\definecolor{refkey}{gray}{.75}
\definecolor{labelkey}{gray}{.5}
\colorlet{DarkGreen}{green!50!black}
\colorlet{DarkGray}{gray!60!black}
\numberwithin{equation}{section}
\DeclarePairedDelimiter{\ceil}{\lceil}{\rceil}
\renewcommand{\restriction}{\mathord{\upharpoonright}}
\renewcommand{\epsilon}{\varepsilon}
\newcommand{\one}{\mathbf{1}}
\newcommand\norm[1]{\lVert#1\rVert}
 \definecolor{refkey}{gray}{.5}
 \definecolor{labelkey}{gray}{.5}
\definecolor{light}{gray}{.9}
\newtheorem{theorem}{Theorem}[section]
\newtheorem*{theorem*}{Theorem}
\newtheorem{lemma}[theorem]{Lemma}
\newtheorem{claim}[theorem]{Claim}
\crefname{claim}{Claim}{Claims}
\newtheorem{proposition}[theorem]{Proposition}
\newtheorem{observation}[theorem]{Observation}
\newtheorem{corollary}[theorem]{Corollary}
\theoremstyle{definition}{
\newtheorem{example}[theorem]{Example}
\newtheorem{definition}[theorem]{Definition}

\newtheorem*{definition*}{Definition}

\newtheorem{remark}[theorem]{Remark}
\newtheorem*{remark*}{Remark}

}
\newcommand{\E}{\mathbb E}
\newcommand{\R}{\mathbb R}
\newcommand{\Z}{\mathbb Z}
\newcommand{\cA}{\ensuremath{\mathcal A}}
\newcommand{\cC}{\ensuremath{\mathcal C}}
\newcommand{\cF}{\ensuremath{\mathcal F}}
\newcommand{\cH}{\ensuremath{\mathcal H}}
\newcommand{\cI}{\ensuremath{\mathcal I}}
\newcommand{\cJ}{\ensuremath{\mathcal J}}
\newcommand{\cK}{\ensuremath{\mathcal K}}
\newcommand{\cL}{\ensuremath{\mathcal L}}
\newcommand{\cP}{\ensuremath{\mathcal P}}
\newcommand{\cS}{\ensuremath{\mathcal S}}
\newcommand{\cV}{\ensuremath{\mathcal V}}
\newcommand{\cW}{\ensuremath{\mathcal W}}
\newcommand{\cX}{\ensuremath{\mathcal X}}
\newcommand{\cY}{\ensuremath{\mathcal Y}}
\newcommand{\llb }{\llbracket}
\newcommand{\rrb }{\rrbracket}
\newcommand{\sH}{{\ensuremath{\mathscr H}}}
\newcommand{\sT}{{\ensuremath{\mathscr T}}}
\newcommand{\fB}{\mathfrak{B}}
\newcommand{\fm}{\mathfrak{m}}
\newcommand{\fF}{\mathfrak{F}}
\newcommand{\fW}{\mathfrak{W}}
\newcommand{\sB}{{\ensuremath{\mathscr B}}}
\newcommand{\sE}{{\ensuremath{\mathscr E}}}
\newcommand{\sF}{{\ensuremath{\mathscr F}}}
\newcommand{\sG}{{\ensuremath{\mathscr G}}}
\newcommand{\sX}{{\ensuremath{\mathscr X}}}
\newcommand{\sY}{{\ensuremath{\mathscr Y}}}
\newcommand{\sZ}{{\ensuremath{\mathscr Z}}}
\newcommand{\fD}{\mathfrak{D}}
\newcommand{\scE}{{\ensuremath{\textsc{e}}}}
\newcommand{\scV}{{\ensuremath{\textsc{v}}}}
\newcommand{\g}{{\ensuremath{\mathbf g}}}
\newcommand{\bB}{{\ensuremath{\mathbf B}}}
\newcommand{\bD}{{\ensuremath{\mathbf D}}}
\newcommand{\bF}{{\ensuremath{\mathbf F}}}
\newcommand{\bh}{{\ensuremath{\mathbf h}}}
\newcommand{\bR}{{\ensuremath{\mathbf R}}}
\newcommand{\bW}{{\ensuremath{\mathbf W}}}
\newcommand{\bX}{{\ensuremath{\mathbf X}}}
\newcommand{\fs}{\mathfrak{s}}
 \renewcommand{\epsilon}{\varepsilon}
\newcommand{\Clust}{{\mathsf{Clust}}}
\newcommand{\Cone}{{\mathsf{Cone}}}
\newcommand{\Iso}{{\mathsf{Iso}}}
\newcommand{\Incr}{{\mathsf{Incr}}}
\newcommand{\Cyl}{{\mathsf{Cyl}}}
\newcommand{\ext}{{\mathsf{ex}}}
\newcommand{\Itop}{\cI_{\mathsf{top}}}
\newcommand{\Ibot}{\cI_{\mathsf{bot}}}
\newcommand{\Vtop}{\cV_\mathsf{top}}
\newcommand{\Vbot}{\cV_\mathsf{bot}}
\newcommand{\Atop}{\widehat{\cV}_{\mathsf{top}}}
\newcommand{\Abot}{\widehat{\cV}_{\mathsf{bot}}}
\newcommand{\Top}{\mathsf{top}}
\newcommand{\Bot}{\mathsf{bot}}
\newcommand{\Vred}{\cV_\mathsf{red}}
\newcommand{\Vblue}{\cV_\mathsf{blue}}
\newcommand{\Ared}{\widehat{\cV}_{\mathsf{red}}}
\newcommand{\Ablue}{\widehat{\cV}_{\mathsf{blue}}}
\newcommand{\Red}{{\mathsf{red}}}
\newcommand{\Blue}{{\mathsf{blue}}}
\newcommand{\noRed}{{\mathsf{nred}}}
\newcommand{\out}{{\mathsf{out}}}
\newcommand{\ins}{{\mathsf{ins}}}
\newcommand{\sep}{{\fD}}
\newcommand{\clfaces}[1][\omega]{{\fF_#1^{\texttt{c}}}}
\newcommand{\opfaces}[1][\omega]{{\fF_#1}}
\newcommand{\ex}{{\mathfrak{e}_1}}
\newcommand{\ey}{{\mathfrak{e}_2}}
\newcommand{\ez}{{\mathfrak{e}_3}}
\DeclareMathOperator{\hgt}{ht}
\newcommand{\tv}{{\textsc{tv}}}
\newcommand{\trivincr}{\varnothing}
\crefname{step}{Step}{Steps}
\crefname{case}{Case}{Cases}
\newcommand{\superimpose}[2]{%
  {\ooalign{$#1\@firstoftwo#2$\cr\hfil$#1\@secondoftwo#2$\hfil\cr}}}
\newcommand{\sbullet}{%
  \hbox{\fontfamily{lmr}\fontsize{.4\dimexpr(\f@size pt)}{0}\selectfont\textbullet}}
\begin{document}

\title{Extrema of 3D Potts interfaces}

\author{Joseph Chen}
\address{J.\ Chen\hfill\break
Courant Institute\\ New York University\\
251 Mercer Street\\ New York, NY 10012, USA.}
\email{jlc871@courant.nyu.edu}

\author{Eyal Lubetzky}
\address{E.\ Lubetzky\hfill\break
Courant Institute\\ New York University\\
251 Mercer Street\\ New York, NY 10012, USA.}
\email{eyal@courant.nyu.edu}

\vspace{-1cm}

\begin{abstract}
    The interface between the plus and minus phases in the low temperature 3D Ising model has been intensely studied since Dobrushin’s pioneering works in the early 1970’s established its rigidity. Advances in the last decade yielded the tightness of the maximum of the interface of this Ising model on the cylinder of side length $n$, around a mean that is asymptotically $c\log n$ for an explicit  $c$ (temperature dependent). In~this work, we establish   analogous results for the 3D Potts and random cluster (FK) models. Compared to 3D Ising, the Potts model and its lack of monotonicity form obstacles for existing methods, calling for new proof ideas, while its interfaces (and associated extrema) exhibit richer behavior. 
    We show that the maxima and minima of the interface bounding the blue component in the 3D Potts interface, and those of the interface bounding the bottom component in the 3D FK model, are governed by 4 different large deviation rates, whence the corresponding global extrema feature 4 distinct constants $c$ as above.
    Due to the above obstacles, our methods are initially only applicable to 1 of these 4 interface extrema, and additional ideas are needed to 
    recover the other 3 rates given the behavior of the first one.
\end{abstract}

{\mbox{}
\vspace{-1.5cm}
\maketitle
}
\vspace{-.8cm}

\section{Introduction}
The \emph{Potts model} on a finite graph $\Lambda = (V, E)$ is a random assignment of colors to vertices of $V$ that penalizes adjacent vertices assigned with different colors. The number of possible colors is given by the integer parameter $q \geq 2$, and the aforementioned penalization is governed by the parameter $\beta > 0$, the inverse-temperature of the system: the probability of a vertex coloring $\sigma:V\to\{1,\ldots,q\}$ is given by
\[ \phi_\Lambda(\sigma) \propto e^{-\beta \cH(\sigma)}\,,\qquad\mbox{where}\qquad\cH(\sigma)= \#\{[u,v]\in E\,:\; \sigma_u\neq\sigma_v\}\,.
\]
Consider the half integer lattice with vertices $(\Z + \frac{1}{2})^3$. We will mainly consider the Potts model on the subgraph~$\Lambda_n$ of this lattice with vertices $\llb -\frac n2,\frac n 2\rrb^2 \times (\Z+\frac12)$. (Although $\Lambda_n$ is an infinite graph, one can e.g.\ consider the model on the finite truncation of $\Lambda_n$ to heights in $\llb -m,m\rrb$, then take the weak limit $m \to \infty$.) Define $\partial \Lambda_n^+$ as the vertices $x=(x_1,x_2,x_3) \in \Lambda_n$ such that $x_3 >0$ and $x$ is adjacent to some vertex of $\Lambda_n^c$, and define $\partial \Lambda_n^-$ analogously. We refer to the model with a \emph{boundary condition} $\eta$ as the conditional distribution of the model on some larger graph containing $\Lambda_n$ where we fix $\sigma_v = \eta_v$ for all vertices not in $\Lambda_n$. Our focus is on the Potts model on $\Lambda_n$ with \emph{Dobrushin boundary conditions}, which correspond to $\eta$ that is $\Red$ for all vertices with height $> 0$ and $\Blue$ for all vertices with height $< 0$. Denote this distribution by $\phi_n$ for brevity.

We will consider the low temperature regime, where $\beta > \beta_0$ for a fixed large enough $\beta_0$. It is easy to see (via a standard Peierls argument) that $\phi_n$-almost surely there is a unique infinite connected component of $\Red$ vertices in $\sigma$ --- the one containing $\partial \Lambda_n^+$ --- and a unique infinite $\Blue$ component, the one containing~$\partial\Lambda_n^-$. Thus, there naturally arise two \emph{interfaces}, one separating the infinite $\Red$ component from everything below it, and one separating the infinite $\Blue$ component from everything above it. Formally,
to every edge $e = [x, y]$, consider the dual face $f_{[x, y]}$ that is the closed unit square centered at $\frac{x+y}{2}$ and perpendicular to~$e$. An \emph{interface} is a collection of faces such that every $\Lambda_n$-path of vertices from $\partial \Lambda_n^-$ to $\partial \Lambda_n^+$ must cross the interface. 

\begin{definition}[Potts interfaces] Let $\Vred$ denote the vertices of $\Lambda_n$ in the (a.s.\ unique) infinite $\Red$  cluster, i.e., every $v\in\Lambda_n$ from which there is a $\Lambda_n$-path of $\Red$ vertices in $\sigma$ from $v$ to $\partial \Lambda_n^+$. Let the augmented $\Red$ component, $\Ared$, be the union of $\Vred$ with all finite components of $\Vred^c$. Define the $\Red$ interface $\cI_\Red$ as the set of 
faces separating $\Ared$ and $\Ared^c$; that is,
every face $f_{[x,y]}$ between $x\in\Ared$ and $y\in\Ared^c$.
Analogously, the $\Blue$ interface $\cI_\Blue$ is defined via the infinite $\Blue$ cluster $\Vblue$, which is augmented into $\Ablue$.
\label{def:potts-interfaces}
\end{definition}

The interfaces are illustrated in \cref{fig:potts-3d,fig:potts_interfaces} in dimensions $d=3$ and $d=2$,  resp. Note that our results can be extended to dimensions $d\geq3$, yet the 2D behavior is starkly different (see \cref{sec:related-work} on the famous works of Dobrushin \cite{Dobrushin72,Dobrushin73} on the rigidity of Ising interface --- the case $q=2$ of the Potts model --- for $d\geq3$). Further, if we were to use $*$-connectivity (whereby $x,y$ are $*$-adjacent if $\|x-y\|_\infty \leq 1$, instead of graph adjacency in $\Z^3$ which corresponds to $\|x-y\|_1 \leq 1$) for defining the component $\Vred$ and the augmented~$\Ared$, then the interface $\cI_\Red$ would exactly coincide with the one defined in~\cite{Dobrushin72,GL_max}. As usual, different notions of connectivity would only affect the inclusion (or lack thereof) of finite bubbles
in $\cI_\Red$ (hence giving slightly different constants in the large deviation rates of local extrema); our choice here of standard adjacency maintains consistency with the classical random cluster interfaces (see \cref{subsec:rc-interfaces}).

Closely related to the Potts model is the \emph{random-cluster} or Fortuin--Kasteleyn (FK) model, which is a random edge configuration on the edges $E$ of $\Lambda$ with parameters $0<p<1$ and $q>0$. In every configuration $\omega$, edges are either \emph{open} (present, $\omega_e = 1)$ or \emph{closed} (missing, $\omega_e = 0)$. The probability of $\omega$ is given by
\[\mu_{\Lambda}(\omega) \propto p^{\#\{e\in E\,:\;\omega_e = 1\}}(1-p)^{\#\{e\in E\,:\;\omega_e = 0\}}q^{\kappa(\omega)}\,,\]
where the term $\kappa(\omega)$ denotes the number of connected components of the graph $(V, \{e\,:\;\omega_e=1\})$. We will refer to connected components of said graph as \emph{open clusters}. 

Let $\mu_n$ denote the random-cluster measure on $\Lambda_n$ with \emph{Dobrushin boundary conditions}, given by $\eta_e = 0$ if $e$ separates the upper and lower half-spaces ---  $e = [x, y]$ for some $x = (x_1, x_2, \frac12)$ and $ y = (y_1, y_2, -\frac12)$ --- and $\eta_e = 1$ otherwise. The relation between the Potts and random-cluster model, which we describe next, will necessitate a further conditioning on the (exponentially unlikely) event that $\partial \Lambda_n^+$ and $\partial \Lambda_n^-$ are not part of the same open cluster in $\omega$: denote this event by $\sep_n$, and let \[ \bar{\mu}_n(\cdot) = \mu_n(\cdot \mid \sep_n)\,.\]
When the Potts and random-cluster models on the same graph have the same (integer) value of $q$ and parameters $p = 1 - e^{-\beta}$, the two models can be coupled via the Edwards--Sokal coupling. We will assume this relation throughout this paper, with the exception that the results for the random-cluster model will be established for all real $q \geq 1$, not just integer valued $q$. Explicitly, for any finite graph $G = (V, E)$, the coupled FK--Potts model is given by the following joint measure on vertex spins $\sigma$ and edge spins $\omega$:
\begin{equation*}
    \phi(\sigma, \omega) \propto p^{\#\{e \in E\,:\;\omega_{e} = 1\}}(1-p)^{\#\{e \in E\,:\;\omega_e = 0\}}
    \prod_{e=[u,v]\,:\;
    \omega_e=1}\one_{\{\sigma_u = \sigma_v\}}\,.
\end{equation*}
It is easy to verify that the marginals on the spin and edge configurations give the Potts and random-cluster models respectively; furthermore, the conditional probabilities are such that if one samples a random-cluster model and colors each cluster uniformly at random, then the resulting coloring has the law of a Potts model. Consequently, (by considering the finite truncation of $\Lambda_n$ between heights $-m$ and $m$ and taking the weak limit as $m \to \infty$,) if we sample a random-cluster model on $\Lambda_n$ with Dobrushin boundary conditions conditional on $\sep_n$, fix the colors of clusters incident to $\partial \Lambda_n^+$ and $\partial \Lambda_n^-$ to be $\Red$ and $\Blue$ respectively, and color the remaining open clusters of vertices uniformly at random via $q$ colors, we get a Potts model with Dobrushin boundary conditions (e.g., 
\cite[\S2.2]{GheissariLubetzky18},\cite[Fact 3.4 and Cor.~3.5]{LubetzkySly12}.) As we always consider the Potts model in this context, by an abuse of notation we also let $\phi_n$ denote the coupled FK--Potts measure on~$\Lambda_n$. 

As was the case for the Potts model, there are two natural interfaces arising in the conditional FK distribution $\bar\mu_n$: one separating the 
``top'' open cluster containing $\partial \Lambda_n^+$ from everything below it, and one separating the 
``bottom'' open cluster containing $\partial \Lambda_n^-$ from everything above it.

\begin{figure}
\vspace{-0.15in}
\hspace{-.55in}
    \begin{tikzpicture}
   \node (fig1) at (0.5,0) {
    	\includegraphics[width=0.65\textwidth]{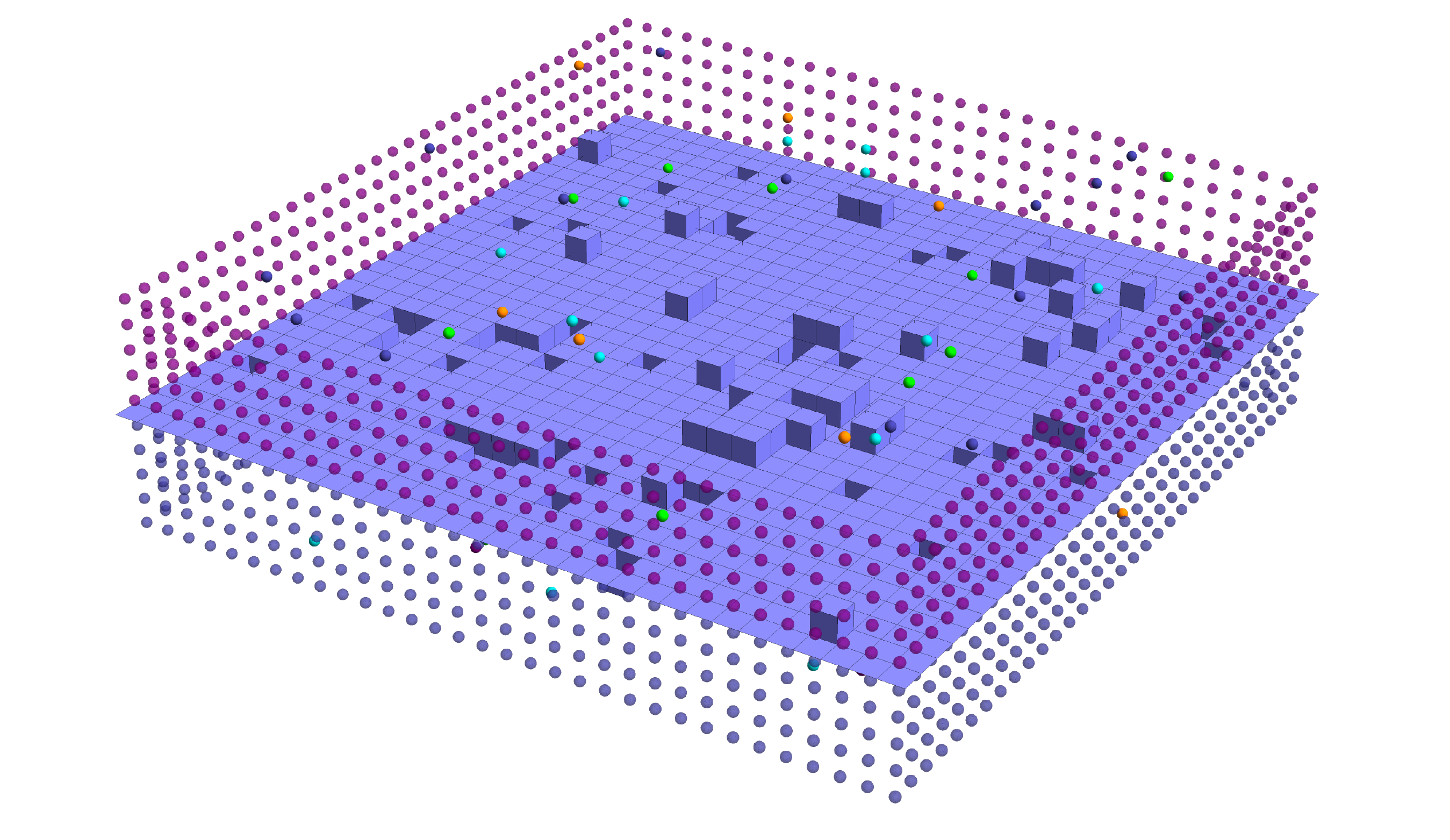}};
   \node (fig2) at (8.6,-1.1) {
   	\includegraphics[width=0.4\textwidth]{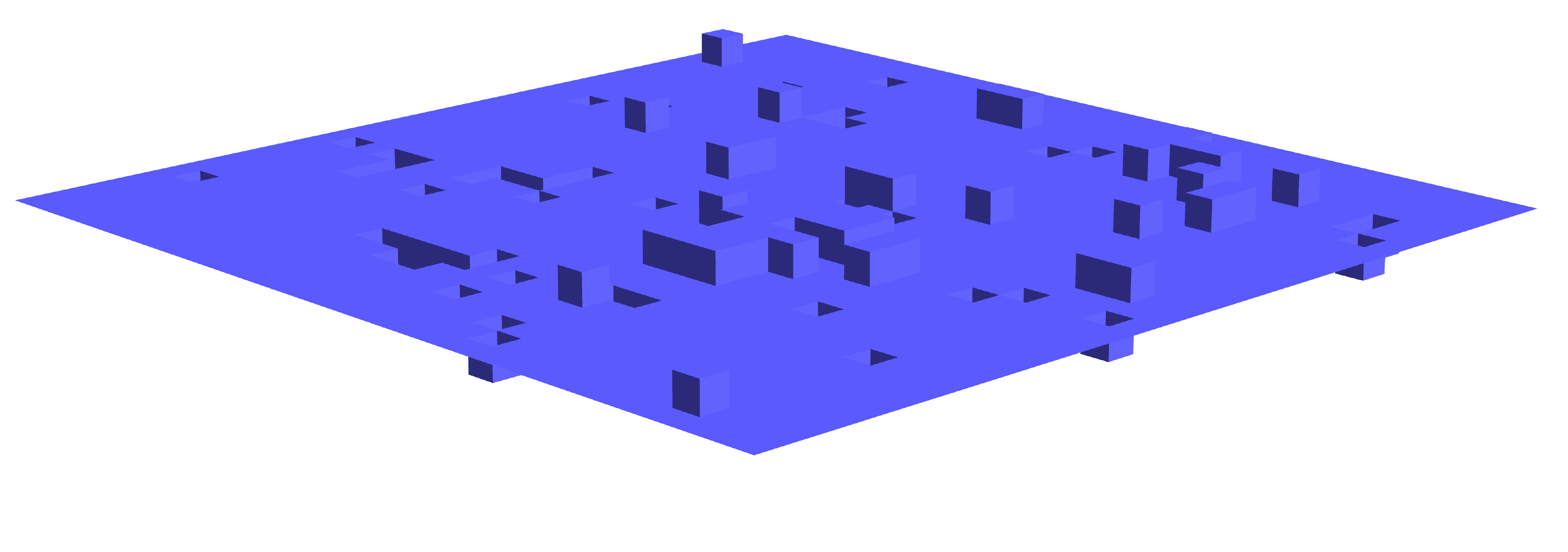}};
    \node (fig3) at (8.6,1.1) {
   	\includegraphics[width=0.4\textwidth]{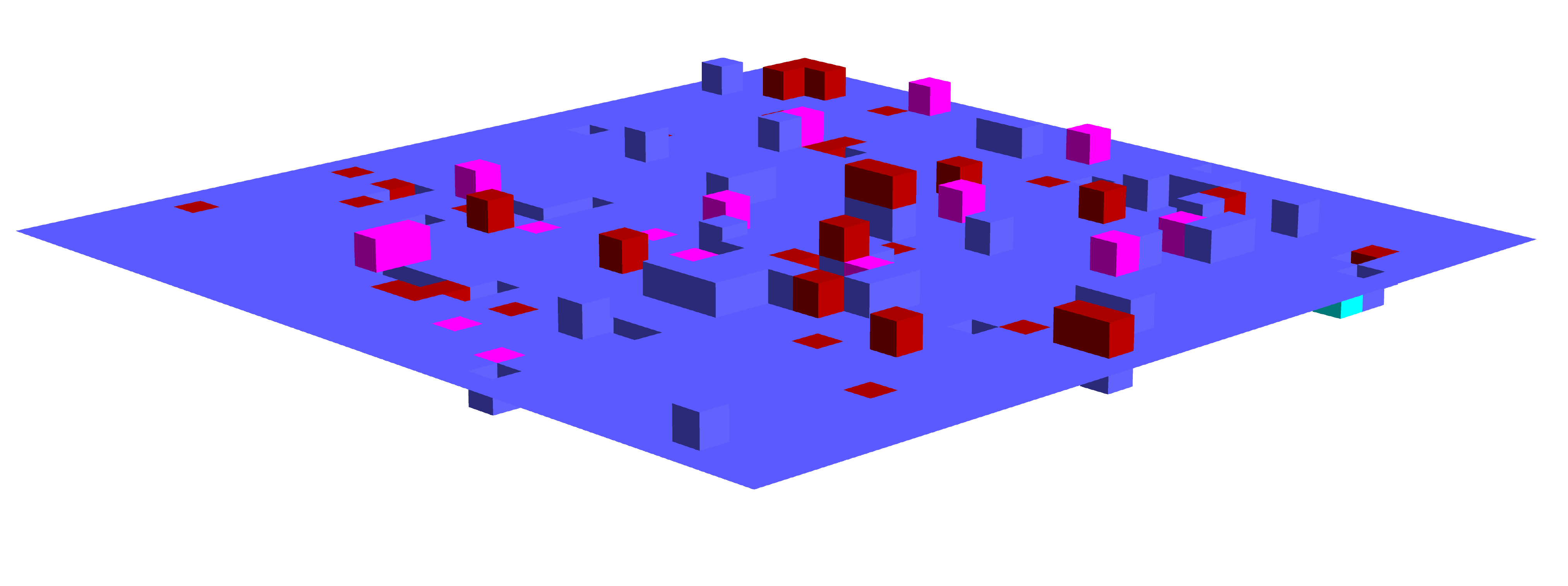}};
    \end{tikzpicture}
    \vspace{-0.7cm}
    \caption{The $\Blue$ interface $\cI_\Blue$ in the $5$-color 3D Potts model (not showing the $\Red$ vertices above $\cI_\Blue$ nor the $\Blue$ vertices below it). \emph{Right bottom}: different view of the same  $\Blue$ interface.
    \emph{Right top}: the faces of $\cI_\Blue$ and the other Potts and random-cluster interfaces $
    \cI_\Red$, $\cI_\Top$, $\cI_\Bot$.}
    \label{fig:potts-3d}
\vspace{-0.15in}
\end{figure}

\begin{definition}[Random-cluster interfaces]\label{def:top-interface} Let $\Vtop$ denote the vertices of $\Lambda_n$ in the $\Top$ open cluster of~$\omega$, i.e., every $v$ connected via an $\omega$-path  to $\partial \Lambda_n^+$. Let the augmented top component, $\Atop$, be $\Vtop$ along with all finite components of $\Vtop^c$ (w.r.t.\ to the full graph $\Lambda_n$). 
Define the $\Top$ interface $\Itop$ to be the set of faces separating vertices from $\Atop$ and $\Atop^c$. Analogously, define the $\mathsf{bottom}$ interface $\cI_\Bot$, and the augmented set $\Abot$ by starting with the vertices of the bottom component, i.e., the infinite open cluster containing $\partial \Lambda_n^-$.
\end{definition}

\begin{remark*}
When the Potts and FK configurations $\sigma,\omega$ are coupled through the Edwards--Sokal coupling $\phi$, as $\Atop \subseteq \Ared\subseteq\Ablue^c $ and $\Abot\subseteq\Ablue$, the 4 corresponding interfaces are \emph{ordered}: $\cI_\Top$, $\cI_\Red$, $\cI_\Blue$, $\cI_\Bot$.
\end{remark*}
\subsection{Results}
For the Ising model ($q=2$), the asymptotics of the maximum of the 3D interface, and its tightness around its mean, were recently established in \cite{GL_max,GL_tightness}.
Our main results are the analogous statements for the 4 interfaces (3D Potts $\cI_\Blue$ and $\cI_\Red$; 3D FK $\cI_\Bot$ and $\cI_\Top$) defined above. As we explain in \cref{sec:subsec-proof-ideas}, 
significant work is required compared to the Ising case, mainly due to the lack of monotonicity (both in the Potts model and in the conditional FK model $\bar\mu_n$), as well as the more delicate interactions in the FK model. Notably, a large portion of the proof is dedicated to an argument that is applicable for the maximum of 1 of these 4 interfaces, $\cI_\Top$, yet fails for the other 3 interfaces. We then recover the remaining maxima by analyzing the conditional behavior of the respective interface conditional on the behavior of the $\Top$ interface.

\begin{figure}
\vspace{-0.1in}
    \begin{tikzpicture}
   \node (fig1) at (0.5,0) {
    	\includegraphics[width=0.65\textwidth]{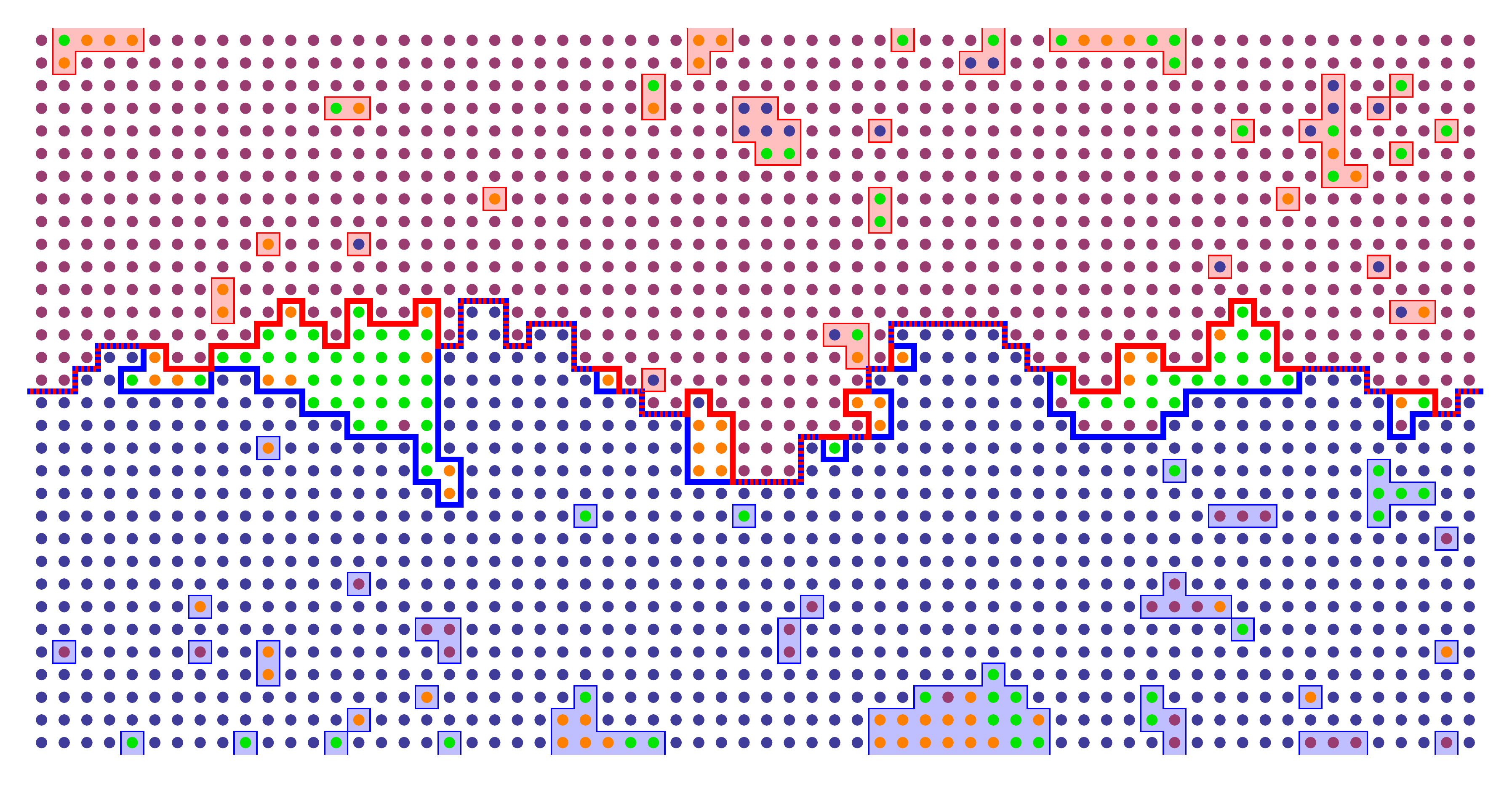}};
   \node (fig2) at (8.6,-1.35) {
   	\includegraphics[width=0.3\textwidth]{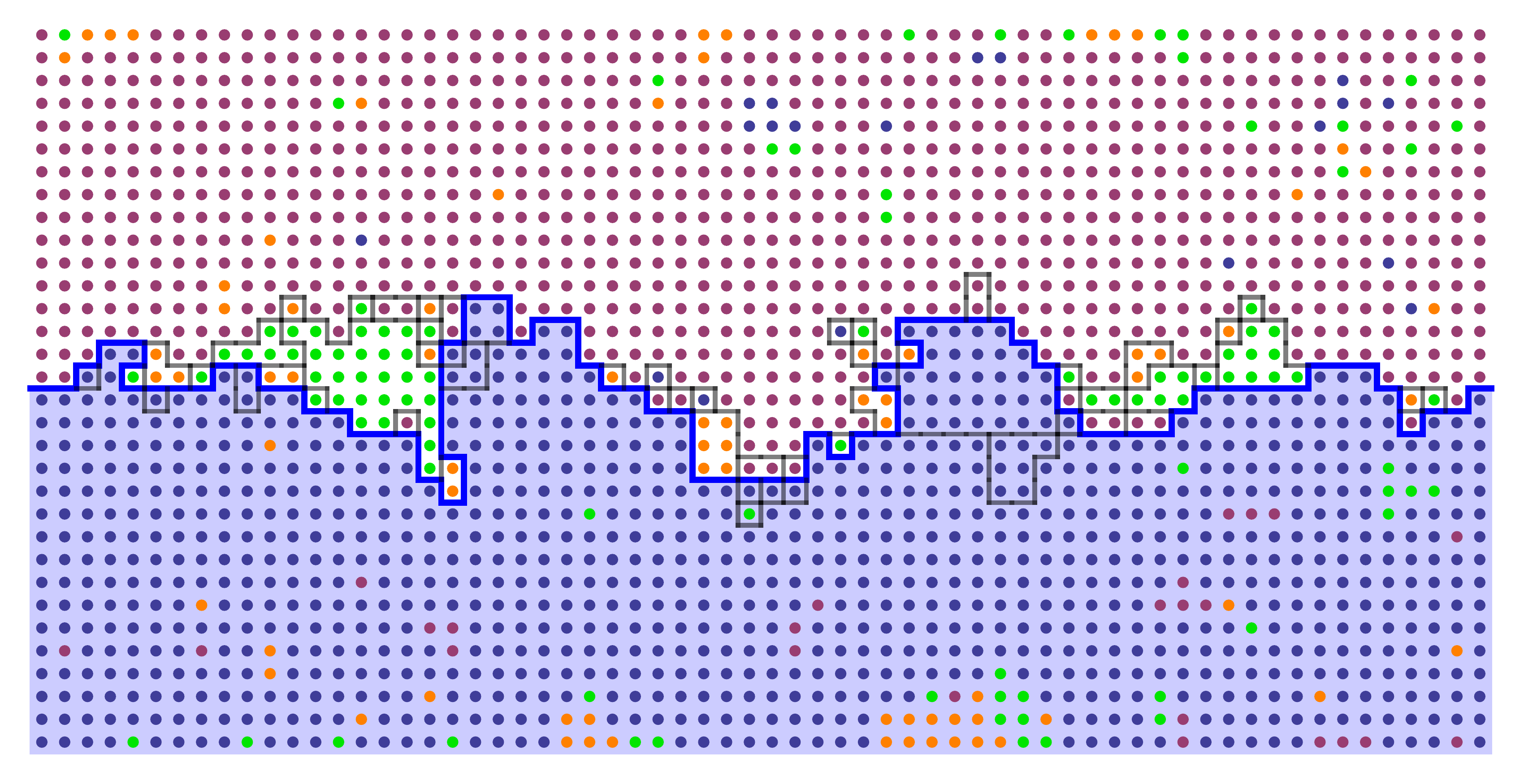}};
    \node (fig3) at (8.6,1.35) {
   	\includegraphics[width=0.3\textwidth]{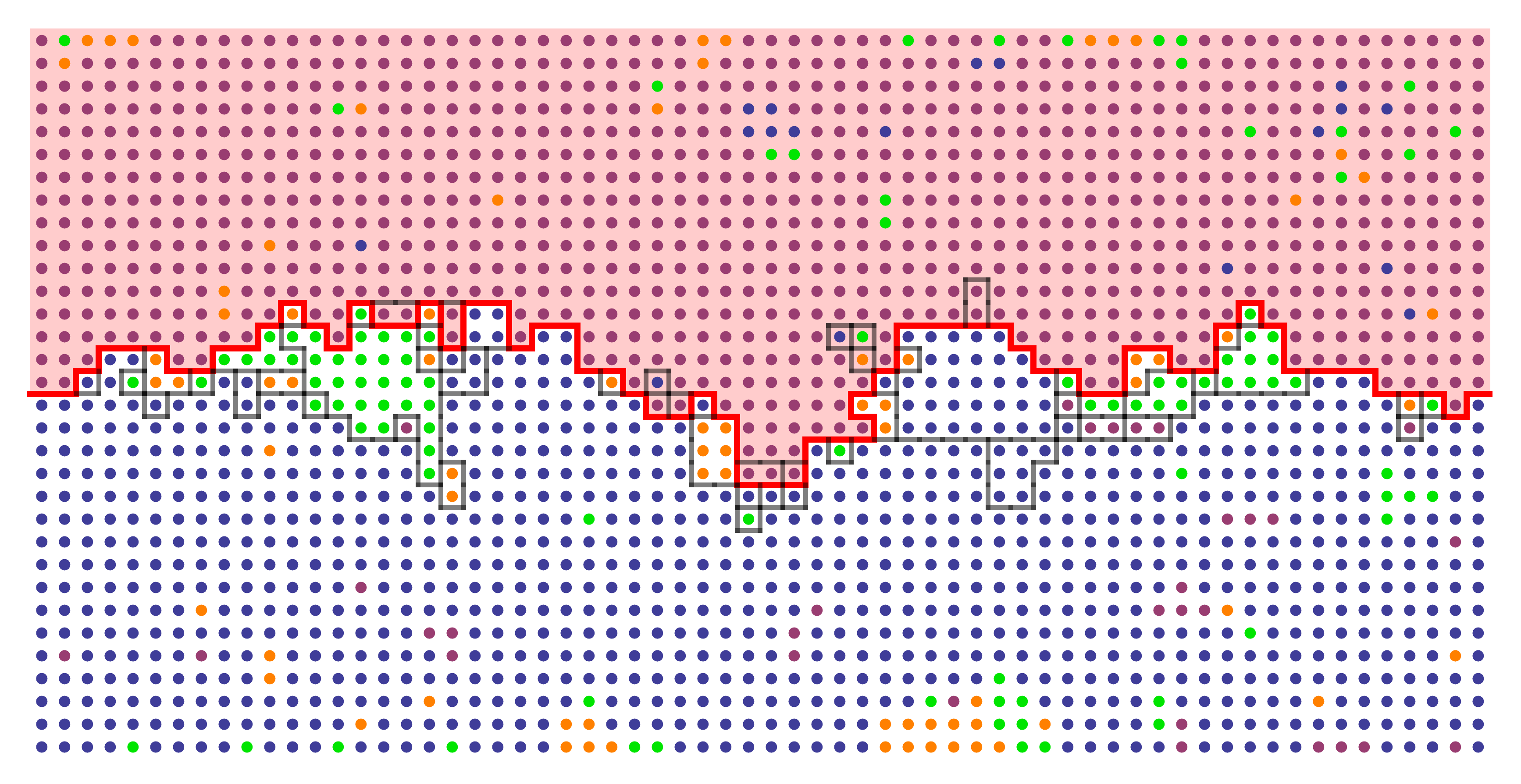}};
    \end{tikzpicture}
    \vspace{-18pt}
    \caption{The $\Red$ and $\Blue$ interfaces of a $4$-color 2D Potts model. \emph{Right bottom}: the interface~$\cI_\Blue$ and augmented $\Blue$ component $\Ablue$.  \emph{Right top}:  $\cI_\Red$ and the augmented $\Red$ component $\Ared$.}
    \label{fig:potts_interfaces}
\end{figure}

\begin{figure}
\vspace{-0.1in}
    \begin{tikzpicture}
   \node (fig1) at (0.5,0) {
    	\includegraphics[width=0.65\textwidth]{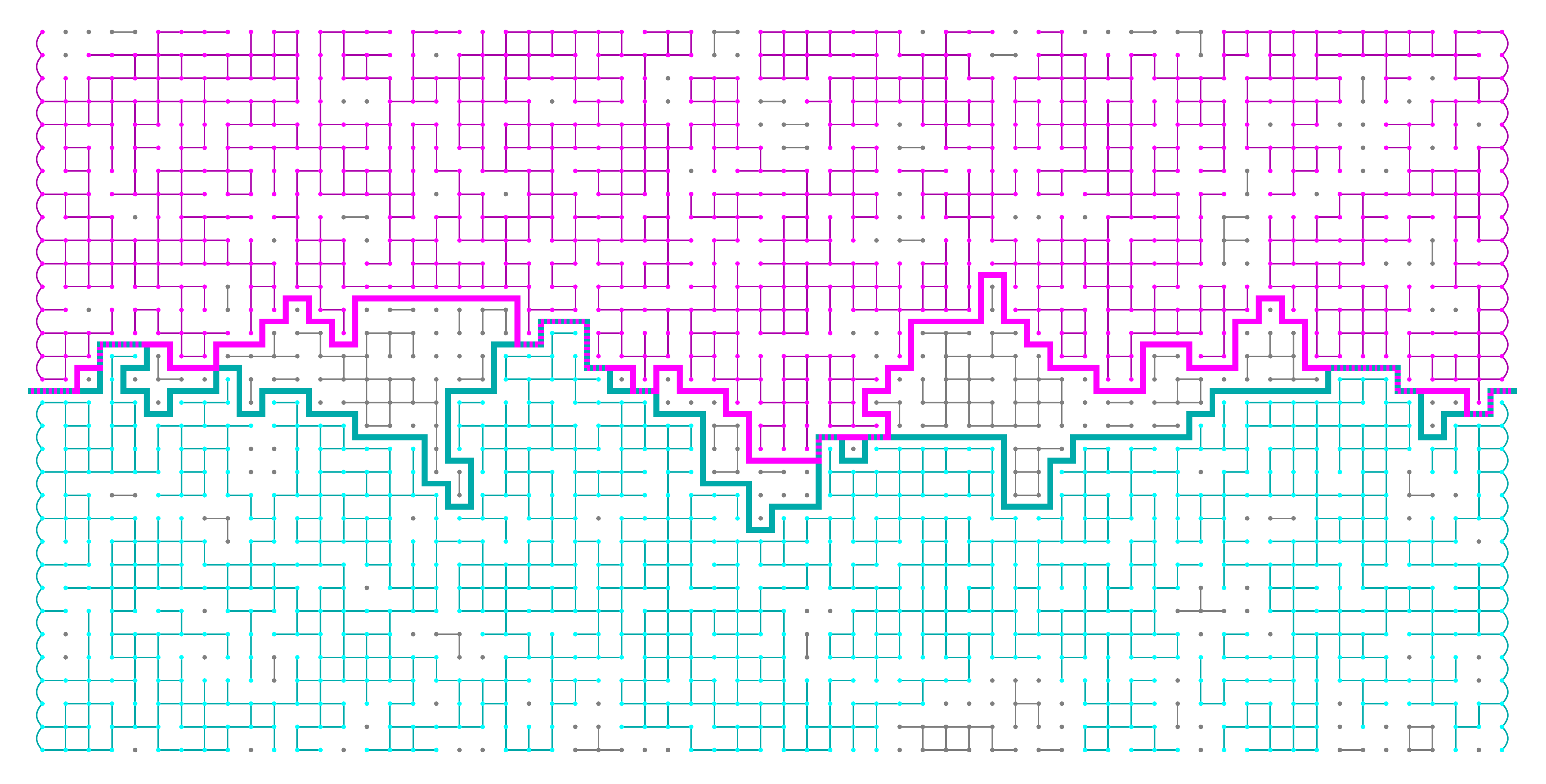}};
   \node (fig2) at (8.6,-1.35) {
   	\includegraphics[width=0.3\textwidth]{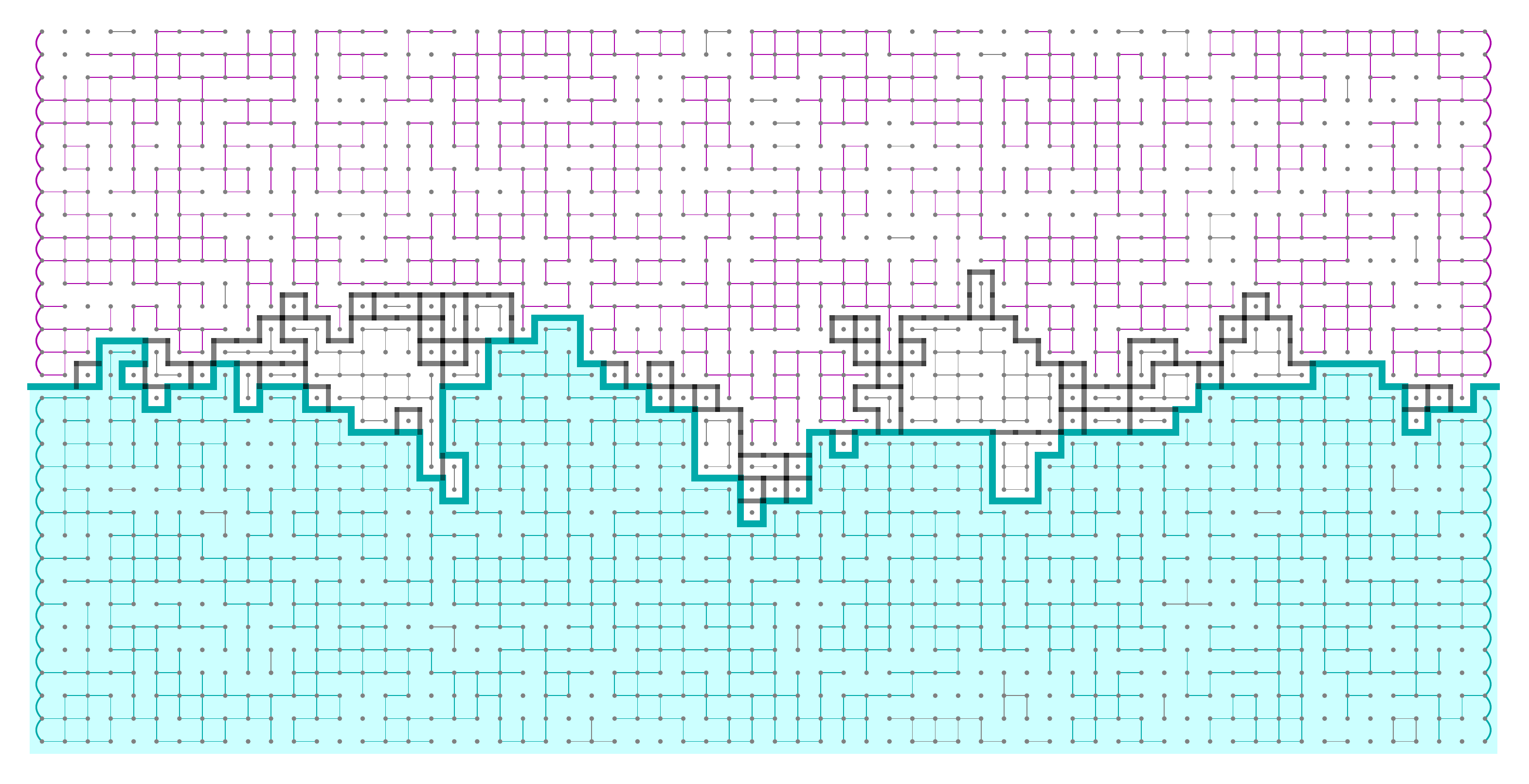}};
    \node (fig3) at (8.6,1.35) {
   	\includegraphics[width=0.3\textwidth]{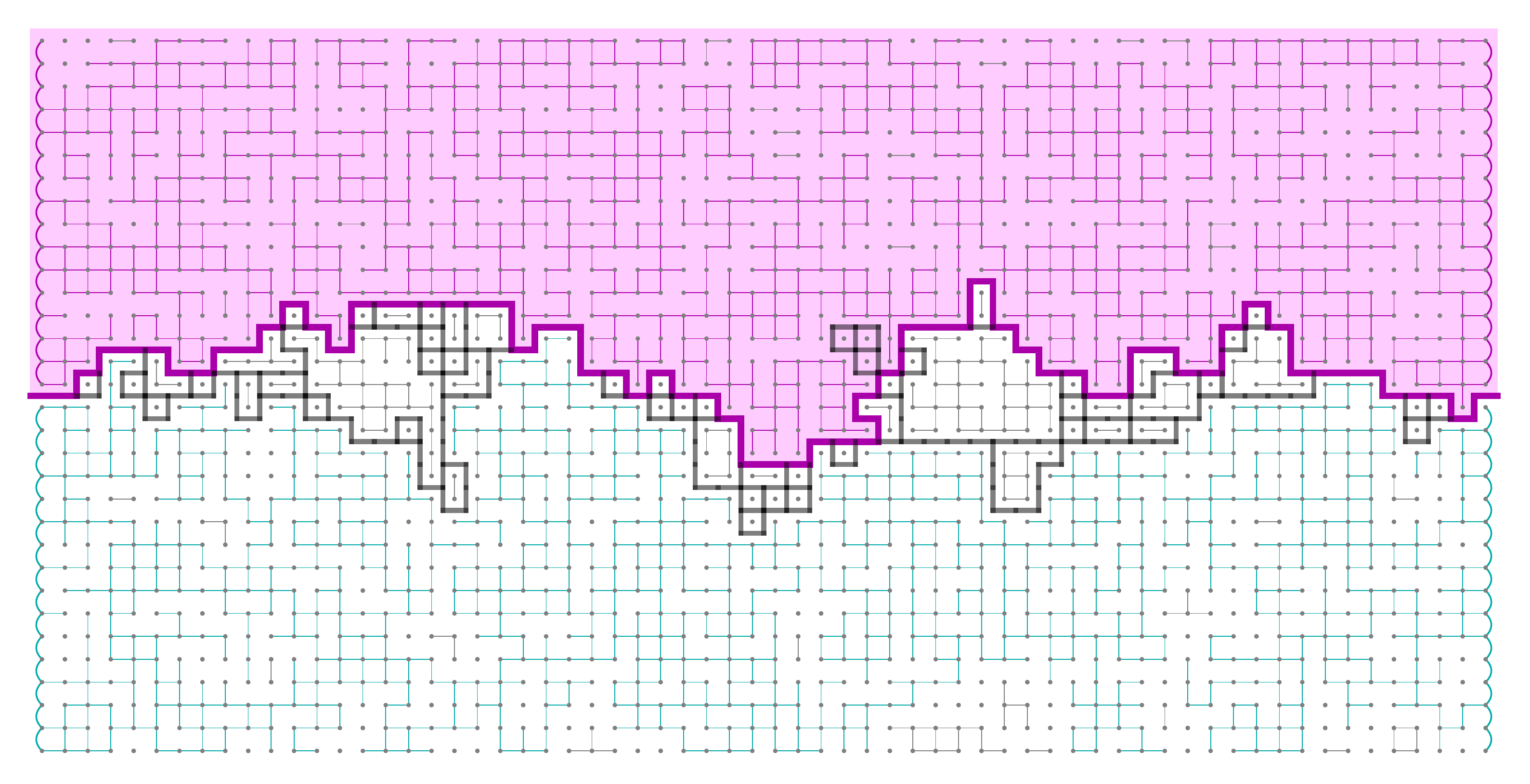}};
    \end{tikzpicture}
    \vspace{-18pt}
    \caption{The $\Top$ interface $\cI_\Top$ and $\mathsf{bottom}$ interface $\cI_\Bot$ of the random-cluster model coupled via the Edwards--Sokal coupling to the Potts model from \cref{fig:potts_interfaces}.
    \emph{Right bottom}: The interface $\cI_\Bot$ and augmented bottom component $\Abot$. \emph{Right top}: $\cI_\Top$ and the augmented top component $\Atop$.}
    \label{fig:rc_interfaces}
\end{figure}

\begin{theorem}[Potts]\label{thm:potts} Fix an integer $q\geq 2$. For $\beta$ large enough, the maximum height $M_n$ and absolute value of the minimum height $M'_n$ of the $\Red$ interface $\cI_\Red$ are tight once centered around their means, i.e.,
\[ M_n-\E[M_n] = O_{\textsc{p}}(1) \qquad\mbox{and}\qquad M'_n-\E[M'_n] = O_{\textsc{p}}(1) \,.\]
Furthermore, there exist $\gamma,\gamma'>0$ such that $\E[M_n]\sim (2/\gamma) \log n$, $\E[M'_n] \sim (2/\gamma') \log n$,  and $\gamma' > \gamma$ for  $q\neq 2$. The same holds for the $\Blue$ interface $\cI_{\Blue}$ when swapping the roles of $M_n$ and $M'_n$.
\end{theorem}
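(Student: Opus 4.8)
The plan is to follow the now-standard route for extrema of rigid interfaces, as developed for the 3D Ising interface in \cite{GL_max,GL_tightness}, but adapted to cope with the non-monotonicity of the Potts and conditional FK measures. The backbone is a one-sided large deviation estimate for the height of the interface above a fixed point: we seek constants $\gamma,\gamma'>0$ such that, writing $\hgt(\cI_\Red)$ for the maximal height of a face of $\cI_\Red$ over a fixed column, one has $\phi_n(\hgt \geq h) = \exp(-(\gamma+o(1))h)$ for the upward deviation and $\exp(-(\gamma'+o(1))h)$ for the downward deviation, uniformly in a suitable range of $h$ (up to polynomial corrections that wash out after taking logs). Given such estimates, the asymptotics $\E[M_n]\sim(2/\gamma)\log n$ follows from a first-moment bound over the $\asymp n^2$ columns together with a second-moment / sprinkling argument to produce a near-maximal pillar somewhere, exactly as in the Ising case; the factor $2$ is the dimension of the base. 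Tightness $M_n - \E[M_n] = O_{\textsc{p}}(1)$ then comes from the exponential (rather than merely polynomial) decay of the tail of $\hgt$ beyond its typical scale, via the same union-bound-versus-independence dichotomy.

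The key new input, and the reason the statement asserts $\gamma'>\gamma$ for $q\neq 2$, is the identification of the large deviation rates $\gamma$ and $\gamma'$ as the exponential costs of, respectively, a tall ``pillar'' of the augmented red component poking upward, and a tall downward spike of $\Ared^c$ (equivalently of $\Ablue$, since $\Ared\subseteq\Ablue^c$, up to bubbles). These two events are \emph{not} symmetric: an upward excursion of $\cI_\Red$ is a cylinder of red spins surrounded by the sea of possible colors above, whereas a downward excursion forces a thin finger in which the spin must avoid red — but with $q$ colors available there is an entropic gain from the $q-1$ non-red options, changing the effective surface tension. Concretely, I would define $\gamma$ and $\gamma'$ through cluster-expansion / Pirogov–Sinai-type variational problems for the cost of a minimal-width column of height $h$ under the two boundary regimes, show each is a genuine exponential rate by subadditivity (Fekete) after decorrelating successive slabs, and then compute the sign of $\gamma'-\gamma$: for $q=2$ the two problems coincide by color symmetry, while for $q\geq 3$ the extra color freedom strictly lowers the downward cost per unit height is \emph{not} what happens — rather, as the excerpt indicates via $\gamma'>\gamma$, the downward spike is \emph{more} costly, because it must be monochromatic-blue (it is an excursion of the blue cluster) and cannot avail itself of the free recoloring that an ambient region enjoys. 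Making this comparison rigorous and quantitative is where the bulk of the work sits.

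The step I expect to be the main obstacle is establishing the large deviation estimate itself in the absence of monotonicity. For Ising one leans heavily on FKG and on monotone couplings to compare the law of a pillar to an unconditioned half-space measure; here neither the Potts measure nor $\bar\mu_n = \mu_n(\cdot\mid\sep_n)$ is monotone, so the comparison arguments must be replaced by direct cluster-expansion control of the interface as a polymer system, with careful treatment of the conditioning on $\sep_n$ (an exponentially unlikely event, contributing only lower-order corrections but needing a quantitative bound). As the excerpt itself flags, the cleanest version of this argument runs only for $\cI_\Top$ — the outermost, ``most FK-like'' interface, where the open-path structure is directly accessible — yielding $\gamma$ (the upward rate) first. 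The remaining three rates, including $\gamma'$, are then obtained by conditioning on the realized $\Top$ interface and analyzing the conditional law of $\cI_\Red$ (resp. $\cI_\Blue$, $\cI_\Bot$) in the slab between consecutive interfaces, where the recoloring of clusters not pinned to the boundary reintroduces the $q$-dependence and produces the strict inequality $\gamma'>\gamma$; symmetry under $\Red\leftrightarrow\Blue$ and top-bottom reflection transfers everything to $\cI_\Blue$ with $M_n,M'_n$ interchanged. The final centering statements $\E[M_n]\sim(2/\gamma)\log n$ and $\E[M'_n]\sim(2/\gamma')\log n$ are then bookkeeping on top of the rate identification.
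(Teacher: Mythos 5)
Your sketch gets the overall architecture right — fixed-column large-deviation rates, first-/second-moment analysis to locate $\E[M_n]\sim(2/\gamma)\log n$, tightness from exponential tails, and the strategy of handling $\cI_\Top$ first and conditioning to reach the other interfaces — and your heuristic for $\gamma'>\gamma$ (an upward excursion of $\cI_\Red$ is a non-red finger with $q-1$ color choices per step, while the downward excursion is a forced-blue finger with one choice) is the correct picture behind \cref{prop:bound-rates}. But there are two genuine gaps that your plan does not close.

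First, for $\cI_\Top$: you propose replacing FKG by ``direct cluster-expansion control of the interface as a polymer system,'' but that is not what circumvents the non-monotonicity of $\bar\mu_n$, and I do not see how it would. The actual mechanism (\cref{prop:RC-rate}) is to build a \emph{decreasing} proxy event $A_h^x$, prove via Domain Markov a bound of the shape $\mu_n(A_{h_1+h_2}\mid\sep_n)\leq \mu_n(A_{h_1}\mid\sep_n)\,\mu_n(A_{h_2})$, and then recover the missing conditioning using FKG in the \emph{unconditioned} model: since both $A_{h_2}$ and $\sep_n$ are decreasing, $\mu_n(A_{h_2})\leq\mu_n(A_{h_2}\mid\sep_n)$. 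This is also precisely why only one of the four rates is directly accessible: the analogous event for $\cI_\Bot$ is increasing, so the FKG step reverses. Your proposal does not surface this one-sidedness or supply a substitute for the FKG recovery step, and a pure polymer-expansion route to subadditivity is not in the paper and would face the same conditioning-on-$\sep_n$ wall.

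Second, for $\gamma$ and $\gamma'$ given $\alpha$: you say to ``condition on the realized $\Top$ interface and analyze the conditional law of $\cI_\Red$.'' This is the right starting point, but the naive execution — treat the typical $\Top$ pillar's increments as approximately i.i.d.\ and raise the per-increment coloring cost to the power $h$ — is exactly the ``flawed heuristic'' the paper flags: the large deviation may be realized by an \emph{atypical} $\Top$ pillar of comparable (exponentially small) probability, so the typical-pillar LLN does not control the rate. The paper needs the 3-to-3 swap map (\cref{lem:3-to-3-map}) to show the conditional pillar law approximately factorizes across a cut height, together with a pillar-shell Domain Markov in the coupled FK--Potts measure (\cref{lem:DMP-pillar-shell,cor:DMP-for-Ared}) so that the interior coloring factorizes increment-by-increment without information leaking through hairs. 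Without some analogue of these, the submultiplicativity of $\phi_n(\cA^{\noRed}_{x,h}\mid E_h^x)$ in \cref{prop:submultiplicativity-Potts} — which is what turns your heuristic into the existence of $\delta=\gamma-\alpha$ — does not follow, and the step ``the recoloring of clusters\ldots reintroduces the $q$-dependence'' remains a description of the answer rather than an argument for it.
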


\begin{theorem}[Random cluster]\label{thm:rc} Fix $q\geq 1$. For $\beta$ large enough, the maximum height $M_n$ and the absolute value of the minimum height $M'_n$ of the $\Top$ interface $\cI_\Top$ are tight once centered around their means, i.e.,
\[ M_n-\E[M_n] = O_{\textsc{p}}(1) \qquad\mbox{and}\qquad M'_n-\E[M'_n] = O_{\textsc{p}}(1) \,.\]
Furthermore, there exist $\alpha,\alpha'>0$ such that $\E[M_n] \sim (2/\alpha) \log n$, $\E[M'_n]\sim (2/\alpha') \log n$ and $\alpha'> \alpha$. The same holds for the $\mathsf{bottom}$ interface $\cI_{\Bot}$ when swapping the roles of $M_n$ and $M'_n$.
\end{theorem}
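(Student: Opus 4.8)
The plan is to prove the statement for the maximum $M_n$ of the top interface $\cI_\Top$ by a self-contained large-deviation analysis, and then to obtain the statement for $M'_n$, and the entire statement for $\cI_\Bot$, from it by conditioning on $\cI_\Top$ and by the up--down reflection symmetry of $\bar\mu_n$. Throughout one works at low temperature, where $p=1-e^{-\beta}$ is close to $1$ and the dual (closed-edge) connectivities decay exponentially, and the first task is a Peierls/cluster-expansion description of $\cI_\Top$ under $\bar\mu_n$ as a small perturbation of the flat plane at height $0$, with a Dobrushin-type wall decomposition whose weights expand convergently. The key point is that $\cI_\Top$ is the \emph{outermost} of the four interfaces: an upward excursion of $\cI_\Top$ reaching height $h$ above a fixed column sees neither the lower interfaces $\cI_\Red,\cI_\Blue,\cI_\Bot$ nor the bottom cluster, so its probability is comparable to that of an $h$-high pillar over a column in the fully wired FK model on $\Lambda_n$. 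That model is monotone, so for it there is a well-defined rate $\alpha>0$ with pillar-probability $e^{-\alpha h(1+o(1))}$ --- existence of the limit by approximate submultiplicativity (cut a tall pillar at an intermediate ceiling and splice in a flat piece), positivity by the rigidity estimate. The place where the lack of monotonicity of $\bar\mu_n$ must be confronted is precisely in justifying this comparison: instead of stochastic domination, I would argue that conditioning on $\sep_n$ (and the presence of the lower interfaces) changes the weight of an upward excursion of $\cI_\Top$ by at most a bounded multiplicative factor, via a local, bounded-energy repair near the excursion, so that the rate for upward excursions of $\cI_\Top$ equals $\alpha$.

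With $\alpha$ in hand, the upper bound is a union bound over the order $n^2$ columns, $\bar\mu_n(M_n\ge h)\le n^2 e^{-\alpha h(1+o(1))}$, whence $M_n\le (2/\alpha)(1+o(1))\log n$ with high probability and in mean. For the matching lower bound I would tile $\Lambda_n$ by roughly $(n/\mathrm{polylog}\,n)^2$ well-separated sub-boxes, show that the pillar events localized in distinct sub-boxes are nearly independent (the relevant walls stay inside the sub-box up to an exponentially small error, again using that $\sep_n$ is locally harmless), and run a Paley--Zygmund/second-moment estimate to conclude that with high probability some column reaches height $(2/\alpha)(1-o(1))\log n$. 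This gives $\E[M_n]\sim(2/\alpha)\log n$; sharpening the two bounds, at shifted heights $m_n\pm t$ with $m_n:=(2/\alpha)\log n$, to $\bar\mu_n(M_n\ge m_n+t)\le Ce^{-\alpha t}$ and $\bar\mu_n(M_n\le m_n-t)\le Ce^{-ct}$ then yields tightness of $M_n-\E[M_n]$ together with uniform integrability.

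For $M'_n$ the comparison above fails, since a downward excursion of $\cI_\Top$ grows towards the wired bottom cluster, through the region containing $\cI_\Red,\cI_\Blue,\cI_\Bot$: it is genuinely obstructed, with no monotone surrogate. Instead I would condition on the realized top cluster $\Vtop$ (equivalently, on $\cI_\Top$ away from the column in question): by the domain Markov property the conditional law on $\Atop^c$ is a random-cluster measure in $\Lambda_n\setminus\Atop$ wired along $\cI_\Top$ and still conditioned to be disconnected from $\partial\Lambda_n^-$. In that conditional model a depth-$h$ downward excursion of $\cI_\Top$ is a pillar of the top cluster forced to navigate past the bottom cluster, and a cluster-expansion evaluation of the weights of such pillars --- uniformly over the conditioning --- defines a rate $\alpha'$; comparing with the unobstructed pillars of the first paragraph gives $\alpha'>\alpha$, the strict loss coming from the extra closed faces the obstructed pillar must carry. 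Re-running the union-bound and second-moment arguments in the conditional model yields $\E[M'_n]\sim(2/\alpha')\log n$ and tightness of $M'_n-\E[M'_n]$. Finally, the reflection $(x_1,x_2,x_3)\mapsto(x_1,x_2,-x_3)$ fixes $\Lambda_n$, the Dobrushin boundary condition and $\sep_n$, and interchanges the two infinite open clusters, hence sends $\cI_\Top$ to $\cI_\Bot$ while negating heights; so $M_n(\cI_\Bot)\overset{d}{=}M'_n(\cI_\Top)$ and $M'_n(\cI_\Bot)\overset{d}{=}M_n(\cI_\Top)$, and the claim for $\cI_\Bot$ follows with $\alpha$ and $\alpha'$ interchanged.

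The main obstacle is the definition and control of the rates $\alpha$ and $\alpha'$ --- existence of the limits, and matching upper and lower bounds with the correct constant --- carried out for the conditional measure $\bar\mu_n$, which unlike the Ising model of \cite{GL_max,GL_tightness} is not monotone, so none of the stochastic-domination steps of the Ising proof are available. Everything reduces to showing that conditioning on $\sep_n$, and later on $\cI_\Top$, perturbs the local surgeries, and the near-independence of far-apart columns, only by bounded multiplicative factors; I expect establishing this --- through finite-energy and local-modification arguments and a precise control of the cluster expansion near excursions --- to be the technical heart of the proof.
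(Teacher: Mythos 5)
Your overall architecture mirrors the paper's --- a large-deviation rate $\alpha$ for upward excursions of $\cI_\Top$, a conditional analysis given $\cI_\Top$ to get $\alpha'$, a second-moment/Paley--Zygmund argument for tightness, and the height reflection $x_3\mapsto-x_3$ to pass between $\cI_\Top$ and $\cI_\Bot$ --- but the step you flag as ``the technical heart'' is where your sketch is not just incomplete but wrong as stated. You claim that conditioning on $\sep_n$ (and on the lower interfaces) ``changes the weight of an upward excursion of $\cI_\Top$ by at most a bounded multiplicative factor, via a local, bounded-energy repair near the excursion.'' This is not a finite-energy fact: $\sep_n$ is a global, decreasing event whose probability under $\mu_n$ decays like $e^{-cn^2}$, so the ratio $\mu_n(E\mid\sep_n)/\mu_n(E)$ for an event $E$ localized near the excursion is not a priori bounded, and no local surgery removes this. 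Moreover, in the fully wired FK model there is no interface $\cI_\Top$ and no pillar --- you would have to replace the pillar event by a proxy phrased purely in terms of closed dual faces, and then the only inequality that is actually available between $\mu_n$ and $\bar\mu_n$ is the one-sided FKG bound $\mu_n(A)\le\mu_n(A\mid\sep_n)$ for \emph{decreasing} $A$ (plus a separate, genuinely delicate Domain-Markov argument to discard the configuration below the already-revealed part of the interface when chaining pillars). The paper builds the rate $\alpha$ intrinsically from a submultiplicativity inequality of the form $\mu_n(A_{h_1+h_2}\mid\sep_n)\le\mu_n(A_{h_1}\mid\sep_n)\,\mu_n(A_{h_2})$ followed by $\mu_n(A_{h_2})\le\mu_n(A_{h_2}\mid\sep_n)$; it never asserts, and your proposal does not establish, that $\alpha$ coincides with a rate in a monotone surrogate model.

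A smaller point: for $\alpha'>\alpha$, your heuristic that the obstructed downward excursion must ``carry extra closed faces'' is not the right mechanism. Passing to $\cI_\Bot$, the event governing $\alpha'$ is that the bottom open cluster penetrates to height $h$ \emph{inside} the top pillar, so the extra cost is an \emph{open-edge} requirement per increment (roughly a factor $p/(p+(1-p)q)$ per step), not an excess of closed faces. The paper quantifies this as $\alpha'-\alpha=(1\pm\epsilon_\beta)\,q e^{-\beta}$, and establishing even the existence of the limit defining $\alpha'$ (let alone this estimate) requires showing that the law of the pillar conditioned to be tall essentially factorizes over increments, which the paper does through a swapping (``3-to-3'') map in the cluster expansion together with a Domain Markov argument for the pillar shell; a bare ``cluster-expansion evaluation uniformly over the conditioning'' would need all of that machinery spelled out.
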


\begin{figure}
    \begin{tikzpicture}
   \node (fig1) at (8.1,0) {
    	\includegraphics[width=0.33\textwidth]{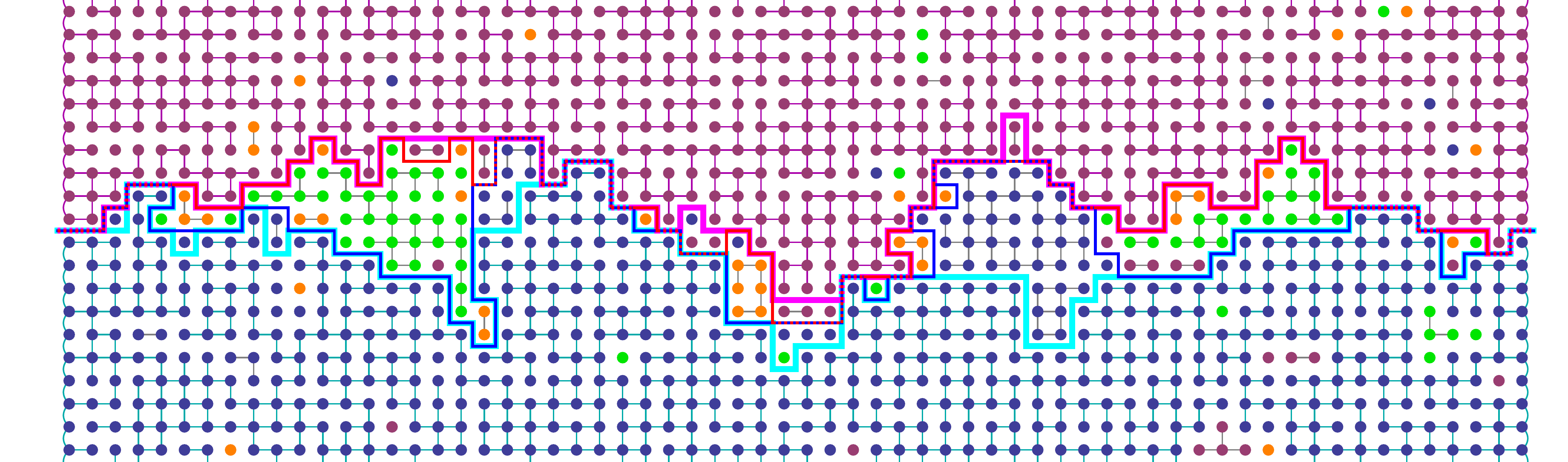}};

     \begin{scope}[shift={(0,0)},font=\small]
   \node (fig2) at (0,0) {
   	\includegraphics[width=0.66\textwidth]{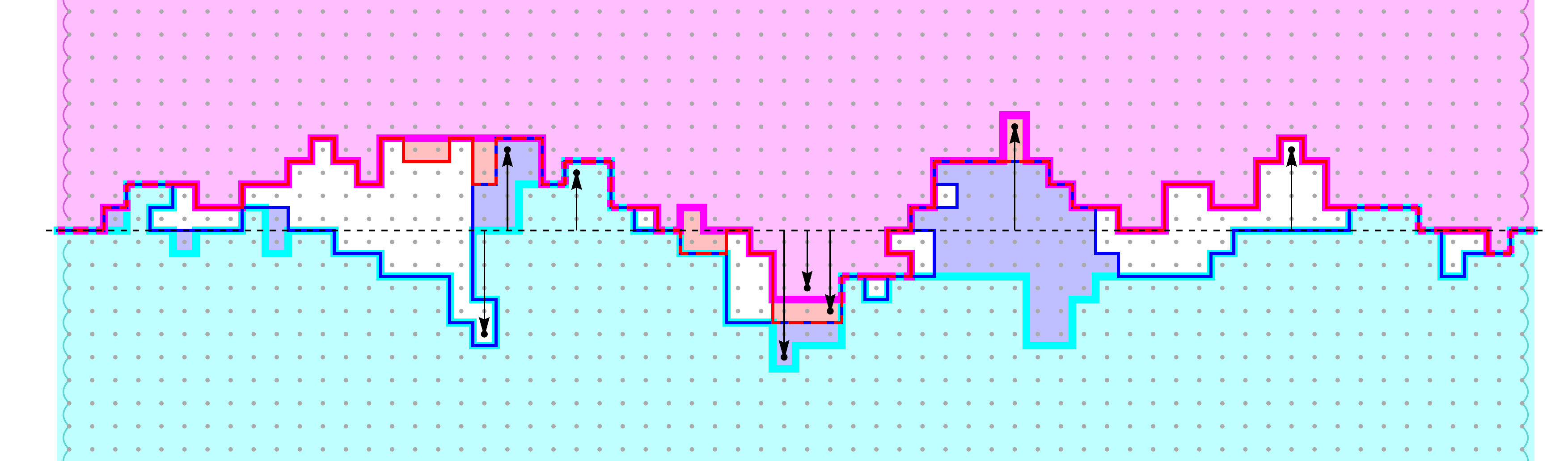}};
    \end{scope}
    \end{tikzpicture}
    \vspace{-0.75cm}
    \caption{\emph{Left}: The four interfaces from \cref{fig:potts_interfaces,fig:rc_interfaces}, pinpointing the minima and maxima of~each. As a result of the Edwards--Sokal coupling, the interfaces are layered in the following order: $\Top$, $\Red$, $\Blue$, $\Bot$. \emph{Right}: The same picture with all the colors and edges of the joint configuration.}
    \label{fig:4-rates}
\end{figure}

\begin{remark*}
The 3D Potts model has up-down asymmetry at a macroscopic level (even though at a microscopic level, such colors only appear in clusters with exponential tails on their~size). In particular, it is easier for the $\Red$ component to ``recede'' via upward deviations (where the global extremum has a prefactor of $1/\gamma$)
than it is to ``advance'' via downward deviations (the global extremum has a prefactor of $1/\gamma'$), as the finite clusters with colors other than $\Blue$ and $\Red$ also invade its territory, resulting in the strict inequality $\gamma < \gamma'$. \end{remark*}
The constants $\alpha,\alpha',\gamma,\gamma'$ in the above theorems are given explicitly in terms of large deviation events of the different interfaces (see \cref{prop:RC-rate,prop:RC-Potts-diff-rate} and \cref{eq:non-red-rate,eq:blue-rate,eq:bot-rate}). The following proposition shows that all 4 rates are distinct, and provides estimates for their differences, sharp up to a factor of $1\pm\epsilon_\beta$.
\begin{proposition}\label{prop:comparison-of-rates}[Comparison of means]
The constants $\alpha,\alpha',\gamma,\gamma'$ governing the asymptotic means of the maxima and minima of 3D Potts and 3D FK interfaces, as per \cref{thm:potts,thm:rc}, satisfy
\begin{align*}
4\beta-C &\leq \alpha \leq 4\beta \,,\\
\gamma - \alpha &=(1\pm\epsilon_\beta) e^{-\beta}\,,\\
\gamma' - \alpha &=(1\pm\epsilon_\beta) (q-1) e^{-\beta}\,,\\ \alpha' - \alpha &=(1\pm\epsilon_\beta) q e^{-\beta}\,,
\end{align*}
where $C$ depends only on $q$, the notation $a=(1\pm\epsilon)b$ denotes $a\in[(1-\epsilon)b,(1+\epsilon)b]$, and $\epsilon_\beta\to0$ as $\beta\to\infty$.
\end{proposition}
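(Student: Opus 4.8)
The plan is to read off the proposition from the explicit descriptions of the four rates obtained in \cref{prop:RC-rate,prop:RC-Potts-diff-rate} and \cref{eq:non-red-rate,eq:blue-rate,eq:bot-rate}. Each of $\alpha,\alpha',\gamma,\gamma'$ is the exponential rate of a partition function ranging over the configurations that force the corresponding interface to reach height $h$ above a fixed face, and the dominant such configuration is in every case a thin vertical ``pillar'': a column of vertices lying in the complement of the relevant augmented component, enclosed by a wall of closed edges. Writing $\delta=1-p=e^{-\beta}$, the structural observation driving all three comparisons is that the per-step weight of this partition function factors as
\[
 W_\bullet \;=\; w_{\mathrm{geo}}\,\bigl(\,p+\delta\,m_\bullet+O(\delta^2)\,\bigr)\,,
\]
where $w_{\mathrm{geo}}=(1-p)^{4}c_{\mathrm{sh}}$ is the cost $(1-p)^4$ of advancing the enclosing wall by one level, times a purely geometric shape-entropy factor $c_{\mathrm{sh}}$, and the \emph{mode polynomial} $p+\delta\,m_\bullet$ records the two ways a level of the pillar can be realized: as an open continuation of the column beneath it (weight $p$), or as a freshly detached open cluster that is merely lattice-adjacent to it (weight $\delta$ for each colour it is allowed to take). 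Here $w_{\mathrm{geo}}$ is the \emph{same} for the four interfaces up to a factor $1\pm\epsilon_\beta$, because the geometric family of admissible pillars does not change; only $m_\bullet$ does, and it counts how many colours a detached segment may receive while keeping the pillar inside the target augmented component: $m=q$ for $\cI_\Top$; $m=q-1$ for the maximum of $\cI_\Red$ (any colour but red, lest the segment join $\Ared$); $m=1$ for the minimum of $\cI_\Red$ (the segment must be red to remain in $\Ared$); $m=0$ for $\cI_\Bot$, whose pillar must be a connected piece of the single infinite bottom cluster and so admits no detachments; and the mirror-image values for $\cI_\Blue$ and for the minimum of $\cI_\Top$.

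The bounds on $\alpha$ come from a standard energy--entropy estimate. For $\alpha\le4\beta$, the straight-column configuration above a fixed face --- a column of $h$ sites joined at its base to the infinite bottom cluster and separated from the top cluster by its $\approx 4h$ closed lateral edges --- has $\bar\mu_n$-weight at least $(1-p)^{4h+O(1)}$ times a $q^{\kappa}$-factor that only helps, so the pillar partition function is $\ge e^{-4\beta h-o(h)}$. For $\alpha\ge4\beta-C$, observe that every face of $\cI_\Top$ is dual to a distinct closed edge, and that on the event ``$\cI_\Top$ reaches height $h$ above the face'' there is an $\Atop^c$-path from below the midplane up to height $h-\tfrac12$; such a path is enclosed by a connected family of at least $4h-O(1)$ closed edges, and a Peierls count of these walls --- the $q^{\kappa}$-weights being crudely bounded by $e^{Ch}$ with $C=C(q)$ --- yields $\bar\mu_n(\cI_\Top\text{ reaches }h)\le e^{-(4\beta-C)h}$.

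For the three differences, since the common factor $w_{\mathrm{geo}}$ drops out, the renewal/generating-function analysis of the preceding sections --- which identifies each rate with $-\log$ of the leading per-step weight $W_\bullet$ --- gives, for any two of the four cases,
\[
 (\text{rate}_{\bullet'})-(\text{rate}_{\bullet})\;=\;\log\frac{p+\delta\,m_\bullet}{p+\delta\,m_{\bullet'}}+O(\delta^2)\;=\;(m_\bullet-m_{\bullet'})\,\delta+O(\delta^2)\,.
\]
Taking $(\bullet,\bullet')$ to be, in turn, (top interface, maximum of the red interface), (top interface, minimum of the red interface), and (maximum of the top interface, minimum of the top interface), and substituting $m_\bullet-m_{\bullet'}=1,\,q-1,\,q$ respectively, we obtain
\[
 \gamma-\alpha=\delta+O(\delta^2)\,,\qquad \gamma'-\alpha=(q-1)\delta+O(\delta^2)\,,\qquad \alpha'-\alpha=q\,\delta+O(\delta^2)\,,
\]
which is exactly the assertion of the proposition with $\epsilon_\beta=O(e^{-\beta})$.

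The crux --- and the bulk of the work --- is to make the two emphasized claims rigorous: that $w_{\mathrm{geo}}$ is common to the four pillar partition functions up to a per-step error $1\pm\epsilon_\beta$, so that it cancels exactly in every difference, and that the remainders in $W_\bullet$ are genuinely $O(\delta^2)$, uniformly in $h$ and $n$. Because neither the conditioned measure $\bar\mu_n$ nor the Potts measure is monotone, this cannot be obtained via stochastic domination; it requires a cluster expansion of $\bar\mu_n$ that controls the effect of conditioning on $\sep_n$, together with an analysis of the induced Edwards--Sokal colouring law in a neighbourhood of a fixed pillar, arranged so that the pillar's shape-entropy decouples from the local cluster-count and colouring bookkeeping and every error is absorbed into $\epsilon_\beta$. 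The localization and decoupling estimates developed for the tightness parts of \cref{thm:potts,thm:rc} should render each individual comparison tractable; the genuinely delicate point is keeping the combinatorial factor identical across all four interfaces to first order in $e^{-\beta}$.
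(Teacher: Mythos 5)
Your proposal is correct and is essentially the paper's argument. The bounds on $\alpha$ are the straight-column lower bound and Peierls-type upper bound of \cref{prop:bound-RC-rate}; the three differences are computed exactly as you propose, via the two-vertex Edwards--Sokal calculation on the ``simple blocks'' that \cref{lem:many-simple-blocks} shows constitute a $(1-\epsilon_\beta)$ fraction of a typical tall pillar, and your mode-counts $m_\bullet \in \{q,q-1,1,0\}$ are precisely the numerators of the per-step success probabilities computed in the proof of \cref{prop:bound-rates}.

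The structural device that turns your ``$w_{\mathrm{geo}}$ cancels'' intuition into a proof --- the one piece your sketch gestures at but does not spell out --- is that the paper never subtracts two $O(\beta)$-sized rates: each difference is identified directly with the large-deviation rate of a \emph{conditional} event, $\gamma-\alpha = \delta = -\lim_{h}\tfrac{1}{h}\log\phi_n(\cA^{\noRed}_{x,h}\mid E_h^x)$, and similarly $\gamma'-\alpha=\delta'$, $\alpha'-\alpha=\delta''$ (\cref{prop:RC-Potts-diff-rate} and \cref{eq:non-red-rate,eq:blue-rate,eq:bot-rate}). Establishing that these conditional rates exist is a separate submultiplicativity argument (the 3-to-3 swapping map of \cref{lem:3-to-3-map} together with the pillar-shell domain Markov result of \cref{cor:DMP-for-Ared}), needed precisely because, as you note, $\bar\mu_n$ and the Potts measure lack FKG. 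Once the conditional rates exist, the simple-block computation bounds each of $\delta,\delta',\delta''$ sharply as $(1\pm\epsilon_\beta)$-multiples of $e^{-\beta}$, $(q-1)e^{-\beta}$, $qe^{-\beta}$, and the geometric factor has been built out from the outset rather than cancelled in a difference of two exponents.
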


\subsection{Related works}\label{sec:related-work}
In what follows, and due to the extensive list of related literature, we will provide only a brief and non-exhaustive overview of these studies, focusing on those that were instrumental to the proofs. (The reader is referred to referred to \cite{GL_max,GL_tightness} for a more comprehensive account of the related work.)
An important milestone in the study of low temperature 3D Ising interfaces was the breakthrough works of Dobrushin \cite{Dobrushin72,Dobrushin73}. There, the rigidity of the interface was proven (valid also in higher dimensions), leading to the existence of non-translation-invariant infinite volume Gibbs measures in $\Z^3$. These results were extended to a variety of other models (e.g., \cite{BLOP79a,DMN00,GielisGrimmett02,CeKo03,HKZ88,MMRS91}, to name a few). In our context, it is particularly important to highlight the following works. First, the work of Gielis and Grimmett \cite{GielisGrimmett02}, establishing the rigidity of the 3D FK interface under $\bar\mu_n$ for $p$ sufficiently close to $1$ (related results for the FK interface at $p=p_c$ and large  $q$ were obtained in~\cite{CeKo03}). The machinery built in ~\cite{GielisGrimmett02} and \cite[\S7]{Grimmett_RC} is a prerequisite for our analysis.
Second, decorrelation estimates for 3D Ising interfaces have been extended to a more general setting by Bricmont, Lebowitz, and Pfister \cite{BLP79b}, which  will allow us to control global (in terms of local) extrema.
Third, and most relevant, a series of recent papers by Gheissari and the second author   \cite{GL_max,GL_tightness,GL_DMP} established detailed results on local and global maxima of the 3D Ising interface. While it readily follows from Dobrushin's  work that the maximum of the Ising interface in a cylinder of side length $n$ should be of order $\log n$, the authors in the above papers prove that the maximum is in fact tight around its mean which is $(c+o(1))\log n$ for an explicit $c=c(\beta)$ (governed by the large deviation rate of the interface height above the origin in infinite volume). Furthermore, those works provide a description of the shape of the Ising interface around a location at which a tall peak is reached, using Dobrushin's argument as a starting point for an analysis of operations on 3D ``pillars''  (as the 2D analysis within Dobrushin's rigidity argument is too crude to recover the correct~$c(\beta)$). 
The ideas in \cite{GL_max,GL_tightness,GL_DMP}, along with the work of \cite{GielisGrimmett02} extending Dobrushin's work to FK interfaces, form the foundation of our analysis of Potts and FK models. We next describe some of the key issues arising there.

\subsection{Proof ideas}\label{sec:subsec-proof-ideas}
Here, we discuss the proof ideas in the context of the main obstacles we encountered. Before detailing the additional challenges that the Potts and FK model present us with, let us recap the approach used in \cite{GL_max} to analyze 3D Ising interface (the case $q=2$). The proof in that case can be roughly summarized in three steps. 
 \begin{enumerate}[label=(\roman*),wide=1pt,topsep=1pt]
    \item \label[step]{it:ising-pf-pillar} 
    \emph{Pillar shape:} Cluster expansion is used to show that if the interface reaches a large height $h$ above a given location~$x$, then with probability $1 - \epsilon_\beta$, it does so in a very controlled manner: define the \emph{pillar} $\cP_x$ to be the local portion of the interface above $x$ (see \cite[Def.~2.16]{GL_max}, or \cref{def:pillar} in our random-cluster setting)---roughly put, this is the cluster of plus spins containing $x$ in the positive half-space; the bulk of the proof in \cite{GL_max,GL_tightness} aims to show that this cluster, conditional on reaching height $h$, behaves as a directed random walk (RW), visiting $1-\epsilon_\beta$ of the slabs at exactly one location as it climbs to height~$h$.
\item \label[step]{it:ising-pf-sub} 
\emph{Large deviation rate:} 
Submultiplicativity of the probability that the pillar $\cP_x$ reaches height $h$ is then argued by comparing the conditional probability of reaching height $h_1+h_2$ given that the pillar already reached height $h_1$, to the unconditional probability it reaches height $h_2$ above $x$. This submultiplicativity implies the existence of the sought large deviation rate, which can also be phrased in terms of a certain spin-connectivity event (some care is required as $|\Lambda_n|$ needs to grow with $h$; see \cref{prop:RC-rate}, for instance). 
\item \label[step]{it:ising-pf-conc} \emph{Mean and tightness for the maximum:} Combining the large deviation rate with decorrelation estimates and a second moment argument gives the desired results concerning the maximum of the interface.
\end{enumerate}
\Cref{it:ising-pf-conc} in this program can be readily adapted to the random-cluster setting via the mentioned decorrelation estimates of~\cite{BLP79b}. 
To carry out \cref{it:ising-pf-pillar} in the FK model, we employ the cluster expansion machinery of~\cite{GielisGrimmett02}, which adds technical difficulties to what had been a fairly delicate argument already for Ising---for instance, the random-cluster pillars must now be decorated by ``hairs''---certain $1$-connected sets of extra faces---that can penetrate their interior and connect them to one another (see \cref{sec:subsec-CE-difficulties} for more on this).  Finally, as we next elaborate, the Ising argument for the critical \cref{it:ising-pf-sub} collapses in the absence of monotonicity, and we resort to establishing the large deviation rates in two stages: first, we obtain the rate for upward deviations of 
the $\Top$ interface $\cI_\Top$ in $\bar\mu_n$ (see
 \cref{sec:subsec-monotonicity}), which is the ``highest'' among the four coupled interfaces; then---building on that result---we derive the rate for upward deviations of the other three interfaces $\cI_\Blue,\cI_\Red$ and $\cI_\Bot$ (see \cref{sec:subsec-LD-rate-Potts}).

\subsubsection{Large deviation rate for the FK top interface}\label{sec:subsec-monotonicity}
The submultiplicativity argument in the Ising proof (\cref{it:ising-pf-sub} above) crucially relied on FKG---a property missing from the Potts model. Without this argument, while one could still establish that the pillars in the Potts interface resemble directed RWs (via \cref{it:ising-pf-pillar}), one would not be able to derive the large deviation rate of them reaching height $h$. A well-known remedy to the lack of monotonicity in the Potts model is to turn to the random-cluster model---which does enjoy FKG---via the Edwards--Sokal coupling (and then attempt to go back to Potts to recover the counterpart~behavior). However,  the Dobrushin boundary conditions for our Potts model correspond (via this coupling) to the conditional FK model $\bar{\mu}_n = \mu_n(\cdot\mid\sep_n)$ (where we aim to analyze the interface and prove submultiplicativity) rather than $\mu_n$, and unfortunately $\bar{\mu}_n$ does not have FKG either. Our workaround leverages the fact that the separation event $\sep_n$ is decreasing. We will define an event $A_h$ comparable to the event that the (suitably defined) \emph{pillar} $\cP_x$ of the $\Top$ interface at a given point $x$ reaches height $h$ (see \cref{def:A_h^x}). Instead of proving a bound of the form $\bar\mu_n(A_{h_1+h_2}) \leq \bar\mu_n(A_{h_1})\bar\mu_n(A_{h_2})$, we resort to proving (after additional technical modifications, as we briefly describe below)
a bound of the form $\mu_n(A_{h_1+h_2}\mid\sep_n) \leq \mu_n(A_{h_1}\mid\sep_n)\mu_n(A_{h_2})$, towards which monotonicity is still available, and then use the fact that $\mu_n(A_{h_2})\leq \mu_n(A_{h_2}\mid\sep_n)$ as long as the event $A_{h_2}$ is decreasing (by FKG in $\mu_n$).  Consequently, this approach, while valid for the upward deviations of $\cI_\Top$, fails for its downward deviations (equivalently, upward deviations of $\cI_\Bot$ ---
addressing the increasing event that there is an open path connecting $\partial \Lambda_n^-$ to height $h$), let alone for understanding the two Potts interfaces. Understanding the maximum of $\cI_\Bot$ requires additional ingredients, and is handled together with the analysis of the Potts interfaces $\cI_\Blue$ and $\cI_\Red$.

An extra complication that is associated with the move to the random-cluster model is that, when studying its interfaces, one  needs to be far more careful when applying a Domain Markov argument, which was also a crucial part of the submultiplicativity argument. More precisely, in the Ising case, revealing the interface up to height $h_1$ exposes a boundary of minus spins, upon which one can apply the Domain Markov property to ignore all of the information ``below'' these minus spins when bounding the probability that the interface further climbs from height $h_1$ to $h_1 + h_2$. In the random-cluster case however, revealing the interface exposes a boundary of open edges, rather than vertices. Making sure that the revealed set forms a boundary condition in the FK model (disconnecting it from the edges that lie ``below''), while the event of climbing to height $h_2$ in the yet-unrevealed subgraph can still be related to the unconditional probability of climbing to height $h_2$ (see also \cref{sec:subsec-CE-difficulties} accounting for some of these difficulties) becomes a delicate part of the analysis.

\subsubsection{Large deviation rate for the Potts interfaces and FK bottom interface}\label{sec:subsec-LD-rate-Potts}
Our approach to establishing the rate of upwards deviations in the remaining three interfaces (Potts $\Blue$ and $\Red$ and FK $\mathsf{bottom}$) modulo the analysis of the $\Top$ interface, is as follows. Consider $\cI_\Blue$  (the other two interfaces are handled similarly). As noted above, in the coupled FK--Potts model $\phi$, the $\Top$ interface always lies above the $\Blue$ interface. Thus, to estimate the probability that the $\Blue$ interface reaches height $h$ above a given point $x$, we may instead look at the conditional probability that it does so given the $\Top$ interface reaches height $h$ above~$x$ (see, e.g., \cref{prop:RC-Potts-diff-rate}), which we had already studied.
Heuristically, this can be thought of as computing the probability that underneath the $\Top$ interface there is a path of $\Blue$ vertices connecting $x$ to height $h$. 

The following heuristic, albeit flawed, gives insight into this problem.  As mentioned above when discussing the shape of the pillar $\cP_x$, one could show that conditional on the top interface reaching a large height $h$ above $x$, the pillar resembles a stack of i.i.d.\ increments---more precisely, its increments are asymptotically stationary and $\alpha$-mixing (for Ising this was shown in \cite[Props.~7.1 and~7.2]{GL_max}). One could then expect that the probability of having a path of $\Blue$ vertices passing through all of these increments would be comparable to the conditional probability of having a path of $\Blue$ vertices passing through a single increment, raised to the power of the number of increments (via the LLN for the i.i.d.\ increments). As the number of increments is comparable to $h$, this would then give the desired rate explicitly in terms of this conditional probability. Unfortunately, this approach fails since we are trying to estimate probabilities on the order of $e^{-ch}$, and the interface may likely achieve a large upward $\Blue$ deviation via an \emph{atypical} $\Top$ pillar occurring with such a probability---whereas the asymptotic mixing and stationarity only apply to a \emph{typical} $\cP_x$ achieving height $h$...

Instead, we employ another submultiplicativity argument to show the \emph{existence} of a $\Blue$ upward deviation rate (similarly for the other interfaces, postponing the problem of comparing these rates per \cref{prop:comparison-of-rates}). The basic idea is to show that (a) sampling a ``nice'' $\Top$ pillar with height $h_1 + h_2$ is comparable to sampling a $\Top$ pillar with height $h_1$ and another ``nice'' $\Top$ pillar with height $h_2$ independently, then stacking them on top of each other; and (b) this comparison further extends when considering the Potts coloring of the interior vertices (which is nontrivial since, e.g., information does leak through our interface via hairs).
In \cite[Section 7]{GL_max}, the key to showing $\alpha$-mixing and asymptotic stationarity of a (typical) pillar $\cP_x$ was elevating the standard map modifying a single interface into a ``2-to-2'' map, acting on a pair of interfaces: to evaluate the effect of having two different extensions of a bottom part of a pillar, one compares the effect of swapping the two possible top pillar parts through the cluster expansion.
Here, we further elevate it to a ``3-to-3'' map, acting on a triple of interfaces as follows.
Suppose that $P_B, Q_B$ are two pillars with height $h_1$, and that $P^T, Q^T$ are two pillars with height $h_2$. Let $P_B \times P^T$ be the result of stacking $P^T$ on top of $P_B$, and similarly for $Q_B \times Q^T$. Our 3-to-3 map sends $(P_B \times P^T, Q_B, Q^T)\mapsto (Q_B \times Q^T, P_B, P^T)$, and its analysis via the cluster-expansion allows us to show that
\[\bar\mu_n(P_B \times P^T)\bar\mu_n(Q_B)\bar\mu_n(Q^T) \approx \bar\mu_n(Q_B\times Q^T)\bar\mu_n(P_B)\bar\mu_n(P^T),\,\]
where the error is multiplicative and not additive. (Recall that all errors must be multiplicative for this approach to stand a chance, as we are estimating events that are exponentially unlikely in the height $h$.) See \cref{lem:3-to-3-map} for a precise statement of this result, and \cref{fig:3-to-3} for an illustration. 

With this estimate in hand, we can sum over all possible $Q_B$ and $Q^T$ to prove the desired claim on the law of pillars. To conclude the submultiplicativity with respect to the probability of having a $\Blue$ path within the pillar, we prove and employ an appropriate Domain Markov property in the coupled FK--Potts model, saying that if we fix an increment, then regardless of the environment outside of the increment, the joint configuration inside has the law of another coupled FK--Potts model with appropriate boundary conditions. This strategy allows us to establish the sought limiting rates, yet without any comparison between them (e.g., they could potentially all coincide with the rate of the $\Top$ interface). 
To estimate the rates of $\Blue$, $\Red$ and $\textsf{bottom}$, we need to bound from below and above the probability of coloring the interiors of the pillars---which are comprised mostly of trivial increments (cubes stacked one on top of the other). To leverage this structure, we must fend off the effect of the environment, since revealing the pillar in the FK model will include interior information. To this end, we introduce a notion of a \emph{pillar shell}, which excludes the latter faces, thus its analysis supports the comparison of the rates.

\subsubsection{Difficulties arising from cluster expansion}\label{sec:subsec-CE-difficulties}
We conclude this section with a discussion of some of the difficulties surrounding cluster expansion for the random-cluster model. In \cite[Lem.~9]{GielisGrimmett02}, Grimmett and Gielis proved the following for the law of the random-cluster interface $\cI$:
\[\bar{\mu}_n(\cI) \propto (1-e^{-\beta})^{|\partial \cI|}q^{\kappa_\cI}\exp\Big[-\beta|\cI|+\sum_{f\in \cI} \g(f, \cI)\Big]\]
for a suitably ``nice'' function $g$ (see \cref{prop:grimmett-cluster-exp} for details). Compared to the Ising cluster expansion, which only contained the last exponent $\exp[-\beta|\cI|+\sum_{f\in\cI} \g(f,\cI)]$, we see here that the number of components $\kappa_\cI$ and the size of the boundary of $\cI$ plays a role; moreover,  the interface $\cI$ appearing in that work was what we refer to as the \emph{full interface}: the $1$-connected component of faces that are dual to closed edges in $\omega$ and are incident to a boundary face at height $0$ (see \cref{def:full-interface}). This larger collection of faces contains all of our 4 interfaces $\cI_\Top$, $\cI_\Red$, $\cI_\Blue$ and $\cI_\Bot$, as well as additional connected components of faces ``protruding'' from them, which we will call hairs. In the absence of cluster expansion for our $\Top$ interface, for instance, we have to apply the cluster expansion arguments on objects in the full interface instead. Namely, the pillar now must include these additional hairs in the full interface, even though our focus is on pillars in $\cI_\Top$ (it is much easier for the full interface to exhibit upward deviations via said hairs, but those will not represent a boundary between connected components in the FK nor Potts model and hence are irrelevant for us). 
What further complicates matters is that these hairs can potentially reattach the pillar to other parts of the interface, leading to unwanted correlations. 
The Ising results in \cite{GL_max,GL_tightness} did not need to face such issues, however the follow-up work \cite{GL_DMP} did treat a situation where, conditional on the existence of level-lines, one would like to establish that the local law of the pillar can be coupled to the standard one in infinite-volume. That was achieved in that work via restricting the analysis to pillars that are confined to appropriate \emph{cones}. Adapting this concept to the FK model allows us to separate the pillars from affecting each other via the long range interactions of the FK model (see \cref{thm:iso-map}).
Then, when establishing the rate of upward deviations of the $\Top$ interface, extra care must be taken to ensure that despite including the extra hairs, no information leaks ``from below" when we reveal the interface up to height $h_1$ (otherwise the Domain Markov argument mentioned in \cref{sec:subsec-monotonicity} would fail). And finally, when studying the rates of the $\Blue,\Red$ and $\textsf{bottom}$ interfaces, we must ensure that no information leaks ``inside the pillar" when conditioning on the $\Top$ interface (otherwise, e.g., we would not be able to address the Potts rates using the Edwards--Sokal coupling as described in \cref{sec:subsec-LD-rate-Potts}). 

\subsection{Organization}
This paper is organized as follows. \Cref{sec:prelim} summarizes the preliminary results we will need on the low temperature FK model, and sets up the notion of pillars. \Cref{sec:pillar-maps} establishes the basic results needed on typical pillars---notably, being confined to appropriate cones and consisting of mostly trivial increments. \Cref{sec:ld-RC} derives the FK model large deviation rate for upward deviations of the $\Top$ interface. \Cref{sec:ld-potts-bot} establishes the corresponding rates for the remaining 3 interfaces ($\cI_\Red$, $\cI_\Blue$, $\cI_\Bot$) modulo the behavior of $\cI_\Top$, and further estimates these rates. \Cref{sec:max} derives the tightness of the minima and maxima of the different interfaces from the above results and certain decorrelation estimates, whose proof is relegated to \cref{sec:decorrelation}.

\section{Preliminaries}\label{sec:prelim}
We begin by introducing various notation that will be used throughout the paper, and recalling the setup work done in \cite{GielisGrimmett02} for the random-cluster model. Then we will define and prove basic properties about pillars, the geometrical objects used to study the upward deviations of the $\Top$ interface.

Let $\ex, \ey, \ez$ denote $(1, 0, 0), (0, 1, 0), (0, 0, 1)$ respectively. For every configuration $\omega$, let $\opfaces$ (resp., $\clfaces$) denote the set of faces dual to open (resp., closed) edges:
\[\opfaces = \left\{ f_e\,:  \omega_e = 1\right\}\quad,\quad\clfaces =  \left\{ f_e\,:  \omega_e = 0\right\}\,.\]
We will identify edges and faces by their midpoints when referring to their location and height, so that horizontal faces have integer heights and vertical faces have half-integer heights. We denote by $\cL_h$ the set of vertices, faces, and edges with height equal to $h$. 

\begin{definition}[Connectivity and Boundaries]
We define two faces to be 0-connected if their intersection contains at least one point, and 1-connected if their intersection contains an edge. For any set of faces $H$, we define $\overline{H}$ to be the union of $H$ with the set of faces that are 1-connected to $H$. We define $\partial H := \overline{H} \setminus H$. Note that this usage of $\partial$ is different from when we write $\partial \Lambda_n$ in the sense that $\partial \Lambda_n \subseteq \Lambda_n$ while $\partial H \cap H = \emptyset$. That is, $\partial \Lambda_n$ refers to an interior boundary of vertices, while $\partial H$ refers to an exterior boundary of faces. Despite this overload in notation, we will keep this convention for the sake of clarity of certain proofs, and this distinction should be noted whenever $\partial$ is used in front of a set of faces.
\end{definition}

\subsection{Cluster Expansion and random-cluster rigidity}\label{subsec:rc-interfaces}
To prove finer details about the random-cluster interfaces, we recall the setup used in \cite{GielisGrimmett02,Grimmett_RC}. We begin with the classical definition due to Gielis and Grimmett, referred in what follows as the \emph{full interface}. (Note that this definition uses 1-connectivity for faces, hence our choice of graph adjacency for $\Vred$ in \cref{def:potts-interfaces}, as discussed below that definition.)

\begin{definition}[Full interface]  The full interface $\cI$ is the 1-connected component of faces in $\clfaces$ containing the boundary faces at height $0$ which separate $\partial \Lambda_n^+$ and $\partial \Lambda_n^-$. See \cref{fig:gr-interface} for a visualization. Note that as a set of faces, this interface includes the previous four interfaces. Denote by $\kappa_\cI$ the number of open clusters in a configuration where the only closed edges are $e$ such that $f_e \in \cI$.
\label{def:full-interface}
\end{definition} 

\begin{figure}
    \begin{tikzpicture}
   \node (fig1) at (8.05,0) {
    	\includegraphics[width=0.55\textwidth]{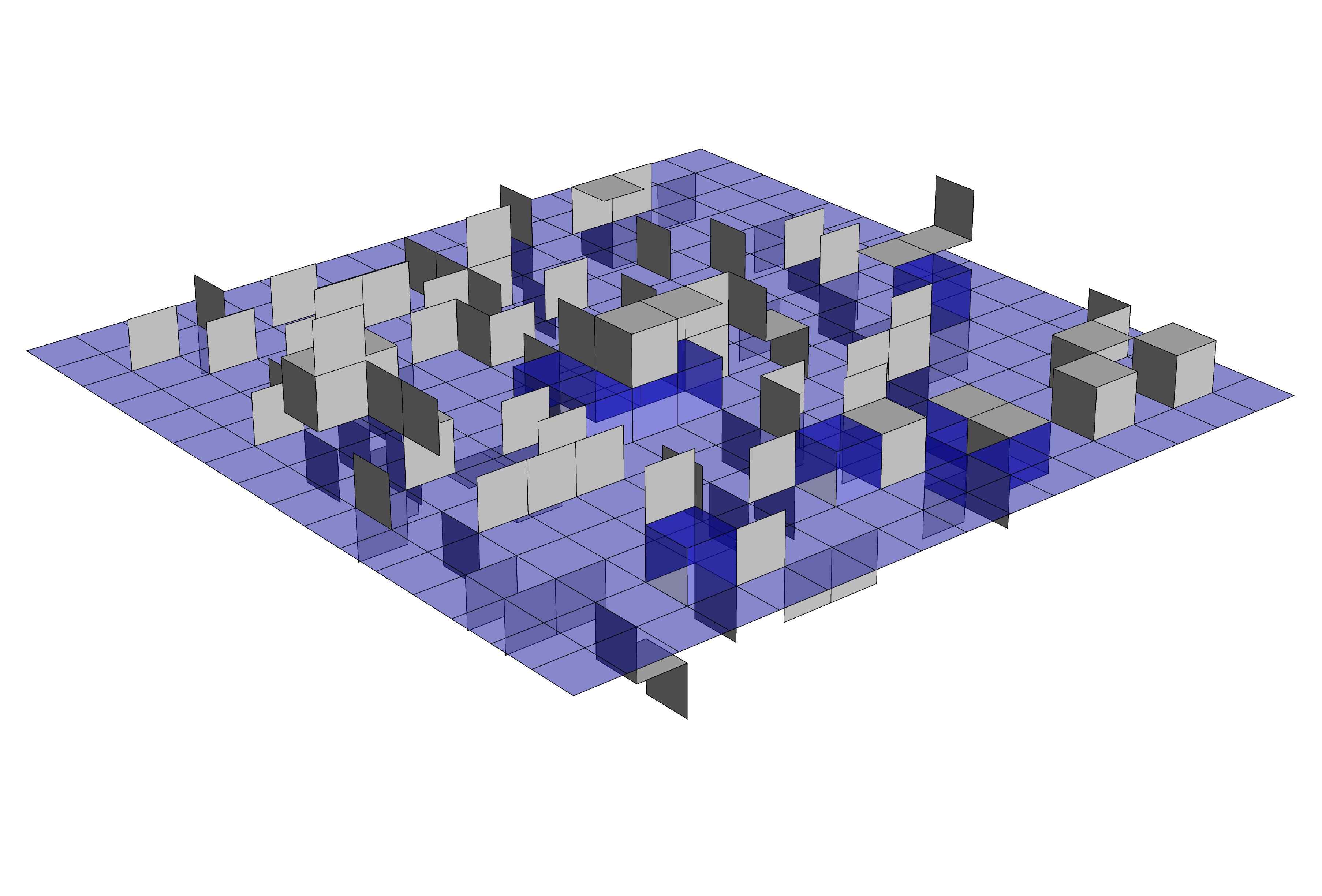}};

     \begin{scope}[shift={(0,0)},font=\small]
   \node (fig2) at (0,0) {
   	\includegraphics[width=0.45\textwidth]{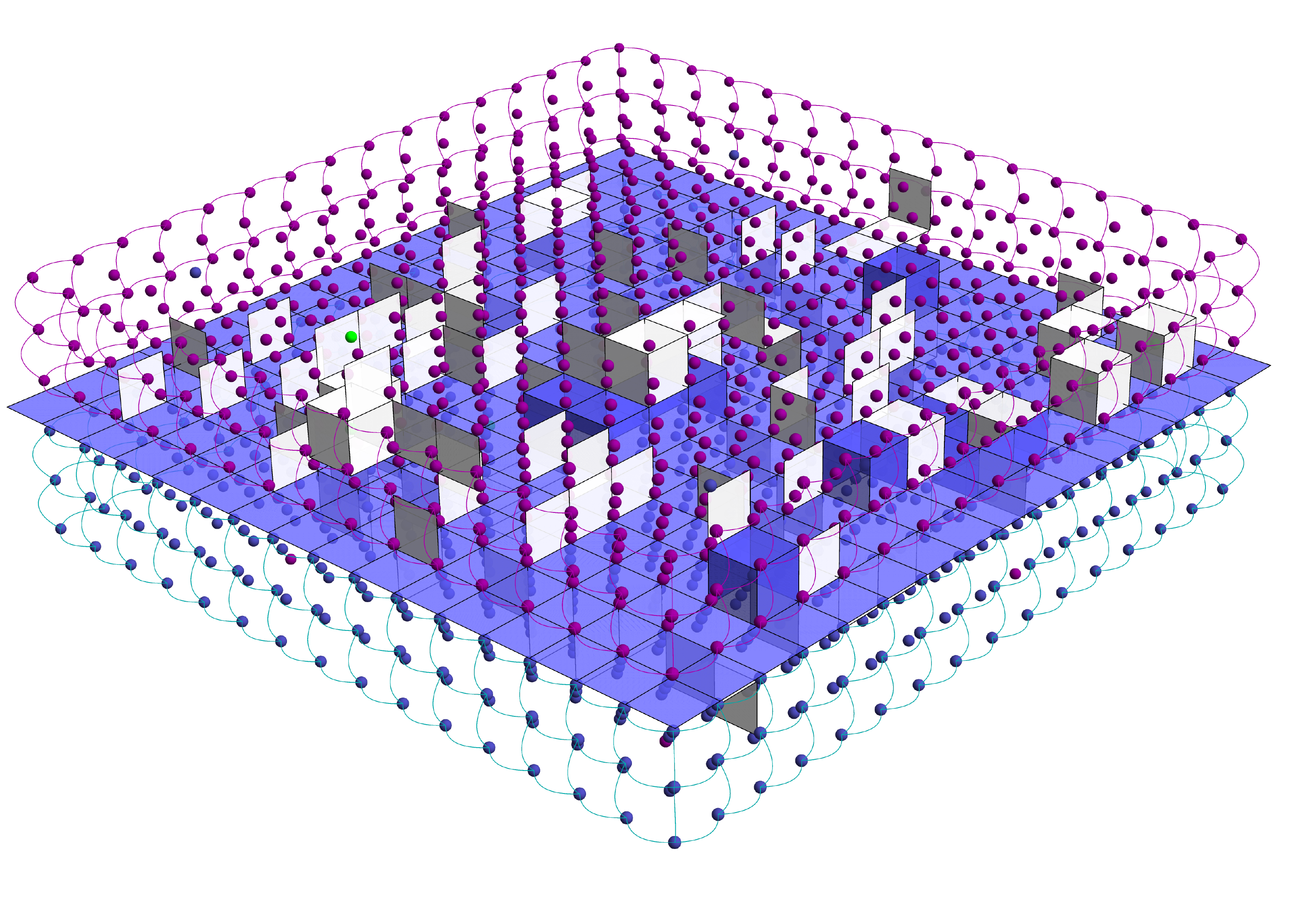}};
    \end{scope}
    \end{tikzpicture}
    \vspace{-1.5cm}
    \caption{\emph{Left}: A 3D joint configuration of edges and vertex colors under the Edwards--Sokal coupling, including the full interface, $\cI$. \emph{Right}: The same model, with just the full interface displayed. Note that the full interface should not be thought of as a surface --- there are many sheets of faces sticking out and creating overhangs.}
    \label{fig:gr-interface}
\end{figure}

\begin{definition}[Semi-extended interface] Let $\cI^\star$  be the union of $\cI$  with all horizontal faces that are 1-connected to $\cI$. 
\end{definition}

\begin{definition}[Ceilings/Walls, Indexing, Nesting] For a face $f$ or vertex $v$, let $\rho(f)$ and $\rho(v)$ denote the face (if $f$ is horizontal) or edge (if $f$ is vertical) that $f$ projects onto at height 0, or the point that $v$ projects to. For a face $f \in \cI^\star$, we call $f$ a ceiling face if it is horizontal and there are no other faces of $\cI^\star$ with projection equal to $\rho(f)$. We call all other faces of $\cI^\star$ wall faces. Ceilings and walls are 0-connected components of ceiling and wall faces respectively. 

For a wall $W$, we can decompose it as $W = (A, B)$ where $A = W \cap \cI$ and $B = W \cap (\cI^\star \setminus \cI)$.
We can index walls by assigning $x$ a wall $W$ if $x$ is in $\rho(W)$. By \cref{lem: wall-ceiling-geometry} below, each vertex is only assigned to one wall, so the notation $W_x$ is well defined (though each wall can be assigned to multiple vertices). Let the empty set of walls be denoted $\sE$, so if there is no wall at $x$, we assign it $\sE_x$.
For a wall $W$, we can consider the complement of its projection $\rho(W)^c$ to be the collection of faces and edges at height 0 that are not in $\rho(W)$. There is an infinite component of $\rho(W)^c$, and possibly some finite ones. We say that a vertex, edge or face is interior to, or nested in a wall $W$ if its projection is not in the infinite component of $\rho(W)^c$. A wall $W'$ is interior to, or nested in a wall $W$ if $\rho(W')$ is disjoint from the infinite component of $\rho(W)^c$, and similarly for ceilings interior to $W$.
For a vertex $x$, we can consider the set of all walls $W$ that nest $x$. The collection of all such walls is denoted $\fW_x = (W_1, \ldots, W_s)$.
\end{definition}

\begin{lemma}[{\cite[Lem.~10]{GielisGrimmett02},\cite[Lem.~7.125]{Grimmett_RC}}]
\label{lem: wall-ceiling-geometry}
The following geometric properties of walls and ceilings hold:
\begin{enumerate}[(i)]

\item The projections $\rho(W_1), \rho(W_2)$ of two different walls $W_1$ and $W_2$ are not 0-connected.

\item All faces of the semi-extended $\cI^\star$ which are 1-connected to a ceiling face are horizontal faces in~$\cI$.
\end{enumerate}
\end{lemma}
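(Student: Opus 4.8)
The plan is to read both statements off the local combinatorial structure of the semi-extended full interface, following the classical Dobrushin-type analysis of \cite{GielisGrimmett02,Grimmett_RC}; since the lemma is quoted from those sources, the proof here would in fact just be a pointer to them, but the outline below records the argument. First I would fix conventions: two faces at the same height are $1$-connected precisely when they are coplanar and share an edge; a horizontal face $f$ has $\rho(f)$ a unit square at height $0$, while a vertical face has $\rho$ an edge; and every vertical face of $\cI^\star$ already lies in $\cI\subseteq\clfaces$, since $\cI^\star\setminus\cI$ consists only of horizontal faces. Item (ii) is the substantive one, and (i) will be extracted from it.

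For (ii), fix a ceiling face $f$, horizontal at height $h$, with $\rho(f)=S$; by definition $f$ is the only face of $\cI^\star$ lying in the vertical column over $S$. Let $g\in\cI^\star$ share an edge $e$ with $f$ (necessarily one of the four sides of $f$, a horizontal edge at height $h$). One first shows $g$ cannot be vertical: a vertical $g$ would be a genuine closed face of $\cI$ rising, say, from $e$ to height $h+1$, forming together with $f$ two adjacent faces of the dual unit cube $Q$ sitting directly on top of $S$; I would then track the $1$-connected component of closed faces of $\cI$ through $Q$ and the dual cubes around it, using at each step that the column over $S$ contains no face of $\cI^\star$ besides $f$, to force either a second closed face over $S$ or (via the semi-extension, which adjoins all horizontal faces $1$-connected to $\cI$) a second horizontal face of $\cI^\star$ over $S$ --- contradicting that $f$ is a ceiling. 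Once $g$ is known to be horizontal it is coplanar with $f$, hence at height $h$, with $\rho(g)$ a unit square sharing an edge with $S$; and the same kind of surgery shows $g\in\cI$ (were $g\in\cI^\star\setminus\cI$, the face of $\cI$ that forced $g$ into the semi-extension would, once more, yield a second face of $\cI^\star$ over $S$ or would make $f$ a wall face). This proves (ii).

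Given (ii), I would prove (i) as follows. Suppose $W_1\neq W_2$ are walls with $\rho(W_1)\cap\rho(W_2)$ containing a point $z$ at height $0$. Since wall faces are exactly the non-ceiling faces of $\cI^\star$, I would examine the finitely many faces of $\cI^\star$ whose projection meets a small neighborhood of $z$ --- those in the columns over the unit squares incident to $z$ and in the vertical planes over the edges incident to $z$ --- and argue that passing from a face of $W_1$ to a face of $W_2$ near the vertical line through $z$ cannot cross a ceiling face: by (ii) a ceiling face is surrounded in $\cI^\star$ only by coplanar faces of $\cI$, which in turn cannot be $0$-connected to a face on a different level ``wrapping around'' $z$. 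Hence a face of $W_1$ and a face of $W_2$ are joined by a $0$-connected chain of wall faces, so $W_1=W_2$, a contradiction. The main obstacle throughout is the vertical-face exclusion and the associated bookkeeping inside (ii): unlike the Ising interface, the full FK interface is not a closed surface --- it carries hairs and overhangs, and the horizontal faces adjoined by the semi-extension need not even be dual to closed edges --- so the surgery around the cube adjacent to a ceiling face must be carried out without appealing to any parity argument. This is exactly what \cite[Lem.~10]{GielisGrimmett02} and \cite[Lem.~7.125]{Grimmett_RC} do, and we follow them.
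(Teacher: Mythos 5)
The paper itself gives no proof of this lemma --- it is imported verbatim from \cite{GielisGrimmett02,Grimmett_RC} --- so your decision to treat it primarily as a pointer matches the paper exactly. Your sketch of the ``no vertical face'' half of (ii) is also the right argument: a vertical $g\in\cI$ through an edge $e$ of $f$ is a side face of the dual cube $Q$ directly above (or below) $\rho(f)$; the opposite horizontal face of $Q$ is then $1$-connected to $g$, hence lies in $\cI^\star$, giving a second face of $\cI^\star$ over $\rho(f)$ and contradicting the ceiling condition.

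The step of your sketch that purports to show $g\in\cI$, and not merely $g\in\cI^\star$, does not work as written. You say that ``the face of $\cI$ that forced $g$ into the semi-extension'' would create a second face of $\cI^\star$ over $S=\rho(f)$. But $g$ is $1$-connected to $f$, and $f\in\cI$, so $f$ itself already witnesses $g\in\cI^\star$; there need be no other face of $\cI$ touching $g$. And if $g$ also happens to be $1$-connected to some vertical $h\in\cI$ along one of its \emph{other} edges $e'\neq e$, the dual-cube trick applied to $h$ produces a second horizontal face of $\cI^\star$ over $\rho(g)$, not over $S$; that shows $g$ is a wall face but says nothing about $f$. What actually excludes an open horizontal $g$ adjacent to a ceiling $f$ is the separation event: with $f$ a ceiling face, the two vertical faces sharing $e$ with $f$ must be open (a closed one would again give a second face of $\cI^\star$ over $S$ via the dual cube), and together with an open $g$ these three open dual edges join the two vertices directly over $\rho(f)$ at heights $h\pm 1/2$ --- the very pair that a ceiling face is supposed to keep apart. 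Turning that into a contradiction with $\sep_n$ is exactly where one must use the global structure and $1$-connectedness of $\cI$, and this ingredient is absent from your outline. Since the paper treats the lemma as a black box, nothing downstream is affected, but the outline as written does not establish item (ii).
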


\begin{definition}[Standard wall]
For sets of faces $A, B$, we call $S = (A, B)$ a standard wall if there exists an interface $\cI$ such that $A \subseteq \cI$ and $B \subseteq \cI^\star \setminus \cI$ and $A \cup B$ is the unique wall of $\cI$. The interface $\cI$ in the above definition is unique (see \cite[Lem.~11]{GielisGrimmett02},\cite[Lem.~7.126]{Grimmett_RC}). A collection of standard walls is admissible if no two walls have 0-connected projections.
\end{definition}  
\begin{lemma}[{\cite[Lem.~12]{GielisGrimmett02},\cite[Lem.~7.127]{Grimmett_RC}}] There is a 1-1 correspondence between interfaces and admissible families of standard walls.
\end{lemma}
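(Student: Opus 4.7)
The plan is to construct explicit maps in both directions between interfaces and admissible families of standard walls, and verify they are mutually inverse.

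Forward direction (interface to standard walls): Given an interface $\cI$, decompose $\cI^\star$ into its walls $W_1,\ldots,W_m$ (the $0$-connected components of wall faces). By \cref{lem: wall-ceiling-geometry}(i), the projections $\rho(W_i)$ are pairwise non-$0$-connected, which is exactly the admissibility requirement. It remains to verify that each $W_i = (A_i, B_i)$ is a standard wall. For this, I would exhibit an auxiliary interface $\cI_{W_i}$ whose unique wall is $W_i$: take $W_i$ together with horizontal faces at height $0$ filling in the projections of all removed walls $W_j$, $j\ne i$, and leaving the existing ceilings of $\cI$ intact. Using \cref{lem: wall-ceiling-geometry}(ii) to analyze the $1$-connectivity at the wall/ceiling boundary, one checks that the resulting face set is $1$-connected, separates $\partial\Lambda_n^+$ from $\partial\Lambda_n^-$, and has $W_i$ as its unique wall, confirming $W_i$ is standard.

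Backward direction (admissible family to interface): Given an admissible family $\{S_i\}$, place each $S_i$ at its lattice position (this is unambiguous by admissibility, since the projections are pairwise non-$0$-connected) and complete to an interface by filling each connected component $U$ of the complement $\bigl(\bigcup_i \rho(S_i)\bigr)^c$ with a horizontal ceiling whose height is determined by the nesting structure. Specifically, if $U$ is nested inside the walls $S_{i_1},\ldots,S_{i_k}$ (ordered from outermost to innermost), its height is obtained by successively applying the interior/exterior height shifts of each $S_{i_j}$, which are well-defined since each $S_i$ is standard and therefore realizes a canonical height offset between its exterior ceiling and the ceiling(s) it nests. The two maps are then shown to be mutual inverses: the round trip on interfaces recovers the original via direct inspection of the wall-ceiling decomposition, and the round trip on admissible families uses the uniqueness clause in the definition of a standard wall to match walls one-by-one.

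The main obstacle will be the backward direction: checking that the reconstructed face set is a bona fide interface ($1$-connected, separating $\partial\Lambda_n^+$ from $\partial\Lambda_n^-$) and that the ceiling heights can be consistently assigned throughout the nesting forest. Admissibility prevents different walls from interacting near their projections, and the fact that the nesting relation on an admissible family forms a forest lets the heights be propagated inward from the outermost (height-$0$) ceilings. The delicate point is that $1$-connectivity at the seam between a wall and its surrounding ceilings must be preserved; this should follow from each $S_i$ being standard, since by definition its outer boundary matches the heights of horizontal ceiling faces in the witnessing single-wall interface.
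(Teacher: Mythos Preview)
The paper does not prove this lemma; it simply cites it from \cite{GielisGrimmett02,Grimmett_RC} as a known structural fact and uses it as a black box. So there is no proof in the paper to compare against.

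That said, your sketch has a genuine gap in the forward direction. A wall $W_i$ of $\cI$ that is nested inside other walls will sit at a nonzero height (the height of the ceiling it lies on). You propose to exhibit $W_i$ as the unique wall of an auxiliary interface by ``filling in horizontal faces at height $0$'' around it while ``leaving the existing ceilings of $\cI$ intact,'' but this cannot produce a $1$-connected interface: once the outer nesting walls are removed, there is nothing to bridge between the height-$0$ plane and the elevated ceiling on which $W_i$ sits. The standard remedy (and the reason the paper carries the notation $\Theta_{\textsc{st}}$ in \cref{alg:Iso}) is to \emph{standardize} each wall by vertically translating it so that its exterior ceiling is at height $0$; only after this shift is $W_i$ a standard wall. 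Your backward direction implicitly acknowledges this---you speak of applying ``interior/exterior height shifts'' to place the walls---but the forward map must apply the inverse shift, and your proposal omits it. With that correction, the two maps you describe are the standard ones and the bijection goes through as in the cited references.
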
 

As a result of the above lemma, we can view interfaces as (admissible) collections of standard walls, and we use this to define groups of walls and the excess area of walls. 

\begin{definition}[Groups of walls] Two standard walls $W_1, W_2$ are close if there exist faces $f_1 \in \rho(W_1)$, and $f_2 \in \rho(W_2)$ such that $d(f_1, f_2) < \sqrt{N(f_1, W_1)} + \sqrt{N(f_2, W_2)}$, where $N(f, W)$ is the number of faces of $W$ whose projection is a subset of $f$. (Recall that $f$ is a closed unit square, and so this definition also counts vertical faces whose projection is a single bounding edge of $f$.) A group of standard walls $F$ is a maximal connected component of standard walls via the closeness adjacency relation. That is, if $W_1, W_2 \in F$, then there exists a sequence walls $W_1 = S_1, \ldots, S_k = W_2 \in F$ such that $S_i$ and $S_{i+1}$ are close, and any wall not in $F$ is not close to any wall in $F$.
\end{definition}

\begin{definition}[Excess area of interfaces and walls] 
For two interfaces $\cI$ and $\cJ$, we will define the excess area of $\cI$ with respect to $\cJ$ to be 
\begin{equation*}
\fm(\cI;\cJ) = |\cI| - |\cJ|\,,
\end{equation*}
where $|\cI|$ denotes the number of faces in the interface $\cI$. 
For a standard wall $W = (A, B)$, let $N(W) = |A|$, and $|W| = |A \cup B|$. Then, we define its excess area to be
\begin{equation*}
\fm(W) = N(W) - |\rho(W)|\,.
\end{equation*}
\end{definition}

\begin{lemma}[{\cite[Lem.~13]{GielisGrimmett02},\cite[Lem.~7.128]{Grimmett_RC}}]
 We have the following inequalities:
 \begin{enumerate}[(i)]
\item $N(W) \geq \frac{14}{13}|\rho(W)|$, which implies $\fm(W) \geq \frac{1}{13}|\rho(W)|$ and $\fm(W) \geq \frac{1}{14} N(W)$;
\item $N(W) \geq \frac{1}{5}|W|$;
\end{enumerate}
\end{lemma}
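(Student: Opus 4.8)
\noindent\emph{Proof plan.} The two ``which implies'' clauses in (i) are immediate algebra once the first inequality is in hand: from $N(W)\geq\frac{14}{13}|\rho(W)|$ we get $\fm(W)=N(W)-|\rho(W)|\geq\big(\frac{14}{13}-1\big)|\rho(W)|=\frac1{13}|\rho(W)|$, and also $|\rho(W)|\leq\frac{13}{14}N(W)$, hence $\fm(W)=N(W)-|\rho(W)|\geq\frac1{14}N(W)$. So the plan is to prove the two displayed inequalities $N(W)\geq\frac{14}{13}|\rho(W)|$ and $N(W)\geq\frac15|W|$; these are precisely \cite[Lem.~13]{GielisGrimmett02} and \cite[Lem.~7.128]{Grimmett_RC}, and I sketch arguments for both.

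For $N(W)\geq\frac15|W|$ I would argue by double counting. Write $W=(A,B)$ with $A=W\cap\cI$ and $B=W\cap(\cI^\star\setminus\cI)$, so that $N(W)=|A|$ and $|W|=|A|+|B|$ (the union being disjoint), and it suffices to prove $|B|\leq 4|A|$. By the definition of $\cI^\star$, each $f\in B$ is a horizontal face that is $1$-connected to some $g\in\cI$; any such $g$ must in fact lie in $A$, since by \cref{lem: wall-ceiling-geometry}(ii) a face of $\cI^\star$ that is $1$-connected to a \emph{ceiling} face is horizontal and lies in $\cI$, so $f\in\cI^\star\setminus\cI$ forces $g$ not to be a ceiling face; thus $g$ is a wall face $1$-connected (hence $0$-connected) to $f$, and so lies in the same wall, i.e.\ $g\in A$. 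Conversely, any single face is $1$-connected to at most $4$ horizontal faces (a horizontal face through its four edges; a vertical face through the two horizontal faces meeting its top edge and the two meeting its bottom edge, while its two vertical side edges meet no horizontal face). Counting the pairs $(g,f)$ with $g\in A$, $f\in B$ and $g$ $1$-connected to $f$ — at least $|B|$ of them (each $f$ admits at least one such $g$) and at most $4|A|$ of them (each $g\in A$ is $1$-connected to at most $4$ horizontal faces) — gives $|B|\leq 4|A|$, hence $N(W)\geq\frac15|W|$.

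For $N(W)\geq\frac{14}{13}|\rho(W)|$ — the Dobrushin-type isoperimetric bound for walls, and the real content — the plan is a local, combinatorial apportionment argument. One assigns to each cell $g$ of the shadow $\rho(W)$ a set of faces of $W\cap\cI$ lying ``over'' $g$ (the horizontal faces with projection $g$, together with a bounded-weight share of the vertical faces of $W$ over the edges of $g$), arranged so that the assignment is essentially injective, and then shows that the total weight attributed to each cell is at least $\frac{14}{13}$. The mechanism is that $W$ is a wall, not a ceiling: over (essentially) every cell of $\rho(W)$ the interface is not a single flat horizontal sheet — either the vertical line through the cell crosses $\cI$ in at least two horizontal faces, or $\cI$ carries vertical faces over the cell's bounding edges — and, since $\rho(W)$ is connected, such ``defects'' must propagate; optimizing over the finitely many admissible local patterns of a wall over a cell and its neighbours yields exactly the ratio $\frac{14}{13}$. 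The hard part will be precisely this bookkeeping: apportioning each $\cI$-face of the wall to shadow cells without double counting, and correctly handling the cells on the boundary of $\rho(W)$. This is carried out in detail in \cite[Lem.~13]{GielisGrimmett02} and \cite[Lem.~7.128]{Grimmett_RC}, which I would follow.
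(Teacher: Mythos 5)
The paper gives no proof of this lemma at all; it is stated verbatim with citations to \cite[Lem.~13]{GielisGrimmett02} and \cite[Lem.~7.128]{Grimmett_RC}, so there is no in-paper argument to compare against and the only question is whether your proposal is itself sound. The algebraic consequences in (i) are immediate and correct. Your double-counting proof of (ii) is also complete and correct: the nontrivial step — that any $g\in\cI$ which is $1$-connected to $f\in B$ cannot be a ceiling face (contrapositive of \cref{lem: wall-ceiling-geometry}(ii)), hence is a wall face, hence (being $0$-connected to $f$) lies in the same wall $W$, hence in $A$ — is handled carefully, and the geometric count that every face (horizontal or vertical) is $1$-connected to exactly four horizontal faces is right, giving $|B|\le 4|A|$ and thus $N(W)=|A|\ge|W|/5$. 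This is more detail than the paper itself supplies.

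The one genuine gap is the isoperimetric bound $N(W)\ge\frac{14}{13}|\rho(W)|$, which is the substantive content of (i) and which you only sketch before deferring to the references — exactly as the paper does, so this is not out of step with the source. But be aware that the sketch as written is not close to a proof: it is not true that every cell of $\rho(W)$ individually carries weight $\ge \frac{14}{13}$ (a wall certainly has shadow cells covered by a single horizontal face), so the ``optimize over local patterns to get the ratio $14/13$ per cell'' framing does not quite describe the mechanism; the bound in Grimmett's Lem.~7.128 is a global counting inequality that must also account for the fact that $|\rho(W)|$ counts both the projected unit squares and the projected unit segments. If a self-contained argument were required, this step would still need to be carried out by following the cited proof rather than your heuristic.
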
 

In order to prove that a typical interface has certain ``nice" geometrical properties, our proof strategy will be to construct a map that sends every interface to a ``nice" one, and control the energy gain and entropy loss of the map. To do this, we use the powerful tool of cluster expansion, which allows us to compare the measure of two interfaces.

\begin{proposition}[Cluster Expansion; {\cite[Lem.~9]{GielisGrimmett02},\cite[Lem~7.118]{Grimmett_RC}}]
\label{prop:grimmett-cluster-exp}
There exists $\beta_0$ and a function $g$ such that for every $\beta \geq \beta_0$, the induced law on interfaces is given by
\begin{equation}\label{eq:CE}
    \bar{\mu}_n(\cI) = \frac{1}{Z_n}(1-e^{-\beta})^{|\partial \cI|}e^{-\beta|\cI|}q^{\kappa_\cI}\exp\Big(\sum_{f\in \cI} \g(f, \cI)\Big)\,,
\end{equation}
where the function g has the following properties: there exists universal constants $c, K > 0$ independent of $\beta$ such that for all $f, \cI, f', \cI'$, 
\begin{align}\label{eq:g-bound-1-face}
    |\g(f, \cI)| &\leq K\,, \\
\label{eq:g-bound-2-faces}
    |\g(f, \cI) - \g(f', \cI')| &\leq Ke^{-cr(f, \cI; f', \cI')}\,,
\end{align}
where $r(f, \cI; f', \cI') = \sup \{r: \cI \cap B_r(f) \equiv \cI' \cap B_r(f')\}$, i.e., $r(f, \cI; f', \cI')$ is the largest radius such that the interfaces $\cI, \cI'$ agree on the balls of this radius around the faces $f, f'$ (the intersections with a ball of this radius are translates of one another). 
\end{proposition}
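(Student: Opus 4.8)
This is a classical result, established in \cite{GielisGrimmett02} and \cite[\S7]{Grimmett_RC}; I would recover it via a polymer (cluster) expansion in the low-temperature regime $p=1-e^{-\beta}$, $\beta\ge\beta_0$, where an FK configuration is a small perturbation of the all-open one. First, write $\bar\mu_n(\cI)$ as a ratio: the numerator is the sum of the FK weight $p^{\#\mathrm{open}}(1-p)^{\#\mathrm{closed}}q^{\kappa(\omega)}$ over all $\omega$ whose full interface equals $\cI$ (every such $\omega$ lies in $\sep_n$, so this is consistent with the conditioning), and the denominator $Z_n$ is the $\cI$-independent normalization. Fixing $\cI$ forces every edge dual to a face of $\cI$ to be closed, contributing the bare energy $e^{-\beta|\cI|}$; it also forces every edge dual to a face of $\partial\cI$ to be \emph{open}, since a closed such edge would enlarge the $1$-connected closed component containing the height-$0$ separating faces and hence already belong to $\cI$ --- this accounts for the factor $(1-e^{-\beta})^{|\partial\cI|}=p^{|\partial\cI|}$. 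The remaining edges are free (apart from not reattaching closed faces to $\cI$), and evaluating $q^{\kappa(\omega)}$ at the reference configuration in which the only closed edges are those dual to $\cI$ yields the prefactor $q^{\kappa_\cI}$.

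Next, expand around that reference. Decompose the closed edges lying outside $\cI$ into $1$-connected clusters (``polymers''); a polymer $P$ carries an activity combining its weight $((1-p)/p)^{|P|}$ with the bounded multiplicative change it induces in $q^{\kappa}$ relative to the reference. A polymer spanning $k$ edges has activity bounded by $(Cq\,e^{-\beta})^{k}$, so for $\beta\ge\beta_0$ large the Koteck\'y--Preiss (equivalently Dobrushin) criterion applies: the logarithm of the polymer partition function equals a convergent sum $\sum_{\mathcal C}\Phi(\mathcal C)$ over clusters of polymers, with the decay estimate $\sum_{\mathcal C\,\ni\,e}|\Phi(\mathcal C)|\,e^{c\,\diam\mathcal C}=O(e^{-\beta})$ for every edge $e$. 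Running the same expansion for the normalization $Z_n$ and subtracting, the volume-order terms cancel and only the contributions of clusters $\mathcal C$ incident to $\cI$ survive (those localized near $\partial\Lambda_n$ are exponentially small and absorbed into $Z_n$), which is precisely the logarithm of the correction factor in \eqref{eq:CE}.

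It remains to repackage this sum as $\sum_{f\in\cI}\g(f,\cI)$ and to check the bounds on $\g$. For $f\in\cI$ set $\g(f,\cI):=\sum_{\mathcal C\,\ni\,f}\Phi(\mathcal C)/m_f(\mathcal C)$, where the sum is over clusters $\mathcal C$ incident to $f$ and $m_f(\mathcal C)$ is the number of faces of $\cI$ that $\mathcal C$ is incident to; summing over $f\in\cI$ recovers $\sum_{\mathcal C}\Phi(\mathcal C)$. Convergence gives $|\g(f,\cI)|\le\sum_{\mathcal C\,\ni\,f}|\Phi(\mathcal C)|\le K$ with $K$ independent of $\beta\ge\beta_0$, which is \eqref{eq:g-bound-1-face}. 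For \eqref{eq:g-bound-2-faces}, put $r=r(f,\cI;f',\cI')$: any cluster incident to $f$ with $\diam\mathcal C\le r$ depends on $\cI$ only through $\cI\cap B_r(f)$, a translate of $\cI'\cap B_r(f')$, hence is matched by a cluster incident to $f'$ with identical weight and identical denominator, and these pairs cancel in $\g(f,\cI)-\g(f',\cI')$. The unmatched clusters have $\diam\mathcal C>r$, and by the decay estimate their total contribution at either face is at most $e^{-cr}\sum_{\mathcal C\,\ni\,e}|\Phi(\mathcal C)|e^{c\diam\mathcal C}\le Ke^{-cr}$, so $|\g(f,\cI)-\g(f',\cI')|\le Ke^{-cr}$ after adjusting the constants.

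The main obstacle is the construction underlying the second step: recasting the non-local factor $q^{\kappa(\omega)}$ as a product of local, geometrically small polymer activities. Since $\kappa$ does not factorize over connected components of closed edges, one must design the polymers and their interactions so that the activities obey a bound of the form $(Cq\,e^{-\beta})^{k}$ while remaining local enough for clusters to have exponentially decaying weight --- this is the technical core of \cite{GielisGrimmett02}, which I would invoke rather than redo. A secondary, routine point is the cancellation of the bulk terms between the numerator and $Z_n$ together with the control of boundary effects near $\partial\Lambda_n$, both of which follow once uniform convergence of the expansion is in hand.
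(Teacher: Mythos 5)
The paper does not prove this proposition; it cites it directly from \cite[Lem.~9]{GielisGrimmett02} and \cite[Lem.~7.118]{Grimmett_RC}, so there is no proof of the paper's own to compare against. Your sketch is therefore a reconstruction of a result the authors take as a black box, and at a high level it is a plausible one: the identification of the forced closed edges (giving $e^{-\beta|\cI|}$), the forced open edges dual to $\partial\cI$ (giving $p^{|\partial\cI|}=(1-e^{-\beta})^{|\partial\cI|}$), the reference contribution $q^{\kappa_\cI}$, and the recasting of the remaining sum as a convergent polymer expansion controlled by the Koteck\'y--Preiss criterion is exactly the right skeleton, and the derivation of \cref{eq:g-bound-1-face,eq:g-bound-2-faces} from the decay of cluster weights by matching clusters supported in $B_r(f)$ versus $B_r(f')$ is the standard argument.

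Two points deserve a caveat, one of which you partly flag. First, taking the polymers to be ``$1$-connected clusters of closed edges lying outside $\cI$'' and assigning each an activity that accounts for the change in $q^{\kappa}$ is not literally well-defined: $\kappa(\omega)$ is a global quantity and the change it suffers when you add several distant polymers does not factor over those polymers (two polymers can jointly enclose a region neither encloses alone). The actual route in \cite{GielisGrimmett02,Grimmett_RC} does not expand over raw closed-edge clusters but follows the Dobrushin/Minlos--Sinai wall-and-ceiling framework: the expansion is organized so that the non-local pieces are tamed combinatorially before the polymer machinery is invoked, and the $q^{\kappa}$ contribution is absorbed into carefully defined contour weights. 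You acknowledge this as ``the technical core'' that you would invoke rather than redo, which is fair, but be aware that it is not merely a matter of bounding an activity by $(Cqe^{-\beta})^k$; one must first exhibit a decomposition in which such a bound even makes sense. Second, the claim that every face of $\partial\cI$ is dual to a forcibly open edge needs a remark about boundary effects: some faces $1$-connected to $\cI$ are dual to edges of $\partial\Lambda_n$ whose state is already fixed by the Dobrushin boundary condition, and those do not contribute a free factor of $p$; this bookkeeping is part of what the references' normalization $Z_n$ absorbs. Neither point undermines the sketch as a summary of the known argument, but both are genuine gaps between the outline given here and a complete proof.
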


The following geometrical lemma will be useful for controlling the entropy of maps.

\begin{lemma}[{\cite[Lem.~14]{GielisGrimmett02},\cite[Lem.~7.131]
{Grimmett_RC}}] 
\label{lem:num-of-0-connected-sets}
The number of 1-connected sets of faces of size $k$ containing a given face $x$ is bounded above by $s^k$ for some universal constant $s$.
\end{lemma}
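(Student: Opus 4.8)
The plan is to recast the statement as the classical counting bound for connected subgraphs of a graph of bounded degree, and to prove that bound by a spanning-tree exploration argument. First I would observe that the relation ``$1$-connected'' turns the collection of all faces of the dual lattice into a graph $G$ in which every vertex has degree bounded by a universal constant $\Delta$: a face is a closed unit square with four bounding unit segments, and along each such segment only boundedly many other unit dual squares of $\Z^3$ can lie (a direct local count gives at most three per segment, so $\Delta \le 12$). Under this identification, a $1$-connected set of faces of size $k$ containing $x$ is exactly a connected subgraph of $G$ on $k$ vertices that contains the vertex $x$, so it suffices to bound the number of the latter.

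Next I would pass to spanning trees. Every connected subgraph $H$ with $x \in H$ has a spanning tree, which we root at $x$; since distinct subgraphs $H$ have distinct vertex sets, they give rise to distinct rooted spanning trees, so it is enough to bound the number of rooted $k$-vertex subtrees of $G$ containing $x$. For this I would use the depth-first-search encoding: a DFS traversal of a $k$-vertex tree from its root is a closed walk of length $2(k-1)$ along tree edges, and at each of its $2(k-1)$ steps the next vertex is one of at most $\Delta$ choices among the $G$-neighbors of the current vertex (one of which realizes the backtracking move). Hence the number of rooted $k$-vertex subtrees of $G$ containing $x$ is at most $\Delta^{2(k-1)} \le \Delta^{2k}$, and taking $s := \Delta^{2}$ (or any universal constant at least this large) gives the claimed bound $s^k$.

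The only step requiring genuine verification is the uniform degree bound $\Delta < \infty$: one must check that in the cubic lattice a unit dual square shares a full side with only boundedly many other unit dual squares, which is a finite local computation whose precise output is irrelevant since we only need \emph{some} universal constant $s$. I do not expect a real obstacle here; everything else is the textbook lattice-animal count and applies verbatim. If a cleaner constant were wanted one could instead run a breadth-first exploration of the face set (pop faces from a queue and record, for each, which of its $\le \Delta$ potential neighbors are included), obtaining the bound with $s = 2^{\Delta}$ at the cost of a worse constant.
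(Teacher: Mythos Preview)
Your argument is correct and is essentially the standard lattice-animal count; the paper itself does not prove this lemma but simply cites it from \cite{GielisGrimmett02} and \cite{Grimmett_RC}, where the same bounded-degree/spanning-tree enumeration is used. One minor remark: in your injection step, what you really use is that distinct face \emph{sets} yield distinct rooted spanning trees (because the trees live on distinct vertex sets), which is fine since the lemma counts face sets rather than subgraphs with edge data.
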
 

The above tools were used by Gielis and Grimmett to prove the following rigidity results:

\begin{proposition}[Exponential tails on groups of walls, {
\cite[Lem.~15]{GielisGrimmett02},\cite[Lem.~7.132]{Grimmett_RC}}]
\label{prop:grimmett-exp-tail-grp-of-walls}
Let $\sF_x$ be the random variable denoting the group of walls at $x$, and recall that $\sE_x$ denotes the empty set of walls, i.e. that there are no walls indexed by $x$. There exists $\beta_0$ and a constant $C > 0$ such that for every $\beta \geq \beta_0$, for any admissible collection of groups of walls $\{(F_y)_{y \neq x},\ F_x\}$,
\begin{equation*}
    \frac{\bar{\mu}_n(\sF_x = F_x,\, (\sF_y)_{y \neq x} \equiv (F_y)_{y \neq x})}{\bar{\mu}_n(\sF_x = \sE_x,\, (\sF_y)_{y \neq x} \equiv (F_y)_{y \neq x})} \leq \exp\big(-(\beta - C)\fm(F_x)\big)\,.
\end{equation*}
\end{proposition}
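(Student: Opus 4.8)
The plan is to use the cluster expansion formula \eqref{eq:CE} to compare the numerator and denominator directly, treating the passage from $\sF_x = F_x$ to $\sF_x = \sE_x$ as an explicit \emph{surgery} on the interface that deletes the group of walls $F_x$ while leaving everything else in place. Concretely, given an interface $\cI$ realizing $\{\sF_x = F_x,\ (\sF_y)_{y\neq x}\equiv(F_y)_{y\neq x}\}$, I would define $\cJ = \Phi(\cI)$ to be the interface whose admissible family of standard walls is obtained from that of $\cI$ by removing exactly the walls belonging to $F_x$ (this is legitimate because, by the 1-1 correspondence between interfaces and admissible families of standard walls, any sub-family is again admissible, and because the walls in $F_x$ are by definition not close to --- in particular not $0$-connected to --- any wall outside $F_x$, so removing them does not disturb the nesting or projections of the remaining walls). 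The map $\Phi$ is injective on the fiber we care about, since from $\cJ$ and the prescribed $F_x$ one recovers $\cI$ uniquely.

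The heart of the argument is then to bound the ratio $\bar\mu_n(\cI)/\bar\mu_n(\cJ)$ using \eqref{eq:CE}, uniformly over $\cI$ in the fiber, and to check it is summable over the possible $\cI$ (equivalently, that the entropy of the fiber is controlled). Writing out the four factors: the term $(1-e^{-\beta})^{|\partial\cI|}$ contributes a factor which is at most $1$ relative to $\cJ$ up to a correction absorbed into the constant $C$, since $|\partial\cI| \geq |\partial\cJ|$ up to $O(|F_x|)$; the factor $e^{-\beta|\cI|}$ versus $e^{-\beta|\cJ|}$ gives exactly $e^{-\beta(|\cI|-|\cJ|)} = e^{-\beta\,\fm(\cI;\cJ)}$, and here $\fm(\cI;\cJ) = \sum_{W\in F_x}|W| - (\text{area reclaimed at height }0) \geq \fm(F_x)$ up to constants, using $N(W)\geq \tfrac15|W|$ and $\fm(W) = N(W) - |\rho(W)|$; the factor $q^{\kappa_\cI}/q^{\kappa_\cJ}$ is controlled because deleting $F_x$ changes the number of open clusters by at most $O(|F_x|)$, so this is at most $q^{O(|F_x|)} = e^{O(\fm(F_x))}$ by the same area comparison; and the cluster-expansion sum $\exp(\sum_{f\in\cI}\g(f,\cI) - \sum_{f\in\cJ}\g(f,\cJ))$ is bounded by $e^{K|F_x|}$ from \eqref{eq:g-bound-1-face} for the faces that are deleted or altered, plus a convergent contribution from faces far from $F_x$ using the decay estimate \eqref{eq:g-bound-2-faces} together with the fact that $\cI$ and $\cJ$ agree outside a neighborhood of $F_x$. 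Collecting these, $\bar\mu_n(\cI)/\bar\mu_n(\cJ) \leq \exp(-(\beta - C_0)\fm(F_x))$ for a constant $C_0 = C_0(q)$. Finally, summing over all $\cI$ mapping to a fixed $\cJ$: such $\cI$ is determined by how $F_x$ is attached, and by \cref{lem:num-of-0-connected-sets} the number of $1$-connected sets of faces of size $k$ through $x$ is at most $s^k$, so the number of choices is at most $s^{O(|F_x|)} = e^{O(\fm(F_x))}$, absorbing another constant into $C$. This yields the claimed bound with $C = C(q)$.

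The main obstacle I expect is the bookkeeping around the $q^{\kappa_\cI}$ and $|\partial\cI|$ factors, which are absent in the Ising analogue and are precisely the new features flagged in \cref{sec:subsec-CE-difficulties}: one must verify that deleting the group of walls $F_x$ changes $\kappa_\cI$ and $|\partial\cI|$ by an amount that is genuinely $O(\fm(F_x))$ and not merely $O(|\cI|)$. For $\kappa_\cI$ this amounts to the observation that the open clusters affected by re-opening the edges dual to faces of $F_x$ are confined to (a bounded neighborhood of) the region nested in $F_x$, whose relevant boundary complexity is $\sum_{W\in F_x} N(W) = \sum_{W\in F_x}|A|$, comparable to $\fm(F_x)$ via \cref{lem: wall-ceiling-geometry}-type reasoning; for $|\partial\cI|$ one uses that $\partial\cI$ and $\partial\cJ$ coincide outside the $1$-neighborhood of $F_x$. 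A secondary subtlety is making sure the decay term $\sum_{f : \dist(f,F_x)\text{ large}} K e^{-c\, r(f,\cI;f,\cJ)}$ is summable: since $r(f,\cI;f,\cJ) \gtrsim \dist(f, F_x)$ once $f$ is far from where the surgery took place, this sum is $O(1)$ by the exponential decay in \eqref{eq:g-bound-2-faces}, but one should phrase it carefully using that $\cI$ and $\cJ$ agree on large balls around such $f$. Everything else is the standard energy-versus-entropy template already used for the Ising interface.
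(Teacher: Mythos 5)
The paper does not prove this proposition; it is cited verbatim from Gielis--Grimmett \cite[Lem.~15]{GielisGrimmett02} (equivalently \cite[Lem.~7.132]{Grimmett_RC}), so there is no in-paper argument to compare against. With that caveat, your sketch does reproduce the standard route taken in those references: delete the group of walls $F_x$ from the standard-wall family of $\cI$ to obtain $\cJ$, then compare $\bar\mu_n(\cI)/\bar\mu_n(\cJ)$ term-by-term in the cluster expansion \eqref{eq:CE}. The Ising portion of the energy bookkeeping ($e^{-\beta|\cI|}$ and the $\g$-terms via \eqref{eq:g-bound-1-face}, \eqref{eq:g-bound-2-faces}) is what Dobrushin already did, and you correctly identify $q^{\kappa_\cI-\kappa_\cJ}$ and $(1-e^{-\beta})^{|\partial\cI|-|\partial\cJ|}$ as the new FK features, controlled by $N(W)\le 14\,\fm(W)$, $|\rho(W)|\le 13\,\fm(W)$, and the observation that adding (or deleting) a single face changes $\kappa$ by at most one and $|\partial\cdot|$ by a bounded amount --- precisely the bookkeeping the paper itself invokes in \cref{lem:control-1-connected-faces,lem:control-interface-clusters} (the proof of the latter explicitly references this lemma of \cite{GielisGrimmett02}).

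Two corrections on the structure. First, the ``entropy'' step is spurious: both the numerator event and the denominator event fix the \emph{entire} collection $(\sF_y)_y$, which by the 1-1 correspondence between interfaces and admissible families of standard walls pins down a single interface each. So the map $\Phi$ you describe is a bijection between two singletons, and there is no fiber to sum over and no need for \cref{lem:num-of-0-connected-sets} here. (You half-note this when you say $\cI$ is recovered uniquely from $\cJ$ and $F_x$, but then contradict it by appealing to the counting lemma.) The whole content is the energy comparison $\bar\mu_n(\cI)/\bar\mu_n(\cJ)\le e^{-(\beta-C)\fm(F_x)}$; the template is ``energy only,'' not ``energy-versus-entropy.'' Second, the excess area is exact: $|\cI|-|\cJ|=\sum_{W\in F_x}N(W)-|\rho(W)|=\fm(F_x)$, not merely $\gtrsim \fm(F_x)$ up to constants --- the nested walls shift vertically without changing their face counts. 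This doesn't affect the conclusion, but stating it as an approximate inequality invites unnecessary slack. Finally, be aware that the ``vertical shift'' of nested walls means the $\g$-term comparison is between $\g(f,\cI)$ and $\g(\theta f,\cJ)$ for a shift $\theta$ depending on $f$ (not $\g(f,\cJ)$), and the bound on $\kappa_\cI-\kappa_\cJ$ is genuinely nontrivial because the surgery both opens and closes edges; this is where \cite{GielisGrimmett02} spends its effort, and you would need to carry out that accounting rather than just flag it.
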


\begin{proposition}[Rigidity, {\cite[Thm.~2]{GielisGrimmett02},
\cite[Thm.~7.142]{Grimmett_RC}}] \label{prop:grimmett-localization}
Let $f$ be any horizontal face at height 0. Denote by $\{f \leftrightarrow \infty\}$ the event that there is a 1-connected sequence of faces $\{f_i\}$ from $f$ to the boundary $\partial \Lambda_n$ such that all the faces $f_i$ are ceiling faces of $\cI$ at height 0. Then, there exists $\beta_0$ such that for any $\beta \geq \beta_0$, there is a constant $\epsilon_\beta$ such that for all $n$ and all starting $f$,
\begin{equation*}
    \bar{\mu}_n(f \leftrightarrow \infty) \geq 1 - \epsilon_\beta\,.
\end{equation*}
\end{proposition}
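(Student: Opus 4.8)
The plan is to run a Peierls-type argument over \emph{blocking walls}. First I would pin down the geometric meaning of the complement event. Under $\bar\mu_n$ the reference interface is the flat sheet of horizontal faces at height $0$, every such face is a ceiling face, and $f\leftrightarrow\infty$ holds; the event can only fail because a wall of $\cI$ intervenes. Concretely, by the wall/ceiling dichotomy for $\cI^\star$ and \cref{lem: wall-ceiling-geometry}, if $f\not\leftrightarrow\infty$ then $\rho(f)$ does not lie in the infinite component of $\rho(W)^c$ for some wall $W$ of $\cI$ --- either $\rho(f)\in\rho(W)$, so $f$ is covered by a wall and is not a ceiling face, or $\rho(f)$ sits in a finite component of $\rho(W)^c$, so every height-$0$ ceiling path out of $f$ must cross $\rho(W)$. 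In both cases $\rho(f)$ is interior to the group of walls $F=F(W)$ containing $W$; choosing a canonical vertex $x=x(F)$ assigned to $F$ (say the lexicographically least one), we get the inclusion $\{f\not\leftrightarrow\infty\}\subseteq\bigcup_{F\text{ blocking}}\{\sF_{x(F)}=F\}$, where ``blocking'' means $\rho(f)$ is interior to $F$.

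Next I would estimate the probability of a single blocking group and then union-bound. Summing the ratio estimate of \cref{prop:grimmett-exp-tail-grp-of-walls} against $\sum_{(F_y)}\bar\mu_n\bigl(\sF_x=\sE_x,\,(\sF_y)_{y\neq x}\equiv(F_y)_{y\neq x}\bigr)\leq 1$ yields, for every admissible group $F$ indexed at $x$,
\[
\bar\mu_n(\sF_x=F)\ \leq\ \exp\bigl(-(\beta-C)\fm(F)\bigr)\,.
\]
It then remains to count blocking groups by excess area. Using the excess-area inequalities recalled above ($\fm(W)\geq\tfrac1{14}N(W)\geq\tfrac1{70}|W|$, so $\fm(F)=\sum_{W\in F}\fm(W)\geq\tfrac1{70}|F|$), a blocking group with $\fm(F)=m$ has total size $|F|\leq 70m$, and since $\rho(F)$ surrounds or covers $\rho(f)$ every face of $\rho(F)$ lies within distance $O(m)$ of $\rho(f)$. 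Hence the number of blocking groups with $\fm(F)=m$ is at most $C_1^{\,m}$ for a universal $C_1$ --- this is where \cref{lem:num-of-0-connected-sets} enters, applied to the $1$-connected sets underlying the walls composing $F$, together with an $O(m^3)$ factor for the anchor location and a bounded factor for the closeness slack $\sqrt{N(f,W)}$. Summing the two displays,
\[
\bar\mu_n(f\not\leftrightarrow\infty)\ \leq\ \sum_{F\text{ blocking}}e^{-(\beta-C)\fm(F)}\ \leq\ \sum_{m\geq m_0}C_1^{\,m}e^{-(\beta-C)m}\ =:\ \epsilon_\beta\,,
\]
with $m_0>0$ the least possible excess area of a wall; the series converges and $\epsilon_\beta\to0$ once $\beta>C+\log C_1$, uniformly in $n$ and in $f$, which is the assertion. (Alternatively one can bypass \cref{prop:grimmett-exp-tail-grp-of-walls} entirely and run the same count directly from the cluster expansion \eqref{eq:CE}, using the standard wall-removal map and absorbing the $q^{\kappa_\cI}$, $(1-e^{-\beta})^{|\partial\cI|}$ and $\sum_{f}\g(f,\cI)$ factors into an $O(1)$ correction to $\beta$; \cref{prop:grimmett-cluster-exp} supplies exactly the bounds needed for this.)

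The main obstacle I expect is not the Peierls sum but the geometric bookkeeping feeding into it: making the ``blocking wall'' characterization airtight (in particular treating the height-$0$ boundary faces of $\cI$ correctly, and cleanly separating the case $\rho(f)\in\rho(W)$ from $\rho(f)$ nested but uncovered), and then verifying that the combinatorial weight of blocking groups of excess area $m$ is genuinely at most $C_1^{\,m}$. For the latter one must pass from groups of walls --- which are defined through the closeness relation rather than through $1$-connectivity of faces --- back to honest $1$-connected face sets so that \cref{lem:num-of-0-connected-sets} applies, while keeping control of the extra spreading allowed by the closeness radii. Once these points are settled the estimate is routine.
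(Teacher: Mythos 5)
The paper does not supply its own proof of this proposition: it is stated as a quoted result, cited directly from Gielis--Grimmett and from Grimmett's book, so there is nothing in the paper to compare your argument against line by line. What you have written is an independent reconstruction of the standard Peierls-over-walls proof that underlies Dobrushin-type rigidity theorems, and it is structurally correct: characterize $\{f\not\leftrightarrow\infty\}$ as the presence of a blocking wall nesting $\rho(f)$, reduce via \cref{prop:grimmett-exp-tail-grp-of-walls} to the bound $\bar\mu_n(\sF_x=F)\leq e^{-(\beta-C)\fm(F)}$, and run a Peierls sum using the isoperimetric inequalities and \cref{lem:num-of-0-connected-sets}. This is exactly the route taken in the cited sources (it is essentially the same Peierls calculation that produces Prop.~2.12, applied once more at the level of the union bound). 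The one point you flag as a possible gap --- passing from groups of walls (defined by the closeness relation) to honest $1$-connected face sets so the counting lemma applies --- is indeed where the bookkeeping lives, but it is the standard step in this machinery and not a real obstruction: the closeness radii are $\sqrt{N(\cdot,W)}$ and $N(W)\leq 14\,\fm(W)$, so the total spread of a group $F$ with $\fm(F)=m$ is $O(m)$, and one connects up the blown-up projections of the constituent walls to form a genuine $1$-connected witness set of size $O(m)$; this is precisely what Gielis--Grimmett do in proving the group-of-walls tail estimate. One minor caveat: your blocking characterization should be stated carefully (a blocking wall either covers $\rho(f)$ or nests it in a finite component of $\rho(W)^c$, and you also need to rule out the case that $f$ sits on a ceiling raised to nonzero height, which is again covered by nesting); you acknowledge this, and it is a matter of bookkeeping, not a conceptual hole. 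In short, your sketch is consistent with the cited proof and correct in outline, with only the usual Dobrushin-framework technicalities to be made airtight.
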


\subsection{Pillars}
The general strategy will be to use cluster expansion arguments to prove results about the full interface, and then transfer these results to the Potts and random-cluster interfaces of interest. For technical reasons that will become apparent later, we need to begin with the $\Top$ interface. To measure the ``height of the $\Top$ interface above a location $x$", we will start at $x$ and follow the upward intrusion of $\Atop^c$ vertices into the $\Atop$ phase of the model. Although the actual $\Top$ interface may reach a higher point above $x$ via an intrusion beginning from another vertex $y$, we choose to measure this more ``local" height of the interface, and this suffices since the maximum of $\Itop$ will still be equal to the maximum height over all such intrusions. We begin this section by first proving some basic properties of the $\Top$ interface, and then making the above idea rigorous through the introduction of pillars. The section then concludes with some preliminary results on the height of a pillar.

\begin{remark}[Properties of $\Itop$]\label{rem:properties-of-Itop}
    We begin by proving a few properties of $\Itop$ that will be useful throughout the paper. Note that $\Itop$ really is an interface in the sense that every path from $\partial \Lambda_n^-$ to $\partial \Lambda_n^+$ must at some step go from a vertex in $\Atop^c$ to a vertex in $\Atop$, which then must cross a face of the $\Top$ interface. Note also that for every edge $e = [v, w]$ such that $f_e \in \Itop$, one of $v$ or $w$ is in the $\Top$ component $\Vtop$, and the other is not. Indeed, say that $v \in \Atop$ and $w \in \Atop^c$. Then, $w$ is not in the $\Top$ component by definition, and $v$ is either in $\Vtop$ or in a finite component of $\Vtop^c$. But, the latter case is impossible since $w$ is in the infinite component of $\Vtop^c$ and is adjacent to $v$.

    Finally, we claim that $\Itop$ determines $\Atop$, and both $\Atop$ and $\Atop^c$ are connected. Moreover, we show that if $V$ is the set of vertices that are not separated from $\partial \Lambda_n^+$ by $\Itop$, then $V = \Atop$. First we show that $V, V^c$ are both infinite connected components. Suppose for contradiction that there is a finite component $A \subseteq V^c$ surrounded by vertices $B \subseteq V$. Then, for every edge $e$ incident to both a vertex of $A$ and $B$, we have $f_e \in \Itop$. The vertices of $A$ are all in $\Vtop^c$, so that as noted above, all the vertices in $B$ must be in $\Vtop$. But $B$ surrounds $A$, and so $A$ is a finite component of $\Vtop^c$ and must be in $\Atop$, which contradicts the fact that the faces separating $A$ from $B$ are in $\Itop$. Similarly, if $A \subseteq V$ is a finite component surrounded by vertices of $V$, then $A$ must be surrounded by faces of $\Itop$ and thus be separated from $\partial \Lambda_n^+$ by $\Itop$, contradicting the definition of $V$. Now we show that $V = \Atop$. Since $V^c \subseteq \Vtop^c$ and is an infinite connected component, then $V^c \subseteq \Atop^c$. On the other hand, if $v \in \Atop^c \cap V$, then there must be a $\Lambda_n$-path $P$ from $v$ to $V^c$ consisting only of vertices in $\Vtop^c$ by the fact that $\Atop^c$ is the infinite component of $\Vtop^c$. But since $v \in V$, there must be an edge $e = [u, w]$ in $P$ that crosses from $u \in V$ to $w \in V^c$. The face $f = f_e$ must then be in $\Itop$, but then at least one of $u, w$ is in $\Vtop$, which contradicts the construction of the path~$P$. Thus, $\Atop^c \cap V = \emptyset$.
\end{remark}

\begin{definition} [Pillar]\label{def:pillar}  
Given an interface $\cI$, we can read from it the corresponding $\Top$ interface $\Itop$. As above, this defines a set of vertices $\Atop$ and $\Atop^c$. Let $x$ be a vertex at height 1/2. Let $V$ be the connected component of vertices in $\Atop^c$ with height $\geq 1/2$ that contains $x$, which we call the vertices of the pillar. Denote by $F$ the set of faces bounding $V$ with height $\geq 1/2$. Note that $F$ is a subset of $\Itop$. Possibly attached to $F$ are some hairs, which we define to be 1-connected components of $\cI \setminus \Itop$. We define the pillar at $x$, $\cP_x$, as the union of $F$ with all hairs that are 1-connected to $F$ at an edge with height $\geq 1/2$. (Note that if a hair connects to $F$ at height $\geq 1/2$ and then descends below that height, it is still included in $\cP_x$.) We analogously define a pillar in the $\Bot$ interface.
\end{definition}

Note that a priori, it is possible that the hairs of the pillar reconnect to other walls of $\Itop$. However, this will not happen for pillars which are in an isolated cone (see \cref{def:isolated-pillar}), and whenever this may be problematic, we will first restrict to such a space of pillars.

By abuse of notation, we will sometimes also use $\cP_x$ to refer to the set of vertices in the pillar. We also define the height of $\cP_x$, denoted $\hgt(\cP_x)$, as the height of the face set $F$, so that the max height of the $\Top$ interface is equal to the maximum height over all pillars. Denote the event 
\begin{equation}\label{eq:def-Eh}
E_h^x := \{\hgt(\cP_x) \geq h\}\,.
\end{equation} 
Recall that $V(\cP_x)$ is connected by definition; the following observation notes that it is also co-connected (i.e., its complement is connected).

\begin{observation}\label{obs:pillar-vert-simp-conn}
    The vertices of a pillar $V(\cP_x)$ form a co-connected set. Indeed, any finite component $A$ of $V(\cP_x)^c$ is by definition surrounded by vertices of $\Atop^c$, and hence a part of $\Atop^c$. All the vertices of $A$ also have height $\geq 1/2$, and thus by definition should actually be included as a part of $V(\cP_x)$.
\end{observation}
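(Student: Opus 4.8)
The plan is to show first that $V^c$ --- the complement of $V:=V(\cP_x)$ among the vertices of $\Lambda_n$ --- has no finite connected component, which is the substantive point, and then to rule out the only remaining obstruction to connectedness, a splitting of $V^c$ into several infinite pieces. I will use three facts, all immediate from the definitions and from \cref{rem:properties-of-Itop}: (a) $V\subseteq\Atop^c$ and every vertex of $V$ has height $\geq 1/2$; (b) both $\Atop$ and $\Atop^c$ are connected and both are infinite, since they contain $\partial\Lambda_n^+$ and $\partial\Lambda_n^-$ respectively; (c) by construction $V$ is the connected component of $x$ in the induced subgraph on $\{v\in\Atop^c:\hgt(v)\geq 1/2\}$, hence is maximal among connected subsets of that set passing through $x$. (If $x\notin\Atop^c$ then $V=\varnothing$ and there is nothing to prove, so assume $V\neq\varnothing$.)

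Suppose for contradiction that $A$ is a finite connected component of $V^c$. First, the set $\partial_v A$ of vertices adjacent to $A$ but not lying in $A$ is contained in $V$: such a vertex is adjacent to $A\subseteq V^c$, so it would lie in the same component $A$ of $V^c$ were it itself in $V^c$. Hence $\partial_v A\subseteq V\subseteq\Atop^c$. Next I claim $A\subseteq\Atop^c$. If not, pick $w\in A\cap\Atop$; since $\Atop$ is connected and infinite while $A$ is finite, there is a path inside $\Atop$ from $w$ to some vertex of $\Atop\setminus A$, and the first vertex of this path lying outside $A$ belongs to $\partial_v A$, hence to $\Atop^c$ --- yet it lies on a path inside $\Atop$, a contradiction. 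Finally I claim every vertex of $A$ has height $\geq 1/2$: the lower half-space $L:=\{v\in\Lambda_n:\hgt(v)\leq -1/2\}$ is connected and infinite and, by (a), disjoint from $V$, so $L$ sits inside a single component of $V^c$, which is infinite and therefore not $A$; thus $A\cap L=\varnothing$.

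Combining these, $A$ is a nonempty connected subset of $\{v\in\Atop^c:\hgt(v)\geq 1/2\}$ sharing an edge with $V$, since every vertex of $\partial_v A\subseteq V$ is adjacent to $A$; hence $A\cup V$ is a connected subset of that same set containing $x$, and the maximality in (c) forces $A\subseteq V$, contradicting $A\subseteq V^c$. So $V^c$ has no finite component. To obtain full connectedness, note that $V$ contains no vertex of the lateral boundary of $\Lambda_n$: any lateral-boundary vertex of height $\geq 1/2$ lies in $\partial\Lambda_n^+\subseteq\Vtop\subseteq\Atop$, which is disjoint from $\Atop^c\supseteq V$. Hence from any vertex of $V^c$ one can reach $L$ while staying in $V^c$ --- if the vertex lies in $\Atop$, route through $\Atop$ (connected) to $\partial\Lambda_n^+$ and then descend the lateral column, and in all cases descend a vertical column of lateral-boundary vertices, none of which meets $V$ --- and since $L$ is connected, $V^c$ is connected.

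The only genuinely non-local step is the claim $A\subseteq\Atop^c$: a priori a finite hole of $V^c$ could consist of $\Atop$-vertices trapped beneath overhangs of the pillar, and excluding this relies essentially on the global connectedness of $\Atop$ from \cref{rem:properties-of-Itop} rather than on a local argument. Everything else is routine bookkeeping with the slab geometry --- half-integer vertex heights and $\partial\Lambda_n^+$ being the high part of the lateral frame.
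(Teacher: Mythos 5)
Your first paragraph, ruling out finite components of $V^c$, is correct and follows the paper's approach while supplying the two steps the paper elides: the deduction $A\subseteq\Atop^c$ from the fact that the outer vertex boundary of $A$ lies in $V\subseteq\Atop^c$ (which, as you observe, relies on the connectedness of $\Atop$ from \cref{rem:properties-of-Itop}), and the height claim $A\cap L=\varnothing$. Your closing remark correctly identifies $A\subseteq\Atop^c$ as the genuinely non-local step in that part of the argument.

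The final paragraph, however, has a gap. You route a vertex $v\in V^c\cap\Atop$ to $L$ via $\partial\Lambda_n^+$ and the lateral boundary columns, but $V^c$ also contains vertices of $\Atop^c$ at heights $\geq 1/2$ that are not in $V$: for example, vertices of pillars rooted at $y\neq x$, or more generally any excursion of $\Atop^c$ above height $0$ lying in a different component of $\Atop^c\cap\cL_{\geq 1/2}$ from the one containing $x$. Such a $v$ is neither in $\Atop$, nor in $L$, nor (generically) on the lateral boundary, so your routing never produces a path from it to $L$ inside $V^c$. The repair uses the connectedness of $\Atop^c$ (again \cref{rem:properties-of-Itop}): take a path $P=(v_0,\ldots,v_k)\subseteq\Atop^c$ from $v=v_0$ to $\partial\Lambda_n^-$ and stop at the first index $j$ with $\hgt(v_j)<1/2$. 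The initial segment $v_0,\ldots,v_{j-1}$ is a connected subset of $\Atop^c\cap\cL_{\geq 1/2}$, hence lies in the component of $v$ there, which is disjoint from $V$; and $v_j\in L$. So $v$ reaches $L$ within $V^c$. Incidentally, the paper's own one-line proof also stops at ``no finite component''---its downstream use, the $1$-connectedness of the splitting set $F(C)$ in \cref{prop:compare-A_h^x-E_h^x}, only needs that every vertex of $C^c$ reach infinity within $C^c$, for which excluding finite holes already suffices---but since the observation is phrased as ``$V^c$ is connected'', your instinct to address infinite components was sound; the execution just misses a case.
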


\begin{remark}\label{rem:height-0-pillar}
We will distinguish between the events $\{\hgt(\cP_x) = 0\}$ and $\{\hgt(\cP_x) < 0\}$ even though both correspond to the event that $V$ in \cref{def:pillar} is empty (and henceforth, the event $E_0^x$ will not include the event $\{\hgt(\cP_x) < 0\})$. We say that the pillar height is 0 in this case only when the face corresponding to the edge $[x - \ez, x]$ is in the $\Top$ interface, otherwise we say that the pillar has negative height. Note that if the pillar height is 0, the fact that the face below $x$ is in the $\Top$ interface implies that exactly one of $x$ or $x - \ez$ is in the $\Top$ component (i.e., has a wired path to the upper half boundary). But, it must be that $x$ is in the $\Top$ component since the other case implies $x \in \Atop^c$, which contradicts $\cP_x = \emptyset$.
\end{remark}

When we eventually move to the Potts model, it will be helpful at times to reveal only the outer shell $\cP_x^\mathrm{o}$ of the pillar without revealing any edges inside the pillar. This motivates the following definition:
\begin{definition}[Pillar shell]\label{def:pillar-shell}
    We define $\cP_x^\mathrm{o}$ as above, except when adding hairs to the face set $F$, we do not include any faces dual to edges with endpoints in $\Atop^c$.
\end{definition}

\begin{observation}\label{obs:pillar-given-by-walls}
The faces of a pillar $\cP_x$ is a subset of the faces of the walls nesting $x$, $\fW_x$, together with any walls interior $\fW_x$, together with all interior ceilings of such walls.
\end{observation}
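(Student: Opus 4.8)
The plan is to trace each face of $\cP_x$ through the decomposition of the semi-extended interface $\cI^\star$ into walls and ceilings, and to pin down which walls and ceilings can occur. I would begin with two preliminary observations. First, $\cP_x\subseteq\cI^\star$: the bounding set $F$ lies in $\Itop\subseteq\cI$, while each hair is a $1$-connected component of $\cI\setminus\Itop$ and hence lies in $\cI\subseteq\cI^\star$; so every face of $\cP_x$ belongs to a unique wall or a unique ceiling of $\cI^\star$. Second, we may assume $\fW_x\ne\sE$: if no wall nests $x$, then $\rho(x)$ lies in the infinite ceiling, so $x$ is not separated from $\partial\Lambda_n^+$ by $\Itop$ and therefore $x\in\Atop$ (cf.\ \cref{rem:properties-of-Itop}); then $V(\cP_x)=\varnothing$, whence $\cP_x=\varnothing$ and the claim is vacuous. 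By \cref{lem: wall-ceiling-geometry}(i) the walls of $\fW_x=(W_1,\dots,W_s)$ have pairwise non-$0$-connected projections, so they are linearly ordered by nesting; write $W_1$ for the outermost one and let $R_1$ be $\rho(W_1)$ together with the finite components of $\rho(W_1)^c$, the region ``enclosed'' by $W_1$.

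The key step is to localize the pillar vertices: I claim $\rho(V(\cP_x))\subseteq R_1$. Write $V:=V(\cP_x)$; recall that $V$ is connected, contains $x$ with $\rho(x)\in R_1$, and lies entirely at heights $\ge\tfrac12$. If some $v\in V$ had $\rho(v)$ in the infinite component of $\rho(W_1)^c$, then a $V$-path from $x$ to $v$ would contain an edge $[u,u']$ with $\rho(u)\in R_1$, $\rho(u')\notin R_1$, and both $u,u'$ at height $\ge\tfrac12$; since the projected edge crosses $\rho(W_1)$, I would combine the $2$D nesting of $W_1$ with the facts from \cref{rem:properties-of-Itop} that $\Itop$ is exactly the set of faces separating $\Atop$ from $\Atop^c$ and that $\Atop$ and $\Atop^c$ are both connected, to deduce that $u'$ must in fact be separated from $x$ by $\Itop$ --- so $u'\in\Atop$, contradicting $V\subseteq\Atop^c$. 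Granting the claim: any $f\in F$ is dual to an edge with an endpoint in $V$, so $\rho(f)$ lies within unit distance of $\rho(V)\subseteq R_1$, and since distinct wall projections are $0$-separated (\cref{lem: wall-ceiling-geometry}(i)) $\rho(f)$ cannot straddle $\rho(W_1)$ and its infinite complement; hence $\rho(f)\subseteq R_1$. If $\rho(f)\subseteq\rho(W_1)$, then --- ceilings having unique projection and wall projections being $0$-separated --- $f$ is a wall face of $W_1\in\fW_x$; otherwise $\rho(f)$ lies in the interior of $R_1$, and the wall or ceiling containing $f$ is interior to $W_1$, hence interior to a wall of $\fW_x$. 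In either case $f$ is covered by the statement.

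Finally, consider a hair $H$ of $\cP_x$. Being $1$-connected, $H$ lies in a single $0$-connected component of $\cI^\star$, i.e.\ in one wall or one ceiling; and since $H$ is $1$-connected to $F$, that component also contains the face $f\in F$ along which $H$ attaches, which by the previous paragraph is $W_1$ or is interior to $W_1$. Thus every face of $H$ is covered by the statement, completing the proof. I expect the localization of $V$ to be the main obstacle: it is the one place where the purely $2$D ``nesting'' bookkeeping of walls must be reconciled with the genuinely $3$D $\Atop/\Atop^c$ structure --- in particular, one must rule out the pillar ``leaning over'' the outermost nesting wall where that wall is low, which is where the connectedness of $\Atop$ and $\Atop^c$ enters --- whereas the remaining steps are routine manipulations of \cref{def:pillar} and \cref{lem: wall-ceiling-geometry}.
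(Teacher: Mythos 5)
Your proposal takes a genuinely different route from the paper's. The paper's (unpublished) argument first pretends that $\cI=\Itop$ --- so that the geometry of walls, ceilings and the pillar is exactly as in the Ising case --- and then invokes the analogous Ising statement (\cite[Obs.~2.17]{GL_max}) to place the faces of $F$ inside the nested-wall collection; it then observes that passing from the walls of $\Itop$ to the (larger) walls of $\cI^\star$ can only enlarge the nesting region, and finally treats the hairs as a separate case, exactly as you do. What this route buys is that the genuinely delicate geometric content (your ``localization'' of $V(\cP_x)$) is outsourced to a cited result; what your route would buy, if completed, is a self-contained argument that never has to reconcile the $\Itop$-walls with the $\cI^\star$-walls.

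The gap is precisely at your key claim $\rho(V(\cP_x))\subseteq R_1$. You reduce it, correctly, to the existence of an edge $[u,u']$ in $V$ with $\rho(u)\in R_1$ and $\rho(u')\notin R_1$, and then write that you ``would combine the 2D nesting of $W_1$ with the facts from \cref{rem:properties-of-Itop} \dots to deduce that $u'$ must in fact be separated from $x$ by $\Itop$.'' That is the statement to be proved, not an intermediate step you can read off: being outside $R_1$ \emph{in projection} is a 2D statement about the wall $W_1$ of $\cI^\star$, while ``separated from $x$ by $\Itop$'' is a 3D statement about $\Atop$; the bridge between them is exactly what is nontrivial. In particular, (a) $W_1$ is a wall of $\cI^\star$, which is built from the full interface $\cI$ and not from $\Itop$, so the projection of $W_1$ need not carry any direct information about where $\Itop$ sits above $\rho(u')$ --- $\rho(u')$ could sit under another (non-nesting) wall, or under a hair-fattened wall, or under a ceiling of $\cI^\star$ at height $0$ that is nevertheless not a ceiling of $\Itop$; and (b) the projected edge $[\rho(u),\rho(u')]$ need not actually cross $\rho(W_1)$ (both endpoints may lie in $\rho(W_1)^c$, one in a finite component and one in the infinite one), so ``crosses $\rho(W_1)$'' is already a leap. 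You do flag this as the main obstacle, which shows good judgment, but as written the step is an assertion rather than a proof, and it is exactly the content that the paper chose to cite rather than re-derive. A correct self-contained version would need to first argue about the walls and ceilings of $\Itop$ (not of $\cI^\star$), show the $\Itop$-analogue of the localization, and then transfer to $\cI^\star$ by noting that enlarging $\Itop$ to $\cI$ only enlarges the walls and hence the region $R_1$; at that point your treatment of the boundary faces and hairs goes through. The remaining bookkeeping (the preliminary reduction $\fW_x\ne\sE$, the case split on $\rho(f)$, the hair argument via $1$-connectivity and \cref{lem: wall-ceiling-geometry}) is sound.
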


\begin{observation}\label{obs:wall-nest-face}
For all faces $f \in \cP_x$, there exists a wall $W$ that nests both $f$ and $x$. Similarly, for any vertex $y \in \cP_x$, there exists a wall that nests both $y$ and $x$.
\end{observation}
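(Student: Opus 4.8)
The statement has two halves, and the plan is to obtain the face half directly from \cref{obs:pillar-given-by-walls} together with a transitivity property of nesting, and then to reduce the vertex half to the face half by a one-step ``straight-up'' argument.

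First I would record the elementary planar fact that nesting composes correctly with the interior relation on walls. (a) If a face $g$ lies on a wall $W$, or on a ceiling nested in $W$, then $g$ is nested in $W$: its projection $\rho(g)$ lies in a subset of the height-$0$ plane that is disjoint from the unbounded component $U_W$ of $\rho(W)^c$, hence $\rho(g)\cap U_W=\varnothing$. (b) If a wall $W'$ is interior to a wall $W$ and a face $g$ is nested in $W'$, then $g$ is nested in $W$: by definition $U_W$ is disjoint from $\rho(W')$, and $U_W$ is connected and unbounded, so $U_W$ sits inside a single component of $\rho(W')^c$, necessarily the unbounded one $U_{W'}$; thus $U_W\subseteq U_{W'}$ and $\rho(g)\cap U_W\subseteq\rho(g)\cap U_{W'}=\varnothing$. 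Both (a) and (b) hold verbatim with a vertex $v$ and the point $\rho(v)$ in place of $g$ and $\rho(g)$. The face half then follows: given $f\in\cP_x$, \cref{obs:pillar-given-by-walls} places $f$ on a wall $W\in\fW_x$, on a wall $W'$ interior to some $W\in\fW_x$, or on a ceiling nested in one of these; in every case (a) followed by at most one application of (b) shows that $f$ is nested in the relevant $W\in\fW_x$, which by definition nests $x$ as well.

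For the vertex half, let $y$ be a vertex of $\cP_x$, so $y\in\Atop^c$ and $\hgt(y)\geq\tfrac12$. Since $\Atop$ contains every vertex of sufficiently large height above $\rho(y)$ (the pillar being finite), there is a least $k\geq1$ with $y+k\ez\in\Atop$; by minimality $y,y+\ez,\dots,y+(k-1)\ez$ all lie in $\Atop^c$ at heights $\geq\tfrac12$ and are connected to $y$, hence all belong to $V(\cP_x)$. Consequently the horizontal face $g:=f_{[y+(k-1)\ez,\,y+k\ez]}$ separates a vertex of $V(\cP_x)$ from a vertex of $\Atop\subseteq V(\cP_x)^c$ and has height $\geq1$, so $g\in F\subseteq\cP_x$; the face half supplies a wall $W$ nesting both $g$ and $x$. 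Finally $\rho(y)$ is the center of the unit square $\rho(g)$, so $\rho(g)\cap U_W=\varnothing$ forces $\rho(y)\notin U_W$, i.e.\ $W$ nests $y$ too.

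None of this is deep; the one point that needs genuine care is the transitivity in (b), together with pinning down what ``$\rho(\cdot)$ not in the infinite component of $\rho(W)^c$'' means for a point versus a unit square (so that ``$W$ nests $g$'' really does entail ``$W$ nests the center of $\rho(g)$''). All of this is routine once the conventions surrounding \cref{lem: wall-ceiling-geometry} are fixed; the only other loose end, the existence of $k$ in the vertex argument, is just the standard finiteness of pillars.
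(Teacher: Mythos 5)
The paper states this as an \texttt{Observation} and provides no proof, so there is no ``paper's own proof'' to compare against; you are filling in an argument the authors thought was clear enough to omit.

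Your argument is correct and is essentially the natural one. The face half really is a direct consequence of \cref{obs:pillar-given-by-walls} combined with the transitivity you establish in (a)/(b): any face of $\cP_x$ lives on some wall in $\fW_x$, on a wall interior to one, or on an interior ceiling of such a wall, and in each case it is nested in the relevant $W\in\fW_x$. Your transitivity lemma (b) is the right form: from $W'$ interior to $W$ you deduce $U_W\subseteq\rho(W')^c$, hence $U_W\subseteq U_{W'}$ by connectedness and unboundedness, and nesting transfers. For the vertex half, the ``straight-up'' reduction is the correct move: you cannot appeal directly to a side face bounding $y$ (its projection is an edge, not the unit square around $\rho(y)$, and moreover $y$ may be an interior vertex of $V(\cP_x)$ with no bounding face of $F$ adjacent at all); climbing vertically to the first $\Atop$ vertex produces a horizontal face $g\in F\subseteq\cP_x$ whose projection is the unit square centered at $\rho(y)$, so nesting of $g$ forces nesting of $y$. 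You correctly flag the two mild presuppositions --- finiteness of $V(\cP_x)$, which holds for the interfaces under consideration (the cluster expansion setting forces $\cI$ and hence $\cP_x$ to be a finite face set), and the convention about what ``projection not in the infinite component of $\rho(W)^c$'' means for a point versus a face, which is resolved precisely because $\rho(y)$ is the center of $\rho(g)$ and so lies in the same component. The proof is complete.
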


The decomposition of the full interface into walls and ceilings, though powerful in proving rigidity, is not sufficient in studying the pillar. We instead decompose the pillar itself into increments.

\begin{definition}[Spine, base, increments, cut-height/point] We call a half integer $h$ a cut height of $\cP_x$ if there is only one vertex $v$ of $\cP_x$ with height $h$, and the only faces of $\cP_x$ at height $h$ are the four faces bounding the sides of $v$. We call $v$ a cut-point of $\cP_x$, and we enumerate the cut-points by increasing height.
The spine of $\cP_x$, denoted $\cS_x$, is the set of faces of $\cP_x$ with height $\geq \hgt(v_1)$. The base $\sB_x$ will be the remaining faces of $\cP_x$. 
Suppose that the spine has $\sT + 1$ cut-points. For $i \leq \sT$, the $i$-th increment $\sX_i$ is the set of faces of $\cP_x$ in the slab $\cL_{[\hgt(v_i), \hgt(v_{i+1})]}$. The vertices of $\sX_i$ are the vertices of $\cP_x$ in the same slab. Sometimes we will write $\sF(\sX_i)$ to reference specifically the face set of the increment. Note that the spine does not necessarily end at a cut-point, and so there may also be a remainder increment which is the set of faces of $\cP_x$ with height in $[\hgt(v_{\sT+1}), \infty)$. We denote this by $\sX_{>\sT}$ or $\sX_{\sT + 1}$. A trivial increment consists of just two vertices $v, v + \ez$, where the faces of the increment are just the 8 faces which bound the sides of the these vertices. We denote such an increment by $X_\trivincr$.
Finally, we can define the spine, base, and increments also with respect to the pillar shell, and denote these by $\cS_x^\mathrm{o}, \sB_x^\mathrm{o}, \sX_i^\mathrm{o}$ respectively. Note however that the cut-points of $\cP_x$ and $\cP_x^\mathrm{o}$ are the same.
\end{definition}

\begin{definition}[Excess area of increments]
For an increment $\sX_i$, we define the excess area $\fm(\sX_i) = |\sF(\sX_i)| - 4(\hgt(v_{i+1}) - \hgt(v_i)+1)$, i.e., the number of extra faces compared to a stack of trivial increments of the same height. This definition applies to the remainder increment if we set $\hgt(v_{\sT + 2}) = \hgt(\cP_x) - 1/2$. Note that if $\sX_i$ is not a trivial increment, then the fact that each height in between $\hgt(v_{i+1})$ and $\hgt(v_i)$ is not a cut-point implies that
$\fm(\sX_i) \geq (\hgt(v_{i+1}) - \hgt(v_i) - 1) \vee 1$,
which implies that \begin{equation}\label{eq:incr-excess-area}
|\sF(\sX_i)| \leq 5\fm(\sX_i) + 8\,.
\end{equation}
\end{definition}

\begin{proposition} [Exponential tail on height of pillar]\label{prop:exp-tail-pillar-height}
There exists $\beta_0$ and a constant $C>0$ such that for every $\beta \geq \beta_0$, for all $x$, and for all $h \geq 1$, 
\begin{equation*}
    \bar{\mu}_n(\hgt(\cP_x) \geq h) \leq \exp\big(-4(\beta - C)h\big)\,.
\end{equation*}
\begin{proof}
This is a direct consequence of the exponential tail on the size of groups of walls. We direct the reader to the proof of \cite[Theorem 2.26]{GL_max} to see how it follows, and just provide a sketch here. The idea is that in order for the pillar at $x$ of the $\Top$ interface to reach height $h$, there needs to be a sequence of nested walls $(W_{x_s})$ nesting $x$ such that such that $\sum_s \fm(W_{x_s}) = h_1$, and a different sequence of nested walls $(W_{y_t})$ interior to a ceiling of some $W_{x_s}$ such that $\sum_t \fm(W_{y_t}) \geq 4h - h_1$, for some $h_1$. 
The crucial bound to prove is that for $\fF_x$ denoting the group of walls of the nested sequence $\fW_x$, we still have an exponential tail:
\begin{equation}\label{eq:tail-on-nested-wall-group}
\bar{\mu}_n(\fm(\fF_x) \geq r) \leq Ce^{-(\beta - C)r}
\end{equation}
for some $C>0$, and one can prove this using the exponential tails on groups of walls established in \cref{prop:grimmett-exp-tail-grp-of-walls}. Then, the proof concludes by summing over possible values of $h_1$.
\end{proof}
\end{proposition}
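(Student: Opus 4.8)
The plan is to reduce the event $\{\hgt(\cP_x)\ge h\}$ to a statement about the excess areas of walls nesting $x$ and of walls nested inside their ceilings, and then bound the probability of the latter via the exponential tails on groups of walls, \cref{prop:grimmett-exp-tail-grp-of-walls}. The factor $4$ in the rate is, heuristically, the cost of building a vertical column: a pillar reaching height $h$ above $x$ contains a $1$-connected column of at least $\sim 4h$ vertical faces ($4$ per unit of height), each of which contributes a factor $e^{-\beta}$ through the term $e^{-\beta|\cI|}$ in the cluster expansion \cref{eq:CE}, while the entropy of such columns is only exponential in $h$ and so gets absorbed into the constant $C$. To make this rigorous I would follow the wall-based argument of \cite[Thm.~2.26]{GL_max}: by \cref{obs:pillar-given-by-walls}, every face of $\cP_x$ lies in a wall nesting $x$, in a wall interior to such a wall, or in an interior ceiling of such a wall, and tracing a $1$-connected face set $F\subseteq\cP_x$ realizing $\hgt(\cP_x)\ge h$ upward from height $1/2$ shows the climb is carried by a nested sequence $\fW_x=(W_{x_1},\dots,W_{x_s})$ of walls nesting $x$ with $\sum_s\fm(W_{x_s})=h_1$ (reaching the height $h_1$ of a ceiling over $x$), together with a further nested sequence $(W_{y_1},\dots,W_{y_t})$ of walls interior to a ceiling of some $W_{x_s}$ with $\sum_t\fm(W_{y_t})\ge 4h-h_1$ (the factor $4$ entering as in the elementary count behind \cref{eq:incr-excess-area}), for some $h_1\ge 0$. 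The one new wrinkle relative to the Ising case is that the pillar carries hairs, but \cref{obs:pillar-given-by-walls} already absorbs those into the relevant walls and ceilings, so the decomposition survives.

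The key estimate is then \cref{eq:tail-on-nested-wall-group}: writing $\fF_x$ for the group of walls containing the nested sequence $\fW_x$ (more precisely, the union of the finitely many groups that the walls of $\fW_x$ belong to), one has $\bar\mu_n(\fm(\fF_x)\ge r)\le Ce^{-(\beta-C)r}$. This follows from \cref{prop:grimmett-exp-tail-grp-of-walls}: for any fixed admissible family of the other groups, a group $F_x$ at $x$ with $\fm(F_x)=k$ has conditional probability at most $e^{-(\beta-C)k}$; summing over the at most $s^{O(k)}$ such groups (by \cref{lem:num-of-0-connected-sets}) and then over $k\ge r$ gives the bound for $\beta$ large. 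The same estimate applies to the interior nested sequence sitting inside a ceiling of $W_{x_s}$: after revealing the outer walls, the region interior to that ceiling is insulated from the exterior (by nesting and a domain Markov argument), so its total excess area $\ge 4h-h_1$ again has probability at most $Ce^{-(\beta-C)(4h-h_1)}$.

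Combining via a union bound over $h_1\in\{0,1,\dots,4h\}$ gives
\[
\bar\mu_n(\hgt(\cP_x)\ge h)\ \le\ \sum_{h_1=0}^{4h} Ce^{-(\beta-C)h_1}\cdot Ce^{-(\beta-C)(4h-h_1)}\ =\ (4h+1)\,C^2 e^{-4(\beta-C)h}\,,
\]
and absorbing the polynomial prefactor $(4h+1)C^2$ into $C$ (legitimate for $\beta$ large and $h\ge 1$) yields the claimed $\bar\mu_n(\hgt(\cP_x)\ge h)\le e^{-4(\beta-C)h}$. I expect the main obstacle to be the first step: making precise the split of the climb into ``nested walls of $x$'' plus ``nested walls inside a ceiling'' while dragging along the FK hairs, and in particular justifying the factor-$4$ accounting above the ceiling rather than a plain factor $1$ (this is exactly the place where the FK geometry differs from, yet parallels, the Ising treatment of \cite[Thm.~2.26]{GL_max}). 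The probabilistic estimates in the remaining steps are routine given \cref{prop:grimmett-exp-tail-grp-of-walls,lem:num-of-0-connected-sets}; the only minor care needed is that $\fW_x$ may meet several distinct groups of walls, which is handled by a union bound over the boundedly many such groups (bounded in terms of the total excess area).
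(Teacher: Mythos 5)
Your proposal follows the same route as the paper's sketch: decompose via \cref{obs:pillar-given-by-walls} into walls nesting $x$ plus interior walls, establish \cref{eq:tail-on-nested-wall-group} from \cref{prop:grimmett-exp-tail-grp-of-walls} (with a union bound over the finitely many groups $\fW_x$ can meet), and conclude by summing over $h_1$. The one small slip is your parenthetical ``reaching the height $h_1$ of a ceiling over $x$'': a nested sequence with $\sum_s\fm(W_{x_s})=h_1$ supports a ceiling over $x$ at height only about $h_1/4$, since a wall contributes roughly $4$ vertical faces per unit of ceiling height, and this is precisely why the remaining interior walls must account for excess area $\geq 4h-h_1$ rather than $4(h-h_1)$; the formula you then write matches the paper and your estimate goes through unchanged.
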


\begin{observation}\label{obs:close-edge}
We have $\bar{\mu}_n(\omega_e = 0 \mid
\omega\restriction_{\Lambda_n\setminus \{e\}} \equiv \eta\restriction_{\Lambda_n\setminus \{e\}}) \geq 1 - p$
for every fixed configuration $\eta$ and any edge $e$. The exact probability is either $1 - p$ or $\frac{q(1-p)}{q(1-p) + p}$ depending on whether closing $e$ creates a new open cluster or not. However, the latter term is increasing in $q$, and thus minimized at $q = 1$ where it is equal to $1 - p$. As a consequence, if $A$ is any event such that for every configuration $\omega \in A$, closing the edge $e$ will not take $\omega$ out of $A$, then we can sum over $\omega$ to get 
\[\mu_n(A) \leq (1-p)^{-1} \mu_n(A, \omega_e = 0)\,.\] In fact, $e$ can even be a random edge depending on $\omega \in A$. Finally, if $e$ depends on $\omega$ in such a way that closing $e$ always creates an additional open cluster, then the above inequality can be strengthened to \[\mu_n(A) \leq \frac{q(1-p)+p}{q(1-p)}\mu_n(A, \omega_e = 0) = \frac{e^\beta+q-1}{q}\mu_n(A, \omega_e = 0)\,.\]
\end{observation}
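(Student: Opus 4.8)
\emph{Proof proposal.} I would first pin down the one-edge conditional law of $\mu_n$ by a direct computation with the FK weights, and then deduce the two summation estimates by a weight-comparison argument along the ``close-the-edge'' map. For the first part, fix an edge $e=[u,v]$ and a configuration $\eta$ on $\Lambda_n\setminus\{e\}$ (as usual, on a finite truncation which we then let grow). Writing $\kappa_1,\kappa_0$ for the number of open clusters when $\omega_e=1$ resp.\ $\omega_e=0$ and $\omega$ agrees with $\eta$ off $e$, the unnormalized FK weights of the two options are proportional to $p\,q^{\kappa_1}$ and $(1-p)\,q^{\kappa_0}$, so
\[
\mu_n\big(\omega_e=0 \given \omega\restriction_{\Lambda_n\setminus\{e\}}\equiv\eta\restriction_{\Lambda_n\setminus\{e\}}\big)=\frac{(1-p)\,q^{\kappa_0-\kappa_1}}{p+(1-p)\,q^{\kappa_0-\kappa_1}}\,.
\]
Deleting a single edge from a graph either leaves its number of components unchanged or raises it by exactly one, so $\kappa_0-\kappa_1\in\{0,1\}$; the value $0$ gives exactly $1-p$, and the value $1$ gives $\tfrac{q(1-p)}{q(1-p)+p}=\big(1+\tfrac{p}{(1-p)q}\big)^{-1}$, which is increasing in $q$, hence at least its value $1-p$ at $q=1$. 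This proves the bound, identifies the two possible exact values, and records that the case $\kappa_0-\kappa_1=1$ (closing $e$ creates a new cluster) is precisely the one giving the larger value.

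Next, for the consequence with a fixed edge $e$: let $A$ be an event with $\omega\in A\Rightarrow\omega^{e}\in A$, where $\omega^{e}$ is $\omega$ with coordinate $e$ reset to $0$. I would split $A=(A\cap\{\omega_e=0\})\sqcup(A\cap\{\omega_e=1\})$; on the second piece the map $\omega\mapsto\omega^{e}$ is injective (its inverse reopens $e$) and lands in $A\cap\{\omega_e=0\}$, while the FK weights give $\mu_n(\omega)/\mu_n(\omega^{e})=\tfrac{p}{1-p}q^{\kappa(\omega)-\kappa(\omega^{e})}\le\tfrac{p}{1-p}$ since $\kappa(\omega)\le\kappa(\omega^{e})$ and $q\ge1$. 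Summing over $\omega\in A\cap\{\omega_e=1\}$ and using injectivity gives $\mu_n(A\cap\{\omega_e=1\})\le\tfrac{p}{1-p}\mu_n(A\cap\{\omega_e=0\})$, whence $\mu_n(A)\le(1-p)^{-1}\mu_n(A\cap\{\omega_e=0\})$. For a random edge $e=e(\omega)$ the identical argument goes through once one checks that $\omega\mapsto\omega^{e(\omega)}$, restricted to $\{\omega\in A:\omega_{e(\omega)}=1\}$, is still an injection into $\{\omega\in A:\omega_{e(\omega)}=0\}$ — which holds because, in the uses we make of it, $e(\cdot)$ is unchanged under closing $e(\cdot)$, so two distinct configurations cannot share an image; the per-configuration weight bound is untouched.

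Finally, for the strengthening: if along $A$ closing the designated edge always creates a new open cluster, then $\kappa(\omega^{e})=\kappa(\omega)+1$ exactly, so the weight ratio sharpens to $\mu_n(\omega)/\mu_n(\omega^{e})=\tfrac{p}{(1-p)q}$; rerunning the previous paragraph with this ratio yields $\mu_n(A)\le\big(1+\tfrac{p}{(1-p)q}\big)\mu_n(A\cap\{\omega_e=0\})=\tfrac{q(1-p)+p}{q(1-p)}\mu_n(A\cap\{\omega_e=0\})$, and substituting $p=1-e^{-\beta}$ rewrites $\tfrac{q(1-p)+p}{q(1-p)}$ as $\tfrac{e^{\beta}+q-1}{q}$. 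There is no real obstacle here — the statement is elementary — and the only points that require care are bookkeeping ones: in the first part, that deleting an edge never decreases $\kappa$ (so the disconnecting case is the worst for the conditional probability) and that the monotonicity in $q$ points in the favorable direction; and in the second part, that the close-the-edge map is a genuine injection into the target set, in particular in the random-edge formulation.
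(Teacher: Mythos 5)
Your proof is correct and follows essentially the same route as the paper's own (which is given inline in the observation): a direct one-edge FK-weight computation to identify the two possible conditional values, then a close-the-edge injection argument with the pointwise weight ratio to deduce the summation bounds, sharpened in the case where closing always disconnects. One small bookkeeping point worth flagging: the observation's opening sentence is stated for $\bar\mu_n=\mu_n(\cdot\mid\sep_n)$, whereas you compute the one-edge conditional under $\mu_n$. This gap is harmless and easy to fill — since $\sep_n$ is decreasing, either both values of $\omega_e$ are compatible with $\sep_n$ given $\eta$ (in which case the $\bar\mu_n$ and $\mu_n$ conditionals coincide) or only $\omega_e=0$ is (in which case the $\bar\mu_n$ conditional is $1$), so the bound $\ge 1-p$ persists — but it should be noted, as the passage from $\mu_n$ to $\bar\mu_n$ in the first claim is the one step your computation does not literally cover. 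The consequences you actually prove are stated for $\mu_n$, which is also what the paper uses downstream, so the substance of the argument is unaffected. Your articulation of the injectivity requirement for the random-edge case (that $e(\cdot)$ be stable under closing $e(\cdot)$) is a reasonable sufficient condition and in fact matches the way the observation is invoked in the paper, where the edge is always fixed once some discrete data (e.g.\ $H_1$) has been revealed.
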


\begin{proposition}\label{prop:bound-RC-rate}
There exist $\beta_0$ and $C > 0$ such that for every $\beta \geq \beta_0$, for all $x$, and for all $h \geq 1$, 
\begin{equation*}
 -4\beta \leq \frac 1h \log\bar{\mu}_n(\hgt(\cP_x) \geq h) \leq -4(\beta-C)\,.
\end{equation*}
\end{proposition}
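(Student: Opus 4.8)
The statement packages two inequalities. The right-hand one, $\tfrac1h\log\bar\mu_n(\hgt(\cP_x)\ge h)\le -4(\beta-C)$, is nothing but \cref{prop:exp-tail-pillar-height} after taking logarithms and dividing by $h$, so the whole content is the matching lower bound $\bar\mu_n(\hgt(\cP_x)\ge h)\ge(1-p)^{4h}$ (recall $1-p=e^{-\beta}$, so this is $e^{-4\beta h}$).

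For the lower bound the plan is to produce, with adequate probability, interfaces whose pillar over $x$ is a straight column of height $h$, and to price them through cluster expansion. Step one: use the rigidity estimate \cref{prop:grimmett-localization} together with the exponential tails on walls \cref{prop:grimmett-exp-tail-grp-of-walls} to restrict attention to the event $\mathcal D$ that $\rho(x)$ and a fixed finite neighbourhood of it are all ceiling faces of $\cI$ (so $\cI$ carries nothing above that neighbourhood); then $\bar\mu_n(\mathcal D)\ge 1-\epsilon_\beta$. Step two: on $\mathcal D$, apply the injective map $\Phi$ that deletes the face $\rho(x)$ and inserts the tower $T_h$ consisting of the $4h$ vertical faces bounding the segment from $x$ to $x+h\ez$ together with the single horizontal face capping it at height $h$; one checks that $\Phi(\cI)$ is again an interface separating $\partial\Lambda_n^+$ from $\partial\Lambda_n^-$, and that under $\sep_n$ the column $x,\dots,x+(h-1)\ez$ is forced into $\Atop^c$ — it hangs off the lower half-space through the now-open edge $[x-\ez,x]$, which cannot reach $\partial\Lambda_n^+$ — so that $\hgt(\cP_x(\Phi(\cI)))=h$ and $\Phi(\cI)$ lies in the target event. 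Step three: by \cref{prop:grimmett-cluster-exp},
\[
\frac{\bar\mu_n(\Phi(\cI))}{\bar\mu_n(\cI)}=(1-e^{-\beta})^{\Delta|\partial\cI|}\,e^{-\beta\,\Delta|\cI|}\,q^{\Delta\kappa_\cI}\exp\Big(\sum_{f\in\Phi(\cI)}\g(f,\Phi(\cI))-\sum_{f\in\cI}\g(f,\cI)\Big),
\]
where $\Delta|\cI|=|T_h|-1=4h$ exactly (the cap face offsets the deleted base face), producing the main factor $e^{-4\beta h}=(1-p)^{4h}$; the remaining factors are lower order, since $\Delta\kappa_\cI=O(1)$ and only $O(h)$ faces change, all within distance $O(1)$ of a one-cell-wide tube, so \cref{eq:g-bound-1-face,eq:g-bound-2-faces} and the exponential decay of $\g$ bound $|\Delta|\partial\cI||$ and the $\g$-sum difference by $O(\epsilon_\beta h)+O(1)$. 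Summing over $\cI\in\mathcal D$ and using injectivity of $\Phi$ gives $\bar\mu_n(\hgt(\cP_x)\ge h)\ge\sum_{\cI\in\mathcal D}\bar\mu_n(\Phi(\cI))$, which is $(1-p)^{4h}$ up to these corrections and $\bar\mu_n(\mathcal D)$.

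The main obstacle is purely the bookkeeping needed to make the clean exponent $4\beta$ — rather than $4\beta+O(\epsilon_\beta)$ — survive: this requires (a) the exact identity $|T_h|-1=4h$, hence the specific choice of one capping face over one deleted base face; (b) keeping the perturbation one cell wide so the three correction factors are genuinely sub-exponential in $h$; and (c) supplying enough entropy — allowing the column to start at any of the boundedly many sites adjacent to $x$, or to wiggle — to absorb the residual $O(1)$ loss and $\bar\mu_n(\mathcal D)<1$. If one is content with $4\beta$ up to an additive $\epsilon_\beta$ (which is all that \cref{prop:comparison-of-rates} ultimately needs, through $\alpha\le 4\beta$, so the sharper reading should be attainable with care), there is a shorter route that avoids cluster expansion entirely: force the $\sim4h$ edges of the tube and its cap closed, the edge below $x$ closed, and the $h-1$ interior vertical edges open, applying \cref{obs:close-edge} edge by edge, and then transfer from $\mu_n$ to $\bar\mu_n$ by FKG, using that both $\{\hgt(\cP_x)\ge h\}$ and $\sep_n$ are decreasing so that $\bar\mu_n(\hgt(\cP_x)\ge h)\ge\mu_n(\hgt(\cP_x)\ge h)$.
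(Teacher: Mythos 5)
Your upper bound is handled correctly — it is just \cref{prop:exp-tail-pillar-height} read off (the paper's "follows from \cref{prop:grimmett-localization}" appears to be a misattribution). For the lower bound, the paper does \emph{not} use a cluster-expansion map: it does something much closer to your "shorter route". It defines $F$ as the $4h+1$ faces bounding the sides and top (only) of the column, $E$ the dual edges, and the event $A$ of configurations agreeing on $E^c$ with some $\eta\in\{f_{[x,x-\ez]}\leftrightarrow\infty\}$ (the ceiling event of \cref{prop:grimmett-localization}); then it observes $A\cap\{\omega_e=0:e\in E\}\subseteq E_h^x$, applies \cref{obs:close-edge} $4h+1$ times \emph{inside} $\bar\mu_n$ (exploiting that $\sep_n$ is decreasing, so closing edges never exits $\sep_n$), and closes with $\bar\mu_n(A)\ge 1-\epsilon_\beta$. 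Your first route, by cluster expansion, is a genuinely different and heavier machine; it can be made to work but is overkill here and does not give a cleaner constant (the $g$-term bookkeeping costs you $O(\epsilon_\beta h)$, which the paper's proof also pays, so neither gives literally $-4\beta$ — both give $-4\beta-O(1/h)-\epsilon_\beta$).

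The real problem is in your "shorter route". First, the FKG step $\bar\mu_n(\hgt(\cP_x)\ge h)\ge\mu_n(\hgt(\cP_x)\ge h)$ is true but vacuous: the event $\{\hgt(\cP_x)\ge h\}$ (for $h\ge 1$) forces $x\in\Atop^c$, which (for $n$ large) forces $\sep_n$, so $\mu_n(\hgt(\cP_x)\ge h)\le\mu_n(\sep_n)$, which decays exponentially in $n$ — not in $h$. You cannot get a useful lower bound by first lower-bounding under $\mu_n$ and then transferring. The paper avoids this entirely by never leaving $\bar\mu_n$: \cref{obs:close-edge} applies verbatim to $\bar\mu_n$ because the event being closed-off already lives in $\sep_n$ and closing edges preserves $\sep_n$. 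Second, once the bottom edge $[x,x-\ez]$ is also closed, the column is a fully isolated finite open cluster; it then sits in $\Vtop^c$, but whether it belongs to $\Atop^c$ (the \emph{infinite} component of $\Vtop^c$) or is absorbed into $\Atop$ as a finite bubble depends on whether $x-\ez$ lies in the infinite component of $\Vtop^c$ — which your forcing does not control. The paper's event $A$ is exactly what anchors this: it pins the configuration outside $E$ to a "flat ceiling through $x$" configuration, guaranteeing $x-\ez\in\Atop^c$ and $x+h\ez\in\Atop$, so that the closed tube really does produce a pillar of height exactly $h$. Third, forcing the $h-1$ interior vertical edges open is unnecessary (the pillar vertex set is a connected component under $\Lambda_n$-adjacency, not open adjacency) and only costs you an extra factor.

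In short: drop the cluster-expansion route, keep the edge-conditioning idea, but (a) restrict first to the ceiling event $A$, (b) only close the $4h+1$ side/top faces via \cref{obs:close-edge}, (c) carry $\sep_n$ along throughout rather than transferring via FKG.
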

\begin{proof}
The upper bound follows from \cref{prop:grimmett-localization}
 above. For the lower bound,  let $F$ be the $4h+1$ faces that surround the sides and top of the column of $h$ vertices, $\{x, x + \ez, \ldots, x + (0, 0, h - 1)\}$. Let $E$ be the set of edges $e$ such that $f_e \in F$. Finally, define $A$ as the set of configurations $\omega$ such that $\omega \restriction_{E^c} = \eta \restriction_{E^c}$ for some $\eta \in \{f_{[x, x - \ez]} \leftrightarrow \infty\}$ (defined as in \cref{prop:grimmett-localization}). Note that $A \cap \{\omega_e = 0\,:\; e \in E\} \subseteq \{\hgt(\cP_x) \geq h\}$. With this definition of $A$, we can apply \cref{obs:close-edge} to close the edges of $E$ one by one, so that
\begin{equation*}
\frac{\mu_n(\hgt(\cP_x) \geq h, \sep_n)}{\mu_n(\sep_n)} \geq \frac{\mu_n(A, \{\omega_e = 0\,:\; e \in E\}, \sep_n)}{\mu_n(\sep_n)} \geq \frac{\mu_n(A, \sep_n)}{\mu_n(\sep_n)}e^{-\beta(4h+1)} \geq (1 - \epsilon_\beta)e^{-\beta(4h+1)}\,.\qedhere
\end{equation*}
\end{proof}

\section{Finer properties of tall pillars}\label{sec:pillar-maps}

This section focuses on proving analogues for the results of \cite[Section 4]{GL_DMP} in the random-cluster setting. There, it was shown that (in the Ising model) a typical tall pillar has a trivial base, and is isolated from the rest of the interface. This is crucial for us because on this isolated space of pillars, we no longer run into the issue that the faces of $\cP_x$ might be 1-connected to other walls of $\cI$. Furthermore, many times we will want to study the effects of straightening or deleting parts of the pillar using the cluster expansion expression established in \cref{prop:grimmett-cluster-exp}. This is in general a complicated endeavor because the ``$g$"-terms will see the interactions between a shifted or deleted increment and nearby walls. Moving to this isolated space first automatically controls such interactions, and thereby greatly simplifies all the cluster expansion arguments which follow. 
Several results in this section follow verbatim from the work in \cite{GL_DMP}, and we will omit those proofs. Our primary contribution here is in showing that the new terms related to $|\partial \cI|$ and $\kappa_\cI$ in the cluster expansion do not pose any problems to the argument provided in the Ising model, which we show in \cref{lem:control-1-connected-faces,lem:control-interface-clusters}.

\begin{definition}[Truncated interface]\label{def:truncated-interface} We can define a truncated interface $\cI \setminus \cP_x$ by removing from $\cI$ every face that is in $\cP_x$, and adding in a face below every vertex $v \in \cP_x$ with $\hgt(v) = 1/2$. Note the abuse of notation in that $\cI \setminus \cP_x$ as a set of faces includes more than $\cI$ set-minus $\cP_x$, because we need to fill in the gaps left by removing $\cP_x$ to ensure that $\cI \setminus \cP_x$ is still an interface. We can similarly define $\cI \setminus \cS_x$ by removing every face that is in $\cS_x$ and adding in the face below $v_1$.
\end{definition}

\begin{definition}[Isolated pillar]\label{def:isolated-pillar}
Let $\Iso_{x, L, h}$ be the set of interfaces $\cI$ satisfying the following:
\begin{enumerate}
\item \label{it:iso-spine} The pillar $\cP_x$ has an empty base (equivalently, $x$ itself is the first cut-point), its increment sequence satisfies
\[\fm(\sX_t) \leq \begin{cases}
0 & \text{if } t \leq L^3\\
t & \text{if } t > L^3
\end{cases}\,,\]
and the number of faces in the spine $\cS_x$ is at most $10h$.
\item \label{it:iso-walls} The walls $(\tilde{W}_y)$ of $\cI \setminus \cP_x$  satisfy
\[\fm(\tilde{W}_y) \leq \begin{cases}
0 & \text{if } d(y, x) \leq L\\
 \log(d(y, x)) & \text{if } L < d(y, x) < L^3h
\end{cases}\,,\]
and $f_{[x, x - \ez]} \notin \cI$.
\end{enumerate}
\end{definition}

Whereas the notion of an isolated pillar is the primary object of interest in our proofs, as mentioned in \cref{sec:subsec-LD-rate-Potts}, we will also need its analog for the pillar shell $\cP_x^{\mathrm o}$ (see \cref{def:pillar-shell}), so as to alleviate information leaking to the FK--Potts model on the interior of the pillar.

\begin{definition}[Isolated pillar shell]
Analogously, we can define $\Iso_{x, L, h}^\mathrm{o}$ as the set of interfaces such that
\begin{enumerate}
\item The pillar $\cP_x^\mathrm{o}$ has an empty base (equivalently, $x$ itself is the first cut-point), and increment sequence satisfying:
\[\fm(\sX_t^\mathrm{o}) \leq \begin{cases}
0 & \text{if } t \leq L^3\\
t & \text{if } t > L^3
\end{cases}\,,\]
and the number of faces in the spine $\cS_x^\mathrm{o}$ is at most $ 10h$.
\item The walls $(\tilde{W}_y)$ of $\cI \setminus \cP_x$  satisfy
\[\fm(\tilde{W}_y) \leq \begin{cases}
0 & \text{if } d(y, x) \leq L\\
 \log(d(y, x)) & \text{if } L < d(y, x) < L^3h
\end{cases}\,,\]
and $f_{[x, x - \ez]} \notin \cI$.
\end{enumerate}
\end{definition}

Note that $\Iso_{x, L, h} \subseteq \Iso_{x, L, h}^\mathrm{o}$. One nice property of these spaces is that the pillar is well separated from the rest of the interface, in the sense of \cref{prop:cone-seperation}
and \cref{lem:cone-separation} below. 

For any $L, h$, we can define the following cones:
\begin{align*}
\Cone_x^1 &= \{f: \hgt(f) > L^3, d(\rho(f), x) \leq \hgt(f)^2 \wedge 10h\}\,,\\
\Cone_x^2 &= \{f: d(\rho(f), x) \geq L, \hgt(f) \leq (\log d(\rho(f), x))^2\}\,.
\end{align*}
 Let $\bF_{\parallel}$ be the $4L^3$ vertical and $L^3 + 1$ horizontal bounding faces of the vertex column $\{x, \ldots x + (0, 0, L^3 - 1)\}$. Define the cylinder $\Cyl_{x,r} := \{f\in \mathscr F(\Z^3): d(\rho(f),x) \le r\}$

\begin{proposition}[{\cite[Claim 4.4]{GL_DMP}}] \label{prop:cone-seperation}

Fix any $L$ large and any $h$. Any interface $\cI \in \Iso_{x,L,h}^\mathrm{o}$ satisfies
\begin{align}\label{eq:interface-containment-definitions}
    \cI \subseteq \underbrace{(\Cone_{x}^1\cap\cL_{<10 h})}_{\bF_{\triangledown}} \;\cup\; \bF_{\parallel} \;\cup\;  \underbrace{(\cL_{0}\cap \Cyl_{x,L})}_{\bF_{-}} \cup \underbrace{\Cone_x^2}_{\bF_{\curlyvee}} \;\cup\; \underbrace{\Cyl^c_{x,L^3 h}}_{\bF_{\ext}}\,.
\end{align}
For $\bF_{\triangledown},\bF_{\parallel}, \bF_{-}, \bF_{\curlyvee},\bF_{\ext}$ defined as above, the right-hand side is a disjoint union, 
\begin{align*}
    (\bF_{\triangledown} \cup \bF_{\parallel})\cap (\bF_{-} \cup \bF_{\curlyvee}\cup \bF_{\ext})= \emptyset
\end{align*}
and the pillar $\cP_x$ is a subset of the first two sets above, while $\cI \setminus \cP_x$ is a subset of the latter three sets. 
\end{proposition}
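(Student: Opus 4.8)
The statement to prove is \cref{prop:cone-seperation}: any interface $\cI \in \Iso_{x,L,h}^\mathrm{o}$ is contained in the disjoint union of the five face-sets $\bF_{\triangledown},\bF_{\parallel},\bF_{-},\bF_{\curlyvee},\bF_{\ext}$, with the pillar $\cP_x$ living in the first two and the truncated interface $\cI\setminus\cP_x$ in the last three. The core idea is that being in the isolated space forces an \emph{excess-area budget} on every wall and every increment, and excess area controls spatial extent: a wall (or increment) with small excess area cannot reach far from its projection (resp.\ cannot rise far above its base). So the plan is to convert each of the two defining conditions of $\Iso_{x,L,h}^\mathrm{o}$ into a geometric containment, then check the containments match the claimed cones.

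First I would handle the pillar/spine side. By \cref{it:iso-spine} (the shell version), the base is empty, the spine has at most $10h$ faces, and $\fm(\sX_t)\le 0$ for $t\le L^3$, $\fm(\sX_t)\le t$ afterwards. Since the spine has $\le 10h$ faces and trivial increments contribute $4$ faces per unit height, the spine reaches height at most $\approx 10h/4 < 10h$, giving the $\cL_{<10h}$ constraint. For the horizontal extent: the first $L^3$ increments are trivial (excess area $0$), so they form the straight vertical column whose bounding faces are exactly $\bF_{\parallel}$; this is where the definition of $\bF_{\parallel}$ as the $4L^3$ vertical and $L^3+1$ horizontal bounding faces of the column $\{x,\dots,x+(0,0,L^3-1)\}$ comes from. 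Above height $L^3$, an increment $\sX_t$ with $\fm(\sX_t)\le t$ occupying a slab near height $\hgt(v_t)\approx t$ can protrude only $O(\sqrt{\fm})$—more precisely, by the isoperimetric-type bound \eqref{eq:incr-excess-area} relating $|\sF(\sX_t)|$ to $\fm(\sX_t)$, together with the fact that a connected set of $k$ faces has diameter $\le k$—laterally by at most roughly $\fm(\sX_t)\le t \lesssim \hgt(f)$, and since also the total spine size is $\le 10h$ the displacement is at most $10h$ in all cases. Hence every pillar face $f$ above height $L^3$ satisfies $d(\rho(f),x)\le \hgt(f)^2\wedge 10h$, i.e.\ $f\in\Cone_x^1\cap\cL_{<10h}=\bF_{\triangledown}$. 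This shows $\cP_x\subseteq\bF_{\triangledown}\cup\bF_{\parallel}$.

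Next the interface-minus-pillar side. By \cref{it:iso-walls}, the walls $\tilde W_y$ of $\cI\setminus\cP_x$ satisfy $\fm(\tilde W_y)\le 0$ for $d(y,x)\le L$ and $\fm(\tilde W_y)\le\log d(y,x)$ for $L<d(y,x)<L^3h$, and $f_{[x,x-\ez]}\notin\cI$. A wall with $\fm=0$ (so $N(W)=|\rho(W)|$) is a flat wall that sits at height $0$ over its projection; all such walls with projection within distance $L$ of $x$ lie in $\cL_0\cap\Cyl_{x,L}=\bF_{-}$. A wall at projection-distance $d=d(y,x)\in(L,L^3h)$ has $\fm\le\log d$, so by the same diameter-versus-excess-area reasoning (using \cite[Lem.~13]{GielisGrimmett02}, i.e.\ $N(W)\ge\frac{14}{13}|\rho(W)|$ and $N(W)\ge\frac15|W|$, so $|W|$ is controlled by $\fm(W)$) it rises to height at most $O(\fm(W))\le O(\log d)\le(\log d)^2$, placing it in $\Cone_x^2=\bF_{\curlyvee}$; and walls at distance $\ge L^3h$ are by definition in $\Cyl^c_{x,L^3h}=\bF_{\ext}$. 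Since by \cref{obs:pillar-given-by-walls} the pillar is made of the walls nesting $x$ together with their interior walls/ceilings, and those are precisely the walls removed in passing to $\cI\setminus\cP_x$, the remaining walls of $\cI\setminus\cP_x$ are exactly the ones just classified, so $\cI\setminus\cP_x\subseteq\bF_{-}\cup\bF_{\curlyvee}\cup\bF_{\ext}$, and $\cI=\cP_x\cup(\cI\setminus\cP_x)$ is contained in the union of all five. Finally, disjointness $(\bF_{\triangledown}\cup\bF_{\parallel})\cap(\bF_{-}\cup\bF_{\curlyvee}\cup\bF_{\ext})=\emptyset$ is a direct check on heights and projections: $\bF_{\triangledown}$ has height $>L^3$ while $\bF_{-}$ has height $0$ and $\bF_{\curlyvee}$ has height $\le(\log d)^2$ which is $<L^3$ for $d<e^{L^{3/2}}$ (and the regime $d<L^3h$ is covered for $L$ large); $\bF_{\parallel}$ has projection-distance $0$ from $x$ while $\bF_{-}$ is restricted to height $0$ but $\bF_\parallel$'s horizontal faces at heights $1,\dots,L^3$ and vertical faces are disjoint from it after checking the one face at height $0$ is excluded via $f_{[x,x-\ez]}\notin\cI$; and $\bF_{\ext}$ is at projection-distance $\ge L^3h$, far from both. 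The various $\wedge$/$\vee$ thresholds in the cone definitions are exactly calibrated to make these comparisons go through for $L$ large.

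\textbf{Main obstacle.} The delicate point is the quantitative step ``small excess area $\Rightarrow$ small spatial reach,'' applied uniformly to both increments of the spine and walls of the truncated interface, and checking it is tight enough to land inside the specific cones $\Cone_x^1,\Cone_x^2$ (with their $\hgt(f)^2\wedge 10h$ and $(\log d)^2$ shapes) rather than merely some cone. For walls this is essentially the geometric content of \cite[Lem.~13--14]{GielisGrimmett02} combined with \cref{lem:num-of-0-connected-sets}-style connectivity bounds, and for increments it is \eqref{eq:incr-excess-area}; the work is in bookkeeping the constants and the $L$-large regime so the five regions truly partition. Since the proposition is cited as \cite[Claim 4.4]{GL_DMP}, much of this carries over verbatim from the Ising setting, and the only genuinely new checking needed is that the shell increments $\sX_t^{\mathrm o}$ (which omit certain interior faces, hence have \emph{smaller} excess area than $\sX_t$) still satisfy the same bounds—which is immediate since removing faces only decreases $\fm$ and cannot increase spatial extent—so the containment for $\Iso_{x,L,h}^{\mathrm o}$ follows from that for $\Iso_{x,L,h}$.
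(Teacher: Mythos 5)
Your overall route is the same as the paper's: the paper's proof of \cref{prop:cone-seperation} simply says the Ising proof of \cite[Claim 4.4]{GL_DMP} applies verbatim, and your sketch reconstructs the substance of that argument (excess-area budget forces spatial confinement, separately for increments of the spine and for walls of $\cI\setminus\cP_x$). Two small points are worth tightening.

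First, the last paragraph has a logical inversion. Since $\Iso_{x,L,h}\subseteq\Iso_{x,L,h}^{\mathrm o}$, the statement for $\Iso^{\mathrm o}$ is the \emph{more general} one and cannot be deduced from the statement for $\Iso$ by inclusion; the implication only goes the other way. The right observation (and the one the paper records in its proof of \cref{lem:cone-separation}) is that the shell $\cP_x^{\mathrm o}$ and the pillar $\cP_x$ share the same vertex set, because the faces omitted from $\cP_x^{\mathrm o}$ are dual to edges with both endpoints in $\Atop^c$ and hence are interior to the pillar. So the excess-area bounds imposed on $\sX_t^{\mathrm o}$ already control the horizontal footprint of $\cP_x$, and $|\sF(\cS_x)|$ can be bounded (say by $20h$) in terms of $|\sF(\cS_x^{\mathrm o})|\le 10h$; this is what lets you run the Ising containment argument directly for $\Iso^{\mathrm o}$.

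Second, the lateral bound "laterally by at most roughly $\fm(\sX_t)\le t\lesssim\hgt(f)$" conflates the protrusion of a single increment with the cumulative horizontal drift of the spine. What you actually need is that $d(\rho(f),x)$ is bounded by the cumulative sum $\sum_{s\le t}\fm(\sX_s)$, which under the budget $\fm(\sX_s)\le 0$ for $s\le L^3$ and $\fm(\sX_s)\le s$ afterwards gives order $t^2\lesssim\hgt(f)^2$ (not $\hgt(f)$), and this, together with the cap $\sum_s\fm(\sX_s)\lesssim|\sF(\cS_x)|\le 20h$, is precisely what places $f$ in $\Cone_x^1\cap\cL_{<10h}$. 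A related minor slip: $\fm(W)=0$ does not mean a flat wall at height $0$; by $\fm(W)\ge\frac1{13}|\rho(W)|$ it forces $W=\varnothing$, so condition~\cref{it:iso-walls} is really saying that there are \emph{no} walls of $\cI\setminus\cP_x$ indexed within distance $L$ of $x$, hence only ceiling faces of $\cI\setminus\cP_x$ at height $0$ there, which is what lands in $\bF_-$. None of these affect the viability of the approach, but they should be fixed before the argument is complete.
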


\begin{proof}
The proof of \cite[Claim 4.4]{GL_DMP} applies in this setting verbatim. 
See \cref{fig:isolated-pillar} for a visualization of $\Cone_x^1$ and $\Cone_x^2$.
\end{proof}

\begin{figure}
    \vspace{-.5cm}
    \includegraphics[width=0.9\textwidth]{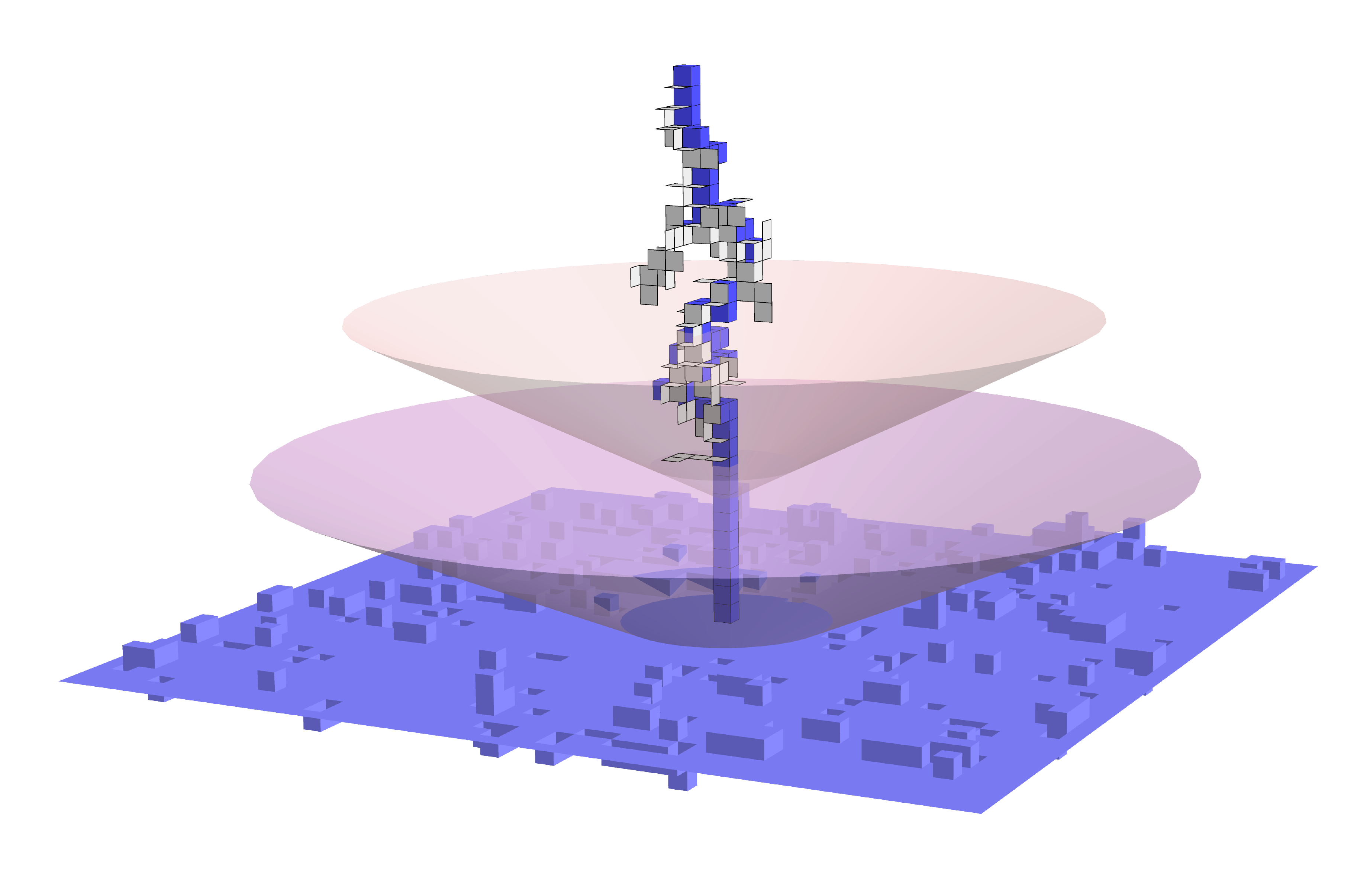}
    \vspace{-1cm}
    \caption{A typical isolated, tall pillar. The region above the tan cone is $\Cone_x^1$ and the region below the pink cone is $\Cone_x^2$.}
    \label{fig:isolated-pillar}
\end{figure}

\begin{corollary}\label{cor:pillar-doesnt-touch-other-walls}
For any $x, L, h$, on the event $\Iso_{x, L, h}^{\mathrm{o}}$ (and thus also on $\Iso_{x, L, h}$), the only faces of $\cI \setminus \cP_x$ which are 1-connected to $\cP_x$ are the four faces at height 0 which connect to the first cut-point of the pillar. (Explicitly, these are the faces $f_{[x+(\pm 1, 0, 0), x + (\pm 1 , 0, -1)]}$ and $f_{[x+(0, \pm 1, 0), x + (0, \pm 1, -1)]}$.)
\end{corollary}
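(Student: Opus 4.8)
\textbf{Proof plan for \cref{cor:pillar-doesnt-touch-other-walls}.}
The plan is to deduce the corollary directly from the geometric containment in \cref{prop:cone-seperation}, examining which faces can lie in the "boundary region" where $\cP_x$ and $\cI\setminus\cP_x$ could be $1$-connected. By \cref{prop:cone-seperation}, on $\Iso_{x,L,h}^{\mathrm o}$ we have the disjoint decomposition $\cI \subseteq \bF_\triangledown \cup \bF_\parallel \cup \bF_- \cup \bF_\curlyvee \cup \bF_\ext$, with $\cP_x \subseteq \bF_\triangledown \cup \bF_\parallel$ and $\cI \setminus \cP_x \subseteq \bF_- \cup \bF_\curlyvee \cup \bF_\ext$, and moreover $(\bF_\triangledown \cup \bF_\parallel) \cap (\bF_- \cup \bF_\curlyvee \cup \bF_\ext) = \emptyset$. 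So two faces $f \in \cP_x$ and $f' \in \cI\setminus\cP_x$ can only be $1$-connected if $f$ lies near the boundary of $\bF_\triangledown \cup \bF_\parallel$ and $f'$ lies near the boundary of $\bF_- \cup \bF_\curlyvee \cup \bF_\ext$. First I would observe that $\bF_\triangledown \subseteq \cL_{>L^3}$ and $\bF_\ext \subseteq \Cyl^c_{x,L^3h}$, so neither of these can be $1$-connected to the other three of their respective groups when $L$ is large; the only interactions possible are between $\bF_\parallel$ (the $4L^3$ vertical and $L^3+1$ horizontal bounding faces of the column $\{x,\dots,x+(0,0,L^3-1)\}$) and the height-$0$ faces of $\bF_- = \cL_0 \cap \Cyl_{x,L}$.

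The heart of the argument is then a local combinatorial check at the bottom of the pillar. Since $\cI \in \Iso_{x,L,h}^{\mathrm o}$ forces an empty base, so $x$ is the first cut-point and $f_{[x,x-\ez]} \notin \cI$, the pillar's face set near height $1/2$ consists exactly of the four vertical faces bounding the sides of $x$, namely $f_{[x+(\pm1,0,0),\,x]}$ and $f_{[x+(0,\pm1,0),\,x]}$ (together with whatever climbs upward from there). A vertical face of $\bF_\parallel$ at height $1/2$ is $1$-connected to a horizontal face at height $0$ only if that height-$0$ face shares a bounding edge with it; enumerating, these are precisely the four faces $f_{[x+(\pm1,0,0),\,x+(\pm1,0,-1)]}$ and $f_{[x+(0,\pm1,0),\,x+(0,\pm1,-1)]}$. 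I would then argue these four faces are indeed in $\cI\setminus\cP_x$: each sits below a vertex at distance $1$ from $x$, i.e.\ in $\bF_- = \cL_0\cap\Cyl_{x,L}$ for $L\geq 1$, and they are the "fill-in" faces that \cref{def:truncated-interface} adds below the height-$1/2$ vertices of $\cP_x$ (or, in terms of the original interface $\cI$, they are forced to be closed-edge faces since $x$'s neighbors at height $0$ straddle $\Atop$ and $\Atop^c$). Any other height-$0$ face of $\bF_-$ is at distance $\geq 2$ from $x$ and shares no edge with any face of $\bF_\parallel$, hence is not $1$-connected to $\cP_x$; and any face of $\bF_\parallel$ at height $\geq 1$ shares no edge with a height-$0$ face.

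The remaining case to rule out is whether a face of $\cP_x$ strictly above height $1/2$ could be $1$-connected to something in $\cI\setminus\cP_x$. Here I would invoke that, by \cref{prop:cone-seperation}, the part of $\cI\setminus\cP_x$ not at height $0$ lives in $\bF_\curlyvee = \Cone_x^2 = \{f : d(\rho(f),x)\geq L,\ \hgt(f)\leq(\log d(\rho(f),x))^2\}$ or in $\bF_\ext$, both of which are horizontal-distance $\geq L$ from the column $\bF_\parallel$ (which has $d(\rho(\cdot),x)=0$ or $1/2$), and in $\bF_\triangledown\subseteq\cL_{>L^3}$ which is far above; so for $L$ large no $1$-connection is possible. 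Also, a face of $\cP_x$ in $\bF_\triangledown$ has height $>L^3$ and is at horizontal distance $\leq \hgt(f)^2$, which keeps it inside the cone and away from $\bF_\curlyvee$ and $\bF_\ext$. I expect the main (though still routine) obstacle to be the careful enumeration in the local check at the bottom: making sure the four named faces are exactly the $1$-connections, that they genuinely belong to $\cI\setminus\cP_x$ (using the empty-base condition and \cref{rem:height-0-pillar} that $x\in\Vtop$, so its height-$0$ downstairs neighbor is in $\Atop^c$ and the separating face is in $\Itop\subseteq\cI\setminus\cP_x$), and that no face of $\cP_x$ that descends below height $1/2$ along a hair (as permitted by \cref{def:pillar}) creates an extra connection — but on $\Iso_{x,L,h}^{\mathrm o}$ such descending hairs are excluded by the containment of $\cP_x$ in $\bF_\triangledown\cup\bF_\parallel$ and the disjointness asserted in \cref{prop:cone-seperation}.
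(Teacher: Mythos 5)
Your proposal is correct and uses exactly the route the paper intends: the corollary is stated as a direct consequence of \cref{prop:cone-seperation}, and you fill in the geometry by checking which pairs among $\bF_\triangledown, \bF_\parallel$ (holding $\cP_x$) and $\bF_-, \bF_\curlyvee, \bF_\ext$ (holding $\cI\setminus\cP_x$) can be $1$-connected, then doing the local enumeration at height~$0$/$1/2$. One small correction: the four height-$0$ faces $f_{[x\pm\ex,\,x\pm\ex-\ez]}$, $f_{[x\pm\ey,\,x\pm\ey-\ez]}$ are \emph{not} the fill-in faces of \cref{def:truncated-interface} (on $\Iso_{x,L,h}^{\mathrm o}$ the only height-$1/2$ vertex of $\cP_x$ is $x$, so the only fill-in is $f_{[x,x-\ez]}$); the cleaner justification that these four faces actually lie in $\cI\setminus\cP_x$ is the wall condition $\fm(\tilde W_y)=0$ for $d(y,x)\le L$ in $\Iso_{x,L,h}^{\mathrm o}$, which forces the truncated interface to be flat at height~$0$ in the $L$-neighborhood of $x$, making those four faces ceiling faces there. (This existence direction is in any case not what the applications use — they rely only on the upper bound that \emph{no other} face of $\cI\setminus\cP_x$ is $1$-connected to $\cP_x$ — so the gap is cosmetic.)
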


\begin{lemma}[{\cite[Lemma 4.5]{GL_DMP}}]\label{lem:cone-separation}
There exists $C > 0$ and $c>0$ such that for all $L$ sufficiently large, and all $h \geq 1$, and any $\cI \in \Iso_{x, L, h}^\mathrm{o}$,
\begin{equation}\label{eq:pillar-in-cone-1}
\sum_{f \in F_\nabla \cup F_{||}}\;\sum_{g\in F_\Upsilon \cup F_\ext}e^{-cd(f, g)} \leq Ce^{-cL}\,,
\end{equation}
and 
\begin{equation}\label{eq:pillar-in-cone-2}
\sum_{f\in F_\nabla}\;\sum_{g \in F_-\cup F_\Upsilon \cup F_\ext}e^{-cd(f, g)} \leq Ce^{-cL}\,.
\end{equation}
\end{lemma}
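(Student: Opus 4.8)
The plan is to prove \cref{eq:pillar-in-cone-1,eq:pillar-in-cone-2} as purely deterministic estimates about the five regions of \cref{prop:cone-seperation}; the FK model enters only through that proposition, which guarantees that once $\cI\in\Iso_{x,L,h}^{\mathrm o}$ every face of $\cI$ sits in one of these regions. Recall from there that $F_\nabla=\Cone_x^1\cap\cL_{<10h}$, that $F_{||}$ is the set of $O(L^3)$ bounding faces of the vertical column of height $L^3$ above $x$ (all with $d(\rho(f),x)=O(1)$ and $\hgt(f)\in[0,L^3]$), that $F_-=\cL_0\cap\Cyl_{x,L}$, that $F_\Upsilon=\Cone_x^2$, and that $F_\ext=\Cyl^c_{x,L^3h}$ (the last two regions are infinite, but the exponential weight makes all sums converge). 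For both inequalities I will (i) bound $d(f,g)$ from below whenever $f$ lies in a ``near'' region and $g$ in a ``far'' region; (ii) for fixed $f$ at distance $\ge D$ from the far region, use that a unit-width spherical shell of radius $k$ about $f$ meets $O(k^2)$ faces, so $\sum_{g}e^{-cd(f,g)}\le\sum_{k\ge D}O(k^2)e^{-ck}\le CD^2e^{-cD}$; and (iii) sum this bound over $f$, exploiting that the relevant near regions have only polynomially growing cross-sections.

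The one delicate distance bound is between $F_\nabla$ and $F_\Upsilon$: both may have unbounded horizontal extent, so no crude argument separates them, and one must exploit that $\Cone_x^1$ opens only polynomially in the height while $\Cone_x^2$ is only logarithmically tall. Precisely, suppose $f\in F_\nabla$, $g\in F_\Upsilon$ and $d(f,g)<\tfrac12\hgt(f)=:\tfrac12 t$; write $\rho:=d(\rho(f),x)$ and $\rho':=d(\rho(g),x)$. From $f\in\Cone_x^1$ we have $t>L^3$ and $\rho\le t^2$; from $d(f,g)<t/2$ we get $\hgt(g)>t/2$ and $|\rho-\rho'|<t/2$; and from $g\in\Cone_x^2$ the bound $\hgt(g)\le(\log\rho')^2$ gives $\rho'>e^{\sqrt{t/2}}$, hence $\rho>e^{\sqrt{t/2}}-t/2$. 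Combined with $\rho\le t^2$ this forces $e^{\sqrt{t/2}}\le t^2+t/2\le 2t^2$, i.e.\ $\sqrt{t/2}\le\log 2+2\log t$, which bounds $t$ by an absolute constant and contradicts $t>L^3$ for $L$ large. Thus $d(f,g)\ge\tfrac12\hgt(f)\ge\tfrac12 L^3$ for all $f\in F_\nabla$, $g\in F_\Upsilon$; crucially this is uniform in $h$, since only $\rho\le\hgt(f)^2$ (not the competing constraint $\rho\le 10h$) was used.

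The remaining separations are immediate triangle-inequality estimates: for $f\in F_\nabla$, $g\in F_-$ the heights differ by $\hgt(f)>L^3$, so $d(f,g)>L^3$; for $f\in F_\nabla\cup F_{||}$, $g\in F_\ext$ the horizontal projections differ by more than $L^3h-O(h)>\tfrac12 L^3h$ (for $L$ large), so $d(f,g)>\tfrac12 L^3h$; and for $f\in F_{||}$, $g\in F_\Upsilon$ one has $d(\rho(f),\rho(g))\ge L-O(1)\ge\tfrac12 L$, so $d(f,g)\ge\tfrac12 L$. Now perform the summation. For $f\in F_\nabla$, step (ii) with $D\ge\tfrac12\hgt(f)$ gives an inner sum $\le C\hgt(f)^2e^{-c\hgt(f)/2}$, and since $F_\nabla$ has $O(t^4)$ faces at each height $t\in(L^3,10h)$ its total contribution is $\le\sum_{t>L^3}Ct^{6}e^{-ct/2}\le Ce^{-cL^3/4}$. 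For $f\in F_{||}$ the inner sum is $\le CL^2e^{-cL/2}$ and there are $O(L^3)$ such $f$, for a contribution $\le CL^5e^{-cL/2}$. Adding these (the $F_{||}$ term dominates) yields a bound of the form $C\,\mathrm{poly}(L)\,e^{-cL/2}$, which after relabeling the constants is \cref{eq:pillar-in-cone-1} --- any exponential rate in $L$ suffices for the downstream applications. Inequality \cref{eq:pillar-in-cone-2} is handled identically: there $f$ ranges only over $F_\nabla$, each such $f$ is at distance $\ge\tfrac12\hgt(f)$ from all of $F_-\cup F_\Upsilon\cup F_\ext$ by the three bounds just listed, and the double sum is therefore $\le Ce^{-cL^3/4}$.

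I expect steps (ii) and (iii) to be routine; the genuine content, and the only place where the specific shapes of the cones matter, is the $F_\nabla$--$F_\Upsilon$ separation uniformly in $h$ carried out in the second paragraph. This is precisely \cite[Lemma 4.5]{GL_DMP}, whose proof applies here verbatim, since it concerns only the Euclidean geometry of $\Cone_x^1$ and $\Cone_x^2$ (as provided by \cref{prop:cone-seperation}) and no probabilistic feature of the model.
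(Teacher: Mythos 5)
Your proof is correct and follows the same geometric route as the cited \cite[Lemma 4.5]{GL_DMP}: the only nontrivial step is the uniform-in-$h$ separation between $F_\nabla$ and $F_\Upsilon$, which you establish cleanly by playing the quadratic opening of $\Cone_x^1$ against the logarithmic ceiling of $\Cone_x^2$, and the remaining separations and summations are routine. The one thing you do not comment on, but which the paper flags, is that on $\Iso_{x,L,h}^\mathrm{o}$ the bound $|\sF(\cS_x^\mathrm{o})|\le 10h$ does not by itself bound $|\sF(\cS_x)|$, so one must observe (as the paper does) that any extra faces of $\cS_x$ lie between vertices of $\cP_x$ and hence $|\sF(\cS_x)|\le 20h$; this matters for the applicability of \cref{prop:cone-seperation} (i.e.\ for knowing $\cP_x\subseteq\bF_\nabla\cup\bF_{||}$ with the cone parameter as defined) rather than for the deterministic inequalities \cref{eq:pillar-in-cone-1,eq:pillar-in-cone-2} themselves, which — as you correctly note — are statements purely about the five fixed regions. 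So your proof is complete for the lemma as stated, but it is worth being explicit that the relevance of the hypothesis $\cI\in\Iso_{x,L,h}^\mathrm{o}$ is only through \cref{prop:cone-seperation}, and that the paper's remark about $\cS_x$ versus $\cS_x^\mathrm{o}$ is aimed at that proposition's proof rather than at this lemma.
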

\begin{proof}
See the proof of \cite[Lemma 4.5]{GL_DMP} with the following observation: In $\Iso_{x, L, h}^\mathrm{o}$, we only know that the spine of the pillar shell $\cS_x^\mathrm{o}$ has less than $10h$ faces, so it is possible that $\cS_x$ has more. However, all the additional faces must be between vertices in $\cP_x$, and so the spine $\cS_x$ still cannot have more than say $20h$ faces.
\end{proof}

Finally, we prove the following claim stating that in an isolated pillar, there is an $\omega$-path from $x$ to $\Lambda_n^-$, which will simplify certain proofs in \cref{sec:ld-RC,sec:ld-potts-bot}.

\begin{claim}\label{clm:x-in-Vbot}
     For the event $E_1^x = \{\hgt(\cP_x) \geq 1\}$ as per \cref{eq:def-Eh} and any $\omega \in \Iso_{x, L, h}^\mathrm{o} \cap E_1^x$, we have $x \in \Vbot$.
\end{claim}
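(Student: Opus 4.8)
The plan is to rule out the two alternatives to $x\in\Vbot$. On $E_1^x$ the pillar $\cP_x$ is nonempty, so by \cref{def:pillar} all of its vertices -- in particular $x$ -- lie in $\Atop^c$, which by \cref{rem:properties-of-Itop} is exactly the (connected, infinite) component of $\Vtop^c$; hence $x\notin\Vtop$, and it remains to show that $x$ is not contained in a finite open cluster. I would first record two facts. \textit{(a) The edge $[x,x-\ez]$ is open.} On $\Iso_{x,L,h}^{\mathrm o}$ the base of $\cP_x$ is empty, so $x$ is its first cut-point, and by the definition of a cut-point the only faces of $\cP_x$ of height $1/2$ are the four vertical faces bounding $x$; these lie in $\cP_x\subseteq\cI\subseteq\clfaces$. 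Since $f_{[x,x-\ez]}$ shares one of its (height-$0$) edges with each of those four faces, were it closed it would belong to the $1$-connected component $\cI$ of $\clfaces$, contradicting $f_{[x,x-\ez]}\notin\cI$ from the definition of $\Iso_{x,L,h}^{\mathrm o}$. Let $K$ be the open cluster containing $x$ and $x-\ez$. \textit{(b) $x\pm\ex,x\pm\ey\in\Atop$.} No vertex of $K$ is in $\Vtop$ (else $x\in\Vtop$), so $K\subseteq\Atop^c$; and if, say, $x+\ex$ were in $\Atop^c$, then -- lying in $\Atop^c$ at height $1/2$ adjacent to $x$ -- it would belong to $V(\cP_x)$, giving $\cP_x$ a second vertex of height $1/2$, impossible for the cut-point $x$.

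Next I would invoke \cref{prop:cone-seperation}, which gives $\cI\subseteq\bF_{\triangledown}\cup\bF_{\parallel}\cup\bF_{-}\cup\bF_{\curlyvee}\cup\bF_{\ext}$: the sets $\bF_{\triangledown},\bF_{\parallel},\bF_{-}$ contain only faces of height $\ge 0$, while every face of $\bF_{\curlyvee}\cup\bF_{\ext}$ has $\rho$-distance $\ge L$ from $x$. Hence $\cI$ contains no face of negative height inside $\Cyl_{x,L-1}$. Suppose, for contradiction, that $K$ is finite, and let $U$ be the unbounded component of $K^c$. By \cref{rem:properties-of-Itop}, $\Atop$ is connected and infinite, and $\Atop\subseteq K^c$ by (b); so $\Atop\subseteq U$, and thus $x\pm\ex,x\pm\ey\in U$, so the four vertical faces bounding $x$ lie on the outer boundary $\partial_U K$ -- and they lie in $\cI$. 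On the other hand, since $x-\ez\in K$ and $K$ is finite, the downward ray $x,x-\ez,x-2\ez,\dots$ leaves $K$ at some $x-k\ez$ with $k\ge 1$; the tail $x-(k+1)\ez,x-(k+2)\ez,\dots$ lies in $K^c$ and is infinite, so $x-(k+1)\ez\in U$, whence $f:=f_{[x-k\ez,\,x-(k+1)\ez]}$ is a closed face on $\partial_U K$, of height $-k\le -1$ and with $\rho(f)$ directly below $x$, so $f\in\Cyl_{x,L-1}$. Since $\partial_U K$ is $1$-connected (being the outer boundary of the finite connected vertex set $K$) and meets $\cI$, it lies in $\cI$; so $f\in\cI$ is a face of negative height inside $\Cyl_{x,L-1}$ -- a contradiction. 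Therefore $K$ is infinite, and together with $x\notin\Vtop$ this forces $x\in\Vbot$.

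The step I expect to require the most care is the geometric bookkeeping of the last paragraph: identifying the four closed faces around $x$ as lying on the \emph{outer} boundary $\partial_U K$ (this is precisely where the cut-point property of $x$ and the connectedness/infiniteness of $\Atop$ are used), together with the (standard) fact that the outer boundary of a finite connected vertex set is $1$-connected. Once $\partial_U K\subseteq\cI$ is in hand, everything else is soft, since the downward exit face $f$ sits directly below $x$ and so contradicts the cone confinement of \cref{prop:cone-seperation} immediately.
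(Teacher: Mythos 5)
Your proof is correct and follows essentially the same route as the paper's: rule out $x$ lying in a finite open cluster by showing that the closed boundary of such a cluster would be a $1$-connected set containing the four side faces of $x$ (hence contained in $\cI$), yet would also have to contain a face of negative height directly below $x$, contradicting the cone confinement of \cref{prop:cone-seperation}. Your write-up is in fact somewhat more careful than the paper's at a couple of points: you explicitly deduce that $[x,x-\ez]$ is open from $f_{[x,x-\ez]}\notin\cI$ together with the cut-point at $x$, and you work with the outer boundary $\partial_U K$, which sidesteps the paper's (slightly glossed) claim that the separating set $A$ is co-connected.

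Two small points. First, in the downward-ray step you should take $k$ to be the \emph{largest} index with $x-k\ez\in K$ (this exists since $K$ is assumed finite); as written, ``leaves $K$ at some $x-k\ez$'' does not by itself guarantee that the whole tail lies in $K^c$ and hence in $U$. (Relatedly, you implicitly use that $K^c$ has a unique unbounded component; this holds here because $K$, being neither $\Vtop$ nor $\Vbot$, contains no boundary vertex and therefore cannot cap the cylinder, but it is worth a sentence.) Second, your concluding step ``$K$ infinite and $K\neq\Vtop$ forces $K=\Vbot$'' is exactly parallel to the paper's opening assertion ``on $E_1^x$ we know $x\in\Abot$''; both tacitly use that $\Vtop$ and $\Vbot$ are the only infinite open clusters, i.e.\ that there is no ``middle'' infinite cluster and $\Atop^c=\Abot$. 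The paper leaves this unjustified as well, so this is not a defect of your argument relative to theirs, but it is the one place where neither version is fully self-contained at the stated level of generality (it is immediate in the finite-cylinder approximation and holds $\bar\mu_n$-a.s., which is how the claim is actually used).
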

\begin{proof}
    By definition, on $E_1^x$, we know that $x \in \Abot$. Suppose for contradiction that $x$ is actually part of some finite component of $\Vbot^c$, call it $A$. Let $F$ be the set of faces separating $A$ from $A^c$ (i.e., if $u \in A$ and $v \notin A$, then $f_{[u, v]} \in F$). Since $A$ must be connected and co-connected, $F$ is a 1-connected set of faces (for the justification that $F$ is 1-connected, see \cite[Prop.~5]{GielisGrimmett02},\cite[Thm.~7.3]{Grimmett_RC}). Moreover, $F \subseteq \clfaces$ . Since $x$ is a cut-point of $\cP_x$, $F$ must include the four faces to the sides of $x$ at height $1/2$, and hence $F \subseteq \cI$. Thus the condition $f_{[x, x - \ez]} \notin \cI$ implies that $x - \ez \in A$. However, if $F$ is to separate $x - \ez$ from $\partial \Lambda_n^-$, there must be some horizontal face of $F$ below $x - \ez$, which is necessarily a face of $\cI \setminus \cP_x$. Yet, no such faces can be in $\cI \setminus \cP_x$ by \cref{prop:cone-seperation}.
\end{proof}

We now prove that except on a set of probability $\epsilon_\beta$, a randomly sampled pillar of height $h$ is going to be an isolated pillar. The idea, as done in \cite[Theorem 4.2]{GL_DMP}, is to use cluster expansion to show that the energy gain in mapping an arbitrary interface to one in $\Iso_{x, L, h}$ beats the entropy of the map. A significant portion of that paper is spent on controlling the $g$-terms which appear in the cluster expansion, and controlling the entropy of the map $\Phi_\Iso$. We will omit those parts of the proof here as they apply exactly. One way to see why those proofs should still hold is to note that problems can only arise in the random-cluster model due to the more complicated geometry in including the hairs of the pillar. The entropy arguments of the cited paper are unaffected by this because they are based on counting the number of arbitrary 1-connected sets of size $k$, and are not limited to the Ising-type pillar structures to begin with. 

\begin{theorem}[{\cite[Theorem 4.2]{GL_DMP}}]\label{thm:iso-map} 
For $\beta > \beta_0$, there exist constants $L_\beta$, $\epsilon_\beta$ (going to $\infty$ and 0 respectively as $\beta \to \infty$) such that for every sequence $h = h_n \geq 1$, and $x = x_n$ with $h = o(d(x_n, \partial \Lambda_n))$, we have for all $0 \leq h' \leq h$, $0 \leq L \leq L_\beta$,
\begin{equation*}
    \bar{\mu}_n(\Iso_{x, L, h}|\hgt(\cP_x) \geq h') \geq 1 - \epsilon_\beta\,,
\end{equation*}
which also implies
\begin{equation*}
    \bar{\mu}_n(\Iso_{x, L, h}^\mathrm{o}|\hgt(\cP_x) \geq h') \geq 1 - \epsilon_\beta\,.
\end{equation*}
\end{theorem}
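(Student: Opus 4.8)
The plan is to adapt the map-based argument of \cite[Theorem~4.2]{GL_DMP} to the random-cluster cluster expansion of \cref{prop:grimmett-cluster-exp}. I would construct a map $\Phi_\Iso$ that takes any interface $\cI$ with $\hgt(\cP_x)\ge h'$ but $\cI\notin\Iso_{x,L,h}$ and repairs it into an interface in $\Iso_{x,L,h}$, keeping the pillar at least as tall, via three families of local moves: (a) on the pillar, delete the base $\sB_x$ and replace every increment $\sX_t$ violating the spine bound of \cref{def:isolated-pillar} (i.e.\ $\fm(\sX_t)>0$ for $t\le L^3$, or $\fm(\sX_t)>t$ for $t>L^3$, or excess pushing $|\cS_x|>10h$) by a stack of trivial increments $X_\trivincr$ of the same height; (b) on the truncated interface $\cI\setminus\cP_x$, straighten every wall $\tilde W_y$ violating its excess-area bound down to its flat standard representative; (c) delete $f_{[x,x-\ez]}$ from $\cI$ if present. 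As in \cite{GL_DMP} one checks that $\Phi_\Iso(\cI)\in\Iso_{x,L,h}$, that $\hgt(\cP_x)$ is preserved (trivial increments carry the right height, so the event $E_{h'}^x$ survives), and that the ``defect'' $r$ removed by the map---the lost excess area $|\cI|-|\Phi_\Iso(\cI)|$ together with the excess areas of the straightened walls---is bounded below by a fixed multiple of the quantity certifying $\cI\notin\Iso_{x,L,h}$, so $r\gtrsim L$ when only the far-wall constraints fail, which is what produces $L_\beta\to\infty$.

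With the map in hand, I would fix an image $\cJ$ and a value of $r$ and bound the number of preimages $\cI$ with $\Phi_\Iso(\cI)=\cJ$ and defect $r$: the removed or straightened faces form boundedly many $1$-connected sets of total size $O(r)$ (using \eqref{eq:incr-excess-area} for increments), whose locations are pinned down up to polynomial factors by $\cJ$ and $x$, so by \cref{lem:num-of-0-connected-sets} there are at most $e^{Cr}$ of them. By \cref{prop:grimmett-cluster-exp},
\begin{multline*}
\frac{\bar\mu_n(\cI)}{\bar\mu_n(\cJ)}=(1-e^{-\beta})^{|\partial\cI|-|\partial\cJ|}\,e^{-\beta r}\,q^{\kappa_\cI-\kappa_\cJ}\\
\times\exp\Big(\sum_{f\in\cI}\g(f,\cI)-\sum_{f\in\cJ}\g(f,\cJ)\Big).
\end{multline*}
The gain is $e^{-\beta r}$. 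The $\g$-difference is at most $KM+o(1)$, where $M=O(r)$ is the number of faces whose neighbourhood changed, plus a sum $\sum e^{-c\,d(f,g)}$ of cross terms between the modified region and the remainder of the interface; on the already-isolated part of configuration space the latter is $O(e^{-cL})$ by \cref{lem:cone-separation}, and in general it is $O(M)$---this is precisely the estimate carried out in \cite{GL_DMP}, and it transfers unchanged since it only uses $1$-connectivity bounds.

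The genuinely new ingredient is to control the two FK-specific factors. Each modified face corresponds to a single edge; since $\partial(\cdot)$ of a face set grows by a bounded factor per face, $\big||\partial\cI|-|\partial\cJ|\big|=O(r)$, so $(1-e^{-\beta})^{|\partial\cI|-|\partial\cJ|}=\exp(O(e^{-\beta}r))$---this is \cref{lem:control-1-connected-faces}. Opening or closing one edge changes the number of open clusters of the relevant configuration by at most one, so $|\kappa_\cI-\kappa_\cJ|=O(r)$ and $q^{\kappa_\cI-\kappa_\cJ}=\exp(O(r\log q))$---this is \cref{lem:control-interface-clusters}. Both are $e^{o(\beta)r}$ and hence negligible against $e^{-\beta r}$ once $\beta$ is large, so $\bar\mu_n(\cI)/\bar\mu_n(\cJ)\le e^{-(\beta-C)r}$. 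Summing over the $e^{Cr}$ preimages at defect $r$, then over $r\ge1$ (a convergent geometric series of size $\to0$ as $\beta\to\infty$), then over $\cJ\in\Iso_{x,L,h}\cap E_{h'}^x$, gives $\bar\mu_n(\Iso_{x,L,h}^c\cap E_{h'}^x)\le\epsilon_\beta\,\bar\mu_n(E_{h'}^x)$ after balancing $L_\beta$ against the polynomial entropy of far walls as in \cite{GL_DMP}; dividing by $\bar\mu_n(E_{h'}^x)$ proves the first inequality, and $\Iso_{x,L,h}\subseteq\Iso_{x,L,h}^{\mathrm o}$ gives the second.

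The main obstacle I expect is making $\Phi_\Iso$ genuinely local despite the hairs of the pillar: a hair (a $1$-connected component of $\cI\setminus\Itop$ attached to the pillar) could reattach $\cP_x$ to a distant wall, so straightening that wall or an increment might have non-local effects and spoil the preimage count. This is why one should first intersect with the $\epsilon_\beta$-probability bulk where, by the exponential tails on groups of walls (\cref{prop:grimmett-exp-tail-grp-of-walls}) and on the pillar height (\cref{prop:exp-tail-pillar-height}), the configuration is already close to a member of $\Iso_{x,L,h}^{\mathrm o}$, so by \cref{prop:cone-seperation,cor:pillar-doesnt-touch-other-walls} the only contact between $\cP_x$ and $\cI\setminus\cP_x$ is the four height-$0$ faces under the first cut-point---after which the pillar moves and the wall moves decouple and the Ising argument applies verbatim. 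Verifying that the repaired interface genuinely lies in $\Iso_{x,L,h}$, and that $E_{h'}^x$ is not destroyed by any straightening, is then routine and identical to \cite{GL_DMP}.
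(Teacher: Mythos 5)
Your proposal tracks the paper's proof very closely: a repair map $\Phi_\Iso$ deleting the base, trivializing bad increments, and flattening bad walls; the cluster expansion of \cref{prop:grimmett-cluster-exp} for the energy bound; \cref{lem:num-of-0-connected-sets} for the entropy bound; and the two genuinely new FK ingredients $\big||\partial\cI|-|\partial\cJ|\big|\lesssim\fm$ and $\kappa_\cI-\kappa_\cJ\lesssim\fm$, which you correctly identify as \cref{lem:control-1-connected-faces,lem:control-interface-clusters}. This is exactly the architecture of the paper's \cref{alg:Iso} and the surrounding lemmas.

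There is one substantive gap, though, in your final paragraph. You correctly flag the danger that a hair of $\cP_x$ could be $1$-connected to a far wall of $\cI\setminus\cP_x$, so that straightening one side has nonlocal effects on the other and spoils both the injectivity of the $\partial\cI$-comparison map and the preimage count. But your proposed remedy---``first intersect with the $\epsilon_\beta$-probability bulk where the configuration is already close to a member of $\Iso_{x,L,h}^{\mathrm o}$''---is circular: the content of the theorem is precisely that this bulk has probability $1-\epsilon_\beta$, and the exponential tails on individual groups of walls and on the pillar height do not by themselves give a simultaneous control that puts you ``close to'' the isolated space without essentially rerunning the map argument. The paper's actual fix lives inside the algorithm: condition (\texttt{A2}) in \cref{alg:Iso} marks any increment $\sX_j$ for which some wall $\tilde W_y\cup\lceil\tilde W_y\rceil$ comes within distance $(j-1)/2$ of $\sX_j$, so that every increment that survives the map is provably far from every surviving wall. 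This is then what guarantees, in Step~2 of the proof of \cref{lem:control-1-connected-faces}, that a surviving increment's hairs cannot touch $\cI\setminus\cP_x^\cI$, making the injection $T$ well-defined without any a priori restriction of the domain. You should replace your two-stage ``restrict, then repair'' plan with a single map that carries this proximity check, as the paper does; once that is done, the rest of your sketch---the $e^{-(\beta-CL^3)r}$ energy gain, the $C^{L^3 r}$ entropy, the geometric sum over $r$, the choice $L_\beta\to\infty$, and the inclusion $\Iso_{x,L,h}\subseteq\Iso^{\mathrm o}_{x,L,h}$---is correct and matches the paper.
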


\begin{figure}
\begin{algorithm2e}[H]
\DontPrintSemicolon
If $\cI \in \Iso_{x, L, h}$, then set $\Phi_\Iso(\cI) = \cI$. Otherwise, proceed as follows:\\
\nl
Let $\{\tilde W_y : y\in \cL_{0}\}$ be the walls of $\cI\setminus \cS_{x}$. Let $(\sX_i)_{i\geq 1}$ be the increments of $\cS_{x}$.
\;

\BlankLine
\tcp{Base modification}
\nl
Mark $\bar x:= \{y\in \cL_{1/2}: y\sim^* x\}\cup \{x\}$ and $\rho(v_1)$ for deletion.\; 

\nl 
\If{the interface with standard wall representation $\Theta_{\textsc{st}}\tilde \fW_{v_1}$ has a cut-height}
{Let $h^\dagger$ be the height of the highest such cut-height.\;
Let $y^\dagger$ be the index of a wall that intersects $(\cP_{x} \setminus \tilde \fW_{v_1}) \cap \cL_{h^\dagger}$ and mark $y^\dagger$ for deletion.\;
}
\BlankLine
\tcp{Spine modification}

\nl
\For{$j=1$ \KwTo $\sT+1$}{     
     \If(\hfill\tcp*[h]{(A1)}){
\quad$ \fm(\sX_j) \geq \begin{cases} 0 & \mbox{if }j\le L^3 \\ j-1 & \mbox{if }j>L^3\end{cases}$ \quad\mbox{}}{Let $\fs \gets j$.} 
      \If(\hfill\tcp*[h]{(A2)}){\quad $d( \tilde W_y\cup \lceil \tilde W_y\rceil,\sX_j)\le (j-1)/2$ \qquad\quad for some $y$\quad\mbox{}}{
      Let $\fs\gets j$ and let $y^*$ be the minimal index $y$ for which~({\tt A2}) holds.}
}
 Let $j^* \gets \fs$ and mark $y^*$ for deletion.\;

 \nl 
 \If(\hfill\tcp*[h]{(A3)}){$|\sF(\cS_{x})| >5h$}{let $\fs \gets \sT+1$ and $j^* \gets \fs$.}
 
 \BlankLine
\tcp{Environment modification}
 \nl
 \For {$y\in \cL_{1/2}\cap \Cyl_{L^3 h}(x)$}{
    \If{\quad$\fm(\tilde W_y) \ge \begin{cases}0 & \mbox{if }d(y,x)\le L \\  \log [d(y,x)] & \mbox{else}\end{cases}$}{Mark $y$ for deletion}
    }
 
\BlankLine
\tcp{Reconstructing the interface}
\BlankLine

\nl 
\lForEach{$y$ marked for deletion}
{remove $\Theta_{\textsc{st}}\Clust(\tilde \fW_{y})$ from $(\Theta_{\textsc{st}}\tilde W_y)_{y\in \cL_{1/2}}$.}

 \nl Add the standard wall $\Theta_{\textsc{st}}W_{x,\parallel}^{\mathbf h}$ consisting of the bounding vertical faces of $(x+ (0,0, i))_{i=1}^{\mathbf h-1}$ where $\mathbf h := (\hgt(v_1)- \frac 12)$.

\nl Let $\cK$ be the interface with the resulting standard wall representation.

\nl Let
\[ \cS \gets 
\begin{cases}\big(\underbrace{X_\trivincr,\ldots,X_\trivincr}_{\hgt(v_{j^*+1})-\hgt(v_1)},\sX_{j^*+1},\ldots,\sX_{\sT},\sX_{>\sT}\big) &\mbox{ if \mbox{({\tt{A3}}) is not violated}},\\
\noalign{\medskip}
 \big( \underbrace{X_\trivincr,\ldots,X_\trivincr}_{h- \mathbf h}\big) &\mbox{ if \mbox{({\tt{A3}}) is violated}}\,.\end{cases}\,.
 \]
\nl Obtain $\Phi_{\Iso}(\cI)$ by appending the spine with increments $\cS$ to $\cK$ at $x+ (0,0,\hgt(\cC_\bW)+ \mathbf h)$.\;

\caption{The map $\Phi_{\Iso}= \Phi_{\Iso}(x,L,h)$}
\label{alg:Iso}
\end{algorithm2e}
\end{figure}

Let $\Phi_\Iso$ be defined as in \cref{alg:Iso}. In the algorithm and in what follows, we denote by $\lceil \tilde W_y\rceil$ the set of interior ceilings of the wall $W_y$. We first show that the map is well-defined and yields an interface in $\Iso_{x, L, h}$, which follows verbatim from the proof of \cite[Lemma~4.8]{GL_DMP}; we include the short proof here for completeness.

\begin{lemma}[{\cite[Lemma 4.8]{GL_DMP}}]\label{lem:well-defined-iso}
For every $L$ large, $0 \leq h' \leq h$, and $\cI \in E_{h'}^x$, the image $\cJ:= \Phi_\Iso(\cI)$ is a well-defined interface in $E_{h'}^x \cap \Iso_{x, L, h}$.
\end{lemma}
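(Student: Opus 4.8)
The plan is to verify directly that each step of \cref{alg:Iso} is a legal operation on the standard wall representation of an interface, and that after all steps the resulting object $\cJ$ (a) is a bona fide interface, (b) still has $\hgt(\cP_x)\geq h'$, and (c) satisfies both defining conditions of $\Iso_{x,L,h}$. Since the algorithm is essentially the one of \cite[Lemma 4.8]{GL_DMP}, I would follow that proof verbatim, only checking that the two new features of the random-cluster setting --- the presence of hairs in the pillar, and the fact that the cluster-expansion weight involves $|\partial\cI|$ and $\kappa_\cI$ --- do not affect \emph{well-definedness} (the energy/entropy bookkeeping that uses those terms is deferred to later lemmas, notably \cref{lem:control-1-connected-faces,lem:control-interface-clusters}, and is not needed here).

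First I would recall that, by the correspondence between interfaces and admissible families of standard walls, every operation in the algorithm amounts to: deleting the standard-wall clusters $\Theta_{\textsc{st}}\Clust(\tilde\fW_y)$ for a finite set of marked indices $y$; inserting one explicit standard wall $\Theta_{\textsc{st}}W_{x,\parallel}^{\mathbf h}$; and then re-appending a modified spine $\cS$ (a stack of trivial increments followed by a tail of the original increments) on top of the surviving interface $\cK$. I would check in turn that (i) deleting a group of close walls leaves an admissible family --- this is the standard fact that removing a whole closeness-cluster cannot create a new closeness adjacency among the survivors; (ii) the inserted column wall $\Theta_{\textsc{st}}W_{x,\parallel}^{\mathbf h}$ is indeed a standard wall with projection the single edge $\rho(v_1)$, and after the base deletions at $\bar x$ and $\rho(v_1)$ its projection is not $0$-connected to any surviving wall, so the family stays admissible; (iii) appending the spine $\cS$ at the prescribed height produces a $1$-connected face set that is again an interface, because each trivial increment $X_\trivincr$ glues onto a cut-point and the retained increments $\sX_{j^*+1},\dots,\sX_{>\sT}$ are exactly the portion of the original spine above a cut-height, hence attach consistently. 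Here is where the hairs enter: I must note that a retained increment $\sX_j$ of $\cP_x$ may carry hairs, but since on the relevant event the pillar increments are being copied wholesale (not surgically cut), the hairs come along unchanged and cause no incompatibility; only the \emph{deleted} increments' hairs disappear, which is harmless.

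Next I would verify the two membership conditions. For \cref{it:iso-spine}: after the spine modification, either (\texttt{A3}) was violated and $\cS$ is a pure stack of $h-\mathbf h$ trivial increments (so $\fm(\sX_t)=0$ for all $t$ and the spine has exactly $4(h-\mathbf h)+1\leq 10h$ faces, using $\mathbf h\geq 0$), or (\texttt{A3}) held and $\cS$ consists of $\hgt(v_{j^*+1})-\hgt(v_1)$ trivial increments followed by $\sX_{j^*+1},\dots,\sX_{>\sT}$; by the choice of $j^*=\fs$ as the last index at which (\texttt{A1}) or (\texttt{A2}) fired, every retained increment $\sX_t$ with $t>j^*$ has $\fm(\sX_t)\leq 0$ if $t\leq L^3$ and $\fm(\sX_t)\leq t-1\leq t$ if $t>L^3$, and reindexing the trivial prefix only makes the bound easier, so the increment-size condition holds; the face count $|\sF(\cS_x)|\leq 5h<10h$ is exactly the content of (\texttt{A3}) not being violated. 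Also $x$ is the first cut-point because the base was deleted and replaced by the straight column, giving an empty base. For \cref{it:iso-walls}: after the environment modification loop, every surviving wall $\tilde W_y$ with $d(y,x)<L^3h$ has $\fm(\tilde W_y)\leq 0$ for $d(y,x)\leq L$ and $\fm(\tilde W_y)\leq\log d(y,x)$ otherwise --- precisely the required bound --- since any wall violating it was marked and deleted; and $f_{[x,x-\ez]}\notin\cJ$ because that face was removed in the base modification and the straight column sits above $x$, never below it. Finally $\cJ\in E_{h'}^x$ because the rebuilt pillar has height $\mathbf h+(\text{length of }\cS)$, which equals $h\geq h'$ when (\texttt{A3}) is violated, and equals $\hgt(v_1)+ \bigl(\hgt(\cP_x)-\hgt(v_1)\bigr)=\hgt(\cP_x)\geq h'$ when it is not (the trivial prefix exactly compensates the deleted lower increments up to height $\hgt(v_{j^*+1})$, and the retained tail preserves the top).

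The main obstacle --- really the only non-bookkeeping point --- is checking that the re-appending step genuinely yields an \emph{interface} and not merely a $1$-connected set of closed-dual faces, i.e.\ that it separates $\partial\Lambda_n^+$ from $\partial\Lambda_n^-$ and that $\cP_x(\cJ)$ is the pillar one expects. This is where one leans on \cref{prop:cone-seperation}/\cref{cor:pillar-doesnt-touch-other-walls}: once we know that on the relevant event the pillar touches the rest of the interface only through the four height-$0$ faces at the first cut-point, the surgery of swapping the spine and straightening the base is localized and the global separation property is preserved. I would therefore state at the outset that it suffices to run the argument of \cite[Lemma 4.8]{GL_DMP} and only flag the hair-related checks above, since everything else transfers without change.
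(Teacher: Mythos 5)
Your proposal is correct and follows essentially the same route as the paper: both reduce well-definedness to the geometric disjointness supplied by \cref{prop:cone-seperation} (the new spine lives in $\bF_\triangledown \cup \bF_\parallel$ while the modified environment $\cK$ lives in $\bF_- \cup \bF_\curlyvee \cup \bF_\ext$), which is exactly what guards against hairs re-attaching the pillar to surviving walls. The extra bookkeeping you carry out (admissibility of the wall family after deletion/insertion, the explicit verification of the $\Iso_{x,L,h}$ criteria and of $\cJ\in E_{h'}^x$) is taken for granted in the paper's terser writeup, but the underlying argument is identical.
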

\begin{proof}
     Observe that $\cK$ in Step 9 of \cref{alg:Iso} is a valid interface since prior to Step 8, all we did was delete walls from the interface $\cI \setminus \cS_x$ (recall \cref{def:truncated-interface}), and adding a column at $x$ doesn't cause any problems because any walls that would intersect the column would have been deleted due to Step 2. Hence, it remains to show that the pillar generated in Steps 10 and 11 will not intersect $\cK$ except at the initial column added in Step 8. This follows easily as a result of the separation established in \cref{prop:cone-seperation}. Indeed, Steps 4 and 5 ensure that the pillar generated satisfies \cref{it:iso-spine} in the definition of an isolated pillar, and thus is a subset of $\bF_{\triangledown} \cup \bF_{\parallel}$ by \cref{prop:cone-seperation}. Similarly, the deletion of walls in Step 6 ensure that that $\cK$ satisfies the criterion of \cref{it:iso-walls} in an isolated pillar, whence \cref{prop:cone-seperation} implies that other than the initial column built at $x$, $\cK$ is a subset of  $\bF_{-} \cup \bF_{\curlyvee} \cup \bF_{\ext}$, and the disjointness follows by the same proposition.
\end{proof}
\begin{lemma}[{\cite[Corollary 4.11]{GL_DMP}}]\label{lem:wall-intersect-col-height}
In \cref{alg:Iso}, the walls $\tilde{\fW}_{v_1} \cup \tilde{\fW}_{y^\dagger}$ intersect heights $1/2, \ldots , \hgt(v_1) - 1$ in at least five faces.
\end{lemma}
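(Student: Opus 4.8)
The plan is to establish the lemma as a geometric fact about the \emph{base} of the pillar: the faces of $\cP_x$ at the half-integer heights $\tfrac12,\tfrac32,\ldots,\hgt(v_1)-1$, which by the choice of $v_1$ as the first cut-point are precisely the heights below $v_1$ that are \emph{not} cut-heights of $\cP_x$. I would prove two claims: (i) at each such height $h$, the pillar $\cP_x$ has at least five faces; and (ii) inside the truncated interface $\cI\setminus\cS_x$ of \cref{def:truncated-interface}, every face of $\cP_x$ at such a height lies in the wall family $\tilde{\fW}_{v_1}\cup\tilde{\fW}_{y^\dagger}$, with $\tilde{\fW}_{v_1}$ and $y^\dagger$ as defined in \cref{alg:Iso}. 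These together give the lemma; note it is vacuous when the base is empty, i.e.\ $v_1=x$.

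For (i), fix a half-integer $h<\hgt(v_1)$. Since $V(\cP_x)$ is connected (\cref{def:pillar}) and contains $x$ at height $\tfrac12$ and $v_1$ at height $\hgt(v_1)$, it has a vertex at height $h$; and since $h$ is not a cut-height, the defining condition fails. Either $\cP_x$ has two or more vertices at height $h$, or it has a unique such vertex $v$ but its faces at height $h$ are not exactly the four lateral faces of $v$. In the unique-vertex case, each lateral neighbour of $v$ at height $h$ lies outside $V(\cP_x)$ (otherwise it would be a second height-$h$ vertex of $\cP_x$), so all four lateral faces of $v$ bound $V(\cP_x)$ and hence belong to $\cP_x$; as these do not exhaust the height-$h$ faces of $\cP_x$, there is at least a fifth one --- necessarily a vertical hair face. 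In the multi-vertex case, co-connectedness of $V(\cP_x)$ (\cref{obs:pillar-vert-simp-conn}) forces every height-$h$ lateral neighbour outside $V(\cP_x)$ of a height-$h$ vertex of $V(\cP_x)$ to contribute a face of $\cP_x$, and inspecting the lateral edges of two distinct height-$h$ vertices of $V(\cP_x)$ yields at least six such faces. So in all cases $\cP_x$ has at least five faces at height $h$.

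For (ii), apply the face--wall correspondence for pillars (\cref{obs:pillar-given-by-walls} and \cref{obs:wall-nest-face}) to the interface $\cI\setminus\cS_x$, whose pillar at $x$ is the base of $\cP_x$ together with the single horizontal face $f_{[v_1,v_1-\ez]}$ adjoined in \cref{def:truncated-interface}: every base face of $\cP_x$ is nested by a wall of $\cI\setminus\cS_x$ nesting $x$, or by an interior wall or ceiling of such a wall. It remains to see that, with at most one exception, these walls all lie in $\tilde{\fW}_{v_1}$, the exception being $\tilde{\fW}_{y^\dagger}$. Let $\cJ_{v_1}$ be the interface with standard-wall representation $\tilde{\fW}_{v_1}$; if it has a cut-height, let $h^\dagger$ be the highest one (otherwise there is no $y^\dagger$ and all base faces must be placed in $\tilde{\fW}_{v_1}$). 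Using that projections of distinct walls are never $0$-connected (\cref{lem: wall-ceiling-geometry}(i)) and that a cut-height of $\cJ_{v_1}$ partitions its nested walls into an inner and an outer part, one checks that every face of $\cP_x$ of height $\le h^\dagger$ already lies in $\tilde{\fW}_{v_1}$, while the faces of $\cP_x$ at heights in $(h^\dagger,\hgt(v_1))$ that are not in $\tilde{\fW}_{v_1}$ form a $0$-connected set, by co-connectedness of $V(\cP_x)$, and hence lie in a single wall --- precisely the one indexed by $y^\dagger$ in \cref{alg:Iso}. Summing the bound of (i) over the heights $\tfrac12,\ldots,\hgt(v_1)-1$ then finishes the proof.

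The main obstacle is (ii): showing that beyond $\tilde{\fW}_{v_1}$ a \emph{single} extra wall family already covers the base of $\cP_x$. This is the combinatorial heart of \cite[Cor.~4.11]{GL_DMP}; it requires a careful analysis of the wall--ceiling decomposition of $\cI\setminus\cS_x$ near the base, handling of the degenerate cases ($v_1=x$; $\cJ_{v_1}$ with no cut-height; and checking that adjoining $f_{[v_1,v_1-\ez]}$ creates no spurious cut-height), and an appeal to co-connectedness of $V(\cP_x)$ to ensure the excess faces above $h^\dagger$ form one connected piece. As explained at the start of \cref{sec:pillar-maps}, the one feature of the random-cluster interface absent from the Ising setting --- the hairs --- can only \emph{add} faces of $\cP_x$ at a given height, hence never weakens (i) nor obstructs (ii), so the argument of \cite{GL_DMP} carries over verbatim.
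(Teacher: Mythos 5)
Your step (i) is sound: since $v_1$ is the first cut-point, every half-integer height $h\in\{\tfrac12,\ldots,\hgt(v_1)-1\}$ fails the cut-height definition, which forces at least five faces of $\cP_x$ there. The problem is step (ii), and it is a genuine gap: the lemma asserts a lower bound on the number of faces of the \emph{walls} $\tilde\fW_{v_1}\cup\tilde\fW_{y^\dagger}$ at each height, and your reduction to (i) requires every face of $\cP_x$ at these heights to belong to $\tilde\fW_{v_1}\cup\tilde\fW_{y^\dagger}$. That is a much stronger statement than the lemma, and it is not established (and appears to be false: by \cref{obs:pillar-given-by-walls}, the base may contain faces of walls nested strictly inside the walls indexed by $v_1$, and there is no reason these all coincide with $\tilde\fW_{y^\dagger}$). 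Worse, one of your intermediate assertions directly contradicts \cref{alg:Iso}: you claim that every face of $\cP_x$ at height $\le h^\dagger$ lies in $\tilde\fW_{v_1}$, but $y^\dagger$ is by definition the index of a wall meeting $(\cP_x\setminus\tilde\fW_{v_1})\cap\cL_{h^\dagger}$, so that set is nonempty; you also place $y^\dagger$ among the heights $(h^\dagger,\hgt(v_1))$, whereas the algorithm selects it at $h^\dagger$. So the ``single extra wall covers the base'' mechanism on which (ii) relies is both unproved and in conflict with the algorithm's construction.

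The paper's proof avoids this entirely by reasoning about the walls rather than about $\cP_x$. It splits the height range into three pieces and produces five faces \emph{of the walls} at each. Above $h^\dagger$ it uses that $\Theta_{\textsc{st}}\tilde\fW_{v_1}$ has no cut-height in $(h^\dagger,\hgt(v_1))$, so $\tilde\fW_{v_1}$ alone has at least five faces at each such height. Below $\hgt(\cC_{v_1})\vee\hgt(\cC_{y^\dagger})$ it uses a wall $W\in\tilde\fW_{v_1}$ that nests both $v_1$ and $y^\dagger$ (\cref{obs:wall-nest-face}): $W$ must surround the sides of at least two vertices beneath the two distinct nesting ceilings, giving at least six faces. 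In between, $\tilde\fW_{v_1}$ contributes at least four faces (surrounding a vertex below $\cC_{v_1}$) and $\tilde\fW_{y^\dagger}$ at least one more since it reaches up to $h^\dagger$. Crucially, the faces exhibited by this argument need not be faces of $\cP_x$ at all (e.g.\ the wall $W$ can be a large wall whose faces at a given height lie far from the pillar), which is exactly why one cannot substitute a count of $\cP_x$-faces as in your (i)+(ii). To repair your proposal you would need either to prove the inclusion in (ii) (which I do not believe holds) or to re-derive the five-face bound at the level of the walls, as in the paper.
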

\begin{proof}
By \cref{alg:Iso}, an interface consisting of just the walls $\tilde{\fW}_{v_1}$ has no cut-heights between $h^\dagger + 1$ and $\hgt(v_1) -1$. That means each of those heights must be intersected by $\tilde{\fW}_{v_1}$ in at least five faces.

By \cref{obs:wall-nest-face}, there exists a wall $W$ that nests both $y^\dagger$ and $v_1$. By the algorithm, the walls $\tilde{W}_{v_1}$ and $\tilde{W}_{y^\dagger}$ are distinct, so let their innermost nesting ceilings within $\ceil{W}$ be $\cC_{v_1}$ and $\cC_{y^\dagger}$. $W$ must surround the sides of every vertex below faces of these ceilings, and each ceiling must have at least two faces if it is to nest a wall. Since it takes at least six faces to surround the sides of two vertices, then $W$ must intersect every height below $\hgt(\cC_{v_1}) \vee \hgt(\cC_{y^\dagger})$ in at least 6 faces.

Finally, $\tilde{W}_{v_1}$ must surround at least one vertex at every height between $\hgt(\cC_{v_1})$ and $\hgt(v_1)$. Since $\tilde{W}_{y^\dagger}$ also reaches height $h^\dagger$, together they must contribute at least five faces to each height between $\hgt(\cC_{v_1}) \vee \hgt(\cC_{y^\dagger})$ and $h^\dagger$.
\end{proof}

Note that by definition, we have for $\cJ = \Phi_\Iso(\cI)$,
\[\fm(\cI;\cJ) = \begin{cases}
\sum_{z \in \bD} \fm(\tilde{W}_z) + \sum_{i = 1}^{j^*} \fm(\sX_i) - |W_{x, ||}^{\bh}| & \text{(A3) is not violated}\\
\sum_{z \in \bD} \fm(\tilde{W}_z) + \sum_{i = 1}^{\sT + 1} \fm(\sX_i) +4(\hgt(v_{\sT+1})-h) - |W_{x, ||}^{\bh}| & \text{(A3) is violated}
\end{cases}\,.\]

In the following claim, we provide an upper bound for $|W_{x,||}^{\bh}|$ and $j^*$ in terms of $\fm(\cI;\cJ)$. 

\begin{claim}[{\cite[Claim 4.9]{GL_DMP}}]\label{clm:m-lower-bnd-in-column}
For every $L$ large, $\cJ = \Phi_\Iso(\cI)$, we have
\begin{equation}\label{eq:m-lower-bnd-in-column}
|W_{x,||}^{\bh}| \leq \frac{4}{5}\fm(\tilde{\fW}_{v_1} \cup \tilde{\fW}_{y^\dagger}),\ \text{and thus } \fm(I;J) \geq \frac{1}{5}\fm(\bigcup_{y \in \bD} \tilde{W}_y) + \sum_{i = 1}^{j^*} \fm(\sX_i)\,.
\end{equation}
In particular, 
\begin{equation}\label{eq:m-lower-bnd-in-column-2}
|W_{x,||}^{\bh}| \leq 4\fm(\cI;\cJ),\ and\ \fm(\bigcup_{y \in \bD}\tilde{W}_y) \leq 5\fm(\cI;\cJ)\,,
\end{equation}
and
\[\begin{cases}
j^* - 1 \leq (2 \vee L^3)\fm(\cI;\cJ)\ & \text{if (A3) is not violated}\\
h - \bh \leq \fm(\cI;\cJ)\ & \text{if (A3) is violated}
\end{cases}\,.\]
\end{claim}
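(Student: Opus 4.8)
The plan is to feed the explicit two–case identity for $\fm(\cI;\cJ)=|\cI|-|\cJ|$ recorded immediately above the claim back into itself, after isolating its one troublesome term. In both branches of that identity the only negative contribution is $-|W_{x,||}^{\bh}|$, the straight column added in Step 8 of \cref{alg:Iso}; all the other summands ($\sum_{z\in\bD}\fm(\tilde W_z)$, $\sum_i\fm(\sX_i)$, and the term $4(\hgt(v_{\sT+1})-h)\ge 0$) are nonnegative. So the whole claim reduces to charging $|W_{x,||}^{\bh}|$ against walls that the map deletes, and concretely to the single inequality
\[
|W_{x,||}^{\bh}|\;\le\;\tfrac45\,\fm\big(\tilde{\fW}_{v_1}\cup\tilde{\fW}_{y^\dagger}\big)\,.
\]
Granting this, since $\tilde{\fW}_{v_1}$ and $\tilde{\fW}_{y^\dagger}$ are among the wall clusters marked for deletion one has $\sum_{z\in\bD}\fm(\tilde W_z)\ge\fm(\tilde{\fW}_{v_1}\cup\tilde{\fW}_{y^\dagger})$; substituting into the (A3)-not-violated branch and absorbing the column against $\tfrac45$ of the deleted excess yields $\fm(\cI;\cJ)\ge\tfrac15\sum_{z\in\bD}\fm(\tilde W_z)+\sum_{i\le j^*}\fm(\sX_i)$, i.e.\ \eqref{eq:m-lower-bnd-in-column}. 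Dropping the (nonnegative) increment sum then gives $\fm(\bigcup_{y\in\bD}\tilde W_y)\le 5\fm(\cI;\cJ)$, and combining with the displayed bound $|W_{x,||}^{\bh}|\le\tfrac45\fm(\tilde{\fW}_{v_1}\cup\tilde{\fW}_{y^\dagger})\le\tfrac45\cdot 5\,\fm(\cI;\cJ)$ gives \eqref{eq:m-lower-bnd-in-column-2}.

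To prove the displayed inequality I would count faces level by level. By construction $W_{x,||}^{\bh}$ has exactly four vertical faces at each of the $\bh-1$ half-integer heights $3/2,\dots,\bh-\tfrac12$, so $|W_{x,||}^{\bh}|=4(\bh-1)$, where $\bh=\hgt(v_1)-\tfrac12$. On the other hand \cref{lem:wall-intersect-col-height} says $\tilde{\fW}_{v_1}\cup\tilde{\fW}_{y^\dagger}$ meets each of the $\bh$ heights $1/2,3/2,\dots,\hgt(v_1)-1$ in at least five faces, hence contains at least $5\bh$ faces at strictly positive height. Since $v_1$ is the first cut-point, none of these heights is a cut-height, so the wall system is ``columnar'' there: the projections of distinct walls in a nested family are pairwise non-$0$-connected, and the piece climbing above $x$ projects onto the single site $\rho(x)$, so the five-or-more faces at each level genuinely pay for themselves in the excess, giving $\fm(\tilde{\fW}_{v_1}\cup\tilde{\fW}_{y^\dagger})\ge 5\bh-O(1)\ge\tfrac54|W_{x,||}^{\bh}|$. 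This is the geometric heart of the matter—it is exactly \cite[Claim 4.9]{GL_DMP}, with the linear isoperimetry $N(W)\ge\tfrac15|W|$, $\fm(W)\ge\tfrac1{14}N(W)$ of \cite[Lem.~13]{GielisGrimmett02} handling the non-columnar part—and it transfers to the random-cluster setting unchanged, since walls, ceilings and their excess areas have the same combinatorics here. Making the heuristic ``no cut-height $\Rightarrow$ every level is charged'' fully rigorous is the step I expect to be the main obstacle; everything else is bookkeeping.

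It remains to deduce the bounds on $j^*$ and on $h-\bh$. If (A3) is not violated, $j^*=\fs$ was last set at an index where (A1) or (A2) held. If (A1) held with $j^*>L^3$ then $\fm(\sX_{j^*})\ge j^*-1$, so by \eqref{eq:m-lower-bnd-in-column} $\fm(\cI;\cJ)\ge\sum_{i\le j^*}\fm(\sX_i)\ge j^*-1\le(2\vee L^3)\fm(\cI;\cJ)$. If instead $j^*\le L^3$ then $j^*-1<L^3\le 2\vee L^3$, and it suffices to observe $\fm(\cI;\cJ)\ge 1$: indeed $\fm(\cI;\cJ)$ is a nonnegative integer by \eqref{eq:m-lower-bnd-in-column}, and it vanishes only if no wall is deleted, every $\sX_i$ ($i\le j^*$) is already trivial and $|W_{x,||}^{\bh}|=0$, which one checks is incompatible with $\cI\notin\Iso_{x,L,h}$ (any nonempty deleted wall $W$ carries integer $\fm(W)\ge\tfrac1{13}|\rho(W)|\ge1$). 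The case where (A2) rather than (A1) determines $j^*$ is handled identically, the constant $2\vee L^3$ absorbing the $(j^*-1)/2$ in condition (A2). Finally, when (A3) \emph{is} violated we use the other branch: there $|\sF(\cS_x)|>5h$ by the (A3) test, while the replacement spine adds only about $4(h-\bh)$ faces and $|W_{x,||}^{\bh}|\le4\bh$ by \eqref{eq:m-lower-bnd-in-column-2}; converting increment face-counts into excess areas via \eqref{eq:incr-excess-area} and collecting terms gives $\fm(\cI;\cJ)=|\cI|-|\cJ|\ge h-\bh$, which completes the claim.
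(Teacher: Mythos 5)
Your proposal mirrors the structure of the paper's proof: start from the explicit two‑case identity for $\fm(\cI;\cJ)$, charge the single negative term $|W_{x,||}^{\bh}|$ against $\frac45\fm(\tilde\fW_{v_1}\cup\tilde\fW_{y^\dagger})$ via \cref{lem:wall-intersect-col-height}, and then do a case analysis on how $j^*$ was last set. The derivations of \eqref{eq:m-lower-bnd-in-column} and \eqref{eq:m-lower-bnd-in-column-2} from the column bound are exactly right, and your treatment of (A3)-violated matches the paper's.

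There is, however, a genuine gap in your treatment of the case where $j^*$ was last set by the (A2) test. You write that it is ``handled identically, the constant $2\vee L^3$ absorbing the $(j^*-1)/2$,'' but the (A2) condition is a distance condition $d(\tilde W_y\cup\lceil\tilde W_y\rceil,\sX_{j^*})\le (j^*-1)/2$ — it says nothing directly about $\fm(\sX_{j^*})$, so the (A1)-style conclusion $\fm(\cI;\cJ)\ge j^*-1$ does not follow. What is needed (and what the paper actually proves) is a separate geometric inequality relating the index of an increment to a nearby wall: for any $j$ and $y$,
\[
j-1\ \le\ d\bigl(\tilde W_y\cup\lceil\tilde W_y\rceil,\,\sX_j\bigr)+\fm(\tilde\fW_y)\,,
\]
because the increment $\sX_j$ sits at height at least $j-1$, the wall $\tilde W_y$ reaches at most height $\fm(\tilde\fW_y)$, and the gap is bridged by the distance term. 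Plugging in the (A2) bound at $(j^*,y^*)$ gives $j^*-1\le (j^*-1)/2+\fm(\tilde\fW_{y^*})$, hence $j^*-1\le 2\fm(\tilde\fW_{y^*})\le 2\fm(\cI;\cJ)$ (using that $y^*$ is marked for deletion and \eqref{eq:m-lower-bnd-in-column-2}); this is where the $2$ in $2\vee L^3$ comes from. Without this lemma the (A2) branch is unproved. As a secondary comment, your argument for the central inequality $|W_{x,||}^{\bh}|\le\frac45\fm(\tilde\fW_{v_1}\cup\tilde\fW_{y^\dagger})$ (where you honestly flag the ``no cut-height $\Rightarrow$ every level is charged'' step as the obstacle) ends with an $O(1)$ slack, whereas the constant $\frac45$ is tight against the $5\bh$-vs-$4\bh$ count from \cref{lem:wall-intersect-col-height}; the paper's one-line derivation from that lemma has no such slack to spare, so the $O(1)$ needs to be eliminated rather than tolerated.
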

\begin{proof}               
By \cref{lem:wall-intersect-col-height}, we have
\begin{equation*}
|W_{x, ||}^\bh| = 4(\hgt(v_1) - \frac{1}{2}) \leq \frac{4}{5}(\fm(\tilde{\fW}_{v_1} \cup \tilde{\fW}_{y^\dagger}))\,,
\end{equation*}
which proves \cref{eq:m-lower-bnd-in-column,eq:m-lower-bnd-in-column-2}.

If (A3) is violated, then the spine replacement generates an excess area of $5h - 4(h - \bh) \geq h - \bh$.
If (A3) is not violated, if $j^* = 1$ then the bound is trivial. If $j^* > 1$, then $j^*$ is set for the last time either because of (A1) or (A2) being violated. If it was due to (A1) being violated, then either $j^* \leq L^3$ or $j^* \leq \fm(\sX_i)+1$. If it was due to (A2) being violated, then $d( \tilde W_y\cup \lceil \tilde W_y\rceil,\sX_j) \leq (j-1)/2$ for $y = y^*$. Now we note that in general for any $j, y$, we have
\begin{equation*}
j - 1 \leq d( \tilde W_y\cup \lceil \tilde W_y\rceil,\, \sX_j) + \fm(\tilde{\fW}_y)\,.
\end{equation*}
Indeed, the lowest part of $\sX_j$ has height $\geq j-1$, whereas the highest point reached by a face of $\tilde{W}_y$ is at most $\fm(\tilde{\fW}_y)$, and the remaining distance is made up by the term $d( \tilde W_y\cup \lceil \tilde W_y\rceil,\, \sX_j)$.  Applying this to $j^*, y^*$ gets \begin{equation*}
j^* - 1 \leq d( \tilde W_y\cup \lceil \tilde W_y\rceil,\sX_j) + \fm(\tilde{\fW}_y) \leq (j^*-1)/2 + \fm(\tilde{\fW}_y)\,,
\end{equation*}
so that $j^* - 1 \leq 2m(\cI;\cJ)$. 
\end{proof}

The following two lemmas control the terms related to $|\partial \cI|$ and $\kappa_\cI$ in the cluster expansion.

\begin{lemma}\label{lem:control-1-connected-faces}
Let $\cI,\cJ$ be two interfaces with $\cI \notin \Iso_{x, L, h}$ and $\cJ = \Phi_{\Iso}(\cI)$. Then $ |\partial \cJ|-|\partial \cI| \leq C\fm(\cI; \cJ)$ for some constant $C$ which can depend on $L$.
\end{lemma}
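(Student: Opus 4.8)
The plan is to bound $|\partial\cJ| - |\partial\cI|$ by accounting separately for the three types of modifications performed by $\Phi_\Iso$ (base modification, spine modification, environment modification, plus the reconstruction), since $\partial(\cdot)$ is a local operation: a face belongs to $\partial\cI$ only if it is $1$-connected to a face of $\cI$, so any change in $\partial\cI$ must occur within bounded distance of a face that was added or removed. First I would record the elementary fact that for any interface $\cK$, $|\partial\cK| \le c_0 |\cK|$ for a universal constant $c_0$ (each face has a bounded number of $1$-connected neighbors), and more usefully, that if $\cK'$ is obtained from $\cK$ by adding/removing a set $D$ of faces, then $|\partial\cK'\setminus\partial\cK| \le c_0 |D|$ and $|\partial\cK\setminus\partial\cK'|\le c_0|D|$ — only faces near the symmetric difference can change status. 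Thus it suffices to bound the total number of faces involved in the symmetric difference $\cI \triangle \cJ$ by $C\fm(\cI;\cJ)$.

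Next I would assemble this count from the pieces already controlled in \cref{clm:m-lower-bnd-in-column} and the surrounding analysis. The deleted walls contribute $\sum_{y\in\bD}|\tilde W_y|$ faces; by \cref{lem:num-of-0-connected-sets}-type bounds and part (ii) of the wall inequality ($N(W)\ge\frac15|W|$), together with $\fm(W) \ge \frac1{14}N(W)$, this is at most a constant times $\fm(\bigcup_{y\in\bD}\tilde W_y) \le 5\fm(\cI;\cJ)$. The removed spine increments $\sX_1,\ldots,\sX_{j^*}$ (or the whole spine if (A3) is violated) contribute, by \cref{eq:incr-excess-area}, at most $\sum_{i\le j^*}(5\fm(\sX_i)+8) \le 5\fm(\cI;\cJ) + 8j^*$ faces, and $j^* \le (2\vee L^3)\fm(\cI;\cJ) + 1$ by \cref{clm:m-lower-bnd-in-column} — this is where the dependence of $C$ on $L$ enters. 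The added column $W_{x,\parallel}^{\bh}$ has $|W_{x,\parallel}^{\bh}| \le 4\fm(\cI;\cJ)$ faces, and the added trivial-increment spine has $O(h)$ faces in the (A3)-violated case but there one has $h - \bh \le \fm(\cI;\cJ)$ and $\bh \le |W_{x,\parallel}^{\bh}| / 4 \le \fm(\cI;\cJ)$, so $h \le 2\fm(\cI;\cJ)$, keeping the count linear in $\fm(\cI;\cJ)$; similarly the base-modification markings ($\bar x$, $\rho(v_1)$, $y^\dagger$) involve only $O(1)$ plus one extra wall whose size is again controlled by $\fm(\tilde\fW_{v_1}\cup\tilde\fW_{y^\dagger}) \le C\fm(\cI;\cJ)$ via \cref{eq:m-lower-bnd-in-column}. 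Summing, $|\cI\triangle\cJ| \le C_L\,\fm(\cI;\cJ)$, and applying the locality estimate from the first paragraph gives $|\partial\cJ| - |\partial\cI| \le c_0 C_L\,\fm(\cI;\cJ)$.

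The one subtlety to be careful about — and the step I expect to be the main obstacle — is the (A3)-violated case and, more generally, making sure the trivial-increment spine that $\Phi_\Iso$ appends does not contribute a term linear in $h$ that is \emph{not} absorbed by $\fm(\cI;\cJ)$. In the (A3)-not-violated case the new spine is a reshuffling that replaces the bottom $j^*$ increments by $\hgt(v_{j^*+1})-\hgt(v_1)$ trivial increments, whose face count is within a constant of the removed ones plus $O(j^*)$, so it is fine; in the (A3)-violated case one must use that violating (A3) means $|\sF(\cS_x)| > 5h$ while the replacement has $\le 4h + O(1)$ faces, so the replacement is actually \emph{smaller}, and the bound $h - \bh \le \fm(\cI;\cJ)$ plus $\bh \le \fm(\cI;\cJ)$ from \cref{clm:m-lower-bnd-in-column} controls $h$ linearly in $\fm(\cI;\cJ)$. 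Collecting these observations, the desired inequality follows with $C = C(L)$.
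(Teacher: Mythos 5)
Your reduction in the first paragraph — bounding $|\partial\cJ|-|\partial\cI|$ by $c_0\,|\cI\triangle\cJ|$ and then trying to show $|\cI\triangle\cJ|\le C\,\fm(\cI;\cJ)$ — is where the proof breaks. The inequality $|\partial\cJ\setminus\partial\cI|\le c_0|\cI\triangle\cJ|$ is correct, but the second step is false: the symmetric difference can be on the order of $h$, not on the order of $\fm(\cI;\cJ)$. The reason is the horizontal shift that $\Phi_\Iso$ applies to the retained upper increments. When (A3) is not violated, Steps 10--11 of \cref{alg:Iso} re-root the sub-spine $(\sX_{j^*+1},\ldots,\sX_\sT,\sX_{>\sT})$ at $x+(0,0,\hgt(\cC_\bW)+\bh)$, replacing only the bottom $j^*$ increments and the base by a trivial column. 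If $\rho(v_{j^*+1})\neq\rho(x)$ (which happens whenever the base or any non-trivial increment below $v_{j^*+1}$ wiggles the pillar off the vertical through $x$), then every face of $\sX_{j^*+1},\ldots,\sX_\sT$ appears in $\cI$ at one location and its shifted copy appears in $\cJ$ at another; both copies enter $\cI\triangle\cJ$. That contributes up to $10h$ faces in $\cJ$ (and potentially more in $\cI$), which is not controlled by $\fm(\cI;\cJ)$. Your second paragraph enumerates only the deleted walls, the trivialized bottom increments, and the added column — none of which account for the shifted block — and your remark that ``the new spine is a reshuffling...whose face count is within a constant of the removed ones'' is comparing \emph{cardinalities}, which is not the same as bounding the \emph{symmetric difference}: identical face counts at different locations do not cancel in $\cI\triangle\cJ$.

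The paper's proof avoids this by refusing to pass through $|\cI\triangle\cJ|$ at all. It builds an explicit injective map $T$ from a subset of $\partial\cJ$ into $\partial\cI$, and the crucial Step 2 sends each boundary face near a shifted increment to its shifted preimage in $\partial\cI$. That one-to-one pairing is exactly the cancellation that a pure symmetric-difference count cannot see. The faces on which $T$ is not defined are then bounded by $C_L\,\fm(\cI;\cJ)$ using \cref{clm:m-lower-bnd-in-column} and \cref{eq:incr-excess-area}, together with a somewhat delicate case analysis (Steps 3--4) for faces near the trivialized region and the untouched environment. Most of the effort there goes into verifying injectivity across the different regions — the shifted pillar, the trivialized column, and the walls/ceilings of $\cI\setminus\cS_x^\cI$ — which is precisely the bookkeeping your one-sided locality bound was meant to skip. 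Your accounting in the second paragraph of the deleted walls, $j^*$, the column $W^\bh_{x,\parallel}$, and the (A3) case is consistent with the paper's estimates and would be reusable in the correct framework, but the framework itself (bound the symmetric difference) must be replaced by a matching argument.
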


\begin{proof}
The goal is to construct an injective map $T$ from a subset of $\partial \cJ$ into $\partial \cI$, and show that the remaining set of faces that $T$ is not defined on has size smaller than $C\fm(\cI;\cJ)$, which would prove the lemma. Throughout this proof, let $C_0$ be the number of faces that can be 1-connected to a particular face (namely, $C_0 = 12$).

\begin{enumerate}[label=\textbf{Step~\arabic*}:, ref=\arabic*, wide=0pt, itemsep=1ex]
    \item \label[step]{st-1-control-1-conn-faces}
    Consider first the faces of $\partial \cJ$ which are 1-connected to the column of faces $W_{x,||}^{\bh}$. There are at most $C_0|W_{x,||}^{\bh}|$ faces to account for here, but we already know that $|W_{x,||}^\bh| \leq 4\fm(\cI;\cJ)$ by \cref{eq:m-lower-bnd-in-column-2}, so we do not need to define $T$ on these faces.
    
    \item \label[step]{st-2-control-1-conn-faces}
    If (A3) was not violated, then $\cP_x^\cJ$ consists of a stack of trivial increments and then a horizontally shifted copy of the increments of $\cP_x^\cI$ with index starting from $j^* + 1$. Each face in $\partial \cJ$ which is 1-connected to one of these latter increments therefore also has a copy in $\partial \cI$, and we associate them under the map $T$. Note that a priori, it is possible that a hair on an increment is actually 1-connected to $\cI \setminus \cP_x^\cI$ by connecting to another part of $\Itop$. However, this cannot happen for increments with index larger than $j^*$ by condition (A2) of the algorithm. We remark that because this portion of $\cP_x^I$ begins with the cut-point $v_{j^* + 1}$, the image of $T$ in this step consists only of faces with height $\geq \hgt(v_{j^* + 1}) - 1$ that are 1-connected to $\cS_x^\cI$.

\item \label[step]{st-3-control-1-conn-faces}
The rest of $\cP_x^\cJ$ consists of trivial increments that replace the spine $\cS_x^\cI$ up to increment $j^*$, so it is a straight vertical column of vertices from $\hgt(v_1)$ to $\hgt(v_{j^*+1}) -1$ (or to $h$ if (A3) was violated). Let $\sY_i$ correspond to the stack of trivial increments that have the same height as the increment $\sX_i$ from $\cS_x^\cI$. Let $B$ be an empty set of faces, and begin the following iterative process: Start with $i = 1$. If $\sX_i$ (from $\cS_x^\cI$) is trivial, then $\sY_i$ is a single trivial increment. For every face $g \in \overline{\sY_i} \setminus \cJ$, there is a corresponding face $h \in \overline{\sX_i} \setminus \cI$ in the same orientation. If $g$ has height $\leq \hgt(v_{i+1})$ (where $v_{i+1}$ is the cut-point in $\cS_x^\cI$), is not in $B$, and has not yet been assigned a face under $T$, then let $T(g) = h$. (It is possible that some faces may have already been added to $B$ or been assigned a face under $T$ since two consecutive increments overlap at a common cut-point.) Otherwise, if $\sX_i$ is not a trivial increment, then we must have $\fm(\sX_i) \geq \hgt(v_{i+1}) - \hgt(v_i)$. So, we add to $B$ all the faces in $\overline{\sY_i} \setminus \cJ$ that have height $\leq \hgt(v_{i+1})$ and have not been assigned a face under $T$. Note that the number of faces added is at most  $4C_0(\hgt(v_{i+1}) - \hgt(v_i) + 1) \leq 8C_0\fm(\sX_i)$. Then, increase $i$ by 1 and repeat until $i = j^*$. Since $|B| \leq \sum_{i = 1}^{j^*} 8C_0\fm(\sX_i) \leq 8C_0\fm(I;J)$, we do not need to define $T$ on the faces of $|B|$. Now we show $T$ is still injective. There are no problems within this step since the image of $T$ in each iteration is either empty or contains faces with height in $[\hgt(v_i) + 1/2, \hgt(v_{i+1})]$ (except for the case $i = 1$, whence the image can contain faces with height in $[\hgt(v_1), \hgt(v_2)]$). This is because every assignment $T(g) = h$ here has $\hgt(g) = \hgt(h)$. Thus, by the comment at the end of \cref{st-2-control-1-conn-faces}, we only need to worry about the injectivity of $T$ in the last iteration $i = j^*$, and only if $\sX_{j^*}$ is trivial. But actually, in this iteration no faces would have been added to the domain of $T$ since any faces with height $\leq \hgt(v_{j^*})$ would have been handled in when $i = j^* - 1$, and any faces with height $\geq \hgt(v_{j^*}) + 1/2$ would have been handled in \cref{st-2-control-1-conn-faces}.

\item \label[step]{st-4-control-1-conn-faces}
Reset $B$ to be an empty set. We will be adding pairs of faces $(g, h)$ to $B$, where $g$ is some face in $\partial \cJ$ that we choose not to define $T$ on, and $h$ will be used to keep track of the size of $B$. In the previous steps, we have already handled faces of $\partial J$ which are 1-connected to $\cP_x^\cJ$. We can divide the remaining faces of $\cJ$ into the following sets:
\begin{align*}
A_1 &= \mbox{Ceiling faces of $\cJ \setminus \cP_x^\cJ$ that are in the projection of a ceiling face of $\cI$}\,;\\
A_2 &= \mbox{Ceiling faces of $\cJ \setminus \cP_x^\cJ$ that are in the projection of a deleted wall of $\cI \setminus \cS_x^\cI$}\,;\\
A_3 &= \mbox{Ceiling faces of $\cJ \setminus \cP_x^\cJ$ that are in the projection of $\overline{\cS_x^\cI}$, and not in $A_2$}\,;\\
A_4 &= \mbox{Wall faces of $\cJ \setminus \cP_x^\cJ$}\,.
\end{align*}
We fix some ordering of the faces of $\cJ$ (say, lexicographical), and visit them one by one. Whenever we visit a face $f \in \cJ$, we consider all the faces $g$ which are 1-connected faces to $f$, in $\partial \cJ$, not yet in the domain of $T$, and have not yet been added into $B$ as the first face of a pair $(g, h)$:
\begin{enumerate}[1.]
\item If $f \in A_1$, then call the corresponding ceiling face in $\cI$ by $f'$. $f'$ is a vertical shift of $f$, so define $T(g) = h$ where $h$ is the same vertical shift applied to $g$. Necessarily, $h \in \partial \cI$. Note that $h$ also cannot yet have been in the image of $T$, since that would require the spine $\cS_x^\cI$ to be 1-connected to $f'$ or above $f'$, both of which are impossible if $f'$ is a ceiling face of $\cI$.

\item If $f \in A_2$, then we can use the vertical translation method from \cite[Lem.~15]{GielisGrimmett02}. There must exist some face $f' \in \cI \setminus \cS_x^\cI$ that is a vertical shift of $f$ (i.e., that $\rho(f') = \rho(f)$); pick one arbitrarily. By \cref{lem: wall-ceiling-geometry}, $g$ must be a vertical face either above or below $f$. If it is above $f$, define $h_0$ to be the face above $f'$ such that $\rho(h_0) = \rho(g)$. If $h_0 \not\in \cI \setminus \cS_x^\cI$, then set $h = h_0$. Otherwise, shift $h_0$ up by 1 to get $h_1$, and repeat until we have $h_n \notin \cI \setminus \cS_x^\cI$. Set $h = h_n$. (If $g$ was below $f$, we can instead shift $h_i$ down by 1 to get $h_{i+1}$.) If $h \in \partial \cI$ and $h$ is not yet in the image of $T$, set $T(g) = h$. Otherwise, add the pair of faces $(g, h)$ to $B$. (It is possible that $h$ is actually a hair of $\cS_x^\cI$, so that $h \in \cI$ even though $h \notin \cI \setminus \cS_x^\cI$).

\item First note that in $A_3$, the choice to take $\overline{\cS_x^\cI}$ as opposed to just $\cS_x^\cI$ is a technicality, because we defined walls on the semi-extended interface whereas the spine was just defined as a subset of the interface. For $f \in A_3$, note that there is a ceiling face $f'$ of $\cI \setminus \cS_x^\cI$ that has the same projection as $f$. (If there were instead a wall face of $\cI \setminus \cS_x^\cI$ with the same projection, then either the wall is deleted in $\Phi_\Iso$ or not, in which case $f$ should actually be in $A_2$ or $A_4$ respectively). As $f'$ is a vertical shift of $f$, let $h$ denote the face that is the same vertical shift applied to $g$. Suppose that $\rho(v_1) \in \rho(f)$. This is the one special case where because of how $\cI \setminus \cS_x^\cI$ was defined, then $f'$ might not be in $\cI$. There are however at most $C_0$ faces of $\partial \cJ$ that are 1-connected to this $f$, and so henceforth we will ignore them. Otherwise, $f' \in \cI$, and $h \notin \cI \setminus \cS_x^\cI$. If $h \in \partial \cI$ and $h$ is not yet in the image of $T$, set $T(g) = h$. Otherwise, add the pair of faces $(g, h)$ to $B$. (It is possible that $h$ is actually a hair of $\cS_x^\cI$, so that $h \in \cI$ even though $h \notin \cI \setminus \cS_x^\cI$).

\item Finally, for $f \in A_4$, every wall in $\cJ \setminus \cP_x^\cJ$ has a vertically shifted copy in $\cI \setminus \cS_x^\cI$ that is part of an undeleted wall. Let $h$ denote the face that is the same vertical shift applied to $g$.  If $h \in \partial \cI$ and $h$ is not yet in the image of $T$, set $T(g) = h$. Otherwise, add the pair of faces $(g, h)$ to $B$. We comment here for what follows that if $f\in A_4$, it cannot be that $\rho(f)$ is 0-connected with the projection of a deleted wall $\rho(\tilde{W})$ from $\cI \setminus \cS_x^\cI$, as otherwise by \cref{lem: wall-ceiling-geometry}, the vertically shifted copy of $f$ in $\cI \setminus \cS_x^\cI$ must actually be part of $\tilde{W}$, and therefore cannot be part of an undeleted wall.
\end{enumerate}
\end{enumerate}

Note that $T$ is still injective since in \cref{st-4-control-1-conn-faces}
we always checked that $h$ was not in the image of $T$ before assigning $T(g) = h$ (except for when $f \in A_1$, but simply because it is unnecessary to check as noted there). To control the size of $B$, we now show that within \cref{st-4-control-1-conn-faces}, every $h$ that was added in a pair to $B$ or added to the image of $T$ is unique. Indeed, every pairing of $g$ with $h$ was via a vertical shift. Thus, if there is overlap it must be that the starting faces $g_1$ and $g_2$ have the same projection. Following the notation of the steps above, suppose $g_1$ was connected to $f_1$, and $g_2$ to $f_2$. There are corresponding faces $f_1', f_2'$ in $\cI$ such that $f_1' = \theta_1 f_1$ and $f_2' = \theta_2 f_2$ for some vertical shifts $\theta_1, \theta_2$. Suppose $f_1, f_2$ are both in $A_1 \cup A_3 \cup A_4$. By how $T$ was defined there, we had $h_i = \theta_i g_i$. So, the only way we can pair the same $h$ to both $g_1, g_2$ is if $\theta_1$ and $\theta_2$ are different shifts, which implies that there must be a deleted wall of $\cI\setminus \cS_x^\cI$ separating $f_1'$ and $f_2'$. But by definition of the sets $A_1 \cup A_3 \cup A_4$ (and the comment above regarding $f \in A_4$), this is impossible.

On the other hand, if both $f_1, f_2 \in A_2$, then since they are both ceiling faces, by \cref{lem: wall-ceiling-geometry} (ii), it is only possible for $\rho(g_1) = \rho(g_2)$ when $f_1$ and $f_2$ are 1-connected and $g_1, g_2$ are attached to the common edge $f_1 \cap f_2$. But in this case, whichever face of $f_1, f_2$ was visited second will not do anything with $g_1, g_2$. Since we only used the property that $f_1, f_2$ are ceiling faces, the same logic applies if $f_1 \in A_1 \cup A_3$ and $f_2 \in A_2$.

Finally, suppose $f_1 \in A_4$, $f_2 \in A_2$. By \cref{lem: wall-ceiling-geometry} (ii), $g_2$ must be a vertical face. But this forces $\rho(f_1)$ to be 1-connected to $\rho(f_2)$, which cannot happen by the comment above regarding $f \in A_4$.

Thus, every pair $(g, h)$ added to $B$ must be such that either $h$ was already in the image of $T$ after \cref{st-2-control-1-conn-faces} or \cref{st-3-control-1-conn-faces}, or $h$ was part of a hair in $\cS_x^\cI$. However, at each point in \cref{st-4-control-1-conn-faces}, $h$ was always constructed as some face that is 1-connected to $\cI \setminus \cS_x^\cI$.
By (A2) of \cref{alg:Iso}, if there is a wall or interior ceiling of $\cI \setminus \cS_x^\cI$ that is distance 1 away from an increment $\sX_i$, then $i \leq j^*$. Combined, if $h$ was added to $B$ as part of a pair $(g, h)$ in \cref{st-4-control-1-conn-faces}, then $h$ is either part of or 1-connected to an increment with index $i \leq j^*$. Thus it suffices to show that $|\sF(\bigcup_{i \leq j^*} \sX_i)|\leq C\fm(\cI;\cJ)$ for some constant $C$. But by combining \cref{eq:incr-excess-area} with the upper bound on $j^*$ in \cref{clm:m-lower-bnd-in-column}, we have
\begin{equation*}
    \Big|\sF\Big(\bigcup_{i \leq j^*} \sX_i\Big)\Big| \leq \sum_{i \leq j^*} 5\fm(\sX_i) + 8j^* \leq C\fm(\cI;\cJ)\,.
\end{equation*}
(The constant above may depend on $L$, but that is not a problem.)
\end{proof}

\begin{lemma}\label{lem:control-interface-clusters}
Suppose that we have two interfaces $\cI \notin \Iso_{x, L, h}$ and $\cJ = \Phi_{\Iso}(\cI)$. Then, we have $\kappa_\cI - \kappa_\cJ \leq C\fm(I;J)$ for some constant $C$.
\end{lemma}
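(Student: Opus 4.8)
The plan is to view $\Phi_\Iso$ as a composition of elementary moves and to control the change in $\kappa_\cdot$ across each of them. Recall that $\kappa_\cI$ depends only on $\cI$ (it counts the open clusters in the configuration whose closed edges are exactly those dual to faces of $\cI$), so $\kappa_\cI-\kappa_\cJ$ depends only on $\cI\triangle\cJ$, and changing the open/closed status of one edge alters the number of open clusters by at most $1$. The naive bound $\kappa_\cI-\kappa_\cJ\le|\cI\setminus\cJ|$ obtained by opening the edges of $\cI\setminus\cJ$ one at a time is therefore valid, but it is too weak: flattening a wall can make $|\cI\setminus\cJ|$ far larger than $\fm(\cI;\cJ)$, because most of those faces are not genuinely deleted but merely \emph{relocated} (a ceiling over a large region being slid from some positive height down to height $0$). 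The point is thus to separate the genuine deletions/insertions, whose total face count is $O(\fm(\cI;\cJ))$ by estimates already in hand, from the relocations, which are rigid translations and hence leave $\kappa$ unchanged.

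Inspecting \cref{alg:Iso}, the moves that genuinely delete or insert faces are: the insertion of the column $W_{x,\parallel}^{\bh}$, with $|W_{x,\parallel}^{\bh}|\le 4\fm(\cI;\cJ)$ by \cref{clm:m-lower-bnd-in-column}; the deletion of the wall faces of the groups $\Clust(\tilde\fW_y)$, $y\in\bD$, of total size $\sum_{y\in\bD}|\tilde W_y|=O\big(\sum_{y\in\bD}\fm(\tilde W_y)\big)=O\big(\fm(\bigcup_{y\in\bD}\tilde W_y)\big)=O(\fm(\cI;\cJ))$ by the excess-area inequalities of \cite[Lem.~13]{GielisGrimmett02} and \cref{clm:m-lower-bnd-in-column}; the replacement of the spine increments $\sX_1,\dots,\sX_{j^*}$ by trivial ones, affecting $|\sF(\bigcup_{i\le j^*}\sX_i)|\le\sum_{i\le j^*}(5\fm(\sX_i)+8)\le C\fm(\cI;\cJ)$ faces by \cref{eq:incr-excess-area} and \cref{clm:m-lower-bnd-in-column} (when (A3) is violated the whole spine is replaced instead, and $|\sF(\cS_x)|=O(\fm(\cI;\cJ))$ follows from the trigger $|\sF(\cS_x)|>5h$ together with the same bookkeeping); and the base modification, which touches only a bounded number of faces. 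Since each of these changes the open/closed status of $O(\fm(\cI;\cJ))$ edges, it changes $\kappa$ by $O(\fm(\cI;\cJ))$.

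Every remaining face of $\cI$ reappears in $\cJ$ only relocated by a rigid translation: the tail $\bigcup_{i>j^*}\sX_i$ of the spine is reinserted as a vertically-and-horizontally shifted copy of itself, and — exactly as in the wall-deletion map of \cite[Lem.~15]{GielisGrimmett02} underlying \cref{prop:grimmett-exp-tail-grp-of-walls} — flattening a deleted wall slides the ceilings and walls nested in it rigidly down to height $0$. Provided these translations are collision-free, each of them is an isomorphism of the induced subgraph on its support that re-glues to the complement only along the $O(\fm(\cI;\cJ))$ faces already accounted for, and so contributes nothing further to $\kappa_\cI-\kappa_\cJ$. The required collision-freeness is precisely the separation property used to show that $\Phi_\Iso$ is well-defined: \cref{prop:cone-seperation} confines the relocated spine tail to a region disjoint from $\cI\setminus\cP_x$, and the disjointness of wall projections (\cref{lem: wall-ceiling-geometry}) does the same for the nested walls — cf.\ \cref{lem:well-defined-iso}. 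Summing all the contributions yields $|\kappa_\cI-\kappa_\cJ|\le C\fm(\cI;\cJ)$, which is stronger than the asserted one-sided bound.

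The main obstacle is exactly this invariance under the rigid relocations — verifying that after deleting the wall faces (resp.\ the spine faces up to $j^*$) the nested structure (resp.\ the tail of the spine) can be slid into place without collisions, so that the slide is a genuine isomorphism and the only net change to the cluster count comes from the $O(\fm(\cI;\cJ))$ faces that are truly removed or inserted. This is the $\kappa$-analogue of the injective face-matching carried out for the sets $A_1,\dots,A_4$ in the proof of \cref{lem:control-1-connected-faces}, and of the vertical-translation device of \cite[Lem.~15]{GielisGrimmett02}; that analysis carries over, the only new inputs being the face-count bound $|\sF(\bigcup_{i\le j^*}\sX_i)|\le C\fm(\cI;\cJ)$ and the separation of \cref{prop:cone-seperation}.
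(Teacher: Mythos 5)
Your proof is correct and follows essentially the same route as the paper: separate the contribution from genuinely deleted/inserted faces (bounded in number by $C\fm(\cI;\cJ)$ via \cref{clm:m-lower-bnd-in-column} and \cref{eq:incr-excess-area}, and for the wall-deletion part by the same \cite[Lem.~15]{GielisGrimmett02} estimate the paper cites) from the contribution of rigidly relocated faces, which leave $\kappa$ invariant. The only difference is one of bookkeeping: where you argue abstractly that collision-free rigid translations are isomorphisms that re-glue along $O(\fm(\cI;\cJ))$ faces, the paper instantiates this concretely by writing the difference as a telescoping sum through the intermediate interfaces $\cI\setminus\cS_x^\cI$ and $\cJ\setminus\cP_x^\cJ$ and introducing the quantities $\kappa_{>j^*}$ and $\kappa_{\le j^*}$ (clusters enclosed by the upper/lower portions of the spine); the exact cancellation $\kappa_\cJ-\kappa_{\cJ\setminus\cP_x^\cJ}=\kappa_{>j^*}$ is the precise form of your translation-invariance claim. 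Your observation that the two-sided bound $|\kappa_\cI-\kappa_\cJ|\le C\fm(\cI;\cJ)$ also holds is correct but not needed, since only $q^{\kappa_\cI-\kappa_\cJ}$ must be controlled from above in the cluster expansion.
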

\begin{proof}
The proof of the exponential tails on groups of walls in \cite[Lem.~15]{GielisGrimmett02} already controls the difference in the number of open clusters resulting from deleting walls, and so we have the bound 
\begin{equation*}
\kappa_{\cI \setminus \cS_x^\cI} - \kappa_{\cJ \setminus \cP_x^\cJ} \leq \sum_{z \in \bD} 28\fm(\tilde{W}_z) \leq C\fm(I;J)\,,
\end{equation*}
where $\bD$ are the indices of all deleted walls in \cref{alg:Iso}.

Now, let $\kappa_{> j^*}$ be the number of open clusters which are separated from $\partial \Lambda_n$ by the portion of $\cS_x^\cI$ consisting of increments starting from index $j^* + 1$. Then, 
\begin{equation*}\kappa_\cJ - \kappa_{\cJ \setminus \cP_x^\cJ} = \kappa_{> j^*}\,.
\end{equation*}
On the other hand, if $\kappa_{\leq j^*}$ is defined analogously, then 
\begin{equation*}
\kappa_\cI - \kappa_{\cI \setminus \cS_x^\cI} \leq \kappa_{> j^*} + \kappa_{\leq j^*} + 1
\end{equation*}
(where the extra plus one is because it is possible for the joining together of the two parts of the spine to create an extra open cluster). Thus, it suffices to bound $\kappa_{\leq j^*}$ in terms of the excess area of the increments. However, the addition of a single face can add at most one cluster, and we can bound the number of faces in $\bigcup_{i = 1}^{j^*} \sX_i$ using \cref{eq:incr-excess-area} by
\begin{equation*}
\sum_{i = 1}^{j^*} |\sF(\sX_i)| - 4(j^* - 1) \leq \sum_{i = 1}^{j^*} 5\fm(\sX_i) + 4 \leq 5\fm(I;J) + 4 \leq 9\fm(I;J)\,.\qedhere
\end{equation*}
\end{proof}

\begin{proposition}
There exists $C > 0$ such that for all $\beta > \beta_0$, all $L$ large, and every $\cI$, $\Phi_\Iso(\cI) = \cJ$,
\begin{equation*}
\frac{\bar{\mu}_n(\cI)}{\bar{\mu}_n(\cJ)} \leq e^{-(\beta-CL^3) \fm(\cI;\cJ)}\,.
\end{equation*}
\end{proposition}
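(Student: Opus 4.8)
The plan is to expand both $\bar\mu_n(\cI)$ and $\bar\mu_n(\cJ)$ using the cluster-expansion identity \eqref{eq:CE} and to track the four resulting factors. With $\cJ=\Phi_\Iso(\cI)$ this gives
\[
\frac{\bar\mu_n(\cI)}{\bar\mu_n(\cJ)} = (1-e^{-\beta})^{|\partial\cI|-|\partial\cJ|}\, e^{-\beta(|\cI|-|\cJ|)}\, q^{\kappa_\cI-\kappa_\cJ}\, \exp\Big(\sum_{f\in\cI}\g(f,\cI)-\sum_{f\in\cJ}\g(f,\cJ)\Big)\,,
\]
the difference of the (formally infinite) $g$-sums being well defined since the summands cancel outside a bounded region, as in \cite{GielisGrimmett02}. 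By definition of the excess area $|\cI|-|\cJ|=\fm(\cI;\cJ)$, so the energy factor contributes exactly $e^{-\beta\fm(\cI;\cJ)}$; this is the entire gain, and it remains to show that the other three factors together are at most $e^{CL^3\fm(\cI;\cJ)}$. (When $\cI\in\Iso_{x,L,h}$ the statement is trivial, so assume $\cI\notin\Iso_{x,L,h}$, whence $\fm(\cI;\cJ)\ge 1$.)

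First I would dispense with the two factors that do not appear in the Ising cluster expansion, using the lemmas just proved. Since $q\ge1$ forces $0<1-e^{-\beta}<1$, we have $(1-e^{-\beta})^{|\partial\cI|-|\partial\cJ|}\le(1-e^{-\beta})^{-(|\partial\cJ|-|\partial\cI|)^+}$, and \cref{lem:control-1-connected-faces} bounds $(|\partial\cJ|-|\partial\cI|)^+\le C_L\fm(\cI;\cJ)$ with $C_L$ allowed to depend on $L$; as $(1-e^{-\beta})^{-1}\le e^{2e^{-\beta}}$ for $\beta$ large, this factor is at most $e^{2C_Le^{-\beta}\fm(\cI;\cJ)}$. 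Similarly \cref{lem:control-interface-clusters} gives $\kappa_\cI-\kappa_\cJ\le C\fm(\cI;\cJ)$, so (using $q\ge1$) $q^{\kappa_\cI-\kappa_\cJ}\le e^{C(\log q)\fm(\cI;\cJ)}$. Both contributions are of the form $e^{C'\fm(\cI;\cJ)}$ with $C'$ independent of $\beta$, and are absorbed into the final $CL^3$.

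The remaining, and main, task is the $g$-term estimate $\big|\sum_{f\in\cI}\g(f,\cI)-\sum_{f\in\cJ}\g(f,\cJ)\big|\le CL^3\fm(\cI;\cJ)$, for which I would import the argument of \cite[proof of Thm.~4.2]{GL_DMP}, which applies here once the geometric inputs above are in place; only its skeleton needs recalling. The map $\Phi_\Iso$ modifies $\cI$ only inside a bounded region $R$ made up of the deleted walls $\{\tilde W_z:z\in\bD\}$, the inserted column $W_{x,||}^{\bh}$, and the spine increments $\sX_1,\dots,\sX_{j^*}$ (or the entire spine if (A3) is violated). Each face $f\in\cI$ lying outside a fixed neighborhood of $R$ has a vertically translated copy $f'\in\cJ$ around which $\cI$ and $\cJ$ agree on a ball of radius comparable to $d(f,R)$, so \eqref{eq:g-bound-2-faces} gives $|\g(f,\cI)-\g(f',\cJ)|\le Ke^{-c\,d(f,R)}$; summing the geometric series accounts for $O(1)$ per face of $R$, while the $O(|R|)$ faces within a fixed neighborhood of $R$ are bounded one at a time by $K$ via \eqref{eq:g-bound-1-face}. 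Finally $|R|\le CL^3\fm(\cI;\cJ)$: by \eqref{eq:m-lower-bnd-in-column-2} the deleted walls and the column contribute $O(\fm(\cI;\cJ))$, while \eqref{eq:incr-excess-area} together with the bound $j^*-1\le(2\vee L^3)\fm(\cI;\cJ)$ of \cref{clm:m-lower-bnd-in-column} gives $\big|\sF(\bigcup_{i\le j^*}\sX_i)\big|\le\sum_{i\le j^*}5\fm(\sX_i)+8j^*\le CL^3\fm(\cI;\cJ)$ --- this is where the $L^3$ enters.

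The step I expect to be delicate is ensuring that this far-field pairing is not spoiled in the random-cluster setting: a hair attached to an increment of index $>j^*$ must not $1$-connect back into $R$ or into another part of the full interface, since that would destroy the agreement of $\cI$ and $\cJ$ on balls around paired faces. This is exactly what the cone confinement \cref{prop:cone-seperation} and \cref{cor:pillar-doesnt-touch-other-walls} were set up to preclude on $\Iso_{x,L,h}$, and it is also why the face-counting of \cref{lem:control-1-connected-faces,lem:control-interface-clusters} had to be carried out beforehand. Assembling the four bounds then yields $\bar\mu_n(\cI)/\bar\mu_n(\cJ)\le e^{-\beta\fm(\cI;\cJ)}e^{CL^3\fm(\cI;\cJ)}=e^{-(\beta-CL^3)\fm(\cI;\cJ)}$ after renaming $C$, as claimed.
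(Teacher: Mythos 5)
Your proof is correct and follows essentially the same approach as the paper's, which (tersely) invokes \cref{lem:control-1-connected-faces,lem:control-interface-clusters} for the boundary and cluster-count factors of the cluster expansion and refers to \cite[Prop.~4.10]{GL_DMP} for the $g$-term estimate. One small clarification: the concern you flag about hairs on increments of index $>j^*$ reattaching into the modified region is not prevented by \cref{prop:cone-seperation} or \cref{cor:pillar-doesnt-touch-other-walls} (those only apply to the image $\cJ\in\Iso_{x,L,h}$, not to the preimage $\cI$), but rather by condition (A2) of \cref{alg:Iso}, as used in Step~2 of the proof of \cref{lem:control-1-connected-faces}.
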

\begin{proof}
    An appropriate bound on the first two terms in the cluster expansion follow from the above two lemmas. See the proof of \cite[Proposition 4.10]{GL_DMP} for how to control the remaining $g$-terms.
\end{proof}

\begin{proposition}
There exists $C>0$ such that for all $L$ large, $M \geq 1$, $0 \leq h' \leq h$, and $\cJ \in E_x^{h'}$
\begin{equation*}
|\{\cI \in \Phi_\Iso^{-1}(\cJ): \fm(\cI;\cJ) = M\}| \leq C^{L^3M}\,.
\end{equation*}
\end{proposition}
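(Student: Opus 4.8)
The plan is an entropy (encoding) argument: given the image $\cJ$, we exhibit a bounded piece of auxiliary data, ranging over at most $C^{L^3M}$ values for an absolute constant $C$, such that $\cJ$ together with this data determines any preimage $\cI$ with $\fm(\cI;\cJ)=M$; this immediately bounds $|\Phi_\Iso^{-1}(\cJ)\cap\{\fm(\cdot;\cJ)=M\}|$ by $C^{L^3M}$. This is the random-cluster analogue of the entropy bound in \cite[Thm.~4.2]{GL_DMP}, and, as with the preceding lemmas, the only genuinely new point is that the extra hair faces do not affect the counting, because every object counted below is counted merely as an \emph{arbitrary} $1$-connected (resp.\ $0$-connected) set of faces containing a prescribed face, for which \cref{lem:num-of-0-connected-sets} gives the bound $s^k$ on sets of size $k$, with $s$ independent of $L$.

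First I would isolate what $\Phi_\Iso$ actually changes. Inspecting \cref{alg:Iso}, $\cJ$ differs from $\cI$ only in: (a) the walls of $\cI\setminus\cS_x^\cI$ marked for deletion in Steps 2--3 and 6 (together with the full groups removed in Step 7); (b) the bottom increments $\sX_1,\dots,\sX_{j^*}$ of the spine $\cS_x^\cI$, replaced by trivial increments --- or, if (A3) is violated, the whole spine $\cS_x^\cI$; and (c) the inserted column $W_{x,||}^{\bh}$, of height $\bh=\hgt(v_1)-\tfrac12$. Everything else in $\cI$ is forced by $\cJ$: the undeleted walls of $\cI\setminus\cS_x^\cI$ are a common vertical translate (by a height determined by $\cJ$ and $\bh$) of the walls of $\cJ\setminus\cP_x^\cJ$, and the increments $\sX_{j^*+1},\dots$ of $\cS_x^\cI$ coincide with the increments of $\cP_x^\cJ$ above its initial trivial part (which do not reattach to any other wall, by condition (A2) and \cref{cor:pillar-doesnt-touch-other-walls}). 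Hence the auxiliary data --- the deleted walls of (a) with their anchor positions, the face set of (b), the integer $\bh$, and finitely many bookkeeping labels (the flag whether (A3) was violated, and the indices $j^*,y^\dagger,y^*$, each lying in a set of polynomial-in-$L^3M$ size) --- determines $\cI$, and it remains to count the data.

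Next I would bound each piece. For (c): \cref{clm:m-lower-bnd-in-column} gives $4\bh=|W_{x,||}^{\bh}|\le4\fm(\cI;\cJ)=4M$, so $\bh$ takes at most $M$ values. For (b) with (A3) not violated: \cref{eq:m-lower-bnd-in-column} gives $\sum_{i\le j^*}\fm(\sX_i)\le M$ and \cref{clm:m-lower-bnd-in-column} gives $j^*\le(2\vee L^3)M+1$, so by \cref{eq:incr-excess-area} the set $\bigcup_{i\le j^*}\sF(\sX_i)$ has at most $5M+8\bigl((2\vee L^3)M+1\bigr)\le C'L^3M$ faces; being connected and containing the faces around $x$ at height $\tfrac12$, there are at most $s^{C'L^3M}$ such sets by \cref{lem:num-of-0-connected-sets}. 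When (A3) is violated the whole $\cS_x^\cI$ must be recorded, but a short computation from $|\cI|-|\cJ|=M$ (comparing $\cS_x^\cI$, which has more than $5h$ faces, with the replacement spine of $\le4h+O(1)$ faces) forces $h=O(M)$ and hence $|\sF(\cS_x^\cI)|=O(M)$, which is even better. For (a): by \cref{eq:m-lower-bnd-in-column-2} the deleted walls have total excess area at most $5M$, hence, using $N(W)\le14\fm(W)$ and $|W|\le5N(W)$, total size $O(M)$, so their shapes range over at most $s^{O(M)}$ possibilities, while a deleted wall anchored at $y$ has at most $O(L^2)$ positions available if $d(y,x)\le L$ (and the subset of these positions that carries deleted walls costs at most $2^{O(L^2)}\le C^{L^3}$ for $L$ large), and at most $e^{O(\fm(\tilde W_y))}$ positions if $d(y,x)>L$, since then it is deleted only when $\fm(\tilde W_y)\ge\log d(y,x)$. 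Multiplying these bounds and absorbing the polynomially many bookkeeping labels and all constants into a single $C$ yields at most $C^{L^3M}$ choices of data.

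The step I expect to require the most care is the \emph{unambiguity} of the reconstruction: one must verify that, once $\cJ$ and the anchor positions are fixed, there is no freedom in where the recorded deleted walls and bottom increments are re-inserted --- in particular, that the FK hairs (which can a priori reattach a pillar increment to a neighbouring wall, unlike in the Ising model) never create such a choice. This is precisely what condition (A2) and \cref{cor:pillar-doesnt-touch-other-walls} rule out for the increments surviving the map, so every hair one must reattach lives on a recorded increment $\sX_i$ with $i\le j^*$ and is therefore already part of the recorded data. With this in hand, the remaining verifications parallel \cite[Thm.~4.2]{GL_DMP} and are routine.
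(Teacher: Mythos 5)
Your proof is correct and is precisely the encoding/entropy argument that the paper delegates to \cite[Proposition~4.11]{GL_DMP}: the paper's own text states only that the entropy count is over arbitrary $1$-connected sets (so the FK hairs cannot cause trouble), and you spell out exactly this point, together with the bound $\bh \leq M$, the $O(L^3M)$-face bottom-spine witness from \cref{clm:m-lower-bnd-in-column} and \cref{eq:incr-excess-area}, and the position-encoding for deleted walls (trivial near $x$ since $M\ge1$, and $e^{O(\fm)}$ far from $x$ via $d(y,x)\le e^{\fm(\tilde W_y)}$). Your remark on reconstruction unambiguity via (A2) and \cref{cor:pillar-doesnt-touch-other-walls} is the one FK-specific subtlety, and you handle it correctly.
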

\begin{proof}
    See the proof of \cite[Proposition 4.11]{GL_DMP}.
\end{proof}

\begin{proof}[\textbf{\emph{Proof of \cref{thm:iso-map}}}]
 For any $\cI \notin \Iso_{x, L, h}$, one has $\fm(\cI; \Phi_\Iso(\cI)) \geq 1$; thus, it suffices to prove the stronger statement that for some $C$ and any $r \geq 1$,
\begin{align*}
\mu_{n}^{\mp} \big(\fm(\cI; \Phi_{\Iso}(\cI)) \geq r\mid E_x^{h'}\big) \leq C\exp\big[-  (\beta- CL^3) r)\big]\,
\end{align*} 
  and take $L = L_\beta = \beta^{1/4}$, say.  
For every $r\ge 1$, 
\begin{align*}
    \bar{\mu}_n(\fm(\cI; \Phi_{\Iso}(\cI)) \geq r,  E_x^{h'}) &\leq
    \sum_{M\geq r}\,\, \sum_{\substack{\cI\in E_x^{h'}\\ \fm(\cI;\Phi_{\Iso}(\cI)) = M}} \bar{\mu}_n (\cI) \\
    & \leq \sum_{M\geq r} \sum_{\substack{\cI \in E_x^{h'}\\ \fm(\cI;\Phi_\Iso(\cI)) = M}} e^{ - (\beta- CL^3)M} \bar{\mu}_n(\Phi_{\Iso}(\cI))  \\
    & = \sum_{M\geq r}\,\, \sum_{\cJ\in \Phi_{\Iso}(E_x^{h'})}  \bar{\mu}_n(\cJ) \,\, \sum_{\substack{\cI\in \Phi_{\Iso}^{-1}(\cJ) \\ \fm(\cI; \Phi_{\Iso}(\cI))=M}} e^{-(\beta - CL^3 )M} \\
    & \leq \sum_{M\geq r} C^{L^3M} e^{- (\beta- C L^3)M} \mu^\mp_n(E_x^{h'})\,,
\end{align*}
where in the last line we use that $\Phi_{\Iso}(E_x^{h'}) \subseteq (E_x^{h'})$.
Dividing through by $\bar{\mu}_n(E_x^{h'})$ then yields the desired conditional bound.  \end{proof}

Next we prove that we have control over the size of increments at a given height by another map argument. We note that following the procedure in \cite[Proposition 4.1]{GL_max}  would work, but utilizing the cone separation properties of $\Iso_{x, L, h}$ greatly simplifies the proof.

\begin{definition}
Fix any $L$ and integer height $0 \leq h_0 < h$. Let $\cP_x$ be a pillar with height at least $h$. Suppose that the first increment in $\cP_x$ to have height $> h_0$ has index $j_0$. Then, we will say that $\cI \in \Incr_{x, L, h_0}$ if its pillar satisfies
\[ \fm(\sX_j) \leq \begin{cases}
0 & \text{if } 0 \leq j - j_0 \leq L\\
j - j_0 & \text{if } j - j_0 > L
\end{cases}\,.\]
\end{definition}
(Note that $j_0$ is defined so that the first increment which is guaranteed to be trivial has its two vertices at heights $h_0 - 1/2$ and $h_0 + 1/2$.)

\begin{theorem}\label{thm:incr-map}
For $\beta > \beta_0$ and $L$ sufficiently large, there exists constants $L'_\beta$ and $\epsilon_\beta$ (going to $\infty$ and 0 respectively as $\beta \to \infty$) such that for every $0 \leq L' \leq L'_\beta$ and all $h_0 \leq h' \leq h$,
\begin{equation*}
    \bar{\mu_n}(\cI \in \Incr_{x, L', h_0}| \hgt(\cP_x) \geq h', \Iso_{x, L, h}) \geq 1 - \epsilon_\beta
\end{equation*}
\end{theorem}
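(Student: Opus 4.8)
The proof adapts the map argument used for \cref{thm:iso-map}, now run entirely inside the already-isolated event $\Iso_{x,L,h}$, where the cone separation of \cref{prop:cone-seperation,lem:cone-separation} makes every cluster-expansion estimate local (this is the simplification over the procedure of \cite[Proposition~4.1]{GL_max} alluded to above). Concretely, I would define a map $\Phi_{\Incr}=\Phi_{\Incr}(x,L',h_0)$ on $\Iso_{x,L,h}\cap E_{h'}^x$ as follows: if $\cI\in\Incr_{x,L',h_0}$ set $\Phi_{\Incr}(\cI)=\cI$; otherwise, letting $j_0$ be the index of the first increment of $\cP_x$ reaching above height $h_0$ (as in the definition of $\Incr_{x,L',h_0}$), scan $\sX_{j_0},\sX_{j_0+1},\dots$ and let $j^*$ be the \emph{largest} index at which the $\Incr_{x,L',h_0}$ constraint is violated, and then replace $\sX_{j_0},\dots,\sX_{j^*}$ by a straight column of $H^*:=\hgt(v_{j^*+1})-\hgt(v_{j_0})$ trivial increments, leaving the rest of $\cP_x$ — now re-indexed to strictly larger indices — and all of $\cI\setminus\cP_x$ untouched. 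Writing $\cJ:=\Phi_{\Incr}(\cI)$, one has $\fm(\cI;\cJ)=\sum_{j=j_0}^{j^*}\fm(\sX_j)\ge 1$ whenever $\cI\notin\Incr_{x,L',h_0}$.

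The first point to check is that $\cJ\in\Incr_{x,L',h_0}\cap\Iso_{x,L,h}\cap E_{h'}^x$. Since $\hgt(\cP_x)$ is preserved, $\cJ\in E_{h'}^x$. A violating increment has $\fm(\sX_j)\ge 1$, hence is nontrivial, hence — because $\cI\in\Iso_{x,L,h}$ — has index $>L^3$; consequently the entire modification takes place above height $L^3$, so $\cI\setminus\cP_x$ is genuinely unchanged and still meets the wall bounds required by both $\Iso_{x,L,h}$ and $\Incr_{x,L',h_0}$, and $f_{[x,x-\ez]}\notin\cJ$. The inserted column is trivial, so it satisfies every increment bound; each increment formerly above $j^*$ was individually good (by maximality of $j^*$), and re-indexing sends the one of index $m$ to index $m'=m+H^*-(j^*+1-j_0)\ge m$ because $H^*\ge j^*-j_0+1$, so it lands at a \emph{no smaller} index, which only relaxes both constraints it must satisfy. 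Finally the spine of $\cJ$ has no more faces than that of $\cI$, so the $10h$ bound persists.

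For the weight comparison I would invoke the cluster expansion \eqref{eq:CE}: by \cref{prop:cone-seperation} the modified region is separated from $\cI\setminus\cP_x$, so in the telescoping of $\sum_f\g(f,\cdot)$ only the exponentially small tails \eqref{eq:g-bound-2-faces} near the modification survive, and together with the analogues of \cref{lem:control-1-connected-faces,lem:control-interface-clusters} (bounding $|\partial\cJ|-|\partial\cI|$ and $\kappa_\cI-\kappa_\cJ$ by $C\fm(\cI;\cJ)$) this yields $\bar\mu_n(\cI)\le e^{-(\beta-C)\fm(\cI;\cJ)}\bar\mu_n(\cJ)$ with $C=C(L')$. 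For the entropy, the stopping rule forces either $j^*-j_0\le L'$ or $\fm(\sX_{j^*})>j^*-j_0$, hence in all cases $j^*-j_0\le(1+L')\fm(\cI;\cJ)$; combined with \eqref{eq:incr-excess-area} this bounds $|\sF(\sX_{j_0}\cup\cdots\cup\sX_{j^*})|$ by $C(L')\fm(\cI;\cJ)$, so by \cref{lem:num-of-0-connected-sets} there are at most $C(L')^{M}$ interfaces $\cI$ with $\Phi_{\Incr}(\cI)=\cJ$ and $\fm(\cI;\cJ)=M$ — no extra location entropy is needed, since $j_0$ and $\hgt(v_{j_0})$ are read off from the part of $\cJ$ lying below the column, which coincides with that of $\cI$. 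Summing and using that $\Phi_{\Incr}$ maps $E_{h'}^x\cap\Iso_{x,L,h}$ into itself,
\[
\bar\mu_n\bigl(\cI\notin\Incr_{x,L',h_0},\,E_{h'}^x,\,\Iso_{x,L,h}\bigr)\le\sum_{M\ge1}C(L')^{M}e^{-(\beta-C(L'))M}\,\bar\mu_n\bigl(E_{h'}^x\cap\Iso_{x,L,h}\bigr);
\]
taking $L'_\beta=\beta^{1/4}$ makes the geometric series $\le\epsilon_\beta\to0$ for every $L'\le L'_\beta$ once $\beta$ is large, and dividing by $\bar\mu_n(E_{h'}^x\cap\Iso_{x,L,h})$ gives the claim.

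The step I expect to be the main obstacle is the second one: verifying that after re-indexing the pillar still obeys \emph{both} families of increment bounds, i.e.\ that no increment is pushed down into a range where a stronger constraint applies. This hinges on the elementary but essential inequality $H^*\ge j^*-j_0+1$ (each increment has height at least one) together with the observation that all violating increments lie above height $L^3$, so that the environment — and hence the wall bounds it must satisfy — is literally unchanged. By contrast the cluster-expansion weight bound, though technical, is essentially inherited: the cone separation already established for $\Iso_{x,L,h}$ confines the modification and localizes the $\g$-, $\kappa_\cI$- and $|\partial\cI|$-terms exactly as in \cref{lem:control-1-connected-faces,lem:control-interface-clusters}.
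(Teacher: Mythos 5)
Your proposal reproduces the paper's strategy: the same trivialization map (cf.\ \cref{alg:Incr}), the same cluster-expansion energy estimate (\cref{prop:incr-energy}), the same witness-counting entropy estimate (\cref{prop:incr-entropy}), and the same final summation. However, two places in your verification are incorrect as written, even though both conclusions happen to be correct.

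\emph{The re-indexing check.} You argue that the shifted increment of old index $m>j^*$ lands at $m'=m+\Delta$ with $\Delta:=H^*-(j^*+1-j_0)\ge 0$ and deduce that it ``lands at a no smaller index, which only relaxes both constraints.'' But the $\Incr_{x,L',h_0}$ constraint is measured relative to $j_0$, and $j_0$ is determined by the pillar: if $\hgt(v_{j_0})<h_0-\tfrac12$ (so $\sX_{j_0}$ is a tall increment straddling height $h_0$), then after inserting the column the first increment of $\cJ$ reaching height $>h_0$ sits at index $j_0^{\cJ}=j_0+\delta$ with $\delta:=h_0-\tfrac12-\hgt(v_{j_0})>0$. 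What is actually required is $m'-j_0^{\cJ}\ge m-j_0$, i.e.\ $\Delta\ge\delta$, not merely $\Delta\ge 0$; your inequality $H^*\ge j^*-j_0+1$ is insufficient. The correct inequality does hold: since each increment spans height at least one, $\hgt(v_{j^*+1})\ge\hgt(v_{j_0+1})+(j^*-j_0)\ge h_0+\tfrac12+(j^*-j_0)$, whence $\Delta-\delta=\hgt(v_{j^*+1})-h_0-\tfrac12-(j^*-j_0)\ge0$. (The paper's proof of \cref{lem:well-defined-incr} also elides this point, checking only geometric well-definedness.)

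\emph{The witness count.} You assert that ``no extra location entropy is needed, since $j_0$ and $\hgt(v_{j_0})$ are read off from the part of $\cJ$ lying below the column.'' This is circular: to identify which part of $\cJ$ is ``below the column'' one must already know $\hgt(v_{j_0})$, and if $\sX_{j_0-1}$ is itself trivial the boundary between the original pillar and the inserted column is invisible in $\cJ$. The paper therefore records $\hgt(v_{j_0})$ as part of the witness: from $\fm(\sX_{j_0})\ge\hgt(v_{j_0+1})-\hgt(v_{j_0})-1\ge h_0-\tfrac12-\hgt(v_{j_0})$ and $\fm(\sX_{j_0})\le M$ it lies in a window of at most $M+1$ values, an extra polynomial factor that your bound $C(L')^M$ absorbs. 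The final tail estimate is unaffected, but the claim as stated is false.
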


\begin{remark}\label{rem:incr-map}
    We can also define the map $\Incr_{x, L, j_0}$ by specifying directly the increment we want to trivialize, instead of specifying a height that we want to ensure a trivial increment to be at. We will still have
    \begin{equation*}
    \bar{\mu_n}(\cI \in \Incr_{x, L', j_0}| \hgt(\cP_x) \geq h', \Iso_{x, L, h}) \geq 1 - \epsilon_\beta
\end{equation*}
in the same setting as above, and the proof will follow in the same way.
\end{remark}

 We first show that the map $\Phi_\Incr$ is well-defined on $\Iso_{x, L, h}$ and yields an interface in $\Incr_{x, L', h_0}$. Note that we really need the starting interface to be in $\Iso_{x, L, h}$, otherwise the new pillar generated in Step~3 of \cref{alg:Incr} might intersect with existing walls of the interface. 

\begin{lemma}\label{lem:well-defined-incr}
For every $L, L'$ large, $h_0 \leq h' \leq h$, and $\cI \in E_{h'}^x \cap \Iso_{x, L, h}$, the image $\cJ:= \Phi_\Incr(\cI)$ is a well-defined interface in $E_{h'}^x \cap \Iso_{x, L, h} \cap \Incr_{x, L', h_0}$.
\end{lemma}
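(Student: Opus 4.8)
The plan is to follow the proof of \cref{lem:well-defined-iso} almost verbatim, with the global spine modification of $\Phi_\Iso$ replaced by the local retrimming of \cref{alg:Incr} that only acts on the increments above height $h_0$, and to observe that requiring $\cI \in \Iso_{x,L,h}$ at the outset is precisely what makes the rebuilt pillar avoid the rest of the interface. Write $\cJ = \Phi_\Incr(\cI)$, and let $\cK$ denote the interface obtained from $\cI$ by removing the portion of the spine above $\sX_{j_0}$ (filling in the horizontal faces below the exposed vertices, as in \cref{def:truncated-interface}) and deleting the at most one wall of $\cI\setminus\cP_x$ flagged by the closeness test of \cref{alg:Incr}, but before the trivialized pillar is reattached.

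First I would check that $\cK$ is a genuine interface. Since $\cI\in\Iso_{x,L,h}$, its base is empty and $x$ is the first cut-point, so $\cI\setminus\cS_x$ is already an interface; deleting a standard wall from an admissible family leaves it admissible, hence $\cK$ is an interface, and re-erecting the vertical column above $x$ creates no conflict because, by isolation, any wall that could meet that column lies in $\cL_0\cap\Cyl_{x,L}$ and hence was either absent or just deleted.

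The crux is that the rebuilt pillar --- a stack of trivial increments replacing $\sX_{j_0},\dots,\sX_{j^*}$, topped by the horizontally shifted tail $\sX_{j^*+1},\dots,\sX_{>\sT}$ --- meets $\cK$ only along the reconnection column. Here I would invoke \cref{prop:cone-seperation}: trivializing and shifting increments only \emph{decreases} every $\fm(\sX_j)$ and the spine face count, so the new pillar still satisfies \cref{it:iso-spine} and therefore lies in $\bF_{\triangledown}\cup\bF_{\parallel}$; the walls of $\cK$ other than the new column are those of $\cI\setminus\cP_x$ minus one deletion, so they still satisfy \cref{it:iso-walls} and lie in $\bF_-\cup\bF_{\curlyvee}\cup\bF_{\ext}$; the disjointness clause of \cref{prop:cone-seperation} then gives the non-intersection and, at the same time, shows $\cJ\in\Iso_{x,L,h}$, the isolation conditions being inherited because nothing was enlarged. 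Since the spine replacement preserves the total height --- or, if a safeguard analogous to rule (A3) of \cref{alg:Iso} applies, caps it at $h\ge h'$ --- we get $\hgt(\cP_x^{\cJ})\ge h'$, i.e.\ $\cJ\in E_{h'}^x$. Finally $\cJ\in\Incr_{x,L',h_0}$ holds by design: increments $j_0,\dots,j^*$ are trivial, and $j^*$ is taken to be the last index at which either the defining inequality of $\Incr_{x,L',h_0}$ or a closeness test of the form $d(\tilde W_y\cup\lceil\tilde W_y\rceil,\sX_j)\le(j-j_0)/2$ is violated; arguing as in \cref{clm:m-lower-bnd-in-column} that a kept wall can be close to an increment only for indices $\le j^*$, the tail increments $\sX_{j^*+1},\dots$ are genuinely isolated, so their excess areas are unaffected by the horizontal shift and the required bound survives.

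The one place that needs real attention, rather than transcription from \cite{GL_DMP,GL_max}, is the bookkeeping around the threshold index $j_0$: after trivialization the increments are re-indexed, so one must verify that in $\cJ$ the index $j_0$ still marks the lowest increment of height exceeding $h_0$, and that the heights below $\hgt(v_{j_0})$ --- in particular the position of $h_0$ --- are left unchanged by the replacement. Both follow at once from the fact that increments $1,\dots,j_0-1$ are untouched, which is exactly the simplification afforded by having started in $\Iso_{x,L,h}$. The variant of \cref{rem:incr-map}, where $\Phi_\Incr$ is indexed directly by the increment $j_0$ rather than by a height $h_0$, is handled identically and is in fact slightly cleaner, since this last bookkeeping step disappears.
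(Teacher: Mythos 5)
Your core idea is exactly right and matches the paper: since $\cI\in\Iso_{x,L,h}$, the modified spine still satisfies \cref{it:iso-spine} and hence lies in $\bF_{\triangledown}\cup\bF_{\parallel}$, while $\cI\setminus\cP_x$ is untouched and lies in $\bF_{-}\cup\bF_{\curlyvee}\cup\bF_{\ext}$, so \cref{prop:cone-seperation} gives the needed disjointness and membership in $\Iso_{x,L,h}$. The bookkeeping remark about $j_0$ at the end is also a sensible check.

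However, you have imported several steps from $\Phi_\Iso$ that do not exist in \cref{alg:Incr}. Specifically: $\Phi_\Incr$ performs \emph{no} wall deletion, so the construction of an intermediate interface $\cK$ with a ``flagged wall'' removed, and the remark that ``deleting a standard wall from an admissible family leaves it admissible,'' are solving a problem the map doesn't create — the walls of $\cI\setminus\cP_x$ are carried over verbatim, which is exactly why the paper's proof is a one-liner. Likewise, \cref{alg:Incr} has no analogue of the closeness test ({\tt A2}) — the criterion $d(\tilde W_y\cup\lceil\tilde W_y\rceil,\sX_j)\le(j-j_0)/2$ you quote is not part of Step~2 — and no analogue of the height safeguard ({\tt A3}); the trivialized stack always matches the original height $\hgt(v_{j^*+1})-\hgt(v_{j_0})$, so $E_{h'}^x$ is preserved automatically without any cap. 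These invented steps don't lead you to a false conclusion (they are vacuously satisfied since nothing is deleted), but they obscure the reason the lemma is so short: $\Phi_\Incr$ only touches the spine, which, thanks to the pre-condition $\cI\in\Iso_{x,L,h}$, is already insulated from the rest of the interface by the cone geometry. You should strip the wall-deletion and ({\tt A2})/({\tt A3}) machinery and let the cone separation argument stand on its own, as the paper does.
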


\begin{proof}
    This follows by \cref{prop:cone-seperation}. Since the only change to the interface is in the pillar, it suffices to show that the new pillar generated in Step~3 does not intersect the rest of the interface $\cI$. By \cref{prop:cone-seperation}, the rest of $\cI$ is a subset of $\bF_{-} \cup \bF_{\curlyvee}\cup \bF_{\ext}$. On the other hand, Step~2 ensures that the pillar being generated satisfies \cref{it:iso-spine} in the definition of an isolated pillar, and thus is a subset of $\bF_{\triangledown} \cup \bF_{\parallel}$. The disjointness then follows by the same proposition.
\end{proof}

\begin{figure}
\begin{algorithm2e}[H]
\DontPrintSemicolon
If $\cI \in \Incr_{x, L', h_0}$, then set $\Phi_\Incr(\cI) = \cI$. Otherwise, proceed as follows. Let $j_0$ be the index of the first increment of $\cS_x$ that reaches a height $> h_0$.\\
\nl
Let $(\sX_i)_{i\geq 1}$ be the increments of $\cS_{x}$.
\;

\nl
\For{$j=j_0$ \KwTo $\sT+1$}{     
     \If(\hfill){
\quad$ \fm(\sX_j) \geq \begin{cases} 0 & \mbox{if }0\le j-j_0\le L' \\ j-j_0-1 & \mbox{if }j-j_0> L'\end{cases}$ \quad\mbox{}}{Let $\fs \gets j$.} 
}
 Let $j^* \gets \fs$.\;

\BlankLine

\nl Let
\[ \cS^* \gets 
\big(X_1,\ldots,X_{j_0 -1},\underbrace{X_\trivincr,\ldots,X_\trivincr}_{\hgt(v_{j^*+1})-\hgt(v_{j_0})},\sX_{j^*+1},\ldots,\sX_{\sT},\sX_{>\sT}\big)\,.
 \]
\nl Obtain $\Phi_{\Incr}(\cI)$ by replacing the spine $\cS_x$ with $\cS^*$.\;

\caption{The map $\Phi_{\Incr}= \Phi_{\Incr}(x,L',h_0)$}
\label{alg:Incr}
\end{algorithm2e}
\end{figure}

We can split up any interface $\cI \in \Iso_{x, L, h}$ as follows. Let $j^*$ be as defined from \cref{alg:Incr}.
{\renewcommand{\arraystretch}{1.3}
\[
\begin{tabular}{m{0.05\textwidth}m{0.3\textwidth}m{0.5\textwidth}}
\toprule
\midrule
 $\bX_B^\cI$ & $\bigcup_{j \geq j^*+1} \sF(\sX_j)$ & Increments above $v_{j^*+1}$  \\ 
 $\bX_{A}^{\cI}$ & $\bigcup_{j_0\leq j \leq j^*}\sF(\sX_j)$ & Increments between $v_{j_0}$ and $v_{j^*+1}$ \\
 $\bX_C$ & $\bigcup_{j \leq j_0-1}\sF(\sX_j)$ & Increments below $v_{j_0}$ \\
 $\bB$ & $\cI\setminus(\cS_x^\cI)$
 & The remaining set of faces in $\cI$ \\
 \bottomrule
 \end{tabular}
\]
}
Define $\Phi_\Incr$ as in \cref{alg:Incr}. We can split up the faces of $\cJ = \Phi_\Incr(\cI)$ as follows:
{\renewcommand{\arraystretch}{1.3}
\[
\begin{tabular}{m{0.05\textwidth}m{0.3\textwidth}m{0.5\textwidth}}
\toprule
\midrule
 $\bX_B^\cJ$ & & Horizontally shifted copy of $\bX_B^\cI$\\ 
 $\bX_{A}^{\cJ}$ & & Trivial increments at the same height as $\bX_A^\cI$ \\
 $\bX_C$ & & Same set of faces as in $\cI$ \\
 $\bB$ & & Same set of faces as in $\cI$ \\
 \bottomrule
 \end{tabular}
\]
}
\begin{claim}\label{clm:bound-j*}
Let $\cJ = \Phi_\Incr(\cI)$ for $\cI \in E_x^{h'} \cap \Iso_{x, L, h}$. Then, there exists a constant $C > 0$ such that
\begin{equation}
|\bX_A^\cI \cup \bX_A^\cJ| \leq CL'\fm(\cI;\cJ)\,.
\end{equation}
\end{claim}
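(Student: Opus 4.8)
The plan is to reduce the claim to pure excess-area bookkeeping for the increments $\sX_{j_0},\dots,\sX_{j^*}$ that $\Phi_\Incr$ trivializes. The first observation is that $\Phi_\Incr$ changes $\cI$ only on the block $\bX_A$: it leaves $\bB$ and $\bX_C$ untouched, replaces $\bX_A^\cI$ by a stack $\bX_A^\cJ$ of $k:=\hgt(v_{j^*+1})-\hgt(v_{j_0})$ trivial increments covering the same heights, and carries $\bX_B^\cI$ to a horizontal translate, a face-count-preserving operation. Hence $\cI$ and $\cJ$ agree face-for-face outside $\bX_A$, so $\fm(\cI;\cJ)=|\cI|-|\cJ|=|\bX_A^\cI|-|\bX_A^\cJ|$. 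I would then perform an exact count using the definition $\fm(\sX_i)=|\sF(\sX_i)|-4(\hgt(v_{i+1})-\hgt(v_i)+1)$ and the fact that consecutive increments share exactly the four vertical faces at their common cut-point: this gives $|\bX_A^\cJ|=4k+4$ and $|\bX_A^\cI|=4k+4+\sum_{j=j_0}^{j^*}\fm(\sX_j)$, whence the clean identity $\fm(\cI;\cJ)=\sum_{j=j_0}^{j^*}\fm(\sX_j)$.

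Next I would bound the two pieces of $|\bX_A^\cI\cup\bX_A^\cJ|\le|\bX_A^\cI|+|\bX_A^\cJ|$. Since $\hgt(v_{j+1})-\hgt(v_j)\le\fm(\sX_j)+1$ for every increment (equality $1=0+1$ in the trivial case, and $\fm(\sX_j)\ge(\hgt(v_{j+1})-\hgt(v_j)-1)\vee1$ otherwise), summing over $j_0\le j\le j^*$ gives $k\le\fm(\cI;\cJ)+(j^*-j_0+1)$, so $|\bX_A^\cJ|=4k+4\le 4\fm(\cI;\cJ)+4(j^*-j_0)+8$; combined with $|\bX_A^\cI|=\fm(\cI;\cJ)+|\bX_A^\cJ|$ this leaves me with $|\bX_A^\cI\cup\bX_A^\cJ|\le 9\fm(\cI;\cJ)+8(j^*-j_0)+16$, so everything reduces to controlling $j^*-j_0$.

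For that I would read off the stopping rule of \cref{alg:Incr}: $j^*$ is the largest index in $[j_0,\sT+1]$ for which the displayed inequality holds. Either $j^*-j_0\le L'$, or else the last time $\fs\gets j^*$ was set it was via the second branch, $\fm(\sX_{j^*})\ge j^*-j_0-1$, and then the identity above gives $\fm(\cI;\cJ)\ge\fm(\sX_{j^*})\ge j^*-j_0-1$. Since $\cI\notin\Incr_{x,L',h_0}$ forces $\fm(\cI;\cJ)\ge1$ (a violating increment has index in $[j_0,j^*]$ and excess area $\ge1$, so it contributes to the sum), in both cases $j^*-j_0\le 2L'\fm(\cI;\cJ)$; the case $\cI\in\Incr_{x,L',h_0}$ is trivial since then $\cJ=\cI$. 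Plugging this into the previous bound and using $\fm(\cI;\cJ)\ge1$ and $L'\ge1$ gives $|\bX_A^\cI\cup\bX_A^\cJ|\le(25+16L')\fm(\cI;\cJ)\le CL'\fm(\cI;\cJ)$ for a universal constant $C$, as desired.

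Everything here is elementary, but the step I would be most careful about is the identity $\fm(\cI;\cJ)=\sum_{j=j_0}^{j^*}\fm(\sX_j)$: it requires that the horizontal translation relating $\bX_B^\cI$ to $\bX_B^\cJ$, and the re-attachment of the trivialized column at the cut-points $v_{j_0}$ and $v_{j^*+1}$, create no extra faces nor unexpected $1$-connections — which is precisely where one invokes that $\cJ=\Phi_\Incr(\cI)$ is a well-defined interface in $\Iso_{x,L,h}$ (\cref{lem:well-defined-incr}), i.e.\ ultimately the cone separation of \cref{prop:cone-seperation}.
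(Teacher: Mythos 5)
Your argument is correct and follows the same route as the paper's proof: reduce to bounding $|\bX_A^\cI|$ via increment excess areas, and control $j^*-j_0$ by reading off the stopping rule in \cref{alg:Incr} (either $j^*-j_0\le L'$, or $\fm(\sX_{j^*})\ge j^*-j_0-1$ was the reason $\fs$ was last set). The main difference is cosmetic: the paper uses the inequality $|\sF(\sX_i)|\le 5\fm(\sX_i)+8$ from \cref{eq:incr-excess-area} to bound $|\bX_A^\cI|$ directly, whereas you derive the exact identity $\fm(\cI;\cJ)=\sum_{j=j_0}^{j^*}\fm(\sX_j)$ and work backward. Your version is slightly tighter and makes explicit the bookkeeping that the paper leaves implicit — in particular, the observation that $\Phi_\Incr$ alters faces only inside $\bX_A$ (so $\fm(\cI;\cJ)=|\bX_A^\cI|-|\bX_A^\cJ|$), which does indeed rest on the well-definedness of the map via cone separation (\cref{lem:well-defined-incr}, \cref{prop:cone-seperation}), as you flag at the end. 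Both proofs ultimately invoke $\fm(\sX_{j^*})\le\sum_j\fm(\sX_j)\le\fm(\cI;\cJ)$ to close the loop; you just write this step out.
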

\begin{proof}
It suffices to bound $|\bX_A^\cI|$ since clearly $|\bX_A^\cJ| \leq |\bX_A^\cI|$. The number of faces of $\bX_A^\cI$ is \begin{equation*}
|\bX_A^\cI| = \sum_{j_0}^{j^*} |\sF(\sX_j)| - 4(j^* - j_0) \leq \sum_{j_0}^{j^*} 5\fm(\sX_j) + 4(j^* - j_0) +8\,.
\end{equation*}
Thus, it suffices to bound $j^* - j_0$, and by \cref{alg:Incr}, either $j^* - j_0 \leq L'$, or $j^* - j_0 \leq \fm(\sX_{j^*}) + 1$ 
\end{proof}

\begin{proposition}\label{prop:incr-energy}
There exists $C > 0$ such that for all $\beta > \beta_0$, and every $\cI \in E_x^{h'} \cap \Iso_{x, L, h}$, $\Phi_\Incr(\cI) = \cJ$,
\begin{equation}
 \frac{\bar{\mu_n}(\cI)}{\bar{\mu}_n(\cJ)} \leq e^{-(\beta-CL') \fm(\cI;\cJ)}\,.
\end{equation}
\end{proposition}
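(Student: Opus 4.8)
The plan is to read the ratio $\bar{\mu}_n(\cI)/\bar{\mu}_n(\cJ)$ off the cluster expansion \eqref{eq:CE} and to bound the four resulting factors separately, in the same spirit as the energy estimate for $\Phi_\Iso$ proved above (and as \cite[Prop.~4.10]{GL_DMP}, \cite[Prop.~4.1]{GL_max}). Since $|\cI|-|\cJ|=\fm(\cI;\cJ)$, \eqref{eq:CE} gives
\[
\frac{\bar{\mu}_n(\cI)}{\bar{\mu}_n(\cJ)}=(1-e^{-\beta})^{|\partial\cI|-|\partial\cJ|}\,e^{-\beta\,\fm(\cI;\cJ)}\,q^{\kappa_\cI-\kappa_\cJ}\,\exp\Big(\sum_{f\in\cI}\g(f,\cI)-\sum_{f\in\cJ}\g(f,\cJ)\Big).
\]
The term $e^{-\beta\fm(\cI;\cJ)}$ is exactly the desired gain, so it suffices to bound the product of the other three factors by $e^{CL'\fm(\cI;\cJ)}$. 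Two features make this easier than the $\Phi_\Iso$ case: first, $\Phi_\Incr$ alters $\cI$ only inside the spine $\cS_x$, replacing $\bX_A^\cI=\bigcup_{j_0\le j\le j^*}\sF(\sX_j)$ by a stack of trivial increments $\bX_A^\cJ$ of the same total height and rigidly shifting the higher increments $\bX_B^\cI$ horizontally, while $\bX_C$ and $\bB:=\cI\setminus\cS_x$ are untouched; second, by \cref{lem:well-defined-incr} both $\cI$ and $\cJ$ lie in $\Iso_{x,L,h}$, so the separation statements \cref{prop:cone-seperation}, \cref{cor:pillar-doesnt-touch-other-walls} and \cref{lem:cone-separation} are available for each. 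The single quantitative input carried over is \cref{clm:bound-j*}: $|\bX_A^\cI\cup\bX_A^\cJ|\le CL'\fm(\cI;\cJ)$.

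For the boundary and cluster factors I would prove the evident $\Phi_\Incr$-analogues of \cref{lem:control-1-connected-faces} and \cref{lem:control-interface-clusters}, which are now almost immediate. Any face of $\partial\cJ$ that is $1$-connected to $\bX_C$, $\bB$, or the shifted copy $\bX_B^\cJ$ has a matching face of $\partial\cI$ under the inverse shift (the identity off $\bX_B$), so $|\partial\cJ|-|\partial\cI|$ is at most the number of faces of $\partial\cJ$ meeting $\bX_A^\cJ$; by \cref{cor:pillar-doesnt-touch-other-walls} these stay inside the pillar, hence number at most $C|\bX_A^\cJ|\le CL'\fm(\cI;\cJ)$. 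Since $1-e^{-\beta}<1$, the factor $(1-e^{-\beta})^{|\partial\cI|-|\partial\cJ|}$ is $\le1$ unless $|\partial\cJ|>|\partial\cI|$, in which case it is at most $e^{2e^{-\beta}(|\partial\cJ|-|\partial\cI|)}\le e^{CL'\fm(\cI;\cJ)}$. Similarly, deleting the faces $\bX_A^\cI$, inserting $\bX_A^\cJ$, and translating $\bX_B$ (a cluster-preserving rigid motion) changes the number of enclosed open clusters by at most $C(|\bX_A^\cI|+|\bX_A^\cJ|)+O(1)\le CL'\fm(\cI;\cJ)$ --- one face at a time changes $\kappa$ by at most one, as in \cite[Lem.~15]{GielisGrimmett02} --- so $q^{\kappa_\cI-\kappa_\cJ}\le q^{CL'\fm(\cI;\cJ)}$. (In the nontrivial case $\cI\notin\Incr_{x,L',h_0}$ one has $\fm(\cI;\cJ)\ge1$, which lets me absorb the additive $O(1)$'s.)

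The genuine work is the $\g$-term $\sum_{f\in\cI}\g(f,\cI)-\sum_{f\in\cJ}\g(f,\cJ)$. I would pair each face of $\bX_C$ and $\bB$ with itself and each face of $\bX_B^\cJ$ with its inverse shift in $\bX_B^\cI$, leaving the faces of $\bX_A^\cJ$ and $\bX_A^\cI$ unpaired; by \eqref{eq:g-bound-1-face} and \cref{clm:bound-j*} the unpaired faces contribute at most $K(|\bX_A^\cI|+|\bX_A^\cJ|)\le CL'\fm(\cI;\cJ)$. For a paired face, \eqref{eq:g-bound-2-faces} together with translation invariance of $\g$ bounds the $\g$-difference by $Ke^{-cr}$ with $r$ the radius of agreement of the two interfaces (after undoing the shift where relevant) around that face. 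For $f\in\bX_C\cup\bB$ the only nearby disagreement between $\cI$ and $\cJ$ is the set $\bX_A^\cI\cup\bX_A^\cJ$, the shifted $\bX_B$ being far off by \cref{cor:pillar-doesnt-touch-other-walls} and \cref{lem:cone-separation}; hence $\sum_{f\in\bX_C\cup\bB}e^{-cr}\le C(|\bX_A^\cI|+|\bX_A^\cJ|)+Ce^{-cL}\le CL'\fm(\cI;\cJ)$, using $\sum_{f}e^{-c\,d(f,S)}\le C|S|$. For $f\in\bX_B^\cI$, after undoing the horizontal shift the two interfaces coincide on all of $\bX_B^\cI$ and disagree only near the cut-point $v_{j^*+1}$ (through the replaced increments, which lie within distance $1$ of $v_{j^*+1}$) and on the far-away $\bB,\bX_C$; thus $r\gtrsim d(f,v_{j^*+1})$ up to a $Ce^{-cL}$ cone-separation tail, and $\sum_{f\in\bX_B^\cI}e^{-cr}\le C\sum_{f}e^{-c\,d(f,v_{j^*+1})}+Ce^{-cL}\le C$ because $v_{j^*+1}$ is a single point. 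Summing, $|\sum_{\cI}\g-\sum_{\cJ}\g|\le CL'\fm(\cI;\cJ)$, and multiplying the four bounds yields $\bar{\mu}_n(\cI)/\bar{\mu}_n(\cJ)\le e^{-(\beta-CL')\fm(\cI;\cJ)}$. The step I expect to be the main obstacle is precisely this last one --- checking that the horizontal shift of $\bX_B$, though it displaces unboundedly many faces, produces $\g$-discrepancies that only ``see'' the bounded disagreement near $v_{j^*+1}$ plus a negligible cone-separation remainder; this is exactly what the hypothesis $\cI\in\Iso_{x,L,h}$ and \cref{lem:cone-separation} are there to guarantee, and it is why the argument is cleaner than the $\Iso$-free version in \cite[Prop.~4.1]{GL_max}.
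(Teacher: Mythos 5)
Your proposal follows essentially the same route as the paper: read off the ratio from \eqref{eq:CE}, bound $(1-e^{-\beta})^{|\partial\cI|-|\partial\cJ|}$ and $q^{\kappa_\cI-\kappa_\cJ}$ by analogues of \cref{lem:control-1-connected-faces,lem:control-interface-clusters}, and split the $\g$-sum into the four pieces corresponding to $\bX_A$, $\bX_B$, $\bX_C$, $\bB$. The first three ingredients match the paper's proof, and your treatment of $\sum_{f\in\bX_C\cup\bB}$ via $\sum_f e^{-c\,d(f,S)}\le C|S|$ is exactly right.

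There is, however, a genuine error in your bound for $\sum_{f\in\bX_B^\cI}|\g(f,\cI)-\g(\theta f,\cJ)|$. You write that after undoing the shift the interfaces ``disagree only near the cut-point $v_{j^*+1}$ (through the replaced increments, which lie within distance $1$ of $v_{j^*+1}$),'' and conclude $r\gtrsim d(f,v_{j^*+1})$, hence a bound of $C$. This is false: $\bX_A^\cI=\bigcup_{j_0\le j\le j^*}\sF(\sX_j)$ occupies the entire height range $[\hgt(v_{j_0}),\hgt(v_{j^*+1})]$ and, since the increments $\sX_j$ for $j>L^3$ need only satisfy $\fm(\sX_j)\le j$, can extend an unbounded horizontal distance from the pillar axis. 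For $f\in\bX_B^\cI$ close to $v_{j^*+1}$ in height but offset horizontally, the nearest disagreement in $\bX_A^\cI$ can sit directly beneath $f$, giving $r\ll d(f,v_{j^*+1})$ — so $e^{-cr}\le Ce^{-c\,d(f,v_{j^*+1})}$ does not hold. The correct bound is obtained exactly as you already did for $f\in\bX_C\cup\bB$: when the radius of agreement is attained in $\bX_A^\cI\cup\bX_A^\cJ$, use $\sum_{f\in\sF(\Z^3)}\sum_{g\in\bX_A^\cI\cup\bX_A^\cJ}e^{-c\,d(f,g)}\le C|\bX_A^\cI\cup\bX_A^\cJ|\le CL'\fm(\cI;\cJ)$ (this is \cref{clm:bound-j*}), and when it is attained in $\bB\cup\bX_C$ use \cref{lem:cone-separation} for $\bB$ and the height-gap computation $\sum_{j>j^*}|\sF(\sX_j)|e^{-c(j-j_0)}\le C$ for $\bX_C$. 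So the $\bX_B^\cI$ term is $\le CL'\fm(\cI;\cJ)$, not $\le C$. This does not sink your conclusion — the unpaired $\bX_A$ faces already contribute $CL'\fm(\cI;\cJ)$, so the overall bound stands — but the step as written is incorrect and should be replaced by the case split above.

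One smaller point: your boundary-face argument asserts that every face of $\partial\cJ$ $1$-connected to $\bX_C$, $\bB$, or $\bX_B^\cJ$ ``has a matching face of $\partial\cI$ under the inverse shift'' without verifying injectivity of the assignment near the seams where identity and shift compete, nor that the image actually lies in $\partial\cI$ (a face near the cut-points $v_{j_0}$ or $v_{j^*+1}$ can end up inside $\cI$ rather than in $\partial\cI$). The paper handles this by routing through the bookkeeping of \cref{st-3-control-1-conn-faces} of \cref{lem:control-1-connected-faces}; it is worth invoking that explicitly rather than treating it as obvious.
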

\begin{proof}
Using the cluster expansion, we have
\begin{equation}
\frac{\bar{\mu_n}(\cI)}{\bar{\mu}_n(\cJ)} = (1-e^{-\beta})^{|\partial \cI| - |\partial \cJ|}e^{-\beta \fm(\cI;\cJ)}q^{\kappa_\cI - \kappa_\cJ}\exp(\sum_{f\in \cI} \g(f, \cI) - \sum_{f \in \cJ} \g(f, \cJ))\,.
\end{equation}
To account for the faces in $\partial \cJ$ and $\partial \cI$, we follow the proof of \cref{lem:control-1-connected-faces} and define an injective map $T$ on a subset of $\partial \cJ$ to $\partial \cI$ and show that the number of faces we do not define $T$ on is bounded by $C\fm(\cI;\cJ)$ for some $C$. Faces which are 1-connected to $B \cup \bX_C$ can be mapped to themselves, and faces 1-connected to $\bX_B^\cJ$ can be mapped to their shifted copy in $\bX_B^\cI$. The remaining faces 1-connected to $\bX_A^\cJ$ can be handled by following the procedure in \cref{st-3-control-1-conn-faces} of \cref{lem:control-1-connected-faces}, noting there that the bound on the number of faces where $T$ was not defined was actually a constant times the sum of the excess areas of the increments being trivialized, which in this case is precisely $C\fm(\cI;\cJ)$, and so $|\partial \cJ|-|\partial \cI| \leq C\fm(\cI;\cJ)$.

A bound on $\kappa_\cI - \kappa_\cJ$ also follows as in \cref{lem:control-interface-clusters}. The difference in the number of open clusters between the two interfaces is bounded by the number of open clusters in $\bX_A^\cI + 2$ (where the +2 comes from potentially creating an extra open cluster when joining to $B \cup \bX_C$ below and/or to $\bX_B^\cI$ above). However, the addition of a single face can add at most one cluster, whence \cref{clm:bound-j*} gives us the bound $\kappa_\cI - \kappa_\cJ \leq CL'\fm(\cI;\cJ)$. 

Finally, we can decompose the sum of $g$-terms as
\begin{equation}
\sum_{f \in \bX_A^\cI} |\g(f, \cI)| + \sum_{f \in \bX_A^\cJ} |\g(f, \cJ)| + \sum_{f \in \bX_B^\cI} |\g(f, \cI) - \g(\theta f, \cJ)| + \sum_{f \in B \cup \bX_C} |\g(f, \cI) - \g(f, \cJ)|\,.
\end{equation}
The first two sums are bounded by $CL'\fm(\cI;\cJ)$ by \cref{clm:bound-j*} (for a different constant than in claim, but a constant nonetheless). 

For the latter two sums, we separate the analysis into cases according to which face $g \in \cI \cup \cJ$ attains the distance $r(f, \cI; \theta f, \cJ)$:
\begin{enumerate}[(i)]
    \item If $g \in \bX_A^\cI \cup \bX_A^\cJ$, then by summability of exponential tails and \cref{clm:bound-j*}, we have
\begin{equation*}
\sum_{f \in \sF(\Z^3)} \sum_{g \in \bX_A^\cI \cup \bX_A^\cJ} e^{-cd(f, g)} \leq C|\bX_A^\cI \cup \bX_A^\cJ| \leq CL'\fm(\cI;\cJ)\,,
\end{equation*}
which covers both sums.

    \item If $g \in \bX_B^\cI \cup \bX_B^\cJ$, we only need to check for the sum over $f \in B \cup \bX_C$, since every face in $\bX_B^\cJ$ is the same horizontal shift of a face in $\bX_B^\cI$. For $f \in B$, the sum is bounded by \cref{eq:pillar-in-cone-2}, since both $\cI$ and $\cJ$ are in $\Iso_{x, L, h}$. For $f \in \bX_C$, we have using summability of exponential tails, \cref{eq:incr-excess-area}, and \cref{alg:Incr},
\begin{align*}
\sum_{g \in \bX_B^\cI \cup \bX_B^\cJ}\;\sum_{f \in \bX_C} e^{-cd(f, g)} &\leq \sum_{j > j^*}\;\sum_{\substack{f \in \sF(\Z^3)\\ \hgt(f)\leq \hgt(v_{j_0})}} \max_{g \in \sX_j} |\sF(\sX_j)|e^{-cd(f, g)}\\
&\leq \sum_{j > j^*} |\sF(\sX_j)| e^{-c(j - j_0)}\\
&\leq \sum_{j > j^*}C(j - j_0)e^{-c(j - j_0)} \leq C\,.
\end{align*}

    \item If $g \in B \cup \bX_C$, we only need to consider the sum over $f \in \bX_B^\cI$, but then this is the same as case (ii) above with the roles of $f$ and $g$ reversed. \qedhere
\end{enumerate}
\end{proof}

\begin{proposition}\label{prop:incr-entropy}
There exists $C>0$ such that for all $M \geq 1$, $L', h', h$ as in the setting of \cref{thm:incr-map},  and $\cJ \in E_x^{h'} \cap \Iso_{x, L, h}$,
\begin{equation}
|\{\cI \in \Phi_\Incr^{-1}(\cJ): \fm(\cI;\cJ) = M\}| \leq C^{L'M}\,.
\end{equation}
\end{proposition}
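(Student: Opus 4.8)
The plan is to mimic the entropy bound for $\Phi_\Iso$ (the proposition just before the proof of \cref{thm:iso-map}), but using the simpler structure of $\Phi_\Incr$: the only faces that change between $\cI$ and $\cJ=\Phi_\Incr(\cI)$ lie in the spine, specifically in $\bX_A^\cI$ (the increments between $v_{j_0}$ and $v_{j^*+1}$, which get trivialized) and $\bX_B^\cI$ (the increments above $v_{j^*+1}$, which get shifted horizontally by a fixed vector). First I would fix the image $\cJ$ and describe what data is needed to recover a preimage $\cI$ with $\fm(\cI;\cJ)=M$. Since $\bX_C$, $\bB$, and the \emph{shape} of $\bX_B^\cJ$ are unchanged (only rigidly translated), to reconstruct $\cI$ it suffices to know: (a) the horizontal shift vector applied to $\bX_B$, and (b) the face set of $\bX_A^\cI$ (the ``undone'' portion of the spine). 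Note $j_0$ is determined by $\cJ$ since the increments below $v_{j_0}$ are untouched and $h_0$ is a parameter of the map; and $j^*$ is then determined once we know $\bX_A^\cI$.

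For (b), by \cref{clm:bound-j*} we have $|\bX_A^\cI| \leq CL'\fm(\cI;\cJ) = CL'M$. The face set $\bX_A^\cI$ is a $1$-connected set of faces (it is a connected portion of the spine) of size at most $CL'M$, attached to $\cJ$ at the cut-point $v_{j_0}$, whose location is determined by $\cJ$. Hence by \cref{lem:num-of-0-connected-sets} the number of choices for $\bX_A^\cI$ is at most $s^{CL'M}$ for the universal constant $s$. For (a), the horizontal shift of $\bX_B$ is completely determined by $\bX_A^\cI$: once we know the replaced portion of the spine, we know the height $\hgt(v_{j^*+1})$ in $\cI$ versus the corresponding height in $\cJ$, hence the vertical offset, and the cut-point structure forces the horizontal position of $v_{j^*+1}$ in $\cI$ (indeed, in $\cI$ the increment $\sX_{j^*+1}$ sits directly above $\sX_{j^*}$, so its basepoint is read off from $\bX_A^\cI$). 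So (a) contributes no further entropy beyond (b). Combining, $|\{\cI\in\Phi_\Incr^{-1}(\cJ): \fm(\cI;\cJ)=M\}| \leq s^{CL'M} = C^{L'M}$ after renaming the constant.

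The main obstacle I anticipate is a bookkeeping subtlety rather than a conceptual one: one must check carefully that $\bX_A^\cI$ together with the (unchanged) data of $\cJ$ genuinely pins down $\cI$ — in particular that the hairs attached to the shifted increments $\bX_B^\cI$ are recovered correctly and that no information about the deleted-vs-kept structure is lost. Here the cone separation of $\Iso_{x,L,h}$ (\cref{prop:cone-seperation} and \cref{cor:pillar-doesnt-touch-other-walls}) is what saves us: on $\Iso_{x,L,h}$ the pillar, hence $\bX_A^\cI$ and $\bX_B^\cI$ and their hairs, is disjoint from and not $1$-connected to $\bB=\cI\setminus\cS_x^\cI$ (except at the four faces below $x$), so reconstructing the spine portion and rigidly translating $\bX_B$ cannot clash with the environment, and the decomposition $\cI = \bB \cup \bX_C \cup \bX_A^\cI \cup \bX_B^\cI$ is an honest disjoint recovery. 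One should also note, as in the proof of \cref{thm:iso-map}, that this bound being of the form $C^{L'M}$ with $C$ possibly depending on $L'$ is harmless, since in the end $L'$ is taken to be a slowly growing function of $\beta$ (e.g.\ $L'_\beta$), and the energy gain $e^{-(\beta - CL')M}$ from \cref{prop:incr-energy} dominates.
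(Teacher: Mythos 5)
Your overall strategy is the same as the paper's — take $\cJ$, use $\bX_A^\cI$ as the witness, and reconstruct $\cI$ by replacing the trivialized segment and reattaching the (rigidly shifted) upper part — but there is a genuine gap in the claim ``$j_0$ is determined by $\cJ$ since the increments below $v_{j_0}$ are untouched and $h_0$ is a parameter of the map,'' and consequently in the assertion that the location of the attachment point $v_{j_0}$ is determined by $\cJ$. This is false: $j_0$ is defined as the index of the first increment of $\cS_x^\cI$ that reaches height $>h_0$, and in general $\sX_{j_0}^\cI$ is a tall non-trivial increment with $\hgt(v_{j_0})$ possibly well below $h_0-\tfrac12$. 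After applying $\Phi_\Incr$, the segment of $\cP_x^\cJ$ from $\hgt(v_{j_0})$ upward is a column of trivial increments, so in $\cJ$ there is no visible boundary between the ``original'' trivial increments of $\cI$ just below $v_{j_0}$ and the ``inserted'' trivial increments at and above $v_{j_0}$; in particular, the first increment of $\cS_x^\cJ$ to reach height $>h_0$ sits at $h_0-\tfrac12$, not at $\hgt(v_{j_0})$. So from $\cJ$ and $h_0$ alone you cannot recover $\hgt(v_{j_0})$, and your application of \cref{lem:num-of-0-connected-sets} — which counts $1$-connected sets through a \emph{given} rooting face — is not yet justified, because the root is not yet pinned down.

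The repair is exactly what the paper does: take the witness to be the \emph{pair} $(\bX_A^\cI,\,\hgt(v_{j_0}))$. Since $\sX_{j_0}^\cI$ straddles height $h_0$, its excess area is at least $\hgt(v_{j_0+1})-\hgt(v_{j_0})-1 \ge h_0-\hgt(v_{j_0})$, so on $\{\fm(\cI;\cJ)=M\}$ one has $\hgt(v_{j_0})\in[h_0-M-\tfrac12,\,h_0-\tfrac12]$, giving at most $M+1$ choices. Once $\hgt(v_{j_0})$ is fixed, $\cJ$ determines the cut-point of $\cP_x^\cJ$ at that height, which roots $\bX_A^\cI$; then \cref{lem:num-of-0-connected-sets} together with \cref{clm:bound-j*} bounds the number of shapes by $s^{CL'M}$, and $\hgt(v_{j^*+1})$ (hence $\bX_B^\cJ$ and the shift) is read off from the witness. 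The total $(M+1)s^{CL'M}$ is still $\le C^{L'M}$, so your final bound is right, but the intermediate claim needs the extra $\hgt(v_{j_0})$ coordinate in the witness. Your remarks about cone separation ensuring the reattachment does not clash with the environment are correct and match the paper's reliance on $\Iso_{x,L,h}$.
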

\begin{proof}
We follow the proof of \cite[Lemma 7.9]{GL_DMP}, with the witness being the faces of $\bX_A^\cI$ together with the height of $v_{j_0}$. Indeed, suppose we are given such a witness with an interface $\cJ$. Then, to reconstruct $\cI$, we first take $\cJ$ and delete the portion of the pillar $\cP_x^\cJ$ with height $> \hgt(v_{j_0})$, and append $\bX_A^\cI$ to the pillar in such a way that the bottom four faces of $\bX_A^\cI$ around $v_{j_0}$ match the four faces around the cut-point of $\cP_x^\cJ$ that has height $\hgt(v_{j_0})$. Then, we append $\bX_B^\cJ$ (the portion of $\cP_x^\cJ$ with height $ \geq \hgt(v_{j^*+1}$), which can be read off from $\bX_A^\cI$ and $\hgt(v_{j_0})$) to the top of $\bX_A^\cI$, joining again at the respective cut-points.

Now, we already know that for any fixed $M$, by \cref{clm:bound-j*} $\bX_A^\cI$ is a 1-connected face set of size $\leq CL'M$. So, by \cref{lem:num-of-0-connected-sets} the number of possible face sets for $\bX_A^\cI$ is bounded by $s^{CL'M}$. Furthermore, we know that $\hgt(v_{j_0}) \in [h_0 - M - 1/2, h_0 - 1/2]$ since the excess area $\fm(\sX_{j_0})$ is at least $\hgt(v_{j_0 + 1}) - \hgt(v_{j_0}) - 1$, and so this leaves $M+1$ possible choices for what $\hgt(v_{j_0})$ can be. Thus, the number of possible witnesses is bounded by $(M+1)s^{CL'M}$.
\end{proof}

\begin{proof}[\textbf{\emph{Proof of \cref{thm:incr-map}}}] For any $\cI \notin \Incr_{x, L', h_0}$, $\fm(\cI; \Phi_\Incr(\cI)) \geq 1$, so it suffices to prove the stronger statement that for some $C$ and any $r \geq 1$,
\begin{equation}\label{eq:incr-precise-bound}
\bar{\mu_n}(\fm(\cI;\Phi_\Incr(\cI)) \geq r| \hgt(\cP_x) \geq h', \Iso_{x, L, h}) \leq C\exp{-(\beta - CL')r}
\end{equation}
and then take $L' = L'_\beta = \beta^{3/4}$ and $r = 1$. Indeed,
\begin{align*}
\bar{\mu_n}(\fm(\cI;\Phi_\Incr(\cI)) \geq r, \hgt(\cP_x) \geq h', \Iso_{x, L, h}) &= \sum_{M \geq r}\,\, \sum_{\substack{\cI \in E_x^{h'}\cap \Iso_{x, L, h}, \\\fm(\cI;\Phi_\Incr(\cI)) = M }}\bar{\mu}_n(\cI) \\
&\leq \sum_{M \geq r}\,\, \sum_{\cJ \in \Phi_\Incr(E_x^{h'} \cap \Iso_{x, L, h})}\,\,
\sum_{\cI \in \Phi_\Incr^{-1}, \fm(\cI;\cJ) = M} e^{-(\beta - CL')M} \bar{\mu}_n(\cJ)\\
&\leq \sum_{M \geq r}C^{L'M}e^{-(\beta - CL')M}\bar{\mu}_n(\Phi_\Incr(E_x^{h'} \cap \Iso_{x, L, h}))\\
&\leq Ce^{-(\beta - CL' - L'\log C)r}\bar{\mu}_n( E_x^{h'} \cap \Iso_{x, L, h})\,.
\end{align*}
Hence, dividing by $\bar{\mu}_n(E_x^{h'} \cap \Iso_{x, L, h})$ proves the claim. The above inequalities follow from \cref{prop:incr-energy}, \cref{prop:incr-entropy}, and the fact that $\Phi_\Incr(E_x^{h'} \cap \Iso_{x, L, h}) \subseteq E_x^{h'} \cap \Iso_{x, L, h}$.
\end{proof}

\begin{remark}\label{rem:incr-map-variations}
Note that the proof above still works if we condition on any subset $A \subseteq \Iso_{x, L, h} \cap E_x^{h'}$ that satisfies the property $\Phi_\Incr(A) \subseteq A$. In particular, this allows us to apply the map multiple times to ensure trivial increments at multiple locations. 
\end{remark}

\section{Large deviation rate for random-cluster interfaces}\label{sec:ld-RC}

In this section, we come to the first large deviation result, which concerns the height of the $\Top$ interface $\Itop$ at a particular location. Throughout this section, we will focus on three heights $h_1,h_2$ and $h=h_1+h_2$, with the goal of proving the following proposition.

\begin{proposition}\label{prop:RC-rate}
For all $\beta > \beta_0$, every sequence of $n, x$ dependent on $h$  with $1 \ll h \ll n$ and $d(x, \partial \Lambda_n) \gg h$, and every $h = h_1 + h_2$,
\begin{equation}\label{eq:submultiplicativity-Ehx}
    \bar{\mu}_n(E_h^x) \leq (1+\epsilon_\beta)(e^\beta + q-1)^3 \bar{\mu}_n(E_{h_1}^x)\bar{\mu}_n(E_{h_2}^x)\,,
\end{equation}
 and consequently,
\begin{equation}\label{eq:alpha-rate}
    \lim_{h \to \infty} - \frac{1}{h}\log \bar{\mu}_n(\hgt(\cP_x) \geq h) = \alpha
\end{equation}
for some constant $\alpha$.
\end{proposition}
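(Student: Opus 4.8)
The plan is to first establish the submultiplicativity bound \eqref{eq:submultiplicativity-Ehx} and then deduce the limit \eqref{eq:alpha-rate} from it by a Fekete‑type argument, taking some care because both the measure $\bar\mu_n$ and the point $x$ are allowed to depend on $h$. The structural fact driving everything is that $E_h^x=\{\hgt(\cP_x)\ge h\}$ is a \emph{decreasing} event (adding an open edge can only enlarge $\Vtop$, hence $\Atop$, hence shrink the pillar), and that, although $\bar\mu_n=\mu_n(\cdot\mid\sep_n)$ does not enjoy FKG, the measure $\mu_n$ does and $\sep_n$ is itself decreasing; hence $\mu_n(D)\le\mu_n(D\mid\sep_n)=\bar\mu_n(D)$ for every decreasing $D$. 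This is the device that lets us move between $\mu_n$ and $\bar\mu_n$, and it is exactly what will fail for the increasing events attached to $\cI_\Bot$, $\cI_\Red$, $\cI_\Blue$ (postponed to \cref{sec:ld-potts-bot}).

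\textbf{Submultiplicativity.} Write $\bar\mu_n(E_h^x)=\mu_n(E_h^x\mid\sep_n)$ and, using $E_h^x\subseteq E_{h_1}^x$, factor it as $\mu_n(E_{h_1}^x\mid\sep_n)\cdot\mu_n(E_h^x\mid E_{h_1}^x,\sep_n)$. On $E_{h_1}^x\cap\sep_n$ we intersect with the isolated‑pillar event $\Iso_{x,L,h}$ for a suitable $L=L_\beta$; by \cref{thm:iso-map} this costs only a factor $1+\epsilon_\beta$, and by \cref{clm:x-in-Vbot} we may also assume $x\in\Vbot$. On $\Iso_{x,L,h}$, \cref{prop:cone-seperation} and \cref{cor:pillar-doesnt-touch-other-walls} ensure that $\cP_x$ meets the rest of $\cI$ only at four prescribed faces at height $0$, so revealing the pillar (shell) up to its highest cut‑point $v$ of height at most $h_1$ exposes open/closed edges that genuinely disconnect the slab above from everything below. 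A Domain Markov argument then identifies the conditional law of $\omega$ on the un‑revealed region above as an FK model on a subgraph, whose boundary condition does \emph{not} wire $v$ to $\partial\Lambda_n^+$. Since $\hgt(v)\le h_1$, on this conditional event $E_h^x$ forces the pillar stacked on $v$ to climb at least $h-\hgt(v)\ge h_2$, an event that is decreasing in the un‑revealed region; so monotonicity (FKG, now available) together with $O(1)$ edge closings at the junction to realign the pillar base --- each handled by \cref{obs:close-edge} and contributing a factor $e^\beta+q-1$, for $(e^\beta+q-1)^3$ in total --- and one further $1+\epsilon_\beta$ to reinstate the separation structure give $\mu_n(E_h^x\mid E_{h_1}^x,\sep_n)\le(1+\epsilon_\beta)(e^\beta+q-1)^3\,\mu_n(E_{h_2}^{x'})$, where $x'$ is the image of $v$. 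Finally $E_{h_2}^{x'}$ is decreasing, so $\mu_n(E_{h_2}^{x'})\le\mu_n(E_{h_2}^{x'}\mid\sep_n)=\bar\mu_n(E_{h_2}^{x'})$, and $\bar\mu_n(E_{h_2}^{x'})=(1+o(1))\bar\mu_n(E_{h_2}^x)$ by translation and $d(x,\partial\Lambda_n)\gg h$. Combining the three factors yields \eqref{eq:submultiplicativity-Ehx}.

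\textbf{From submultiplicativity to the rate.} Put $C_\beta=(1+\epsilon_\beta)(e^\beta+q-1)^3$, so that $\tilde a_h:=C_\beta\,\bar\mu_n(E_h^x)$ satisfies $\tilde a_{h_1+h_2}\le\tilde a_{h_1}\tilde a_{h_2}$; then $b_h:=-\log\tilde a_h$ is superadditive and Fekete's lemma gives $b_h/h\to\sup_h b_h/h=:\alpha$, and since $(\log C_\beta)/h\to0$ the same limit holds for $-\tfrac1h\log\bar\mu_n(E_h^x)$, with \cref{prop:bound-RC-rate} confining $\alpha$ to a finite window so that $\alpha$ is a genuine constant. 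The one subtlety is the $h$‑dependence of $(n,x)$: one first records that for fixed $h$ the value $\bar\mu_n(E_h^x)$ stabilizes up to a $(1+o(1))$ factor as $n\to\infty$ and $d(x,\partial\Lambda_n)\to\infty$ (via the decorrelation estimates, or equivalently an infinite‑volume weak limit), applies the superadditivity to the stabilized values $q_h:=\lim_n\bar\mu_n(E_h^x)$, and then transfers the limit back along the given sequence.

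\textbf{Main obstacle.} The crux is the Domain Markov step: unlike for Ising, revealing the interface up to height $h_1$ exposes \emph{open} edges together with the hairs decorating the pillar, and one must simultaneously verify that the revealed set forms a legitimate FK boundary condition disconnecting the region above from the one below, and that the residual conditioning is weak enough for ``climb a further $h_2$'' to remain comparable to the unconditioned $E_{h_2}$ probability --- which is precisely what the isolated‑pillar and cone machinery of \cref{sec:pillar-maps} was built to supply. A secondary but essential point is bookkeeping the direction of every monotonicity inequality through the detour via $\mu_n$, exploiting that $\sep_n$, $E_h^x$ and the ``continue climbing'' event are all decreasing; this is the ingredient that genuinely fails for the other three interfaces and forces their separate treatment.
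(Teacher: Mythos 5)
Your high‑level blueprint matches the paper's: expose the low part of the pillar, invoke Domain Markov, use FKG via the detour through $\mu_n$ exploiting that $\sep_n$ and the climbing event are decreasing, and then Fekete. You also correctly flag that the crux is the Domain Markov step. However, as written the argument has a genuine gap at exactly that step, and it is not merely a matter of filling in details.

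The gap is that you work directly with $E_h^x$ throughout. The event $E_h^x=\{\hgt(\cP_x)\ge h\}$ is defined via $\Itop$, hence via the full interface $\cI$, which in turn only makes sense on $\sep_n$; so it is not a cleanly defined (let alone cleanly decreasing) event on the whole configuration space, which is exactly what you need in order to run the unconditioned FKG step under $\mu_n$ and to feed the ``continue climbing'' event into the Domain Markov argument. Moreover, ``revealing the pillar shell up to the cut‑point'' does not determine $\sep_n$ — the separation event is global — so the conditional measure you are trying to apply Domain Markov to is not one where the revealed data form a legitimate FK boundary condition. The paper gets around both problems simultaneously by introducing the proxy event $A_h^x$ (\cref{def:A_h^x}): a purely geometric event (existence of a finite connected, co‑connected vertex set $C$ over $x$ whose shell $F(C)$ of closed dual faces climbs to height $h$), manifestly decreasing and defined for all $\omega$, with $\bar\mu_n(A_h^x)\asymp\bar\mu_n(E_h^x)$ (\cref{prop:compare-A_h^x-E_h^x}). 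It is $A_h^x$ (shifted, as $\theta_{h_1}A_{h_2}^{y-h_1\ez}$) — not $E_{h_2}^{x'}$ — that is measurable with respect to the unexplored edges above the cut (\cref{clm:Ah2-E-measurable}), and decreasing for the FKG comparison with $\partial^\dagger I'\subseteq\opfaces$ and with $\sep_n$. Relatedly, what actually gets revealed in the paper is the \emph{entire truncated interface} $I'$ (not just the pillar bottom), which is needed both to pin down $\sep_n$ and to produce the open‑edge boundary $\partial^\dagger I'$; and the good event $\Gamma_{h_1}^x$, with cut‑points at heights $h_1\pm\tfrac12$, is what guarantees $I'$ is again a valid interface and that the Claims \ref{clm:Bv-connected}–\ref{clm:G-bc} about the graph above $I'$ hold. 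These pieces — the proxy $A_h^x$, the event $\Gamma_{h_1}^x$, and revealing the truncated interface rather than a partial pillar — are the ideas your sketch is missing, and without them the Domain Markov / FKG chain does not close. (Your Fekete step, and the treatment of the $(n,x)$ dependence via decorrelation, are fine; the $(e^\beta+q-1)^3$ bookkeeping is also slightly off — the three factors arise one each from closing $[y,y-\ez]$, from passing $A_{h_2}\to E_{h_2}$, and from passing $E_{h_1}\to A_{h_1}$, not from three edge closings — but that is cosmetic.)
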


We first want to introduce a proxy event $A_h^x$ that is comparable to $E_h^x$ but is not defined with respect to an interface.

\begin{definition}\label{def:A_h^x}
Define $A_h^x$ to be the event that a certain set of faces are in $\clfaces$. Specifically, let $C$ be any finite connected set of vertices with the following conditions:
\begin{enumerate}
    \item \label{it:Ahx-cut-point-at-0} $C$ contains $x$, and this is the only vertex of $C$ with height 1/2;

    \item \label{it:Ahx-vertices-between-slabs} the vertices of $C$ have heights in $[1/2, h-1/2]$;

    \item \label{it:Ahx-simply-connected} $C$ is co-connected.
\end{enumerate}
Now, let $F(C)$ be the set of faces that form the side and top boundary of $C$. That is, if $u \in C$ and $v \notin C$ such that $u$ is adjacent to $v$, then we add the face $f_{[u, v]}$ to $F(C)$, except we do not add the face $f_{[x, x - \ez]}$ at the bottom. $A_h^x$ is the event that there is some such $C$ such that $F(C)\subseteq \clfaces$. 
\end{definition}

A crucial property of $A_h^x$ is that it is decreasing. Also important is the geometrical fact that any such bounding set of faces $F(C)$ is 1-connected (see \cite[Prop.~5]{GielisGrimmett02},\cite[Thm.~7.3]{Grimmett_RC}, noting that in our case because $C$ is connected and co-connected, the splitting set there is precisely the set of faces that separate $C$ from $C^c$, and removing the face $f_{[x, x - \ez]}$ to get $F(C)$ keeps $F(C)$ 1-connected).

Since we are including the faces bounding the top side of $C$ in the definition of $A_h^x$, the faces $F(C)$ form a shell that looks like a pillar in $E_h^x$ whose vertex set is capped at height $h$. This leads to the following definition, which will also appear throughout the rest of the paper:
\begin{definition}
For every $h \geq 1$, let $\widetilde E_h^x \subseteq (E_h^x \setminus E_{h+1}^x)$ be the set of pillars of height $h$ such that there are no faces of $\cP_x$ with height $\geq h$ except those forming the top boundary of vertices of $\cP_x$. 
\end{definition}

We state here the following fact that a $1 - \epsilon_\beta$ fraction of pillars in $E_h^x$ are actually in $\widetilde E_h^x$, but we defer the proof until \cref{lem:nice-space-likely} where we prove the stronger statement required there.
\begin{corollary}\label{cor:widehat-Ehx}
    For every $\beta > \beta_0$ and $h \geq 1$, there exists a constant $\epsilon_\beta$ such that
    \begin{equation*}
        \bar{\mu}_n(\widetilde E_h^x \mid E_h^x) \geq 1- \epsilon_\beta\,.
    \end{equation*}
\end{corollary}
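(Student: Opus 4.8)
The plan is to decompose the ``bad'' event $E_h^x \setminus \widetilde E_h^x$ and bound each piece by $\epsilon_\beta\,\bar\mu_n(E_h^x)$ using an interface map in the spirit of \cref{sec:pillar-maps}. Recalling that $\hgt(\cP_x)$ refers to the face set $F$ (not the hairs) of $\cP_x$, on $E_h^x$ one of two things must fail: either $F$ reaches height $h+1$, i.e.\ $E_{h+1}^x$ occurs, or $\hgt(\cP_x)=h$ but some face of $\cP_x$ --- necessarily part of a hair --- sits at height $\ge h$. It is worth noting at the outset that the crude bounds \cref{prop:exp-tail-pillar-height} and \cref{prop:bound-RC-rate} do \emph{not} suffice, since their ratio carries a stray factor $e^{O(h)}$; a genuine surgery argument (or the submultiplicativity of \cref{prop:RC-rate}) is needed to extract the gain $e^{-\Omega(\beta)}$.

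First I would pass to a good sub-event. By \cref{thm:iso-map} we condition on $\Iso_{x,L,h}^{\mathrm{o}}$ --- hence on the cone separation of \cref{prop:cone-seperation} and, crucially, on \cref{cor:pillar-doesnt-touch-other-walls}: the pillar, \emph{hairs included}, meets $\cI\setminus\cP_x$ only in the four height-$0$ faces abutting the first cut-point. By \cref{thm:incr-map} together with \cref{rem:incr-map-variations} we further condition on the increments at heights $h-1,\dots,h-L'$ being trivial. Each of these conditionings costs at most a factor $1-\epsilon_\beta$ relative to $E_h^x$, and on the resulting event the pillar near its top is a clean vertical column capped flatly at height $h$, possibly carrying hairs.

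The map $\Phi$ acts only on $\cP_x$: it removes every face of $\cP_x$ at height $\ge h$ other than the horizontal caps at height $h$ lying directly above pillar vertices, removes the dangling increments of $\cS_x$ of height $>h$, and recaps the column at height $h$. On $\Iso_{x,L,h}^{\mathrm{o}}$ this surgery leaves $\cI\setminus\cP_x$ untouched, so $\Phi(\cI)$ is an interface and lies in $\widetilde E_h^x$. Writing $M=\fm(\cI;\Phi(\cI))$ (so $M\ge1$ exactly on the bad event, and $M\ge4$ whenever $E_{h+1}^x$ held), one then runs the energy/entropy dichotomy exactly as in \cref{prop:incr-energy} and the proof of \cref{thm:incr-map}: (a) \cref{prop:grimmett-cluster-exp} combined with \cref{lem:cone-separation}, \cref{lem:control-1-connected-faces}, and \cref{lem:control-interface-clusters} gives $\bar\mu_n(\cI)/\bar\mu_n(\Phi(\cI))\le e^{-(\beta-C)M}$; (b) since the removed faces form a $1$-connected set attached to the (essentially unique, thanks to the trivial-increment conditioning) top cap, \cref{lem:num-of-0-connected-sets} bounds the number of preimages of a fixed $\cJ$ with $\fm=M$ by $C^{M}$. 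Summing over $M\ge1$ yields $\bar\mu_n\big(E_h^x\setminus\widetilde E_h^x\mid E_h^x\big)\le Ce^{-(\beta-C)}\le\epsilon_\beta$ for $\beta$ large.

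I expect the main obstacle to be the bookkeeping for the hairs: a hair of $\cP_x$ could a priori reattach to another wall of $\cI$, so naively deleting it might disrupt other parts of the interface or change the cluster count $\kappa_\cI$; this is exactly what conditioning on $\Iso_{x,L,h}^{\mathrm{o}}$ rules out via \cref{cor:pillar-doesnt-touch-other-walls}, and it is the reason one cannot work directly on all of $E_h^x$. Beyond that, one must check that the cluster-expansion terms $|\partial\cI|$ and $\kappa_\cI$ behave under $\Phi$ as they did under $\Phi_\Iso$ and $\Phi_\Incr$, which is routine given \cref{lem:control-1-connected-faces} and \cref{lem:control-interface-clusters}. (Alternatively, once \cref{prop:RC-rate} is available, the sub-event $E_{h+1}^x$ is dispatched at once from $\bar\mu_n(E_{h+1}^x)\le(1+\epsilon_\beta)(e^\beta+q-1)^3\bar\mu_n(E_h^x)\bar\mu_n(E_1^x)$ together with $\bar\mu_n(E_1^x)\le e^{-4(\beta-C)}$, leaving only the hair part to the map.)
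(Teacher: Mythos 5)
Your main argument is, in substance, the same route the paper takes: the paper defers the proof to \cref{lem:nice-space-likely}, whose Case~C map is exactly your surgery (truncate the remainder $\bR$ above the top trivial increment and recap), preceded by $\Phi_\Iso$ and $\Phi_\Incr$ to clean up the top of the pillar, with the energy/entropy dichotomy run via \cref{prop:grimmett-cluster-exp}. You have correctly identified the two pitfalls the paper also navigates: that a crude ratio of the bounds in \cref{prop:exp-tail-pillar-height,prop:bound-RC-rate} loses a factor $e^{O(h)}$, and that hairs reattaching to other walls must be ruled out via \cref{cor:pillar-doesnt-touch-other-walls} before the surgery is well-defined.

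Two caveats. First, the parenthetical ``alternatively'' is circular: \cref{prop:RC-rate} is established via \cref{lem:move-to-gamma,lem:submultiplicativity-in-Gamma}, both of which invoke \cref{prop:compare-A_h^x-E_h^x}, which in turn invokes \cref{cor:widehat-Ehx} through \cref{eq:stronger-comparison}; so the sub-event $E_{h+1}^x$ cannot be dispatched through the already-established rate. Second, two small technical points: \cref{thm:incr-map} is stated conditionally on $\Iso_{x,L,h}$, not the shell space $\Iso^{\mathrm{o}}_{x,L,h}$, so you should condition on the former (\cref{thm:iso-map} gives both); and \cref{lem:control-1-connected-faces,lem:control-interface-clusters} as stated treat $\Phi_\Iso$ specifically, so you would need to verify (along the same lines, as the paper does when analyzing $\Phi_\Incr$ in \cref{prop:incr-energy}) that $|\partial\cI|-|\partial\cJ|$ and $\kappa_\cI-\kappa_\cJ$ are $O(M)$ for your truncation map as well. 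Neither is a conceptual obstacle, but they should be stated.
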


The following proposition states that $\bar{\mu}_n(A_h^x)$ is comparable to $\bar{\mu}_n(E_h^x)$ (up to multiplicative constants depending on $\beta$). 
\begin{proposition}\label{prop:compare-A_h^x-E_h^x} In the setting of \cref{prop:RC-rate}, there exists a constant $\epsilon_\beta$ such that
\begin{equation*}
    \frac{q}{e^\beta+q-1}(1 - \epsilon_\beta)\bar{\mu}_n(A_h^x) \leq \bar{\mu}_n(E_h^x) \leq (1+\epsilon_\beta)\bar{\mu}_n(A_h^x)\,.
\end{equation*}
\end{proposition}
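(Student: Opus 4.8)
The plan is to prove the two inequalities separately; the upper bound ($\bar\mu_n(E_h^x)$ controlled by $\bar\mu_n(A_h^x)$) is the soft direction, while the lower bound needs one extra edge-closing step, which is where the factor $q/(e^\beta+q-1)$ enters.

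For $\bar\mu_n(E_h^x)\le(1+\epsilon_\beta)\bar\mu_n(A_h^x)$ I would first restrict to the typical sub-event $\widetilde E_h^x\cap\Iso_{x,L,h}$: by \cref{cor:widehat-Ehx} and \cref{thm:iso-map} (with $h'=h$) it carries a $(1-\epsilon_\beta)$-fraction of $E_h^x$, so it suffices to show $\widetilde E_h^x\cap\Iso_{x,L,h}\subseteq A_h^x$. On this event I take $C:=V(\cP_x)$, the vertex set of the pillar, and check it is an admissible witness in \cref{def:A_h^x}: it is connected by definition of the pillar, co-connected by \cref{obs:pillar-vert-simp-conn}, contains $x$ with $x$ its unique vertex at height $1/2$ (the base of $\cP_x$ is empty on $\Iso_{x,L,h}$, cf.\ \cref{it:iso-spine}), and—since on $\widetilde E_h^x$ the pillar has height exactly $h$ with no faces above height $h$ other than tops of its vertices—its vertices have heights in $[1/2,h-1/2]$ and attain $h-1/2$. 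The only face below height $1/2$ in the full vertex-boundary of $C$ is the discarded $f_{[x,x-\ez]}$, so $F(C)$ coincides with the pillar's face set $F$, and $F\subseteq\Itop\subseteq\clfaces$ by \cref{def:pillar}; hence $A_h^x$ holds.

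For $\tfrac{q}{e^\beta+q-1}(1-\epsilon_\beta)\bar\mu_n(A_h^x)\le\bar\mu_n(E_h^x)$, the obstruction is that a closed shell $F(C)$ does not by itself determine the $\Top$-connectivity of the vertices it encloses, so I would first close the unique ``escape'' edge $e:=[x,x-\ez]$. Since $A_h^x$ is decreasing and, on $A_h^x\cap\{\omega_e=1\}$, closing $e$ necessarily creates a new open cluster (all faces of $F(C)$ being closed, the only open connection out of $C$ is through $e$), \cref{obs:close-edge} gives $\bar\mu_n(A_h^x)\le\tfrac{e^\beta+q-1}{q}\bar\mu_n(A_h^x\cap\{\omega_e=0\})$; this is valid for $\bar\mu_n=\mu_n(\cdot\mid\sep_n)$ because closing an edge preserves the decreasing event $\sep_n$. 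Next I would use the geometric inclusion $A_h^x\cap\{\omega_e=0\}\cap\{x-\ez\in\Atop^c\}\subseteq E_h^x$: on $\{\omega_e=0\}$ the set $C$ is surrounded by closed faces, hence $\omega$-disconnected from everything, so $C\subseteq\Vtop^c$; being graph-adjacent (via $x\sim x-\ez$) to $x-\ez\in\Atop^c$, the infinite component of $\Vtop^c$, it lies in $\Atop^c$; thus $C\subseteq V(\cP_x)$ and, as $C$ attains height $h-1/2$, $\hgt(\cP_x)\ge h$. Combining this with the previous display reduces the lower bound to showing that $\{x-\ez\in\Atop^c\}$ has conditional probability $1-\epsilon_\beta$ given $A_h^x\cap\{\omega_e=0\}$.

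The hard part is precisely this last step. By \cref{rem:properties-of-Itop}, $\{x-\ez\notin\Atop^c\}$ is the event that $x-\ez$ is separated from $\partial\Lambda_n^-$ by $\Itop$, which forces a wall of the full interface to descend below height $-1/2$ over the column $\rho(x)$; such a wall has excess area bounded below by an absolute constant, so unconditionally it has probability $O(e^{-c\beta})=\epsilon_\beta$ by the exponential tails on groups of walls (\cref{prop:grimmett-exp-tail-grp-of-walls}). What must be ruled out is that conditioning on the exponentially unlikely tall closed shell $A_h^x\cap\{\omega_e=0\}$ drags $\Atop$ down past $x$; I expect this to be handled exactly as in the isolation estimates behind \cref{thm:iso-map}, since $A_h^x\cap\{\omega_e=0\}$ is supported on faces in a thin cylinder around the upward column $\{x+k\ez\}_{k\ge0}$, disjoint from the region a bad wall below $x$ would occupy, so a wall-deletion map argument keeps the conditional probability of such a wall at $\epsilon_\beta$.
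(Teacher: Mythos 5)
Your upper bound is correct and is essentially the paper's argument verbatim: restrict to $\widetilde E_h^x\cap\Iso_{x,L,h}$ via \cref{cor:widehat-Ehx,thm:iso-map}, take $C=V(\cP_x)$, and check it is an admissible witness. Your first two steps of the lower bound are also the paper's: close $e=[x,x-\ez]$ via \cref{obs:close-edge} to pass to $\tilde A_h^x := A_h^x\cap\{\omega_e=0\}$ at cost $\tfrac{e^\beta+q-1}{q}$, then use the geometric inclusion into $E_h^x$ on the ``good'' sub-event (you use $\{x-\ez\in\Atop^c\}$, the paper uses the slightly larger $E_0^x$; both serve).

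The genuine gap is the remaining step, which you correctly flag as the hard part but then dispatch with a sketch that does not work. Two problems. First, the claim that ``$A_h^x\cap\{\omega_e=0\}$ is supported on faces in a thin cylinder around the upward column'' is simply false: $A_h^x$ asks for the existence of \emph{some} admissible $C$ with no a priori diameter bound, so the closed shell $F(C)$ need not be confined near $\{x+k\ez\}$. Second, and more structurally, a $\Phi_\Iso$-type wall-deletion map is built to operate interface-by-interface via the cluster expansion \cref{eq:CE}, i.e., on events measurable w.r.t.\ $\cI$. But $\tilde A_h^x$ is an $\omega$-measurable event, not an interface event, so you cannot run such a map ``conditional on $\tilde A_h^x$.'' The paper resolves this with a Domain-Markov argument tailored to the FK setting that has no analogue in your sketch: it conditions on $\Itop=I_\Top$ for each bad $I_\Top$, rewrites $\{\Itop=I_\Top\}$ as $\{V_1(I_\Top)\subseteq\Vtop,\ I_\Top\subseteq\clfaces\}$, shows via a careful geometric argument that any witness $C$ must lie entirely inside $\Atop(I_\Top)$ so that the decreasing event $\tilde A_h^x$ is measurable w.r.t.\ the subgraph above $I_\Top$ (whence the closed-face part of the conditioning can be dropped by Domain Markov), then applies FKG against the remaining increasing conditioning $\{V_1\subseteq\Vtop\}$ and against $\sep_n$, and finally invokes rigidity \cref{prop:grimmett-localization} to bound $\bar\mu_n((E_0^x)^c)\le\epsilon_\beta$. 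This decoupling of the bad event from $\tilde A_h^x$ is the crux of the lower bound and cannot be replaced by an unconditional wall estimate plus an appeal to \cref{thm:iso-map}.
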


\begin{proof}
Beginning with the upper bound, we have (say, for $L = L_\beta$), 
\begin{equation*}
    \bar{\mu}_n(\widetilde{E}_h^x,\, \Iso_{x, L, h}) \leq \bar{\mu}_n(A_h^x, E_h^x)\,.
\end{equation*}
Indeed, if we have a pillar $\cP_x \in \widetilde E_h^x$, we can take $C$ in the definition of $A_h^x$ to be the set of vertices in the pillar. Recall that the vertices of $\cP_x$ form a co-connected set by \cref{obs:pillar-vert-simp-conn}, so this satisfies \cref{it:Ahx-simply-connected}, and the definition of $\widetilde E_h^x$ implies the height requirement of \cref{it:Ahx-vertices-between-slabs}. Then, the $\Iso_{L, h, x}$ event implies \cref{it:Ahx-cut-point-at-0} above. Furthermore, each face in $F(C)$ must be in $\clfaces$ because it separates a vertex in $\Atop^c$ from a vertex in $\Atop$. Thus, by \cref{thm:iso-map,cor:widehat-Ehx}, we have
\begin{equation*}
\bar{\mu}_n(\widetilde{E}_h^x,\, \Iso_{L, h, x}) \geq \bar{\mu}_n(E_h^x)(1 - \epsilon_\beta)\,.
\end{equation*}
Combining the above gives the following stronger statement which implies the upper bound
\begin{equation}\label{eq:stronger-comparison}
    \bar{\mu}_n(A_h^x \mid E_h^x) \geq 1 - \epsilon_\beta\,.
\end{equation}

For the lower bound, as a technical step, we want to first close the edge $[x, x - \ez]$ (this will be needed for an application of the Domain Markov property). By \cref{obs:close-edge}, we can close this edge at a cost of $\frac{e^\beta+q-1}{q}$, noting that closing this edge always creates a new open cluster in separating $x$ from $x - \ez$. We will call $\tilde A_h^x$ the event $A_h^x \cap \{f_{[x, x - \ez]} \in \clfaces\}$, so that we have
\begin{equation*}
\bar{\mu}_n(A_h^x) \leq \frac{e^\beta+q-1}{q}\bar{\mu}_n(\tilde A_h^x)\,.
\end{equation*}
We can split the event $\tilde  A_h^x$ based off whether or not the pillar at $x$ has height $\geq 0$ or $< 0$. We first show that $\tilde A_h^x \cap E_0^x \subseteq E_h^x$. Indeed, the event $E_1^x$ implies that $x$ is in $\Atop^c$, and $\tilde A_h^x \cap \{\hgt(\cP_x) = 0\}$ is empty since the presence of the faces $F(C)$ together with the face below $x$ make it impossible for $x$ to have a wired path to the upper half boundary, which is a contradiction (see \cref{rem:height-0-pillar}). Once we have established that $x \in \Atop^c$, then all of $C$ must also be in $\Atop^c$ since it is part of the same connected component of $\Vtop^c$ as $x$. Thus, the vertices of the pillar $\cP_x$ must contain all the vertices of $C$, which notably includes at least one vertex at height $h - 1/2$, so that the pillar has height at least $h$.

Thus, it suffices to show that
\begin{equation}\label{eq:Ahx-and-int-dig-below}
    \bar{\mu}_n(\tilde A_h^x, (E_0^x)^c) \leq \epsilon_\beta\bar{\mu}_n(E_h^x)\,.
\end{equation}
Now, for a given $\Top$ interface $I_{\Top}$, consider the set of vertices $v$ such that there exists $w$ with $f_{[v, w]} \in I_{\Top}$. Of these, let $V_1$ be the ones in $\Atop$ and $V_2$ be the ones in $\Atop^c$. With the notation $\Itop = I_\Top$ meaning the $\Top$ interface of the configuration $\omega$ is equal to the set of faces $I_\Top$, we claim that we can write 

\begin{flalign}
\bar{\mu}_n(\tilde A_h^x,\, (E_0^x)^c) & =  \sum_{I_{\Top} \in (E_0^x)^c} \bar{\mu}_n(\tilde A_h^x\mid\Itop = I_{\Top})\bar{\mu}_n(\Itop = I_{\Top})\nonumber \\
&= \sum_{I_{\Top} \in (E_0^x)^c} \mu_n(\tilde A_h^x\mid V_1(I_{\Top}) \subseteq \Vtop(\omega),\, V_2(I_{\Top}) \subseteq \Vtop^c(\omega))\bar{\mu}_n(\Itop = I_{\Top})\nonumber \\
&= \sum_{I_{\Top} \in (E_0^x)^c} \mu_n(\tilde A_h^x\mid V_1(I_{\Top}) \subseteq \Vtop(\omega),\, I_{\Top} \subseteq \clfaces)\bar{\mu}_n(\Itop = I_{\Top})\label{eq:V1-V2-describe-Itop}\,.
\end{flalign}

To justify the second line above, we need to prove that for any configuration $\omega$, the $\Top$ interface being a specified $I_{\Top}$ is equivalent to the event  $\{V_1(I_{\Top}) \subseteq \Vtop(\omega),\, V_2(I_{\Top}) \subseteq \Vtop^c(\omega)\}$. The forward implication is true as we already showed in \cref{rem:properties-of-Itop} that for every face of $I_{\Top}$, one of the adjacent vertices is in $\Vtop(\omega)$ and the other is not. For the reverse implication, the same remark showed that we can let $\Atop(I_{\Top})$ be the augmented $\Top$ component corresponding to $I_{\Top}$, and it suffices to show that $\Atop(\omega) = \Atop(I_{\Top})$. If $v \in \Atop(I_{\Top})$, then every path from $v$ to $\partial \Lambda_n^-$ must pass through a face of $I_{\Top}$, and hence must include a vertex of $V_1$. Since $V_1$ is part of $\Vtop(\omega)$, then $v$ cannot be in the infinite component of $\Vtop^c(\omega)$, so $v \in \Atop(\omega)$. This shows $\Atop(I_{\Top}) \subseteq \Atop(\omega)$. We note (for later use) that in the proof of this direction, we only used the fact that $V_1(I_\Top) \subseteq \Vtop(\omega)$. For the converse, if $v \in \Atop^c(I_{\Top})$, then every path from $v$ to $\partial \Lambda_n^+$ must pass through a face of $I_{\Top}$, and thus must include a vertex of $V_2$. Thus, $v$ cannot be in $\Vtop(\omega)$, and we have a partial converse $\Atop^c(I_{\Top}) \subseteq \Vtop^c(\omega)$. We need to rule out the possibility of $v$ being in a finite component of $\Vtop^c(\omega)$, say $A$. But such an $A$ by definition must be surrounded by vertices of $\Vtop(\omega)$, which by the partial converse, are in $\Atop(I_{\Top})$. Thus, by assumption we have $v \in \Atop^c(I_{\Top})$, yet $v$ is surrounded by vertices of $\Atop(I_{\Top})$, which contradicts the fact that $\Atop^c(I_{\Top})$ is connected (see \cref{rem:properties-of-Itop}).

Furthermore, the third line holds because the event  $\{V_1(I_{\Top}) \subseteq \Vtop(\omega),\, V_2(I_{\Top}) \subseteq \Vtop^c(\omega)\}$ is equal to the event $\{V_1(I_{\Top}) \subseteq \Vtop(\omega),\, I_{\Top} \subseteq \clfaces\}$. Indeed, conditional on $\{V_1(I_{\Top}) \subseteq \Vtop(\omega)\}$, the event $\{I_{\Top} \subseteq \clfaces\}$ is sufficient to show $\{V_2(I_{\Top}) \subseteq \Vtop^c(\omega)\}$ because every path from $V_2(I_{\Top})$ to $\partial \Lambda_n^+$ must cross a face of $I_{\Top}$, and it is necessary because otherwise there would be an open edge between some $u \in V_2(I_{\Top})$ and $v \in V_1(I_{\Top})$, which would imply that $u \in \Vtop(\omega)$.

Next we will argue that by the Domain Markov property, for every $I_\Top \in (E_0^x)^c$, we have 
\begin{equation}\label{eq:DMP-in-compare-A-E}
\mu_n(\tilde A_h^x\mid V_1(I_{\Top}) \subseteq \Vtop(\omega),\, I_{\Top} \subseteq \clfaces) = \mu_n(\tilde A_h^x\mid V_1(I_{\Top}) \subseteq \Vtop(\omega))\,.
\end{equation}
To begin, observe that for any $v \in \Atop(I_{\Top})$, every path from $v$ to $\Atop^c(I_{\Top})$ must pass through a vertex of $V_1(I_\Top)$, so that $V_1(I_\Top) \cup \partial \Lambda_n^+$ forms a vertex boundary of $\Atop(I_{\Top})$. Furthermore, conditioning on $V_1(I_{\Top}) \subseteq \Vtop(\omega)$ guarantees that the vertices $V_1 \cup \partial \Lambda_n^+$ are all part of the same open cluster. So, if $(\Atop(I_\Top), E)$ is the induced subgraph of $\Lambda_n$ on $\Atop(I_{\Top})$, it remains to show that conditional on $V_1(I_{\Top}) \subseteq \Vtop(\omega)$, the event $\tilde A_h^x$ only depends on $\omega_e$ for $e \in E$. Recall that $\tilde A_h^x$ is the event that there exists some finite $\Lambda_n$-connected set of vertices $C$ fulfilling the conditions of \cref{def:A_h^x}, such that its bounding faces $\tilde F(C)$ (including $f_{[x, x - \ez]}$ now) are all in $\clfaces$. We will argue that for \emph{any} finite $\Lambda_n$-connected set of vertices~$C$ containing $x$ such that its bounding faces $\tilde F(C) \subseteq \clfaces$, we have
\[\{e:\; f_e \in \tilde F(C)\} \subseteq E\,.\]
We first argue that $C$ must be a subset of $\Atop(I_\Top)$. Indeed, if $\tilde F(C) \subseteq \clfaces$, then $C$ cannot contain any vertices of $\Vtop(\omega)$, and in particular $C \cap V_1(I_\Top) = \emptyset$. But since $C$ is a $\Lambda_n$-connected and $V_1(I_\Top)$ is a vertex boundary for $\Atop(I_\Top)$, then $C$ must lie entirely in either $\Atop(I_\Top)$ or $\Atop^c(I_\Top)$. As $C$ contains $x$ (which must be in $\Atop(I_\Top)$ since $I_\Top \in (E_0^x)^c$), then $C \subseteq \Atop(I_\Top)$. Now 
suppose for contradiction that there is some face $f= f_{[u, v]} \in \tilde F(C)$ where $u \in \Atop^c(I_{\Top})$. Then, the fact that $C \subseteq \Atop(I_\Top)$ implies that not only $v \in C$, but also $f_{[u, v]} \in I_\Top$. Combined, this implies that $v \in V_1(I_\Top)$, and hence on the event $\{V_1(I_\Top) \subseteq \Atop(\omega)\}$, we have $v \in \Atop(\omega)$. But, this is impossible when $\tilde F(C) \subseteq \clfaces$, a contradiction. This concludes the proof of \cref{eq:DMP-in-compare-A-E}, which we can now plug into \cref{eq:V1-V2-describe-Itop}.

Finally, since $\tilde A_h^x$ is a decreasing event, we can use FKG followed by the rigidity of the $\Top$ interface to conclude that
\begin{flalign*}
\sum_{I_\Top \in (E_0^x)^c} \mu_n(\tilde A_h^x\mid V_1(I) \subseteq \Vtop(\omega))\bar{\mu}_n(\Itop = I_{\Top}) &\leq \sum_{I \in (E_0^x)^c} \mu_n(\tilde A_h^x)\bar{\mu}_n(\Itop = I_{\Top}) \\
&=\mu_n(\tilde 
A_h^x)\bar{\mu}_n((E_0^x)^c) \\
& \leq \epsilon_\beta \mu_n(\tilde A_h^x)\,.
\end{flalign*}

Thus, combining the above, we get
\begin{equation}\label{eq:Ahx-and-interface-dig-below-sharper}
    \bar{\mu}_n(\tilde A_h^x,\, (E_0^x)^c) \leq \epsilon_\beta \mu_n(\tilde A_h^x).
\end{equation}

Finally, a short computation using FKG gets us that
\begin{equation*}
\bar{\mu}_n(E_h^x) \geq \frac{\mu_n(E_0^x, \tilde A_h^x, \sep_n)}{\mu_n(\sep_n)} \geq \frac{\mu_n(E_0^x, \sep_n)}{\mu_n(\sep_n)}\mu_n(\tilde A_h^x) \geq (1-\epsilon_\beta)\mu_n(\tilde A_h^x),
\end{equation*}
which together with \cref{eq:Ahx-and-interface-dig-below-sharper} concludes the proof of \cref{eq:Ahx-and-int-dig-below}, and hence the proposition.
\end{proof}

\begin{definition} Let $\sH$ be the 1-connected set of faces of $\cL_{>0} \cap \clfaces$ that contains the faces on the four sides of $x$. Let $\sH_1$ be the restriction of $\sH$ to faces in $\cL_{> 0} \cap \cL_{\leq h_1}$.
\end{definition}

If we are on the event $A_h^x$ for any $h$, note that since $F(C)$ is 1-connected, then $\sH$ must include all the faces of $F(C)$. We next define an event $\Gamma_{h_1}^x$, to be thought of as a subset of the configurations where $A_{h_1+1}^x$ is achieved, except possibly up to the final face at height $h_1+1$, yet via a sufficiently ``nice'' pillar (with cut-points at height $\frac12$ and $h_1\pm\frac12$) making it easier to implement a submultiplicativity argument on the event $A_{h_1 + h_2}^x$.

\begin{definition}\label{def:Gamma-event}
Let $ \Gamma_{h_1}^x$ be the subset of configurations where
\begin{enumerate}[(1)]
    \item \label{it:gamma-cut-point-x} $\sH$ has a ``cut-point" at $x$, in that $\sH \cap \cL_{1/2}$ consists of only the four faces surrounding the sides of~$x$.
    \item \label{it:gamma-cut-points-y} $\sH$ has ``cut-points" at heights $h_1 + 1/2$ and $h_1 - 1/2$ for some $y$ and $y - \ez$ resp., in the sense of \cref{it:gamma-cut-point-x}. Furthermore, we ask that $\sH \cap \cL_{h_1}$ has no faces, except possibly the horizontal face $f_{[y, y - \ez]}$.
    \item \label{it:gamma-connect-to-UHB-LHB} For each of the four vertices $z_i$ adjacent to $y$ at height $h_1 + 1/2$, and each of the four $w_i$ which are adjacent to $x$ at height $1/2$, we require that $z_i, w_i \in \Vtop$. We also require that $x \in \Vbot$.
    \item \label{it:gamma-xy-close} $d(x, y - h_1\ez) \leq d(x, \partial \Lambda_n)/2$.
\end{enumerate}
\end{definition}

\begin{remark}\label{rem:properties-of-Gamma} Suppose  that our configuration $\omega $ satisfies $A_h^x \cap \Gamma_{h_1}^x\cap\sep_n$ for some $h > h_1$. We claim that the corresponding interface $\cI$ satisfies $\sH \subseteq \cI$, and furthermore, there is no $\omega' \notin A_h^x \cap \Gamma_{h_1}^x\cap\sep_n$ that could have the same interface $\cI$.
To see this, we first argue that the requirement $x \in \Vbot$ in \cref{it:gamma-connect-to-UHB-LHB} implies that for any set of vertices $C$ satisfying the definition of $A_h^x$, we have $C \subseteq \Atop^c$, and in particular, $y \in \Atop^c$. Indeed, any open path from $v \in C$ to $\partial \Lambda_n^+$ must pass through $x$ because of the faces $F(C) \subseteq \clfaces$, so that $C \subseteq \Vtop^c$. Since $C$ is connected, all the vertices of $C$ are in the same infinite component of $\Vtop^c$ as $x$, and so $C \subseteq \Atop^c$. The property that $\sH \subseteq \cI$ then readily follows: indeed,  the fact that $z_i \in \Vtop$ implies that $f_{[y, z_i]} \in \Itop$ since these faces are separating $y \in \Atop^c$ from $z_i \in \Atop$. This together with the fact that $\sH$ is a 1-connected component of faces in $\clfaces$ implies that $\sH \subseteq \cI$. To rule out the existence of $\omega'\notin A_h^x \cap\Gamma_{h_1}^x\cap\sep_n$ with the same interface $\cI$, argue as follows. First, $\sep_n$ is trivially satisfied by $\omega'$. Second, the event $A_h^x$ was satisfied via a subset of the faces $\sH$, all of which are in $\cI$, and hence is also satisfied by $\omega'$. Third, to confirm the event $\Gamma_{h_1}^x$, we note that \cref{it:gamma-cut-point-x,it:gamma-cut-points-y,it:gamma-xy-close} are satisfied via the same $\sH\subseteq\cI$, and it remains to check that $\cI$ determines \cref{it:gamma-connect-to-UHB-LHB}. As shown in \cref{rem:properties-of-Itop}, $\Itop$ determines $\Atop$, and so $\cI$ will already guarantee that $z_i, w_i \in \Atop$ and $x \in \Atop^c$. So, it suffices to show that $\cI$ will also determine whether $x, z_i, w_i$ are in finite components or not. This is true because if $z_i$ is part of a finite component $A$, then the set of faces $F$ which separate $A$ from the infinite component of $\Lambda_n \setminus A$ is 1-connected (see \cite[Prop.~5]{GielisGrimmett02},\cite[Thm.~7.3]{Grimmett_RC} for a proof). Since $\Atop$ and $\Atop^c$ are both connected (also shown in \cref{rem:properties-of-Itop}), this set $F$ includes $f_{[y, z_i]}$, whence $F\subseteq \cI$. The same argument applies for $x, w_i$.
\end{remark}

\begin{remark}\label{rem:shifted-A}
Let $\theta_{h_1}A_{h_2}^x$ be the event $\{\theta_{h_1}\omega: \omega \in A_{h_2}^x\}$, where $\theta_{h_1}\omega$ is the configuration that results from shifting all the edges of $\omega$ up by $h_1$. Then, by the way we defined $\Gamma_{h_1}^x$, we have that $\Gamma_{h_1}^x \cap A_h^x$ implies the event $\theta_{h_1}A_{h_2}^{y - h_1\ez}$.
\end{remark}

We are now ready to begin the proof of the submultiplicativity statement in \cref{eq:submultiplicativity-Ehx}. We already showed in \cref{prop:compare-A_h^x-E_h^x} that we can move from $E_h^x$ to $A_h^x$ by paying a cost of $(1+\epsilon_\beta)$, and we next show how a slight modification of the proof there allows us to further move onto the nicer space $\Gamma_{h_1}^x$:

\begin{lemma}\label{lem:move-to-gamma}
For all $\beta > \beta_0$, every sequence of $n, x$ dependent on $h$  with $1 \ll h \ll n$ and $d(x, \partial \Lambda_n) \gg h$, and every $h = h_1 + h_2$, we have
\begin{equation*}
\bar{\mu}_n(E_h^x) \leq 
(1+\epsilon_\beta)\bar{\mu}_n(A_h^x, \Gamma_{h_1}^x) \,.
\end{equation*}
\end{lemma}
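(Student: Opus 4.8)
The plan is to sharpen \cref{prop:compare-A_h^x-E_h^x}: rather than only moving from $E_h^x$ to $A_h^x$, I would show that, outside an $\epsilon_\beta$-fraction of $E_h^x$, the configuration lies on a ``nice'' event on which \emph{all four} clauses of $\Gamma_{h_1}^x$ (and $A_h^x$) hold automatically. Fix $L=L_\beta$ as in \cref{thm:iso-map} and $L'=L'_\beta$ as in \cref{thm:incr-map}, and set
\[
\cN \;:=\; \widetilde E_h^x\,\cap\,\Iso_{x,L,h}\,\cap\,\Incr_{x,L',h_1}\,.
\]
The first step is a union bound over the complements of these three events: \cref{cor:widehat-Ehx} bounds $\bar\mu_n((\widetilde E_h^x)^c\mid E_h^x)$, \cref{thm:iso-map} with $h'=h$ bounds $\bar\mu_n(\Iso_{x,L,h}^c\mid E_h^x)$, and \cref{thm:incr-map} with $h_0=h_1\le h'=h$ (here $h_2\ge1$ is used) bounds $\bar\mu_n(\Incr_{x,L',h_1}^c\mid E_h^x\cap\Iso_{x,L,h})$; combining them gives $\bar\mu_n(\cN\mid E_h^x)\ge1-\epsilon_\beta$, whence $\bar\mu_n(E_h^x)\le(1+\epsilon_\beta)\bar\mu_n(E_h^x\cap\cN)$. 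It then remains to prove the \emph{deterministic} inclusion
\[
E_h^x\cap\cN\ \subseteq\ A_h^x\cap\Gamma_{h_1}^x
\]
(for $n$ large, given $h\ll d(x,\partial\Lambda_n)$), after which $\bar\mu_n(E_h^x\cap\cN)\le\bar\mu_n(A_h^x\cap\Gamma_{h_1}^x)$ closes the argument.

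For the inclusion, fix a configuration in $E_h^x\cap\cN$ with interface $\cI$ and pillar $\cP_x$. That $A_h^x$ holds is exactly the reasoning in \cref{prop:compare-A_h^x-E_h^x}: the vertex set $C:=V(\cP_x)$ is co-connected by \cref{obs:pillar-vert-simp-conn}, has its unique height-$\tfrac12$ vertex at $x$ by the empty-base clause \cref{it:iso-spine} of $\Iso_{x,L,h}$, lies in heights $[\tfrac12,h-\tfrac12]$ by definition of $\widetilde E_h^x$, and its side-and-top bounding faces $F(C)$ each separate an $\Atop^{c}$ vertex from an $\Atop$ vertex and so lie in $\clfaces$. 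Next I would record the geometric fact that on $\Iso_{x,L,h}$ one has $\sH=\cP_x$: the pillar is $1$-connected, is confined to heights $\geq\tfrac12$ (since $f_{[x,x-\ez]}\notin\cI$ and by the cone confinement of \cref{prop:cone-seperation}), contains the four vertical faces at the sides of $x$, and—by \cref{cor:pillar-doesnt-touch-other-walls}—no further face of $\cI$, hence (as $\cI$ is a $1$-connected component of $\clfaces$) no further closed face whatsoever, is $1$-connected to $\cP_x$ above height $0$; thus $\sH$, the $1$-connected component of $\cL_{>0}\cap\clfaces$ through those four faces, equals $\cP_x$.

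With $\sH=\cP_x$, the four conditions of \cref{def:Gamma-event} are read off the isolated-pillar geometry. Condition \cref{it:gamma-cut-point-x} is again the empty-base clause of $\Iso_{x,L,h}$ ($x$ is the first cut-point, so $\cP_x\cap\cL_{1/2}$ is just its four side faces). Condition \cref{it:gamma-cut-points-y} holds because on $\Incr_{x,L',h_1}$ the first spine increment reaching height above $h_1$ is trivial (see \cref{thm:incr-map} and \cref{rem:incr-map}), so its cut-points are $v_{j_0}$ at height $h_1-\tfrac12$ and $y:=v_{j_0+1}=v_{j_0}+\ez$ at height $h_1+\tfrac12$, and the faces of $\cP_x$ in the slab $\cL_{[h_1-1/2,\,h_1+1/2]}$ are precisely the eight side faces of those two vertices, leaving $\cP_x\cap\cL_{h_1}$ empty. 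Condition \cref{it:gamma-connect-to-UHB-LHB} follows from \cref{rem:properties-of-Itop}: since $E_h^x\subseteq E_1^x$ we have $x\in\Atop^{c}\subseteq\Vtop^{c}$ and $y\in V(\cP_x)\subseteq\Atop^{c}\subseteq\Vtop^{c}$, while the faces $f_{[x,w_i]}$ and $f_{[y,z_i]}$ lie in $\cP_x\subseteq\Itop$; hence every $w_i$ and every $z_i$ lies in $\Vtop$, and $x\in\Vbot$ by \cref{clm:x-in-Vbot} (using $\Iso_{x,L,h}\subseteq\Iso^{\mathrm{o}}_{x,L,h}$). Finally, condition \cref{it:gamma-xy-close} holds because \cref{prop:cone-seperation} confines $\cP_x$ to $\bF_{\triangledown}\cup\bF_{\parallel}$, so the horizontal displacement of $y$ from $x$ is $O(h)$ (and is $0$ along $\bF_{\parallel}$), giving $d(x,y-h_1\ez)=O(h)\le d(x,\partial\Lambda_n)/2$ for $n$ large.

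The main obstacle, and where I would spend the most care, is the bookkeeping inside the deterministic inclusion—chiefly the identification $\sH=\cP_x$, which is what converts the ``$\sH$-based'' clauses of $\Gamma_{h_1}^x$ into statements about the isolated pillar, together with the exact placement of the trivial increment straddling height $h_1$ and the correct identification of its cut-points $v_{j_0}$ and $y$ (this is precisely what \cref{thm:incr-map} and \cref{rem:incr-map} are tailored for, so it reduces to invoking them with the right parameters). Everything else is a union bound over already-established high-probability events. I would also flag at the outset that the statement is applied with $h_1,h_2\ge1$, which is what makes \cref{thm:incr-map} applicable and makes $y-h_1\ez$ a genuine height-$\tfrac12$ vertex.
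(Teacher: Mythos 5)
Your proposal is correct and follows essentially the same route as the paper's own proof: move (at a $(1+\epsilon_\beta)$ cost) to the event $\widetilde E_h^x\cap\Iso_{x,L,h}\cap\Incr_{x,L',h_1}$ and then verify the four clauses of $\Gamma_{h_1}^x$ deterministically from the isolated-pillar geometry; the paper takes $L'=1$, which already suffices since only the increment straddling height $h_1$ need be trivial, but your choice $L'=L'_\beta$ is also covered by \cref{thm:incr-map}. One small imprecision to fix: you write ``$\cP_x\subseteq\Itop$'', which is false in general because $\cP_x$ includes hairs drawn from $\cI\setminus\Itop$; what you actually need — and what is true — is only that the specific faces $f_{[x,w_i]}$ and $f_{[y,z_i]}$ separate a vertex of $V(\cP_x)\subseteq\Atop^c$ from a vertex outside the pillar at a cut-height, so they lie in the bounding face set $F\subseteq\Itop$, whence \cref{rem:properties-of-Itop} yields $w_i,z_i\in\Vtop$.
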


\begin{proof}
Because of the prior map arguments (see \cref{thm:iso-map,thm:incr-map,cor:widehat-Ehx}), it suffices to show that $\widetilde{E}_h^x \cap \Iso_{x, L, h} \cap \Incr_{x, 1, h_1}$ implies $\Gamma_{h_1}^x \cap A_h^x$ (for say, $L = L_\beta$). We have already proved the implication of $A_h^x$ in the first part of \cref{prop:compare-A_h^x-E_h^x}, so we need to check that we have all the items of $\Gamma_{h_1}^x$.

On $\Iso_{x, L, h}$, the four faces surrounding $x$ at height $1/2$ are a part of $\cP_x$. As $\sH$ is 1-connected, this implies that $\sH \subseteq \cI$. But in \cref{cor:pillar-doesnt-touch-other-walls}, we proved that on $\Iso_{x, L, h}$, the pillar $\cP_x$ is only connected to the rest of the interface via faces at height 0. Since $\sH \subseteq \cL_{>0}$, this implies that $\sH \subseteq \cP_x$. Thus, we have \cref{it:gamma-cut-point-x} of $\Gamma_{h_1}^x$ because of the cut-point in the pillar at $x$. If we are additionally on $\Incr_{x, 1, h_1}$, then we have \cref{it:gamma-cut-points-y} because the pillar is just a trivial increment there. To show \cref{it:gamma-connect-to-UHB-LHB}, note first that $z_i, w_i \in \Atop$, as if those vertices were in $\Atop^c$, then they would also be part of the pillar which would violate the cut-point condition imposed by $\Iso_{x, L, h}$ and $\Incr_{x, 1, h_1}$. Then, the fact that $x, y \in \Atop^c$ implies that the faces $f_{[y, z_i]}$ and $f_{[x, w_i]}$ are in $\Itop$, whence we conclude that $z_i, w_i \in \Vtop$ as at least one of the vertices adjacent to a face of $\Itop$ is in $\Vtop$ by \cref{rem:properties-of-Itop}. We already proved that $x \in \Vbot$ in \cref{clm:x-in-Vbot}. Finally, we have \cref{it:gamma-xy-close} by the fact that the pillar lies in a cone (see \cref{prop:cone-seperation}) and the assumption that $d(x, \partial \Lambda_n) \gg h$.
\end{proof}

Before we continue, we record here some definitions and geometrical statements from \cite{Grimmett_RC}, which will be useful in justifying the Domain Markov argument used in proving \cref{lem:submultiplicativity-in-Gamma}.

\begin{definition}
Let $H$ be any 1-connected set of faces, and $G$ a component of the lattice with the edges corresponding to $H$ removed (so $G$ is a subgraph of the lattice). Define $\Delta_{\scV, H}G$ to be the set of all vertices $v \in G$ such that there exists another vertex $w$ with $f_{[v, w]} \in \overline{H}$. Define $\Delta_{\scE, H}G$ to be the set of edges $e \in G$ such that $f(e) \in \overline{H} \setminus H$.
\end{definition}

\begin{definition}
For a set $T$ in $\R^d$, let $\out(T)$ denote the union of the unbounded connected components of $\R^d \setminus T$. When $H$ in the above definition is a finite 1-connected set of faces, then there is a unique component $G$ which lies in $\out(H)$. As a shorthand in notation, we write $\Delta_\scV H$ and $\Delta_\scE H$ when using this choice of $G$.
\end{definition}

\begin{proposition}[{\cite[Thm.~7.6, special case]{Grimmett_RC}}]\label{prop:out-graph-connected}  Let H be a finite 1-connected set of faces, corresponding to an edge set $D$. Let $G = (V, E)$ be the subgraph of $(\Z^3, \E^3 \setminus D)$ comprising of all vertices and edges in $\out(H)$. Then, the graph $(\Delta_{\scV} H, \Delta_{\scE} H)$ is connected.
\end{proposition}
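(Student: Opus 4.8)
The plan is to deduce this statement from \cite[Thm.~7.6]{Grimmett_RC} rather than to reprove it, so the only real work is in matching hypotheses. First I would note that since $H$ is a \emph{finite} $1$-connected set of faces, its support $\overline{H}:=\bigcup_{f\in H}f$ is a bounded closed subset of $\R^3$; hence $\R^3\setminus\overline{H}$ has exactly one unbounded connected component, because outside any large ball containing $\overline{H}$ the complement is path-connected and therefore lies in a single component. By definition $\out(H)$ is exactly this unbounded component. In particular $\out(H)$ is a bona fide connected component of $\R^3\setminus\overline{H}$, so the subgraph $G=(V,E)$ of $(\Z^3,\E^3\setminus D)$ that it induces is precisely the graph to which the shorthand $\Delta_{\scV}H=\Delta_{\scV,H}G$ and $\Delta_{\scE}H=\Delta_{\scE,H}G$ refers.

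With this identification the claim is immediate: \cite[Thm.~7.6]{Grimmett_RC} applies to any connected component of $\R^3\setminus\overline{H}$, and applied to the component $G=\out(H)$ it asserts exactly that $(\Delta_{\scV,H}G,\Delta_{\scE,H}G)=(\Delta_{\scV}H,\Delta_{\scE}H)$ is connected. Thus the proof is a one-line reduction, and the only point worth spelling out explicitly is the observation above, namely that finiteness of $H$ forces $\out(H)$ to be a genuine complementary component, so that the cited theorem applies verbatim.

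The step I would flag as the genuine obstacle is not in this reduction but inside the cited theorem, were one to want a self-contained treatment: one shows that any two vertices $v,v'\in\Delta_{\scV}H$ can be joined by a path using only edges of $\Delta_{\scE}H$, via a minimal-counterexample / surgery argument that ``walks along the outer face-boundary'' of $\overline{H}$. The delicate part there is the cubic-lattice combinatorial topology needed to guarantee that consecutive steps of such a walk are genuine lattice edges whose dual faces belong to $\overline{H}\setminus H$, keeping careful track of $0$- versus $1$-connectivity of faces, edges and vertices, and of the $1$-connectedness of $H$; this is the discrete Jordan--Brouwer type argument carried out in \cite[\S7]{Grimmett_RC}, which I would simply quote. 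For our purposes the statement is only needed as a black box in the Domain Markov decomposition used in the proof of \cref{lem:submultiplicativity-in-Gamma}: revealing the $1$-connected set of closed faces $\sH_1$ up to height $h_1$ should leave the part of the configuration ``above'' it distributed, conditionally, as a random-cluster model on the outer subgraph with suitable boundary conditions along $\Delta_{\scV}H$, and Proposition~\ref{prop:out-graph-connected} is precisely what guarantees that this revealed set forms a single connected boundary component for that subgraph.
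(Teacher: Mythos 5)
Your proposal is correct and takes the same route as the paper, which states this proposition as a direct consequence of \cite[Thm.~7.6]{Grimmett_RC} without giving a proof. The only genuine content in your writeup—verifying that finiteness of $H$ makes $\out(H)$ a bona fide complementary component so the cited theorem applies verbatim—is a reasonable hypothesis-check, but the paper treats this as immediate and leaves it implicit.
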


We also prove a useful lemma regarding height shifts:
\begin{lemma}\label{lem:height-shift}
Let $A$ be an event measurable with respect to the configuration $\omega$ restricted to edges of $\Lambda_n$ whose height is in $[-L_0,L_0]$, for some $L_0>0$ which may depend on $n$.
For any $k$, we have 
\begin{equation*}
    \mu_n(\theta_k A) \leq q^2\mu_n(A)\,.
\end{equation*}
\end{lemma}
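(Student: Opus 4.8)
The plan is to trade the vertical shift $\theta_k$ for a change in the location of the Dobrushin ``cut''. For an integer $c$, let $\mu_n^{(c)}$ denote the random-cluster measure on $\Lambda_n$ with Dobrushin boundary conditions whose cut sits at height $c$; that is, $\eta_e=0$ precisely for the vertical edges lying outside $\Lambda_n$ between heights $c-\tfrac12$ and $c+\tfrac12$, and $\eta_e=1$ otherwise, so that $\mu_n=\mu_n^{(0)}$. Since the column $\Lambda_n$ and its complement are both invariant under the shift by $k$, and this shift carries the cut-at-$0$ boundary condition onto the cut-at-$k$ one, one has $\mu_n^{(k)}=(\theta_k)_\ast\mu_n^{(0)}$ on cylinder events. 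This is exactly where the hypothesis on $A$ enters: being measurable on the bounded slab of heights $[-L_0,L_0]$ makes $A$, and hence $\theta_kA$, a cylinder event, so the identity can be read off from the finite height-truncations defining the two measures. Consequently $\mu_n^{(k)}(\theta_kA)=\mu_n^{(0)}(A)=\mu_n(A)$, and it remains only to show $\mu_n(\theta_kA)=\mu_n^{(0)}(\theta_kA)\le q^2\,\mu_n^{(k)}(\theta_kA)$.

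For this I would establish the stronger configuration-wise bound $\mu_n^{(0)}(\omega)\le q^2\,\mu_n^{(k)}(\omega)$ for \emph{every} $\omega$, which upon summing over $\omega\in\theta_kA$ finishes the proof. Writing $w(\omega)=p^{\#\{e:\,\omega_e=1\}}(1-p)^{\#\{e:\,\omega_e=0\}}$ for the shift-invariant energy weight and $\kappa^{(c)}(\omega)$ for the number of open clusters counted with the cut-at-$c$ wiring, the random-cluster formula gives
\[
\frac{\mu_n^{(0)}(\omega)}{\mu_n^{(k)}(\omega)}=\frac{Z_n^{(k)}}{Z_n^{(0)}}\,q^{\,\kappa^{(0)}(\omega)-\kappa^{(k)}(\omega)},\qquad \frac{Z_n^{(k)}}{Z_n^{(0)}}=\sum_{\omega'}\frac{w(\omega')q^{\kappa^{(0)}(\omega')}}{Z_n^{(0)}}\,q^{\,\kappa^{(k)}(\omega')-\kappa^{(0)}(\omega')}.
\]
Hence both the explicit ratio and the partition-function ratio are controlled as soon as one has the key pointwise estimate
\[
\bigl|\kappa^{(0)}(\omega)-\kappa^{(k)}(\omega)\bigr|\le 1\qquad\text{for every configuration }\omega,
\]
which, using $q\ge 1$, yields $q^{\kappa^{(0)}-\kappa^{(k)}}\le q$ and $Z_n^{(k)}/Z_n^{(0)}\le q$, and therefore $\mu_n^{(0)}(\omega)\le q^2\mu_n^{(k)}(\omega)$.

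The key estimate itself comes from the observation that the cut-at-$0$ and cut-at-$k$ wirings partition the \emph{same} set of boundary vertices: the former groups $\partial\Lambda_n^+\cup\partial\Lambda_n^-$ according to $\hgt(v)>0$ versus $\hgt(v)<0$, the latter according to $\hgt(v)>k$ versus $\hgt(v)<k$, and in the truncations the ceiling and floor always fall on opposite sides of either cut. Thus, for either $c$, $\kappa^{(c)}(\omega)$ equals the number of open clusters of $\omega$ that avoid all boundary vertices — a quantity independent of $c$ — plus the number of distinct ``super-clusters'' obtained by merging, through the boundary wiring, all of the remaining clusters; this latter number equals $1$ if some open cluster joins the two parts of the partition and $2$ otherwise, hence always lies in $\{1,2\}$. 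Therefore $\kappa^{(0)}(\omega)-\kappa^{(k)}(\omega)\in\{-1,0,1\}$, as claimed. (All these quantities are read through the finite height-truncations defining $\mu_n$, and the displayed identities and inequalities pass to the weak limit; the case $k<0$ is symmetric and $k=0$ is trivial.)

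I expect the main obstacle to be precisely this pointwise cluster-count estimate $|\kappa^{(0)}-\kappa^{(k)}|\le 1$ — i.e.\ verifying carefully that relocating the Dobrushin cut changes the number of open clusters by at most one — together with the bookkeeping needed to make the translation step $\mu_n^{(k)}(\theta_kA)=\mu_n(A)$ rigorous through the truncation/weak-limit construction of the measures, which is the one place the bounded-slab hypothesis on $A$ is genuinely used. Everything else reduces to the short random-cluster computation above.
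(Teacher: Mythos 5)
Your argument is correct and is essentially the paper's proof in a slightly different parametrization: the paper shifts the finite truncation window $[-L,L]\mapsto[-L+k,L+k]$ while keeping the Dobrushin cut at height $0$ and compares $\omega$ to $\theta_k\omega$, whereas you keep the truncated domain fixed and move the cut from height $0$ to height $k$, which is the same comparison after the change of variables $\omega\mapsto\theta_k\omega$. The heart of both proofs is identical — both Dobrushin wirings consist of exactly two wired components, so the cluster count changes by at most one, yielding a factor $q$ on the per-configuration weight and a factor $q$ on the partition-function ratio — and both conclude by passing the pointwise bound through the finite height truncations.
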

\begin{proof}
Let $\Lambda_n^{[a,b]}=\Lambda_n \cap \{\Z^2 \times [a,b]\}$ denote the finite cylinder confined between the heights $a<0$ and~$b>0$. Let~$\mu_n^{[a,b]}$ be the FK measure with Dobrushin boundary conditions (still about height $0$, i.e., the boundary configuration $\eta$ has $\eta_e = 0$ if $e = [x, y]$ for some $x = (x_1, x_2, \frac12)$ and $y = (y_1, y_2, \frac12)$, and $\eta_e = 1$ otherwise). Let $\Omega_n^{[a,b]}$ be the set of configurations for the FK model on $\Lambda_n^{[a,b]}$, and for any event $A \subseteq \Omega_n^{[a, b]}$, let $\theta_k A \subseteq \Omega_n^{[a+k,b+k]}$ be the event $\{\theta_k \omega\,:\; \omega \in A\}$, where $\theta_k \omega$ is the configuration obtained by shifting every edge of $\omega$ up by $k$. Noting $\mu_n^{[-L,L]}\to\mu_n$ as $L\to\infty$, we will compare $\mu_n^{[-L+k,L+k]}(\theta_k A)$ to $\mu_n^{[-L,L]}(A)$ for $L > L_0$.

Recall that the weight of a configuration $\omega$ is given by  $p^{\#\{e\in E\,:\;\omega_e = 1\}}(1-p)^{\#\{e\in E\,:\;\omega_e = 0\}}q^{\kappa(\omega)}$. Comparing the weight of $\omega$ and $\theta_k \omega$, since they have the same number of open/closed edges, a change in weight can only come from a change in the number of open clusters via interactions with the boundary (since the open clusters that do not touch the boundary are preserved by the height shift). However, every vertex that is connected to the boundary  $\partial \Lambda_n^{[-L,L]}$ via open edges will still be connected to $\partial \Lambda_n^{[-L+k, L+k]}$ after the height shift. Hence, because of the boundary conditions, the only possible variable in the number of open clusters is whether the two wired boundary components above and below height zero are joined via open edges of $\omega$ (or $\theta_k \omega$). So, the number of clusters can change by at most 1. Thus, if $Z_n^{[a,b]}$ denotes the partition function of $\mu_n^{[a,b]}$ then
\begin{equation*}
    Z_n^{[-L, L]} \leq q Z_n^{[-L+k, L+k]}\,,
\end{equation*}
and similarly the weight of $\theta_k \omega$ can increase by at most $q$, which together give
\begin{equation*}
    \mu_n^{[-L+k, L+k]}(\theta_h A) \leq q^2 \mu_n^{[-L, L]}(A)\,.
\end{equation*}
The proof is concluded by taking $L \to \infty$, yielding this inequality for $\mu_n$.
\end{proof}

With \cref{lem:move-to-gamma} and the above results in hand, we next prove the following inequality, which is arguably the most delicate part of this paper.

\begin{lemma}\label{lem:submultiplicativity-in-Gamma}
For all $\beta > \beta_0$, every sequence of $n, x$ dependent on $h$  with $1 \ll h \ll n$ and $d(x, \partial \Lambda_n) \gg h$, and every $h = h_1 + h_2$, we have
\begin{equation*}
\bar{\mu}_n(A_h^x,\, \Gamma_{h_1}^x) \leq (1+\epsilon_\beta) q(e^\beta + q-1)^2\bar{\mu}_n(A_{h_1}^x)\bar{\mu}_n(E_{h_2}^x)\,.
\end{equation*}
\end{lemma}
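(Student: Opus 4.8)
The plan is to establish the submultiplicativity bound by a Domain Markov decomposition at height $h_1$, splitting the event $A_h^x\cap\Gamma_{h_1}^x$ into what happens below and above the cut-points that $\Gamma_{h_1}^x$ provides. First I would work in the unconditional FK measure $\mu_n$, using \cref{obs:close-edge} to pass freely between $\mu_n$ and $\bar\mu_n=\mu_n(\cdot\mid\sep_n)$ at the cost of factors of $(e^\beta+q-1)/q$ (tracking how many extra open clusters we force). The key geometric input is \cref{rem:properties-of-Gamma}: on $\Gamma_{h_1}^x$ the interface contains $\sH$, which has cut-points at $x$ and at $y,y-\ez$ at heights $h_1\pm\frac12$, so $\sH_1$ (the restriction of $\sH$ to heights in $(0,h_1]$) is a $1$-connected set of closed-dual faces that, together with the requirement $z_i,w_i\in\Vtop$ and $x\in\Vbot$, functions as a boundary condition separating the ``pillar up to height $h_1$'' region from the region above height $h_1$.

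The main step is the Domain Markov argument. Reveal $\sH_1$ and the relevant connectivity information below height $h_1$. By \cref{prop:out-graph-connected} (the connectedness of $\Delta_\scV H$ for a $1$-connected $H$), the faces of $\sH_1$ genuinely disconnect the graph into an ``inside/below'' part and an ``outside/above'' part, and conditional on $\{z_i\in\Vtop\}$ the four top vertices $z_i$ of the cut-point at $y$ are wired together, so the configuration above height $h_1$ (in the subgraph hanging off $y$) is, by the Domain Markov property of the FK model, distributed as an FK measure with wired boundary on that subgraph. On that configuration, the event that the pillar continues to height $h$ — via \cref{rem:shifted-A}, the shifted event $\theta_{h_1}A_{h_2}^{y-h_1\ez}$ — is decreasing, so comparing the wired-boundary measure to $\mu_n$ (wired boundary dominates, and $A_{h_2}$ decreasing means FKG gives $\mu_n(A_{h_2})\le$ the conditional probability, in the direction we need) and then re-inserting the separation conditioning yields a factor $\bar\mu_n(E_{h_2}^x)$ after using \cref{prop:compare-A_h^x-E_h^x} to go from $A_{h_2}$ back to $E_{h_2}$. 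The ``below height $h_1$'' part, after summing over all admissible $\sH_1$ and all positions $y$, is bounded by $\bar\mu_n(A_{h_1}^x)$ (again modulo an $(e^\beta+q-1)/q$ factor for closing $[y,y-\ez]$ so that the bottom piece is a bona fide element of $A_{h_1}^x$), plus a $(1+\epsilon_\beta)$ loss to drop the extra ``niceness'' constraints of $\Gamma_{h_1}^x$. A height-shift comparison via \cref{lem:height-shift} handles the bookkeeping that the top event lives at a shifted height and at horizontal location $y-h_1\ez$ rather than $x$, contributing the harmless $q^2$ which gets absorbed.

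Collecting the factors: one $(e^\beta+q-1)/q$ from closing $[x,x-\ez]$-type edges to make the lower pillar a legitimate $A_{h_1}^x$ configuration, one from the analogous closing of $[y,y-\ez]$, one more $q$ from the possible extra open cluster created when the two wired boundary arcs (above and below height $h_1$) are forced apart by revealing $\sH_1$ — matching the claimed prefactor $q(e^\beta+q-1)^2$ — and an overall $(1+\epsilon_\beta)$ absorbing the rigidity/entropy losses and the $q^2$ from \cref{lem:height-shift}. The hard part will be making the Domain Markov decomposition fully rigorous in the FK setting: unlike Ising, revealing $\sH_1$ exposes \emph{closed edges} rather than a spin boundary, so one must carefully verify (i) that the revealed faces, plus the wiring of the $z_i$'s, truly render the above-$h_1$ subgraph conditionally independent of everything below (this is where \cref{prop:out-graph-connected} and the connectedness of $\Atop,\Atop^c$ from \cref{rem:properties-of-Itop} are essential), and (ii) that the hairs permitted in the pillar of $\cP_x$ — which, absent an isolation hypothesis, could in principle reach back down past height $h_1$ — do not spoil the separation; this is exactly why the statement is phrased on $\Gamma_{h_1}^x$, whose cut-point conditions force $\sH\cap\cL_{h_1}$ to be essentially empty, and one must check that this genuinely prevents any such reattachment.
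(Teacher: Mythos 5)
Your overall architecture — decompose at the cut-point at height $h_1$, close the edge $[y,y-\ez]$ via \cref{obs:close-edge}, argue a Domain Markov factorization, use FKG on the decreasing top event to drop the conditioning, and convert $A_{h_2}$ to $E_{h_2}$ via \cref{prop:compare-A_h^x-E_h^x} — is the same as the paper's. But there are genuine gaps in how you propose to implement the central Domain Markov step.

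The most serious issue is your proposal to ``reveal $\sH_1$ and the relevant connectivity information below height $h_1$.'' In the FK model this is not enough: the cluster-count term $q^{\kappa(\omega)}$ creates long-range dependence, so a set of \emph{closed} dual faces $\sH_1$ does not by itself decouple the region below from the region above. What the paper actually reveals is the full \emph{truncated interface} $\cI' = \cI\setminus(\sH\setminus\sH_1)$, together with the derived boundary $\partial^\dagger I'$ of \emph{open} edges; the Domain Markov cut is the set $\{e:\,f_e\in\partial^\dagger I'\}$ of open edges, not $\sH_1$ itself (this is precisely the content of \cref{prop:out-graph-connected} and \cref{clm:Bv-connected,clm:G-bc}: the open edges of $\partial^\dagger I'$ wire $B_\scV\setminus\{y\}$ into a single cluster). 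Revealing the full truncated interface is also what allows one to leave the conditional space $\sep_n$ (since each $I'$ is itself a valid interface, so $\cS_{I'}\subseteq\sep_n$), and what makes the events $\cS_{I'}$ \emph{disjoint} (\cref{clm:S-I'-disjoint}), which is indispensable for the step $\sum_{H_1}\sum_{I'}\mu_n(\cS_{I'})\le\mu_n(\sep_n,A_{h_1}^x)$. Your ``relevant connectivity information'' phrasing does not address any of this, and without it the argument does not go through.

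Two smaller but real errors: (a) your FKG inequality is stated backwards — you want ``conditional probability $\le\mu_n(A_{h_2})$'' (top event decreasing, conditioning on $\{\partial^\dagger I'\subseteq\opfaces\}$ increasing), not ``$\mu_n(A_{h_2})\le$ conditional probability''; and (b) your prefactor accounting does not add up: the three factors you list are $\frac{e^\beta+q-1}{q}\cdot\frac{e^\beta+q-1}{q}\cdot q=\frac{(e^\beta+q-1)^2}{q}$, off by $q^2$ from the claimed $q(e^\beta+q-1)^2$, and the $q^2$ from \cref{lem:height-shift} is precisely that missing factor — it is a genuine multiplicative contribution, not something ``absorbed'' into the $(1+\epsilon_\beta)$. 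Also, the Domain Markov boundary condition on the upper subgraph $G$ is not purely wired: it is wired on $\partial\Lambda_n^+\cup B_\scV\setminus\{y\}$ and \emph{free} at $y$ (the bottom cut-point of the upper pillar), which is why closing $[y,y-\ez]$ was forced in the first place, and why removing that conditioning costs a factor of $q$.
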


\begin{remark}
    The goal is to analyze the increasing and decreasing information gained by climbing up to height $h_1$ (i.e. the event $A_{h_1}^x$) with respect to climbing from height $h_1$ to $h_1 + h_2$. We recall here the proof idea of \cite[Proposition 5.1]{GL_tightness}, which is the Ising analog of our claim here. The idea in that paper was that upon revealing the plus component connecting $x$ to $\cL_{h_1}$, there is revealed a minus boundary all along the sides of the plus component so that by Domain Markov, it is equivalent to revealing just the minus boundary and the plus spins at the top and bottom. However, the $\Gamma_{h_1}^x$ event ensures that there will only be one plus spin at the top and another at the bottom, so that these spins can be disregarded at a constant cost. Then, the conditioning on the minus spins can be removed by FKG.

    We would like to follow this proof, but some difficulties stand in the way. The primary issue is that our ``minus spins'' are vertices in $\Vtop$, yet whether or not a vertex is in $\Vtop$ is not something that can be determined locally, so revealing a set of vertices is not suitable for a Domain Markov proof. Instead, we reveal the dual faces that fulfill the event $A_{h_1}^x$, along with components of faces in $\clfaces \cap \cL_{>0} \cap \cL_{\leq h_1}$ which are 1-connected to them (namely, $\sH_1$). By maximality, this reveals a side boundary of open edges. We would like to also use Domain Markov to forget the closed edges revealed and only remember the boundary of open edges, so that we can use FKG. However, to utilize the FKG property of the random-cluster measure, we need to move off our conditioned space $\sep_n$. This requires us to additionally reveal not only the faces described above, but also the entire interface. However, we can not reveal the faces fulfilling $\theta_{h_1}A_{h_2}$, which on $\Gamma_{h_1}^x$ are a part of the interface, so this step needs to be treated more delicately. Furthermore, since the object we are revealing is not a component of vertices but of dual faces, the geometry is more complicated and one needs to be more careful when applying the Domain Markov step. 
    
    Finally, we note that the fact that $A_h^x$ is a decreasing event is critical for this proof to work because of the usage of FKG. This is the reason that we are starting with the $\Top$ interface, as opposed to the analogously defined $\Bot$ interface. Roughly speaking, for the $\Top$ interface to rise up requires the existence of faces forming a shell of $\Atop^c$ vertices, while for the $\Bot$ interface to rise up requires the existence of an open path of vertices to penetrate upwards. The former as we have seen can be compared to a decreasing event, while the latter is very much an increasing event. 
\end{remark}

\begin{proof}
We first sum over all possible sets of faces that can make up $\sH_1$ on the event $\Gamma_{h_1}^x$. Let $y$ be as in \cref{def:Gamma-event}, i.e., $y$ is the unique vertex at height $h_1 + 1/2$ that has sides bounded by faces of $\sH$. We can write
\begin{equation}
\bar{\mu}_n(A_h^x, \Gamma_{h_1}^x)= \frac{1}{\mu_n(\sep_n)}\sum_{H_{1}} \mu_n(\sH_1=H_1 ,\, A_h^x,\, \Gamma_{h_1}^x,\, \sep_n)\,.
\end{equation}
To sum over interfaces, we define
\begin{align*} \sep_n^1(H_{1}) &= \left\{ I= \cI(\omega)\mbox{ for some }\omega\in A_h^x \cap \sep_n \cap \Gamma_{h_1}^x \mbox{ and } \sH_1=H_1 \right\}\,.
\end{align*}
We can then write 
\begin{equation}\label{eq:separate-by-interface}
    \frac{1}{\mu_n(\sep_n)}\sum_{H_{1}} \mu_n(\sH_1=H_1 ,\, A_h^x,\, \Gamma_{h_1}^x,\, \sep_n) = \frac{1}{\mu_n(\sep_n)}\sum_{H_{1}}
    \sum_{I \in \sep_n^1(H_{1})}\mu_n(\cI = I)\,,
\end{equation}
where we really have an equality because we proved (in \cref{rem:properties-of-Gamma}) that no $\omega'\notin A_h^x \cap \Gamma_{h_1}^x \cap \sep_n$ can lead to an interface $\cI\in\sep_n^1(H_1)$.

For every $\omega \in \sep_n^1(H_1)$, closing the edge $[y, y - \ez]$ always creates an additional open cluster because of the cut-point condition in \cref{it:gamma-cut-points-y} of $\Gamma_{h_1}^x$. Moreover, the resulting configuration is always still in $\sep_n^1(H_1)$, as the only non-trivial thing to check is \cref{it:gamma-connect-to-UHB-LHB} of $\Gamma_{h_1}^x$, and this property is unaffected by closing the edge $[y, y - \ez]$ because we proved in \cref{rem:properties-of-Gamma} that both $y, y - \ez$ are in $\Atop^c$ for this choice of $\omega$. Thus, we can force the face below $y$ to be in $\clfaces$ at a cost of $\frac{e^\beta+q-1}{q}$ by \cref{obs:close-edge}. So, defining
\[ \hat{\sep}_n^1(H_1) =\left\{ I\in\sep_n^1(H_1)\,:\; f_{[y,y-\ez]}\in I\right\}\,,\]
we get that
\[\mu_n(\sep_n^1(H_1)) \leq \frac{e^\beta+q-1}{q} \mu_n(\hat\sep_n^1(H_1))\,.\]

We want to reveal only the portion of the interface below the face $f_{[y, y-\ez]}$, so for every interface $\cI \in \hat\sep_n^1$, we define its truncation $\cI'$ as the set of faces that are in $\cI$ minus the faces of $\sH \setminus \sH_1$. The purpose of adding the face $f_{[y, y - \ez]}$ to the definition of $\hat\sep_n^1$ is threefold: It guarantees that $\cI'$ is still an interface so that we are still in $\sep_n$, it acts as a ``top boundary'' so that together with the faces $\sH_1$, we are in $A_{h_1}^x$, and it brings us into a situation where we can apply Domain Markov property. (Note the importance of \cref{it:gamma-cut-points-y} in $\Gamma_{h_1}^x$ for the first point --- it is a priori possible that the face set $\sH \setminus \sH_1$ comes down and reconnects to the interface at several locations, so that deleting these faces creates an arbitrary number of gaps in the interface. The event $\Gamma_{h_1}^x$ makes this impossible, and ensures that the only place where the faces of $\sH \setminus \sH_1$ connects to the rest of $\cI$ is at the four faces to the sides of $y - \ez$ at height $h_1 - 1/2$. Thus, adding just a single face $f_{[y, y - \ez]}$ ensures that $\cI'$ is still an interface.) 

Now define $\partial^\dagger I'$ by deleting from $\partial I'$ the 4 faces that are 1-connected to the face $f_{[y, y - \ez]}$ (out of the 12 such faces) and have height $> h_1$. We would like to have $\partial I'$ capture all the faces that we know are not present in $\clfaces$ by the maximality of $I$; however, the four faces adjacent to $y$ are exceptional, in that we truncated $I'$ in the slab $\cL_{h_1+1/2}$ by choice. (In fact, on $\Gamma_{h_1}^x$ we know that those four faces actually are in $\clfaces$, so they definitely cannot be in $\partial^\dagger I'$.) By grouping the terms in the above sum \cref{eq:separate-by-interface} according to the truncated interface $I'$, and recalling that $A_h^x$ implies $\theta_{h_1}A_{h_2}^{y - h_1}$, we have an upper bound of
\begin{equation}\label{eq:separate-by-truncated-interface}
     \bar{\mu}_n(A_h^x, \Gamma_{h_1}^x) \leq \frac{e^\beta+q-1}{q}\frac{1}{\mu_n(\sep_n)}\sum_{H_{1}}\sum_{I': I \in \hat \sep_n^1(H_{1})} \mu_n(I' \subseteq \clfaces,\, \partial^\dagger I' \subseteq \opfaces ,\, \theta_{h_1}A_{h_2}^{y - h_1\ez})\,.
\end{equation}
(One might note that in moving from \cref{eq:separate-by-interface} to \cref{eq:separate-by-truncated-interface}, we are enlarging the set of interfaces we are summing over since it is possible for an interface $J$ that violates $\Gamma_{h_1}^x$ to still have truncation $I'$. This is not a problem because from now on we will only use the information from $\Gamma_{h_1}^x$ that is measurable with respect to the event $I' \subseteq \clfaces, \partial^\dagger I' \subseteq \opfaces$ and the fact that $I'$ came from a truncation of some $I \in \hat \sep_n^1(H_1)$, and we are only claiming an upper bound.)

Writing the latter probability as
\[ \mu_n(I' \subseteq \clfaces,\, \partial^\dagger I' \subseteq \opfaces ,\, \theta_{h_1}A_{h_2}^{y - h_1\ez})= \mu_n\left(\theta_{h_1}A_{h_2}^{y-h_1\ez} \mid \cS_{I'}\right) \mu_n\left(\cS_{I'}\right)
 \]
 for
 \begin{equation}
 \cS_{I'}:=\left\{\omega:\; I'\subset \clfaces\,,\, \partial^\dagger I'\subseteq \opfaces\right\}\,,
 \end{equation}
the next claim will establish that the events $\cS_{I'}$ are disjoint:

\begin{claim}\label{clm:S-I'-disjoint}
The events $\cS_{I'}$ are mutually disjoint across all $I' \in \hat\sep_n^1(H_1)$ and all possible sets of faces that can make up $H_1$.
\end{claim}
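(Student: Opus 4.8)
The claim is that distinct truncated interfaces $I'$ give rise to disjoint events $\cS_{I'}$. The natural strategy is to show that the configuration $\omega$, when restricted to the event $\cS_{I'}$, \emph{determines} $I'$ — that is, one can recover $I'$ from any $\omega \in \cS_{I'}$ — so that two different $I'$ cannot share a configuration. The recovery recipe should be: given $\omega \in \cS_{I'}$, look at $x$ and the four faces on its sides at height $1/2$ (which are in $\clfaces$ since $I' \supseteq \sH_1$ contains them, as $\sH_1$ has a cut-point at $x$), take $\sH$ — the $1$-connected component of $\cL_{>0} \cap \clfaces$ through those faces — and restrict it to heights $\le h_1$ to recover $H_1 = \sH_1$. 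Then $I' \setminus \sH_1$ should be recoverable as a suitable $1$-connected component of $\clfaces$ faces ``below'' $\sH_1$, pinned down by the additional open faces $\partial^\dagger I' \subseteq \opfaces$ acting as a boundary marker.

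\textbf{Key steps, in order.} First I would fix $\omega \in \cS_{I'}$ and argue that $\sH_1$ is determined by $\omega$: since $I' \subseteq \clfaces$ and $I'$ contains the four faces around $x$ at height $1/2$ (recall $I' \in \hat\sep_n^1(H_1)$ came from a truncated interface in $\hat\sep_n^1(H_1)$, and $\sH_1 \subseteq I'$), the $1$-connected closed-face component of $\cL_{>0}\cap\clfaces$ through those faces, intersected with $\cL_{\le h_1}$, equals $H_1$ — this uses exactly the maximality built into the definition of $\sH$ together with $\partial^\dagger I' \subseteq \opfaces$ to prevent the component from leaking out through faces we have declared open. (One must check here that the face $f_{[y,y-\ez]}$, which is in $I'$, does not cause the component to wander upward past height $h_1$; this is where \cref{it:gamma-cut-points-y} of $\Gamma_{h_1}^x$ — encoded into $\hat\sep_n^1$ via the cut-point structure — is used, since $\sH \cap \cL_{h_1}$ has no faces other than possibly $f_{[y,y-\ez]}$, and the faces at height $h_1+1/2$ around $y$ are in $\partial^\dagger I'$'s complement... actually they are explicitly \emph{removed} from $\partial^\dagger I'$, so care is needed: one recovers $H_1$ purely from heights $\le h_1$.) Second, having recovered $H_1$, the remaining faces $I' \setminus H_1$ sit at heights $\le 0$ (they are the truncation of the full interface below $f_{[y,y-\ez]}$, minus $\sH\setminus\sH_1$); I would recover them as the $1$-connected component of $\clfaces$ faces containing the boundary faces at height $0$ that separate $\partial\Lambda_n^+$ from $\partial\Lambda_n^-$, truncated consistently — the point being that $I'$ is itself an interface (this was proved in the discussion preceding the claim) and an interface is determined by its closed faces plus the connectivity-to-boundary requirement, all of which are $\omega$-measurable given $I' \subseteq \clfaces$ and $\partial^\dagger I' \subseteq \opfaces$. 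Third, conclude: if $\omega \in \cS_{I'} \cap \cS_{J'}$ then the recovery procedure applied to $\omega$ yields both $I'$ and $J'$, so $I' = J'$.

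\textbf{Main obstacle.} The delicate point is the bookkeeping at the top slab around $y$: the truncation was chosen so that $I'$ stops at $f_{[y,y-\ez]}$, and $\partial^\dagger I'$ was defined by deliberately \emph{deleting} the four faces $1$-connected to $f_{[y,y-\ez]}$ at height $>h_1$ from $\partial I'$ — so those four faces are \emph{unconstrained} by $\cS_{I'}$ (indeed on $\Gamma_{h_1}^x$ they are closed, but $\cS_{I'}$ does not record this). I must make sure the recovery of $I'$ never needs to inspect those four faces: this should be fine because $I'$ lives at heights $\le h_1$ (its faces have height $\le h_1$, the top one being the horizontal $f_{[y,y-\ez]}$ at height $h_1$), and the reconstruction of the closed-face component can be carried out entirely within $\cL_{\le h_1}$, using $\partial^\dagger I'$ (all of whose faces have height $\le h_1$ after the deletion, except we must double-check this) as the open boundary that seals it off. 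A clean way to phrase it: $\cS_{I'}$ fixes the status (open/closed) of every face of $\overline{I'}$ \emph{except} those four exceptional faces, and the map $\omega \restriction_{\overline{I'}\text{-faces, minus the 4}} \mapsto I'$ is injective because $\overline{I'} \setminus I'$ being all-open is precisely what certifies $I'$ is a maximal closed component in its region. I would also invoke \cref{prop:out-graph-connected} exactly as the surrounding text does to guarantee that the revealed boundary of open faces is connected, which is what makes the component-extraction well-posed. If the four-exceptional-face issue turns out to obstruct a direct injectivity argument, the fallback is to note that $H_1$ alone is recovered cleanly (no exceptional faces involved), and then within a fixed $H_1$ the sets $\cS_{I'}$ are disjoint because $I' \setminus H_1 \subseteq \cL_{\le 0}$ is determined by the interface-below-$0$ data, which is untouched by the truncation ambiguity near $y$.
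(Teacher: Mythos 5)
Your overall strategy — showing that any $\omega \in \cS_{I'}$ \emph{determines} $I'$, so that two distinct truncations can't share a configuration — is a legitimate reformulation of the claim. But the execution has a factual error about the geometry of $I'$ and never actually closes the gap you yourself identify.

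\textbf{The factual error.} You assert that "$I'$ lives at heights $\le h_1$" and that "$I' \setminus H_1 \subseteq \cL_{\le 0}$." This is false. By definition $I' = I \setminus (\sH \setminus \sH_1)$, and $\sH$ is only the $1$-connected component of $\cL_{>0}\cap\clfaces$ through the four faces around $x$. Any face of the full interface $I$ at height $> h_1$ that is \emph{not} in $\sH$ — e.g., belonging to a tall wall or another pillar rooted far from $x$ — survives the truncation and remains in $I'$. (Indeed item~\cref{it:y_y+e3-not-in-Be} in the proof of \cref{clm:G-bc} explicitly contemplates faces of $I'$ at height $> h_1$ "part of $I'$ via another pillar $\cP_{x'}$ for $x'\neq x$.") Your two-step reconstruction — $H_1$ from heights $(0,h_1]$, the rest from heights $\le 0$ — therefore omits a potentially nonempty portion of $I'$, and the proposal to recover that portion as "the truncation of the full interface below $f_{[y,y-\ez]}$" does not describe $I'$.

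\textbf{The uncovered gap.} You correctly flag the four faces removed in passing from $\partial I'$ to $\partial^\dagger I'$ as the delicate point: $\cS_{I'}$ is silent on their status. Your fallback — "$\overline{I'}\setminus I'$ being all-open certifies $I'$ is a maximal closed component" — assumes exactly what is unconstrained, since $\cS_{I'}$ forces openness only of $\partial^\dagger I'$, not of all of $\partial I'$. So the injectivity assertion is left unproven. The paper sidesteps reconstruction altogether: given $I'\neq J'$ it exhibits a concrete face lying in $I' \cap \partial^\dagger J'$ (or $J' \cap \partial^\dagger I'$), which forces $\cS_{I'}\cap\cS_{J'}=\emptyset$ immediately. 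Crucially, this is done by a case split on whether $y(I')=y(J')$: if they coincide, the two sets of exceptional faces coincide, so the face produced by a chaining argument in $I'\cup J'$ cannot be exceptional; if they differ, one works entirely inside $H_1(I')$ and $H_1(J')$ at heights $\le h_1$, which never touch the exceptional faces. That case split is exactly the mechanism your write-up is missing, and without it the argument does not close.
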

\begin{proof}
Consider two face sets $H_1$ and $\tilde H_1$ (possibly the same) such that each one makes up $\sH_1$ for some configuration on the event $\Gamma_{h_1}^x$. Suppose $I \in \hat\sep_n^1(H_1)$ and $J \in \hat\sep_n^1(\tilde H_1)$, with truncations $I', J'$ respectively. We need to show that, if $I'\neq J'$, then the events $\cS_{I'}$ and $\cS_{J'}$ are disjoint, i.e., that \[(I' \subseteq \clfaces) \cap (\partial^\dagger I' \subseteq \opfaces) \cap (J' \subseteq \clfaces) \cap (\partial^\dagger J' \subseteq \opfaces)=\emptyset\,.\] It suffices to exhibit a face in $I' \cap \partial^\dagger J'$ or $J' \cap \partial^\dagger I'$. 

Let us first define $H_1(I')$ by taking the 1-connected set of faces which are in $I'$ and have height $(0, h_1]$ that contains the four faces to the sides of $x$). By \cref{it:gamma-cut-points-y} of $\Gamma_{h_1}^x$, there can only be four faces of $H_1(I')$ which have height $h_1 - 1/2$, and they are all adjacent to a single vertex which we can call $y(I') - \ez$. The same applies to $H_1(J')$, leading to an analogously defined $y(J') - \ez$. 

\begin{enumerate}[label=\textbf{Case~\arabic*}:, ref=\arabic*, wide=0pt, itemsep=1ex]
\item \label[case]{case-1:S-I'-disjoint} 
$y(I') - \ez = y(J') - \ez$. Since $I'\neq J'$, without loss of generality we may take $f \in I' \setminus J'$. As we know that $I' \cap J'\neq\emptyset$ (because they both must contain the four faces to the sides of $x$), we may take $g \in I' \cap J'$. Since both $I'$ and $J'$ are 1-connected and their intersection is nonempty, then $I' \cup J'$ is also 1-connected. Let $P = (f = f_1, \ldots, f_k = g)$ be a 1-connected path of faces in $I' \cup J'$. Let $f_{j+1}$ be the first face in $P$ that is in $J'$. Then, $f_j \in I' \cap \partial J'$. But, since $y(I') - \ez = y(J') - \ez$ by assumption, then $\partial I' \setminus \partial^\dagger I' = \partial J' \setminus \partial^\dagger J'$ (both are equal to the four faces surrounding $y(I') = y(J')$). So, $I' \cap \partial J' \setminus \partial^\dagger J' = \emptyset$, and $f_j \in I' \cap \partial^\dagger J'$.

\item \label[case]{case-2:S-I'-disjoint}  
$y(I') - \ez \neq y(J') - \ez$. Here $H_1(I')$ can only have the four faces surrounding $y(I') - \ez$ at height $h_1 - 1/2$, and similarly for $H_1(J')$. Thus, we can let $f \in H_1(I') \setminus H_1(J')$. We have $H_1(I') \cap H_1(J') \neq \emptyset$ since both sets must contain the four faces to the sides of $x$. Let $g \in H_1(I') \cap H_1(J')$.  Since both $H_1(I')$ and $H_1(J')$ are 1-connected and their intersection is nonempty, then $H_1(I') \cup H_1(J')$ is also 1-connected. Let $P = (f = f_1, \ldots, f_k = g)$ be a 1-connected path of faces in $H_1(I') \cup H_1(J')$. Let $f_{j+1}$ be the first face in $P$ that is in $H_1(J')$. Then, $f_j \in H_1(I') \cap \partial H_1(J')$. We additionally know that $f_j \in \partial J'$ since if $f_j \in J'$, this would violate the maximality of $H_1(J')$ (because $f \in H_1(I')$ implies that $\hgt(f_j) \leq h_1$). Moreover, we have that $f_j \notin \partial J' \setminus \partial^\dagger J'$ because the faces of $\partial J' \setminus \partial^\dagger J'$ have height $h_1 + 1/2$. Thus, $f_j \in H_1(I') \cap \partial^\dagger J' \subseteq I' \cap \partial^\dagger J'$.
\end{enumerate}
This concludes the proof.
\end{proof}

Since every $\cS_{I'}$ for $I'\in\hat\sep_n^1(H_1)$ further implies $A_{h_1}^x$ and $\sep_n$, it follows from the above claim that
\[ \sum_{H_1}\sum_{I':I\in\hat\sep_n^1(H_1)} \mu_n(\cS_{I'})\leq \mu_n(\sep_n,\, A_{h_1}^x)\,,\]
and consequently (together with \cref{eq:separate-by-truncated-interface}):
\begin{equation}\label{eq:mu(Ah-Gamma)-up-to-max-Ah2}
\bar{\mu}_n(A_h^x, \Gamma_{h_1}^x) \leq 
\frac{e^\beta+q-1}{q}\bar{\mu}_n(A_{h_1}^x)\max_{H_1}\max_{I':I\in\hat\sep_n^1(H_1)}\mu_n\left(\theta_{h_1}A_{h_2}^{y-h_1\ez} \mid \cS_{I'}\right)\,.
\end{equation}
Hence, to conclude the proof it will suffice to show that for any admissible $H_1$ and $I'$ such that $ I \in \hat\sep_n^1(H_1)$, we have 
$\mu_n(\theta_{h_1}A_{h_2}^{y - h_1\ez} \mid \cS_{I'})) \leq C(\beta,q) \bar{\mu}_n(E_{h_2}^x)$; namely, we prove this for $C(\beta,q)=(1+\epsilon_\beta)q(e^\beta+q-1)$.

Our definition of $H_1$ and $I'$ was tailored to infer the following result.
\begin{lemma}\label{lem:Ah2-DMP}
For every admissible $H_1$ and $I'\in\hat\sep_n^1(H_1)$ we have
\begin{equation}\label{eq:Ah2-DMP}
\mu_n(\theta_{h_1}A_{h_2}^{y - h_1\ez}\mid I' \subseteq \clfaces,\,\partial^\dagger I' \subseteq \opfaces) = \mu_n(\theta_{h_1}A_{h_2}^{y - h_1\ez}\mid f_{[y, y - \ez]} \in \clfaces\,, \partial^\dagger I' \subseteq \opfaces)\,.
\end{equation}
\end{lemma}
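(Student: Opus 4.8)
The plan is to deduce \eqref{eq:Ah2-DMP} from the spatial (Domain) Markov property of the random--cluster measure $\mu_n$. Write $B:=\theta_{h_1}A_{h_2}^{y-h_1\ez}$. Since $I'$ is a genuine interface (see \cref{def:truncated-interface} and the discussion preceding the lemma), it determines an augmented top component $\Atop(I')$ and its complement $\Abot(I')$, both connected, exactly as in \cref{rem:properties-of-Itop}. I would fix the edge set $R:=\{e:\ \text{both endpoints of }e\text{ lie in }\Atop(I')\}$ --- morally, the edges lying strictly above $I'$ --- and reveal $\omega$ on $\Lambda_n\setminus R$. The argument then splits into three pieces: (i) $B$ is $\sigma(\omega\restriction_R)$--measurable (the only relevant edge outside $R$, namely $[y,y-\ez]$, has its dual face $f_{[y,y-\ez]}$ conditioned closed on both sides of \eqref{eq:Ah2-DMP}); (ii) on the event $\{\partial^\dagger I'\subseteq\opfaces\}\cap\{f_{[y,y-\ez]}\in\clfaces\}$, the boundary partition that $\omega\restriction_{\Lambda_n\setminus R}$ induces on the boundary of $R$ does not depend on whether the remaining faces of $I'$ are open or closed; and (iii) conclude: by the spatial Markov property both conditional probabilities in \eqref{eq:Ah2-DMP} then equal the probability of $B$ under one and the same random--cluster measure on $R$ (with the boundary condition from (ii) and $[y,y-\ez]$ declared closed), hence they coincide.

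For (i), note that $y$ sits just above the ``cap'' face $f_{[y,y-\ez]}\in I'$, so $y\in\Vtop(I')\subseteq\Atop(I')$, while $y-\ez\in\Abot(I')$ and the unique edge joining $y$ to $\Abot(I')$ is $[y,y-\ez]$. The cut--point structure forced by $I'\in\hat\sep_n^1(H_1)$ --- namely \cref{it:gamma-cut-point-x,it:gamma-cut-points-y} of $\Gamma_{h_1}^x$ in \cref{def:Gamma-event}, which make $\sH\cap\cL_{h_1+1/2}$ consist of exactly the four side faces of $y$, none of which survives in $I'$ --- implies that any vertex set $C$ witnessing $B$ (in the sense of \cref{def:A_h^x}, shifted up by $h_1$) is glued to $y$ through edges whose dual faces are not in $I'$, so $C\subseteq\Atop(I')$ and every edge dual to $F(C)$ lies in $R$. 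Thus $B$ depends only on $\omega\restriction_R$.

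Step (ii) is the crux, and the place where the precise definition of $\partial^\dagger I'$ is used. By \cref{prop:out-graph-connected} the graph $\big(\Delta_{\scV}I',\Delta_{\scE}I'\big)$ on the outer side of $I'$ is connected, where $\Delta_{\scE}I'$ consists of the edges dual to $\partial I'$. Forming $\partial^\dagger I'$ from $\partial I'$ deletes from this graph only the four edges $[y,z_i]$, all incident to $y$ (indeed these are $y$'s only $\Delta_{\scE}I'$--edges, since $f_{[y,y-\ez]}\in I'$ and the horizontal face above $y$ is not $1$--connected to $I'$). Hence $\{\partial^\dagger I'\subseteq\opfaces\}$ still ties together all outer--boundary vertices of $R$ other than $y$, and ties them to the wired boundary $\partial\Lambda_n^+$ (which lies in $\Atop(I')$ and meets $\partial\Lambda_n$ along the height--$0$ part of $\overline{I'}$). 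Since closing the cap $f_{[y,y-\ez]}$ severs $y$ from $\Abot(I')$, the only edges leaving $R$ run either to $\partial\Lambda_n^+$ or, through the deleted faces of $I'$, into $\Abot(I')$; any identifications the latter could produce among outer--boundary vertices of $R$ are already present via the open moat. So the induced boundary partition on $R$ is this fixed wired partition regardless of the state of $I'\setminus\{f_{[y,y-\ez]}\}$.

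The main obstacle is exactly this reconciliation in step (ii): the conditioning on the left of \eqref{eq:Ah2-DMP} pins down all of $I'$, whereas on the right only the cap $f_{[y,y-\ez]}$ and the open moat are pinned down, and one must show the difference is invisible to $B$. Carrying this out rigorously requires combining Grimmett's connectivity statement (\cref{prop:out-graph-connected}) with a careful audit of which edges straddle the boundary of $R$ near $y$ --- verifying that the four faces excised from $\partial I'$ to form $\partial^\dagger I'$ touch only the isolated vertex $y$, that the closed cap cleanly detaches $R$ from $\Abot(I')$, and that no connectivity can leak back into the boundary partition of $R$ from below. Everything else (the spatial Markov property itself, the measurability in step (i), and the final identification of the two conditional laws) is routine once this geometric separation is in place.
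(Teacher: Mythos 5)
Your proposal follows the same high-level strategy as the paper: identify the subgraph of $\Lambda_n$ lying ``above'' $I'$, show the event is measurable w.r.t.\ edges inside it, and argue that the induced boundary condition is the same under both conditionings so that Domain Markov gives the identity. However, there are two issues, one conceptual and one a genuine gap in the argument.

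The conceptual issue is that you treat $I'$ as if it were a top interface and invoke the analogue of \cref{rem:properties-of-Itop} to claim it determines connected sets $\Atop(I')$ and $\Abot(I')$. But $I'$ is a truncation of the decorated interface $I$, not of $\Itop$; in particular $I'$ carries hairs and may enclose bubbles, so the complement of its ``above'' component need not be connected. The paper side-steps this by defining $V$ directly as the vertices not separated from $\partial\Lambda_n^+$ by $I'$, without claiming connectivity of $V^c$. Your $R$ can be salvaged by identifying $\Atop(I')$ with this $V$, but the claim ``both connected, exactly as in \cref{rem:properties-of-Itop}'' is false as stated.

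The genuine gap is in your step (ii). You observe that passing from $\partial I'$ to $\partial^\dagger I'$ deletes from the outer-boundary graph $(\Delta_\scV I',\Delta_\scE I')$ only the four edges $[y,z_i]$, all incident to $y$, and then assert ``Hence $\{\partial^\dagger I'\subseteq\opfaces\}$ still ties together all outer-boundary vertices of $R$ other than $y$.'' That ``Hence'' does not follow: deleting a vertex (together with its incident edges) from a connected graph can disconnect it, and $y$ is a priori a potential cut vertex. One must show that the $z_i$ remain mutually connected in $(\Delta_\scV I'\cap\Lambda_n)\setminus\{y\}$ by edges dual to $\partial^\dagger I'$. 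This is where the paper's \cref{clm:G-bc}\cref{it:edge-bc} does real work: it exhibits the explicit path $(y+\ex,\,y+\ex-\ez,\,y+\ex+\ey-\ez,\,y+\ey-\ez,\,y+\ey)$ through the slab at height $h_1-\tfrac12$ and verifies, using \cref{it:gamma-cut-points-y} of $\Gamma_{h_1}^x$, that each of its edges is dual to a face of $\partial I'$ (hence of $\partial^\dagger I'$). You acknowledge that a ``careful audit'' is needed here, but that audit is precisely the substantive content of the lemma and cannot be left implicit. Your step (i) (measurability of $B$ w.r.t.\ $\omega\restriction_R$) is also stated more loosely than the paper's \cref{clm:Ah2-E-measurable}, which needs the conditioning $\partial^\dagger I'\subseteq\opfaces$ to confine any witnessing face set $F$ to $R$ — but the idea you sketch there is essentially right.
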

This is a subtle point in the argument --- while Domain Markov applications are often straightforward in Ising and Potts models, here we are conditioning on a certain set of open edges in $\Z^3$ (the ones dual to $\partial^\dagger I'$), and wish to infer that they form a cut that separates every vertex lying ``above'' $I'$ from those ``below''~it.
More precisely, we would like to construct a set of edges separating a subdomain $G$ from $G^c$, so that the number of connected components in $G$ is unaffected by the edge configuration within $G^c$. The delicate definition of $I'$ was designed to have the edges dual to $\partial^\dagger I'$ serve that purpose, along with \cref{prop:out-graph-connected}. In what follows, we now condition on the event $\{ \partial^\dagger I' \subseteq \opfaces ,\,  f_{[y, y-\ez]} \in \clfaces\}$ for some $I'$ which was a truncation of an interface $I \in \hat\sep_n^1$, and we build such a set of separating edges.

We know by \cref{prop:out-graph-connected} that the subgraph $K=(\Delta_{\scV}I', \Delta_{\scE}I')$ is connected. (Note that this subgraph includes some vertices and edges that are not in $\Lambda_n$.) Now let $B_\scV$ be the vertices of $\Delta_{\scV}I' \cap \Lambda_n$ with a $\Lambda_n$-path to $\partial \Lambda_n^+$ that do not cross a face of $I'$, and let $B_\scE$ be the edges of the induced subgraph of $K$ on $B_\scV$.
\begin{claim}\label{clm:Bv-connected}
Let $I'\in\sep_n$ be any interface (not necessarily the truncation of $I\in\hat\sep_n^1$), and let $B_\scV$ as defined above. Then the induced subgraph of $K=(\Delta_\scV I',\Delta_\scE I')$ on $B_\scV$ is connected.
\end{claim}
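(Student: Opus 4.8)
The plan is to show that $B_\scV$ is connected inside $K$ by a ``lift a path from below'' argument. Take any two vertices $u,v\in B_\scV$. By definition each has a $\Lambda_n$-path to $\partial\Lambda_n^+$ that does not cross a face of $I'$; since the complement of $I'$ (as a set of vertices not separated from $\partial\Lambda_n^+$) is connected — this is exactly the set $\Atop$-type region, and $I'\in\sep_n$ guarantees it forms one infinite connected piece together with $\partial\Lambda_n^+$ — there is a $\Lambda_n$-path $P$ from $u$ to $v$ consisting entirely of vertices that are not separated from $\partial\Lambda_n^+$ by $I'$ (concatenate $u$'s path to the boundary, move along the boundary, and come back along $v$'s path). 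The issue is that $P$ need not lie in $\Delta_\scV I'$, i.e.\ its vertices need not be incident to a face of $\overline{I'}$; we must push $P$ onto the graph $K$.

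First I would invoke \cref{prop:out-graph-connected}: the subgraph $K=(\Delta_\scV I',\Delta_\scE I')$ is connected, where $\Delta_\scV I'$ consists of the vertices of the outer component incident to a face of $\overline{I'}$. The key structural input is that every vertex $w$ which is not separated from $\partial\Lambda_n^+$ by $I'$ and which is ``adjacent to'' $I'$ in the relevant sense lies in $B_\scV$: indeed such a $w$ has by assumption a path to $\partial\Lambda_n^+$ avoiding $I'$, so it satisfies the defining property of $B_\scV$ as soon as it lies in $\Delta_\scV I'$. So the real content is: the set $\Delta_\scV I'\cap(\text{outer region})$, which is all of $\Delta_\scV I'$ for the outer component, is contained in $B_\scV$. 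But that is immediate — every vertex of the outer component reaches $\partial\Lambda_n^+$ within the outer component, hence without crossing $I'$ — so in fact $B_\scV=\Delta_\scV I'$ (the vertices of $\Delta_\scV I'$ lying in $\Lambda_n$, with the boundary vertices of $\partial\Lambda_n^+$ included as needed), and connectivity of the induced subgraph on $B_\scV$ is exactly \cref{prop:out-graph-connected}. The one point requiring care is the interface-theoretic fact that ``not separated from $\partial\Lambda_n^+$ by $I'$'' is the same as ``in the outer component of $\out(I')$'': this uses that $I'\in\sep_n$ so that $\partial\Lambda_n^+$ and $\partial\Lambda_n^-$ sit in different components of $\R^3\setminus I'$, together with the reasoning in \cref{rem:properties-of-Itop} showing the region not separated from $\partial\Lambda_n^+$ is connected and co-connected.

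I expect the main obstacle to be bookkeeping about which vertices and edges belong to $\Lambda_n$ versus the ambient lattice $\Z^3$ (the remark after the claim flags that $K$ includes vertices/edges outside $\Lambda_n$), and making precise the identification of $B_\scV$ with the ``outer'' portion of $\Delta_\scV I'$. Concretely, I would: (i) state that $\out(I')$ has a unique unbounded component $G$ and that $\Delta_\scV I'$ as used here refers to vertices incident to $\overline{I'}$ lying in $G$; (ii) observe that $\partial\Lambda_n^+\subseteq G$ since $I'\in\sep_n$; (iii) conclude every $w\in\Delta_\scV I'$ has a path in $G$ (hence not crossing $I'$) to $\partial\Lambda_n^+$, so $w\in B_\scV$, giving $B_\scV=\Delta_\scV I'$; (iv) apply \cref{prop:out-graph-connected} to finish. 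The rest is routine once the topological dictionary between ``separation by a 1-connected face set'' and ``components of the complement'' is in place, which is already available via the cited results of \cite{Grimmett_RC}.
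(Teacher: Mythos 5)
Your proposed identification $B_\scV=\Delta_\scV I'\cap\Lambda_n$ is false, and that is where the argument breaks. The set $\out(I')$ is taken in $\R^3$, not inside $\Lambda_n$: since $I'$ is a finite collection of faces contained in the cylinder, its complement in $\R^3$ has a single unbounded component that contains \emph{both} $\partial\Lambda_n^+$ and $\partial\Lambda_n^-$ (one can pass around $I'$ through $\Lambda_n^c$). Consequently $\Delta_\scV I'$ contains vertices ``below'' $I'$ as well as ``above'' it; for instance, already for the flat interface $\cL_0$ the set $\Delta_\scV I'\cap\Lambda_n$ includes the entire layer at height $-1/2$, none of which belongs to $B_\scV$ since a $\Lambda_n$-path from there to $\partial\Lambda_n^+$ must cross $I'$. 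Thus \cref{prop:out-graph-connected} gives connectivity of the whole graph $K$, but does not by itself give connectivity of the induced subgraph on the proper subset $B_\scV$; a path in $K$ between two vertices of $B_\scV$ can genuinely dip into $\Delta_\scV I'\setminus B_\scV$.

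What you do correctly identify is that the only place such a path can exit $B_\scV$ is near the boundary: if $c^-\in B_\scV$ and $c\notin B_\scV$ with $[c^-,c]\in\Delta_\scE I'$, then $f_{[c^-,c]}\in\partial I'$ is not in $I'$, so if $c$ were in $\Lambda_n$ it would inherit a path to $\partial\Lambda_n^+$ from $c^-$ and lie in $B_\scV$ after all; hence $c\in\Lambda_n^c$. This forces the excursion out of $B_\scV$ to be an excursion through $\partial\Lambda_n$, and one then needs the Dobrushin boundary conditions to conclude that the entry and exit points $c^-,d^+$ sit on the height-$1/2$ ring of $\partial\Lambda_n^+$, that this entire ring is contained in $B_\scV$, and that consecutive ring vertices are joined by edges of $B_\scE$, so the excursion can be rerouted along the ring. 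That re-routing argument is the actual content of the claim and is missing from your write-up; without it, the proof does not close.
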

\begin{proof}
Let $a, b$ be any two vertices in $B_\scV$, and let $P$ be a path connecting them in $K$. If the path uses only vertices of $B_\scV$, then there is nothing to prove. Otherwise, let $c, d$ be the first and last vertices of $P$, respectively, that are in $\Delta_{\scV} I' \setminus B_\scV$. Let $c^-$ be the vertex that comes right before $c$ in the path $P$, and $d^+$ the vertex that comes right after $d$, so that $c^-$ and $d^+$ are both in $B_\scV$. Consider the edge $e=[c^-,c]$; since $c^-\in B_\scV$, there is a $\Lambda_n$-path $P^-$ from it to $\partial \Lambda_n^+$ that does not cross any face of $I'$. We argue that this implies that $c\notin \Lambda_n$: indeed, if $c\in\Lambda_n$, then the fact that 
$c\notin B_{\scV}$ would imply that $f_e\in I'$ (otherwise the path $e\cup P^-$ would qualify $c$ to be included in $B_{\scV}$), and yet $e\in P \subseteq \Delta_{\scE} I'$ by construction, so in particular $f_e\in \partial I'$ (by definition of $\Delta_{\scE}I'$), which is disjoint to $I'$. By the same argument, $d\notin\Lambda_n$. Thus, $c^-, d^+ \in \partial \Lambda_n$. But $I'$ separates $\partial \Lambda_n^-$ from $\partial \Lambda_n^+$, so the fact that $c^-, d^+ \in B_{\scV}$ implies that $c^-, d^+ \in \partial \Lambda_n^+$. 

Now we furthermore prove that $\hgt(c^-)=\hgt(d^+)=\frac12$. Since $c \in \Delta_\scV I'$, $c$ is incident to some edge $e$ such that $f_e \in \overline{I'}$. $f_e$ must be 1-connected to some face $f_{e'} \in I'$, say that $e' = [u, v]$. In general, there are three possible ways that $c$ can be positioned with respect to $u, v$, pictured in \cref{fig:submultiplicativity-c-u-v}. 

\begin{figure}
    \centering

    \begin{tikzpicture}

    \begin{scope}[scale=1,shift={(0,0)}]
    \filldraw [fill=blue!20, draw=black,thick] (0,0)--(.6,0.2)--(.6,1.2)--(0,1.)--cycle;
    \filldraw [fill=blue!20, draw=black,thick] (0,1)--(.6,1.2)--(.6,2.2)--(0,2.)--cycle;
    \node[circle,scale=0.4,fill=gray,label={[label distance=2pt]left:{\tiny$u$}}] (u) at (-0.15,1.6) {};
    \node[circle,scale=0.4,fill=gray,label={[label distance=2pt]right:{\tiny$v$}}] (v) at (.75,1.6) {};
    \node[circle,scale=0.4,fill=gray,label={[label distance=2pt]right:{\tiny$c$}}] (c) at (.75,.6) {};    
    \draw (.3,1.6) node
    {\tiny$\diamond$};
    \draw [black, dashed] (u)--(v);
    \end{scope}

    \begin{scope}[scale=1,shift={(3,.5)}]
    \filldraw [fill=blue!20, draw=black,thick] (0,0)--(.6,0.2)--(.6,1.2)--(0,1.)--cycle;
    \filldraw [fill=blue!20, draw=black,thick] (0.,1)--(.9,1.)--(1.5,1.2)--(0.6,1.2)--(0.,1);
    \node[circle,scale=0.4,fill=gray,label={[label distance=2pt]right:{\tiny$v$}}] (v) at (.75,1.6) {};
    \node[circle,scale=0.4,fill=gray,label={[label distance=2pt]right:{\tiny$c=u$}}] (c) at (.75,.6) {};    
    \draw (.75,1.1) node
    {\tiny$\diamond$};
    \draw [black, dashed] (v)--(c);
    \end{scope}


    \begin{scope}[scale=1,shift={(7,0.5)}]
    \filldraw [fill=blue!20, draw=black,thick] (0,0)--(.6,0.2)--(.6,1.2)--(0,1.)--(0,0);
    \filldraw [fill=blue!20, draw=black,thick] (.6,1.2)--(-0.3,1.2)--(-0.9,1)--(0.,1.)--(.6,1.2);
    \node[circle,scale=0.4,fill=gray,label={[label distance=2pt]left:{\tiny$v$}}] (v) at (-0.15,1.6) {};
    \node[circle,scale=0.4,fill=gray,label={[label distance=2pt]left:{\tiny$u$}}] (u) at (-.15,.6) {};
    \node[circle,scale=0.4,fill=gray,label={[label distance=2pt]right:{\tiny$c$}}] (c) at (.75,.6) {};    
    \draw (-.15,1.1) node
    {\tiny$\diamond$};
    \draw (.3,0.6) node
    {\tiny$\diamond$};    
    \draw [black, dashed] (v)--(u)--(c);
    \end{scope}
    
    \end{tikzpicture}
    \caption{The three possible positions that $u, v$ can have with respect to $c$.}
    \label{fig:submultiplicativity-c-u-v}
\end{figure}

Regardless of which case we are in, the (Euclidean) distance between $u, v$ and $c$ is at most $\sqrt{2}$, and $c$ is $\Z^3$-adjacent to at least one of $u$ or $v$. However, the distance between $c$ and any vertex of $\Lambda_n \setminus \partial \Lambda_n$ is at least 2, which means that both $u, v \in \partial \Lambda_n$. The important observation is that the Dobrushin boundary conditions imply that the faces of $I'$ dual to an edge between two vertices of $\partial \Lambda_n$ are precisely the set of horizontal faces separating some $w \in \partial \Lambda_n^+$ from $w - \ez \in \partial \Lambda_n^-$, where $\hgt(w) = 1/2$. In our case, $[u,v]=[w,w-\ez]$, and as there is only one vertex adjacent to such a $w$ (or to $w - \ez$) that is also in $\Lambda_n^c$, and it has the same height as $w$ (or as $w - \ez$), we can conclude that $\hgt(c) = 1/2$ or $-1/2$. But conversely, there is only vertex adjacent to $c$ that is also in $\Lambda_n$, and it has the same height as $c$, so that $\hgt(c^-) = \hgt(c)$. But $c \in \partial \Lambda_n^+$, so it must be that $\hgt(c^-) = 1/2$, and the same argument implies that $\hgt(d^+) = 1/2$.

In fact, we claim that we can moreover infer that every vertex at height $1/2$ in $\partial \Lambda_n^+$ is in $B_\scV$, and that the edge between every two such adjacent vertices is in $B_\scE$. Indeed, all of $\partial \Lambda_n^+$ is in $\out(I')$, so that for any $u \in \partial \Lambda_n^+$ with $\hgt(u) = 1/2$, the fact that $f_{[u, u - \ez]} \in I'$ implies that $u \in B_\scV$. Moreover, if $u$ is adjacent to another vertex $w \in \partial \Lambda_n^+$ with $\hgt(w) = 1/2$, then the face $f_{[u, w]}$ is 1-connected to the face $f_{[u, u - \ez]}$. So, $f_{[u, w]} \in \overline{I'}$, but as observed above, the Dobrushin boundary conditions imply that $f_{[u, w]} \notin I'$, so $f_{[u, w]} \in \partial I'$ and $[u, w] \in B_\scE$. Now, the vertices of $\partial \Lambda_n^+$ with height $1/2$ are just the four sides of a square and are notably connected, so that $c^-$ and $d^+$ can be connected by a path $Q$ that only uses edges of $B_\scE$ by travelling along the sides of this height $1/2$ square. Thus, we can replace the portion of the path $P$ from $c^-$ to $d^+$ by the path $Q$, and we have thus exhibited a path from $a$ to $b$ using only edges of $B_\scE$, which proves that the induced subgraph of $K$ on $B_\scV$ is  connected.
\end{proof} 

We will now address the subgraph $G$ of $\Lambda_n$ induced on the set of vertices $V$ that are not disconnected from  $\partial \Lambda_n^+$ by $I'$ (to be thought of as the vertices that lie ``above'' $I'$). Note that $\out(I')$ does not (necessarily) contain all of $\Z^3$ because $I'$ is not a truncation of the $\Top$ interface $I_{\mathsf{top}}$, but a truncation of the decorated interface $I$, and can thus enclose some vertices. In fact, the property in $\Gamma_{h_1}^x$ that the side neighbors of $y$ are in $\Vtop$ is needed to guarantee that the subgraph $G \subseteq \out(I')$ is the right graph to be looking at for the event $\theta_{h_1}A_{h_2}^{y - h_1\ez}$, since otherwise it is possible that $I'$ encapsulates $y$ in a big bubble, and the next claim will establish that we are not in this case. For ease of reference, denote the four adjacent vertices to $y$ that have height $h_1 + 1/2$ as $z_1, z_2, z_3, z_4$.

\begin{claim}\label{clm:Ah2-E-measurable}
Let $I'$ be the truncation of some interface $I\in\hat\sep_n^1$. Let $G=(V,E)$ be the induced subgraph of $\Lambda_n$ on the vertices that are connected to $\partial \Lambda_n^+$ in $\Lambda_n\setminus \{e':f_{e'}\in I'\}$.
Then conditional on $\partial^\dagger I' \subseteq \opfaces$, the event $\theta_{h_1} A_{h_2}^{y-h_1\ez} $ is measurable w.r.t.\ $\{\omega_e : e\in E\}$.
\end{claim}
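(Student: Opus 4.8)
The plan is to reduce the measurability statement to two deterministic facts about the fixed face set $I'$: that every witness $C$ for $\theta_{h_1}A_{h_2}^{y-h_1\ez}$ lies in $V$, and that $F(C)\cap I'=\emptyset$. Unfolding definitions, a witness is an admissible set $C$ as in \cref{def:A_h^x} --- finite, connected, co-connected, $\ni y$, all vertices at heights in $[h_1+\tfrac12,h_1+h_2-\tfrac12]$, with $y$ its \emph{unique} vertex at height $h_1+\tfrac12$ --- whose side-and-top boundary $F(C)$ (which omits $f_{[y,y-\ez]}$) is contained in $\clfaces$. If $C\subseteq V$ and $F(C)\cap I'=\emptyset$, then every edge interior to $C$ and every edge dual to $F(C)$ has both endpoints in $V$ (for the latter: one endpoint is in $C\subseteq V$, and the dual face, not being in $I'$, does not block the other endpoint from reaching $\partial\Lambda_n^+$ along the path through $C$ and on), so whether a given admissible $C$ is a witness is decided by $\omega|_E$; since this holds for every admissible $C$, any two configurations agreeing on $E$ and satisfying $\partial^\dagger I'\subseteq\opfaces$ share the same witnesses, which is the claim.

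Two preliminary observations. First, $y\in V$, and its four horizontal neighbours $z_1,\dots,z_4$ are in $V$: writing $I=\cI(\omega_0)$ with $\omega_0\in A_h^x\cap\sep_n\cap\Gamma_{h_1}^x$ (possible since $I\in\hat\sep_n^1$), on $\Gamma_{h_1}^x$ each $z_i\in\Vtop(\omega_0)$ by \cref{rem:properties-of-Gamma}, so an $\omega_0$-open path from $z_i$ to $\partial\Lambda_n^+$ avoids every edge dual to $\cI(\omega_0)=I\supseteq I'$, whence $z_i\in V$ ($V$ depends only on $I'$); and cut-point condition \cref{it:gamma-cut-points-y} places $f_{[y,z_i]}$ in $\sH\setminus\sH_1$, exactly the set deleted in forming $I'$, so $[y,z_i]$ is not dual to $I'$ and appending it gives $y\in V$. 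Second, $I'$ is still a $1$-connected interface: $\sH\setminus\sH_1$ is a ``branch'' of the $1$-connected full interface $\cI(\omega_0)$ joined to the rest only through the $f_{[y,z_i]}$ and the retained face $f_{[y,y-\ez]}$, using that any closed face at height $>0$ that is $1$-connected to $\sH$ must itself lie in $\sH$. I will also record the consequences of cut-point condition \cref{it:gamma-cut-points-y} that $\sH\cap\cL_{h_1}\subseteq\{f_{[y,y-\ez]}\}$ and $\sH\cap\cL_{h_1+1/2}=\{f_{[y,z_i]}:i\}$, and that, since $y$ is $C$'s only vertex at height $h_1+\tfrac12$, all four faces $f_{[y,z_i]}$ belong to $F(C)$, while $F(C)\subseteq\cL_{\ge h_1+1/2}$.

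The heart of the matter --- and the step I expect to be the main obstacle --- is $F(C)\cap I'=\emptyset$. Suppose not, take $f\in F(C)\cap I'$, and fix a $1$-connected path inside $F(C)$ from $f$ to some $f_{[y,z_j]}$ (which lies in $F(C)$ by the above but not in $I'$). Along the path let $g\in I'$ be the last face before the first face $g'\notin I'$. Since $g'\in F(C)\subseteq\clfaces$ while $\partial^\dagger I'\subseteq\opfaces$, the face $g'$, being $1$-connected to $g\in I'$ but not in $I'$, must lie in $\partial I'\setminus\partial^\dagger I'=\{f_{[y,z_1]},\dots,f_{[y,z_4]}\}$. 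Thus $g$ is a face of $F(C)$ --- hence at height $\ge h_1+\tfrac12>h_1$ --- that lies in $I'$ and is $1$-connected to some $f_{[y,z_j]}\in\sH$. But any $1$-neighbour of $f_{[y,z_j]}$ at height $>h_1$ that belongs to $\cI(\omega_0)$ is $1$-connected to $\sH$ and hence lies in $\sH\setminus\sH_1$, so is deleted and not in $I'$ (and any $1$-neighbour at height $\le h_1$ is not in $F(C)$ at all). This contradicts $g\in I'$, proving $F(C)\cap I'=\emptyset$, i.e.\ $I'\cap S_C=\{f_{[y,y-\ez]}\}$ where $S_C=F(C)\cup\{f_{[y,y-\ez]}\}$ is the $1$-connected splitting set of $C$.

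It remains to upgrade this to $C\subseteq V$. Since all vertices of $C$ have height $\ge h_1+\tfrac12$, $\ins(S_C)$ is the union of the unit cubes about the vertices of $C$, so it meets the plane of height $h_1$ only along $f_{[y,y-\ez]}$, and the only $1$-neighbours of $f_{[y,y-\ez]}$ inside $\overline{\ins(S_C)}$ are the four (deleted) faces $f_{[y,z_i]}$. A $1$-connected path in the $1$-connected set $I'$ starting from a face interior to $\ins(S_C)$ and ending far outside (such paths exist since $I'$ reaches $\partial\Lambda_n$) cannot leave $\overline{\ins(S_C)}$: a face interior to $\ins(S_C)$ has all $1$-neighbours in $\overline{\ins(S_C)}$, and the only $I'$-face on $\partial\ins(S_C)$ is $f_{[y,y-\ez]}$, whose inward $1$-neighbours are not in $I'$. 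Hence no face of $I'$ is interior to $\ins(S_C)$, so every edge with both endpoints in $C$ is dual to a face not in $I'$; any path inside $C$ from $v$ to $y$ therefore avoids all edges dual to $I'$, and $y\in V$ forces $v\in V$. Thus $C\subseteq V$, completing the argument. I expect the routine-but-careful parts to be the $1$-neighbour/height bookkeeping near $y$ and the elementary planar topology of the splitting set $S_C$, both squarely within the machinery imported from \cite{GielisGrimmett02,Grimmett_RC}.
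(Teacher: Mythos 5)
Your proposal matches the paper on the first half: the reduction to showing that, for any admissible $C$ with $F(C)\subseteq\clfaces$, the edges dual to $F(C)$ all lie in $E$; the preliminary observations $y,z_i\in V$; and the proof that $F(C)\cap I'=\emptyset$ via the ``last face before the first face not in $I'$'' argument, using $F(C)\cap\partial I'\subseteq\partial I'\setminus\partial^\dagger I'=\{f_{[y,z_i]}\}$ and the fact that height-$>h_1$ faces $1$-connected to $f_{[y,z_j]}$ are deleted in the truncation. These steps match \cref{eq:side-edges-y-zi-in-E,eq:f-1-connected-to-f_y_z-not-in-I',eq:F-intersect-partial-I',eq:F-intersect-I'-is-empty}.

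The gap is in your ``upgrade to $C\subseteq V$.'' The topological claim you rely on --- ``a face interior to $\ins(S_C)$ has all $1$-neighbours in $\overline{\ins(S_C)}$'' --- is false. A face interior to $\ins(S_C)$, say $f_{[u,v]}$ with $u,v\in C$, has among its twelve $1$-neighbours the four coplanar faces in the same plane offset by one unit; if such a neighbour is $f_{[u+a,v+a]}$ with both $u+a,v+a\notin C$, it shares an edge with $f_{[u,v]}$ yet lies entirely outside $\overline{\ins(S_C)}$ (for instance $C=\{y,y+\ez\}$ gives interior face $f_{[y,y+\ez]}$ whose four same-height horizontal neighbours are all outside). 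Consequently, a $1$-connected path in $I'$ starting at an interior face can slip out of $\overline{\ins(S_C)}$ without ever visiting a face of $S_C$, and your argument that the path is ``stuck'' does not go through. This is not a cosmetic issue: faces of $I'$ at height $>h_1$ that are not in $\sH$ (arising from other pillars or walls of $I$) can in general sit dual to an edge of a candidate $C$, so you cannot establish ``no face of $I'$ is interior to $\ins(S_C)$'' in the way proposed, and therefore $C\subseteq V$ is not secured.

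The paper avoids $\ins(S_C)$ altogether. It takes an arbitrary $f=f_e\in F(C)$ with $e\notin E$, walks along a $1$-connected path \emph{inside $F(C)$} (not inside $I'$) to $f_{[y,z_1]}$, and isolates the last face $g=f_{[u,v]}$ on that path with $[u,v]\notin E$; its successor $g'=f_{[u',v']}$ has $[u',v']\in E$. Because $g$ and $g'$ share an edge, some endpoint of $[u,v]$ --- say $u\notin V$ --- is $\Lambda_n$-adjacent to some endpoint of $[u',v']\subseteq V$, and the face between them is forced into $I'$ (one endpoint in $V$, the other not). As that face is equal to or $1$-connected to $g$, we get $g\in\overline{I'}$; combined with $F(C)\cap I'=\emptyset$ and $F(C)\cap\partial I'\subseteq\{f_{[y,z_i]}\}$ this forces $g=f_{[y,z_j]}$, contradicting $[u,v]\notin E$. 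This path-in-$F(C)$ argument directly produces the needed ``boundary edges in $E$'' statement without ever asserting $C\subseteq V$, and it sidesteps the problematic $1$-connectivity-of-$\ins(S_C)$ topology that breaks in your version.
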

\begin{proof}
Recall from \cref{def:A_h^x} the event $\theta_{h_1}A_{h_2}^{y - h_1\ez}$ concerns the existence of a certain 1-connected set of faces $F \subseteq \clfaces \cap \cL_{>h_1}$ that includes $\{f_{[y, z_i]}\}_{i = 1}^4$.
We will argue that, for \emph{any} $1$-connected subset $F$ of $\clfaces\cap\cL_{>h_1}$ that includes $\{f_{[y,z_i]}\}_{i=1}^4$, the edges $\{e \,:\; f_e\in F\}$ must all belong to $E$. First, we show that  
\begin{equation}\label{eq:side-edges-y-zi-in-E}
 \{[y, z_i]\}_{i = 1}^4 \subseteq E\,,   
\end{equation} 
or equivalently that $y$ and each $z_i$ are in $V$. For any $I \in \hat\sep_n^1(H_{1})$, \cref{it:gamma-connect-to-UHB-LHB} of $\Gamma_{h_1}^x$ ensures that $I$ does not separate any of the $z_i$ from $\partial \Lambda_n^+$, and $I' \subseteq I$. Thus, $\{z_i\}_{i = 1}^4 \subseteq V$. Furthermore, since $\{f_{[y, z_i]}\}_{i = 1}^4 \cap I' = \emptyset$, then $y$ is also in $V$. (In fact, since $f_{[y, y - \ez]} \in I'$, we additionally have that $y, z_i \in B_\scV$.) Second, we show that 
\begin{equation}\label{eq:f-1-connected-to-f_y_z-not-in-I'}
    \left\{ f\,:\;  \hgt(f) > h_1 \mbox{ and $f$ is 1-connected to  $\bigcup_{i = 1}^4 f_{[y, z_i]}$}\right\} \cap I'  = \emptyset\,.
\end{equation} 
Indeed, we know that for any $I \in \hat\sep_n^1$, by \cref{it:gamma-cut-points-y} of $\Gamma_{h_1}^x$, we have $f_{[y,z_i]}\in I\setminus I'$ for each $i=1,\ldots,4$. Thus, any faces whose height exceeds $ h_1$ and are 1-connected to one of the $f_{[y, z_i]}$ would have been cut out in the truncation of $I$, and therefore cannot be in $I'$. (The faces at height exactly $h_1$ are also not in $I'$ because \cref{it:gamma-cut-points-y} of $\Gamma_{h_1}^x$ directly excludes them, but we will not use this fact.)
Now, consider the faces $F$. Since $F \subseteq \clfaces $, on the event $\partial^\dagger I' \subseteq \opfaces$ we have
\begin{equation}\label{eq:F-intersect-partial-I'}
F \cap \partial I' \subseteq \partial I' \setminus \partial^\dagger I' =  \{f_{[y, z_i]}\}_{i = 1}^4.
\end{equation}
We claim that by definition of $F$ and \cref{eq:f-1-connected-to-f_y_z-not-in-I',eq:F-intersect-partial-I'} we can infer that
\begin{equation}\label{eq:F-intersect-I'-is-empty} F \cap I' = \emptyset\,;
\end{equation}
to see this, suppose there exists some $f \in F \cap I'$, and (recalling $F$ is $1$-connected) let $P=(f_i)_1^m$ be a 1-connected of faces in $F$ connecting $f_0=f$ to $f_m=f_{[y, z_1]}$. Let $j$ be the minimal index such that $f_j\notin I'$ (well-defined since $f_m\notin I'$). Then $f_j\in F\cap \partial I'$, hence $f_j=f_{[y,z_i]}$ for some $i$ by \cref{eq:F-intersect-partial-I'}, whence $f_{j-1}$ cannot exist by \cref{eq:f-1-connected-to-f_y_z-not-in-I'}, contradiction.

We are now ready to show that every edge $e$ with $f_e \in F$ must be in $E$. For any $f \in F$, there is a 1-connected path $P$ of faces in $F$ from $f$ to one of the $f_{[y, z_i]}$. If $f= f_e$ for some $e \notin E$, then let $g = g_{[u, v]}$ be the last face in the path $P$ such that $[u, v] \notin E$, so that $g$ is 1-connected to $g' = g'_{[u', v']}$ where $[u', v'] \in E$. W.l.o.g., let $u \notin V$. No matter how $g$ and $g'$ are connected to each other, $u$ is always $\Lambda_n$-adjacent to $u'$ (or $v'$), with the face $g'' = g''_{[u, u']}$ (or $= g''_{[u, v']})$ being either equal to or 1-connected to $g$. However, since $g''$ separates $u \notin V$ from $u' \in V$, then $g'' \in I' $. Hence, as $g$ and $g''$ are equal or $1$-connected, we have $g \in \overline{I'}$. But then the assumption that $g = g_{[u, v]}$ for $[u, v] \notin E$ contradicts the combination of \cref{eq:side-edges-y-zi-in-E,eq:F-intersect-partial-I',eq:F-intersect-I'-is-empty}.
This concludes the proof.
\end{proof}

The next claim will establish that $B_{\scV}\cup\partial \Lambda_n^+$ forms a vertex boundary for $G$, as well as identify its open clusters given the configuration in $(\omega \setminus E)\cup B_{\scE}$.

\begin{claim}\label{clm:G-bc}
Let $I'$ be the truncation of some interface $I\in\hat\sep_n^1$. Define $(B_\scV,B_\scE)$ and $G=(V,E)$ as above. The following hold:
\begin{enumerate}[(i)]
    \item \label{it:vertex-bc}
    The vertices $B_{\scV} \cup \partial \Lambda_n^+$ form a vertex boundary for $V$ (in that every $\Lambda_n$-path from $v\in V$ to $V^c$ must cross one of those vertices).

    \item \label{it:edge-bc}
The graph obtained from $(B_\scV,B_\scE)$ by deleting the vertex $y$ (and edges incident to it) is connected.
Consequently, on the event $\partial^\dagger I' \subseteq \opfaces$, the vertices $B_\scV \setminus \{y\}$ are all part of a single open cluster in~$\omega$.

    \item \label{it:y-bc}
    On the event $f_{[y, y - \ez]} \in \clfaces$, there cannot be a path of open edges in $E^c$ connecting $y$ to $\partial \Lambda_n^+ \cup B_\scV \setminus \{y\}$.
\end{enumerate}
\end{claim}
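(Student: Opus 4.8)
The plan is to prove the three items separately: item~(i) will be essentially immediate from the definitions, while (ii) and (iii) both rest on the local geometry of $I'$ around $y$ that item~(2) of $\Gamma_{h_1}^x$ forces. The facts about this geometry I would record first are: $f_{[y,y-\ez]}\in I'$ (since $I'$ truncates some $I\in\hat\sep_n^1$); the four side faces of $y-\ez$ lie in $\sH\subseteq I'$, and hence so does $f_{[y,y-\ez]}$ (closed, and $1$-connected to those side faces); and—because the truncation deletes from $I$ exactly the faces of $\sH$ above height $h_1$, whereas a closed face $1$-connected to $\sH$ belongs to $\sH$—no face of $I'$ meeting a fixed neighborhood of $y$ lies at height $\ge h_1$ except $f_{[y,y-\ez]}$. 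For (i), take $v\in V$ and a $\Lambda_n$-path from $v$ to $V^c$, and let $u$ be the last vertex of the path in $V$ and $u'$ the next; then $f_{[u,u']}\in I'$ (otherwise $u'$ would reach $\partial\Lambda_n^+$ through $u$ without crossing $I'$), so $u\in\Delta_\scV I'\cap V=B_\scV$ and the path meets $B_\scV$. As $\partial\Lambda_n^+\subseteq V$, adjoining it to the boundary set is harmless.

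For (iii), I would show that $[y,y-\ez]$ is the only edge incident to $y$ lying in $E^c$. Its horizontal neighbors $z_1,\dots,z_4$ are in $V$ (in fact in $B_\scV$) by \cref{clm:Ah2-E-measurable}, and $y+\ez\in V$ since $f_{[y,y+\ez]}\notin I'$ (by the truncation and the local structure above) and $y\in V$; so all five of those edges lie in $E$. On the other hand $y-\ez\in V^c$: it is cut off from $y$ by $f_{[y,y-\ez]}\in I'$ and from each of its horizontal neighbors by a side face of $y-\ez$ in $I'$, so its only possible exit is toward $y-2\ez$, which lies in the region that $I'$ seals off below height $h_1$—namely the pillar column around $x$, whose bounding faces of $\Itop$ at heights $\le h_1$ are kept under truncation, together with the height-$0$ Dobrushin faces of $I'$. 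Hence $[y,y-\ez]\in E^c$ is the unique such edge; on $\{f_{[y,y-\ez]}\in\clfaces\}$ it is closed, so no open edge of $E^c$ leaves $y$ at all, which gives (iii) a fortiori.

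The heart of the matter is (ii). Since $(B_\scV,B_\scE)$ is connected by \cref{clm:Bv-connected}, it suffices to prove that $y$ is not a cut vertex, after which the ``consequently'' clause is immediate: every edge of $B_\scE$ not incident to $y$ is dual to a face of $\partial I'$ other than one of $f_{[y,z_1]},\dots,f_{[y,z_4]}$, hence to a face of $\partial^\dagger I'$, hence open on the event $\partial^\dagger I'\subseteq\opfaces$, so a connected $(B_\scV\setminus\{y\},B_\scE\setminus\{\text{edges at }y\})$ places $B_\scV\setminus\{y\}$ in one open cluster. To see $y$ is not a cut vertex: among the faces of $\overline{I'}$ incident to $y$, only the four $f_{[y,z_i]}$ lie in $\overline{I'}\setminus I'$, so the $K$-neighbors of $y$ are exactly $z_1,\dots,z_4$. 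Writing $p_i:=z_i-\ez$, the key point is that $\Gamma_{h_1}^x$ \emph{forces open} the edges that reroute around $y$: the horizontal face $f_{[z_i,p_i]}$ lies in $\cL_{h_1}$ and is $1$-connected to $f_{[y,y-\ez]}\in\sH$, so by item~(2) it cannot be closed; being open it lies in $\overline{I'}\setminus I'$, whence $[z_i,p_i]\in\Delta_\scE I'$, and since $z_i\in\Vtop$ also $p_i\in\Vtop\subseteq V$, so $[z_i,p_i]\in B_\scE$ and avoids $y$. Running the same argument one slab lower—now using that $\sH\cap\cL_{h_1-1/2}$ consists only of the four side faces of $y-\ez$—shows that for each of the four diagonal vertices $d$ of $y-\ez$ at its own height the vertical faces $f_{[p_i,d]}$ are likewise open, so the $p_i$ are joined to one another by an $8$-cycle of $B_\scE$-edges at height $h_1-\frac12$, none incident to $y$. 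Combining with the edges $[z_i,p_i]$ puts $z_1,\dots,z_4$ in one component of $(B_\scV\setminus\{y\},B_\scE)$; since any component of that graph must be $(B_\scV,B_\scE)$-adjacent to $y$, hence contain some $z_i$, it is connected.

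The main obstacle is precisely this cut-vertex step: the four neighbors $z_i$ of $y$ are pairwise non-adjacent, so after deleting $y$ they have to be reconnected via a detour through the pillar region, and since we have already left $\Iso_{x,L,h}$ we cannot assume $I'$ is well behaved there. The argument is rescued by the observation that the ``cut-point'' information in $\Gamma_{h_1}^x$—that $\sH$ meets $\cL_{h_1}$ only in $f_{[y,y-\ez]}$ and meets $\cL_{h_1-1/2}$ only in the side faces of $y-\ez$—is exactly strong enough to force the handful of local edges one needs open, and this is the sole place where those particular clauses of $\Gamma_{h_1}^x$ are used.
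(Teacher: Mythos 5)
Your proof is correct and follows essentially the same route as the paper: for (ii) the paper likewise shows that $y$'s only $K$-neighbors are the four $z_i$ (via $f_{[y,y-\ez]}\in I'$ and \cref{eq:f-1-connected-to-f_y_z-not-in-I'}) and then exhibits the detour $P=(z_1,\,p_1,\,y+\ex+\ey-\ez,\,p_2,\,z_2)$ at height $h_1-\tfrac12$, forced into $\tilde B_\scE$ by the cut-point conditions of $\Gamma_{h_1}^x$ at heights $h_1\pm\tfrac12$ — your $8$-cycle is the same geometry stated slightly more generally — while (iii) is handled identically by noting that all edges at $y$ other than $[y,y-\ez]$ lie in $E$. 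Two harmless slips: the inclusion should read $\sH_1\subseteq I'$ rather than $\sH\subseteq I'$ (all faces you invoke indeed lie in $\sH_1$), and the aside in (iii) arguing $y-\ez\notin V$ is somewhat hand-wavy but, as you note, unnecessary for the conclusion.
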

\begin{proof}

To prove \cref{it:vertex-bc}, recall that if $u\in V $, then necessarily $u\in\out(I')$ (as it is connected to $\partial \Lambda_n^+$ via a path not crossing a face of $I'$), whence we have that
\[B_\scV =  \left\{ u\in V\,:\; \exists v\mbox{ s.t.\ } f_{[u,v]}\in \overline{I'}\right\}\,.\]
We first claim that if $u \in V$ is $\Lambda_n$-adjacent to $v\in\Lambda_n\setminus V$, then necessarily $u\in B_\scV$. Indeed, we must have $f_{[u,v]}\in I'$ by definition of $V$; in particular, $f_{[u,v]}\in \overline {I'}$, and by the last display, $u\in B_\scV$. 
Second, note that $\partial \Lambda_n^+ \subseteq V$ and $\partial \Lambda_n^- \cap V = \emptyset$. Combined, we find that $B_\scV \cup \partial \Lambda_n^+$ forms a complete vertex boundary for $V$. 

Having established that $B_\scV\cup\partial \Lambda_n^+$ forms a vertex boundary for $G=(V,E)$, we proceed to \cref{it:edge-bc}.
Recall that $(B_\scV,B_\scE)$ is connected, as per \cref{clm:Bv-connected}, hence for this item we need only account for the effect of deleting $y$. 
A-priori, we only know that $f_e \in \partial I'$ for all $e \in B_\scE$, but would like to instead say that $f_e \in \partial^\dagger I'$ so that on the event $\{\partial^\dagger I' \subseteq \opfaces\}$, every such $e$ would be open. 
To this end, let $\tilde B_\scE$ be the outcome of removing from $B_\scE$ the four edges $[y, z_i]$ (the faces $f_{[y, z_i]}$ are precisely the four faces removed from $\partial I'$ to obtain $\partial^\dagger I'$). 

First, we claim that there are no other edges of $B_\scE$ incident to $y$, via the following two items:
\begin{enumerate}[(a)]
    \item $[y, y - \ez] \notin B_\scE$ since
    $f_{[y, y - \ez]} \in I'$;
    \item \label{it:y_y+e3-not-in-Be}
    $[y, y+\ez] \notin B_\scE$, as otherwise, having $f_{[y, y+\ez]} \in \partial I'$, there must be a face $g \in I'$ that is 1-connected to $f_{[y, y+\ez]}$ with $\hgt(g) > h_1$. By the truncation, this face $g$ cannot be some $f_{[y,z_i]}$ (it can only be part of $I'$ via another pillar $\cP_{x'}$ for $x'\neq x$) yet it must be 1-connected to $f_{[y, z_i]}$ for some $i$. But, by \cref{eq:f-1-connected-to-f_y_z-not-in-I'}, this is impossible.
\end{enumerate}
Thus we have shown that the only adjacent vertices of $y$ in $(B_\scV, B_\scE)$ are its four side neighbors $z_i$, and as a consequence, the graph $(B_\scV \setminus \{y\}, \tilde B_\scE)$ is equal to the subgraph of $(B_\scV, B_\scE)$ induced on $B_\scV \setminus \{y\}$. So, to show that $(B_\scV \setminus \{y\}, \tilde B_\scE)$ is connected, it suffices to exhibit a path in $\tilde B_{\scE}$ between $z_1 = y+\ex$ and $z_2=y+\ey$ (whence by symmetry there will be such paths between any two of the $z_i$'s). These are connected in $\Lambda_n$ by the path 
\begin{equation*}
    P = \Big(y+\ex ,y+\ex-\ez, y+\ex+\ey-\ez, y+\ey-\ez, y+\ey\Big)\,.
\end{equation*}
Now, \cref{it:gamma-cut-points-y} of the definition of $\Gamma_{h_1}^x$ (and the fact that $I'$ was a truncation of some $I \in \hat\sep_n^1(H_1)$) readily implies that for any edge $e\in P$, the face $f_e$ is in $\partial I'$. 
Since $z_1 \in B_\scV$, this implies every vertex in the path $P$ is also in $B_\scV$. Thus, $P$ uses only edges in $\tilde B_\scE$, as required, and altogether $(B_\scV \setminus \{y\}, \tilde B_\scE)$ is connected.

It remains to prove \cref{it:y-bc}. By \cref{eq:f-1-connected-to-f_y_z-not-in-I'}, we have $f_{[y, y+\ez]}\notin I'$, hence (recall $y\in V$) also $y+\ez \in V$. Since the edge $[y, y - \ez]$ is the only edge of the form $[y,y']$ with $y'\notin V$, on the event $f_{[y, y - \ez]} \in \clfaces$ we see $y$ can never have an open path to $V\setminus \{y\}$ (and in particular to $\partial \Lambda_n^+ \cup B_\scV \setminus \{y\}$) using only edges of $E^c$. 
\end{proof}

We are now in a position to prove the Domain Markov-type identity in \cref{lem:Ah2-DMP}.
\begin{proof}[\textbf{\emph{Proof of \cref{lem:Ah2-DMP}}}]
By \cref{clm:G-bc}, 
if $I'$ is the truncation of some $I\in \hat\sep_n^1(H_1)$, and $\eta$ is any configuration of $E^c$ satisfying that $f_{[y, y - \ez]} \in \clfaces[\eta]$ and $\partial^\dagger I' \subseteq \opfaces[\eta]$, then the law of $\omega\restriction_E$ for $\omega\sim\mu_n(\cdot \mid f_{[y, y - \ez]} \in \clfaces,\, \partial^\dagger I' \subseteq \opfaces,\, \omega\restriction_{E^c}=\eta)$ is that of the random-cluster model on $G=(V,E)$
 with the vertex boundary $B_{\scV}\cup\partial \Lambda_n^+$ and boundary conditions 
 that are wired on $\partial \Lambda_n^+\cup  B_{\scV}\setminus\{y\}$ and free on $y$ (using Domain Markov to disregard the configuration $\eta$).
 Note the boundary condition is fully prescribed by the closed edge $[y,y-\ez]$ and open edges dual to $\partial^\dagger I'$. 
 Recalling \cref{clm:Ah2-E-measurable}, the event $\theta_{h_1} A_{h_2}^{y-h_1\ez}$ is measurable w.r.t.\ the configuration $\omega\restriction_E$. Combined, we arrive at \cref{eq:Ah2-DMP}.
\end{proof}

Next we look at the right-hand of \cref{eq:Ah2-DMP} and compute the cost of  conditioning on the face $f_{[y, y - \ez]}\in\clfaces$. Let ${\bf e} = [y, y - \ez]$. 
For every configuration $\omega \in \theta_{h_1}A_{h_2}^{y - h_1\ez} \cap \{\partial^\dagger I' \subseteq \opfaces\}$ (likewise for $\omega \in \{\partial^\dagger I' \subseteq \opfaces\}$), both $\omega^{{\bf e}, 0}$ and $\omega^{{\bf e}, 1}$ are still in the event (as ${\bf e}\notin \partial^\dagger I'$ nor is it in $E$), where $\omega^{e, 0}$ (resp., $\omega^{e, 1}$) denotes the version of $\omega$ with the edge $e$ closed (resp., $e$ open). Now, $\mu_n(\omega^{{\bf e}, 1})/\mu_n(\omega^{{\bf e}, 0})$ is either $\frac{p}{q(1-p)}$ or $\frac{p}{1-p}$. Summing over $\omega$, we get
\begin{align}  
\mu_n(\theta_{h_1}A_{h_2}^{y - h_1\ez}\mid f_{[y, y - \ez]} \in \clfaces\,, \partial^\dagger I' \subseteq \opfaces)
&\leq \frac{\mu_n(\theta_{h_1}A_{h_2}^{y - h_1\ez}, \partial^\dagger I' \subseteq \opfaces)}{\mu_n(\partial^\dagger I' \subseteq \opfaces)}\frac{1 + \frac{p}{1-p}}{1+\frac{p}{q(1-p)}}\nonumber \\ 
&\leq q\mu_n(\theta_{h_1}A_{h_2}^{y - h_1\ez}\mid \partial^\dagger I' \subseteq \opfaces)\label{eq:remove-conditioning-on-edge}
\end{align}
At this point when may apply FKG to get 
\begin{equation*}
    \mu_n(\theta_{h_1}A_{h_2}^{y - h_1\ez}\mid \partial^\dagger I' \subseteq \opfaces) \leq \mu_n(\theta_{h_1}A_{h_2}^{y - h_1\ez})
\end{equation*}
since the event $\theta_{h_1}A_{h_2}^{y - h_1\ez}$ is decreasing, while the event $\partial^\dagger I' \subseteq \opfaces$ is increasing. By \cref{lem:height-shift}, we can pay a factor of $q^2$ to move to the non-shifted event $A_{h_2}^{y - h_1\ez}$:
\[\mu_n(\theta_{h_1}A_{h_2}^{y - h_1\ez}) \leq q^2\mu_n(A_{h_2}^{y - h_1\ez})\]
By FKG again (now using that $\sep_n$ is decreasing), we have
\begin{equation*}
    \mu_n(A_{h_2}^{y - h_1\ez}) \leq \bar{\mu}_n(A_{h_2}^{y - h_1\ez})
\end{equation*}
Finally, to move from $A_{h_2}^{y - h_1\ez}$ to $A_{h_2}^x$, we utilize (a special case of) \cref{cor:decorrelation-1}, a decorrelation result on pillars, that implies that for some constant $C$ and all $x, y$ such that $d(x, \partial \Lambda_n) \wedge d(y, \partial \Lambda_n) \geq r$, we have
\[\bar{\mu}_n(E_h^y) \leq \bar{\mu}_n(E_h^x) + Ce^{-r/C}\,.\]
(We defer the proof of said estimate to the appendix, along with the analogous results for the Potts model.) By putting together the assumptions $d(x, y - h_1\ez) \leq d(x, \partial \Lambda_n)/2$ and $d(x, \partial \Lambda_n) \gg h$ with the bounds on $\bar{\mu}_n(E_{h_2}^x)$ from \cref{prop:bound-RC-rate}, we get that
\[\bar{\mu}_n(E_{h_2}^{y-h_1\ez}) \leq (1+o(1))\bar{\mu}_n(E_{h_2}^x)\]
(where the $o(1)$ is as $h \to \infty$).
We can then apply \cref{prop:compare-A_h^x-E_h^x} to get 
\begin{flalign}
    \bar{\mu}_n(A_{h_2}^{y - h_1\ez}) &\leq (1+\epsilon_\beta)\frac{e^\beta+q-1}{q}\bar{\mu}_n(E_{h_2}^{y-h_1\ez})\nonumber \\
    &\leq (1+\epsilon_\beta)\frac{e^\beta+q-1}{q}(1+o(1))\bar{\mu}_n(E_{h_2}^x)\label{eq:Ahx-move-locations}
\end{flalign}

Combining \cref{lem:Ah2-DMP} with the inequalities between 
\cref{eq:remove-conditioning-on-edge} to \cref{eq:Ahx-move-locations}, we have that for some $\epsilon_\beta$,
\begin{equation}
\max_{H_1}\max_{I':I\in\hat\sep_n^1(H_1)}\mu_n\left(\theta_{h_1}A_{h_2}^{y-h_1\ez} \mid \cS_{I'}\right) \leq (1+\epsilon_\beta)q^2(e^\beta+q-1)\bar{\mu}_n(E_{h_2}^x)
\end{equation}
which together with \cref{eq:mu(Ah-Gamma)-up-to-max-Ah2} concludes the proof of \cref{lem:submultiplicativity-in-Gamma}.
\end{proof}

\begin{proof}[{\textbf{\emph{Proof of \cref{prop:RC-rate}}}}]
Combining \cref{prop:compare-A_h^x-E_h^x,lem:move-to-gamma,lem:submultiplicativity-in-Gamma} immediately implies the submultiplicativity statement of \cref{eq:submultiplicativity-Ehx}. By using the decorrelation estimates in \cref{cor:decorrelation-1}, we can generalize to the case where $x, n$ on the right hand side can depend on $h_1$ and $h_2$, as long as we still have $1 \ll h_i \ll n_{h_i}$ and $d(x_{h_i}, \partial \Lambda_{n_{h_i}}) \gg h_i$:
\begin{equation*}
    \bar{\mu}_n(E_h^x) \leq (1+\epsilon_\beta+o_{h_1}(1)+o_{h_2}(1))(e^\beta + q-1)^3\bar{\mu}_{n_{h_1}}(E_{h_1}^{x_{h_1}})\bar{\mu}_{n_{h_2}}(E_{h_2}^{x_{h_2}})\,.
\end{equation*}
Fekete's Lemma now gives the existence of the limit $\alpha$ in \cref{eq:alpha-rate}, and the bounds of \cref{prop:bound-RC-rate} immediately gives the corresponding bound $4(\beta - C) \leq \alpha \leq 4\beta$.
\end{proof}

\section{Large deviation rate for Potts interfaces}\label{sec:ld-potts-bot}

The pillar $\cP_x$ was used to locally measure the height of the $\Top$ interface at a location $x$. There, we needed a more complicated definition of the pillar including the hairs attached to it so that we could apply various map arguments to prove properties of a typical pillar. For the $\Blue$ and $\Red$ Potts interfaces and $\Bot$ random-cluster interface, rather than consider an analogous pillar on its own, we will study the event that a path of a particular component of vertices reaches height $h$, conditional on $\cP_x$ reaching height at least $h$.

\begin{definition}
    Let $\cA^{\noRed}_{x, h}$ be the event that there is a $\Ared^c$-path from $x$ to $h$ using only vertices that are part of $\cP_x$. We also analogously define $\cA^{\Blue}_{x, h}$ and $\cA^{\Bot}_{x, h}$ as paths of vertices in $\Ablue$ and $\Abot$ respectively. More generally, we use the notation $\cA^{\noRed}_{v_i, v_{i+1}}$ to mean that there is a $\Ared^c$-path of vertices from $v_i$ to $v_{i+1}$ (endpoints included) that uses only vertices of $\cP_x$ in the slab $\cL_{[\hgt(v_i), \hgt(v_{i+1})]}$, and analogously for $\cA^{\Blue}_{v_i, v_{i+1}}, \cA^{\Bot}_{v_i, v_{i+1}}$.
\end{definition}

\begin{remark}
We note that one may attempt to define a pillar in $\cI_\Red$ analogously to how it was defined w.r.t.\ $\Itop$. That is, for a vertex $x$ at height 1/2, the non-$\Red$ pillar at $x$ would be the connected set of vertices in $\Ared^c$ which have height $\geq 1/2$. However, by the ordering of the interfaces (i.e., the fact that $\Ared^c \subseteq \Atop^c$), the non-$\Red$ pillar always lies entirely within $\cP_x$. Hence, the event that the height of the non-$\Red$ pillar at $x$ reaches height $h$ is exactly the same as the event $\cA^{\noRed}_{x, h}$. However, we will not refer to such a non-$\Red$ pillar and instead refer to events of the form $\cA^{\noRed}_{x, h}$ because the latter is more easily broken up into parts --- one can view the event $\cA^{\noRed}_{x, h}$ as an intersection of events of the form $\cA^{\noRed}_{v_i, v_{i+1}}$, and this reflects the proof ideas of this section.
\end{remark}

The goal of this section is to prove the large deviation rates for the pillars of the $\Blue$ and $\Red$ Potts interfaces and the $\Bot$ interface of the random-cluster model.
\begin{proposition}\label{prop:RC-Potts-diff-rate}
For every $\beta > \beta_0$ and integer $q\geq 2$ there exist $\delta,\delta' \geq 0$ such that, for every sequence of $n, x$ dependent on $h$  with $1 \ll h \ll n$ and $d(x, \partial \Lambda_n) \gg h$, 
\begin{align}
\label{eq:delta-beta}
    \lim_{n \to \infty} - \frac{1}{h}\log \phi_n(\cA^{\noRed}_{x, h} \mid \hgt(\cP_x)\geq h) &= \delta\,,\\
\label{eq:delta-beta-prime}
    \lim_{n \to \infty} - \frac{1}{h}\log \phi_n(\cA^{\Blue}_{x, h} \mid \hgt(\cP_x)\geq h) &= \delta'\,.
\end{align}
Moreover, for every $\beta>\beta_0$ and real $q\geq 1$ there exists $\delta''\geq 0$ such that, for every sequence $n,x$ as above,
\begin{align}\label{eq:delta-beta-double-prime}
    \lim_{n \to \infty} - \frac{1}{h}\log \bar\mu_n(\cA^{\Bot}_{x, h} \mid \hgt(\cP_x)\geq h) &= \delta''\,.
\end{align}
\end{proposition}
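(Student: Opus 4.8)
\textbf{Proof proposal for \cref{prop:RC-Potts-diff-rate}.} The plan is to prove the existence of all three limits by establishing a submultiplicativity estimate for each of the three conditional probabilities, and then invoking Fekete's lemma exactly as in the proof of \cref{prop:RC-rate}. I will describe the argument for the $\Bot$ interface (the quantity in \cref{eq:delta-beta-double-prime}); the cases of $\cA^{\noRed}_{x,h}$ and $\cA^{\Blue}_{x,h}$ are handled in the same way, with the Potts colours of the interior vertices playing the role of the open/closed edge status along the path, and with $\bar\mu_n$ replaced by the coupled FK--Potts measure $\phi_n$. Write $p_{\Bot}(h) := \bar\mu_n(\cA^{\Bot}_{x,h}\mid E_h^x)$, so that $\bar\mu_n(\cA^{\Bot}_{x,h},\,E_h^x)=p_{\Bot}(h)\,\bar\mu_n(E_h^x)$. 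Since by \cref{prop:RC-rate} the factor $\bar\mu_n(E_h^x)$ already has a large deviation rate $\alpha$, it suffices to show that the joint probability $q_{\Bot}(h):=\bar\mu_n(\cA^{\Bot}_{x,h},\,E_h^x)$ is (approximately) submultiplicative, i.e.
\begin{equation*}
q_{\Bot}(h_1+h_2)\;\le\;(1+\epsilon_\beta)\,C(\beta,q)\,q_{\Bot}(h_1)\,q_{\Bot}(h_2)
\end{equation*}
for a constant $C(\beta,q)$; combined with the matching lower bound (obtained by the explicit construction of a straight column of closed faces with an open path of $\Bot$-vertices inside it, as in \cref{prop:bound-RC-rate}) this yields $-\frac1h\log q_{\Bot}(h)\to$ a limit, and dividing by $\bar\mu_n(E_h^x)$ gives \cref{eq:delta-beta-double-prime}.

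The first block of the argument reuses the machinery of \cref{sec:pillar-maps} and \cref{sec:ld-RC}. By \cref{thm:iso-map,thm:incr-map,cor:widehat-Ehx} and \cref{lem:move-to-gamma}, up to a multiplicative $(1+\epsilon_\beta)$ we may restrict to configurations in $\widetilde E_h^x\cap\Iso_{x,L,h}^{\mathrm o}\cap\Incr_{x,1,h_1}\cap\Gamma_{h_1}^x$, on which the pillar has a genuine cut-point at height $h_1\pm\frac12$ (so the increment straddling height $h_1$ is the trivial increment $X_\varnothing$ at a single column vertex $y$), and the pillar is confined to the cone decomposition of \cref{prop:cone-seperation} so that it does not reattach to the rest of the interface. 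On the event $\cA^{\Bot}_{x,h}$ the $\Bot$-path from $x$ to height $h$ must pass through the unique cut-point vertex $y$ at height $h_1+\frac12$, since that vertex is the only vertex of $\cP_x$ in the slab $\cL_{h_1}$; hence $\cA^{\Bot}_{x,h}\subseteq \cA^{\Bot}_{x,y}\cap\cA^{\Bot}_{y,\,x+(0,0,h)}$, and the latter event, after the height shift $\theta_{h_1}$, is (a translate of) $\cA^{\Bot}_{x,h_2}$. This is the FK--Potts analogue of the decomposition $A_h^x\cap\Gamma_{h_1}^x\Rightarrow\theta_{h_1}A_{h_2}^{y-h_1\ez}$ from \cref{rem:shifted-A}, now decorated with the requirement that a path of open edges among $\Abot$-vertices threads each piece.

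The core of the proof is then a Domain Markov / FKG argument mirroring \cref{lem:submultiplicativity-in-Gamma}, but now we reveal, in addition to the interface faces making up $\sH_1$ and its truncation $I'$, also the \emph{pillar shell} $\cP_x^{\mathrm o}$ (\cref{def:pillar-shell}) up to height $h_1$ and the status (open/closed, and the FK--Potts colour) of the edges and vertices \emph{strictly inside} the revealed pillar below $y$; the $\Iso^{\mathrm o}$ event guarantees that this revealed data forms a boundary condition separating the region below $y$ from the region above, so that the conditional law of the configuration above height $h_1-\frac12$ is again a coupled FK--Potts model on the appropriate subgraph with wired/free boundary conditions prescribed by the closed face $f_{[y,y-\ez]}$ and the open faces dual to $\partial^\dagger I'$ (this is the Domain Markov property for the coupled measure alluded to in \cref{sec:subsec-LD-rate-Potts}; it is where \cref{prop:out-graph-connected} and \cref{clm:G-bc} are applied, now to the coupled configuration). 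Crucially, the event $\cA^{\Bot}_{x,h_1}$ read off from the revealed data is \emph{increasing} in the edges (it asks for an open $\Abot$-path), while $A_{h_1}^x$ remains decreasing; the $\Bot$-path portion above height $h_1$ is likewise increasing. So the only place where we pay is: closing the edge $[y,y-\ez]$ at cost $\tfrac{e^\beta+q-1}{q}$ (via \cref{obs:close-edge}); removing the conditioning on that closed edge at cost $q$ (as in \cref{eq:remove-conditioning-on-edge}); the height shift at cost $q^2$ (\cref{lem:height-shift}); FKG against $\sep_n$; the decorrelation move from $y-h_1\ez$ back to $x$ (\cref{cor:decorrelation-1}); and finally the comparison $\bar\mu_n(\cA^{\Bot}_{x,h_2})\le(1+\epsilon_\beta)\tfrac{e^\beta+q-1}{q}\,q_{\Bot}(h_2)$ analogous to \cref{prop:compare-A_h^x-E_h^x}. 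Collecting these gives the claimed submultiplicativity with $C(\beta,q)$ a fixed power of $e^\beta+q-1$ times $q$.

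The main obstacle I expect is the \emph{second} part of step three: arguing that revealing the pillar together with its interior does not leak information that would spoil either the Domain Markov factorization or the monotonicity. In the pure FK setting of \cref{sec:ld-RC} one reveals only closed faces and their maximal $1$-connected closure, producing a clean boundary of open edges; here, because we must know the colours/edge-states inside the pillar below $y$ to assert $\cA^{\Bot}_{x,h_1}$, we risk revealing edges that are \emph{not} separated from the region above $y$ by $\partial^\dagger I'$ — precisely the ``information leaks inside the pillar'' issue flagged in \cref{sec:subsec-CE-difficulties}. The fix is to work with the pillar \emph{shell} $\cP_x^{\mathrm o}$ (which by definition omits faces dual to edges with endpoints in $\Atop^c$, hence does not drag in interior edges as part of the revealed face set) and to verify — using \cref{cor:pillar-doesnt-touch-other-walls} and the cone separation of \cref{prop:cone-seperation} on $\Iso^{\mathrm o}_{x,L,h}$ — that the interior of the shell below $y$ is connected to the rest of $\Lambda_n$ only through the four closed side-faces at the cut-point $x$ and through $\partial^\dagger I'$, so the coupled configuration inside is conditionally independent of everything above. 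Getting this separation statement exactly right for the \emph{coupled} measure (rather than the marginal FK measure), and checking that the event $\cA^{\Bot}_{x,h_1}$ is genuinely measurable with respect to the revealed $\sigma$-algebra, is the delicate point; once it is in place, the rest is a routine assembly of the cost factors listed above and an application of Fekete's lemma.
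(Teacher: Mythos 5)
Your proposal has a genuine gap precisely at the step where you try to remove the conditioning on $\partial^\dagger I' \subseteq \opfaces$ by FKG. You correctly observe that the event $\cA^{\Bot}_{x,h_1}$ (and its shifted counterpart above height $h_1$) is \emph{increasing} in the edges, whereas $A_{h}^x$ is decreasing. But you then proceed as if this is a minor nuisance, listing the usual cost factors and concluding. In fact it is fatal to the Lemma~\ref{lem:submultiplicativity-in-Gamma}-style argument: after revealing $\cS_{I'}$ together with the interior data below $y$, the event you must bound from above is $\theta_{h_1}(\cA^{\Bot}_{h_2})\cap\theta_{h_1}A_{h_2}$, which is an intersection of an increasing and a decreasing event. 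FKG against the increasing conditioning $\partial^\dagger I'\subseteq\opfaces$ gives the inequality $\mu_n(\text{decreasing}\mid\text{increasing})\le\mu_n(\text{decreasing})$, which is what makes the $\cI_\Top$ argument work; but for an increasing event it yields $\mu_n(\text{increasing}\mid\text{increasing})\ge\mu_n(\text{increasing})$, the wrong direction, and for a mixed event it yields nothing. This is exactly the obstruction the paper flags in \cref{sec:subsec-monotonicity}: the $\bar\mu_n$-based FKG trick ``fails for [the $\Top$ interface's] downward deviations (equivalently, upward deviations of $\cI_\Bot$ ... addressing the increasing event that there is an open path...).'' For the Potts events $\cA^{\noRed},\cA^{\Blue}$ the situation is worse still, since the Potts marginal lacks FKG altogether.

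The paper's actual route avoids FKG here entirely. It works with the \emph{conditional} quantity $\phi_n(\cA^{\star}_{x,h}\mid E_h^x)$ and establishes submultiplicativity (\cref{prop:submultiplicativity-Potts,prop:submultiplicativity-bot}) by two ingredients you don't have: (a) a cluster-expansion-based ``3-to-3'' swapping map (\cref{lem:3-to-3-map}) which shows, with \emph{multiplicative} error $1\pm\epsilon_\beta$, that the law of a pillar $P_B\times P^T$ in the restricted space $\Omega_{h_1,h_2}$ factorizes as the law of $P_B$ in $\Omega_{h_1}$ times the law of $P^T$ in $\Omega_{h_2}$; and (b) a Domain Markov statement for the coupled FK--Potts law inside an increment shell (\cref{lem:DMP-pillar-shell,cor:DMP-for-Ared}), which makes $\phi_n(\cA^{\star}_{x,h}\mid\cP_x=P)$ factor increment-by-increment so the pillar decomposition of (a) can be pushed through to the interior event. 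Your instincts about the pillar shell $\cP_x^\mathrm{o}$ and the need for cone separation to prevent information leakage are in the right direction — they are exactly what (b) is built on — but without replacing the FKG step by something like the 3-to-3 map, the central inequality cannot be closed.
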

Combining this with \cref{prop:RC-rate}, we derive the following rates:
\begin{align}
\label{eq:non-red-rate}
\gamma &:= \lim_{n \to \infty} - \frac{1}{h}\log \phi_n(\cA^{\noRed}_{x, h}) = \alpha + \delta\,,\\
\label{eq:blue-rate}
 \gamma' &:= \lim_{n \to \infty} - \frac{1}{h}\log \phi_n(\cA^{\Blue}_{x, h}) = \alpha + \delta' \\ 
\label{eq:bot-rate}
  \alpha'&:= \lim_{n \to \infty} - \frac{1}{h}\log \bar\mu_n(\cA^{\Bot}_{x, h}) = \alpha + \delta''\,.
\end{align}

Once we establish the above rates, we will also provide bounds on their differences. In particular, we show that all the rates are different from each other, whence using the symmetry that the upward deviations of $\Itop$ are the same as the downward deviations of $\Ibot$, we conclude that each interface has an asymmetry between its maximum and its minimum.
\begin{proposition}
    \label{prop:bound-rates}
There exists a sequence $\epsilon_\beta$ going to $0$ as $\beta\to\infty$ such that, for every fixed $\beta>\beta_0$, the rates $\delta,\delta',\delta''$ from \cref{prop:RC-Potts-diff-rate} 
satisfy
\begin{align}
\label{prop:bound-potts-rate-1}
    \delta &= (1\pm \epsilon_\beta) e^{-\beta}\,,\\
\label{prop:bound-potts-rate-2}
    \delta' &= (1\pm \epsilon_\beta) (q-1)e^{-\beta}\,,\\
\label{prop:bound-bot-rate}
    \delta'' &= (1\pm \epsilon_\beta) qe^{-\beta}
    \end{align}
where $a=(1\pm\epsilon)b$ is notation for $a\in[(1-\epsilon)b,(1+\epsilon)b]$.
\end{proposition}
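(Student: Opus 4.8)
The plan is to establish \cref{prop:bound-rates} by extracting, from the submultiplicativity/supermultiplicativity machinery that produces the rates $\delta,\delta',\delta''$ in \cref{prop:RC-Potts-diff-rate}, \emph{quantitative} one-step bounds on the conditional probability of coloring a single trivial increment appropriately, and then showing that a typical tall pillar consists of $(1-\epsilon_\beta)h$ trivial increments whose colorings are \emph{almost independent}, so that the rate is essentially the $(-\log)$ of the one-increment cost. Concretely: fix a tall pillar $\cP_x$ conditioned on $\hgt(\cP_x)\ge h$ and on $\Iso_{x,L,h}$ (the latter costs only $1-\epsilon_\beta$ by \cref{thm:iso-map}), and further condition on an increment at a given height being trivial (costs $1-\epsilon_\beta$ by \cref{thm:incr-map}, applied repeatedly via \cref{rem:incr-map-variations}), so that $(1-\epsilon_\beta)h$ of the increments are single cubes $X_\trivincr$. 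For a trivial increment with vertices $v,v+\ez$, a $\Ared^c$-path through it means $v+\ez\in\Ared^c$; since inside an isolated pillar shell the interior FK--Potts configuration on the cube (minus the boundary hairs, which is exactly why the \emph{pillar shell} $\cP_x^{\mathrm o}$ and \cref{def:isolated-pillar-shell} were introduced) is that of a coupled FK--Potts model with wired-outside boundary conditions, the probability that $v+\ez$ fails to be in the $\Top$ cluster — equivalently, that the edge $[v,v+\ez]$ is closed \emph{and} $v+\ez$ is colored outside $\{\Red\}$, resp.\ colored $\Blue$, resp.\ $[v,v+\ez]$ closed with $v+\ez$ in a fresh component — is, up to $1\pm\epsilon_\beta$, equal to $e^{-\beta}$, $(q-1)e^{-\beta}$, $qe^{-\beta}$ respectively (closing one edge costs $1-e^{-\beta}\approx e^{-\beta}$ relative to leaving it open, and then the $q$-dependent combinatorial factor for the color of the newly-separated vertex). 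This is the content of \cref{obs:close-edge} made sharp.

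The upper bounds on $\delta,\delta',\delta''$ (i.e.\ the event is \emph{at least} as likely as $\approx (e^{-\beta})^{(1-\epsilon_\beta)h}$ etc.) follow by a direct lower-bound construction: take the event $E_h^x\cap\Iso_{x,L,h}^{\mathrm o}$ with \emph{all} increments trivialized (which still has probability $\ge (1-\epsilon_\beta)\bar\mu_n(E_h^x)$ by iterating \cref{thm:incr-map}), then close the $h$ vertical edges $[v_i,v_{i+1}]$ inside the column and color the separated vertices appropriately; by \cref{obs:close-edge} and the Domain Markov property in the coupled FK--Potts model (the pillar-shell version, to shield the interior from the environment) each step contributes a factor $(1-\epsilon_\beta)e^{-\beta}$ (resp.\ $(q-1)e^{-\beta}$, $qe^{-\beta}$), and these factors multiply because the increments are genuinely disjoint cubes separated by cut-points. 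Dividing by $\bar\mu_n(E_h^x)=e^{-(\alpha+o(1))h}$ and taking $-\tfrac1h\log$ gives $\delta\le(1+\epsilon_\beta)e^{-\beta}$, and likewise for $\delta',\delta''$.

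The matching lower bounds on $\delta,\delta',\delta''$ (the events are \emph{no more} likely) are where the real work sits, and I expect this to be the main obstacle. Here one must show $\phi_n(\cA^{\noRed}_{x,h}\mid\hgt(\cP_x)\ge h)\le \exp[-(1-\epsilon_\beta)e^{-\beta}h]$, i.e.\ that to realize a $\Ared^c$-path the configuration essentially \emph{must} pay $e^{-\beta}$ per trivial increment and cannot do substantially better by exploiting atypical (non-trivial) pillars or by correlations between increments. The strategy is: (i) restrict to $\Iso_{x,L,h}^{\mathrm o}$ and use the cone separation (\cref{prop:cone-seperation}, \cref{cor:pillar-doesnt-touch-other-walls}) so the pillar shell interior is decoupled from the environment; (ii) condition on the pillar shell $\cP_x^{\mathrm o}$ and its increment structure, so that the interior colorings/edges of the $\approx(1-\epsilon_\beta)h$ trivial increments become, via the Domain Markov property for FK--Potts, a product measure over the increments (this is exactly the ``pillar shell excludes interior faces'' point flagged in \cref{sec:subsec-LD-rate-Potts}); (iii) in a single trivial increment, bound $\phi(\,v+\ez\in\Ared^c\,)\le(1+\epsilon_\beta)e^{-\beta}$ from above — this requires that for $v+\ez$ to lie in $\Ared^c$ the edge $[v,v+\ez]$ is closed (a $1-e^{-\beta}$ event relative to open) \emph{and} the color constraint holds, giving the stated $q$-factors; (iv) handle the $O(\epsilon_\beta h)$ non-trivial increments crudely by bounding their contribution by $1$, absorbing the loss into $\epsilon_\beta$. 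Combining (ii)--(iv), $\phi_n(\cA^{\noRed}_{x,h}\mid\Iso^{\mathrm o}_{x,L,h},\hgt(\cP_x)\ge h)\le\prod_{i\ \mathrm{trivial}}(1+\epsilon_\beta)e^{-\beta}\le\exp[-(1-\epsilon_\beta)e^{-\beta}h]$, and removing the $\Iso^{\mathrm o}$ conditioning costs only $(1-\epsilon_\beta)^{-1}$. The delicate points are making the product structure of step (ii) genuinely rigorous — information can leak through the hairs that penetrate the shell, which is precisely what the pillar-shell definition and the isolated-shell event are engineered to prevent — and ensuring the one-increment estimate in step (iii) is a clean $1\pm\epsilon_\beta$ statement rather than merely an order-of-magnitude bound, which forces one to track the exact combinatorial weights ($q$ for a new component, $q-1$ for ``colored $\Blue$ given not $\Red$'', etc.) as in the sharp form of \cref{obs:close-edge}.
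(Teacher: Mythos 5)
Your high-level architecture matches the paper's: isolate the pillar shell, decouple the interior of each increment via a Domain Markov argument, compute the one-increment cost $(1\pm\epsilon_\beta)\cdot\{e^{-\beta},(q-1)e^{-\beta},qe^{-\beta}\}$, and deduce the rates. The one-increment computation is also the one the paper does (compare \cref{eq: prob-non-red-in-trivial} and \cref{eq:prob-bot-in-trivial}).

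However, your mechanism for ensuring that $(1-\epsilon_\beta)h$ of the increments are trivial is wrong, and this is fatal to the sharpness you need. You propose to ``trivialize all increments by iterating \cref{thm:incr-map}'' and claim the resulting event still has probability $\ge(1-\epsilon_\beta)\bar\mu_n(E_h^x)$. That is false: each application of $\Phi_\Incr$ at a new height costs a factor $1-\epsilon_\beta$, and covering the whole column requires $\Theta(h)$ applications, so the all-trivial event has conditional probability of order $(1-\epsilon_\beta)^{\Theta(h)}=e^{-\Theta(\epsilon_\beta)h}$. Because the map's $\epsilon_\beta$ is of order $e^{-\beta}$ (and in fact larger — with the paper's $L'=\beta^{3/4}$ it is $e^{-\beta(1-o(1))}$), this loss is comparable to, or larger than, the main term $e^{-e^{-\beta}h}$. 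So your construction proves at best $\delta\le C e^{-\beta}$ for some absolute constant $C$, not $\delta\le(1+\epsilon_\beta)e^{-\beta}$. The symmetric issue infects your step (iv) for the lower bound on $\delta$: ``handle the $O(\epsilon_\beta h)$ non-trivial increments crudely'' presupposes you have that many, which your proposed machinery does not deliver.

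What actually makes the argument work, and what is missing from your proposal, is a \emph{single} map to a \emph{global} good event: the paper's $\sG=\{|\sF(\cP_x^{\mathrm o})|< 4h+1+\tfrac{C^*}{2\beta}h\}$ and \cref{lem:many-simple-blocks}. Because a violation of $\sG$ forces $\fm(\cI;\Phi(\cI))\ge\tfrac{C^*}{2\beta}h$ (linear in $h$), a one-shot flattening map gives $\bar\mu_n(\sG^c\mid E_h^x,\Iso^{\mathrm o})\le Ce^{-ch}$, with no compounding over increments. And deterministically on $\sG$ the pillar shell has at least $h(1-\tfrac{C^*}{\beta})$ simple blocks, because each non-simple increment contributes at least one extra face. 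The $\tfrac1\beta$ in the error — vanishing as $\beta\to\infty$ — is exactly what yields a $1\pm\epsilon_\beta$ correction rather than a $1\pm O(1)$ one. Similarly, for the lower bound on the probability (i.e.\ $\delta\le(1+\epsilon_\beta)e^{-\beta}$) the paper does \emph{not} pass to the all-trivial event; it keeps the typical pillar, bounds the contribution of the $O(h/\beta)$ non-simple increments from below by forcing open an explicit path whose total length across all such increments is $O(h/\beta)$ (using the face-count bound again), yielding a cost $e^{-O(e^{-\beta})h/\beta}$, which is again an $\epsilon_\beta$-correction. You should replace your iterate-$\Phi_\Incr$ step with an analogue of \cref{lem:many-simple-blocks} and the open-path counting; without it the $(1\pm\epsilon_\beta)$ sharpness cannot be recovered.
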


Proving the above propositions would conclude the proof of \cref{prop:comparison-of-rates}, as we already showed the bound on $\alpha$ at the end of \cref{sec:ld-RC}.

\begin{remark} To prove the existence of the rates in \cref{prop:RC-Potts-diff-rate}, the sub-additivity claim we are after is essentially that a non-$\Red$ path climbing to height $h_1 + h_2$ is comparable to climbing to height $h_1$, and then independently climbing up to height $h_2$. Since inside a given pillar the coloring of different clusters is independent to begin with, this is seemingly obvious. However, we are aiming for sub-additivity conditional on the event $E_h^x$ and not on a fixed pillar, so to make this rigorous we need to show that the joint law of the part of a pillar in $E_h^x$ below height $h_1$ and the part above it is comparable to the law of a pillar in $E_{h_1}^x$ and an independently sampled pillar in $E_{h_2}^x$. This is true only if we add some restrictions to control the interactions between the two halves of the pillar, and the interactions between the pillar and the rest of the interface. So, in \cref{lem:move-to-nice-space} we prove that we can move onto this space of nicer pillars, and in \cref{lem:3-to-3-map} we prove the claim on the law of the pillars by utilizing a 3 to 3 swapping map similar to the swapping maps in \cite{GL_max}. Along the way, we also need to be cautious that we are actually asking for a path of vertices in $\Ared^c$, not just non-$\Red$ vertices, and we also need to work on the joint space of configurations $(\omega, \sigma)$.
\end{remark}

\subsection{Establishing the Potts rates}

The bulk of this section is devoted to proving the following submultiplicativity statement:
\begin{proposition}\label{prop:submultiplicativity-Potts}
    For every $\beta > \beta_0$, there exists a constant $\epsilon_\beta$ such that for every $h = h_1 + h_2$, and every sequence $x, n$ dependent on $h$ such that $d(x, \partial \Lambda_n) \gg h$,
    \begin{equation}\label{eq:submultiplicativity-noRed}
\phi_n(\cA^{\noRed}_{x, h_1+h_2}\mid E_{h_1+h_2}^x) \leq (1+\epsilon_\beta)\phi_n(\cA^{\noRed}_{x, h_1}\mid E_{h_1}^x)\phi_n(\cA^{\noRed}_{x, h_2}\mid E_{h_2}^x)\,.
\end{equation}
The same statement holds if we replace $\noRed$ by $\Blue$.
\end{proposition}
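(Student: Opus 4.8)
The plan is to run, on the \emph{joint} FK--Potts configuration $(\omega,\sigma)$, an argument parallel to the submultiplicativity proof of \cref{sec:ld-RC} (i.e.\ \cref{lem:move-to-gamma} followed by \cref{lem:submultiplicativity-in-Gamma}), with the pillar shell playing the role of isolating the interior coloring from the exterior. Throughout write $h=h_1+h_2$, and when $\cP_x$ has a trivial increment at heights $h_1\pm\tfrac12$ let $y$ denote its lower cut-point (height $h_1-\tfrac12$), so $y+\ez$ is the upper one. First I would reduce all three conditional probabilities in \cref{eq:submultiplicativity-noRed} to a ``nice'' subspace: by \cref{thm:iso-map} and \cref{thm:incr-map} (invoking \cref{rem:incr-map-variations} to force a trivial increment at height $h_1$ on top of the isolating ones), together with \cref{cor:widehat-Ehx}, at a multiplicative cost $1\pm\epsilon_\beta$ on each probability one may condition on an event $\cN_h^x$ on which $\cP_x$ is isolated and cone-confined (\cref{def:isolated-pillar}), has empty base, and splits as $\cP_x=P^B\cup P^T$ with $P^B$ the portion at heights $\le h_1$ (itself an isolated height-$h_1$ pillar), $P^T$ the portion at heights $\ge h_1$ (an isolated height-$h_2$ pillar, equal to the $h_1$-shift of one based at $y-h_1\ez$), overlapping only in the trivial increment $\{y,y+\ez\}$; this is the content of \cref{lem:move-to-nice-space}. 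By \cref{prop:cone-seperation}, on $\cN_h^x$ neither $P^B$ nor $P^T$ is $1$-connected to the rest of $\cI$ except through the four faces at height $0$ below $x$, so no hair reconnects the two halves to each other or to the environment.

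Next I would decouple the two halves of the \emph{shell}. Summing over the bottom shell $P^B$ and applying the $3$-to-$3$ swapping map of \cref{lem:3-to-3-map}, one obtains that conditionally on $E_h^x\cap\cN_h^x$ the law of the pair $(P^B,P^T)$ is, up to a factor $1\pm\epsilon_\beta$, the product of the law of a height-$h_1$ isolated pillar conditioned to reach height $h_1$ and an independent vertically shifted copy of a height-$h_2$ isolated pillar based at $y-h_1\ez$ conditioned to reach height $h_2$. As in the proof of \cref{lem:submultiplicativity-in-Gamma}, the base point of the top pillar is moved from $y-h_1\ez$ back to $x$ at cost $1+o(1)$ using the decorrelation estimate \cref{cor:decorrelation-1} together with the bounds of \cref{prop:bound-RC-rate}, and the two-sided passage between $\{\hgt(\cP_x)\ge h_i\}$ and the associated isolated/nice spaces costs a further $1\pm\epsilon_\beta$ by \cref{thm:iso-map,thm:incr-map}.

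It then remains to insert the Potts coloring. Here I would reveal the pillar shell $\cP_x^{\mathrm{o}}$ (\cref{def:pillar-shell}), which by construction exposes no edge interior to the pillar, together with the FK--Potts configuration outside $\cP_x$; by the Domain Markov property for the coupled FK--Potts model on increments, conditionally on this information the joint configurations inside $P^B$ and inside $P^T$ are independent coupled FK--Potts models whose boundary conditions are read off the shell and off the frozen colours of the $\Top$ and $\Bot$ clusters. Since the increment $\{y,y+\ez\}$ is trivial, every $\Ared^c$-path inside $\cP_x$ from $x$ to height $h$ must pass through both $y$ and $y+\ez$, so $\cA^{\noRed}_{x,h}=\cA^{\noRed}_{x,y}\cap\cA^{\noRed}_{y,y+\ez}\cap\cA^{\noRed}_{y+\ez,h}$, with the first event measurable w.r.t.\ the interior of $P^B$ and the last w.r.t.\ the interior of $P^T$. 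Bounding the conditional probability of $\cA^{\noRed}_{x,h}$ by the product of the conditional probabilities of $\cA^{\noRed}_{x,y}$ and $\cA^{\noRed}_{y+\ez,h}$ (discarding the shared middle event, or paying for it another $1\pm\epsilon_\beta$), then integrating over shells against the decoupled law from the previous step and translating the top factor back to a height-$h_2$ pillar based at $x$ via \cref{cor:decorrelation-1} as above, would yield \cref{eq:submultiplicativity-noRed}. The $\Blue$ case is identical with $\Ared^c$ replaced by $\Ablue$, using $\Abot\subseteq\Ablue\subseteq\Ared^c\subseteq\Atop^c$ so that the relevant paths still lie inside $\cP_x$.

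The hard part will be this last step: establishing the FK--Potts Domain Markov property in a form that makes the interior colorings of $P^B$ and $P^T$ genuinely conditionally independent, and verifying that revealing $\cP_x^{\mathrm{o}}$ leaks no information about interior edges. In the random-cluster model one exposes open/closed edges rather than spins, and hairs can penetrate the pillar, so one must check that on the isolated-shell space the revealed faces form a bona fide boundary condition separating the pillar interior from everything outside, and must control how the interior interacts with the colour-frozen $\Top$ and $\Bot$ clusters --- this is the joint-model analogue of the delicate separation arguments \cref{clm:G-bc,clm:Ah2-E-measurable}. Two secondary subtleties also need handling: the cut-point vertices $y$ and $y+\ez$ may a priori lie in a common open cluster, which must be ruled out (or absorbed into the $1\pm\epsilon_\beta$ factor) so that the middle event $\{y,y+\ez\in\Ared^c\}$ does not obstruct the factorization; and the requirement is genuine membership in $\Ared^c$ rather than mere non-Red colour, so one must argue that on the isolated-pillar space the two coincide up to a $1\pm\epsilon_\beta$ discrepancy controlled by the exponential tails on finite bubbles.
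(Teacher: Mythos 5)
Your overall architecture is the right one and matches the paper's: pass to a space $\Omega_{h_1,h_2}$ of isolated pillars with trivial increments straddling height $h_1$ (\cref{lem:nice-space-likely}, \cref{lem:move-to-nice-space}), decouple the shell law via the $3$-to-$3$ map \cref{lem:3-to-3-map}, then decouple the interior colouring via a Domain-Markov property for the coupled FK--Potts measure on increments (\cref{lem:locally-determine-Ared}, \cref{lem:DMP-pillar-shell}, \cref{cor:DMP-for-Ared}). Two of the details you flag, however, are either unnecessary or pointed in the wrong direction.

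First, the base-point shift via \cref{cor:decorrelation-1}. This step is superfluous in the Potts argument: the $3$-to-$3$ map already expresses the decoupled law as $\bar\mu_n(\cP_x=P_B\mid\Omega_{h_1})\,\bar\mu_n(\cP_x=P^T\mid\Omega_{h_2})$ with both pillars rooted at $x$, and once one factorizes $\phi_n(\cA^{\noRed}_{x,h}\mid\cP_x=P,\Omega_{h_1,h_2})$ increment by increment via \cref{cor:DMP-for-Ared}, each factor $\nu_\star(\cdot\mid\cX_\star,\sigma_{v_\star}\neq\Red)$ is exactly location-invariant (\cref{ex:application-of-Ared-DMP}): it is a measure on an abstract increment graph that does not see where the increment sits inside $\Lambda_n$. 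The decorrelation shift, borrowed from the FK proof of \cref{lem:submultiplicativity-in-Gamma}, has no counterpart here and would only introduce an unnecessary $1+o(1)$ cost and an extra dependence on $d(x,\partial\Lambda_n)$.

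Second, and more substantively: you say the event that $y$ and $y+\ez$ lie in a common open cluster ``must be ruled out.'' This is backwards. On the space $\Omega_{h_1,h_2}$ the straddling increment is a \emph{trivial} increment by construction, which forces the edge $[y,y+\ez]$ to be open (a closed edge there would put $f_{[y,y+\ez]}\in\cI$ inside the pillar, contradicting triviality). Far from being a nuisance, this guaranteed co-clustering is precisely what makes the middle factor in the increment-by-increment decomposition equal to $1$, since $y\in\Ared^c$ then forces $y+\ez\in\Ared^c$ with no randomness. Trying to rule this out would discard the generic case and block the factorization. Your last concern---distinguishing membership in $\Ared^c$ from merely carrying a non-Red colour---is the real one, but the paper handles it exactly (not as a $1\pm\epsilon_\beta$ discrepancy from bubble tails): \cref{lem:locally-determine-Ared} shows that, conditioned on the increment shell and on the cut-point lying in $\Ared^c$, the random set $V_\star\cap\Ared^c$ is determined by $\sigma\restriction_{V_\star}$ alone, so the passage to the local FK--Potts measure $\nu_\star$ in \cref{cor:DMP-for-Ared} is an identity rather than an approximation.
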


As mentioned in the remark above, we will use the following nicer spaces of pillars, which are subsets of spaces of isolated pillars with some additional restrictions. Suppose that we fix $h_1 + h_2 = h$, and choose $0 \leq L \leq L_\beta$, where $L_\beta \uparrow \infty$ is as in \cref{thm:iso-map}. 

\begin{definition}[The subset $\Omega_{h}$ of isolated pillar interfaces] \label{def:Omega-h}
Let $x\in \cL_{1/2}$, and define $\Omega_{h}$ to be the set of interfaces in $\Iso_{x, L, h}$ satisfying the following properties  (\cref{it:omega-cone-base,it:omega-face-bound-weak,it:omega-walls} are precisely the criteria for $\Iso_{x,L,h}$; we repeat the statement of these conditions here for an easy comparison with the next definition.)
\begin{enumerate}[(1)]
\item \label{it:omega-cone-base}$\fm(\sX_t) \leq \begin{cases} 0 & \text{if } t \leq L^3\\
t & \text{if } t > L^3
\end{cases}$
\item \label{it:omega-L-separate-top}There is a stretch of trivial increments from height $h -1/2- L^3$ to $h - 1/2$
\item \label{it:omega-face-bound-weak}$|\sF(\cS_x)| \leq 10h$
\item \label{it:omega-pillar-cap}$\cP_x \in \widetilde E_{h}^x$
\item \label{it:omega-walls}For the walls of $\cI \setminus \cP_x$, we have\\
$\fm(W_y) \leq \begin{cases}
0 & \text{if } d(y, x) \leq L\\
\log(d(y, x)) & \text{if } L < d(y, x) < L^3h
\end{cases}$\\
and $f_{[x, x - \ez]} \notin \cI$.
\end{enumerate}
\end{definition}
\begin{definition}[The subset $\Omega_{h_1,h_2}$ of isolated pillar interfaces]
Let $x\in \cL_{1/2}$, and define $\Omega_{h_1, h_2}$ to be the set of interfaces $\Omega_{h_1+h_2}$ from \cref{def:Omega-h} such that the following additional properties are satisfied:
\begin{enumerate}[(1), start=6]
\item \label{it:omega-L-seperate-middle} There is a stretch of trivial increments from height $0 \vee h_1 -1/2- L^3$ to $h_1+1/2$
\item \label{it:omega-cone-middle}
Let $j_0$ be the index of the increment with bottom cut-point at height $h_1 + 1/2$. Then,\\
$\fm(\sX_t) \leq \begin{cases} 0 & \text{if } t - j_0 \leq L^3\\
t- j_0 & \text{if } t - j_0 > L^3
\end{cases}$
\item \label{it:omega-face-bound-strong} $|\sF(\cS_x) \cap \cL_{\leq h_1}| \leq 10h_1$, and $|\sF(\cS_x) \cap \cL_{\geq h_1}| \leq 10h_2$.
\end{enumerate}
\end{definition}

\begin{remark}\label{rem:cut-paste-pillars}
For simplicity, we can also say that a pillar $\cP_x \in \Omega$ if it satisfies the pillar properties of the space, i.e., there exists $\cI \in \Omega$ with pillar $\cP_x$. These two spaces of pillars are defined such that we can write $\Omega_{h_1, h_2} = \Omega_{h_1} \times \Omega_{h_2}$ in the following sense: Suppose at the vertex $x$, we take a pillar $P^T \in \Omega_{h_2}$ and attach it to the top increment of a pillar $P_B \in \Omega_{h_1}$. This location of attachment is well-defined because of the cut-point conditions imposed in \cref{it:omega-cone-base,it:omega-L-separate-top} of $\Omega_h$. By \cref{it:omega-pillar-cap}, there is an extra face separating the top vertex of $P^T$ and the bottom vertex of $P_B$; remove it. Then, the resulting combined pillar satisfies the pillar properties of $\Omega_{h_1, h_2}$. We denote this combined pillar by $P_B \times P^T$. Conversely, we can decompose any $P \in \Omega_{h_1, h_2}$ into $P = P_B \times P^T$ by cutting the pillar at height $h_1$ and then adding the face where we cut to be the `top cap' of $P_B$ (this face is never in $P$ because of \cref{it:omega-L-seperate-middle} of $\Omega_{h_1, h_2}$), and we will have $P_B \in \Omega_{h_1}$, $P^T \in \Omega_{h_2}$.
\end{remark}

\begin{lemma}\label{lem:nice-space-likely}
For any $\beta > \beta_0$, and any $x$ such that $d(x, \partial \Lambda_n) \gg h_1 + h_2$, there exists a constant $\epsilon_\beta$ such that for any $h_1, h_2$, \begin{equation}
\bar{\mu}_n(\Omega_{h_1, h_2}\mid E_{h_1+h_2}^x) \geq 1 - \epsilon_\beta\,.
\end{equation}
As $\Omega_{h_1, h_2} \subseteq \Omega_{h_1+h_2}$, then we consequently also have $\bar{\mu}_n(\Omega_{h_1}\mid E_{h_1}^x) \;\wedge\; \bar{\mu}_n(\Omega_{h_2} \mid E_{h_2}^x) \geq 1 - \epsilon_\beta$.
\end{lemma}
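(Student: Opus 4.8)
\textbf{Proof proposal for \cref{lem:nice-space-likely}.}
The plan is to show that each of the eight defining conditions of $\Omega_{h_1,h_2}$ holds with $\bar\mu_n$-probability $1-\epsilon_\beta$ conditional on $E^x_{h_1+h_2}$, and then conclude by a union bound over the (finitely many) conditions. The bulk of the work has essentially already been done in \cref{sec:pillar-maps}: \cref{thm:iso-map} gives $\bar\mu_n(\Iso_{x,L,h}\mid E^x_{h'})\ge 1-\epsilon_\beta$ for all $h'\le h$, which immediately takes care of conditions \cref{it:omega-cone-base,it:omega-face-bound-weak,it:omega-walls} (these are literally the definition of $\Iso_{x,L,h}$, with $h=h_1+h_2$ and $h'=h$). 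Condition \cref{it:omega-pillar-cap} ($\cP_x\in\widetilde E^x_h$) is exactly \cref{cor:widehat-Ehx}, which is the only place we actually need the deferred stronger statement promised there; I will prove \cref{cor:widehat-Ehx} here as part of the argument, by the same cluster-expansion/map reasoning used for \cref{thm:iso-map} but with the target space being interfaces whose pillar has no faces at height $\ge h$ other than the top cap of its vertex set (the entropy of ``trimming the overhang'' is again beaten by the energy gain $e^{-(\beta-C)\fm}$, since any such overhanging faces contribute positively to the excess area; the new $|\partial\cI|$ and $\kappa_\cI$ terms in the cluster expansion are controlled exactly as in \cref{lem:control-1-connected-faces,lem:control-interface-clusters}).

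The remaining conditions \cref{it:omega-L-separate-top,it:omega-L-seperate-middle,it:omega-cone-middle,it:omega-face-bound-strong} are each an instance of ``there is a trivial stretch / controlled increments around a prescribed height,'' and these follow from \cref{thm:incr-map} together with \cref{rem:incr-map,rem:incr-map-variations}. Concretely: conditioning further on $\Iso_{x,L,h}$ (which costs only $1-\epsilon_\beta$), \cref{thm:incr-map} with $h_0=h-L^3\cdot\text{(const)}$, applied via \cref{rem:incr-map-variations} to the subset $A=\Iso_{x,L,h}\cap E^x_h$ which is preserved by $\Phi_\Incr$, yields a stretch of $L^3$ consecutive trivial increments ending at height $h-1/2$; this gives \cref{it:omega-L-separate-top}. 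Applying the same map at height $h_1$ gives \cref{it:omega-L-seperate-middle} (with the $0\vee$ accounting for the edge case $h_1\le L^3$), and \cref{rem:incr-map} run ``from $j_0$ upward'' gives the cone condition \cref{it:omega-cone-middle}; since each application only conditions on an event of probability $\ge 1-\epsilon_\beta$ and the relevant target spaces are all closed under the respective $\Phi_\Incr$ maps (as in \cref{rem:incr-map-variations}), we may compose finitely many of them. Finally \cref{it:omega-face-bound-strong} ($|\sF(\cS_x)\cap\cL_{\le h_1}|\le 10h_1$ and $|\sF(\cS_x)\cap\cL_{\ge h_1}|\le 10h_2$) follows from the same face-count map that proves $|\sF(\cS_x)|\le 10h$ in \cref{it:iso-spine} of $\Iso_{x,L,h}$, now applied separately to the two sub-slabs $\cL_{\le h_1}$ and $\cL_{\ge h_1}$ (using that on \cref{it:omega-L-seperate-middle} the spine genuinely splits at height $h_1$ into two independent-looking pieces, each reaching height $h_1$ resp.\ $h_2$ above its base), or alternatively it is a direct consequence of \cref{prop:exp-tail-pillar-height} applied to the two pieces.

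Assembling: let $G_1,\dots,G_8$ denote the eight events in \cref{def:Omega-h} and \cref{def:Omega-h_1,h_2} (abusing notation for the conditions). We have shown $\bar\mu_n(G_i\mid E^x_{h_1+h_2})\ge 1-\epsilon_\beta$ for each $i$, possibly after first intersecting with $\Iso_{x,L,h}$ and/or previously-established $G_j$'s, each of which is itself of conditional probability $\ge 1-\epsilon_\beta$; since all the intermediate conditioning events are monotone-compatible with the maps used, the conditional probabilities multiply up to a $1-\epsilon_\beta'$ for a (different) $\epsilon_\beta'\to 0$. Hence $\bar\mu_n(\Omega_{h_1,h_2}\mid E^x_{h_1+h_2})=\bar\mu_n(\bigcap_i G_i\mid E^x_{h_1+h_2})\ge 1-\epsilon_\beta$. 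The last sentence of the lemma is immediate from $\Omega_{h_1,h_2}\subseteq\Omega_{h_1+h_2}$ once we note the analogous (simpler) statement $\bar\mu_n(\Omega_{h'}\mid E^x_{h'})\ge 1-\epsilon_\beta$ for a single scale $h'$, which is the $h_1=0$ (or $h_2=0$) degenerate case of the argument above.

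\textbf{Main obstacle.} The delicate point is not any single condition but the bookkeeping of \emph{composing} the map arguments: each of \cref{thm:iso-map,thm:incr-map} is stated conditional on $E^x_{h'}$ (and the latter also on $\Iso_{x,L,h}$), and to stack them one must check that the target space of each map is contained in the conditioning event of the next, i.e.\ that $\Phi_\Incr$ applied to (say) $\Iso_{x,L,h}\cap\widetilde E^x_h\cap\{\text{trivial stretch near }h\}$ stays inside that set — this is precisely what \cref{rem:incr-map-variations} is for, but one has to verify it for each combination. The other genuinely new ingredient is the proof of \cref{cor:widehat-Ehx}; while morally identical to \cref{thm:iso-map}, one must make sure the trimming map does not destroy the cut-point structure near height $h$ and that the excess-area accounting still gives a strict energy gain, which requires the same care with hairs (they may overhang above height $h$ and must be removed without reconnecting the pillar to other walls — here the cone separation of \cref{prop:cone-seperation} is what guarantees no such reconnection occurs on $\Iso_{x,L,h}$).
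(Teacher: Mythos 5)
Your overall strategy—first pass to $\Iso_{x,L,h}$ via $\Phi_\Iso$, then apply $\Phi_\Incr$ a few times at the scales $h_1$ and $h_1+h_2$, then handle the remaining conditions—does match the paper's route and correctly disposes of conditions \ref{it:omega-cone-base}, \ref{it:omega-L-separate-top}, \ref{it:omega-face-bound-weak}, \ref{it:omega-walls}, \ref{it:omega-L-seperate-middle}, \ref{it:omega-cone-middle}. But the genuinely new work in the paper's proof is a single explicit map $\Phi$ (with three cases depending on whether the spine is too heavy below $h_1$, too heavy above $h_1$, or neither) that \emph{simultaneously} enforces conditions \ref{it:omega-pillar-cap} and \ref{it:omega-face-bound-strong}, together with a fresh cluster-expansion energy/entropy computation for that map. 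You gesture at this (``same face-count map applied to the two sub-slabs'') but never actually construct the map or verify the energy gain and entropy bound—this is precisely the hard part you flag as an ``obstacle,'' so the proposal leaves the crux unproven rather than finessing it.

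More concretely problematic is your alternative justification that condition \ref{it:omega-face-bound-strong} ``is a direct consequence of \cref{prop:exp-tail-pillar-height} applied to the two pieces.'' \Cref{prop:exp-tail-pillar-height} bounds the probability that the pillar \emph{height} exceeds $h$; it says nothing about the \emph{face count} $|\sF(\cS_x)\cap\cL_{\le h_1}|$ of the spine within a slab, which is what \ref{it:omega-face-bound-strong} constrains. A pillar of height exactly $h_1+h_2$ can still have an arbitrarily wide spine inside $\cL_{\le h_1}$, and conditioning on $E_{h_1+h_2}^x$ does not let you read a face-count bound off the height tail. So that alternative step would fail, and you should discard it. There is also a subtle bookkeeping issue with trying to establish \ref{it:omega-pillar-cap} (as in \cref{cor:widehat-Ehx}) via a separate ``trimming'' map and then \ref{it:omega-face-bound-strong} via another ``face-count'' map: the trimming map must be shown to preserve both the increment conditions from the earlier $\Phi_\Incr$ applications and the split at $h_1$, while the face-count map must preserve the trimmed cap. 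The paper avoids this by building one map whose cases are mutually exclusive (Cases A and B cannot co-occur because $|\sF(\cS_x)|\le 10h$ after $\Phi_\Iso$) and whose output lands directly in $\Omega_{h_1,h_2}$—a structural point your modular decomposition would still need to check.
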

\begin{proof}
To lower bound $\bar{\mu}_n(\Omega_{h_1, h_2}\mid E_{h_1+h_2}^x)$, note that if we begin with any interface in $E_{h_1+h_2}^x$, we can guarantee all the properties except \cref{it:omega-pillar-cap,it:omega-face-bound-strong} of $\Omega_{h_1, h_2}$ by applying $\Phi_\Iso$ and $\Phi_\Incr$. Call the image of the composition of these maps $\tilde{\Omega}_{h_1, h_2}$. We are allowed to apply $\Phi_\Incr$ a constant number of times by \cref{rem:incr-map-variations}, and each will cost a factor of $1 - \epsilon_\beta$. (We need to apply the increment map $\Phi_\Incr$ three times, at heights $h_1 - L^3$, $h_1+1$, and $h_1 + h_2 - L^3$ with $L' = L^3$, to get \cref{it:omega-cone-middle,,it:omega-L-seperate-middle,it:omega-L-separate-top}). Thus, \cref{thm:iso-map,thm:incr-map} proves that
\begin{equation}\label{eq:tilde-Omega}
    \bar{\mu}_n(\tilde{\Omega}_{h_1, h_2} \mid E_{h_1 + h_2}^x).
\end{equation}
Now for any $\cI \in \tilde{\Omega}_{h_1, h_2}$, we claim we can use another map argument to additionally ensure we have \cref{it:omega-pillar-cap,it:omega-face-bound-strong}. Let $\cP_x = \cP_x^\cI$ be the pillar at $x$ in $\cI$. Let $\sX_{T}$ be the trivial increment with height $h_1 +h_2$ (so that its two vertices have heights $h_1 + h_2 - 1/2$ and $h_1 + h_2 - 1 - 1/2$). We consider three cases:

\begin{enumerate}[{Case} A.]
\item If $|\sF(\cS_x) \cap \cL_{\leq h_1}| > 10h_1$, then let $\sX_{j}$ be the trivial increment whose bottom cut-point is at $h_1 - 1/2$ (which exists per \cref{it:omega-L-seperate-middle}), and define \[\cP_x^\cJ = \big(\underbrace{X_\trivincr,\ldots,X_\trivincr}_{h_1},\sX_{j+1},\ldots,\sX_{T}\big)\,.\]

\item Otherwise, if $|\sF(\cS_x) \cap \cL_{\geq h_1}| > 10h_2$, then let $\sX_{j}$ be as above and define \[\cP_x^\cJ = \big(\sX_1,\ldots,\sX_{j-1},\underbrace{X_\trivincr,\ldots,X_\trivincr}_{h_2}\big)\,.\]
Note that case A and B cannot occur simultaneously since $|\sF(\cS_x)| \leq 10h$ by $\Phi_\Iso$.

\item If neither of the above cases hold, then define \[\cP_x^\cJ = \big(\sX_1,\ldots,\sX_{T}\big)\,.\]
\end{enumerate}

Let $\Phi$ be a map that takes $\cI$ and gives the interface $\cJ$ which replaces $\cP_x^\cI$ with $\cP_x^\cJ$. We will prove the required energy and entropy bounds assuming we are in Case A, as the proof for Cases B and C are essentially the same. In Case B we just have $\bX_B^\cI$ and $\bX_A^\cI$ defined below switch roles (in fact it is even simpler because there is no shift of increments), and in Case C we just note that all the computations below would still hold if we did not change any of the increments in $\bX_A^\cI$. We begin by proving the energy bound:
\begin{equation*}
\bar{\mu}_n(\fm(\cI;\Phi(\cI)) \geq r \mid \tilde{\Omega}_{h_1, h_2}) \leq Ce^{-(\beta - C)r}
\end{equation*} 
We can split up any interface $\cI \in \Iso_{x, L, h}$ as follows:
{\renewcommand{\arraystretch}{1.3}
\[
\begin{tabular}{m{0.05\textwidth}m{0.2\textwidth}m{0.5\textwidth}}
\toprule
\midrule
 $\bR$ & $\cS_x \cap \cL_{\geq h_1 + h_2}$ & ``Remainder" increments above height $h_1 + h_2$  \\ 
 $\bX_{B}^{\cI}$ & $\bigcup_{j+1\leq j \leq T-1}\sF(\sX_j)$ & Increments between $v_{j+1}$ and $v_{T}$ \\
 $\bX_{A}^{\cI}$ & $\cS_x \cap \cL_{\leq h_1}$ & Increments below height $h_1$ (these will be trivialized) \\
 $\bB$ & $\cI\setminus(\cS_x^\cI)$
 & The remaining set of faces in $\cI$ \\
 \bottomrule
 \end{tabular}
\]
}
Similarly, we can divide the interface $\cJ$:
{\renewcommand{\arraystretch}{1.3}
\[
\begin{tabular}{m{0.05\textwidth}m{0.2\textwidth}m{0.5\textwidth}}
\toprule
\midrule
 $\bX_{B}^{\cJ}$ & & Horizontally shifted copy of $\bX_{B}^\cI$ \\
 $\bX_{A}^{\cJ}$ & & Trivial increments between heights 0 to $h_1$ \\
 $\bB$ & & Same set of faces as in $\cI$ \\
 \bottomrule
 \end{tabular}
\]
}
The trivial increments in $\bX_{A}^\cJ$ have $4h_1$ faces, while the number of faces in $\bX_A^\cI$ is at least $10h_1$ by assumption. So, the excess area of the map is 
\begin{equation}\label{eq:m-lower-bnd-omega-map}
    \fm(\cI;\cJ) = |\bR| + |\bX_A^\cI| - |\bX_A^\cJ| \geq (|\bR| + \frac{3}{5}|\bX_A^\cI|) \vee (|\bR| + \frac{3}{2}|\bX_A^\cJ|)
\end{equation}
Using the cluster expansion, we have
\begin{equation*}
\frac{\bar{\mu_n}(\cI)}{\bar{\mu}_n(\cJ)} = (1-e^{-\beta})^{|\partial \cI| - |\partial \cJ|}e^{-\beta \fm(\cI;\cJ)}q^{\kappa_\cI - \kappa_\cJ}\exp(\sum_{f\in \cI} \g(f, \cI) - \sum_{f \in \cJ} \g(f, \cJ))
\end{equation*}
As in the proof of \cref{lem:control-1-connected-faces}, we can define an injective map $T$ on a subset of $\partial \cJ$ to $\partial \cI$ and show that the number of faces we do not define $T$ on is bounded by $C\fm(\cI;\cJ)$ for some $C$. Faces which are 1-connected to $B$ can be mapped to themselves, and faces 1-connected to $X_B^\cJ$ can be mapped to their shifted copy in $X_B^\cI$ (the cone separation property ensures there is no problem here). The remaining faces which are 1-connected $X_A^\cJ$ can be handled by following the procedure in \cref{st-3-control-1-conn-faces} of \cref{lem:control-1-connected-faces}, or more simply in this case we can just bound the number of such faces by $C_0|\bX_A^\cJ| \leq \frac{2}{3}C_0\fm(\cI;\cJ)$, where $C_0$ is the number of faces that can be 1-connected to a particular face, and so we do not need to define $T$ on these faces.

We also have 
\begin{equation*}
\kappa_\cI - \kappa_\cJ \leq |\bR| +|\bX_A^\cI| \leq \frac53\fm(\cI;\cJ)
\end{equation*}
since adding a face can only create at most one more open cluster. 

Finally, we bound the influence of the $g$-terms. We can write the absolute value of their sum as
\begin{equation*}
    \sum_{f \in \bR \cup \bX_A^\cI} |\g(f, \cI)| + \sum_{f \in \bX_A^\cJ} |\g(f, \cJ)| + \sum_{f \in \bB} |\g(f, \cI) - \g(f, \cJ)| + \sum_{f \in \bX_B^\cI} |\g(f, \cI) - \g(\theta f, \cJ)|
\end{equation*}
where $\theta$ is the horizontal shift that moves $\bX_B^\cI$ to $\bX_B^\cJ$.

We can bound the first and second terms by $KC\fm(\cI;\cJ)$ by the bound in \cref{eq:m-lower-bnd-omega-map}.

For the third term, we note that since both pillars have the same stretch of $L^3$ trivial increments at the bottom, we have by \cref{eq:pillar-in-cone-2},
\begin{equation*}
    \sum_{f \in \bB} |\g(f, \cI) - \g(f, \cJ)| \leq \sum_{f \in \bB}\sum_{g \in (\bX_A^\cI \cup \bX_A^\cJ) \cap \cL_{\geq L^3}} Ke^{-cd(f, g)} \leq KCe^{-cL}
\end{equation*}
Finally, for the fourth term, when the $r$-distance in the cluster expansion is attained by a face in $\bR \cup \bX_A^\cI \cup \bX_A^\cJ$, we can use \cref{eq:m-lower-bnd-omega-map}, and when it is attained by a face in $\bB$, we can use \cref{eq:pillar-in-cone-2}. That is, we have
\begin{align*}
    \sum_{f \in \bX_B^\cI} |\g(f, \cI) - \g(\theta f, \cJ)| &\leq \sum_{f \in \bX_B^\cI}\sum_{g \in \bR \cup \bX_A^\cI \cup \bX_A^\cJ}  Ke^{-cd(f, g)} + \sum_{f \in \bX_B^\cI}\sum_{g \in \bB}  Ke^{-cd(f, g)}\\
    &\leq \sum_{f \in \sF(\Z^3)}\sum_{g \in \bR \cup \bX_A^\cI \cup \bX_A^\cJ}  Ke^{-cd(f, g)} + KCe^{-cL}\\
    &\leq KC\fm(\cI;\cJ) + KCe^{-cL}
\end{align*}

Thus, we have proved the energy bound
\begin{equation*}
\frac{\bar{\mu_n}(\cI)}{\bar{\mu}_n(\cJ)} \leq e^{-(\beta - C)\fm(\cI;\cJ)}
\end{equation*}
For the entropy bound, we simply note that given any $\cJ \in \Phi(\tilde{\Omega}_{h_1, h_2})$, we can recover $\cI$ if we are given the 1-connected set $\bR$ which has size $\leq \fm(\cI;\cJ)$, and the 1-connected set $\bX_A^\cI$ which has size $\leq \frac{5}{3}\fm(\cI;\cJ)$. Indeed, we can take $\cJ \setminus \cP_x^\cJ$, attach $\bX_A^\cI$ at $x$, then append the portion of $\cP_x^\cJ$ with height larger than $\hgt(\bX_A^\cI)$, and finally attach $\bR$ at the top cut-point. Thus, by \cref{lem:num-of-0-connected-sets}, we have
\begin{equation*}
|\{\cI \in \Phi^{-1}(\cJ): \fm(\cI;\cJ) = M\}| \leq s^{\frac83M}
\end{equation*}
Thus, we have for any $r \geq 1$,
\begin{align*}
\bar{\mu}_n(\fm(\cI;\Phi(\cI)) \geq r, \tilde{\Omega}_{h_1, h_2}) &= \sum_{M \geq r}\,\, \sum_{\substack{\cI \in \tilde{\Omega}_{h_1, h_2},\\ \fm(\cI;\Phi(\cI)) = M}} \bar{\mu}_n(\cI)\\
&\leq \sum_{M \geq r}\,\, \sum_{\cJ \in \Phi(\tilde{\Omega}_{h_1, h_2})}\,\, \sum_{\substack{\cI \in \Phi^{-1}(\cJ),\\ \fm(\cI;\cJ) = M}} e^{-(\beta - C)M}\bar{\mu}_n(\cJ)\\
&\leq \sum_{M \geq r} s^{\frac83M} e^{-(\beta - C)M}\bar{\mu}_n(\Phi(\tilde{\Omega}_{h_1, h_2}))\\
&\leq Ce^{-(\beta - C - \frac83\log s)r}\bar{\mu}_n(\tilde{\Omega}_{h_1, h_2})
\end{align*}
Then, dividing by $\bar{\mu}_n(\tilde{\Omega}_{h_1, h_2})$ yields
\begin{equation*}
\bar{\mu}_n(\fm(\cI;\Phi(\cI)) \geq r \mid \tilde{\Omega}_{h_1, h_2}) \leq Ce^{-(\beta - C)r}
\end{equation*} 
Taking $r = 1$ above and combining with \cref{eq:tilde-Omega} concludes the proof of the lower bound for $\bar{\mu}_n(\Omega_{h_1, h_2}\mid E_{h_1+h_2}^x)$.
\end{proof}

Now, we have shown that a typical pillar in $E_{h_1 + h_2}^x$ will also be in $\Omega_{h_1, h_2}$. However, we need to show that in the joint space of configurations $(\omega, \sigma)$, the event $\cA^{\noRed}_{x, h}$ also occurs primarily on pillars in $\Omega_{h_1, h_2}$. For this, it will be useful to show that the event $\cA^{\noRed}_{x, h}$ can naturally be broken up increment by increment. However, in general we can only determine if a vertex is in $\Ared$ or $\Ared^c$ by looking at the entire configuration $\sigma$. Hence, we need to establish a Domain Markov type result in the joint space showing that once we reach a cut-point $v_i \in \cP_x$, the influence of the coloring outside of $\cP_x$ on a vertex inside $\cP_x$ is only through $v_i$. We begin with the following lemma, where in what follows, we refer to vertices interior to an increment shell $X^{\mathrm{o}}$ as all the vertices of $\cP_x$ that are part of said increment.

\begin{lemma}\label{lem:locally-determine-Ared}
    Fix an increment shell $X_\star^\mathrm{o}$ rooted at a vertex $v_\star\in \Lambda_n$, and let $G_\star = (V_\star, E_\star)$ be the induced subgraph of $\Lambda_n$ on the vertices that are interior to $X_\star^\mathrm{o}$. Let $i\geq 1$. Conditional on the event $\sX_i^\mathrm{o} = X_\star^\mathrm{o}$ in the pillar $\cP_x$ (i.e., $v_i(\cP_x)=v_\star$) and the event $v_\star \in \Ared^c$ (resp., $v_\star \in \Ablue$), the random set $V_\star \cap \Ared^c$ (resp., $V_\star \cap \Ablue$) depends only on $\sigma\restriction_{V_\star}$. 
\end{lemma}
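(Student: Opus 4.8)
\textbf{Proof proposal for \cref{lem:locally-determine-Ared}.}

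The plan is to show that the status of a vertex $v\in V_\star$ with respect to $\Ared^c$ (resp.\ $\Ablue$), given the two conditioning events, is entirely determined by the colors $\sigma\restriction_{V_\star}$, by exhibiting the increment shell's bounding faces as a separating interface that blocks any ``external'' $\Red$-path (resp.\ $\Blue$-path) from reaching the interior. The key structural input is that $\sX_i^\mathrm{o} = X_\star^\mathrm{o}$ carries the information that the cut-point $v_\star$ is a genuine cut-point of $\cP_x$, so the four side faces of $v_\star$ at its own height, together with the side/top faces of the increment shell, form a $1$-connected collection of faces in $\clfaces$ that separates $V_\star$ from $V_\star^c$ inside the half-space $\cL_{\geq\hgt(v_\star)}$. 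Concretely, I would first record the geometric fact (as in \cref{obs:pillar-vert-simp-conn} and the reasoning of \cref{rem:properties-of-Itop}) that on the event $\sX_i^\mathrm{o}=X_\star^\mathrm{o}$, every $\Lambda_n$-path leaving $V_\star$ through a face of $\opfaces$ must exit through $v_\star$ itself or descend below $\hgt(v_\star)$ through the face below $v_\star$; this is precisely because the shell faces are closed, hence cannot carry an open edge.

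Next I would argue the two directions of the determination. For ``$v\in V_\star\cap\Ared^c$ is implied by $\sigma\restriction_{V_\star}$'': suppose $\sigma\restriction_{V_\star}$ is such that there is no $\Red$-path within $V_\star$ from $v$ to $v_\star$ and no $\Red$-path within $V_\star$ from $v$ to a vertex on the top boundary of the increment. Then, since any path from $v$ to $\partial\Lambda_n^+$ within $\Lambda_n$ must either pass through $v_\star$ or exit $V_\star$ through an open edge at or above $\hgt(v_\star)$ but the latter is impossible (shell edges are closed), and since $v_\star\in\Ared^c$ by the second conditioning event, no $\Red$-path from $v$ can reach $\partial\Lambda_n^+$, so $v\in\Vred^c$. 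Co-connectedness of $V_\star$ together with the fact that $V_\star^c$ meets $\Ared^c$ (it contains a neighborhood of $v_\star$) promotes this to $v\in\Ared^c$: any finite $\Vred^c$-component containing $v$ would have to be contained in $\Ared$, contradicting that $v_\star\notin\Ared$ is reachable. Conversely, if $\sigma\restriction_{V_\star}$ does provide a $\Red$-path inside $V_\star$ from $v$ to $v_\star$, then $v$ inherits $v_\star$'s status: but $v_\star\in\Ared^c$ is a fixed conditioning event, and $v$ being $\Red$-connected to $v_\star$ within $\Lambda_n$ forces $v\in\Ared^c$ as well (a $\Red$-path to the infinite $\Red$ cluster through $v$ would give one through $v_\star$). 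The $\Blue$ case is identical after replacing $\Red$-paths by $\Blue$-paths and $\Ared^c$ by $\Ablue$, using that $v_\star\in\Ablue$ and that a $\Blue$-path inside $V_\star$ connecting $v$ to $v_\star$ certifies $v\in\Ablue$, while the absence of such a path (together with the shell separating $V_\star$ from everything below) certifies $v\notin\Ablue$.

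The main obstacle I anticipate is handling the subtle point that the increment shell $X_\star^\mathrm{o}$ by definition omits faces dual to edges with both endpoints in $\Atop^c$ (see \cref{def:pillar-shell}), so one must be careful that these omitted faces do not open a ``leak'' allowing an external $\Red$ or $\Blue$ path into $V_\star$. The resolution is that such omitted faces sit between two $\Atop^c$ vertices, hence both endpoints are already inside the pillar (thus inside $V_\star$ if we are in the right slab), so no leak to the genuinely exterior region is possible; formalizing exactly which vertices count as ``interior to the increment'' and checking this requires invoking \cref{obs:pillar-given-by-walls} and \cref{obs:wall-nest-face} to pin down $V_\star$ as a connected, co-connected set whose only boundary faces toward $\partial\Lambda_n^+$ pass through $v_\star$. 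Once that bookkeeping is done, the Domain Markov property in the coupled FK--Potts measure (the Edwards--Sokal coupling restricted to $G_\star$ with the boundary spin at $v_\star$ and the shell of closed edges as boundary condition) yields the claimed $\sigma\restriction_{V_\star}$-measurability immediately.
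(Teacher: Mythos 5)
Your central step is incorrect: you argue that a $\Red$-path from $v$ cannot exit $V_\star$ sideways ``because the shell faces are closed, hence cannot carry an open edge,'' but a $\Red$-path in the Potts coloring is a path of adjacent $\Red$ vertices and does not use open FK edges at all. Two $\Red$ vertices can perfectly well be separated by a closed edge, so closed shell faces do nothing to block a color path. Consequently your deduction ``no $\Red$-path from $v$ can reach $\partial\Lambda_n^+$, so $v\in\Vred^c$'' does not follow from the hypothesis you state, and in fact you have the direction of the argument inverted. The relevant structural fact is not that the shell blocks paths, but that the vertices $W$ just outside the shell (at heights $\geq \hgt(v_\star)$) lie in $\Vtop\subseteq\Vred$ and are therefore all $\Red$; this is where the pillar-shell definition (which excludes faces between two $\Atop^c$ vertices, forcing the opposite endpoint of every shell face to be in $\Atop\subseteq\Vred$) actually earns its keep. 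Once this is in hand, the characterization is: $v\in\Vred\cap V_\star$ \emph{iff} $v$ has an internal $\Red$-path to a vertex adjacent to $W$, which is indeed determined by $\sigma\restriction_{V_\star}$ because the colors of $W$ are already known to be $\Red$. Your stated condition (``no $\Red$-path from $v$ to $v_\star$ and no $\Red$-path to the top boundary'') is not the right one; $v_\star$ is forced to be non-$\Red$ under the conditioning so a $\Red$-path terminating at $v_\star$ never exists, and the side-boundary exits you dismissed are precisely the ones that matter.

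The paper's proof instead constructs the answer explicitly: it sets $U$ to be the $V_\star$-vertices with an internal $\Red$-path to the external boundary $W$ (observing $U\subseteq\Vred$ because $W\subseteq\Vred$), then fills in the finite components of $\Z^3\setminus(U\cup W')$ to obtain $\widehat U$, and proves $V_\star\cap\Ared=\widehat U$. The filling-in step is what handles the passage from $\Vred$ to the augmented $\Ared$, and your brief appeal to ``co-connectedness of $V_\star$'' does not substitute for it --- a vertex of $V_\star$ can be in $\Vred^c$ yet still land in $\Ared$ if it is enclosed in a finite $\Vred^c$-component by the surrounding $\Red$ vertices, and one has to argue that the relevant finite components are visible from $\sigma\restriction_{V_\star}$ and the fixed shape of $W$. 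Finally, the appeal to the Domain Markov property at the end of your sketch is out of place: this lemma is a deterministic/combinatorial statement that a certain random set is a measurable function of $\sigma\restriction_{V_\star}$, whereas Domain Markov governs the conditional \emph{distribution} of $(\omega,\sigma)\restriction_{G_\star}$, which is the content of the subsequent \cref{lem:DMP-pillar-shell}, not of this one.
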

\begin{proof}
    Let $V'$ denote the vertices inside the pillar shell which have height $\geq \hgt(v_i)$. We will prove the case where $X_i^\mathrm{o}$ ends in a cut-point (the case where $X_i^\mathrm{o}$ is the remainder increment is simpler as then $V_\star = V'$). Let $W$ (resp., $W'$) be the set of vertices in $\Z^3 \setminus V'$ which are $\Lambda_n$-adjacent to $V_*$ (resp., $V'$). We know that $W' \subseteq \Vtop$, and hence $W \subseteq W' \subseteq \Vred$.  Let $U \subseteq V_\star$ be the subset of vertices $u$ such that there is a $\Lambda_n$-path $(u = u_1, \ldots, u_k)$ of vertices such that $u_k \in W$, and $u_l$ are $\Red$ vertices in $V$ for $l < k$. Then, $U \subseteq \Vred$. Let $\widehat U$ be the union of $U$ with the vertices in $V_\star$ which are in a finite component of $\Z^3 \setminus (U \cup W')$. Then, $\widehat U \subseteq \Ared$. 
    
    We now argue that $V_* \setminus \widehat U \subseteq \Ared^c$. Observe that every vertex of $v \in \Vred \cap V_\star$ must have a $\Lambda_n$-path of $\Red$ vertices in $V_\star$ connecting to $W'$. Because of the cut-point at $v_{i+1}$, there must actually be a $\Lambda_n$-path of $\Red$ vertices in $V_\star$ connecting $v$ to $W$, whence $\Vred \cap V_\star \subseteq U$. Furthermore, by definition of $\widehat U$, we know that for every $w \in V_\star \setminus \widehat U$, there is a $\Lambda_n$-path connecting $w$ to $v_i$ that does not include any vertices of $U$. Combined, $w$ is in the same component of $\Vred^c$ as $v_i$, whence $w \in \Ared^c$. In other words, we have shown that $V_\star \cap \Ared = \widehat U$. The set $U$ clearly only depends on $\sigma\restriction_{V_\star}$. Although the definition of $\widehat U$ further involves the set $W'$, the specific shape of $W' \setminus W$ does not affect which vertices of $V_\star$ are in $\widehat U$, and the set $W$ is fixed by $X_i^\mathrm{o}$. Hence, the set $\widehat U$ only depends on $\sigma\restriction_{V_\star}$.
    
    The $\Blue$ case is similar. First observe that if $v_i \in \Ablue$, we must actually have $v_i \in \Vblue$ since being a cut-point, the side neighbors of $v_i$ are in $\Ablue^c$. Let $U \subseteq V_\star$ be the subset of vertices $u$ such that there is a $\Lambda_n$-path $(u = u_1, \ldots, u_k = v_i)$ of vertices in $V_\star$ such that $u_l$ are $\Blue$ for $l < k$. Since as defined above, $W \subseteq W' \subseteq \Vred$, then we have $\Vblue \cap V_\star = U$. Let $\widehat U$ be the union of $U$ with the vertices in $V_\star$ which are in a finite component of $\Z^3 \setminus U$. Since $\Ablue$ is co-connected, then $\widehat U \subseteq \Ablue$. Finally, for every $w \in V_\star \setminus \widehat U$, there is a $\Lambda_n$-path of vertices in $V_\star$ connecting $w$ to $W'$ that does not include any vertices of $U$. Because of the cut-point at $v_{i+1}$, there must actually be a $\Lambda_n$-path of vertices in $V_\star$ connecting $w$ to $W$ (still not including vertices of $U$). Since $\Vblue \cap V_\star = U$, this then implies that $w$ is in the same component of $\Vblue^c$ as $W$, whence $w \in \Ablue^c$. Thus, $V_\star \cap \Ablue = \widehat U$.
\end{proof}

\begin{lemma}\label{lem:DMP-pillar-shell}
Fix an increment shell $X_\star^\mathrm{o}$ rooted at a vertex $v_\star\in \Lambda_n$, and let $G_\star = (V_\star, E_\star)$ be the induced subgraph of $\Lambda_n$ on the vertices that are interior to $X_\star^\mathrm{o}$. 
Let $i\geq 1$,
condition on the event $\sX_i^{\rm o}=X_\star^{\rm o}$ in the pillar $\cP_x$, and let $\cW_\omega $ be the set of vertices in $\Lambda_n$ excluding all vertices in $\cP_x^\mathrm{o}$ with height $> \hgt(v_\star)$, noting that on the event $\{\sX_i^\mathrm{o} = X_\star^\mathrm{o}\}$, the set $\cW_\omega$ is measurable w.r.t.\ $\omega\restriction_{E_\star^c}$. Let $\cF$ be the $\upsigma$-field generated by $\omega\restriction_{E_\star^c}$ along with $\sigma\restriction_{\cW_\omega}$. Then the law  $\phi_n\left((\omega,\sigma)\restriction_{G_\star} \in \cdot \mid 
\sX_i^{\mathrm{o}} = X_\star^{\mathrm o}\,, \cF\right)$ is that of the coupled FK--Potts model on $G_\star$ with boundary conditions that are free except at $v_\star$, whose color is specified by $\cF$.
\end{lemma}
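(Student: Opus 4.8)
The plan is to reduce the statement to the elementary factorization of the \emph{coupled} Edwards--Sokal measure. Recall that $\phi_n(\sigma,\omega)=Z_n^{-1}\prod_{e=[u,v]}w_e(\omega_e,\sigma_u,\sigma_v)$, with $w_e(1,a,b)=p\,\one_{\{a=b\}}$ and $w_e(0,a,b)=1-p$; this weight is a product over edges and carries \emph{no} global $q^{\kappa(\omega)}$ factor, so $\phi_n$ is a genuine Markov random field in the pair $(\omega,\sigma)$. The spatial Markov property then says that, conditionally on $\omega$ on all edges outside $E_\star$ together with the spins at the vertices bounding $E_\star$ from outside, the configuration inside $G_\star$ is a coupled FK--Potts model with those spins as boundary data; the content of the lemma is that, under our conditioning, this boundary data collapses to the single spin $\sigma_{v_\star}$, because the event $\{\sX_i^{\mathrm o}=X_\star^{\mathrm o}\}$ forces almost all of the edges separating $V_\star$ from $V_\star^c$ to be closed.

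First I would record the bookkeeping. On $\{\sX_i^{\mathrm o}=X_\star^{\mathrm o}\}$ one has $v_i(\cP_x)=v_\star$, and since $v_\star$ is a cut-point the interior vertex set is $V_\star=V(\cP_x)\cap\cL_{[\hgt(v_\star),\hgt(v_{i+1})]}$ (with $v_{i+1}$ the next cut-point), and $v_\star$ is the unique vertex of $V_\star$ at height $\hgt(v_\star)$; every other vertex of $V_\star$ has height $>\hgt(v_\star)$, lies in $\cP_x^{\mathrm o}$, and is therefore deleted from $\cW_\omega$. Hence $V_\star\cap\cW_\omega=\{v_\star\}$, so $\cF$ freezes exactly one interior spin, $\sigma_{v_\star}$. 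Next, every face of $X_\star^{\mathrm o}$ is dual to an edge having an endpoint outside $V_\star$: the faces of $F$ bounding $V_\star$ are dual to edges between a pillar vertex and an $\Atop$-vertex, while a pillar \emph{shell} carries no hairs into $\Atop^c$, so its hair faces are dual to edges with both endpoints in $\Atop\subseteq V_\star^c$. Consequently all edges dual to $X_\star^{\mathrm o}$ lie in $E_\star^c$, and (using the measurability noted in the statement) the event $\{\sX_i^{\mathrm o}=X_\star^{\mathrm o}\}$ and the field $\cF$ are measurable with respect to $\omega\restriction_{E_\star^c}$ and $\sigma\restriction_{\cW_\omega}$. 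The only edges joining $V_\star$ to $V_\star^c$ that are \emph{not} forced closed by $\{X_\star^{\mathrm o}\subseteq\clfaces\}$ are the two cut-point edges $[v_\star,v_\star-\ez]$ and $[v_{i+1},v_{i+1}+\ez]$, whose far endpoints lie respectively in $\cW_\omega$ (spin frozen) and among the pillar vertices of height $>\hgt(v_{i+1})$ (spin free).

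Now I would read off the conditional law. Splitting $\phi_n(\sigma,\omega)=Z_n^{-1}\big(\prod_{e\in E_\star}w_e\big)\big(\prod_{e\in E_\star^c}w_e\big)$, the second factor is $\cF$-measurable and the first depends only on $\omega\restriction_{E_\star}$ and on $\sigma$ at the endpoints of $E_\star$, all of which lie in $V_\star$. The one point that is not purely mechanical is the marginalization over the spins of the pillar vertices of height $>\hgt(v_{i+1})$: these are unfrozen, but all their incident edges lie in $E_\star^c$ (hence are frozen), and this region is joined to $G_\star$ only through the single edge $[v_{i+1},v_{i+1}+\ez]$; moreover, in the frozen edge configuration the cluster of $v_{i+1}+\ez$ within that region meets neither $\partial\Lambda_n^+$ (the pillar is sealed off from $\Atop\supseteq\partial\Lambda_n^+$ by closed faces) nor $\partial\Lambda_n^-$ (any descending path in the frozen configuration would have to pass from $v_{i+1}$ into $G_\star$, hence along an edge of $E_\star$, which is unfrozen). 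Hence summing out those spins contributes a factor independent of $\sigma_{v_{i+1}}$, by the colour symmetry of the coupled model. Therefore, on $\{\sX_i^{\mathrm o}=X_\star^{\mathrm o}\}\cap\cF$, the law of $(\omega,\sigma)\restriction_{G_\star}$ is proportional to $\prod_{e\in E_\star}w_e(\omega_e,\sigma_u,\sigma_v)$ with $\sigma_{v_\star}$ pinned to the colour recorded by $\cF$ and everything else free --- that is, the coupled FK--Potts model on $G_\star$ with boundary conditions free except at $v_\star$.

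The main obstacle is the bookkeeping of the middle two paragraphs rather than any hard estimate: one must justify that closing the shell faces $X_\star^{\mathrm o}$ genuinely seals $V_\star\setminus\{v_\star\}$ off from the rest of $\Lambda_n$ --- so that $\Vtop$, $\Atop$, the pillar shell and hence $\cW_\omega$ above $v_\star$ really are functions of $\omega\restriction_{E_\star^c}$ alone (here \cref{prop:out-graph-connected} together with the co-connectedness of the increment vertices is the natural tool) --- and that the region above $v_{i+1}$ couples to $G_\star$ only through the colour-symmetric, boundary-free edge $[v_{i+1},v_{i+1}+\ez]$. Given that, the conclusion is just the edge-by-edge factorization of $\phi_n$.
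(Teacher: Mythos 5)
Your proposal is correct and follows essentially the same route as the paper: you factor $\phi_n$ over edges (which is precisely the Domain Markov property of the coupled FK--Potts model invoked there), observe that on $\{\sX_i^{\mathrm o}=X_\star^{\mathrm o}\}$ every edge joining $V_\star$ to $V_\star^c$ other than $[v_\star,v_\star-\ez]$ and $[v_{i+1},v_{i+1}+\ez]$ is forced closed, note that $V_\star\cap\cW_\omega=\{v_\star\}$ so $\cF$ pins only $\sigma_{v_\star}$, and then integrate out the unfrozen spins above $v_{i+1}$ by colour symmetry after checking that region is sealed off from $\cW_\omega$ by closed shell faces. The paper phrases the last step as the boundary condition at $v_{i+1}$ integrating out to a uniform colour, and also notes in passing the easy variant where $X_\star^{\mathrm o}$ is the remainder increment (no $v_{i+1}$), but the substance is identical.
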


\begin{proof}
    As above, we will assume that $X_i^\mathrm{o}$ is not the remainder increment, as that case is the same except there is no $v_{i+1}$ to worry about. Note first that the event $\sX_i^\mathrm{o} = X_\star^\mathrm{o}$ does not impose any conditions on $\omega\restriction_{E_\star}$. Indeed, it follows by the definition of the pillar shell that for every $\omega \in \{\sX_i^\mathrm{o} = X_i^\mathrm{o}\}$ and $\eta\restriction_{E_\star^c} = \omega\restriction_{E_\star^c}$, we still have $\eta \in \{\sX_i^\mathrm{o} = X_i^\mathrm{o}\}$.
    Now, fix any boundary condition $(\bar{\omega}, \bar\sigma) \in \{\sX_i^\mathrm{o} = X_\star^\mathrm{o}\}$. Let $\partial V_\star \subseteq V_\star$ be the subset of vertices which are $\Lambda_n$-adjacent to $V_\star^c$. Observe that for any vertex $v \in \partial V_\star$, every edge $e \in E_\star^c \setminus \{[v_i, v_i - \ez], [v_{i+1}, v_{i+1} + \ez]\}$ incident to $v$ is such that $f_e \in X_i^\mathrm{o}$, and hence $\bar{\omega}_e = 0$. Thus, by the Domain Markov property of the coupled FK--Potts model, the law of $(\omega, \sigma)\restriction_{G_\star}$ under $\phi_n(\cdot \mid \omega\restriction_{E_\star^c} = \bar{\omega}\restriction_{E_\star^c},\, \sigma\restriction_{V_\star^c} = \bar{\sigma}\restriction_{V_\star^c})$ is an FK--Potts model on $G_\star$ with free boundary conditions except $\sigma_{v_i} = \bar\sigma_{v_i}$ if $\bar{\omega}_{[v_i, v_i - \ez]} = 1$ and $\sigma_{v_{i+1}} = \bar\sigma_{v_{i+1}}$ if $\bar{\omega}_{[v_{i+1}, v_{i+1} + \ez]} = 1$.
    Now, any path from $v_{i+1}$ to $\cW_{\bar{\omega}}$ using edges of $E_\star^c$ must cross a face of $\cP_x$ and hence include a closed edge, so $v_{i+1}$ is not in the same component of $\bar{\omega}\restriction_{E_\star^c}$ as any vertices of $\cW_{\bar{\omega}}$.
    Hence, if we condition on $\{\omega\restriction_{E_\star^c} = \bar{\omega}\restriction_{E_\star^c}, \sigma\restriction_{\cW_{\bar{\omega}}} = \bar\sigma\restriction_{\cW_{\bar{\omega}}}\}$, we are in the above situation except we always fix  $\sigma_{v_i} = \bar\sigma_{v_i}$ as $v_i \in \cW_{\bar{\omega}}$, and the boundary condition on $\sigma_{v_{i+1}}$ integrates out via symmetry to being a uniform distribution over colors, which is the same as having no boundary condition.    
\end{proof}

\begin{corollary}\label{cor:DMP-for-Ared}
    In the notation of \cref{lem:DMP-pillar-shell}, let $\cX_\star$ be any event that is measurable w.r.t.\ $(\omega, \sigma)\restriction_{G_\star}$, and let $\cY$ be any event which, conditionally on $\{\sX_i^\mathrm{o} = X_\star^\mathrm{o}\}$, is $\cF$-measurable.
    Then, letting $\nu_\star$ be the coupled FK--Potts model on $G_\star$ with free boundary conditions, we have the following for any event $\cA$:
    \begin{enumerate}
        \item 
        If $\cA$ is measurable w.r.t.\ the random set $V_\star \cap \Ared^c$
        and $\{\sX_i^\mathrm{o} = X_\star^\mathrm{o},\, v_\star\in\Ared^c,\, \cX_\star,\, \cY\} \neq \emptyset$
        then
    \[
        \phi_n(\cA \mid \sX_i^\mathrm{o} = X_\star^\mathrm{o},\,v_\star \in \Ared^c,\, \cX_\star,\, \cY) = \nu_\star(\cA \mid \cX_\star, \sigma_{v_\star} \neq \Red)\,.
    \]
    \item If $\cA$ is measurable w.r.t.\ the random set $V_\star \cap \Ablue$
        and $\{\sX_i^\mathrm{o} = X_\star^\mathrm{o},\, v_\star\in\Ablue,\, \cX_\star,\, \cY\} \neq \emptyset$
        then
        \[\phi_n(\cA \mid \sX_i^\mathrm{o} = X_\star^\mathrm{o},\,v_\star \in \Ablue,\, \cX_\star,\, \cY) = \nu_\star(\cA \mid \cX_\star, \sigma_{v_\star} = \Blue)\,.
    \]
    \end{enumerate}
\end{corollary}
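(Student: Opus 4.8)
The statement to prove is \cref{cor:DMP-for-Ared}, which follows by combining the two preceding lemmas.

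\begin{proof}[\textbf{\emph{Proof of \cref{cor:DMP-for-Ared}}}]
The plan is to feed \cref{lem:locally-determine-Ared} into \cref{lem:DMP-pillar-shell}. First I would observe that, conditionally on $\{\sX_i^\mathrm{o} = X_\star^\mathrm{o}\}$, the three events $\cY$, $\{v_\star\in\Ared^c\}$ and $\cX_\star$ split cleanly between the ``inside'' and ``outside'' $\upsigma$-fields: $\cY$ is $\cF$-measurable by hypothesis; $\cX_\star$ is measurable w.r.t.\ $(\omega,\sigma)\restriction_{G_\star}$ by hypothesis; and $\{v_\star\in\Ared^c\}$ is $\cF$-measurable as well, since whether $v_\star$ lies in the (a.s.\ infinite) $\Ared^c$ component is determined by the configuration outside the pillar shell together with the coloring on $\cW_\omega$ --- indeed $v_\star = v_i(\cP_x)$ is a cut-point whose four side neighbors at its own height lie in $\Atop\subseteq\Vred$, so the membership of $v_\star$ in $\Ared$ versus $\Ared^c$ is decided by paths that immediately leave $V_\star$. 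Thus conditioning on $\{\sX_i^\mathrm{o}=X_\star^\mathrm{o}, v_\star\in\Ared^c,\cX_\star,\cY\}$ amounts to conditioning on an $\cF$-measurable event (namely $\{\sX_i^\mathrm{o}=X_\star^\mathrm{o}, v_\star\in\Ared^c,\cY\}$) intersected with the ``inside'' event $\cX_\star$.

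Next I would apply \cref{lem:DMP-pillar-shell}: conditionally on $\{\sX_i^\mathrm{o}=X_\star^\mathrm{o}\}$ and on $\cF$, the law of $(\omega,\sigma)\restriction_{G_\star}$ is the coupled FK--Potts model $\nu_\star$ on $G_\star$ with free boundary conditions except that $\sigma_{v_\star}$ equals the color prescribed by $\cF$. On the $\cF$-measurable event $\{v_\star\in\Ared^c\}$, that prescribed color is some color $\neq\Red$; by the $S_q$-symmetry of $\nu_\star$ among the $q-1$ non-red colors (the conditional law of $(\omega,\sigma)\restriction_{G_\star}$ depends on $\cF$ only through the prescribed color of $v_\star$), conditioning further on the precise non-red value is the same as conditioning on $\{\sigma_{v_\star}\neq\Red\}$. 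Hence
\[
\phi_n\bigl(\cX_\star \cap \{V_\star\cap\Ared^c \in \cdot\,\} \,\big|\, \sX_i^\mathrm{o}=X_\star^\mathrm{o},\,v_\star\in\Ared^c,\,\cY\bigr)
= \nu_\star\bigl(\cX_\star \cap \{V_\star\cap\Ared^c\in\cdot\,\} \,\big|\, \sigma_{v_\star}\neq\Red\bigr)\,,
\]
where crucially \cref{lem:locally-determine-Ared} guarantees that, on the conditioned event, the random set $V_\star\cap\Ared^c$ is a deterministic function of $\sigma\restriction_{V_\star}$ alone, hence is itself $(\omega,\sigma)\restriction_{G_\star}$-measurable and its law under $\nu_\star(\cdot\mid\sigma_{v_\star}\neq\Red)$ is well-defined. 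Dividing the displayed identity for $\cX_\star\cap\cA$ by the one for $\cX_\star$ (both sides nonzero by the nonemptiness hypothesis) yields the claimed formula for $\phi_n(\cA\mid \sX_i^\mathrm{o}=X_\star^\mathrm{o},v_\star\in\Ared^c,\cX_\star,\cY)$. The $\Ablue$ case is identical, using the $\Blue$ halves of \cref{lem:locally-determine-Ared,lem:DMP-pillar-shell}: here the prescribed color on $\{v_\star\in\Ablue\}$ is exactly $\Blue$ (a cut-point in $\Ablue$ is in $\Vblue$), so no symmetrization is needed and one conditions directly on $\{\sigma_{v_\star}=\Blue\}$.

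I do not expect any serious obstacle here, as the corollary is essentially a bookkeeping combination of the two lemmas; the one point requiring care is the measurability claim that $\{v_\star\in\Ared^c\}$ is $\cF$-measurable (so that it may legitimately be absorbed into the conditioning before invoking the Domain Markov statement), together with the observation that the non-red symmetry of the free-boundary FK--Potts measure lets one replace ``$\sigma_{v_\star}$ is a specific non-red color'' by ``$\sigma_{v_\star}\neq\Red$'' without changing the conditional law of any $V_\star\cap\Ared^c$-measurable event.
\end{proof}
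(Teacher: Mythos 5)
Your proof follows the same route as the paper's: combine \cref{lem:locally-determine-Ared} (to make $\cA$ an event on $\sigma\restriction_{V_\star}$, with $V_\star\cap\Ared^c$ blind to which non-red colors appear) with \cref{lem:DMP-pillar-shell} (to reduce the conditional law on $G_\star$ to $\nu_\star$ with a boundary color at $v_\star$ prescribed by $\cF$), and then observe that the precise non-red boundary color is irrelevant for $\cA$, together with the cut-point observations that $\{v_\star\in\Ared^c\}$ is $\cF$-measurable and that $v_\star\in\Ablue$ forces $\sigma_{v_\star}=\Blue$. This is exactly the paper's argument.

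One small caveat on the phrasing: your intermediate ``displayed identity,'' read as an equality of measures applied to all events of the form $\cX_\star\cap\{V_\star\cap\Ared^c\in\cdot\,\}$ (including the special case $\cdot=$ everything, i.e., $\cX_\star$ alone), is stronger than what is actually available, since the conditional law on $G_\star$ from \cref{lem:DMP-pillar-shell} has a \emph{specific} (possibly $\cY$-dependent) non-red color at $v_\star$, whereas $\nu_\star(\cdot\mid\sigma_{v_\star}\neq\Red)$ uniformizes over non-red colors; these full laws need not coincide if $\cX_\star$ itself distinguishes among non-red colors. The paper sidesteps this by arguing only about the conditional probability of $\cA$: since, given $\omega\restriction_{E_\star}$ and $\cX_\star$, the set $V_\star\cap\Ared^c$ is a function of the Red/non-Red indicator alone and the cluster of $v_\star$ is never Red, the value $\nu_\star(\cA\mid\cX_\star,\sigma_{v_\star}=c)$ is constant over non-red $c$, hence equals the mixture $\nu_\star(\cA\mid\cX_\star,\sigma_{v_\star}\neq\Red)$. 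Stating the symmetry step at that level (conditional probability of $\cA$, not full conditional law) is both what you need and what is actually true; as written, the ``divide two identities'' move risks relying on an equality that can fail for color-asymmetric $\cX_\star$. In the paper's applications $\cX_\star$ is a pure $\omega$-event, for which your displayed identity does hold, so the end result is unaffected, but the argument should be phrased at the level of $\cA$ to match the generality of the Corollary's statement.
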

\begin{proof}
    Consider the $\Ared^c$ case (the $\Ablue$ case follows similarly). By \cref{lem:locally-determine-Ared}, the event $\cA$ can be expressed as an event on  $\sigma\restriction_{V_\star}$, so the expression $\nu_\star(\cA)$ is well defined. Note that conditionally on $\{\sX_i^\mathrm{o} = X_\star^\mathrm{o}\}$, the event $\{v_i \in \Ared^c\}$ is also $\cF$-measurable (the vertices surrounding the pillar shell are always in $\Vtop \subseteq \Vred$, so $v_i \in \Ared^c$ iff there is a path of $\Ared^c$ vertices in $\cW_\omega$ from $v_i$ to $\partial \Lambda_n^-$). Thus, it follows from \cref{lem:DMP-pillar-shell} that the law of $(\omega, \sigma)\restriction_{G_\star}$ under the measure $\phi_n(\cdot \mid \sX_i^\mathrm{o} = X_\star^\mathrm{o},\,v_\star \in \Ared^c,\, \cY)$ is the coupled FK--Potts model on $G$ with free boundary conditions except at $v_\star$, whose color is as specified by $\cY \cap  \{v_\star \in \Ared^c\}$.  Since $v_\star=v_i$ is a cut-point, then $v_\star \in \Ared^c$ implies that $\sigma_{v_\star} \neq \Red$, so the boundary condition on $\sigma_{v_\star}$ is some distribution over the non-$\Red$ colors (arising from $\cY \cap \{v_\star \in \Ared^c\}$). However, it is clear via the proof of \cref{lem:locally-determine-Ared} that the actual non-$\Red$ color of $\sigma_{v_\star}$ does not affect the set $V_\star \cap \Ared^c$, so for the conditional probability of $\cA$, we can equivalently condition on $v_\star = \noRed$. In the $\Blue$ case, $v_\star \in \Ablue$ implies that $\sigma_{v_\star} = \Blue$.
\end{proof}

\begin{remark}
    While \cref{cor:DMP-for-Ared} asks for $\cY$ to be measurable w.r.t.\ the edges $\omega\restriction_{E^c_\star}$ and vertex colors $\sigma\restriction_{\cW_\omega}$, our application of this corollary will be for $\cY$ that is measurable w.r.t.\ a smaller subset of edges: those in the interface $E_\star^c \cap \{e : f_e\in\cI\}$ along with those in ${E(U)^c}$ for $U = \{u\in \cP_x\,:\; \hgt(u)> \hgt(v_\star)\}$.    
\end{remark}
\begin{example}\label{ex:application-of-Ared-DMP}
    Oftentimes, we will want to establish an equality of the form 
    \begin{equation}\label{eq:DMP-for-Ared-example}\phi_n(\cA^{\noRed}_{v_i,v_{i+1}}\mid \cI = I,\, \cA^{\noRed}_{x, v_i}) = \phi_n(\cA^{\noRed}_{v_i,v_{i+1}}\mid \sX_i = X_i, v_i \in \Ared^c) 
    \end{equation}
    Observe that fixing $\cI = I$ can be split up as fixing the increment shell $\sX_i^\mathrm{o}$, fixing the hairs inside $\sX_i^\mathrm{o}$, and then fixing the rest of $\cI$. Then, in the notation of the above corollary, we can take $\cX_\star$ to be the event that fixes the hairs inside $\sX_i^\mathrm{o}$, and $\cY$ to be the event that fixes $\cI \setminus \sX_i$, intersected with the event $\cA^{\noRed}_{x, v_i}$. The above corollary then implies that the left hand side of \cref{eq:DMP-for-Ared-example} is equal to $\nu_\star(\cA \mid \cX_\star, \sigma_{v_\star} \neq \Red)$ for some event $\cA$ defined in terms of $\sigma\restriction_V$. A similar argument shows the same for the right hand side, where we additionally note that $X_i$ does not have to be a rooted increment because the measure $\nu_\star(\cdot \mid \cX_\star, \sigma_{v_\star} \neq \Red)$ no longer depends on the location of the graph $G_\star = (V_\star, E_\star)$ inside $\Lambda_n$ (nor the index $i$ of the increment). 
\end{example}

With this Domain Markov type result in hand, we can establish a $\Phi$-monotonicity property for our events of interest. 
\begin{lemma}\label{lem:phi-Phi-monotonicity}
Let $\Phi$ be any map on interfaces sending $E_h^x$ into $\Iso_{x, L, h}$ such that the action of $\Phi$ on $\cP_x$ is to shift increments or replace them by a stack of trivial increments, and to replace the base by a stack of trivial increments with equal height. (In particular, we can take $\Phi$ to be the composition of the sequence of maps used in \cref{lem:nice-space-likely} to move from $E_h^x$ to $\Omega_{h_1, h_2}$.) Then, for any $I, J$ such that $J = \Phi(I)$, we have 
\begin{equation*}
    \phi_n(\cA^{\noRed}_{x, h} \mid I) \leq \phi_n(\cA^{\noRed}_{x, h} \mid J)\,.
\end{equation*}
Moreover, the statement above holds if we replace $\cA^{\noRed}_{x, h}$ by $\cA^{\Blue}_{x, h}$.
\end{lemma}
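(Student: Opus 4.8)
The plan is to use the cut-point structure of $\cP_x$ to factor $\phi_n(\cA^{\noRed}_{x,h}\mid \cI = I)$ into one factor per increment, to observe that $\Phi$ acts on these factors only by (neutral) horizontal shifts and by replacing increments by stacks of trivial increments, and thereby to reduce the statement to a single-increment monotonicity estimate.

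Fix $I\in E_h^x$ with pillar $\cP_x$, cut-points $x=v_1,v_2,\ldots,v_{\sT+1}$, and increments $\sX_1,\ldots,\sX_{\sT},\sX_{>\sT}$ (on the isolated spaces $x=v_1$, so there is no base; in general one also carries a base increment below $v_1$, treated exactly as below). Since each $v_i$ is a cut-point of $\cP_x$, a simple $\Ared^c$-path inside $\cP_x$ from $x$ to height $h$ must visit every $v_i$ and its segment between $v_i$ and $v_{i+1}$ stays in the slab $\cL_{[\hgt(v_i),\hgt(v_{i+1})]}$; conversely such segments concatenate. Hence $\cA^{\noRed}_{x,h}=\big(\bigcap_{i=1}^{\sT}\cA^{\noRed}_{v_i,v_{i+1}}\big)\cap\cA^{\noRed}_{v_{\sT+1},h}$. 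Telescoping and invoking the Domain Markov identity of \cref{cor:DMP-for-Ared} in the form made explicit in \cref{ex:application-of-Ared-DMP}, each factor $\phi_n(\cA^{\noRed}_{v_i,v_{i+1}}\mid\cI=I,\cA^{\noRed}_{x,v_i})$ equals $\nu_\star(\,\cdot\mid\cX_{\star,i},\sigma_{v_\star}\neq\Red)$ for the free FK--Potts measure $\nu_\star$ on a translate of the increment, where $\cX_{\star,i}$ fixes the hairs interior to $\sX_i^{\mathrm o}$; in particular it is a universal function $g(\sX_i)$ of the increment (shell together with interior hairs) alone, independent of $i$ and of its location in $\Lambda_n$. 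Thus
\[
\phi_n(\cA^{\noRed}_{x,h}\mid\cI=I)\;=\;\Big(\prod_{i=1}^{\sT}g(\sX_i)\Big)\,g_{\mathrm{rem}}(\sX_{>\sT})\,,
\]
and the identical derivation, using the second item of \cref{cor:DMP-for-Ared}, gives the analogous product for $\cA^{\Blue}_{x,h}$.

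Now let $J=\Phi(I)$. By hypothesis $\Phi$ leaves $\cI\setminus\cP_x$ alone and, on $\cP_x$, either horizontally shifts an increment --- leaving $g(\sX_i)$ unchanged by translation invariance of $\nu_\star$ --- or replaces an increment $\sX$ (or the base, or the remainder increment) by a stack of $\hgt(\sX)$ trivial increments, whose joint contribution to the product above is $g(X_\trivincr)^{\hgt(\sX)}$, where $\hgt(\sX)$ denotes the number of unit slabs spanned by $\sX$. Comparing the two products term by term, \cref{lem:phi-Phi-monotonicity} follows at once from the single-increment estimate
\[
g(\sX)\;\le\;g(X_\trivincr)^{\hgt(\sX)}\qquad\text{for every increment }\sX
\]
(and the same for the base), together with the matching statement for $\Blue$.

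The heart of the matter --- and the step I expect to be the main obstacle --- is this last inequality: thinning a possibly wide, hairy increment to a straight column can only help a non-$\Red$ crossing, since the interior vertices of a wide increment abut the surrounding wired boundary $\Vtop\subseteq\Vred$ and are therefore more readily absorbed into $\Ared$. The plan is to prove it by first conditioning, under $\nu_\star$, on the FK configuration inside $\sX$: the Potts colouring then assigns an independent uniform colour to each open cluster, so the $\Ared^c$-crossing becomes a monotone (decreasing in ``this cluster is $\Red$'') percolation-type crossing in the revealed environment. Revealing the colouring slab by slab from the bottom cut-point upward and writing $\cR_j$ for the set of level-$j$ vertices of $\sX$ that lie in $\Ared^c$ and are $\Ared^c$-connected to $v_\star$ below level $j$, one has $\cR_0=\{v_\star\}$ and the crossing event is $\{\cR_{\hgt(\sX)}\neq\emptyset\}$; bounding $\nu_\star(\cR_{j+1}\neq\emptyset\mid\cR_0,\ldots,\cR_j)$ by the probability of crossing a single trivial unit slab reduces everything to a stochastic-domination statement --- enlarging the incoming boundary data $\cR_j$ and thinning that slab to $X_\trivincr$ are both operations under which the $\Ared^c$-crossing probability is monotone, which one gets from the FKG property of the FK edge layer together with the cluster-wise monotonicity of ``$v\in\Ared^c$'' noted above. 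The base is handled identically, viewing it as one increment between the levels of $x$ and $v_1$, and the $\Blue$ statement is obtained the same way, replacing ``decreasing in $\Red$'' by ``increasing in $\Blue$'' and using that $\Ablue$ is co-connected.
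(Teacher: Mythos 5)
Your factorization into per-increment factors via cut-points and \cref{cor:DMP-for-Ared}/\cref{ex:application-of-Ared-DMP} is exactly the paper's decomposition, and your observation that horizontal shifts leave each factor $g(\sX_i)$ invariant is correct. But you then reduce the lemma to the estimate $g(\sX)\le g(X_\trivincr)^{\hgt(\sX)}$ and flag it as ``the heart of the matter,'' proposing a fairly involved slab-by-slab stochastic-domination argument. That step is in fact vacuous, because $g(X_\trivincr)=1$: if $\sX_i$ is a trivial increment, the edge $[v_i,v_{i+1}]$ must be open --- were it closed, the dual face $f_{[v_i,v_{i+1}]}$ would be a hair $1$-connected to the side faces of the increment and would therefore belong to $\cP_x$, contradicting that the increment has only its eight side faces. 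Since $v_i$ and $v_{i+1}$ then lie in the same open cluster, they receive the same color under the Edwards--Sokal coupling, so conditional on $v_i\in\Ared^c$ (resp.\ $v_i\in\Ablue$) the event $\cA^{\noRed}_{v_i,v_{i+1}}$ (resp.\ $\cA^{\Blue}_{v_i,v_{i+1}}$) occurs with probability one. Once you note this, your inequality collapses to $g(\sX)\le 1$, which is trivial, and the whole stochastic-domination machinery you sketch in the last paragraph is unnecessary.

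There is also a small point about the first and last factors: the paper separately records $\phi_n(\cA^{\noRed}_{x,v_1}\mid I)\le 1$ for the base and $\phi_n(x\in\Ared^c\mid J)=1$ for the image (the latter because $J\in\Iso_{x,L,h}$ forces $x\in\Vbot\subseteq\Ared^c$ by \cref{clm:x-in-Vbot}, and $J$'s base is a column of trivial increments up to $\hgt(v_1)$). You gesture at this in a parenthetical, but you should make explicit that the base factor for $J$ is also exactly $1$ for the same ``same open cluster, same color'' reason, rather than because of your proposed single-increment monotonicity. Once these two points are in place, your write-up matches the paper's proof; the only real flaw is that you treated the trivial-increment factor as a nontrivial object to bound rather than computing it to be $1$.
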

\begin{proof}
Let $T$ be the index of the increment in $P_x^I$ that first reaches height $h$. Let $X_i$ be the $i$-th increment of the pillar $P_x^I$. By definition, we can always write 
    \begin{equation*}
        \phi_n(\cA^{\noRed}_{x, h} \mid I) = \phi_n(\cA^{\noRed}_{x,v_1} \mid I)\phi_n(\cA^{\noRed}_{v_T,h} \mid I, \cA^{\noRed}_{x, v_T})\prod_{i = 1}^{T-1} \phi_n(\cA^{\noRed}_{v_i,v_{i+1}}\mid I,\, \cA^{\noRed}_{x, v_i})\,. 
    \end{equation*}
Then, by \cref{cor:DMP-for-Ared}, we can write
\begin{equation}\label{eq:nored-pillar-for-I}
\phi_n(\cA^{\noRed}_{x, h}\mid I) = \phi_n(\cA^{\noRed}_{x,v_1} \mid I)\phi_n(\cA^{\noRed}_{v_T,h} \mid X_T, v_T \in \Ared^c)\prod_{i = 1}^{T-1} \phi_n(\cA^{\noRed}_{v_i,v_{i+1}}\mid X_i, v_i \in \Ared^c)\,.
\end{equation} 

To write an analogous equation for $\phi_n(\cA^{\noRed}_{x, h} \mid J)$, let $Y_i$ correspond to either the shifted copy of $X_i$ in $P_x^J$, or the stack of trivial increments in $P_x^J$ from $\hgt(v_i)$ to $\hgt(v_{i+1})$. Let $Y_0$ be the stack of trivial increments from height $1/2$ to $\hgt(v_1)$. Finally, let $w_i$ correspond to the cut-point in $\cP_x^J$ at height $\hgt(v_i)$, with $w_0 = x$. Then, applying \cref{cor:DMP-for-Ared} for $J$, we can write
\begin{equation}\label{eq:nored-pillar-for-J}
\phi_n(\cA^{\noRed}_{x, h} \mid J) = \phi_n(x \in \Ared^c\mid J)\phi_n(\cA^{\noRed}_{w_T,h} \mid Y_T, w_T \in \Ared^c)\prod_{i = 0}^{T-1}\phi_n(\cA^{\noRed}_{w_i,w_{i+1}}\mid Y_i, w_i \in \Ared^c)
\end{equation}
Now comparing the above two equations, we see that if $Y_i$ is a shifted copy of $X_i$, then their corresponding terms are equal (see \cref{ex:application-of-Ared-DMP} regarding the shift invariance). Otherwise, we can upper bound the remaining terms in \cref{eq:nored-pillar-for-I} by 1. To see that the remaining terms in \cref{eq:nored-pillar-for-J} are all equal to 1, observe that in a stack of trivial increments, all the vertices inside are guaranteed to be in the same open cluster (and hence have the same color under the coupling). Moreover, we argued in \cref{clm:x-in-Vbot} that on $\Iso_{x, L, h}$, we deterministically have $x \in \Vbot$ (and hence $x \in \Ared^c$). Since $J \in \Phi(E_h^x) \subseteq \Iso_{x, L, h}$, then in the above equation, $\phi_n(x \in \Ared^c\mid J) = 1$. 
\end{proof}

The next lemma shows how the previous monotonicity result can be used to establish the comparison of our events under the two measures $\phi_n(\cdot \mid E_h^x)$ and $\phi_n(\cdot \mid \Omega_{h_1, h_2})$. The lemma may be of independent interest, and is stated in a more general setting.
\begin{lemma}\label{lem:move-to-nice-space}
Let $\Phi$ be any map on interfaces sending $E_h^x$ into itself such that for any $J \in \Phi(E_h^x)$, we have $\bar{\mu}_n(\Phi^{-1}(J)) \leq (1+\epsilon_\beta)\bar{\mu}_n(J)$. Let $\cA$ be any event (possibly in the joint space of configurations $(\omega, \sigma)$) such that 
\begin{enumerate}
    \item $\cA \subseteq E_h^x$

    \item\label{it:phi-Phi-monotonicity} For any $I, J$ such that $J = \Phi(I)$, we have $\phi_n(\cA \mid I) \leq \phi_n(\cA \mid J)$
\end{enumerate}
Then, for any space $\Omega$ such that $\Phi(E_h^x) \subseteq \Omega \subseteq E_h^x$, there exists a constant $\epsilon_\beta$ such that 
\begin{equation}
\left|\frac{\phi_n(\cA \mid \Omega)}{\phi_n(\cA \mid E_h^x)} - 1\right| \leq \epsilon_\beta
\end{equation}
\end{lemma}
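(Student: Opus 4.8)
The plan is to condition on the realized interface $\cI$ and reduce the statement to the two structural hypotheses on $\Phi$. Since $\cI=\cI(\omega)$ is a function of the edge configuration and the $\omega$-marginal of the coupled measure $\phi_n$ is $\bar{\mu}_n$, for any event $\cA$ one has $\phi_n(\cA)=\sum_{I}\phi_n(\cA\mid \cI=I)\,\bar{\mu}_n(\cI=I)$, the sum taken over interfaces $I$ with $\bar{\mu}_n(\cI=I)>0$; abbreviate $\phi_n(\cA\mid I):=\phi_n(\cA\mid \cI=I)$. Because $\cA\subseteq E_h^x$, only interfaces $I\in E_h^x$ contribute, and likewise $\phi_n(\cA\cap\Omega)=\sum_{I\in\Omega}\phi_n(\cA\mid I)\,\bar{\mu}_n(I)$.

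First I would establish the two-sided comparison
\[
\tfrac{1}{1+\epsilon_\beta}\,\phi_n(\cA)\ \le\ \phi_n(\cA\cap\Omega)\ \le\ \phi_n(\cA)\,.
\]
The upper bound is immediate, since $\Omega\subseteq E_h^x$ and every summand $\phi_n(\cA\mid J)\,\bar{\mu}_n(J)$ is nonnegative. For the lower bound, apply hypothesis \cref{it:phi-Phi-monotonicity} termwise, $\phi_n(\cA\mid I)\le\phi_n(\cA\mid\Phi(I))$, and group the sum according to the value $J=\Phi(I)$:
\[
\phi_n(\cA)=\sum_{I\in E_h^x}\phi_n(\cA\mid I)\,\bar{\mu}_n(I)\ \le\ \sum_{J\in\Phi(E_h^x)}\phi_n(\cA\mid J)\sum_{\substack{I\in E_h^x\\ \Phi(I)=J}}\bar{\mu}_n(I)\ \le\ (1+\epsilon_\beta)\sum_{J\in\Phi(E_h^x)}\phi_n(\cA\mid J)\,\bar{\mu}_n(J)\,,
\]
where the last inequality uses the entropy hypothesis $\bar{\mu}_n(\Phi^{-1}(J))\le(1+\epsilon_\beta)\bar{\mu}_n(J)$. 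Since $\Phi(E_h^x)\subseteq\Omega$ and all terms are nonnegative, the right-hand side is at most $(1+\epsilon_\beta)\,\phi_n(\cA\cap\Omega)$, which proves the claimed lower bound.

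Next I would apply the very same computation to the event $\cA=E_h^x$, for which \cref{it:phi-Phi-monotonicity} holds trivially (both conditional probabilities equal $1$), to get $\tfrac{1}{1+\epsilon_\beta}\,\bar{\mu}_n(E_h^x)\le\bar{\mu}_n(\Omega)\le\bar{\mu}_n(E_h^x)$. Dividing the two displays,
\[
\frac{\phi_n(\cA\mid\Omega)}{\phi_n(\cA\mid E_h^x)}=\frac{\phi_n(\cA\cap\Omega)}{\bar{\mu}_n(\Omega)}\cdot\frac{\bar{\mu}_n(E_h^x)}{\phi_n(\cA)}\in\Big[\tfrac{1}{1+\epsilon_\beta},\ 1+\epsilon_\beta\Big]\,,
\]
so that $\big|\phi_n(\cA\mid\Omega)/\phi_n(\cA\mid E_h^x)-1\big|\le\epsilon_\beta$ after renaming $\epsilon_\beta$. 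This is the entire argument.

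In the application, $\Phi$ is the composition of $\Phi_\Iso$ with finitely many copies of $\Phi_\Incr$ as in \cref{lem:nice-space-likely}; each constituent map obeys the entropy bound $\bar{\mu}_n(\Phi^{-1}(J))\le(1+\epsilon_\beta)\bar{\mu}_n(J)$ thanks to the energy/entropy estimates of \cref{sec:pillar-maps} (and such bounds compose), one has $\Phi(E_h^x)\subseteq\Omega_{h_1,h_2}\subseteq E_h^x$, and condition \cref{it:phi-Phi-monotonicity} for $\cA\in\{\cA^{\noRed}_{x,h},\cA^{\Blue}_{x,h},\cA^{\Bot}_{x,h}\}$ is precisely \cref{lem:phi-Phi-monotonicity}. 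The argument above is routine bookkeeping; the only point that needs care is that the regrouping $I\mapsto\Phi(I)$ must not lose mass, which is exactly where both $\Phi(E_h^x)\subseteq\Omega$ and the quantitative entropy bound enter. The genuine difficulty therefore lies upstream, in the $\Phi$-monotonicity input \cref{lem:phi-Phi-monotonicity}, whose proof in turn rests on the Domain Markov identity of \cref{cor:DMP-for-Ared}.
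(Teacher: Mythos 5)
Your proof is correct and takes essentially the same approach as the paper: both hinge on the regrouping $\sum_{I}=\sum_{J}\sum_{I\in\Phi^{-1}(J)}$, the termwise monotonicity $\phi_n(\cA\mid I)\le\phi_n(\cA\mid\Phi(I))$, and the entropy bound $\bar\mu_n(\Phi^{-1}(J))\le(1+\epsilon_\beta)\bar\mu_n(J)$. The only cosmetic difference is that you run the same computation twice (once for $\cA$ and once for $E_h^x$) and divide, whereas the paper handles the upper bound on $\phi_n(\cA\mid\Omega)/\phi_n(\cA\mid E_h^x)$ directly from $\Omega\subseteq E_h^x$ and $\bar\mu_n(\Omega\mid E_h^x)\ge 1-\epsilon_\beta$; both amount to the same thing.
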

\begin{proof} 
The conditions on $\Phi$ easily imply that $\bar{\mu}_n(\Phi(E_h^x) \mid E_h^x) \geq 1 - \epsilon_\beta$, and hence $\bar{\mu}_n(\Omega \mid E_h^x) \geq 1 - \epsilon_\beta$. Together with the condition that $\cA \subseteq E_h^x$, we compute that
\[
\phi_n(\cA \mid \Omega) = \frac{\phi_n(\cA,\, \Omega)}{\phi_n(\Omega)} \leq (1+\epsilon_\beta)\frac{\phi_n(\cA , E_h^x)}{\phi_n(E_h^x)}\leq (1+\epsilon_\beta)\phi_n(\cA \mid E_h^x)\,.
\]
 By a similar computation, we see that in order to prove
\[
\phi_n(\cA \mid E_h^x) \leq (1+\epsilon_\beta) \phi_n(\cA \mid \Omega)\,,
\]
it suffices to show that
\begin{equation*}
\phi_n\left(\Omega\mid \cA\right) \geq 1 - \epsilon_\beta\,.
\end{equation*}
Since $\cA \subseteq E_h^x$, we can first write
\begin{align*}
    \phi_n(\cA) &= \sum_{I \in E_h^x} \phi_n(\cA \mid I)\bar{\mu}_n(I)\\
    &= \sum_{J \in \Phi(E_h^x)}\;\sum_{I \in \Phi^{-1}(J)}\phi_n(\cA \mid I)\bar{\mu}_n(I)\,.
\end{align*}
Using \cref{it:phi-Phi-monotonicity} followed by the bound $\bar{\mu}_n(\Phi^{-1}(J)) \leq (1+\epsilon_\beta)\bar{\mu}_n(J)$, we have
\begin{align*}
    \sum_{J \in \Phi(E_h^x)}\;\sum_{I \in \Phi^{-1}(J)}\phi_n(\cA \mid I)\bar{\mu}_n(I)
    &\leq (1+\epsilon_\beta)\sum_{J \in \Phi(E_h^x)} \phi_n(\cA \mid J)\bar{\mu}_n(J) \\
    &\leq (1+\epsilon_\beta)\sum_{J \in \Omega} \phi_n(\cA \mid J)\bar{\mu}_n(J) \\
    &= (1+\epsilon_\beta)\phi_n(\cA \mid \Omega)\,.\qedhere
\end{align*}
\end{proof}

\begin{remark}\label{rem:Phi-condition}
    Note that if $\Phi$ is the composition of the sequence of maps used in \cref{lem:nice-space-likely} to move from $E_h^x$ to $\Omega_{h_1, h_2}$, then $\Phi$ satisfies the conditions of the above lemma. Indeed, each map $\Psi$ in the composition satisfies the energy bound that if $\fm(I; \Psi(I)) = k$, then $\bar{\mu}_n(I) \leq e^{-(\beta - C)k}\bar{\mu}_n(\Psi(I))$ for some constant $C$, as well as the entropy bound that the number of preimages $I \in \Psi^{-1}(J)$ such that $\fm(I;J) = k$ is bounded by $s^k$ for some constant $s$. Together, this implies that $\bar{\mu}_n(\Psi^{-1}(J)) \leq (1 + \epsilon_\beta)\bar{\mu}_n(J)$ for $\epsilon_\beta = \tilde{C}e^{-\beta}$, and clearly the same bound holds when taking a composition of such maps for a different $\epsilon_\beta$. 
\end{remark}

\begin{lemma}\label{lem:3-to-3-map}
In the setting of \cref{prop:RC-Potts-diff-rate}, there exists $\epsilon_\beta$ such that for any pillar $P = P_B \times P^T \in  \Omega_{h_1, h_2}$,
\begin{equation}
\bar{\mu}_n(\cP_x = P_B\times P^T \mid \Omega_{h_1, h_2}) \leq (1 + \epsilon_\beta)\bar{\mu}_n(\cP_x = P_B \mid \Omega_{h_1})\bar{\mu}_n(\cP_x = P^T \mid \Omega_{h_2})
\end{equation}
\end{lemma}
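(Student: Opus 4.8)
\textbf{Proof proposal for Lemma~\ref{lem:3-to-3-map}.}

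The plan is to prove the stated inequality via the ``3-to-3'' swapping map alluded to in the introduction, run through the cluster expansion of \cref{prop:grimmett-cluster-exp}. First I would set up the relevant interfaces: given pillars $P_B\in\Omega_{h_1}$ and $P^T\in\Omega_{h_2}$, and a second pair $Q_B\in\Omega_{h_1}$, $Q^T\in\Omega_{h_2}$, let $\cI(P_B\times P^T)$ denote a generic interface in $\Omega_{h_1,h_2}$ whose pillar is $P_B\times P^T$ (so that the environment $\cI\setminus\cP_x$ is arbitrary subject to the wall conditions of \cref{def:Omega-h}), and similarly $\cI(Q_B)$, $\cI(Q^T)$. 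The 3-to-3 map acts on a triple $(\cI_1,\cI_2,\cI_3)$ with $\cP_x(\cI_1)=P_B\times P^T$, $\cP_x(\cI_2)=Q_B$, $\cP_x(\cI_3)=Q^T$, and produces $(\cI_1',\cI_2',\cI_3')$ with $\cP_x(\cI_1')=Q_B\times Q^T$, $\cP_x(\cI_2')=P_B$, $\cP_x(\cI_3')=P^T$, keeping all three environments fixed (the cut-and-paste operation $\times$ and its inverse are well-defined by \cref{rem:cut-paste-pillars}, and the cone-separation of \cref{prop:cone-seperation} guarantees the pasted-in pillars do not collide with the respective environments, so each $\cI_i'$ is a valid interface, still in the appropriate $\Omega$-space). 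This map is an involution on triples and preserves total area: $|\cI_1|+|\cI_2|+|\cI_3| = |\cI_1'|+|\cI_2'|+|\cI_3'|$, and likewise $|\partial\cI_1|+|\partial\cI_2|+|\partial\cI_3|$ and $\kappa_{\cI_1}+\kappa_{\cI_2}+\kappa_{\cI_3}$ are each preserved up to an additive $O(1)$ (from the junction faces where $P_B$ meets $P^T$), exactly as in \cref{lem:control-1-connected-faces,lem:control-interface-clusters}.

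The heart of the argument is then the cluster-expansion bound
\[
\frac{\bar\mu_n(\cI_1)\bar\mu_n(\cI_2)\bar\mu_n(\cI_3)}{\bar\mu_n(\cI_1')\bar\mu_n(\cI_2')\bar\mu_n(\cI_3')} \;=\; (1\pm\epsilon_\beta)\,,
\]
uniformly over all admissible environments. By \cref{eq:CE}, the ratio equals $(1-e^{-\beta})^{\Delta_\partial}e^{-\beta\Delta}q^{\Delta_\kappa}\exp\big(\sum g\text{-terms}\big)$, where $\Delta,\Delta_\partial,\Delta_\kappa$ are all $O(1)$ by the area-preservation just noted, so those factors contribute a universal multiplicative constant $C(\beta,q)$ — which is problematic unless the junction contributions cancel. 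Here one uses that both $\Omega_{h_1,h_2}$ and $\Omega_{h_1}\times\Omega_{h_2}$ are built from the \emph{same} pieces glued at a cut-point with one intervening face removed (\cref{it:omega-pillar-cap,it:omega-L-seperate-middle}), so the area, boundary-size and cluster-count bookkeeping matches \emph{exactly}, giving $\Delta=\Delta_\partial=\Delta_\kappa=0$. For the $g$-terms, one groups the faces of $\cI_1\cup\cI_2\cup\cI_3$ into: (i) faces in $P_B$ (resp.\ in $Q_B$, resp.\ their shifted copies), (ii) faces in $P^T$, $Q^T$ and shifts, (iii) faces in the three environments. Faces of type (iii) are unchanged and cancel. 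For faces of types (i)–(ii), one compares $\g(f,\cI_1)$ to $\g(f',\cI_2')$ (where $f'$ is the copy of $f$ inside $P_B\subseteq\cP_x(\cI_2')$) using \cref{eq:g-bound-2-faces}: the two interfaces agree on a large ball around $f$ except for (a) the environment, which is far by \cref{eq:pillar-in-cone-2} of \cref{lem:cone-separation}, and (b) the $P^T$-portion sitting above, which is at distance $\ge h_1 - L^3 - |\text{spine stuff}|$ from $P_B$ by the trivial-increment buffer \cref{it:omega-L-seperate-middle}; summing the resulting exponential tails over all faces (using \cref{lem:num-of-0-connected-sets}, \cref{eq:incr-excess-area} and $|\sF(\cS_x)|\le 10h$) yields a total of $Ce^{-cL} + Ce^{-c(h_1\wedge h_2)}$, i.e.\ $\epsilon_\beta$ for $L=L_\beta$ large — wait, the $h_1\wedge h_2$ term could be large, but the buffer is of length $L^3$ not $h_1$, so in fact the relevant separation is $L^3$ and the bound is $Ce^{-cL}$. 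This gives the displayed ratio estimate.

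Given the ratio estimate, the lemma follows by summation. Write
\[
\bar\mu_n(\cP_x = P_B\times P^T,\,\Omega_{h_1,h_2}) = \sum_{\cI_1}\bar\mu_n(\cI_1)\,,
\]
the sum over interfaces $\cI_1\in\Omega_{h_1,h_2}$ with that pillar, and similarly $\bar\mu_n(\cP_x=P_B,\Omega_{h_1}) = \sum_{\cI_2}\bar\mu_n(\cI_2)$ and $\bar\mu_n(\cP_x=Q^T,\Omega_{h_2})=\sum_{\cI_3}\bar\mu_n(\cI_3)$. Multiplying and applying the ratio bound term by term against the image triple (whose environments range over exactly the same admissible families, since the map only swaps pillars), one obtains
\[
\bar\mu_n(\cP_x = P_B\times P^T,\Omega_{h_1,h_2})\,\bar\mu_n(\Omega_{h_1})\,\bar\mu_n(\Omega_{h_2}) \le (1+\epsilon_\beta)\,\bar\mu_n(\cP_x=P_B,\Omega_{h_1})\,\bar\mu_n(\cP_x=P^T,\Omega_{h_2})\,\bar\mu_n(\Omega_{h_1,h_2})\,,
\]
and then dividing through by $\bar\mu_n(\Omega_{h_1,h_2})\bar\mu_n(\Omega_{h_1})\bar\mu_n(\Omega_{h_2})$ — using $\bar\mu_n(\Omega_{h_1,h_2})\ge(1-\epsilon_\beta)\bar\mu_n(E_h^x)$ type lower bounds from \cref{lem:nice-space-likely} to control the normalizations, more simply just noting the three partition-function ratios are $1$ after cancellation — gives the claim. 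The main obstacle is step two: verifying that the junction bookkeeping ($\Delta=\Delta_\partial=\Delta_\kappa=0$) is exact rather than merely $O(1)$, and that the $g$-term comparison between $P_B$-faces in a combined pillar versus a standalone pillar really does see only an $e^{-cL}$ discrepancy — this is where the trivial-increment buffers in \cref{def:Omega-h,def:Omega-h} (conditions \cref{it:omega-L-separate-top,it:omega-L-seperate-middle}) and the cone-separation estimate \cref{lem:cone-separation} must be invoked with care, exactly as the ``errors must be multiplicative'' warning in the introduction demands.
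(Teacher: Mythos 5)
Your high-level strategy is the paper's: set up the involutive 3-to-3 swap on triples of interfaces, use the cluster expansion of \cref{prop:grimmett-cluster-exp}, observe that the area, boundary-size and cluster-count exponents cancel exactly (rather than merely being $O(1)$), and then sum. Your final summation is algebraically equivalent to the paper's (which works with the \emph{difference} $\bar\mu_n(P_B\times P^T\mid\Omega_{h_1,h_2})-\bar\mu_n(P_B\mid\Omega_{h_1})\bar\mu_n(P^T\mid\Omega_{h_2})$ rather than carrying around $\bar\mu_n(\Omega_{\cdot})$ factors).

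However, there is a genuine gap in your $g$-term pairing, and it is exactly the subtlety the 3-to-3 structure exists to handle. You propose to compare $\g(f,\cI_1)$ (combined pillar $P_B\times P^T$, environment $A$) against $\g(f',\cI_2')$ (standalone pillar $P_B$, environment $A'$) for $f,f'$ corresponding faces of $P_B$. For $f$ in the buffer zone $Z_1$ (the top $L^3/2$ increments of $P_B$), the two interfaces are \emph{not} locally alike near the junction: in $\cI_1$ the cap face of $P_B$ at height $h_1$ has been removed and the trivial increments of $P^T$ continue upward (\cref{rem:cut-paste-pillars}); in $\cI_2'$ the cap face is present and the pillar stops. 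A face $f\in Z_1$ at height $h_1-d$ sees this discrepancy at distance $\approx d$, so $|\g(f,\cI_1)-\g(f',\cI_2')|\approx Ke^{-cd}$ by \cref{eq:g-bound-2-faces}, and summing $d$ from $1$ to $L^3/2$ gives a $\Theta(1)$ total per triple — \emph{not} the $\epsilon_\beta$ you need. Your self-correction about ``$L^3$ not $h_1$'' correctly identifies that the buffer length is $L^3$, but does not notice that with your pairing the $g$-discrepancies near the junction do \emph{not} decay in $L$. This is why the paper's actual $g$-term bookkeeping (\cref{eq:3-to-3-I_P^P-sums,eq:3-to-3-I_Q^Q-sums}) splits off $Z_1,Z_2$ and pairs them differently: $Z_1$ in $I_P^P$ is compared to $Z_1$ in $I_Q^Q$ (both combined pillars, hence locally identical trivial increments through the junction), while $Z_1$ in $I'_Q$ is compared to $Z_1$ in $I'_P$ (both standalone $h_1$-pillars with cap faces). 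Under that re-pairing the two sides agree on a ball of radius $\sim L^3/2$ around each $Z_1$-face, and the $g$-difference is $Ke^{-cL^3/2}$ as required. A minor related issue: your claim that environment faces (type (iii)) ``cancel'' is not literally right either — $\g(f,\cI_1)$ for $f\in A$ depends on the pillar $P_B\times P^T$ while $\g(f,\cI_1')$ depends on $Q_B\times Q^T$ — but there the cone-separation estimate \cref{eq:pillar-in-cone-2} does give an $\epsilon_\beta$ bound, so it is only an imprecision of exposition, unlike the buffer-zone pairing which would actually fail.
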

\begin{proof}
For any interface $\cI$, we can denote it in terms of the pillar at $x$ and the rest of the interface, $\cI = (\cP_x, \cI \setminus \cP_x)$. Note that in general, by the definition of the truncated interface $\cI \setminus \cP_x$ (in \cref{def:truncated-interface}), there are possibly some extra faces added to fill in the gaps created by removing the pillar $\cP_x$, and it is a priori ambiguous from the pair $(\cP_x, \cI \setminus \cP_x)$ which of these faces were originally in $\cI$ and which needed to be added in. However, for interfaces in $\Iso_{x, L, h}$ (and hence for all the interfaces considered here), there is no ambiguity as the cut-point criteria at $x$ implies that the only face that might need to be added in is $f_{[x, x - \ez]}$, yet this face is also required to be missing from $\cI$ as part of the definition of $\Iso_{x, L, h}$. Now, recalling the notation in \cref{rem:cut-paste-pillars}, suppose we have three interfaces, $(P_B \times P^T, A) \in \Omega_{h_1, h_2}, (Q_B, A') \in \Omega_{h_1}, (Q^T, A'') \in \Omega_{h_2}$. For more concise notation, we write $\bar{\mu}_n(P) = \bar{\mu}_n(\cP_x = P)$ and $ \bar{\mu}_n(I) = \bar{\mu}_n(\cI = I)$. We have the following inequality
\begin{align*}
&\bar{\mu}_n(P_B\times P^T \mid \Omega_{h_1, h_2}) - \bar{\mu}_n(P_B \mid  \Omega_{h_1})\bar{\mu}_n(P^T \mid \Omega_{h_2})\\
= \sum_{\substack{A, A', A''\\ Q_B, Q^T}} &\bar{\mu}_n((P_B \times P^T, A) \mid\Omega_{h_1, h_2})\bar{\mu}_n((Q_B, A')\mid \Omega_{h_1})\bar{\mu}_n((Q^T, A'')\mid \Omega_{h_2})\\
- &\bar{\mu}_n((Q_B \times Q^T, A) \mid \Omega_{h_1, h_2})\bar{\mu}_n((P_B, A')\mid  \Omega_{h_1})\bar{\mu}_n((P^T, A'')\mid \Omega_{h_2})
\end{align*}
Here, the sum is over all possible truncated interfaces $A, A', A''$ that satisfy the respective wall requirements, and over all possible pillars $Q_B, Q^T$ that satisfy the pillar requirements of $\Omega_{h_1}, \Omega_{h_2}$ respectively. We can factor out the term being subtracted and cancel out the conditional events so that the above is bounded by 
\begin{align}
\sum_{\substack{A, A', A''\\ Q_B, Q^T}} &\bar{\mu}_n((Q_B \times Q^T, A) \mid \Omega_{h_1, h_2})\bar{\mu}_n((P_B, A')\mid  \Omega_{h_1})\bar{\mu}_n((P^T, A'')\mid \Omega_{h_2})\nonumber \\
\label{eq:3-to-3-ratio}
&\cdot\left| \frac{\bar{\mu}_n((P_B \times P^T, A))\bar{\mu}_n((Q_B, A'))\bar{\mu}_n((Q^T, A'')}{\bar{\mu}_n((Q_B \times Q^T, A))\bar{\mu}_n((P_B, A'))\bar{\mu}_n((P^T, A''))}-1\right|
\end{align}
If we are able to bound the absolute value term in \cref{eq:3-to-3-ratio} by $\epsilon_\beta$, then we would be done since the rest of the sum is equal to $\bar{\mu}_n(P_B \mid \Omega_{h_1})\bar{\mu}_n(P^T \mid \Omega_{h_2})$.

To bound \cref{eq:3-to-3-ratio}, we plug in the cluster expansion expressions from \cref{eq:CE} for each term in the fraction above. There are 6 interfaces that we need to refer to; in numerator from left to right, let them be denoted $I_P^P, I'_Q, I''^Q$, and in the denominator let them be denoted $I_Q^Q, I'_P, I''^P$, as drawn in \cref{fig:3-to-3}.

\begin{figure}
\vspace{-1.25cm}
    \begin{tikzpicture}[font=\small]
   \node (fig1) at (0,.9) {
    	\includegraphics[width=0.3\textwidth]{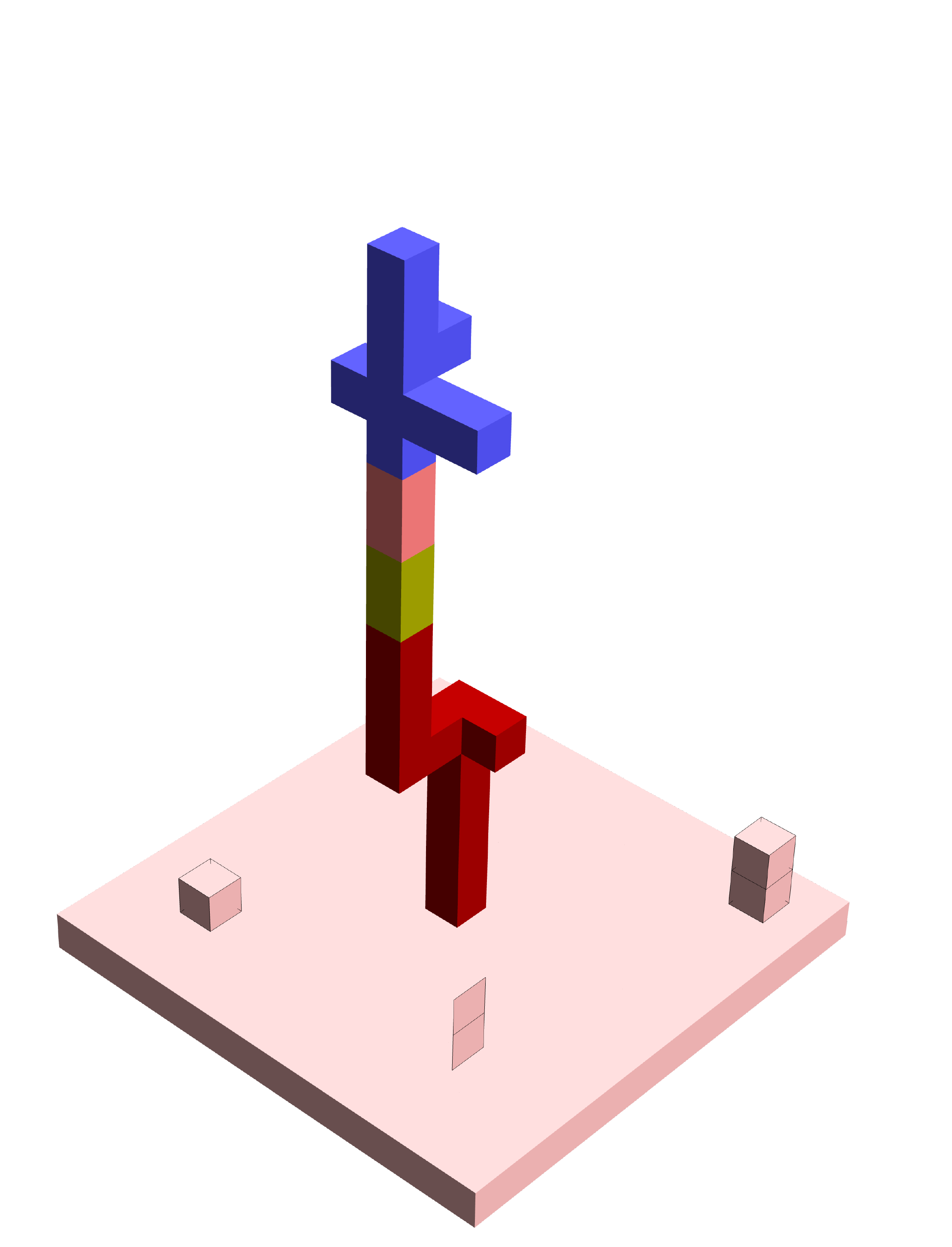}};
    \node[font=\large] at (-1.6,-1.6) {$I_P^P$};
    \node[color=blue!75!black] at (-1,2.5) {$P^T$};
    \node[color=red!75!black] at (-1,0.4) {$P^B$};
    \node[color=yellow!50!black] at (.4,1) {$Z_1$};
    \node[color=pink] at (.4,1.5) {$Z_2$};
   \node (fig2) at (5,0) {
   	\includegraphics[width=0.3\textwidth]{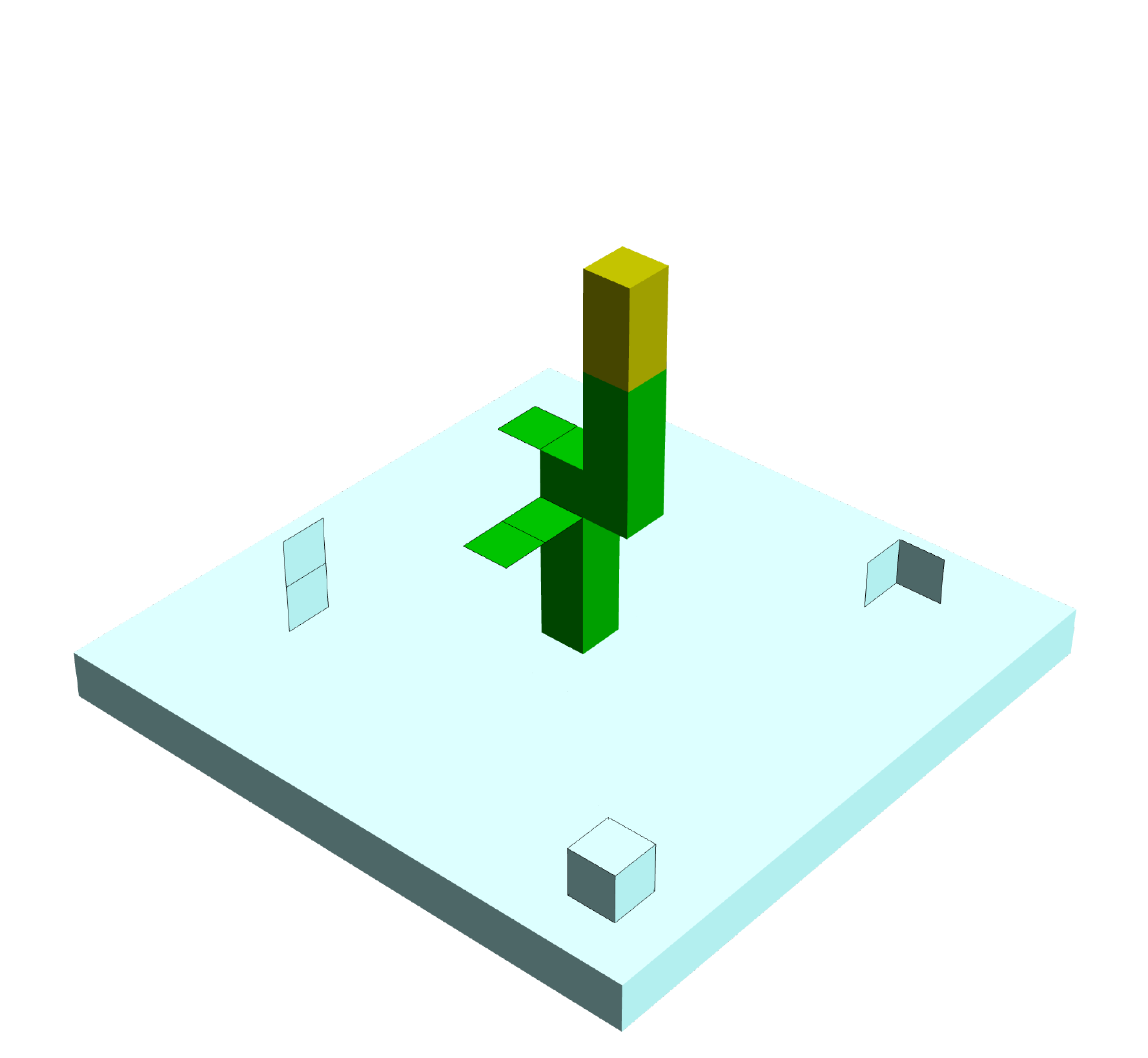}};
    \node[font=\large] at (3.7,-1.6) {$I'_Q$};
   \node[color=green!50!black] at (4.3,0.3) {$Q^B$};
    \node[color=yellow!50!black] at (5.8,1) {$Z_1$};
    \node (fig3) at (10,.2) {
   	\includegraphics[width=0.3\textwidth]{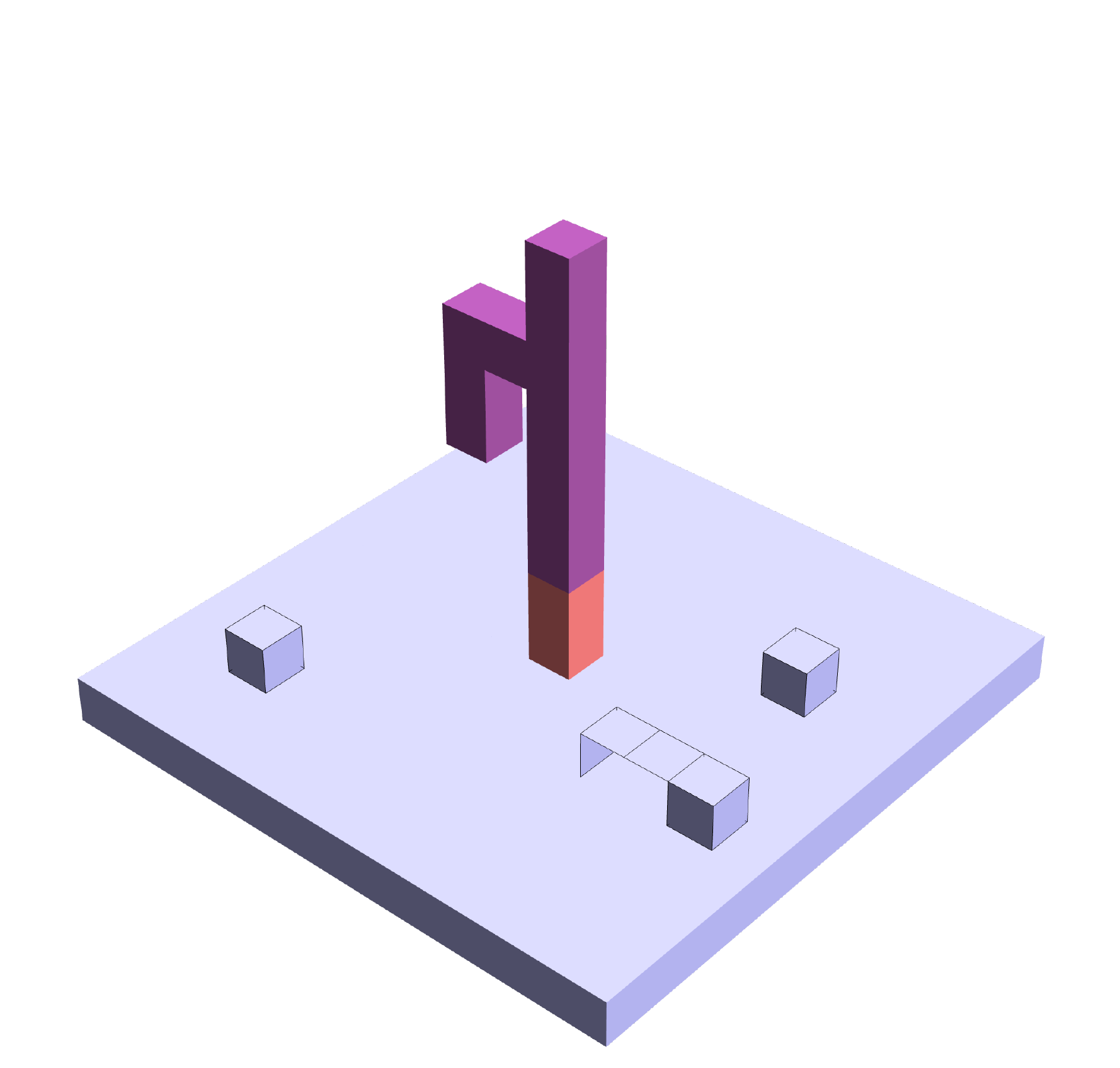}};
   \node[font=\large] at (8.7,-1.6) {$I''_Q$};
   \node[color=purple!50!black] at (9.,1) {$Q^T$};
    \node[color=pink!75!black] at (10.55,-0.15) {$Z_2$};
    \begin{scope}[shift={(0,-6)}]
    \node (fig1b) at (0,0.9) {
    	\includegraphics[width=0.3\textwidth]{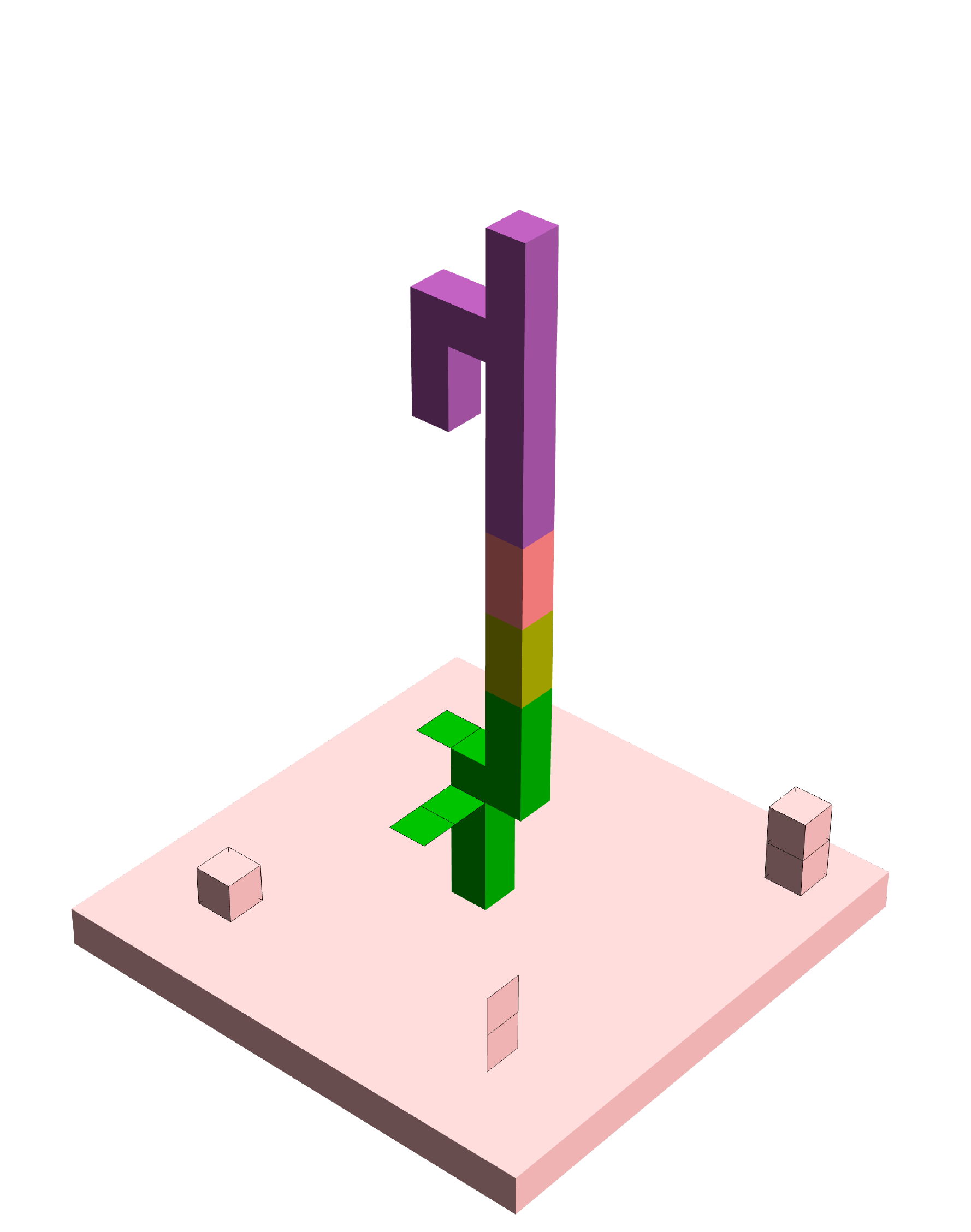}};
   \node[font=\large] at (-1.6,-1.6) {$I_Q^Q$};  
   \node[color=green!50!black] at (-0.7,0.1) {$Q^B$};
    \node[color=yellow!50!black] at (.8,.65) {$Z_1$};
    \node[color=pink] at (.8,1.15) {$Z_2$};
    \node[color=purple!50!black] at (-0.7,2) {$Q^T$};
   \node (fig2b) at (5,0.4) {
   	\includegraphics[width=0.3\textwidth]
    {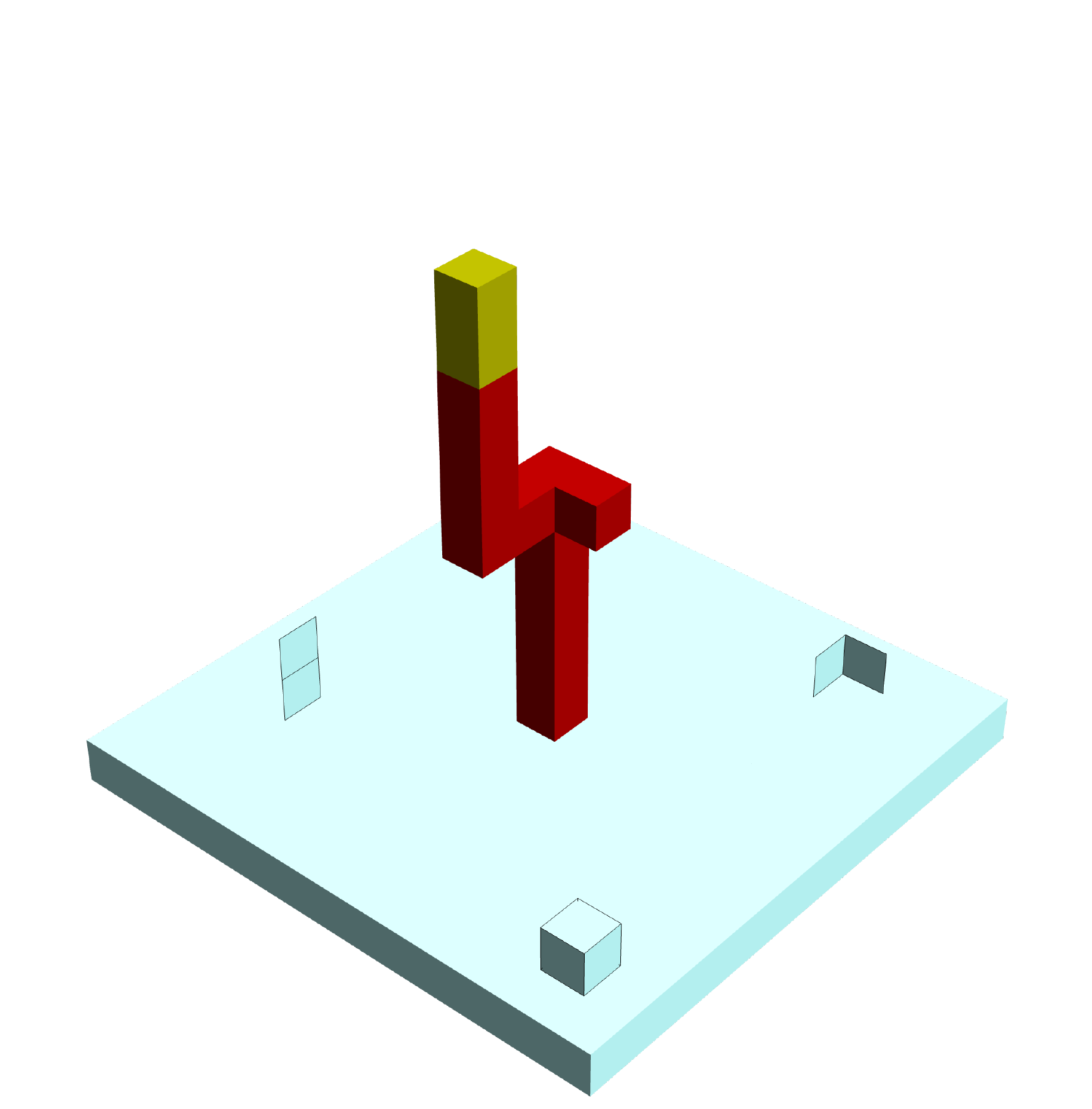}};
    \node[font=\large] at (3.7,-1.6) {$I'_P$};
    \node[color=red!75!black] at (4.1,0.4) {$P^B$};
    \node[color=yellow!50!black] at (5.4,1.45) {$Z_1$};
   \node (fig3b) at (10,0.2) {
   	\includegraphics[width=0.3\textwidth]{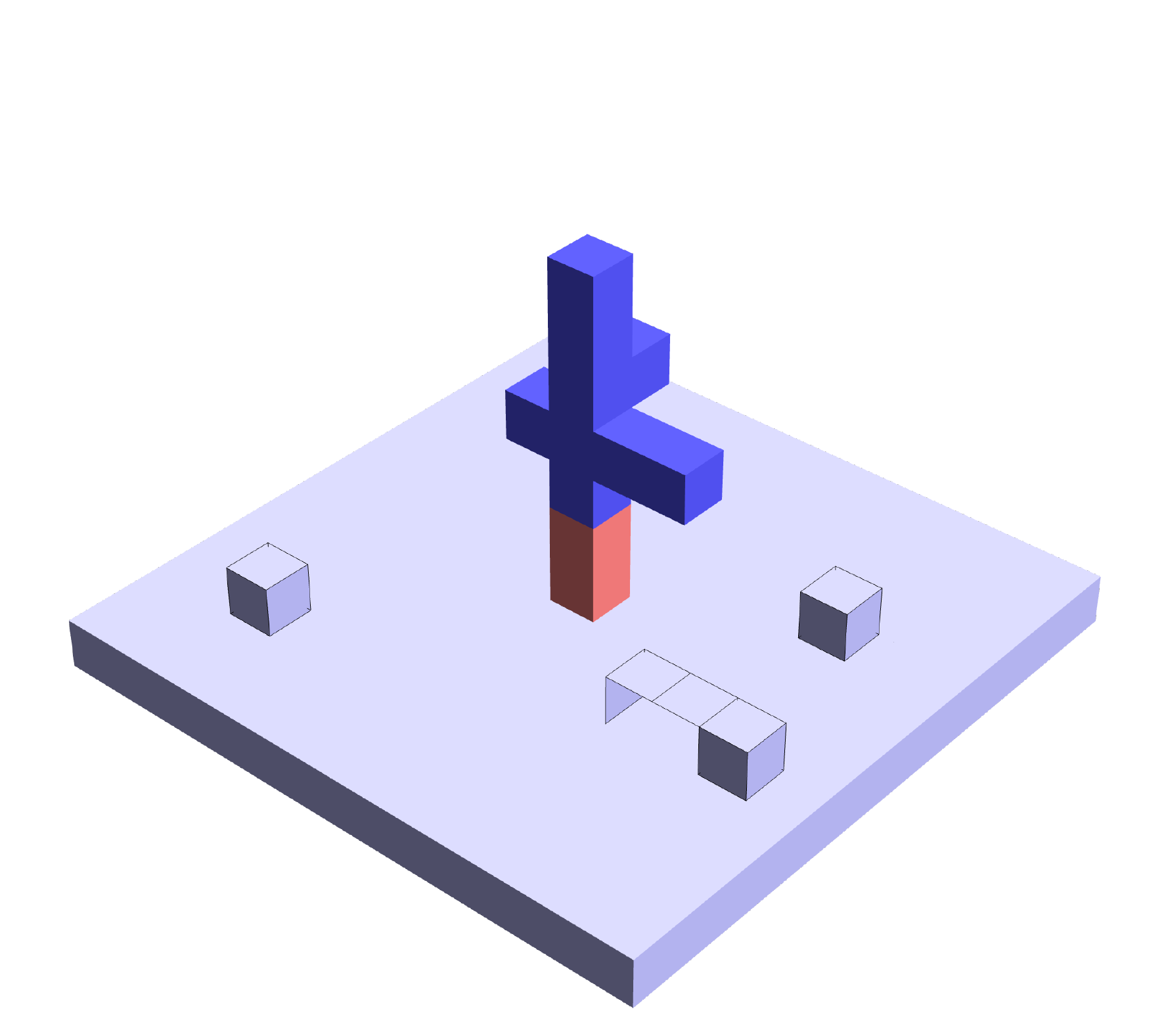}};
    \node[font=\large] at (8.7,-1.6) {$I''_P$};
    \node[color=blue!75!black] at (9.25,1) {$P^T$};
    \node[color=pink!75!black] at (10.55,-.1) {$Z_2$};
    \end{scope}
    \end{tikzpicture}
    \vspace{-0.25cm}
    \caption{The 3-to-3 map sends the top three interfaces to the bottom three. The figure is color-coded according to which faces are paired together in the cluster expansion computation. (See how the terms in \cref{eq:3-to-3-g-terms} are separated into the terms in \cref{eq:3-to-3-I_P^P-sums,eq:3-to-3-I_Q^Q-sums,eq:3-to-3-A'-A''-sums}.)}
    \label{fig:3-to-3} 
\end{figure}

Note that the two sets of interfaces have the same number of total faces, open clusters, and contributions to the term $|\partial \cI|$ in the cluster expansion. Indeed, the relationship between the interfaces is a cut and paste operation on the pillars, and furthermore \cref{prop:cone-seperation} applies for all of these interfaces, ensuring that there is no interaction between the pillars and the surrounding walls that could potentially affect one of the terms above in the cluster expansions. Thus, it remains to control the $g$-terms, 
\begin{equation}\label{eq:3-to-3-g-terms}
\exp\bigg[\sum_{f \in I_P^P}\g(f, I_P^P) + \sum_{f \in I'_Q} \g(f, I'_Q) + \sum_{f \in I''^Q} \g(f, I''^Q) - \sum_{f \in I_Q^Q}\g(f, I_Q^Q) - \sum_{f \in I'_P} \g(f, I'_P) - \sum_{f \in I''^P} \g(f, I''^P)\bigg]\,.
\end{equation}
As in \cref{fig:3-to-3}, let the top $L^3/2$ increments of $P_B, Q_B$ be referred to as $Z_1$, and the bottom $L^3/2$ increments of $P^T, Q^T$ be referred to as $Z_2$. Note that the top $L^3$ increments of $P_B$ and $Q_B$ are trivial, so there is a $L^3/2$ buffer distance between $Z_1$ and the first non-trivial increment of $P_B, Q_B$, and likewise for $Z_2$ with $P^T, Q^T$. We split up the terms of the sum that involve the interface $I_P^P$ as follows:
\begin{align}\label{eq:3-to-3-I_P^P-sums}
&\sum_{f \in P_B \setminus Z_1}|\g(f, I_P^P) - \g(f, I'_P)| + \sum_{f \in P^T \setminus Z_2} |\g(f, I_P^P) - \g(\theta f, I''^P)| + \sum_{f \in A} |\g(f, I_P^P) - \g(f, I_Q^Q)|\nonumber \\
&+ \sum_{f \in Z_1} |\g(f, I_P^P) - \g(\theta f, I_Q^Q)| + \sum_{f \in Z_2} |\g(f, I_P^P) - \g(\theta f, I_Q^Q)|\nonumber \\
&= \Xi_1 + \Xi_2 + \Xi_3 + \Xi_4 + \Xi_5\,.
\end{align}
Here, the sums are all over faces of the interface $I_P^P$, and $\theta f$ is the shifted copy of $f$ in the corresponding interface. Although each $\theta$ is a different shift depending on the target interface, none of the computations that follow depend on the particular shift so we will not distinguish between them and call them all $\theta$.

Begin with $\Xi_1$. Using the bounds in \cref{eq:g-bound-2-faces} and \cref{eq:pillar-in-cone-1}, the part of the sum where $r(f, I_P^P; f, I'_P)$ is attained by a face in $A$ or $A'$ is bounded by $\bar{C}e^{-\bar{c}L}$. Otherwise, if $r$ is attained by a face in $P^T$, suppose that the first increment of $P^T$ has index $j_0$. Then, using condition (3) of $\Omega$ to control the size of the increments,
\begin{align*}
\sum_{f \in P_B \setminus Z_1}\;\sum_{g \in P^T} Ke^{-cd(f, g)} &\leq \sum_{j \geq 0} \;\sum_{g \in X_{j_0+j}}\;\sum_{\substack{f \in \sF(\Z^3),\\ \hgt(f) \leq \hgt(P_B\setminus Z_1)}} Ke^{-cd(f,g)}\\
&\leq \sum_{j \geq 0}|\sF(X_{j_0+j})|Ke^{-c(L^3/2 + j)}\\
&\leq \sum_{j \geq 0}\tilde{K}je^{-c(L^3/2 + j)} \leq \tilde{K}e^{-cL^3/2}
\end{align*}
The second sum $\Xi_2$ is bounded similarly. Again, the terms where $r$ is attained by a face in $A$ or $A''$ is bounded by $\bar{C}e^{-\bar{c}L/2}$ using \cref{eq:pillar-in-cone-2}. Otherwise, when $r$ is attained by a face in $P_B$,  we have a similar computation as above:
\begin{align*}
\sum_{f \in P_B}\;\sum_{g \in P^T \setminus Z_2} Ke^{-cd(f, g)} &\leq \sum_{j \geq L^3/2} \;\sum_{g \in X_{j_0 + j}} \;\sum_{\substack{f \in \sF(\Z^3),\\ \hgt(f) \leq \hgt(P_B)}} Ke^{-cd(f,g)}\\
&\leq\sum_{j \geq L^3/2}|\sF(X_{j_0+j})|Ke^{-cj}\\
&\leq \sum_{j \geq L^3/2}\tilde{K}je^{-cj} \leq \tilde{K}e^{-cL^3/2}
\end{align*}
The third sum $\Xi_3$ is immediately bounded by $\bar{C}e^{-\bar{c}L}$ using \cref{eq:pillar-in-cone-2}.

Finally, the fourth and fifth sums $\Xi_4, \Xi_5$ are both bounded by $2L^3Ke^{-cL^3/2}$ since there are $2L^3$ faces, and the buffer of $L^3/2$ increments above and below ensures that the distance $r$ to a face where the interfaces differ is at least $L^3/2$.

Now for the remaining terms in \cref{eq:3-to-3-g-terms}, 
the remaining faces in $I_Q^Q$ are captured in the sums
\begin{equation}\label{eq:3-to-3-I_Q^Q-sums}
\sum_{f \in Q_B \setminus Z_1}|\g(f, I_Q^Q) - \g(f, I'_Q)| + \sum_{f \in Q^T \setminus Z_2} |\g(f, I_Q^Q) - \g(\theta f, I''^Q)|.
\end{equation}
These sums can be bounded above by $Ke^{-cL^3/2}$ for some constants $c, K$ in the same way as $\Xi_1$ and $\Xi_2$ above. Furthermore, the sums
\begin{equation}\label{eq:3-to-3-A'-A''-sums}
\sum_{f \in A'} |\g(f, I'_Q) - \g(f,I'_P)| + \sum_{f \in A''} |\g(f, I''^Q) - \g(f, I''^P)|
\end{equation}
are bounded by $\bar{C}e^{-\bar{c}L}$ using \cref{eq:pillar-in-cone-2}. It remains to take care of the copies of $Z_1, Z_2$ in the interfaces $I'_Q, I'_P, I''^Q, I''^P$. We have
\begin{equation*}
\sum_{f \in Z_1} |\g(f, I'_Q) - \g(\theta f, I'_P)| \leq \bar{C}e^{-\bar{c}L} \wedge 2L^3Ke^{-cL^3/2}
\end{equation*}
since interactions with walls of $A'$ are handled by \cref{eq:pillar-in-cone-2} and interactions with $P_B$ and $Q_B$ are handled similarly to $\Xi_4$ and $\Xi_5$ above. We also have 
\begin{equation*}
\sum_{f \in Z_2} |\g(f, I''^Q) - \g(\theta f, I''^P)| \leq \bar{C}e^{-\bar{c}L} \wedge 2L^3Ke^{-cL^3/2}
\end{equation*}
by the same reasoning, except we need to use \cref{eq:pillar-in-cone-1} this time instead. 

Thus, putting everything together and recalling that we could take $L = L_\beta \uparrow \infty$ as $\beta\uparrow\infty$, we get that
\begin{equation*}
\frac{\bar{\mu}_n((P_B \times P^T, A))\bar{\mu}_n((Q_B, A'))\bar{\mu}_n((Q^T, A'')}{\bar{\mu}_n((Q_B \times Q^T, A))\bar{\mu}_n((P_B, A'))\bar{\mu}_n((P^T, A''))} \in [e^{-Ce^{-cL_\beta}}, e^{Ce^{-cL_\beta}}]
\end{equation*}
for some different constants $C, c >0$.
\end{proof}

We are now ready to prove the submultiplicativity statment of \cref{prop:submultiplicativity-Potts}:
\begin{proof}[\textbf{\emph{Proof of \cref{prop:submultiplicativity-Potts}}}]
We will write the proof in the notation of the $\noRed$ case, noting that the previous lemmas (and hence this proof) apply to the $\Blue$ case as well. Let $\Phi$ be defined as the composition of the sequence of maps used in \cref{lem:nice-space-likely} to move from $E_h^x$ to $\Omega_{h_1, h_2}$. By applying \cref{lem:move-to-nice-space} for this choice of $\Phi$, $\Omega = \Omega_{h_1, h_2}$, and $\cA = \cA^{\noRed}_{x, h}$, we have
\begin{equation*}
    \phi_n(\cA^{\noRed}_{x, h_1+h_2}\mid E_{h_1+h_2}^x) \leq (1+\epsilon_\beta)\phi_n(\cA^{\noRed}_{x, h_1+h_2}\mid \Omega_{h_1, h_2})\,.
\end{equation*}
We can always decompose the space $\Omega_{h_1, h_2}$ according to the pillar $\cP_x$ to write
\begin{equation}\label{eq:split-Omega-by-P}
    \phi_n(\cA^{\noRed}_{x, h_1+h_2}\mid \Omega_{h_1, h_2}) = \sum_{P \in \Omega_{h_1, h_2}}\phi_n(\cA^{\noRed}_{x, h_1+h_2} \mid \cP_x = P,\, \Omega_{h_1, h_2})\bar{\mu}_n(\cP_x = P \mid \Omega_{h_1, h_2})
\end{equation}
As argued in \cref{clm:x-in-Vbot}, we know that on the event $\Omega_{h_1, h_2} \subseteq \Iso_{x, L, h}$, we have $x \in \Vbot$ and hence $x \in \Ared^c$. Since $x$ is a cut-point of $P$, we can apply \cref{cor:DMP-for-Ared} (with the convention that $v_0 = v_1 = x$) to get that
\begin{align}\label{eq:split-Nred-by-increments}
    \phi_n(\cA^{\noRed}_{x, h_1+h_2} \mid \cP_x = P,\, \Omega_{h_1, h_2}) = &\prod_{i=1}^{T-1}\phi_n(\cA^{\noRed}_{v_i, v_{i+1}} \mid \mbox{$\bigcap \{\sX_j = X_j\}$},\, \Omega_{h_1, h_2},\, \cA^{\noRed}_{x, v_i})\nonumber\\
    =&\prod_{i=1}^{T-1}\phi_n(\cA^{\noRed}_{v_i, v_{i+1}} \mid \{\sX_i = X_i\},\, v_i \in \Ared^c)
\end{align}
where $X_T$ is the last increment of $P$. (Recall that in $\Omega_{h_1, h_2}$, the pillar is capped at height $h_1 + h_2$ and the last increment is trivial). Now recall by \cref{rem:cut-paste-pillars} that we can always write $P = P_B \times P^T$ and change the sum over $P \in \Omega_{h_1, h_2}$ into a double sum over $P_B \in \Omega_{h_1}$ and $P^T \in \Omega_{h_2}$. Let $y$ be the cut-point of $P_B \times P^T$ with height $h_1 + 1/2$, and let $i^*$ be index of the trivial increment with vertices $y, y - \ez$ (so that $y = v_{i^*+1}$). First, note that since $X_{i^*}$ is a trivial increment, then $y$ and $y - \ez$ are in the same open cluster, and hence
\[\phi_n(\cA^{\noRed}_{v_{i^*}, v_{i^*+1}} \mid \{\sX_{i^*} = X_{i^*}\},\, v_{i^*} \in \Ared^c) = 1\]
Next, observe that the event $\cP_x = P_B$ is equal to the event that $\cP_x$ has increments $(X_1,\, \dots,\, X_{i^*-1})$, while $\cP_x = P^T$ is equal to the event that $\cP_x$ has increments $(X_{i^*+1},\, \ldots,\, X_{T})$. Thus, by applying \cref{cor:DMP-for-Ared} again (and noting \cref{ex:application-of-Ared-DMP} following it with regards to the shift from being rooted at $y$ to being rooted at $x$), we have that the product in \cref{eq:split-Nred-by-increments} above is equal to
\begin{equation}
    \phi_n(\cA^{\noRed}_{x, h_1} \mid \cP_x = P_B,\, \Omega_{h_1})\phi(\cA^{\noRed}_{x, h_2} \mid \cP_x = P^T,\, \Omega_{h_2})\,.
\end{equation}

Combining the above three equations with \cref{lem:3-to-3-map}, we have
\begin{align}
    \phi_n(\cA^{\noRed}_{x, h_1+h_2}&\mid \Omega_{h_1, h_2})\nonumber\\
    &\leq (1+\epsilon_\beta)\sum_{P_B \in \Omega_{h_1}}\sum_{P^T \in \Omega_{h_2}} \phi_n(\cA^{\noRed}_{x, h_1} \mid P_B,\, \Omega_{h_1})\phi(\cA^{\noRed}_{x, h_2} \mid P^T,\, \Omega_{h_2})\bar{\mu}_n(P_B\mid \Omega_{h_1})\bar{\mu}_n(P^T\mid \Omega_{h_2})\nonumber\\
    &=(1+\epsilon_\beta)\phi_n(\cA^{\noRed}_{x, h_1}\mid \Omega_{h_1})\phi_n(\cA^{\noRed}_{x, h_2}\mid \Omega_{h_2})\,.
\end{align}
Finally, we can conclude by applying \cref{lem:move-to-nice-space} again for $\Omega = \Omega_{h_1}$ and $\Omega =\Omega_{h_2}$. 
\end{proof}
Thus, we have proved the submultiplicativity statement \cref{prop:submultiplicativity-Potts}. By using the decorrelation estimates of \cref{cor:Potts-decorrelation-1}, we can generalize to the case where $x, n$ on the right hand side can depend on $h_1$ and $h_2$, as long as we still have $1 \ll h_i \ll n_{h_i}$ and $d(x_{h_i}, \partial \Lambda_{n_{h_i}}) \gg h_i$:
\begin{equation*}
    \phi_n(\cA^{\noRed}_{x_h, h} \mid E_h^x) \leq (1+\epsilon_\beta+o_{h_1}(1)+o_{h_2}(1))\phi_{n_{h_1}}(\cA^{\noRed}_{x_{h_1}, h_1} \mid E_{h_1}^{x_{h_1}})\phi_{n_{h_2}}(\cA^{\noRed}_{x_{h_2}, h_2} \mid E_{h_2}^{x_{h_2}})
\end{equation*}
The analogous statement for $\Blue$ also holds in the same way. Now we can apply Fekete's Lemma to prove the existence of the first two limits in \cref{prop:RC-Potts-diff-rate}.

\subsection{Establishing the rate for the bottom interface}

We will now prove the large deviation rate for the event $\cA^{\Bot}_{x, h}$ as in \cref{eq:bot-rate}. This case is substantially easier because we do not need to work on the joint space of configurations $(\omega, \sigma)$. Moreover, defining $x \xleftrightarrow{\omega} h$ to be the event that there is a path of open edges connecting $x$ to height $h$ via vertices of $\cP_x$, we have the following observation:
\begin{observation}\label{obs:simpler-cABot}
    On the event $\Iso_{x, L, h}$, the events $\cA^{\Bot}_{x, h}$ and $x \xleftrightarrow{\omega} h$ are equal. Indeed, on $\Iso_{x, L, h}$ we know that $x \in \Vbot$, whence it immediately follows that $x \xleftrightarrow{\omega} h \subseteq \cA^{\Bot}_{x, h}$. For the other direction, note that the vertices (with height $>0$) surrounding those of $\cP_x$ are all in $\Vtop$. Together with the assumption that $x$ is a cut-point and in $\Vbot$, this implies that every vertex in $\cP_x$ which is in $\Abot \setminus \Vbot$ must be part of a finite component which is surrounded by vertices of $\cP_x$ in $\Vbot$. But all the vertices of $\cP_x$ which are in $\Vbot$ have an open path of edges connecting to $x$ inside $\cP_x$, and so $\cA^{\Bot}_{x, h} \subseteq  \{x \xleftrightarrow{\omega} h\}$.
\end{observation}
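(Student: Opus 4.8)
The plan is to establish the set-identity $\cA^{\Bot}_{x,h}=\{x\xleftrightarrow{\omega}h\}$ on $\Iso_{x,L,h}$ by proving the two inclusions separately, working throughout under $\bar\mu_n$ (so that $\Vtop$ and $\Vbot$ are distinct, hence disjoint, open clusters) and using that on $\Iso_{x,L,h}$ the vertex $x$ is the first cut-point of $\cP_x$ with $f_{[x,x-\ez]}\notin\cI$; in particular $[x,x-\ez]$ is open and, by \cref{clm:x-in-Vbot}, $x\in\Vbot$ whenever $\cP_x$ reaches height $h\ge 1$. The inclusion $\{x\xleftrightarrow{\omega}h\}\subseteq\cA^{\Bot}_{x,h}$ is immediate: any path of open edges out of $x$ that uses only vertices of $\cP_x$ consists of vertices in the open cluster of $x$, namely $\Vbot\subseteq\Abot$, so if it reaches height $h$ it is an $\Abot$-path from $x$ to $h$ through vertices of $\cP_x$.

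For the reverse inclusion I would first record two geometric facts. (a) Any vertex of height $>0$ that is $\Lambda_n$-adjacent to $V(\cP_x)$ but not contained in it lies in $\Vtop$: it cannot lie in $\Atop^c$, for then—being adjacent at height $\ge1/2$ to a vertex of $V(\cP_x)\subseteq\Atop^c$—it would lie in the same connected component of the set of $\Atop^c$-vertices of height $\ge1/2$, which is exactly $V(\cP_x)$; hence it lies in $\Atop$, and then in $\Vtop$ by the edge-crossing property of $\Itop$ from \cref{rem:properties-of-Itop}. (b) Every vertex of $V(\cP_x)\cap\Vbot$ is joined to $x$ by a path of open edges confined to $V(\cP_x)$: following open edges out of a $\cP_x$-vertex one stays in $\Atop^c$ (an open neighbour of a $\Vtop^c$-vertex is again in $\Vtop^c$, hence in the same, infinite, component), while the height cannot drop below $1/2$ before reaching $x$, since $x$ is the unique vertex of $V(\cP_x)$ at height $1/2$ on $\Iso_{x,L,h}$; so any open path in $\Vbot$ from such a vertex to $x$ is confined to $V(\cP_x)$.

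From (a) I would deduce that any finite component $A$ of $\Vbot^c$ meeting $V(\cP_x)$ satisfies $A\subseteq V(\cP_x)$ and $\partial A\subseteq V(\cP_x)\cap\Vbot$: a vertex of $A$ adjacent to $V(\cP_x)$ but not in it would, by (a), lie in $\Vtop$—impossible, since $\Vtop$ is infinite and lies in $\Vbot^c$, so its vertices sit in the infinite component of $\Vbot^c$, not in $A$—unless it has height $\le0$, in which case the cut-point structure forces it to be $x-\ez$, which lies in $\Vbot$ and hence not in $A$; thus $A\subseteq V(\cP_x)$, and the same dichotomy applied to $\partial A$ (whose vertices lie in $\Vbot$, being adjacent to but not in a component of $\Vbot^c$) gives $\partial A\subseteq V(\cP_x)$. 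With this in hand, take an $\Abot$-path through $V(\cP_x)$ reaching height $h$ and let $u$ be its top vertex. If $u\in\Vbot$, fact (b) joins $u$ to $x$ by open edges in $\cP_x$, so $x\xleftrightarrow{\omega}h$. Otherwise $u$ lies in a finite component $A\subseteq V(\cP_x)$ of $\Vbot^c$; moving straight up from $u$ through $A$ until first leaving it yields a vertex of height at least that of $u$ which, by (a) together with $\Vtop\cap\Vbot=\emptyset$, lies in $V(\cP_x)\cap\Vbot$, and fact (b) again joins it to $x$ by open edges inside $\cP_x$. Either way $x\xleftrightarrow{\omega}h$, completing the reverse inclusion.

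The main obstacle is the bookkeeping in the previous paragraph: one must keep track of which of three competing components — the infinite component of $\Vtop^c$ (i.e.\ $\Atop^c$), the component of $\Vbot^c$ containing a given vertex, and the component of the set of $\Atop^c$-vertices of height $\ge1/2$ that defines $V(\cP_x)$ — each vertex belongs to, and the argument closes only because $\Vtop\cap\Vbot=\emptyset$ (valid under $\bar\mu_n$) together with the cut-point at $x$ and the condition $f_{[x,x-\ez]}\notin\cI$ rule out the exceptional low-height cases. Everything else is elementary, but these containments should be written out with care.
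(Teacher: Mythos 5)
Your proposal is correct and takes essentially the same approach as the paper: you identify the same three facts---that vertices of height $>0$ bordering $V(\cP_x)$ lie in $\Vtop$, that finite $\Vbot^c$-components meeting $V(\cP_x)$ stay inside $V(\cP_x)$ and are cordoned off by $V(\cP_x)\cap\Vbot$-vertices, and that every vertex of $V(\cP_x)\cap\Vbot$ is open-connected to $x$ within $\cP_x$ via the cut-point at $x$---which are precisely the three observations in the paper's sketch. The only cosmetic imprecision is the phrasing in fact~(b) that \emph{any} open path in $\Vbot$ from $u$ to $x$ is confined to $V(\cP_x)$; strictly one should say its prefix up to the first visit to $x$ is, as your own height argument actually shows.
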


With this in mind, we prove the following analog of \cref{lem:phi-Phi-monotonicity,lem:move-to-nice-space}.
\begin{lemma}\label{lem:move-to-nice-space-bot}
    Let $\Phi$ be any map on interfaces sending $E_h^x$ into $\Iso_{x, L, h}$ such that the action of $\Phi$ on $\cP_x$ is to shift increments or replace them by a stack of trivial increments, and to replace the base by a stack of trivial increments with equal height. Suppose moreover that  $\bar{\mu}_n(\Phi^{-1}(J)) \leq (1+\epsilon_\beta)\bar{\mu}_n(J)$ holds for any $J \in \Phi(E_h^x)$.
    Then, for any space $\Omega$ such that $\Phi(E_h^x) \subseteq \Omega \subseteq E_h^x$, there exists a constant $\epsilon_\beta$ such that 
\begin{equation}
\left|\frac{\bar{\mu}_n(\cA^{\Bot}_{x, h} \mid \Omega)}{\bar{\mu}_n(\cA^{\Bot}_{x, h} \mid E_h^x)} - 1\right| \leq \epsilon_\beta\,.
\end{equation}
\end{lemma}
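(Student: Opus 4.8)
The plan is to mirror the argument of \cref{lem:move-to-nice-space}, but in the simpler single-configuration ($\omega$-only) setting, so that we only need the weaker ``$\Phi$-monotonicity'' property for the event $\cA^{\Bot}_{x,h}$ rather than re-deriving a full Domain Markov theory. First I would record the analog of \cref{it:phi-Phi-monotonicity} from \cref{lem:move-to-nice-space}: for any $I,J$ with $J=\Phi(I)$ one has $\bar\mu_n(\cA^{\Bot}_{x,h}\mid I)\le\bar\mu_n(\cA^{\Bot}_{x,h}\mid J)$. By \cref{obs:simpler-cABot}, on $\Iso_{x,L,h}$ the event $\cA^{\Bot}_{x,h}$ coincides with $\{x\xleftrightarrow{\omega}h\}$, i.e. the existence of an open path from $x$ to height $h$ using only vertices of $\cP_x$; since $\Phi$ only shifts increments or replaces them (and the base) by stacks of trivial increments, the existence of such a path can only be helped: in a stack of trivial increments all interior vertices lie in a single open cluster with $x$ (and $x\in\Vbot$ on $\Iso_{x,L,h}$ by \cref{clm:x-in-Vbot}), so the $\omega$-conditional probability of connecting through a trivialized increment is $1$, whereas the conditional probability of connecting through the original increment is at most $1$. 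A clean way to make this precise is to factor $\bar\mu_n(x\xleftrightarrow{\omega}h\mid I)$ as a telescoping product over increments via cut-points (using the Domain Markov property of the FK model at each cut-point, exactly as in the $\omega$-marginal of \cref{cor:DMP-for-Ared}), match the increments of $I$ that survive $\Phi$ with their shifted copies in $J$ (equal contributions), bound the remaining factors in the $I$-product by $1$, and observe that the corresponding factors in the $J$-product equal $1$ because the relevant increments are trivial. This yields exactly condition~\ref{it:phi-Phi-monotonicity}.

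Second, with this monotonicity in hand, I would invoke \cref{lem:move-to-nice-space} itself: the event $\cA^{\Bot}_{x,h}$ satisfies $\cA^{\Bot}_{x,h}\subseteq E_h^x$ (a non-$\Bot$ path to height $h$ through $\cP_x$ forces $\hgt(\cP_x)\ge h$), it satisfies the $\Phi$-monotonicity just established, and by hypothesis $\Phi$ satisfies the energy/entropy-type bound $\bar\mu_n(\Phi^{-1}(J))\le(1+\epsilon_\beta)\bar\mu_n(J)$. Hence \cref{lem:move-to-nice-space} applies verbatim (it is stated for an arbitrary event in the joint space, so in particular for an $\omega$-measurable one), and gives $\big|\bar\mu_n(\cA^{\Bot}_{x,h}\mid\Omega)/\bar\mu_n(\cA^{\Bot}_{x,h}\mid E_h^x)-1\big|\le\epsilon_\beta$ for every $\Omega$ with $\Phi(E_h^x)\subseteq\Omega\subseteq E_h^x$, which is the claim.

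The main obstacle is the proof of the $\Phi$-monotonicity step, i.e. the FK analog of \cref{lem:phi-Phi-monotonicity}. The subtlety is that ``$x\xleftrightarrow{\omega}h$ through $\cP_x$'' is not literally an increasing event once we also condition on the interface $I$ (which fixes closed edges), so the monotonicity does not follow from FKG; it has to come from the combinatorial structure of $\Phi$ (trivialization of increments can only create connectivity) together with the FK Domain Markov property at cut-points to localize the conditional connection probabilities increment by increment. Compared to \cref{lem:phi-Phi-monotonicity} this is genuinely easier — there is no Potts coloring to track and no need for \cref{lem:locally-determine-Ared} or \cref{lem:DMP-pillar-shell}, only the ordinary FK Domain Markov at a single vertex cut-point — but it still requires care in setting up the telescoping factorization and in checking that a trivial increment contributes a factor exactly $1$ (using $x\in\Vbot$ on $\Iso_{x,L,h}$ from \cref{clm:x-in-Vbot} to ensure the bottom of the pillar is genuinely connected to $\partial\Lambda_n^-$). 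Once that is done, the rest is a direct application of \cref{lem:move-to-nice-space}.
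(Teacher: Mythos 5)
Your overall strategy is correct and arrives at the conclusion by essentially the same mechanism as the paper, though the paper does not route through \cref{lem:move-to-nice-space} explicitly but instead gives a short standalone argument that is a specialization of it. The key simplification the paper exploits --- and which makes your Domain Markov telescoping unnecessary --- is that $\cA^{\Bot}_{x,h}$ is \emph{interface-measurable}: by \cref{clm:nested-walls-determine-pillars-RC-Potts}, once $\cI=I$ is fixed the event $\cA^{\Bot}_{x,h}$ is determined (any finite maximal closed-face component $F$ disjoint from $\cI$ has all of $\Delta_\scE F$ open, so it cannot disconnect and never affects membership in $\Abot$). Consequently $\bar\mu_n(\cA^{\Bot}_{x,h}\mid\cI=I)\in\{0,1\}$, so the ``$\Phi$-monotonicity'' you want to establish collapses to the set inclusion $\Phi(\cA^{\Bot}_{x,h})\subseteq\cA^{\Bot}_{x,h}$, and there is nothing to telescope: no FK Domain Markov at cut-points is needed, and your concern that the connectivity event is not literally increasing given $I$ evaporates because given $I$ it is \emph{deterministic}. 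The paper then proves the set inclusion exactly as you describe in words --- trivialized increments connect for free, and for each surviving increment $X_i$ one extracts an open path from $v_i$ to $v_{i+1}$ via \cref{obs:simpler-cABot}, which survives the horizontal shift into $J$ --- and finishes with the same sum as in \cref{lem:move-to-nice-space}, writing $\bar\mu_n(\cA^{\Bot}_{x,h})$ directly as $\sum_{I\in\cA^{\Bot}_{x,h}}\bar\mu_n(I)$ and bounding by $(1+\epsilon_\beta)\bar\mu_n(\Omega\cap\cA^{\Bot}_{x,h})$ via the entropy hypothesis. So your proof would go through, but what you flag as the ``main obstacle'' (setting up and controlling the telescoping factorization) is a detour; noticing the interface-measurability of $\cA^{\Bot}_{x,h}$ reduces it to pure combinatorics.
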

\begin{proof}
    By the same computation as in the proof of \cref{lem:move-to-nice-space}, the facts  $\cA^{\Bot}_{x, h} \subseteq E_h^x$ and $\bar{\mu}_n(\Omega \mid E_h^x) \geq 1- \epsilon_\beta$ reduce the proof to showing that
    \[\bar{\mu}_n(\Omega\mid\cA^{\Bot}_{x, h}) \geq 1 - \epsilon_\beta\,.\]
    Using the bound $\bar{\mu}_n(\Phi^{-1}(J)) \leq (1+\epsilon_\beta)\bar{\mu}_n(J)$, we can write
    \begin{align*}
        \bar{\mu}_n(\cA^{\Bot}_{x, h}) &= \sum_{I \in \cA^{\Bot}_{x, h}} \bar{\mu}_n(I)\\
        &\leq \sum_{J \in \Phi(\cA^{\Bot}_{x, h})}\,\sum_{I \in \Phi^{-1}(J)}\bar{\mu}_n(I)\\
        &\leq \sum_{J \in \Phi(\cA^{\Bot}_{x, h})} \bar{\mu}_n(J)(1+\epsilon_\beta)\,.
    \end{align*}
    We conclude by arguing that the conditions on $\Phi$ ensure that $\Phi(\cA^{\Bot}_{x, h}) \subseteq \Omega \cap \cA^{\Bot}_{x, h}$. Indeed, if $I \in \cA^{\Bot}_{x, h}$ and $J = \Phi(I)$, then $J$ has a path of open edges in $P_x^J$ connecting $x$ up to $\hgt(v_1)$, where $v_1$ is the first cut-point of $P_x^I$ (since $J$ is just a stack of trivial increments there). More generally, any stack of trivial increments in $P_x^J$ also has an open path connecting the bottom and top cut-points of the stack. Furthermore, for every increment $X_i \in P_x^I$, \cref{obs:simpler-cABot} shows that there must be a path of open edges connecting $v_i$ to $v_{i+1}$, and hence the same must be true regarding the shifted copy of $X_i$ in $P_x^J$. Hence, there must be an open path in $J$ connecting $x$ to height $h$ inside $P_x^J$, which implies $\cA^{\Bot}_{x, h}$ by \cref{obs:simpler-cABot} and the assumption that $\Phi(\cA^{\Bot}_{x, h}) \subseteq \Iso_{x, L, h}$.
\end{proof}

Equipped with the 3-to-3 map of \cref{lem:3-to-3-map}, we can prove the submultiplicativity result for $\cA^{\Bot}_{x, h}$ directly.
\begin{proposition}\label{prop:submultiplicativity-bot}
For every $\beta > \beta_0$, there exists a constant $\epsilon_\beta$ such that for every $h = h_1 + h_2$, and every sequence $x, n$ dependent on $h$ such that $d(x, \partial \Lambda_n) \gg h$,
    \begin{equation}
        \phi_n(\cA^{\Bot}_{x, h_1+h_2}\mid E_{h_1+h_2}^x) \leq (1+\epsilon_\beta)\phi_n(\cA^{\Bot}_{x, h_1}\mid E_{h_1}^x)\phi_n(\cA^{\Bot}_{x, h_2}\mid E_{h_2}^x)\,.
    \end{equation}
\end{proposition}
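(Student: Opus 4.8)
The proof will mirror that of \cref{prop:submultiplicativity-Potts}, but working with the single FK configuration $\omega$ rather than the coupled pair $(\omega,\sigma)$; this is where the bulk of the simplification comes from. Fix $h=h_1+h_2$ and let $\Phi$ be the composition of maps used in \cref{lem:nice-space-likely} to send $E_h^x$ into $\Omega_{h_1,h_2}$. By \cref{rem:Phi-condition} this $\Phi$ satisfies the hypotheses of \cref{lem:move-to-nice-space-bot}: its action on $\cP_x$ is to shift increments, replace them by stacks of trivial increments, and replace the base by a trivial stack of equal height, and it obeys the energy/entropy estimate $\bar{\mu}_n(\Phi^{-1}(J))\le(1+\epsilon_\beta)\bar{\mu}_n(J)$. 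Since $\cA^{\Bot}_{x,h}\subseteq E_h^x$ and $\cA^{\Bot}_{x,h}$ is $\omega$-measurable (so $\phi_n$ and $\bar{\mu}_n$ agree on it), \cref{lem:move-to-nice-space-bot} applied with $\Omega=\Omega_{h_1,h_2}$, and then later with $\Omega=\Omega_{h_1}$ and $\Omega=\Omega_{h_2}$, reduces matters to proving
\[
\phi_n(\cA^{\Bot}_{x,h_1+h_2}\mid\Omega_{h_1,h_2})\le(1+\epsilon_\beta)\,\phi_n(\cA^{\Bot}_{x,h_1}\mid\Omega_{h_1})\,\phi_n(\cA^{\Bot}_{x,h_2}\mid\Omega_{h_2})\,.
\]

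To this end I would decompose over the pillar, $\phi_n(\cA^{\Bot}_{x,h}\mid\Omega_{h_1,h_2})=\sum_{P\in\Omega_{h_1,h_2}}\phi_n(\cA^{\Bot}_{x,h}\mid\cP_x=P,\Omega_{h_1,h_2})\,\bar{\mu}_n(\cP_x=P\mid\Omega_{h_1,h_2})$. On $\Omega_{h_1,h_2}\subseteq\Iso_{x,L,h}$ we have $x\in\Vbot$ by \cref{clm:x-in-Vbot}, so \cref{obs:simpler-cABot} identifies $\cA^{\Bot}_{x,h}$ with the open-connectivity event $\{x\xleftrightarrow{\omega}h$ within $\cP_x\}$. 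Because each $v_i$ is a cut-point, a simple path realizing this event passes through every $v_i$ in increasing order of height (the path's height changes by integer steps and must sweep from $\tfrac12$ to $h-\tfrac12$), so it decomposes as $\bigcap_i\cA^{\Bot}_{v_i,v_{i+1}}$. Revealing $\cP_x=P$ (equivalently the increment shells together with the interior hairs) seals each increment subgraph behind closed faces, and the FK marginal of \cref{lem:DMP-pillar-shell}---the colourless, and hence lighter, counterpart of \cref{cor:DMP-for-Ared}---shows that inside the increment the law is free-boundary FK on that subgraph, with connectivity there independent of everything outside. Hence $\phi_n(\cA^{\Bot}_{x,h}\mid\cP_x=P,\Omega_{h_1,h_2})=\prod_i\phi_n(\cA^{\Bot}_{v_i,v_{i+1}}\mid\sX_i=X_i)$, each factor being shift-invariant and, for a trivial increment, equal to $1$ (the edge joining its two vertices is forced open: were it closed, its dual horizontal face would be a hair of $\cP_x$ and the increment would not be trivial). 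Writing $P=P_B\times P^T$ as in \cref{rem:cut-paste-pillars} and splitting the product at the trivial increment straddling height $h_1$ (present by \cref{it:omega-L-seperate-middle}), this rearranges to
\[
\phi_n(\cA^{\Bot}_{x,h}\mid\cP_x=P,\Omega_{h_1,h_2})=\phi_n(\cA^{\Bot}_{x,h_1}\mid\cP_x=P_B,\Omega_{h_1})\,\phi_n(\cA^{\Bot}_{x,h_2}\mid\cP_x=P^T,\Omega_{h_2})\,.
\]

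Substituting this and turning the sum over $P$ into a double sum over $P_B\in\Omega_{h_1}$ and $P^T\in\Omega_{h_2}$, the only factor linking the two halves is $\bar{\mu}_n(\cP_x=P_B\times P^T\mid\Omega_{h_1,h_2})$, which \cref{lem:3-to-3-map} bounds by $(1+\epsilon_\beta)\bar{\mu}_n(\cP_x=P_B\mid\Omega_{h_1})\bar{\mu}_n(\cP_x=P^T\mid\Omega_{h_2})$; summing out $P_B$ and $P^T$ gives the displayed inequality for $\Omega_{h_1,h_2}$, and the two remaining applications of \cref{lem:move-to-nice-space-bot} complete the proof. The one step genuinely requiring care is the increment-by-increment factorization of $\phi_n(\cA^{\Bot}_{x,h}\mid\cP_x=P,\Omega_{h_1,h_2})$: although it is the FK-marginal analogue of \cref{cor:DMP-for-Ared} and so avoids the colour bookkeeping of the Potts case, one must still check, via \cref{lem:DMP-pillar-shell}, \cref{obs:simpler-cABot} and the cut-point geometry of isolated pillars, that fixing $\cP_x=P$ makes the connectivity events in distinct increments conditionally independent FK events on the respective increment subgraphs with free boundary conditions.
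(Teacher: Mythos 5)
Your proof is correct and follows the paper's outline: passing to $\Omega_{h_1,h_2}$, $\Omega_{h_1}$, $\Omega_{h_2}$ via \cref{lem:move-to-nice-space-bot}, a pillar decomposition with the split $P=P_B\times P^T$, and \cref{lem:3-to-3-map}. The one imprecision is the detour through \cref{lem:DMP-pillar-shell}: once you condition on the full pillar $\cP_x=P$ (hairs included) rather than on its shell, the event $\cA^{\Bot}_{x,h}$ is already deterministic by \cref{obs:simpler-cABot} and \cref{clm:nested-walls-determine-pillars-RC-Potts}, so the ``conditional probabilities'' you manipulate are just $\{0,1\}$-valued indicators and there is no residual interior FK randomness to which that lemma applies; the paper sidesteps this by treating $\cA^{\Bot}_{x,h}$ directly as a pillar event and using the implication $P_B\times P^T\in\cA^{\Bot}_{x,h_1+h_2}\Rightarrow P_B\in\cA^{\Bot}_{x,h_1},\ P^T\in\cA^{\Bot}_{x,h_2}$, which supplies exactly the factorization you derive and with less machinery.
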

\begin{proof}
    By \cref{lem:move-to-nice-space-bot} above, it suffices to prove instead
    \[\phi_n(\cA^{\Bot}_{x, h_1+h_2}\mid \Omega_{h_1, h_2}) \leq (1+\epsilon_\beta)\phi_n(\cA^{\Bot}_{x, h_1}\mid \Omega_{h_1})\phi_n(\cA^{\Bot}_{x, h_2}\mid \Omega_{h_2})\,.\]
    But, \cref{obs:simpler-cABot} readily implies that if $P = P_B \times P^T$ for a pillar $P \in \Omega_{h_1, h_2} \cap \cA^{\Bot}_{x, h_1 + h_2}$, then $P_B \in \Omega_{h_1} \cap \cA^{\Bot}_{x, h_1}$ and $P^T \in \Omega_{h_2} \cap \cA^{\Bot}_{x, h_2}$. Thus, we compute using \cref{lem:3-to-3-map} that
    \begin{align*}
    \phi_n(\cA^{\Bot}_{x, h_1+h_2}\mid \Omega_{h_1, h_2}) &= \sum_{P_B\times P^T \in \Omega_{h_1, h_2} \cap \cA^{\Bot}_{x, h_1+h_2}} \bar{\mu}_n(P_B \times P^T \mid \Omega_{h_1, h_2})\\
    &\leq (1 + \epsilon_\beta)\sum_{P_B\times P^T \in \Omega_{h_1, h_2} \cap \cA^{\Bot}_{x, h_1+h_2}} \bar{\mu}_n(P_B \mid \Omega_{h_1})\bar{\mu}_n( P^T \mid \Omega_{h_2})\\
    &\leq(1+\epsilon_\beta)\sum_{P_B \in \Omega_{h_1} \cap \cA^{\Bot}_{x, h_1}} \sum_{P^T \in \Omega_{h_2} \cap \cA^{\Bot}_{x, h_2}}\bar{\mu}_n(P_B \mid \Omega_{h_1})\bar{\mu}_n( P^T \mid \Omega_{h_2})\\
    &=(1 + \epsilon_\beta)\bar{\mu}_n(\cA^{\Bot}_{x, h_1}\mid \Omega_{h_1})\bar{\mu}_n(\cA^{\Bot}_{x, h_2}\mid \Omega_{h_2})\,.     
    \qedhere
\end{align*}
\end{proof}

As done before, by using the decorrelation estimates of \cref{cor:decorrelation-1}, we can generalize to the case where $x, n$ on the right hand side can depend on $h_1$ and $h_2$, as long as we still have $1 \ll h_i \ll n_{h_i}$ and $d(x_{h_i}, \partial \Lambda_{n_{h_i}}) \gg h_i$:
\begin{equation*}
    \phi_n(\cA^{\Bot}_{x_h, h} \mid E_h^x) \leq (1+\epsilon_\beta+o_{h_1}(1)+o_{h_2}(1))\phi_{n_{h_1}}(\cA^{\Bot}_{x_{h_1}, h_1} \mid E_{h_1}^{x_{h_1}})\phi_{n_{h_2}}(\cA^{\Bot}_{x_{h_2}, h_2} \mid E_{h_2}^{x_{h_2}})\,.
\end{equation*}
Fekete's Lemma then implies the existence of the last rate in \cref{prop:RC-Potts-diff-rate}.

\subsection{Estimating the rates}
To conclude this section, we want to prove that the above rates are distinct, and provide some better bounds on their differences. Call an increment $\sX_i^\mathrm{o}$ a simple block if it consists of just two vertices $v_i, v_{i+1}$ where $v_{i+1} = v_i + \ez$. For some constant $C^*$ sufficiently large (to be determined below), let $\sG$ be the good event that the pillar shell $\cP_x^\mathrm{o}$ has less than $4h + 1+ \frac{C^*}{2\beta}h$ faces.

\begin{lemma}\label{lem:many-simple-blocks}
    There exists constants $C^*,C, c > 0$ such that for $\beta, L$ sufficiently large, for all $h \geq 1$,
    \begin{equation}\label{eq:many-simple-blocks}
        \bar{\mu}_n(\sG \mid E_h^x, \Iso_{x, L, h}^\mathrm{o}) \geq 1 - Ce^{-ch}\,.
    \end{equation}
    Furthermore, any any pillar in $E_h^x \cap \sG$ has at least $h(1 - \frac{C^*}{\beta})$ simple blocks below height $h$.
\end{lemma}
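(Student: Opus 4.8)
\textbf{Proof plan for \cref{lem:many-simple-blocks}.}

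The plan is to combine the exponential tail on the excess area of the pillar (which already sits behind \cref{prop:exp-tail-pillar-height} and \cref{prop:bound-RC-rate}) with the observation that having few faces forces most increments to be simple blocks. First I would set up the decomposition of the pillar shell $\cP_x^\mathrm{o}$ into its increments $\sX_1^\mathrm{o},\ldots,\sX_T^\mathrm{o}$ (plus the trivial base, which is empty on $\Iso_{x,L,h}^\mathrm{o}$ by \cref{it:iso-spine}). Recall that a stack of $k$ simple blocks contributes exactly $4k+1$ faces to $\cP_x^\mathrm{o}$ (the four side faces per level plus a single top cap, using that we are on $\widetilde E_h^x$-type configurations so there is no extra face above, or more precisely bounding the cap contribution by $1$), while \cref{eq:incr-excess-area} and the definition of excess area give $|\sF(\sX_i^\mathrm{o})| \ge 4(\hgt(v_{i+1})-\hgt(v_i)+1) + \fm(\sX_i^\mathrm{o})$ for every increment. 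Summing over $i$, the total number of faces of $\cP_x^\mathrm{o}$ is at least $4h + 1 + \sum_i \fm(\sX_i^\mathrm{o})$ (telescoping the heights, which run from $1/2$ to $h-1/2$). Hence on the event $\sG$ we automatically get $\sum_i \fm(\sX_i^\mathrm{o}) \le \frac{C^*}{2\beta}h$.

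Next I would extract the simple-block count from this. An increment that is \emph{not} a simple block has $\fm(\sX_i^\mathrm{o}) \ge 1$ (this is essentially the inequality recorded just after the definition of excess area of increments, since a non-trivial increment has a non-cut-point at some intermediate height). Therefore, on $\sG$, the number of non-simple increments is at most $\sum_i \fm(\sX_i^\mathrm{o}) \le \frac{C^*}{2\beta}h$, and since each non-simple increment spans at least one unit of height, the number of \emph{height levels} covered by non-simple increments is at most, say, $\frac{C^*}{2\beta}h$ as well (being slightly careless with constants, absorbing the $\vee 1$ issue into $C^*$); consequently at least $h - \frac{C^*}{\beta}h$ of the $h$ unit height slabs lie inside simple blocks, giving the ``furthermore'' clause. (One has to be mildly careful that adjacent increments share a cut-point, so ``height levels covered'' should be counted as $\hgt(v_{i+1})-\hgt(v_i)$; this is a routine bookkeeping point.)

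Finally, for the probability bound \cref{eq:many-simple-blocks}, I would show $\bar\mu_n(\sG^c \mid E_h^x, \Iso_{x,L,h}^\mathrm{o}) \le Ce^{-ch}$ by noting $\sG^c$ forces $|\sF(\cP_x^\mathrm{o})| > 4h+1+\frac{C^*}{2\beta}h$, hence (by the face-count lower bound above, now used as an identity up to the cap) $\fm(\cS_x^\mathrm{o}) = \sum_i \fm(\sX_i^\mathrm{o}) > \frac{C^*}{2\beta}h$, i.e.\ the total excess area of the spine exceeds $\frac{C^*}{2\beta}h$. I would then invoke the exponential tail on the excess area of the group of nested walls generating the pillar --- precisely \cref{eq:tail-on-nested-wall-group} in the proof of \cref{prop:exp-tail-pillar-height}, which gives $\bar\mu_n(\fm(\fF_x) \ge r) \le Ce^{-(\beta-C)r}$ --- together with the fact that $\fm(\cS_x^\mathrm{o}) \le \fm(\fF_x)$ (the spine's excess area is dominated by that of its group of walls; this is the same domination used there, and is where $\Iso_{x,L,h}^\mathrm{o}$ is convenient because it rules out interactions with other walls). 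Conditioning on $E_h^x$ costs only a factor $e^{O(\beta h)}$ worse than... no: more carefully, since $\bar\mu_n(E_h^x, \Iso_{x,L,h}^\mathrm{o}) \ge (1-\epsilon_\beta)\bar\mu_n(E_h^x) \ge e^{-4\beta h}$ by \cref{thm:iso-map} and \cref{prop:bound-RC-rate}, and $\bar\mu_n(\sG^c, E_h^x) \le \bar\mu_n(\fm(\fF_x) > \tfrac{C^*}{2\beta}h) \le Ce^{-(\beta-C)\frac{C^*}{2\beta}h} = Ce^{-\frac{C^*}{2}(1-C/\beta)h}$, choosing $C^*$ large enough that $\frac{C^*}{2}(1-C/\beta) > 4\beta + c$ for the relevant range of $\beta$ makes the ratio at most $Ce^{-ch}$. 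The main obstacle here is getting the constants to line up: $C^*$ must be chosen large relative to $\beta$, but the tail rate in \cref{eq:tail-on-nested-wall-group} only gives decay at rate $(\beta-C)\cdot\frac{C^*}{2\beta} \approx \frac{C^*}{2}$, which does beat the $4\beta$ entropy cost provided $C^*$ is taken $\gg \beta$ --- so in fact $C^*$ should be allowed to depend on $\beta$ (or the statement should be read with $\beta$ fixed first, then $C^*=C^*(\beta)$), and I would state it that way to avoid circularity. Everything else is elementary face-counting.
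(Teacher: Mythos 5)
The deterministic face-counting argument for the ``furthermore'' clause is essentially correct and mirrors the paper's: a non-simple increment of vertical span $k$ contributes at least $k-1 \ge k/2$ faces beyond the $4k$ baseline (using $k\ge 2$), so on $\sG$ the total span of non-simple increments is at most $\tfrac{C^*}{\beta}h$ and at least $h(1-\tfrac{C^*}{\beta})$ unit slabs must be simple blocks. The bookkeeping about shared cut-points you flag is indeed routine.

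The probability bound is where the proposal breaks down, and you have actually put your finger on the symptom without diagnosing the cause. Your plan is to note that $\sG^c\cap E_h^x$ forces $\fm(\cS_x^{\mathrm o})>\tfrac{C^*}{2\beta}h$, dominate this by $\fm(\fF_x)>\tfrac{C^*}{2\beta}h$, and invoke the unconditional tail $\bar\mu_n(\fm(\fF_x)\ge r)\le Ce^{-(\beta-C)r}$, dividing finally by $\bar\mu_n(E_h^x,\Iso^{\mathrm o})\ge e^{-4\beta h}$. The problem is that this discards the baseline: merely being on $E_h^x$ already forces $\fm(\fF_x)\gtrsim 4h$, so the event you want to bound is really $\{\fm(\fF_x)\gtrsim 4h+\tfrac{C^*}{2\beta}h\}$, not $\{\fm(\fF_x)>\tfrac{C^*}{2\beta}h\}$. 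Bounding the latter and dividing by $e^{-4\beta h}$ means the tail has to beat an entropy factor $e^{4\beta h}$ on its own, which forces $C^*\gtrsim 8\beta$. You notice this and suggest letting $C^*=C^*(\beta)$ — but that empties the conclusion, since $h(1-C^*/\beta)$ then fails to be positive (let alone $(1-\epsilon_\beta)h$), and the downstream use of the lemma in \cref{prop:bound-rates} needs precisely that $C^*/\beta\to0$. So as written the approach does not prove the statement. Even if you replaced your threshold by the correct $4h+\tfrac{C^*}{2\beta}h$, you would have to establish that $\fm(\fF_x)-\fm(\cS_x^{\mathrm o})\ge 4h$ (or similar) on $E_h^x$, which is not what $\fm(\cS_x^{\mathrm o})\le \fm(\fF_x)$ gives you, and is a nontrivial claim.

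The paper avoids all of this with a map argument $\Phi$ that collapses $\cP_x^\cI$ to a height-$h$ trivial stack, exactly so that the comparison is carried out \emph{within} $E_h^x\cap\Iso^{\mathrm o}$ (both $\cI$ and $\cJ=\Phi(\cI)$ are in this set) and the baseline probability $\bar\mu_n(E_h^x)$ never has to be paid for. The energy gain is $\approx\beta\,\fm(\cI;\cJ)$ with $\fm(\cI;\cJ)\ge\tfrac{C^*}{2\beta}h$, the entropy cost per unit excess area is $\bigl(\tfrac{8\beta}{C^*}+1\bigr)\log s$ (because one must record all $4h+1+\fm(\cI;\cJ)\le\bigl(\tfrac{8\beta}{C^*}+1\bigr)\fm(\cI;\cJ)+1$ faces of $\cP_x^\cI$), and the cluster-expansion corrections to $|\partial\cI|$ and $\kappa_\cI$ are similarly proportional to $\fm(\cI;\cJ)$; putting these together and multiplying the per-$M$ exponent by $M\ge\tfrac{C^*}{2\beta}h$, the factors of $\beta$ cancel and one is left with a rate $\approx\tfrac{C^*}{2}-4\log s-8K$, which is positive for an \emph{absolute} choice of $C^*$ once $\beta$ is large. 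This is the step your proposal is missing.
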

\begin{proof} 
Suppose we have an interface $\cI$ from $E_h^x \cap \Iso_{x, L, h}^\mathrm{o}$.  We first prove that if there are fewer than $h(1 - \frac{C^*}{\beta})$ simple blocks used to reach height $h$, then $|\sF(\cP_x^\mathrm{o})| \geq 4h+1+\frac{C^*}{2\beta}h$. Indeed, suppose we expose the increments one by one. When we expose an increment $\sX_i^\mathrm{o}$ which is not a simple block, the height increases by $\hgt(v_{i+1}) - \hgt(v_i)$, and the number of faces added to $\cP_x^\mathrm{o}$ must be at least $4(\hgt(v_{i+1}) - \hgt(v_i)) + (\hgt(v_{i+1}) - \hgt(v_i) - 1)$ since each height in between the vertices is not a cut-height. That is, the number of faces added in addition to four times the height increase is at least half the height increase (for increments which are not simple blocks, the height increase is at least two). When we expose an increment that is a simple block, we increase the height by one, and we add at least four faces to $\cP_x^\mathrm{o}$. But, the latter can only happen at most $h(1 - \frac{C^*}{\beta})$ times, and so the remaining height of $\frac{C^*}{\beta}h$ is made up by increments which are not simple blocks. Thus, the number of faces in the pillar shell is at least $4h + 1 + \frac{C^*}{2\beta}h$ (where the plus one is just because there must be at least horizontal face that forms a ``cap" of the pillar at the top). 

Now, we can define the map $\Phi$ as follows: If $\cI \in \sG \cap \Iso_{x, L, h}^\mathrm{o} \cap E_h^x$, then $\Phi$ is the identity map. Otherwise, let $\Phi(\cI) = \cJ$ be the interface that replaces $\cP_x^\cI$ with a stack of trivial increments of height $h$. Let $\cI \in \sG^c \cap \Iso_{x, L, h}^\mathrm{o} \cap E_h^x$, so that $\fm(\cI;\cJ) \geq \frac{C^*}{2\beta}h$. For any such $\cI$, using the cluster expansion we have
\begin{equation*}
\frac{\bar{\mu_n}(\cI)}{\bar{\mu}_n(\cJ)} = (1-e^{-\beta})^{|\partial \cI| - |\partial \cJ|}e^{-\beta \fm(\cI;\cJ)}q^{\kappa_\cI - \kappa_\cJ}\exp(\sum_{f\in \cI} \g(f, \cI) - \sum_{f \in \cJ} \g(f, \cJ))\,.
\end{equation*}
To control the term $(1-e^{-\beta})^{|\partial \cI| - |\partial \cJ|}$, note that $|\partial \cJ| \leq 4C_0 h \leq \frac{8\beta C_0}{C^*} \fm(\cI;\cJ)$, where $C_0$ is the number of faces that can be 1-connected to a particular face. Thus, we have
\begin{equation*}
    (1-e^{-\beta})^{|\partial \cI| - |\partial \cJ|} \approx e^{-e^{-\beta}(|\partial \cI| - |\partial \cJ|)} \leq e^{e^{-\beta}\frac{8\beta C_0}{C^*} \fm(\cI;\cJ)} \leq e^{\frac{8C_0}{C^*}\fm(\cI;\cJ)}
\end{equation*}
for sufficiently large $\beta$. (The $\approx$ can be seen to be an equality up to a factor of $(1-\epsilon_\beta)$ in the exponent, which has no affect on the final inequality. See for instance the computation in \cref{eq: prob-non-red-in-trivial}.)

To control the difference in open clusters, we will be slightly more careful than before. In $\cI$, we can first expose the vertical faces that bound the sides of the vertices of the pillar $\cP_x^\cI$. Since we are only exposing vertical faces which notably are not 1-connected to any faces of $\cI \setminus \cP_x^\cI$ except at height 0 (by \cref{cor:pillar-doesnt-touch-other-walls}), there are not yet any new open clusters created. Since the pillar has height $\geq h$, we must have already exposed at least $4h$ faces. Now, there are at most $\fm(\cI;\cJ) + 1$ faces left to expose in the pillar (since we are on the event $\sG$), and each one can create at most one open cluster, so that 
\begin{equation*}
\kappa_\cI - \kappa_\cJ \leq \fm(\cI;\cJ) + 1\,.
\end{equation*}
Finally, we bound the $g$-terms. We can write the absolute value of the sum of the terms as
\begin{equation*}
    \sum_{f \in \cI \setminus \cP_x} |\g(f, \cI) - \g(f, \cJ)| + \sum_{f \in \cP_x^\cI} |\g(f, \cI)| + \sum_{f \in \cP_x^\cJ} |\g(f, \cJ)|\,.
\end{equation*}
We can bound the first term using \cref{eq:pillar-in-cone-2}:
\begin{equation*}
    \sum_{f \in \cI \setminus \cP_x} |\g(f, \cI) - \g(f, \cJ)| \leq \sum_{f \in \cI \setminus \cP_x}\;\sum_{g \in (\cP_x^\cI \cup \cP_x^\cJ) \cap \cL_{\geq L^3}} Ke^{-cd(f, g)} \leq Ke^{-cL}\,.
\end{equation*}
The second and third terms can be bounded by the number of faces:
\begin{equation*}
    \sum_{f \in \cP_x^\cI} |\g(f, \cI)| + \sum_{f \in \cP_x^\cJ} |\g(f, \cJ)| \leq K(8h + 2+ \fm(\cI;\cJ)) \leq K(\frac{16\beta}{C^*} + 1)\fm(\cI;\cJ) + 2K
\end{equation*}
Thus, we have the energy bound:
\begin{equation*}
    \frac{\bar{\mu}_n(\cI)}{\bar{\mu}_n(\cJ)} \leq Ce^{-(\beta - \frac{K16\beta}{C^*} -K - \frac{8C_0}{C^*} - \log q)\fm(\cI;\cJ)}\,.
\end{equation*}
For the entropy bound, we can recover $\cI$ from $\cJ$ if we are given the faces of $\cP_x^\cI$, since both $\cI$ and $\cJ$ are in $\Iso_{x, L, h}$. There are $4h + 1 + \fm(\cI;\cJ) \leq (\frac{8\beta}{C^*} + 1)\fm(\cI;\cJ) + 1$ faces in $\cP_x$.
Thus, by \cref{lem:num-of-0-connected-sets}, we have for some $s > 0$,
\begin{equation*}
|\{\cI \in \Phi^{-1}(\cJ): \fm(\cI;\cJ) = M\}| \leq s^{(\frac{8\beta}{C^*} + 1)M}\,.
\end{equation*}
Thus, we have 
\begin{align*}
\bar{\mu}_n(\sG^c, E_h^x, \Iso_{x, L, h}^\mathrm{o}) &\leq \sum_{M \geq \frac{C^*}{2\beta}h}\,\, \sum_{\substack{\cI \in \sG^c \cap E_h^x \cap \Iso_{x, L, h}^\mathrm{o},\\ \fm(\cI;\Phi(\cI)) = M}} \bar{\mu}_n(\cI)\\
&\leq \sum_{M \geq \frac{C^*}{2\beta}h} \,\, \sum_{\cJ \in E_h^x \cap \Iso_{x, L, h}^\mathrm{o}}\,\, \sum_{\substack{\cI \in \Phi^{-1}(\cJ),\\ \fm(\cI;\cJ) = M}} Ce^{-(\beta - \frac{K16\beta}{C^*} -K - \frac{8C_0}{C^*} - \log q)\fm(\cI;\cJ)}\bar{\mu}_n(\cJ)\\
&\leq \sum_{M \geq \frac{C^*}{2\beta}h} Ce^{-(\beta - \frac{K16\beta}{C^*} -K - \frac{8C_0}{C^*} - \log q-(\frac{8\beta}{C^*}+1)\log s)\fm(\cI;\cJ)}\bar{\mu}_n(E_h^x, \Iso_{x, L, h}^\mathrm{o})\\
&\leq \tilde Ce^{-(\frac{C^*}{2} - 8K-4\log s)h + \frac{C^*}{2\beta}(\log q+\log s)h}\bar{\mu}_n(E_h^x, \Iso_{x, L, h}^\mathrm{o})\,.
\end{align*}
The lemma follows by dividing by $\bar{\mu}_n(E_h^x, \Iso_{x, L, h}^\mathrm{o})$ and taking $C^*/2$ strictly larger than $8K + 4\log s$ and then taking $\beta$ sufficiently large.
\end{proof}

\begin{remark}
    The above lemma says that a typical pillar reaching height $h$ will have $h(1 - \epsilon_\beta)$ simple blocks, which for our purposes is all the precision that is needed. We note one can get a sharper bound of having at least $h(1 - Ce^{-c\beta})$ simple blocks via the following proof strategy: We can reveal the increments $\sX_i^\mathrm{o}$ one by one, and each increment will increase the number of faces revealed in the pillar shell by at least 4. By \cref{eq:incr-precise-bound} (and noting by \cref{rem:incr-map-variations} that we can really apply this bound one increment at a time), the number of additional faces revealed for each increment is stochastically dominated by $\operatorname{Geom}(p^*) - 1$ with $p^* = 1 -e^{-(\beta - C)}$ for some constant $C$. There are at most $h$ increments needed for the pillar to reach height $h$, so the total number of faces in the pillar shell with height $\leq h$ is stochastically dominated by $\operatorname{NegBin}(h, p^*)+3h$. We can then use known large deviation results concerning the Binomial distribution to bound the probability that the number of faces in the pillar shell exceeds $4h + e^{-c\beta}h$ for some constant $c$, and argue as in \cref{lem:many-simple-blocks} to show how this implies the lower bound on the number of simple blocks. Although the map argument presented above gives a weaker result, it allows us to use the machinery of \cref{lem:move-to-nice-space} in what follows.
\end{remark}

We are now in a position to obtain lower and upper bounds on the rates $\delta,\delta',\delta''$ that are sharp up to a factor of $1+\epsilon_\beta$.

\begin{proof}[\emph{\textbf{Proof of \cref{prop:bound-rates}}}]
It will turn out that the probabilities in question are on the scale of $e^{-O(e^{-\beta}h)}$, so we can throw the event $\sG^c$ as an additive error and it will not affect the large deviation rates. Now, on the event $\sG \cap \Iso_{x, L, h}^\mathrm{o}\cap E_h^x$, suppose we reveal the pillar shell $\cP_x^\mathrm{o} = P_x^\mathrm{o}$. Let $\fB$ be the indices of the first $(1 - \frac{C^*}{\beta})h$ increments intersecting with $\cL_{\leq h}$ which are simple blocks. Now, suppose we have a simple block increment consisting of vertices $v$ and $v + \ez$, where we know that $v \in \Ared^c$. Then, since $v + \ez$ is a cut-point and is thus surrounded by vertices of $\Vred$, the event $v + \ez \in \Ared^c$ is the same as $v+\ez$ just being non-$\Red$. Thus, $v+\ez \in \Ared^c$ can occur either if the edge $[v, v + \ez]$ is open, or if $[v, v+\ez]$ is closed and $v + \ez$ is colored non-$\Red$. By \cref{cor:DMP-for-Ared}, this conditional probability can be
computed as if on a coupled FK--Potts model on two vertices with boundary condition $\sigma_v = \noRed$. This probability is
\begin{align}\label{eq: prob-non-red-in-trivial}
\frac{p}{p+(1-p)q} + \frac{(1-p)q}{p+(1-p)q}\frac{q-1}{q} = 1 - e^{-\beta}(1-\epsilon_\beta) &= e^{-e^{-\beta}(1-\epsilon_\beta)}+O(e^{-2\beta})\nonumber \\
&= e^{-e^{-\beta}(1-\epsilon_\beta)+\log(1+O(e^{-2\beta - \epsilon_\beta}))}\nonumber \\
&=e^{-e^{-\beta}(1-\tilde\epsilon_\beta)}\,.
\end{align}

On the event $\Iso_{x, L, h}^\mathrm{o}$, we know $x$ is a cut-point and $x \in \Vbot$. So, via a computation similar to \cref{eq:split-Nred-by-increments}, we can use \cref{cor:DMP-for-Ared} to write
\begin{align}\label{eq:upper-bnd-diff-Potts-RC}
\phi_n(\cA^{\noRed}_{x, h} \mid P_x^\mathrm{o}, \sG, \Iso_{x, L, h}^\mathrm{o}, E_h^x)
& = \prod_{i \in \fB} \phi_n(\cA^{\noRed}_{v_i,v_{i+1}}|X_i^\mathrm{o},\, v_i \in \Ared^c)\prod_{\substack{i \notin \fB\\ X_i^\mathrm{o} \cap \cL_{\leq h} \neq \emptyset}} \phi_n(\cA^{\noRed}_{v_i,v_{i+1}}|X_i^\mathrm{o},\, v_i \in \Ared^c)\nonumber \\
&= e^{-e^{-\beta}(1-\tilde\epsilon_\beta)(1 - \frac{C^*}{\beta})h}
\prod_{\substack{i \notin \fB\\ X_i^\mathrm{o} \cap \cL_{\leq h} \neq \emptyset}} \phi_n(\cA^{\noRed}_{v_i,v_{i+1}}|X_i^\mathrm{o},\, v_i \in \Ared^c)\nonumber \\
&\leq e^{-e^{-\beta}(1-\tilde\epsilon_\beta)(1 - \frac{C^*}{\beta})h}\,.
\end{align}
We can also get a lower bound by considering the probability that for each increment $X_i^\mathrm{o}, i \notin \fB$ that intersects $\cL_{\leq h}$, we have a path of open edges connecting $v_i$ to $v_{i+1}$. Let $Q_i$ be a minimal $\Lambda_n$-path from $v_i$ to $v_{i+1}$ using vertices of $X_i^\mathrm{o}$. We argue that we can control the length $|Q_i|$ by the number of faces in $X_i^\mathrm{o}$. Let $V$ be the set of vertices in $X_i^\mathrm{o}$ (including $v_i$ and $v_{i+1}$). Let $H$ be the set of faces of $X_i^\mathrm{o}$ plus the faces $f_{[v_i, v_i - \ez]}$ and $f_{[v_{i+1}, v_{i+1} + \ez]}$. Then, $V$ is precisely the set of vertices in the component of $\R^3 \setminus H$ containing $v_i$. Note that by definition, none of the faces of $X_i^\mathrm{o}$ (and hence of $H$) separate two vertices of $V$. Then, defining $\Delta_{\scV, H}V$ as the subset 
\[\Delta_{\scV, H}V =  \left\{ u\in V\,:\; \exists v\mbox{ s.t.\ } f_{[u,v]}\in \overline{H}\right\}\,,\]
we know that $\Delta_{\scV, H}V$ is $\Lambda_n$-connected (see \cite[Prop.~6]{GielisGrimmett02},\cite[Thm.~7.5]{Grimmett_RC}) and contains $v_i, v_{i+1}$, so that the length of $Q_i$ is at at most $|\Delta_{\scV, H}V|$. We have a crude upper bound  $|\Delta_{\scV, H}V| \leq 10|H|$. Now, to avoid overcounting faces of $P_x^\mathrm{o}$, let us attribute to each increment $X_i^\mathrm{o}$ all of its faces except the four faces adjacent to $v_i$ at height $\hgt(v_i)$. Then, at least $4h(1-\frac{C^*}{\beta})$ faces are attributed to increments with indices $i \in \fB$. Since $P_x^\mathrm{o}$ has at most $4h + 1 + \frac{C^*}{2\beta}h$ faces (recall we are on $\sG$), this leaves at most $\frac{9C^*}{2\beta}$ faces to be attributed to increments with indices $i \notin \fB$. The number of faces in $H$ is six more than the number of faces attributed to $X_i^\mathrm{o}$, and each $X_i^\mathrm{o}$ gets attributed at least four faces. Hence, for another constant $\tilde C^*$ (namely, $\tilde C^* = 10\cdot\frac{5}{2}C^*$), we have 
\[\sum_{i \notin \fB} |Q_i| \leq \frac{9\tilde C^*}{2\beta}.\]
In other words, we can guarantee that $v_i$ is in the same open cluster as $v_{i+1}$ for each $i \notin \fB$ if we force a specific set of $\frac{9\tilde C^*}{2\beta}$ edges to be open. The probability of an edge $e = [u, v]$ being open is at least the conditional probability that $e$ is open given that $u, v$ are not in the same open cluster in $\omega\restriction_{\Lambda_n \setminus \{e\}}$. We compute this to be
\begin{equation}\label{eq:prob-bot-in-trivial}
    \frac{p}{p+(1-p)q} = 1 - qe^{-\beta}(1-\epsilon_\beta) = e^{-qe^{-\beta}(1-\tilde\epsilon_\beta)}, 
\end{equation}
where the second equality is computed similarly to \cref{eq: prob-non-red-in-trivial}. Thus, combined we have
\begin{align}\label{eq:lower-bnd-diff-Potts-RC}
\phi_n(\cA^{\noRed}_{x, h} \mid P_x^\mathrm{o}, \sG, \Iso_{x, L, h}^\mathrm{o}, E_h^x)
& = \prod_{i \in \fB} \phi_n(\cA^{\noRed}_{v_i,v_{i+1}}|X_i^\mathrm{o},\, v_i \in \Ared^c)\prod_{\substack{i \notin \fB\\ X_i^\mathrm{o} \cap \cL_{\leq h} \neq \emptyset}} \phi_n(\cA^{\noRed}_{v_i,v_{i+1}}|X_i^\mathrm{o},\, v_i \in \Ared^c)\nonumber \\
&\geq e^{-e^{-\beta}(1-\tilde\epsilon_\beta)(1 - \frac{C^*}{\beta})h}e^{-qe^{-\beta}(1-\tilde\epsilon_\beta)\frac{9\tilde C^*}{2\beta}h}\nonumber \\
&= e^{-e^{-\beta}(1 - \tilde\epsilon_\beta)(1 - \frac{C^*}{\beta}+q\frac{9\tilde C^*}{2\beta})h}\,.
\end{align}
Now, the bounds in \cref{eq:upper-bnd-diff-Potts-RC,eq:lower-bnd-diff-Potts-RC} are uniform over $P_x^\mathrm{o}$, so the same bounds apply for $\phi_n(\cA^{\noRed}_{x, h} \mid E_h^x, \sG, \Iso_{x, L, h}^\mathrm{o})$, which has the same large deviation rate as $\phi_n(\cA^{\noRed}_{x, h} \mid E_h^x)$ by \cref{lem:phi-Phi-monotonicity,lem:move-to-nice-space} applied to the composition of $\Phi_\Iso$ with the map used in \cref{lem:many-simple-blocks}. Thus, we have established \cref{prop:bound-potts-rate-1}.

Similar to before, we work out the following probability that the top vertex of a simple block increment is in $\Ablue$ given that the bottom one is in $\Ablue$:
\begin{equation*}
\frac{p}{p+(1-p)q} + \frac{(1-p)q}{p+(1-p)q}\frac{1}{q} = 1 - (q-1)e^{-\beta}(1-\epsilon_\beta) = e^{-(q-1)e^{-\beta}(1-\tilde\epsilon_\beta)}\,,
\end{equation*}
whence the same argument as above implies \cref{prop:bound-potts-rate-2}.

Finally, we would like to use an analog of \cref{cor:DMP-for-Ared} to once again break up the event $\cA^{\Bot}_{x, h}$ increment by increment so that we have analogs of \cref{eq:upper-bnd-diff-Potts-RC,eq:lower-bnd-diff-Potts-RC}. Then, the proof of \cref{prop:bound-bot-rate} would conclude as above via the computation of the probability of having an open edge between two vertices of a simple block (which was already computed in \cref{eq:prob-bot-in-trivial}). The statement in \cref{cor:DMP-for-Ared} is a Domain Markov statement in the joint space of configurations, which is stronger than the statement we need for just the random-cluster model and could easily be adapted to handle the case of $\cA = \cA^{\Bot}_{x, h}$. The one minor issue is that the joint measure $\phi_n$ used there is only defined for integer valued $q$, and we want the result for all real $q \geq 1$. So, we adapt the proof of \cref{lem:DMP-pillar-shell} to apply in the context of the random-cluster model for the more general set of $q$.

\begin{lemma}
Fix a rooted increment shell $X_\star^\mathrm{o}$ and let $G_\star = (V_\star, E_\star)$ be the induced subgraph of $\Lambda_n$ on the vertices of $X_\star^\mathrm{o}$. Then, conditional on the event $\sX_i^\mathrm{o} = X_\star^\mathrm{o}$, the law of $\omega\restriction_{E_\star}$ is that of a random-cluster model on $G_\star$ with free boundary conditions. 
\end{lemma}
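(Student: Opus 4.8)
The plan is to repeat the argument used for \cref{lem:DMP-pillar-shell} while deleting every reference to the Potts colouring $\sigma$; since the Domain Markov property of the random-cluster measure is valid for all real $q>0$, this yields the statement for every $q\geq 1$, which is the whole point of stating the lemma separately. First I would record, exactly as in the proof of \cref{lem:DMP-pillar-shell}, that the event $\{\sX_i^\mathrm{o}=X_\star^\mathrm{o}\}$ is measurable with respect to $\omega\restriction_{E_\star^c}$ and places no constraint on $\omega\restriction_{E_\star}$: every edge of $E_\star$ joins two vertices interior to the shell, hence two vertices of $\Atop^c$, so by \cref{def:pillar-shell} its dual face never belongs to $\cP_x^\mathrm{o}$, and toggling such edges leaves $\Vtop$ (and hence $\Atop^c$ and $\cP_x^\mathrm{o}$) unchanged, since neither endpoint is connected to $\partial \Lambda_n^+$.

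Next I would fix an arbitrary boundary configuration $\bar\omega\restriction_{E_\star^c}$ compatible with $\{\sX_i^\mathrm{o}=X_\star^\mathrm{o}\}$ and invoke the Domain Markov property of the random-cluster model on $\Lambda_n$: the conditional law of $\omega\restriction_{E_\star}$ given $\omega\restriction_{E_\star^c}=\bar\omega\restriction_{E_\star^c}$ is the random-cluster model on $G_\star$ whose boundary conditions wire together precisely those vertices of $V_\star$ that lie in a common cluster of $\bar\omega\restriction_{E_\star^c}$. The substance is then to check that these boundary conditions are free. If $v\in V_\star$ is $\Lambda_n$-adjacent to some $w\notin V_\star$, then either $f_{[v,w]}\in X_\star^\mathrm{o}$, which forces $\bar\omega_{[v,w]}=0$ on the event, or $[v,w]$ is one of the two unconstrained ``pole'' edges $[v_i,v_i-\ez]$ and $[v_{i+1},v_{i+1}+\ez]$, where $v_i,v_{i+1}$ are the bottom and top cut-points of the increment (if $X_\star^\mathrm{o}$ is the remainder increment there is only the bottom pole, and the argument is unchanged). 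Hence the only vertices of $V_\star$ that can lie in a nontrivial cluster of $\bar\omega\restriction_{E_\star^c}$ are $v_i$ and $v_{i+1}$, so the only nonfree possibility is the single identification $v_i\leftrightarrow v_{i+1}$.

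To exclude that possibility I would reuse the geometric observation already isolated inside the proof of \cref{lem:DMP-pillar-shell}: letting $\cW_{\bar\omega}$ denote the set of vertices of $\Lambda_n$ other than the shell vertices of height $>\hgt(v_\star)$ — so that $v_\star=v_i\in\cW_{\bar\omega}$ while $v_{i+1}\notin\cW_{\bar\omega}$ — any $E_\star^c$-path from $v_{i+1}$ into $\cW_{\bar\omega}$ must cross a face of $\cP_x$ and therefore contains a closed edge, whence $v_{i+1}$ and $v_i$ lie in distinct clusters of $\bar\omega\restriction_{E_\star^c}$. Thus for every admissible $\bar\omega\restriction_{E_\star^c}$ the conditional law of $\omega\restriction_{E_\star}$ is the free-boundary random-cluster model on $G_\star$, and averaging over $\bar\omega\restriction_{E_\star^c}$ (weighted by the conditional distribution of $\omega\restriction_{E_\star^c}$ given the event) preserves this, giving the claim.

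The only delicate step is this last geometric separation, and it is exactly the fact already proved within \cref{lem:DMP-pillar-shell}; everything else is a transcription of that proof with the colours removed, so the identification ``free-boundary random-cluster on $G_\star$'' requires no hypothesis on $q$ beyond $q\geq 1$.
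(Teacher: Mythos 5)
Your proof is correct and takes essentially the same route as the paper's: you first record that $\{\sX_i^\mathrm{o}=X_\star^\mathrm{o}\}$ is measurable w.r.t.\ $\omega\restriction_{E_\star^c}$, then apply Domain Markov and argue that every vertex of $\partial V_\star$ other than the poles $v_i,v_{i+1}$ is isolated in $\omega\restriction_{E_\star^c}$, and finally exclude the wiring $v_i\leftrightarrow v_{i+1}$ by the geometric separation across the shell faces. The paper phrases the last step directly (any $E_\star^c$-path from $v_i$ to $v_{i+1}$ must cross a closed face of $\cP_x^\mathrm{o}$) rather than via the detour through $\cW_{\bar\omega}$, but the content is identical.
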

\begin{proof}
    As shown in the proof of \cref{lem:DMP-pillar-shell}, the event $\sX_i^\mathrm{o} = X_\star^\mathrm{o}$ does not impose any conditions on $\omega\restriction_{E_\star}$. 
    Now, let $\partial V_\star \subseteq V_\star$ be the subset of vertices which are $\Lambda_n$-adjacent to $V_\star^c$. By the Domain Markov property, it suffices to show that on the event $\sX_i^\mathrm{o} = X_\star^\mathrm{o}$, there is no path of open edges in $E_\star^c$ that connects two vertices of $\partial V_\star$. For any vertex $v \in \partial V_\star \setminus \{v_i, v_{i+1}\}$, every edge $e \in E_\star^c$ incident to $v$ is such that $f_e \in X_\star^\mathrm{o}$, and hence $\omega_e = 0$. Moreover, regardless of what $\cP_x^\mathrm{o}$ is, any path $Q$ connecting $v_i$ to $v_{i+1}$ using only edges of $E_\star^c$ must  include an edge $e$ such that $f_e \in \cP_x^\mathrm{o}$, whence $Q$ must include a closed edge. 
\end{proof}
Together with \cref{obs:simpler-cABot}, this enables us to write
\begin{align*}\phi_n(\cA^{\Bot}_{x, h} \mid P_x^\mathrm{o}, \sG, \Iso_{x, L, h}^\mathrm{o}, E_h^x)
&=\phi_n(x \xleftrightarrow{\omega} h \mid P_x^\mathrm{o}, \sG, \Iso_{x, L, h}^\mathrm{o}, E_h^x)\\
&= \prod_{i \in \fB} \phi_n(v_i \xleftrightarrow{\omega} v_{i+1}|X_i^\mathrm{o})\prod_{\substack{i \notin \fB\\ X_i^\mathrm{o} \cap \cL_{\leq h} \neq \emptyset}} \phi_n(v_i \xleftrightarrow{\omega} v_{i+1}|X_i^\mathrm{o})\,,
\end{align*}
and the proof of \cref{prop:bound-bot-rate} follows via a similar computation as done in \cref{eq:lower-bnd-diff-Potts-RC,eq:upper-bnd-diff-Potts-RC}.
\end{proof}

\section{Maximum of the random-cluster and Potts interfaces}
\label{sec:max}
This section uses a modified second moment argument to establish the tightness of the minima/maxima of the Potts and random-cluster interfaces from the large deviation rates established in \cref{sec:ld-RC,sec:ld-potts-bot}, as was done for the Ising interface in the proof of \cite[Proposition 6.1]{GL_tightness}. We prove that the maximum of the four interfaces we have defined are tight around a specific constant which we also identify. Since the proofs for the different interfaces are largely the same, we will focus on proving the result for the $\Top$ interface of the random-cluster model and note along the way what modifications are needed for the other interfaces.

Even though we proved the large deviation rates for the events $\cA^{\noRed}_{x, h}, \cA^{\Blue}_{x, h},  \cA^{\Bot}_{x, h}$ in the previous section, we still want estimates on the probability of these events for small $h$ as the goal is to establish tightness. Thus, we begin by noting that the upper bound in \cref{prop:exp-tail-pillar-height} has an  immediate corollary resulting from the fact that the $\Top$ interface lies above all the other interfaces.

\begin{corollary}\label{cor:Potts-exp-tail-pillar-height}
For the same $\beta_0$ and constant $C > 0$ as in \cref{prop:exp-tail-pillar-height}, for every $\beta \geq \beta_0$, for all $x$, and for all $h \geq 1$, 
\begin{equation*}
\phi_n(\cA^{\noRed}_{x, h}) \leq \exp[-4(\beta - C)h]\,,
\end{equation*}
and similarly for $\cA^{\Blue}_{x, h}, \cA^{\Bot}_{x, h}$.
\end{corollary}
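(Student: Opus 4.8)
The proof is a direct comparison via the ordering of interfaces established in the Remark following \cref{def:top-interface}. The plan is to observe that each of the events $\cA^{\noRed}_{x,h}$, $\cA^{\Blue}_{x,h}$, $\cA^{\Bot}_{x,h}$ is contained in the event $E_h^x = \{\hgt(\cP_x)\geq h\}$, after which the bound follows immediately from \cref{prop:exp-tail-pillar-height}.

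First I would spell out the containments. By definition, $\cA^{\noRed}_{x,h}$ is the event that there is an $\Ared^c$-path from $x$ to height $h$ using only vertices of $\cP_x$; in particular this path consists of vertices of $\cP_x$ reaching height $h$, so $\hgt(\cP_x)\geq h$, i.e. $\cA^{\noRed}_{x,h}\subseteq E_h^x$. The same reasoning applies verbatim to $\cA^{\Blue}_{x,h}$ and $\cA^{\Bot}_{x,h}$, since all three are defined as paths of vertices inside $\cP_x$ reaching height $h$ (the only difference being which augmented component the path lies in). Thus
\[
\phi_n(\cA^{\noRed}_{x,h})\leq \phi_n(E_h^x)=\bar\mu_n(\hgt(\cP_x)\geq h)\leq \exp[-4(\beta-C)h]\,,
\]
where the last inequality is exactly \cref{prop:exp-tail-pillar-height}, and the middle equality uses that $\cA^{\noRed}_{x,h}$ only depends on the FK configuration $\omega$ (through $\cP_x$ and the coloring $\sigma$, but the marginal on $\omega$ is $\bar\mu_n$). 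The identical chain of inequalities holds with $\cA^{\noRed}_{x,h}$ replaced by $\cA^{\Blue}_{x,h}$ or $\cA^{\Bot}_{x,h}$.

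There is essentially no obstacle here: the statement is a corollary precisely because the interface ordering $\cI_\Top,\cI_\Red,\cI_\Blue,\cI_\Bot$ forces any $\Ared^c$-, $\Ablue$- or $\Abot$-path inside the $\Top$ pillar to witness a tall $\Top$ pillar. The only point worth a sentence of care is that $\cP_x$ refers to the pillar of the $\Top$ interface throughout (as fixed in \cref{def:pillar}), so that "using only vertices of $\cP_x$'' already restricts to vertices of the $\Top$ pillar, and reaching height $h$ along such vertices is by definition the event $E_h^x$. With that remark in place the proof is complete; one may simply write "The claim is immediate from $\cA^{\noRed}_{x,h}\cup\cA^{\Blue}_{x,h}\cup\cA^{\Bot}_{x,h}\subseteq E_h^x$ together with \cref{prop:exp-tail-pillar-height}.''
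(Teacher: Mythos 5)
Your proposal is correct and matches the paper's own (one‑line) justification: the events $\cA^{\noRed}_{x,h}$, $\cA^{\Blue}_{x,h}$, $\cA^{\Bot}_{x,h}$ are by definition realized by vertex paths inside the $\Top$ pillar $\cP_x$ reaching height $h$, hence each is contained in $E_h^x$, and the bound follows from \cref{prop:exp-tail-pillar-height}. One small clarification: the containment $\cA^{\noRed}_{x,h}\subseteq E_h^x$ is immediate from the clause ``using only vertices of $\cP_x$'' in the definition — the interface ordering is what justifies restricting to such paths in the first place (making the definition no loss of generality), but it is not itself needed for the inclusion.
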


Similar to before, we can also prove a rough lower bound on these exponential tails:
\begin{proposition}\label{prop:bound-Potts-rate}
For the same $\beta_0$ and constant $C > 0$ as above, for every $\beta \geq \beta_0$, for all $x$, and for all $h \geq 1$, 
\begin{equation*}
 -4\beta + \log\frac{p}{p+(1-p)q} \leq \frac 1h \log\phi_n(\cA^{\noRed}_{x, h}) \leq -4\beta+C\,,
\end{equation*}
and similarly for $\cA^{\Blue}_{x, h}, \cA^{\Bot}_{x, h}$.
\end{proposition}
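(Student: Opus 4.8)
The upper bound $\frac1h\log\phi_n(\cA^{\noRed}_{x,h})\leq -4\beta+C$ is immediate from \cref{cor:Potts-exp-tail-pillar-height}, so the only work is the matching lower bound. The plan is to mimic the lower bound construction in the proof of \cref{prop:bound-RC-rate}: exhibit an explicit local event of probability at least $\big(\tfrac{p}{p+(1-p)q}\big)^h e^{-4\beta h}$ that forces a $\Ared^c$-path (resp.\ $\Ablue$-path, $\Abot$-path) from $x$ up to height $h$ using only vertices of $\cP_x$. Concretely, let $F$ be the $4h+1$ faces bounding the sides and top of the vertical column of vertices $\{x, x+\ez,\ldots, x+(0,0,h-1)\}$, let $E$ be the dual edges, and let $\hat E$ be the $h$ edges $\{[x+(0,0,i-1),x+(0,0,i)]\}_{i=1}^{h}$ of the column itself. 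Work inside the event $A$ from \cref{prop:bound-RC-rate} (configurations agreeing off $E$ with some $\eta\in\{f_{[x,x-\ez]}\leftrightarrow\infty\}$), intersected with $\{f_{[x,x-\ez]}\in\clfaces\}$ so that on this event $x$ is a cut-point and (as in \cref{clm:x-in-Vbot}) $x\in\Vbot$.

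\textbf{Key steps.} First, close every edge of $E$ one by one using \cref{obs:close-edge}, paying $e^{-\beta}$ per edge (this uses only that $A$ and the partial intersections with $\{\omega_e=0\}$ remain in the relevant event, exactly as in \cref{prop:bound-RC-rate}); this costs $e^{-\beta(4h+1)}$ and produces a pillar $\cP_x$ whose vertex set is precisely the column, reaching height $h$. Second, on this event every vertex of the column is in $\Vtop^c$, hence — being connected to $x\in\Atop^c$ within the column — in $\Atop^c$; since $\Ared^c\supseteq\Atop^c$ is automatic only in the wrong direction, here we instead use the ordering of interfaces: a vertex in $\Atop^c$ lies in $\Vtop^c$, and we must further arrange that the column vertices are in $\Ared^c$, which is where the Potts coloring enters. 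Third, condition additionally on the coloring: for the $\noRed$ event it suffices that each column vertex is colored non-$\Red$; the cheapest way to guarantee the whole column is $\Ared^c$ is to require all $h$ column edges of $\hat E$ to be open and the cluster they form to be colored non-$\Red$. Each of the $h$ edges of $\hat E$ is open with conditional probability at least $\tfrac{p}{p+(1-p)q}$ (the bound from \cref{eq:prob-bot-in-trivial}, minimized when closing the edge would create a new cluster), and then a single uniform color choice among $q$ colors lands non-$\Red$ with probability $\tfrac{q-1}{q}$ (a constant, absorbable into $C$), giving $x\in\Vbot$ connected by open edges up the column to height $h$ with all these vertices sharing a non-$\Red$ color, hence all in $\Ared^c$, hence $\cA^{\noRed}_{x,h}$ holds. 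Multiplying, $\phi_n(\cA^{\noRed}_{x,h})\geq (1-\epsilon_\beta)\,e^{-\beta(4h+1)}\big(\tfrac{p}{p+(1-p)q}\big)^{h}\cdot\tfrac{q-1}{q}$, and taking $\tfrac1h\log$ and $h\to\infty$ yields $\frac1h\log\phi_n(\cA^{\noRed}_{x,h})\geq -4\beta+\log\tfrac{p}{p+(1-p)q}$ up to vanishing terms, which may be folded into the stated inequality (the $\log\tfrac{q-1}{q}$ and $o(1)$ terms only improve/weaken the constant, consistently with the role of $C$ in the upper bound). For $\cA^{\Blue}_{x,h}$ and $\cA^{\Bot}_{x,h}$ the construction is identical, with the color choice replaced by ``$\Blue$'' (probability $\tfrac1q$, constant) in the $\Blue$ case and by no color constraint at all in the $\Bot$ case — there one only needs the open column, so $x\xleftrightarrow{\omega} h$ holds and, by \cref{obs:simpler-cABot}, $\cA^{\Bot}_{x,h}$ holds.

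\textbf{Main obstacle.} The only delicate point is verifying, on the explicit event constructed, that the column vertices genuinely land in $\Ared^c$ (resp.\ $\Ablue$, $\Abot$) rather than merely being non-$\Red$: one must check that they are not swallowed into a finite complementary component that gets augmented into $\Ared$. This is handled exactly as in \cref{rem:properties-of-Gamma} and \cref{clm:x-in-Vbot}: the faces $F$ together with $f_{[x,x-\ez]}$ form a $1$-connected shell separating the column from the upper-half boundary, so the column vertices are in the infinite component of $\Vtop^c$, hence in $\Atop^c\subseteq\Ared^c$, and since they all share one non-$\Red$ color and are mutually open-connected they remain a single $\Ared^c$-path; the augmentation cannot reach them because they are in the infinite component. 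No new estimates are needed — every inequality is either \cref{obs:close-edge}, \cref{prop:grimmett-localization}, or the two-vertex computations \cref{eq: prob-non-red-in-trivial}–\cref{eq:prob-bot-in-trivial} — so the proof is a short assembly of already-established tools.
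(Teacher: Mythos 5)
The overall strategy is sound and closely tracks the paper, but the proposal contains several concrete errors, one of which (the $\cA^{\Bot}_{x,h}$ case) is fatal.

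\textbf{The paper's actual move.} After closing all of $E$ (cost $e^{-\beta(4h+1)}$), the paper \emph{opens} the $h$ edges $[x-\ez,x],\,[x,x+\ez],\ldots,[x+(h-2)\ez,x+(h-1)\ez]$ — crucially including $[x-\ez,x]$ — at cost $(\tfrac{p}{p+(1-p)q})^h$. Since $x-\ez$ was already in $\Vbot$ (being under the ceiling face $f_{[x,x-\ez]}$), the column joins $\Vbot$, so $\cA^{\Bot}_{x,h}$ holds outright. Then $\Abot\subseteq\Ablue\subseteq\Ared^c$ gives $\cA^{\Blue}_{x,h}$ and $\cA^{\noRed}_{x,h}$ for free — no coloring argument is needed at all.

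\textbf{Where the proposal goes wrong.}

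(1) Your $\hat E=\{[x+(0,0,i-1),x+(0,0,i)]\}_{i=1}^{h}$ is the wrong set: for $i=h$ it is the edge dual to the \emph{top} face of $F$, which you have just forced closed (so the joint event is empty), and it omits $[x-\ez,x]$, which is the one edge actually needed to reach $\Vbot$. You presumably intend $i$ from $0$ to $h-1$.

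(2) You intersect with $\{f_{[x,x-\ez]}\in\clfaces\}$ and then invoke \cref{clm:x-in-Vbot} to conclude $x\in\Vbot$. But \cref{clm:x-in-Vbot} requires $f_{[x,x-\ez]}\notin\cI$, i.e.\ $[x,x-\ez]$ \emph{open}; on the event $A$, where $f_{[x,x-\ez]}$ is a ceiling face, this face is closed and belongs to $\cI$, so the hypothesis is violated and the claim does not apply. Indeed $x$ is \emph{not} in $\Vbot$ on the event you build.

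(3) Consequently the $\cA^{\Bot}_{x,h}$ case fails. With $[x,x-\ez]$ closed the column is its own open cluster, disconnected from $\Vbot$. It is $\Lambda_n$-adjacent to the (infinite) $\Vtop$ component via the vertices just outside the closed side faces, so it lies in the \emph{infinite} component of $\Vbot^c$, i.e.\ in $\Abot^c$. Thus no $\Abot$-path starts at $x$. Your appeal to \cref{obs:simpler-cABot} also does not help: that observation only equates $\cA^{\Bot}_{x,h}$ with $\{x\xleftrightarrow{\omega}h\}$ \emph{on the event $\Iso_{x,L,h}$}, which requires $[x,x-\ez]$ open — exactly what you've excluded.

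(4) In the ``Main obstacle'' paragraph you write ``in $\Atop^c\subseteq\Ared^c$''; the inclusion is reversed ($\Atop\subseteq\Ared$ gives $\Ared^c\subseteq\Atop^c$), as you yourself note correctly a few lines earlier.

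\textbf{What salvages your Potts cases.} For $\cA^{\noRed}_{x,h}$ and $\cA^{\Blue}_{x,h}$, your variant (keeping the column a standalone cluster and paying a constant $\tfrac{q-1}{q}$ resp.\ $\tfrac1q$ for its color) does actually work, because the column then connects to $\Vred^c$ (resp.\ $\Vblue$) via the $\Lambda_n$-adjacent $\Blue$ vertex $x-\ez\in\Vbot$, so the column lands in $\Ared^c$ (resp.\ $\Ablue$). But this is strictly more complicated than the paper's version and does not extend to $\cA^{\Bot}_{x,h}$. The clean fix is to open $[x-\ez,x]$ along with the internal column edges, get $\cA^{\Bot}_{x,h}$, and then use the ordering of interfaces; this is what the paper does.
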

\begin{proof}
The same proof as \cref{prop:bound-RC-rate} holds here with the following minor adjustment. Recall that in the proof for the lower bound there, we showed that the probability of having an interface $\cI$ with a ceiling face at $f_{[x, x - \ez]}$, and then appending the faces surrounding a column of $h$ vertices above $x$ is $\geq (1 - \epsilon_\beta)e^{-\beta(4h+1)}$. On this event, we can force open $h$ edges to connect all the vertices in the column to each other and to $x - \ez$, which was in the same open cluster as $\partial \Lambda_n^-$ to begin with (as we started with $f_{[x, x - \ez]}$ being a ceiling face). This guarantees the event $\cA^{\Bot}_{x, h}$ (which implies $\cA^{\noRed}_{x, h}, \cA^{\Blue}_{x, h}$), and the cost of forcing these $h$ edges to be open is $(\frac{p}{p+(1-p)q})^h$ (we have a weight of $(1-p)q$ for closed edges because each closed edge in the column always creates a new open cluster).
\end{proof}

Towards defining our desired tightness results, first note that \cref{cor:decorrelation-1} shows the existence of the following limit for any $h \geq 1$:
\[\alpha_h := \lim_{n \to \infty} -\log \bar{\mu}_n(E_h^o),\]
where $o = (1/2, 1/2, 1/2)$. Taking the limit $n \to \infty$ in \cref{eq:submultiplicativity-Ehx}, we have
\begin{equation}\label{eq:alpha-superadditivity}
\alpha_{h_1+h_2} \geq  \alpha_{h_1}+\alpha_{h_2}-3\beta - \epsilon_\beta\,.
\end{equation}
By Fekete's Lemma, we know that the limit $\frac{1}{h}\alpha_h$ exists, and it is moreover equal to $\alpha$ since \cref{prop:RC-rate} holds for any $n = n_h$ such that $d(x, \partial \Lambda_n) \gg h$.

Analogously, by \cref{cor:Potts-decorrelation-1} we can define  
\[\alpha_h^\noRed := \lim_{n \to \infty} -\log \phi_n(\cA^{\noRed}_{o, h}),\]
and similarly for $\Blue$ and $\Bot$. Combining \cref{eq:submultiplicativity-Ehx} with the submultiplicativity propositions we proved for the other interfaces (\cref{prop:submultiplicativity-Potts,prop:submultiplicativity-bot}) proves \cref{eq:alpha-superadditivity} for $\alpha_h^\noRed, \alpha_h^\Blue,\alpha_h^\Bot$.

Now we want to compare $\alpha_h$ and $\alpha_{h+1}$. Because of the increment map and \cref{thm:incr-map}, it suffices to consider (at a $(1 + \epsilon_\beta)$ multiplicative cost) just the subset of pillars in $E_h^o$ with a cut-height at $h - 1/2$. Let $w$ be the vertex in $\cP_x$ with height $h - 1/2$ , and let $y = w + \ez$. Every configuration with the edge $[y, w]$ open is already in $E_{h+1}^o$. For the remaining configurations with $[y, w]$ closed, we can first force the five edges $[y, y\pm \ex], [y, y\pm \ey], [y, y + \ez]$ to be closed at a cost of $e^{5\beta}/q$ (see the computation done in \cref{obs:close-edge}, noting that closing these edges creates a new open cluster $\{y\}$). For any resulting configuration $\omega$, the edge ${\bf e} = [y, y - \ez]$ is closed, but we can recover a factor of $qe^{-\beta}/(1 - e^{-\beta})$ by considering versions of $\omega$ with ${\bf e}$ open. That is, $\omega^{{\bf e}, 1} \in E_{h+1}^o$ and $\mu_n(\omega)/\mu_n(\omega^{{\bf e}, 1}) = qe^{-\beta}/(1 - e^{-\beta})$. Combined, we have
\begin{equation}\label{eq:increase-alpha-by-one}
    \alpha_{h+1} \leq \alpha_h + 4\beta + \epsilon_\beta,
\end{equation}
and by induction we have for any $l \geq 1$,
\begin{equation}\label{eq:increase-alpha-by-l}
    \alpha_{h+l} \leq \alpha_h + (4\beta + \epsilon_\beta)l.
\end{equation}
Note that the above computation ended with configurations in $E_{h+1}^o$, with the edge $[w, y]$ open. Hence, the same computation proves the analog for $\alpha_h^\Bot$. Similarly, the analogs for $\alpha_h^{\noRed}, \alpha_h^\Blue$ can be shown by bounding the cost of changing the color of a single spin in the Potts model by $e^{4\beta(1+\epsilon_\beta)}$ (see for instance the computation in \cite[Proposition 2.29]{GL_max}).

By \cref{prop:bound-RC-rate}, there exists a constant $C>0$ such that for all $h$,
\begin{equation}\label{eq:bound-alpha-h}
    4(\beta - C)h \leq \alpha_h \leq 4\beta h\,.
\end{equation}
Finally, Fekete's Lemma additionally tells us that $\alpha = \sup \frac{\alpha_h - 3\beta - \epsilon_\beta}{h}$ as long as we have \cref{eq:alpha-superadditivity}.
(For the Potts interfaces $\Blue$ and $\Red$ and the FK $\mathsf{bottom}$ interface, the upper bound needs to be adjusted to $(4\beta -\log\frac{p}{p+(1-p)q})h$ using \cref{prop:bound-Potts-rate}, but this will never matter in the computations below as we will only use this bound to show that $\alpha_h = O(h)$.)

Now, define 
\begin{equation}\label{eq:def-mn}
m_n^* = \inf\{h\,:\; \alpha_h > 2\log n-\beta/2\}\,,
\end{equation}
and analogously for the other interfaces. We can now state the main proposition of this section. 

\begin{proposition}\label{prop:tight-max-mn*}
Consider the maximum $M_n$ of $\cI_\Top$ for $q\geq 1$ fixed. Setting $m_n^*$ as in \cref{eq:def-mn}, there exist $\beta_0$ and $C_0$ such that for all $\beta > \beta_0$ and sufficiently large $n$,
\begin{equation}
    \label{eq:Mn-2-values}
    \bar\mu_n(M_n\notin \{m_n^*-1,m_n^*\}) \leq C_0\, e^{-\beta/2}\,.
\end{equation}
Moreover, for every $2 \leq l \leq \sqrt{\log n}$, 
\begin{align*}
    \bar{\mu}_n(M_n \geq m_n^* + l) &\leq C_0\, \exp\big(-\alpha_{l-1}+\tfrac{5\beta}2\big)\,,\\
    \bar{\mu}_n(M_n < m_n^* - l) &\leq C_0\, \exp\big(-\alpha_{l-1}+\tfrac{5\beta}2\big)\,.
\end{align*}
In fact, the right tail can be extended to all $1 \leq l \leq \frac1\beta \log n$. Furthermore, for $l > \frac1\beta \log n$, we have the tail
\begin{equation*}
    \bar{\mu}_n(M_n \geq m_n^* + l) \leq e^{-(2\beta - C_0)l}
\end{equation*}
The same statements for the maxima of the $\Red$, $\Blue$, and $\Bot$ interfaces also hold for $m_n^*$ and $\alpha_l$ defined by their respective interfaces, where $q \geq 2$ in the Potts setting.
\end{proposition}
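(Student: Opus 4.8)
The plan is to run the standard second-moment argument of \cite[Proposition 6.1]{GL_tightness}, but with the large-deviation rate $\alpha_h$ (and its analogues $\alpha_h^{\noRed},\alpha_h^{\Blue},\alpha_h^{\Bot}$) replacing the Ising rate, and with the decorrelation estimates of \cref{cor:decorrelation-1,cor:Potts-decorrelation-1} replacing those used in the Ising case. Fix $\beta$ large and write $M_n$ for the maximum of $\cI_\Top$; the key observation is that $M_n \geq h$ iff $E_h^x$ occurs for some $x\in\cL_{1/2}$ with $d(x,\partial\Lambda_n)$ large (peaks near the boundary contribute negligibly, which one checks via \cref{prop:exp-tail-pillar-height} and a union bound over the $O(n)$ boundary columns), so $\bar\mu_n(M_n\ge h)$ is governed by $\sum_x \bar\mu_n(E_h^x)$.

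\textbf{Upper tail.} For the first moment, $\bar\mu_n(M_n\ge m_n^*+l) \le \sum_{x} \bar\mu_n(E_{m_n^*+l}^x) + (\text{boundary error}) \le n^2 \exp(-\alpha_{m_n^*+l}) e^{o(1)}$. By definition of $m_n^*$ in \cref{eq:def-mn}, $\alpha_{m_n^*-1}\le 2\log n - \beta/2$, and by the superadditivity \cref{eq:alpha-superadditivity}, $\alpha_{m_n^*+l} \ge \alpha_{m_n^*-1} + \alpha_{l+1} - 3\beta - \epsilon_\beta \ge (2\log n - \beta/2) + \alpha_{l-1} - (4\beta + \epsilon_\beta)$ after also using \cref{eq:increase-alpha-by-one} to pass from $\alpha_{l+1}$ to $\alpha_{l-1}$ (or directly using \cref{eq:increase-alpha-by-l} and superadditivity in the convenient direction). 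Plugging in gives $\bar\mu_n(M_n\ge m_n^*+l) \le C_0\exp(-\alpha_{l-1}+\tfrac{5\beta}{2})$, valid for $1\le l\le \frac1\beta\log n$; for $l=1$ this is simply $O(e^{-\beta/2})$, giving the lower half of \cref{eq:Mn-2-values}. For $l>\frac1\beta\log n$ one instead uses \cref{prop:bound-RC-rate} (i.e.\ $\alpha_h \ge 4(\beta-C)h$) directly in $n^2\exp(-\alpha_{m_n^*+l})$ together with $\alpha_{m_n^*}\ge 2\log n-\beta/2$, yielding the stated $e^{-(2\beta-C_0)l}$ bound.

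\textbf{Lower tail.} Here one needs a second moment computation. Partition a large sub-box of $\cL_{1/2}$ into $\asymp (n/r)^2$ well-separated columns $x$ (spacing $r = r_\beta$ a large constant, or growing slowly), let $N = \sum_x \one_{E_{m_n^*-l}^x}$, and show $\E N \to\infty$ while $\E[N^2]\le (1+o(1))(\E N)^2$, so that $\bar\mu_n(N=0)\to 0$ by Paley--Zygmund. The first moment is $\E N \asymp (n/r)^2 \exp(-\alpha_{m_n^*-l})$; by definition of $m_n^*$ and \cref{eq:increase-alpha-by-l}, $\alpha_{m_n^*-l}\le \alpha_{m_n^*-1} \le 2\log n$, but more precisely $\alpha_{m_n^*-l}\le \alpha_{m_n^*-1}-(\text{something})$; using superadditivity the other way, $\alpha_{m_n^*-1} \le \alpha_{m_n^*-l} + \alpha_{l-1} + 3\beta + \epsilon_\beta$ is false in general — instead one uses \cref{eq:increase-alpha-by-l}: $\alpha_{m_n^*-1}\le \alpha_{m_n^*-l}+(4\beta+\epsilon_\beta)(l-1)$, so $\alpha_{m_n^*-l}\ge \alpha_{m_n^*-1}-(4\beta+\epsilon_\beta)(l-1)\ge 2\log n-\beta/2 - (4\beta+\epsilon_\beta)(l-1)$ — wait, that is the wrong direction for a lower bound on $\E N$. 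The correct route (as in \cite{GL_tightness}) is: from \cref{eq:alpha-superadditivity}, $\alpha_{m_n^*-1}\ge \alpha_{m_n^*-l}+\alpha_{l-1}-3\beta-\epsilon_\beta$, hence $\alpha_{m_n^*-l}\le \alpha_{m_n^*-1}-\alpha_{l-1}+3\beta+\epsilon_\beta \le 2\log n - \beta/2 - \alpha_{l-1}+3\beta+\epsilon_\beta$, so $\E N \gtrsim (n/r)^2 n^{-2} e^{\alpha_{l-1}-\tfrac{5\beta}{2}+o(1)}\to\infty$ once $l\ge 2$ and $\beta$ large. For the second moment, the diagonal terms contribute $\E N$, and the off-diagonal terms $\sum_{x\ne x'}\bar\mu_n(E_{m_n^*-l}^x\cap E_{m_n^*-l}^{x'})$ are controlled by the decorrelation estimate \cref{cor:decorrelation-1}, which bounds $\bar\mu_n(E_h^x\cap E_h^{x'})$ by $\bar\mu_n(E_h^x)\bar\mu_n(E_h^{x'})(1+o(1))$ plus an error exponentially small in $d(x,x')$; summing over the separated grid gives $\E[N^2]\le (1+o(1))(\E N)^2 + O(\E N)$, as needed. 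This gives $\bar\mu_n(M_n < m_n^*-l)\le \bar\mu_n(N=0)\le C_0\exp(-\alpha_{l-1}+\tfrac{5\beta}{2})$, and the $l=1$ instance (combined with the upper-tail $l=1$ bound) yields \cref{eq:Mn-2-values}.

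\textbf{Other interfaces and the main obstacle.} For $\cI_\Red,\cI_\Blue$ one repeats the argument verbatim with $E_h^x$ replaced by $\cA^{\noRed}_{x,h}$, $\cA^{\Blue}_{x,h}$ under $\phi_n$, using $\alpha_h^{\noRed}=\alpha+\delta$ etc.\ from \cref{eq:non-red-rate,eq:blue-rate}, the submultiplicativity \cref{prop:submultiplicativity-Potts} in place of \cref{prop:RC-rate}, \cref{cor:Potts-exp-tail-pillar-height} and \cref{prop:bound-Potts-rate} for the crude bounds, the per-spin recoloring cost $e^{4\beta(1+\epsilon_\beta)}$ for the analogue of \cref{eq:increase-alpha-by-one}, and \cref{cor:Potts-decorrelation-1} for the second moment; for $\cI_\Bot$ one uses $\cA^{\Bot}_{x,h}$, \cref{prop:submultiplicativity-bot}, and the edge-cost argument already noted after \cref{eq:increase-alpha-by-one}. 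The statements for minima follow by the up-down symmetry noted before \cref{prop:bound-rates} (upward deviations of $\cI_\Top$ are downward deviations of $\cI_\Bot$, and swapping the roles of $M_n,M_n'$ in \cref{thm:potts,thm:rc}). The main obstacle I expect is the second-moment / decorrelation step: one must verify that $\cref{cor:decorrelation-1}$ (and its Potts analogue) is genuinely strong enough to absorb the off-diagonal sum over $\asymp (n/r_\beta)^2$ pairs at a \emph{multiplicative} $(1+o(1))$ cost, which requires the exponential decay rate in $d(x,x')$ to beat the entropy of pairs — this is exactly where the care in choosing the grid spacing $r = r_\beta$ (large constant, or mildly growing) and in the precise form of the decorrelation error enters, and it is the one place where simply citing \cite{GL_tightness} does not suffice because our $E_h^x$, $\cA^{\bullet}_{x,h}$ are defined through the FK/Potts interface rather than the Ising pillar.
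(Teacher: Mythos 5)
Your upper-tail computation and the first-moment estimate in the lower-tail argument are essentially the paper's (modulo a small bookkeeping issue in how you pass between $\alpha_{l+1}$ and $\alpha_{l-1}$, which you flag yourself). The gap is exactly where you suspect it: the second-moment step, and your proposed fix does not close it.

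Two problems. First, the pair-decorrelation estimate (\cref{cor:decorrelation-2}, not \cref{cor:decorrelation-1}) is \emph{additive}: $|\bar\mu_n(E_h^x,E_h^y)-\bar\mu_n(E_h^x)\bar\mu_n(E_h^y)|\le Ce^{-d(x,y)/C}$, with an error that is not multiplied by $\bar\mu_n(E_h^x)$. On a grid of spacing $r_\beta=O(1)$ there are $\asymp(n/r_\beta)^2$ sites, so the total additive error is $\asymp n^2 e^{-r_\beta/C}$, which dwarfs $(\E N)^2\asymp e^{2\alpha_{l-1}-O(\beta)}$; to kill it you would need $r\gtrsim C\log n$. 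But then $\E N\asymp(n/r)^2e^{-\alpha_{m_n^*-l}}\asymp e^{\alpha_{l-1}-5\beta/2}/\log^2 n$, so Paley--Zygmund only gives $\bar\mu_n(N=0)\lesssim\log^2 n\cdot e^{-\alpha_{l-1}+5\beta/2}$, which is strictly weaker than the stated bound. Bounding close pairs crudely by $\bar\mu_n(E_h^x\cap E_h^y)\le\bar\mu_n(E_h^x)$ has the same defect (a $\log^4 n$ loss). So there is no choice of grid spacing or crude bound that reproduces the proposition; the near-diagonal pairs must be handled by a genuinely multiplicative estimate.

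Second, and related, the paper does not sum $\one_{E_h^x}$; it sums $\one_{G_h^x}$ where $G_h^x$ is the refined event of \cref{def:Ghx}, which forces a cut-point at $x$, a capped pillar, and isolation from the rest of $\cI$ (plus the appropriate $\cA^{\star}_{x,h}$ clause for the other interfaces). This refinement costs only a factor $1-\epsilon_\beta$ in the first moment (by \cref{thm:iso-map}, \cref{cor:widehat-Ehx}, and the move-to-nice-space lemmas), but it is what makes the key new ingredient possible: \cref{clm:G-split-product} gives
\[
\bar\mu_n(G_h^x,\,G_h^y)\le(1+\epsilon_\beta)\frac{(e^\beta+q-1)^2}{q}\,\bar\mu_n(E_h^x)\,\bar\mu_n(E_h^y)
\]
uniformly for $d(x,y)\le\log^2 n$, which when summed over the $\asymp n^2\log^4 n$ near pairs and combined with $\bar\mu_n(E_h^x)=n^{-2+o(1)}$ yields $\Xi_2=n^{-2+o(1)}=o(1)$. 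That claim is proved by a Domain-Markov/FKG argument in the spirit of \cref{lem:submultiplicativity-in-Gamma} (reveal the truncated interface $\cI\setminus\cP_y$, close a single edge below $y$, pass from $G_h^y$ to the decreasing proxy $A_h^y$, then decouple via FKG) and is the one place where the FK structure and the isolation clause of $G_h^x$ are indispensable; it has no analogue in your proposal. You correctly diagnose this as the hard step, but you have not supplied the lemma that makes it work.

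Minor points: the extended right tail for $l>\frac1\beta\log n$ should be handled with the crude bound $\bar\mu_n(E_{m_n^*+l}^x)\le\bar\mu_n(E_l^x)\le e^{-4(\beta-C)l}$ rather than $e^{-\alpha_{m_n^*+l}}$, since the asymptotic $\bar\mu_n(E_h^x)\approx e^{-\alpha_h}$ is only justified for $h\ll\log^2 n$; and the left-tail statement of \cref{eq:Mn-2-values} comes from the separate $l=1$ case $\bar\mu_n(M_n<m_n^*)\le(1+\epsilon_\beta)e^{-\beta/2}$, using $\E Z_{m_n^*-1}\ge(1-\epsilon_\beta)e^{\beta/2}$, not from the $l\ge 2$ bound.
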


We will get the right tail using a union bound, and the left tail by using a second moment computation. For this, we need a few preliminary results. Let $\cL_{1/2, n}$ denote the set of vertices with height $1/2$ in $\Lambda_n$. Let $\cL_{1/2, n}^\mathrm{o}$ be the subset of $\cL_{1/2, n}$ with distance larger than $\log^2 n$ from $\partial \Lambda_n$. 

\begin{definition}\label{def:Ghx} Define the event $G_h^x$ to be the event $E_h^x$ with the following additional requirements:
\begin{enumerate}
    \item\label{it:Ghx-cut-point-x} The vertex $x$ is a cut-point of $\cP_x$
    \item\label{it:Ghx-capped-pillar} $\cP_x \in \widetilde{E}_h^x$ in the context of the maximum of $\cI_{\Top}$; for the other interfaces, further require: 
    \begin{itemize}
        \item $\cA^{\noRed}_{x,h}$ in the context of the maximum of $\cI_{\Red}$;
        \item $ \cA^{\Blue}_{x,h}$ in the context of the maximum of $\cI_{\Blue}$;
        \item $ \cA^{\Bot}_{x,h}$ in the context of the maximum of $\cI_{\Bot}$.  
    \end{itemize}
    \item\label{it:Ghx-faces-of-I-near-pillar} The faces of $\cI$ which are 1-connected to $\cP_x$ (and not in $\cP_x$) are the four faces 1-connected to the face $f_{[x, x - \ez]}$ with height 0, and possibly the face $f_{[x, x - \ez]}$ itself.
\end{enumerate} Define also the random variable $Z_h$ by
\begin{equation*}
    Z_h = \sum_{x \in \cL_{1/2, n}^\mathrm{o}} \one_{\{G_h^x\}}\,.
\end{equation*}
\end{definition}

Note that in the case of $\Itop$, $G_h^x$ is implied by $\Iso_{x, L, h} \cap \widetilde E_h^x$, and thus for $x \in \cL_{1/2, n}^\mathrm{o}$, $h \ll \log^2 n \leq d(x, \partial \Lambda_n)$, we have
\begin{equation}\label{eq:G_h^x-likely}
    \bar{\mu}_n(G_h^x) \geq (1-\epsilon_\beta)\bar{\mu}_n(E_h^x)\,.
\end{equation}
For the cases of the other interfaces, we have
\begin{equation}
    \bar{\mu}_n(G_h^x) \geq (1-\epsilon_\beta)\bar{\mu}_n(\cA^{\star}_{x, h})
\end{equation}
by additionally applying \cref{lem:move-to-nice-space,lem:move-to-nice-space-bot} with $\Omega = G_h^x$, where $\star$ can be any of $\noRed, \Blue, \Bot$.

To get a lower bound for $\E[Z_h]$, we begin by noting that by \cref{cor:decorrelation-1} and taking $m \to \infty$, we have that for $1 \leq h \ll \log^2 n$, $x \in \cL_{1/2, n}^\mathrm{o}$,
\begin{equation}\label{eq:estimate-Ehx}
\bar{\mu}_n(E_h^x) = e^{-\alpha_h} + O(e^{-(\log^2 n)/C}) = (1+o(1))e^{-\alpha_h}
\end{equation}
since $\alpha_h = O(h)$. For the other interfaces, we can similarly apply the appropriate decorrelation result to the events $\cA^{\noRed}_{x, h}, \cA^{\Blue}_{x, h}, \cA^{\Bot}_{x, h}$ to get that for $\star$ being $\noRed, \Blue, \Bot$,
\begin{equation}\label{eq:estimate-cA-events}
\bar{\mu}_n(\cA^{\star}_{x, h}) = e^{-\alpha_h} + O(e^{-(\log^2 n)/C}) = (1+o(1))e^{-\alpha_h}\,.
\end{equation}
Note also that by using \cref{eq:increase-alpha-by-one}, we have
\begin{equation}\label{eq:estimate-alpha_mn*}
    2\log n -\tfrac{\beta}{2}< \alpha_{m_n^*} \leq 2\log n + \tfrac{7\beta}{2} + \epsilon_\beta\,.
\end{equation}
Now in preparation for the proof of the left tail, take $h = m_n^* - l$ for any $l \leq \sqrt{\log n}$. One can check (via \cref{eq:estimate-alpha_mn*} and the fact that $\lim_{h \to \infty} \alpha_h/h = \alpha$) that $m_n^* = (\frac {2}{\alpha} + o(1))\log n$, and so we have $h  \ll \log^2 n$ as needed for the results above. For $l = 1$, we simply have by definition of $m_n^*$ that
\begin{equation*}
    \alpha_{m_n^* - 1} \leq 2\log n - \tfrac{\beta}{2}\,.
\end{equation*} For $l \geq 2$, by \cref{eq:alpha-superadditivity,eq:estimate-alpha_mn*}, we have 
\begin{equation*}
\alpha_{m_n^* -l} \leq \alpha_{m_n^*-1} - \alpha_{l-1} + 3\beta + \epsilon_\beta \leq 2\log n - \alpha_{l-1} + \tfrac{5\beta}{2} + \epsilon_\beta\,.
\end{equation*}

Plugging this estimate into  \cref{eq:estimate-Ehx} and also noting that $|\cL_{1/2, n}^\mathrm{o}| = (1 - o(1))n^2$, we have for sufficiently large $n$ and $l \geq 2$, 
\begin{equation}\label{eq:bound-expectation-Z_h}
    \E[Z_{m_n^* - l}] = \sum_{x \in \cL_{1/2, n}^\mathrm{o}} \bar{\mu}_n(G_{m_n^* - l}^x) \geq (1-\epsilon_\beta)e^{-\frac{5}{2}\beta}e^{\alpha_{l-1}}\,,
\end{equation}
and for $l = 1$,
\begin{equation}\label{eq:bound-expectation-Z_h-l=1}
    \E[Z_{m_n^* - 1}] = \sum_{x \in \cL_{1/2, n}^\mathrm{o}} \bar{\mu}_n(G_{m_n^* - 1}^x) \geq (1-\epsilon_\beta)e^{\frac{\beta}{2}}\,.
\end{equation}

We also have the following estimate concerning pillars in $G_h^x, G_h^y$ for $x, y$ close to each other.
\begin{claim}\label{clm:G-split-product}
    For all $\beta > \beta_0$, there exists a constant $\epsilon_\beta$ such that for all $h \ll \log^2 n$, $(x, y) \in \cL_{1/2, n}^\mathrm{o}$ with $d(x, y) \leq \log^2 n$ and $n$ sufficiently large,
    \begin{equation*}
        \bar{\mu}_n(G_h^x,\, G_h^y) \leq (1+\epsilon_\beta)\frac{(e^\beta + q-1)^2}{q}\bar{\mu}_n(E_h^x)\bar{\mu}_n(E_h^y)
    \end{equation*}
    where the definition of $G_h^x$ can be taken with respect to any of the four interfaces.
\end{claim}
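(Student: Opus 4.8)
\textbf{Proof proposal for \cref{clm:G-split-product}.}

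The plan is to run the same truncation/Domain-Markov argument that produced the submultiplicativity bound \cref{eq:submultiplicativity-Ehx} (via \cref{prop:compare-A_h^x-E_h^x,lem:move-to-gamma,lem:submultiplicativity-in-Gamma}), but applied ``horizontally'' to two well-separated pillars rather than ``vertically'' to two halves of one pillar. First I would reveal the pillar $\cP_x$ together with the portion $\sH^x$ of $\cL_{>0}\cap\clfaces$ that is $1$-connected to the sides of $x$; by \cref{it:Ghx-faces-of-I-near-pillar} of \cref{def:Ghx}, this face set meets the rest of the interface only at the four faces of height $0$ around $x$, so deleting the pillar leaves a valid truncated interface, and by maximality of the revealed closed faces it exposes a boundary of open edges all along the sides of $\cP_x$. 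One closes the edge $[x,x-\ez]$ (cost $\tfrac{e^\beta+q-1}{q}$ by \cref{obs:close-edge}, since closing it always creates a new cluster) so that after truncation we are still on $\sep_n$ and the revealed configuration determines $A_h^x$. As in \cref{clm:S-I'-disjoint} the events indexed by the revealed truncated data are disjoint, and summing over them bounds the ``left factor'' by $(1+\epsilon_\beta)\tfrac{e^\beta+q-1}{q}\bar\mu_n(E_h^x)$ (using \cref{prop:compare-A_h^x-E_h^x} and \cref{thm:iso-map,cor:widehat-Ehx} to pass between $G_h^x$-type events and $E_h^x$, exactly as in the proof of \cref{prop:compare-A_h^x-E_h^x}).

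Next I would handle the conditional factor: given the revealed data near $x$, one has a random-cluster (or FK--Potts) model on the complementary subdomain $G=(V,E)$ with wired boundary along the exposed open edges and free at $x$, by the Domain-Markov analysis of \cref{clm:G-bc} and \cref{lem:Ah2-DMP} (here the geometry is simpler because the second pillar sits ``to the side'' rather than ``on top,'' so there is no truncation-induced complication analogous to the face $f_{[y,y-\ez]}$). Since $d(x,y)\le \log^2 n$ and both $x,y\in\cL^{\mathrm o}_{1/2,n}$, the vertex $y$ and its pillar lie in this subdomain, and $G_h^y$ (or the decreasing event $A_h^y$) is measurable with respect to $\omega\restriction_E$. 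Because $A_h^y$ is decreasing while the conditioning ``$\partial^\dagger(\cdot)\subseteq\opfaces$'' is increasing, FKG gives $\mu_n(A_h^y\mid \text{revealed data})\le \mu_n(A_h^y)\le \bar\mu_n(A_h^y)$ (the last step again by FKG using that $\sep_n$ is decreasing). Then \cref{prop:compare-A_h^x-E_h^x} converts $\bar\mu_n(A_h^y)$ back to $(1+\epsilon_\beta)\tfrac{e^\beta+q-1}{q}\bar\mu_n(E_h^y)$. Removing the extra conditioning on $[x,x-\ez]\in\clfaces$ when we want the \emph{unconditional} $A_h^y$ on the right costs only another bounded factor as in \cref{eq:remove-conditioning-on-edge}; collecting the two factors of $\tfrac{e^\beta+q-1}{q}$ and absorbing the rest into $1+\epsilon_\beta$ yields the stated bound $(1+\epsilon_\beta)\tfrac{(e^\beta+q-1)^2}{q}\bar\mu_n(E_h^x)\bar\mu_n(E_h^y)$.

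For the $\Red$, $\Blue$ and $\Bot$ versions one runs the same argument but tracks the Potts coloring as well: after revealing $\cP_x^{\mathrm o}$ and the exposed boundary, \cref{cor:DMP-for-Ared} (and its random-cluster analog for $\Bot$) says the joint configuration on the complementary domain is a coupled FK--Potts model with boundary data determined by the revealed set, so the event $G_h^y$ (which now includes $\cA^\star_{x,h}$-type information at $y$) is measurable there and the same FKG/comparison chain applies, using \cref{lem:move-to-nice-space,lem:move-to-nice-space-bot} in place of \cref{prop:compare-A_h^x-E_h^x} to pass between $G_h^y$ and $E_h^y$; one still ends with $\bar\mu_n(E_h^x)\bar\mu_n(E_h^y)$ on the right since $\cA^\star_{x,h}\subseteq E_h^x$ and the reverse bound $\bar\mu_n(E_h^x)\le(1+\epsilon_\beta)\bar\mu_n(\cA^\star_{x,h})$-type comparisons are already available. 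The main obstacle, as in \cref{lem:submultiplicativity-in-Gamma}, is the Domain-Markov step: one must check that after deleting $\cP_x$ the revealed open edges genuinely cut the lattice into an ``inside'' and an ``outside'' so that the law on the outside is an honest FK/FK--Potts measure with the claimed boundary conditions — this requires invoking \cref{prop:out-graph-connected} and the co-connectedness of the pillar vertices (\cref{obs:pillar-vert-simp-conn}), together with the separation guaranteed by \cref{it:Ghx-faces-of-I-near-pillar}, and is the one place where care is needed; everything else is a recombination of estimates already established.
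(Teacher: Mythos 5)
Your proposal inverts the paper's choice of what to reveal in a way that breaks the crucial FKG step. The paper reveals the truncated interface $\cI\setminus\cP_y$, a global object which simultaneously (a) encodes $\sep_n$ (since the truncation is still an interface), (b) determines $G_h^x$ (since $\cP_x$ is part of it), and (c) via Domain Markov bounds the residual domain $G$ (where $A_h^y$ lives) entirely by the open edges $\partial^\dagger I'$; the closed $I'$ faces lie outside $G$, so after \cref{lem:Ah2-DMP} the conditioning on $G$ reduces to an \emph{increasing} event (plus one closed edge, handled at a factor $q$), and FKG with the decreasing $A_h^y$ goes the right way.

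You instead propose to reveal the local object $\cP_x$ and then estimate the probability of the event at $y$. The problems are: (i) revealing $\cP_x$ does not encode $\sep_n$, which you must still track and which, being a global decreasing event, cannot be folded into the decorrelation-free FKG computation the way the paper's truncated interface automatically does; and, more fundamentally, (ii) the closed pillar faces are \emph{crossing edges} between $\cP_x$ and the complementary domain $G'$, so Domain Markov cannot ``disregard'' them the way \cref{lem:Ah2-DMP} disregards $I'$. After Domain Markov on $G'=\Lambda_n\setminus V(\cP_x)$, the induced boundary condition at the hole is \emph{free} (all crossing edges are closed), which for the decreasing events $A_h^y\cap\sep_n$ pushes their conditional probability \emph{up} relative to $\mu_n$, not down. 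The $\partial^\dagger$-open conditioning pushes the other way, but the combined conditioning is not monotone and FKG does not directly yield $\mu_n(A_h^y\mid \text{revealed data})\le \mu_n(A_h^y)$ as you assert. You cite \cref{clm:G-bc} and \cref{lem:Ah2-DMP}, but those lemmas are tailored to the truncated-interface setting, where the closed faces sit entirely ``below'' the residual domain rather than bounding a hole inside it; the analogous claims for the pillar would have to be proved anew and the sign of the comparison is exactly what is at stake. Your remark that ``the geometry is simpler because the second pillar sits to the side'' is also backwards — the side pillar creates a hole in the middle of the domain, which is topologically harder than the paper's clean above/below split. In short, the approach needs the paper's choice of revealed object, not its complement.
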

\begin{proof} First note that by set inclusion, it suffices to prove the case of $\Itop$. The proof is similar to that of \cref{lem:submultiplicativity-in-Gamma}. The idea is to reveal the interface $\cI \setminus \cP_y$ and use the Domain Markov property to show that the information revealed is essentially all increasing information (with the exception of a single closed edge). Then, using $A_h^y$ as a proxy for $G_h^y$, we can use FKG to remove the conditional information generated by revealing $\cI \setminus \cP_y$. Since the justification of the Domain Markov step is quite lengthy, yet almost the exact same as the one provided in the proof of \cref{lem:submultiplicativity-in-Gamma}, we defer the proof of this claim to \cref{clm:Appendix-G-split-product}.
\end{proof}

For $x, y$ far away from each other, we still have the decorrelation statement that for some $C>0$,
\begin{equation}\label{eq:decorrelation-G-far}
    |\bar{\mu}_n(G_h^x,\, G_h^y) - \bar{\mu}_n(G_h^x)\bar{\mu}_n(G_h^y)| \leq Ce^{-d(x, y)/C}\,.
\end{equation}
For justification, see \cref{sec:decorrelation}, noting that because the conditions of $G_h^x$ have been chosen so they are determined entirely by the pillar $\cP_x$ and the walls it is a part of, this decorrelation statement follows immediately from \cref{prop:pillar-marginal-of-wall,prop:decorrelation-2,clm:nested-walls-determine-pillars-RC-Potts}.

\begin{proof}[\textbf{\emph{Proof of \cref{prop:tight-max-mn*}}}]
Set $h= m_n^* + l$. For the right tail, take any $1 \leq l \leq \frac{1}{\beta}\log n$. We have
\begin{align*}
    \bar{\mu}_n(M_n \geq m_n^* + l) &\leq \sum_{x \in \cL_{1/2, n} \setminus \cL_{1/2, n}^\mathrm{o}}\bar{\mu}_n(E_h^x) + \sum_{x \in \cL_{1/2, n}^\mathrm{o}}\bar{\mu}_n(E_h^x) \\ 
    &\leq |\cL_{1/2, n} \setminus \cL_{1/2, n}^\mathrm{o}|e^{-4(\beta - C)h} + |\cL_{1/2, n}^\mathrm{o}|(1 + o(1))e^{-\alpha_h}\,,
\end{align*}
using \cref{prop:exp-tail-pillar-height} (or \cref{cor:Potts-exp-tail-pillar-height} for other interfaces) for the first sum and \cref{eq:estimate-Ehx} for the second. For the maximum with respect to $\cI_\Red$, $E_h^x$ needs to be replaced by $\cA^{\noRed}_{x, h}$, and similarly for $\cI_\Blue$ and $\cI_\Bot$. Recalling that $\alpha \leq 4\beta$ and $\alpha = \sup_h \frac{\alpha_h - 3\beta - \epsilon_\beta}{h}$, we have
\begin{equation*}
    \alpha_h -4(\beta - C)h \leq \alpha h + 3\beta + \epsilon_\beta - 4(\beta - C)h \leq Ch + 3\beta + \epsilon_\beta\,.
\end{equation*}
(For the other interfaces, the upper bound on the large deviation rate is $4\beta + \epsilon_\beta$, but the above statement still holds with a different $C$.) 
Thus, we have
\begin{equation*}
    \frac{|\cL_{1/2, n} \setminus \cL_{1/2, n}^\mathrm{o}|e^{-4(\beta - C)h}}{|\cL_{1/2, n}^\mathrm{o}|e^{-\alpha_h}} \leq \frac{4n\log^2 n}{(1+o(1))n^2}e^{3\beta + \epsilon_\beta}e^{C(\frac{2}{\alpha} + o(1) + \frac{1}{\beta})\log n} = o(1)
\end{equation*}
as long as $\beta$ is large enough so that $C(\frac{2}{\alpha} + o(1) + \frac{1}{\beta}) < 1$. Plugging back into the first inequality and using \cref{eq:alpha-superadditivity} followed by \cref{eq:estimate-alpha_mn*} to estimate $e^{-\alpha_h}$, we have
\begin{align}\label{eq:right-tail}
    \bar{\mu}_n(M_n \geq m_n^* + l) &\leq (1+o(1))|\cL_{1/2, n}^\mathrm{o}|e^{-\alpha_h} \leq (1+o(1))|\cL_{1/2, n}^\mathrm{o}|e^{-\alpha_{m_n^*} - \alpha_l + 3\beta + \epsilon_\beta}\nonumber \\
    &\leq (1+o(1))n^2e^{-2\log n + \frac{7}{2}\beta + \epsilon_\beta - \alpha_l} \leq (1+\epsilon_\beta) e^{\frac{7}{2}\beta-\alpha_l}\,,
\end{align}
which proves the right tail. (Note that because $\alpha_l \geq \alpha_{l-1} + \alpha_1 - 3\beta - \epsilon_\beta \geq \alpha_{l-1} + \beta - C$, the right tail can be rewritten as $Ce^{\frac{5\beta}{2} - \alpha_{l-1}}$ as in the statement of the proposition.)

To extend the right tail for all $l$ at a sub-optimal rate, we can use (for all four interfaces) the bound of $\bar{\mu}_n(E_h^x) \leq e^{-4(\beta - C)h}$ and the observation that for $l > \frac{1}{\beta}\log n$, we have $n^2e^{-2\beta l} < 1$. Thus, for $l > \frac{1}{\beta}\log n$,
\begin{equation*}
    \bar{\mu}_n(M_n \geq m_n^* + l) \leq \sum_{x \in \cL_{1/2, n}} \bar{\mu}_n(E_{m_n^*+l}^x) \leq \sum_{x \in \cL_{1/2, n}} \bar{\mu}_n(E_{l}^x) \leq n^2e^{-4(\beta - C)l} \leq e^{-2(\beta - C)l}\,.
\end{equation*}

To prove the left tail, let $h = m_n^* - l$ for $l \leq \sqrt{\log n}$. We compute:
\begin{align*}
    \E[Z_h^2] = \sum_{x, y \in \cL_{1/2, n}^\mathrm{o}} \bar{\mu}_n(G_h^x,\, G_h^y) &\leq \sum_{x \in \cL_{1/2, n}^\mathrm{o}} \bar{\mu}_n(G_h^x)\\
    &+ \sum_{x \in \cL_{1/2, n}^\mathrm{o}}\,\, \sum_{y \in \cL_{1/2, n}^\mathrm{o} \cap B(x, \log^2 n): y \neq x} \bar{\mu}_n(G_h^x,\, G_h^y)\\
    &+\sum_{x \in \cL_{1/2, n}^\mathrm{o}}\,\, \sum_{y \in \cL_{1/2, n}^\mathrm{o} \setminus B(x, \log^2 n)} \bar{\mu}_n(G_h^x)\bar{\mu}_n(G_h^y) + |\bar{\mu}_n(G_h^x)\bar{\mu}_n(G_h^y) - \bar{\mu}_n(G_h^x,\, G_h^y)|\\
    &=: \Xi_1 + \Xi_2 + \Xi_3\,.
\end{align*}
By definition, $\Xi_1 = \E[Z_h]$. 

By \cref{clm:G-split-product}, we have 
\begin{equation}\label{eq:Xi2-UB}
    \Xi_2 \leq 4n^2\log^4 n (1+\epsilon_\beta)\frac{(e^\beta + q-1)^2}{q}\sup_{x, y\in \cL_{1/2, n}^\mathrm{o}} \bar{\mu}_n(E_h^x)\bar{\mu}_n(E_h^y) \leq n^{2 + o(1)}\sup_{x \in \cL_{1/2,n}^\mathrm{o}} \bar{\mu}_n(E_h^x)^2\,.
\end{equation}
By \cref{eq:estimate-Ehx}, we have $\bar{\mu}_n(E_h^x) = (1+o(1))e^{-\alpha_h}$. But by \cref{eq:estimate-alpha_mn*,eq:increase-alpha-by-l} and the fact that $l \leq \sqrt{\log n}$, we get that $e^{-\alpha_h} = n^{-2 + o(1)}$. Combined with \cref{eq:Xi2-UB}, we have that 
\[\Xi_2 \leq n^{-2+o(1)} = o(1)\,.\]
Finally, for $\Xi_3$ we have by expanding the square that
\begin{equation*}
    \sum_{x \in \cL_{1/2, n}^\mathrm{o}}\,\, \sum_{y \in \cL_{1/2, n}^\mathrm{o} \setminus B(x, \log^2 n)} \bar{\mu}_n(G_h^x)\bar{\mu}_n(G_h^y) \leq \E[Z_h]^2,
\end{equation*}
and by \cref{eq:decorrelation-G-far} we have
\begin{equation*}
    \sum_{x \in \cL_{1/2, n}^\mathrm{o}}\,\, \sum_{y \in \cL_{1/2, n}^\mathrm{o} \setminus B(x, \log^2 n)} |\bar{\mu}_n(G_h^x)\bar{\mu}_n(G_h^y) - \bar{\mu}_n(G_h^x,\, G_h^y)| \leq C|\cL_{1/2, n}^\mathrm{o}|^2e^{-\log^2 n/C} = o(1).
\end{equation*}
Thus, by Paley--Zygmund, we have
\begin{equation*}
    \bar{\mu}_n(Z_h > 0) \geq \frac{\E[Z_h]^2}{\E[Z_h^2]} \geq \frac{\E[Z_h]^2}{\E[Z_h]^2+\E[Z_h] + o(1)}\,,
\end{equation*}
or equivalently,
\begin{equation*}
    \bar{\mu}_n(M_n < h) \leq \bar{\mu}_n(Z_h = 0) \leq \frac{1 + o(1)}{\E[Z_h] + 1 + o(1)}\,.
\end{equation*}
For $h = m_n^* - 1$, the lower bound on the expectation computed in \cref{eq:bound-expectation-Z_h-l=1} gives us
\begin{equation}\label{eq:left-tail-l=1}
    \bar{\mu}_n(M_n - m_n^* < -1) \leq (1+\epsilon_\beta)e^{\frac{-\beta}{2}}\,,
\end{equation}
whereas for $h = m_n^* - l$ for $l \geq 2$, we have by \cref{eq:bound-expectation-Z_h} that
\begin{equation}\label{eq:left-tail}
    \bar{\mu}_n(M_n - m_n^* < -l) \leq (1+\epsilon_\beta)e^{\frac{5\beta}{2} - \alpha_{l-1}}\,.
\end{equation}
This concludes the proof of the right and left tails, and combining \cref{eq:right-tail,eq:left-tail-l=1} immediately proves the claim in \cref{eq:Mn-2-values} that $M_n$ is, with probability $(1 - C_0e^{-\frac{\beta}{2}})$, either $m_n^* - 1$ or $m_n^*$.
\end{proof}

\begin{corollary}\label{cor:E[M_n]-close-to-mn*}
    There exists $\beta_0$ such that for all $\beta > \beta_0$, for sufficiently large $n$, 
    \begin{equation*}
        m_n^* - 1-\epsilon_\beta \leq \E[M_n] \leq m_n^* + \epsilon_\beta\,,
    \end{equation*}
    and this holds for $M_n, m_n^*$ defined with respect to any of the four interfaces in random-cluster/Potts. 
\end{corollary}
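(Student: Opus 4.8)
The plan is to derive $\E[M_n]$ from the tail estimates in \cref{prop:tight-max-mn*} by a standard summation, using that $M_n$ is a nonnegative integer-valued random variable concentrated near $m_n^*$, and that the tails decay fast enough for the mean to be pinned down up to a constant $\epsilon_\beta$. Throughout, $\epsilon_\beta$ denotes a quantity going to $0$ as $\beta\to\infty$, possibly changing from line to line, and $C_0$ is the constant from \cref{prop:tight-max-mn*}.

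For the \textbf{upper bound}, I would write $\E[M_n] = \sum_{h \geq 1}\bar\mu_n(M_n \geq h)$ and split the sum at $m_n^*+1$. The terms with $h \leq m_n^*$ contribute at most $m_n^*$, but that is too crude; instead write
\[
\E[M_n] = \sum_{h=1}^{m_n^*}\bar\mu_n(M_n\geq h) + \sum_{l\geq 1}\bar\mu_n(M_n \geq m_n^*+l) \leq m_n^* + \sum_{l\geq 1}\bar\mu_n(M_n\geq m_n^*+l)\,.
\]
By \cref{prop:tight-max-mn*}, for $1\leq l\leq \frac1\beta\log n$ we have $\bar\mu_n(M_n\geq m_n^*+l)\leq C_0 e^{5\beta/2-\alpha_{l-1}}$, and by \cref{eq:bound-alpha-h}, $\alpha_{l-1}\geq 4(\beta-C)(l-1)$, so these terms are summable with total $O(e^{-\beta/2})=\epsilon_\beta$ once $\beta$ is large (the $l=1$ term is $\leq C_0 e^{5\beta/2-\alpha_0}$, and since $\alpha_0=0$ one should instead just use the $l=1$ case directly from \cref{eq:Mn-2-values}, or observe that the $l\geq 2$ tail $\sum_{l\geq 2}C_0 e^{5\beta/2 - 4(\beta-C)(l-1)}=\epsilon_\beta$ while the $l=1$ term contributes at most $1$, which is not good enough — so here one must instead use that $\bar\mu_n(M_n \geq m_n^* + 1) = 1 - \bar\mu_n(M_n \in \{m_n^*-1, m_n^*\}) - \bar\mu_n(M_n < m_n^*-1)\leq C_0 e^{-\beta/2}$ by \cref{eq:Mn-2-values} and the left tail). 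For the range $l > \frac1\beta\log n$, the bound $\bar\mu_n(M_n \geq m_n^*+l)\leq e^{-2(\beta-C_0)l}$ from \cref{prop:tight-max-mn*} makes the corresponding sum at most $e^{-2(\beta-C_0)\frac1\beta\log n}(1+o(1))$, which is $o(1)$ for $\beta$ large. Altogether $\E[M_n]\leq m_n^* + \epsilon_\beta$.

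For the \textbf{lower bound}, I would use $\E[M_n] \geq (m_n^*-1)\bar\mu_n(M_n\geq m_n^*-1)$, and bound $\bar\mu_n(M_n \geq m_n^*-1) = 1 - \bar\mu_n(M_n < m_n^*-1) = 1 - \bar\mu_n(M_n \leq m_n^*-2)$. By the left tail of \cref{prop:tight-max-mn*} with $l=2$, $\bar\mu_n(M_n < m_n^*-2)\leq C_0 e^{5\beta/2 - \alpha_1}$, and $\bar\mu_n(M_n = m_n^*-2)$ is part of the event $M_n < m_n^*-1$; more directly, $\bar\mu_n(M_n \leq m_n^* - 2) \leq \bar\mu_n(M_n < m_n^* - 1) = \bar\mu_n(M_n \notin \{m_n^*-1, m_n^*\}) - \bar\mu_n(M_n > m_n^*) \leq C_0 e^{-\beta/2}$ using \cref{eq:Mn-2-values}. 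Hence $\bar\mu_n(M_n \geq m_n^*-1) \geq 1 - C_0 e^{-\beta/2}$, and since $m_n^* = (\frac{2}{\alpha}+o(1))\log n \to \infty$, we get $\E[M_n] \geq (m_n^*-1)(1 - C_0 e^{-\beta/2}) \geq m_n^* - 1 - \epsilon_\beta$, where the last step absorbs the $m_n^* C_0 e^{-\beta/2}$ error --- wait, that is \emph{not} $\epsilon_\beta$ since $m_n^*\to\infty$. So the lower bound argument must instead avoid multiplying a large quantity by a small probability: one writes $\E[M_n] = \sum_{h\geq 1}\bar\mu_n(M_n\geq h) \geq \sum_{h=1}^{m_n^*-1}\bar\mu_n(M_n\geq h)$ and bounds each term $\bar\mu_n(M_n\geq h) \geq \bar\mu_n(M_n\geq m_n^*-1) \geq 1-C_0e^{-\beta/2}$ for $h\leq m_n^*-1$ is still lossy; the correct route is $\E[M_n]\geq \sum_{h=1}^{m_n^*-1}\bar\mu_n(M_n \geq h) = (m_n^*-1) - \sum_{h=1}^{m_n^*-1}\bar\mu_n(M_n < h)$, and then $\sum_{h=1}^{m_n^*-1}\bar\mu_n(M_n < h) = \sum_{l\geq 1}\bar\mu_n(M_n < m_n^*-l) \leq \bar\mu_n(M_n < m_n^*-1) + \sum_{l\geq 2}C_0 e^{5\beta/2-\alpha_{l-1}}$, which by the same reasoning as the right tail (now using \cref{eq:bound-alpha-h} for $\alpha_{l-1}$ and \cref{eq:Mn-2-values} for the $l=1$ term) is $\epsilon_\beta$. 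This gives $\E[M_n]\geq m_n^*-1-\epsilon_\beta$.

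The \textbf{main obstacle} is purely bookkeeping: one must be careful that the crude tail bounds valid only for $l \leq \sqrt{\log n}$ (left tail) or $l \leq \frac1\beta\log n$ (right tail) are supplemented by the all-$l$ sub-optimal bounds for the far tail, so that the full series converges, and that nowhere does a large factor like $m_n^*$ get multiplied by a probability that is only $O(e^{-\beta/2})$ rather than $o(1/m_n^*)$ --- hence the switch to the telescoped form $\E[M_n]=\sum_h \bar\mu_n(M_n\geq h)$ rather than a single-term bound. All four interfaces are handled identically since \cref{prop:tight-max-mn*} and the bounds \cref{eq:bound-alpha-h,eq:increase-alpha-by-l,eq:estimate-alpha_mn*} hold verbatim for each (with $q\geq 2$ in the Potts cases and $q\geq 1$ for FK), so the corollary follows for all of them at once. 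Finally, by the symmetry noted in the introduction --- upward deviations of $\cI_\Top$ equal downward deviations of $\cI_\Bot$, and similarly for $\cI_\Red,\cI_\Blue$ --- the same estimate holds for the minima $M_n'$, which combined with the large deviation rates $\alpha,\alpha',\gamma,\gamma'$ and the identity $m_n^* = (2/\alpha+o(1))\log n$ (etc.) yields the asymptotics $\E[M_n]\sim (2/\alpha)\log n$ claimed in \cref{thm:potts,thm:rc}.
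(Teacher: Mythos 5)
Your approach is fundamentally the same as the paper's: express the error $\E[M_n]-m_n^*$ in terms of tail probabilities and then feed in the two-point concentration \cref{eq:Mn-2-values} and the left/right tail bounds of \cref{prop:tight-max-mn*}. The paper phrases it via the exact identity $\E[M_n]=m_n^*-p_n+\E[(M_n-m_n^*)_+]-\E[(m_n^*-1-M_n)_+]$ with $p_n=\bar\mu_n(M_n<m_n^*)\in[0,1]$, which yields both bounds simultaneously; you get the same two bounds separately from the telescoped sum $\E[M_n]=\sum_{h\ge1}\bar\mu_n(M_n\ge h)$. These are equivalent in content, and you correctly spot and avoid the pitfall of multiplying $m_n^*$ by a probability that is merely $O(e^{-\beta/2})$.

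There is, however, one genuine (though small) gap in your lower bound. You write $\sum_{l\ge2}C_0 e^{5\beta/2-\alpha_{l-1}}$ over all $l\ge 2$, but the left-tail estimate $\bar\mu_n(M_n<m_n^*-l)\le C_0 e^{5\beta/2-\alpha_{l-1}}$ from \cref{prop:tight-max-mn*} is \emph{only stated for $2\le l\le\sqrt{\log n}$}, and — unlike the right tail — the proposition offers no ``extended'' bound beyond that range. So you cannot just appeal to ``the same reasoning as the right tail.'' The missing piece is the one the paper supplies explicitly: for $\sqrt{\log n}<l\le m_n^*-1$ bound each term by the value at $l=\sqrt{\log n}$ (by monotonicity), so the contribution is at most $m_n^*\cdot C_0 e^{5\beta/2-\alpha_{\sqrt{\log n}-1}}=O(\log n)\,e^{-4(\beta-C)(\sqrt{\log n}-1)}=o(1)$. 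Once this range is handled the argument closes; as written it is incomplete.

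One further minor caveat: the identity $\E[M_n]=\sum_{h\ge1}\bar\mu_n(M_n\ge h)$ presumes $M_n\ge 0$ a.s. Since pillars can have negative height, strictly you should write $\E[M_n]\le\sum_{h\ge1}\bar\mu_n(M_n\ge h)$ for the upper bound (which is all you need there), and observe $\P(M_n<0)$ is negligible in the lower-bound direction; the paper's decomposition into $\E[M_n\one_{\{M_n\ge m_n^*\}}]$ and $\E[M_n\one_{\{M_n\le m_n^*-1\}}]$ sidesteps this entirely.
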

\begin{proof}
    By the right tails of \cref{prop:tight-max-mn*}, we can write
\begin{align*}
    \E[(M_n - m_n^*)_+] \leq \sum_{l = 1}^\infty \bar{\mu}_n(M_n - m_n^* \geq l) \leq C_0e^{-\beta/2} + \sum_{l = 2}^{\log n}C_0e^{- \alpha_{l-1} + \tfrac{5\beta}{2} } + \sum_{l > \log n} e^{-2(\beta - C)l}\,.
\end{align*}
By the estimate of $e^{-\alpha_l} \leq e^{-4(\beta - C)l}$ in \cref{eq:bound-alpha-h}, we have that
\[\E[(M_n - m_n^*)_+] \leq \epsilon_\beta.\]
Similarly using the left tail, we have
\begin{align*}
    \E[(m_n^* - 1 - M_n)_+] &= \E[(m_n^* - 1 - M_n)_+\one_{\{M_n \leq m_n* - \sqrt{\log n}\}}] + \E[(m_n^* - 1 - M_n)_+\one_{\{M_n > m_n^* - \sqrt{\log n}\}}]\\
    &\leq m_n^* \bar{\mu}_n(M_n \leq m_n^* - \sqrt{\log n}) + \sum_{l = 1}^{\sqrt{\log n}} \bar{\mu}_n(m_n^* - 1 - M_n \geq l)\\
    &\leq O(\log n) e^{-O(\sqrt{\log n})} + C_0e^{-\beta/2} + \sum_{l = 2}^{\sqrt{\log n}} C_0e^{- \alpha_{l-1} + \tfrac{5\beta}{2}} \leq \epsilon_\beta\,.
\end{align*}
Now define $p_n = \bar{\mu}_n(M_n < m_n^*)$, so that
\begin{align*}
    &\E[M_n\one_{\{M_n \geq m_n^*\}}] = m_n^*(1-p_n) + \E[(M_n - m_n^*)_+]\\
    & \E[M_n\one_{\{M_n \leq m_n^* -1\}}] = (m_n^* -1)p_n - \E[(m_n^* - 1 - M_n)_+]\,.
\end{align*}
Adding these together and applying the bounds computed above, we have
\begin{equation*}
    m_n^* - p_n - \epsilon_\beta \leq \E[M_n] \leq m_n^* - p_n + \epsilon_\beta\, ,
\end{equation*}
whence the proof concludes by using the trivial bound $0 \leq p_n \leq 1$.
\end{proof}

Thus, the results of this section show that the maxima of the four interfaces in random-cluster/Potts are tight around their means, and their means are equal to $(\frac{2}{\alpha} + o(1))\log n$ where $\alpha$ should be replaced with the appropriate large deviation rate for the respective interface. By observing that the minimum of the $\Top$ interface has the same law as the maximum of the $\Bot$ interface, and the minimum of the $\Blue$ interface has the same law as the maximum of the $\Red$ interface, we conclude the proofs of \cref{thm:potts,thm:rc}. 

\appendix 
\section{Decorrelation estimates}\label{sec:decorrelation}
\begin{proposition}\label{prop:pillar-marginal-of-wall}
Let $\fW_x$ be the collection of walls nesting $x$. With probability $1 - Ce^{-(\beta-C)r}$ for some constant $C>0$, the walls in $\fW_x$ are indexed by vertices distanced at most $r$ from $x$.
\end{proposition}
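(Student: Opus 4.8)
The plan is to reduce the statement to the exponential tail on the excess area of the group of walls nesting $x$, which is already available from the proof of \cref{prop:exp-tail-pillar-height} (namely \cref{eq:tail-on-nested-wall-group}). The only genuinely new ingredient is a purely geometric observation: a wall that nests $x$ and is indexed by a vertex far from $x$ is forced to have large excess area, so the whole group $\fF_x$ of walls nesting $x$ must have large excess area, which is exponentially unlikely.

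First I would record the geometric lemma. If $W\in\fW_x$ nests $x$, then its projection $\rho(W)$ is a $0$-connected collection of faces and edges at height $0$ that separates $x$ from infinity; hence the bounded complementary component of $\rho(W)$ containing $x$ has a boundary loop encircling $x$ that is contained in $\rho(W)$, and since $\rho(W)$ is connected and contains the projection of every vertex $y$ with $y\in\rho(W)$, there is a path inside $\rho(W)$ joining that loop to $\rho(y)$. A short case analysis (either the encircling loop already has diameter comparable to $d(x,y)$, or it is small and hence close to $x$, in which case the connecting path has length comparable to $d(x,y)$) yields $|\rho(W)| \ge c\, d(x,y)$ for a universal $c>0$. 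Combining with the inequality $\fm(W)\ge \tfrac1{13}|\rho(W)|$ of \cite[Lem.~13]{GielisGrimmett02}, every vertex $y$ indexing a wall $W\in\fW_x$ satisfies $d(x,y) \le C_0\,\fm(W)$ for a universal $C_0$.

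Next I would package this probabilistically. Let $\fF_x$ be the group of walls containing the nested sequence $\fW_x$, as in the proof of \cref{prop:exp-tail-pillar-height}, so that $\fm(W)\le \fm(\fF_x)$ for every $W\in\fW_x$. By the previous paragraph, on the event that some wall of $\fW_x$ is indexed by a vertex at distance $>r$ from $x$ we must have $\fm(\fF_x) > r/C_0$. I would then invoke the exponential tail \cref{eq:tail-on-nested-wall-group} (a consequence of \cref{prop:grimmett-exp-tail-grp-of-walls} together with the entropy bound \cref{lem:num-of-0-connected-sets}), which gives $\bar\mu_n(\fm(\fF_x) > r/C_0) \le C e^{-(\beta-C)r/C_0}$. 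This is of the claimed form $Ce^{-(\beta-C)r}$ up to the harmless universal constant $1/C_0$ in the exponent, which is absorbed into the quantifier "for some constant $C>0$" (or, if one prefers, one simply reads the estimate throughout the appendix up to such a universal rescaling of $r$).

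The main obstacle is the bookkeeping in the geometric lemma: making precise, with usable constants, that "$W$ nests $x$ and $W$ reaches a far vertex" forces $\rho(W)$ to be large, and — crucially — obtaining this for \emph{every} vertex indexing \emph{every} wall of $\fW_x$ simultaneously rather than for a single wall or a single vertex. Routing the argument through the single quantity $\fm(\fF_x)$ is precisely what makes this uniform control automatic, after which the probabilistic step is immediate from the already-established tails.
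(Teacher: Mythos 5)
Your proof follows the same overall strategy as the paper's: reduce the statement to the exponential tail \cref{eq:tail-on-nested-wall-group} on the excess area $\fm(\fF_x)$ of the group of walls containing the nested sequence $\fW_x$, via a deterministic geometric observation. The difference is in that geometric step. The paper's one-line claim is the weaker statement that if a wall $W\in\fW_x$ has \emph{no} indexing vertex within distance $r$ of $x$, then $\fm(W)\gtrsim r$ (since $\rho(W)$ would then contain a loop encircling $\rho(x)$ lying entirely at distance $>r$, hence of length $\gtrsim r$). Your lemma is strictly stronger: you show that \emph{every} indexing vertex of \emph{every} $W\in\fW_x$ lies within distance $O(\fm(W))$ of $x$, using the encircling loop plus the connecting path inside the $0$-connected set $\rho(W)$. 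In effect the paper reads the proposition as ``each wall of $\fW_x$ has at least one indexing vertex within $r$'' while you read it as ``all indexing vertices of all walls of $\fW_x$ are within $r$.'' Your stronger reading is arguably the one actually needed downstream in \cref{cor:decorrelation-1,cor:decorrelation-2} and \cref{eq:decorrelation-G-far}, where one wants the walls determining $\cP_x$ to be captured by $(\sF_s)_{|s-x|<r}$, so your version is preferable. One small point, shared with the paper's sketch: the geometry yields $\fm(W)\ge c\,d(x,y)$ for a universal $c<1$, so the resulting tail is $Ce^{-(\beta-C)r/C_0}$ rather than literally $Ce^{-(\beta-C)r}$; as you note, this affects only the constant in the exponent and is of no consequence to how the estimate is used.
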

\begin{proof}
If there is a wall $W$ nesting $x$ such that $W$ is not indexed by any vertices within distance $r$ from $x$, then the excess area of $W$ must be at least $r$. The proposition then follows immediately from the bound on the excess area of a group of nested walls in \cref{eq:tail-on-nested-wall-group}.
\end{proof}

\begin{claim}\label{clm:nested-walls-determine-pillars-RC-Potts}
    The entire pillar $\cP_x$ (and hence the event $E_h^x$), as well the event $\cA^{\Bot}_{x, h}$, is determined by $\fW_x$, the collection of walls nesting $x$. The collection $\fW_x$ moreover determines the conditional probabilities of the events $\cA^{\noRed}_{x, h}, \cA^{\Blue}_{x, h}$.
\end{claim}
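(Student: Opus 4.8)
\textbf{Proof proposal for \cref{clm:nested-walls-determine-pillars-RC-Potts}.}

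The plan is to leverage the two structural facts already in hand: first, \cref{obs:pillar-given-by-walls} states that the faces of $\cP_x$ are a subset of the faces of the walls nesting $x$, together with their interior walls and interior ceilings; second, \cref{obs:wall-nest-face} states that for every face $f\in\cP_x$ (and every vertex $y\in\cP_x$), there is a wall $W$ nesting both $f$ (resp.\ $y$) and $x$. Combining these, the collection $\fW_x$ \emph{as a set of walls} (which, by convention, records each wall together with all walls and ceilings nested inside it) determines precisely which faces lie in $\cP_x$. Concretely, I would argue as follows: given $\fW_x$, form the candidate face set $H$ from \cref{obs:pillar-given-by-walls}; then $\cP_x$ is exactly the subset of $H$ that is $1$-connected to the four faces bounding the sides of $x$ at height $1/2$ through edges of height $\geq 1/2$ (this is just re-reading \cref{def:pillar}, since the pillar is the $1$-connected component of those side faces within the closed faces, with hairs attached at height $\geq 1/2$). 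The point is that no face outside the walls of $\fW_x$ can be $1$-connected to $\cP_x$ at positive height without being part of a wall nesting $x$, by \cref{obs:wall-nest-face}; and whether a wall is nested in another, and whether a ceiling is interior, is information already contained in $\fW_x$. Hence $\cP_x$ is a deterministic function of $\fW_x$, and therefore so are $\hgt(\cP_x)$ and $E_h^x$, as well as the increment decomposition $(\sX_i)$ and the cut-points $(v_i)$.

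Next, for $\cA^{\Bot}_{x,h}$: this event asks for a $\Abot$-path from $x$ to height $h$ through vertices of $\cP_x$. Since $\Abot\subseteq\Ablue\subseteq\Ared^c\subseteq\Atop^c$ and the vertices of $\cP_x$ are exactly the connected component of $\Atop^c$-vertices of height $\geq 1/2$ containing $x$, I would note that a vertex $v\in\cP_x$ lies in $\Abot$ if and only if it is \emph{not} separated from $\partial\Lambda_n^-$ by $\cI_\Bot$, and the relevant portion of $\cI_\Bot$ — namely its restriction to edges with both endpoints in $\cP_x\cup\partial\cP_x$ — is itself part of the full interface $\cI$, which is recorded in $\fW_x$ near $x$. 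More precisely, the vertices of $\cP_x$ surrounded (at height $>0$) by vertices of $\Vtop\subseteq\Vred\subseteq\Ared\subseteq\Ablue^c\subseteq\Abot^c$, so a vertex of $\cP_x$ is in $\Abot$ iff it is $\omega$-connected to $x$ within $\cP_x$ (compare \cref{obs:simpler-cABot}), and the edges inside $\cP_x$ are determined by which faces of $\cP_x$ are present — hence by $\fW_x$. Thus $\cA^{\Bot}_{x,h}$ is also a deterministic function of $\fW_x$.

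Finally, for the conditional probabilities of $\cA^{\noRed}_{x,h}$ and $\cA^{\Blue}_{x,h}$: here the relevant coloring of the interior clusters of $\cP_x$ is \emph{not} determined by $\fW_x$, but its conditional law is. I would invoke \cref{lem:DMP-pillar-shell} and \cref{cor:DMP-for-Ared}: conditional on the increment shells $(\sX_i^{\mathrm o})$ — which are determined by $\fW_x$ via the first part of the claim — and on $x\in\Vbot$ (which on the relevant events holds deterministically, cf.\ \cref{clm:x-in-Vbot}), the law of $(\omega,\sigma)$ restricted to the interior of each increment is that of an independent coupled FK--Potts model with free boundary conditions except at the bottom cut-point. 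Writing $\cA^{\noRed}_{x,h}$ as the intersection $\bigcap_i\cA^{\noRed}_{v_i,v_{i+1}}$ and applying \cref{cor:DMP-for-Ared} increment by increment (exactly as in the proof of \cref{prop:submultiplicativity-Potts}), the conditional probability $\phi_n(\cA^{\noRed}_{x,h}\mid \cI)$ factors into a product of terms each depending only on the increment shell $\sX_i^{\mathrm o}$, hence only on $\fW_x$; the same for $\Blue$.

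The main obstacle I anticipate is the bookkeeping in the first part: one must be careful that $\fW_x$, interpreted as the collection of walls nesting $x$ \emph{with their nested walls and ceilings}, really captures every hair of $\cP_x$ — in particular a hair that attaches to a ceiling face and then wanders must still be accounted for via \cref{lem: wall-ceiling-geometry} and \cref{obs:wall-nest-face}, as was done in the (commented-out) proof of \cref{obs:pillar-given-by-walls}. Once that identification is pinned down, the rest is a routine consequence of the Domain Markov statements already established.
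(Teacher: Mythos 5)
Your plan for the first part of the claim (that $\cP_x$ itself is determined by $\fW_x$, via \cref{obs:pillar-given-by-walls} and \cref{obs:wall-nest-face}) is fine and is indeed the implicit content of the paper's ``and are thus determined entirely by $\fW_x$.''  However, your treatment of $\cA^{\Bot}_{x,h}$ and of the conditional probabilities has a genuine gap, and it is exactly the issue that the paper's proof is designed to handle.

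The gap: you assert that ``the edges inside $\cP_x$ are determined by which faces of $\cP_x$ are present — hence by $\fW_x$.''  This is false.  The configuration $\omega$ inside the pillar contains, in addition to the closed edges dual to faces of $\cP_x$ (which are all part of $\cI$, being in $\Itop$ or hairs), possibly some \emph{bubbles}: finite $1$-connected components $F$ of $\clfaces$ that are disjoint from $\cI$ and sit inside the pillar.  These are not recorded in $\cI$, hence not in $\fW_x$.  A priori they could disconnect the open clusters inside $\cP_x$ and so affect which vertices lie in $\Vbot$, $\Vred$, etc.  (Likewise, your appeal to \cref{obs:simpler-cABot} — ``$v\in\Abot$ iff $\omega$-connected to $x$ within $\cP_x$'' — is proved there only on $\Iso_{x,L,h}$ using the cut-point at $x$ and $x\in\Vbot$; the present claim is unconditional.)  The paper's proof does not route through \cref{obs:pillar-given-by-walls} at all.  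Instead it attacks the bubbles directly: for such a component $F$, \cref{prop:out-graph-connected} shows the exterior boundary graph $(\Delta_{\scV}F,\Delta_{\scE}F)$ is connected, and by maximality of $F$ all of $\Delta_{\scE}F$ is open, so $\Delta_{\scV}F$ lies in a single open cluster; whether the interior vertices $V$ of the bubble belong to $\Atop$ (resp.\ $\Abot$) is then decided entirely by the $\Atop$-status (resp.\ $\Abot$-status) of $\Delta_\scV F$, i.e.\ by data external to $F$.  Thus the bubbles are screened and cannot change $\cP_x$ nor $\cA^{\Bot}_{x,h}$.  Your proposal needs this screening argument; without it the claimed determinism for $\cA^{\Bot}_{x,h}$ does not follow.

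For the last part, the paper's argument is shorter than yours and again goes via the bubbles: in the Edwards–Sokal sampling one reveals $\omega$ and then colors clusters independently, and the uniformly random colors assigned to the bubble interior $V$ do not change whether $V\subseteq\Ared$ (or $\Ablue$), by the same boundary-screening reasoning.  Your route through \cref{lem:DMP-pillar-shell}, \cref{cor:DMP-for-Ared}, and the increment-by-increment factorization is essentially the machinery of \cref{prop:submultiplicativity-Potts}; it is workable once the pillar is known to be a function of $\fW_x$, but it is more elaborate than necessary, and one must then also account for the case where $x$ is not a cut-point of $\cP_x$ (the increment decomposition and \cref{cor:DMP-for-Ared} are keyed to cut-points, whereas the claim is stated for arbitrary pillars).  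In short: the bubble-screening argument via \cref{prop:out-graph-connected} is the missing piece, and it replaces both the faulty ``edges inside $\cP_x$ are determined by $\fW_x$'' step and the heavier DMP machinery.
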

\begin{proof}
    Let $F$ be a finite (maximal) 1-connected component of faces in $\clfaces$, that is moreover disjoint from $\cI$. Let $V$ denote the set of vertices separated by $F$ from $\partial \Lambda_n$. By maximality of $F$, all the edges of $\Delta_\scE F$ are open, and by \cref{prop:out-graph-connected}, the graph $(\Delta_\scV F, \Delta_\scE F)$ is connected. Hence, all the vertices of $\Delta_\scV F$ are part of the same open cluster. By the definition of $\Atop$, the vertices of $V$ are in $\Atop$ if and only if the vertices of $\Delta_\scV F$ are, and similarly for $\Abot$. Thus, the face set $F$ plays no role in determining whether or not the vertices of $V$ are in $\Atop$, and similarly for $\Abot$. In particular, both the pillar $\cP_x$ and the event $\cA^{\Bot}_{x, h}$ are unaffected by such components as $F$, and are thus determined entirely by the collection of walls $\fW_x$. Now recall that by the Edwards--Sokal coupling, we can sample the Potts model by first revealing the edge configuration, and then coloring open clusters independently at random. Again, for $F$ and $V$ as above, the random color(s) assigned to $V$ do not affect whether or not the vertices of $V$ are in $\Ared$, and similarly for $\Ablue$. Hence, fixing the collection of walls $\fW_x$ also determines the conditional probabilities of the events $\cA^{\noRed}_{x, h}, \cA^{\Blue}_{x, h}$. 
\end{proof}

The proofs of the next two propositions (\cref{prop:decorrelation-1,,prop:decorrelation-2}) follow from what is already known in the literature. Indeed, in \cite[Propositions 2.1, 2.3]{BLP79b}, it is shown how the decorrelation statements in Ising follow from the machinery developed by Dobrushin in \cite[Lemmas 1, 2]{Dobrushin72} once certain bounds have been proved relating to groups of walls in the interface (see \cite[Eqs.~(2.2)--(2.7)]{BLP79b}). However, Dobrushin's machinery is general and not restricted to the Ising model, and hence the proof in \cite{BLP79b} holds in the Random cluster setting as long as we can prove the analogous bounds. In fact, the only remaining bound not already proved in \cite{GielisGrimmett02} is the following: Take any admissible group of walls $(F_x)_{x \in \cL_{1/2}}$. Recall that $\sE$ denotes an empty wall. Let $\sZ_x^n(F_x \mid (F_y)_{y \neq x})$ denote 
\begin{equation*}
    \sZ_x^n(F_x \mid (F_y)_{y \neq x}) := \frac{\bar{\mu}_n(F_x, (F_y)_{y \neq x})}{\bar{\mu}_n(\sE_x, (F_y)_{y\neq x})}\,.
\end{equation*}
Then, for some constants $C, c>0$ and all $\beta > \beta_0$, all $x, z$,
\begin{equation}\label{eq:decorrelation-CE-bound-1}
    \left|\log \frac{\sZ_x^n(F_x \mid (F_y)_{y \neq x})}{\sZ_x^n(F_x \mid (F_y)_{y \notin (x, z)}, \sE_z)}\right| \leq Ce^{-c\beta|x-z|}
\end{equation}
if 
\begin{equation*}
    |x-z| \geq 10(\fm(F_x) + \fm(F_z))\,.
\end{equation*}
Furthermore, denoting $W_n = \Lambda_n \cap \cL_{1/2}$, we have for any $m \geq n$, 
\begin{equation}\label{eq:decorrelation-CE-bound-2}
    \left|\log \frac{\sZ_x^m(F_x \mid (F_y)_{y \in W_n \setminus x}, (\sE_z)_{z \in W_m \setminus W_n})}{\sZ_x^n(F_x \mid (F_y)_{y \in W_n \setminus x})}\right| \leq Ce^{-c\beta(\min_{y \in W_m \setminus W_n} |x-y|)}
\end{equation}
if
\begin{equation*}
    \min_{y \in W_m \setminus W_n} |x-y| \geq 10\fm(F_x)\,.
\end{equation*}
The proof of these two bounds uses cluster expansion, and is done in the Ising case in \cite{Dobrushin73}. The same proof applies here verbatim as long as we can additionally control the terms $(1-e^{-\beta})^{|\partial \cI| - |\partial \cJ|}q^{\kappa_\cI - \kappa_\cJ}$ in the cluster expansion when comparing interfaces. However, it is clear that looking at the ratios in \cref{eq:decorrelation-CE-bound-1,eq:decorrelation-CE-bound-2}, these terms will all cancel out to be equal to 1. Hence, we have

\begin{proposition}\label{prop:decorrelation-1}
For every $\beta > \beta_0$, there is a constant $C > 0$ such that for every $n \leq m$, $r > 0$, and sequence $x = x_n$,
\begin{equation*}
    \norm{\bar{\mu}_n((\sF_s)_{|s-x| < r} \in \cdot) - \bar{\mu}_m((\sF_s)_{|s-x| < r} \in \cdot)}_{\tv} \leq C\exp(-(d(x, \partial \Lambda_n) - r)/C)\,.
\end{equation*}
\end{proposition}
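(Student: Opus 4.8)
The plan is to derive \Cref{prop:decorrelation-1} from the cluster-expansion estimates \eqref{eq:decorrelation-CE-bound-1} and \eqref{eq:decorrelation-CE-bound-2}, following the scheme of Dobrushin \cite{Dobrushin72,Dobrushin73} and Bricmont--Lebowitz--Pfister \cite{BLP79b} verbatim, since those arguments are written in a way that only requires bounds of the stated form together with the exponential tail on groups of walls from \Cref{prop:grimmett-exp-tail-grp-of-walls}. The only model-specific input that could fail for the random-cluster measure is the presence of the extra factors $(1-e^{-\beta})^{|\partial\cI|}q^{\kappa_\cI}$ in the cluster-expansion formula \eqref{eq:CE}, and the point is that these cancel identically in the ratios $\sZ_x^n(F_x\mid\cdot)/\sZ_x^n(F_x\mid\cdot)$ and $\sZ_x^m/\sZ_x^n$ because the numerator and denominator compare configurations that differ only in the group of walls at $x$ versus the empty wall there, so the number of components and the boundary size of the decorated interface agree between the two. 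This observation is exactly what was noted right before the statement, so once one grants it, the two bounds \eqref{eq:decorrelation-CE-bound-1} and \eqref{eq:decorrelation-CE-bound-2} hold.

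First I would recall Dobrushin's two lemmas (\cite[Lemmas~1,2]{Dobrushin72}), which express the total variation distance between two Gibbs-type measures on collections of walls in terms of the relative partition functions $\sZ_x$ and the exponential tails on the excess area; these are purely combinatorial/probabilistic statements about polymer-type models and carry over with no change. Next I would feed in \eqref{eq:decorrelation-CE-bound-1}: fixing a reference vertex $x=x_n$, comparing the law of $(\sF_s)_{|s-x|<r}$ under $\bar\mu_n$ with its law under $\bar\mu_m$ amounts to turning off all walls indexed by vertices outside $\Lambda_n$ (there are at most $Cm^2$ of them) and estimating the accumulated multiplicative error. For each such wall $F_z$ one pays $\exp(Ce^{-c\beta|x-z|})$ provided $|x-z|\ge 10(\fm(F_x)+\fm(F_z))$; the complementary event (a wall of large excess area close to $x$) is controlled by the tail bound $\bar\mu_n(\fm(\sF_z)\ge t)\le Ce^{-(\beta-C)t}$ from \Cref{prop:grimmett-exp-tail-grp-of-walls} combined with \eqref{eq:tail-on-nested-wall-group}. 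Summing the errors $\sum_{z:|x-z|\ge d(x,\partial\Lambda_n)-r}e^{-c\beta|x-z|}$ over a two-dimensional lattice of $z$'s yields a geometric-type series dominated by $C\exp(-(d(x,\partial\Lambda_n)-r)/C)$, which is the claimed bound. The bound \eqref{eq:decorrelation-CE-bound-2} handles the part of the comparison where $\Lambda_m\setminus\Lambda_n$ walls are simultaneously set to empty, and is used in exactly the same summation.

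The genuinely new content that must be checked — and the step I expect to be the main obstacle — is establishing \eqref{eq:decorrelation-CE-bound-1} and \eqref{eq:decorrelation-CE-bound-2} themselves in the FK setting, i.e. verifying that the cluster-expansion proof of Dobrushin \cite{Dobrushin73} goes through once the $g$-terms are replaced by the $g$-terms of \Cref{prop:grimmett-cluster-exp} and the extra $(1-e^{-\beta})^{|\partial\cI|}q^{\kappa_\cI}$ factors are shown to cancel. Concretely, one writes $\log\sZ_x^n(F_x\mid(F_y)_y) - \log\sZ_x^n(F_x\mid(F_y)_{y\notin\{x,z\}},\sE_z)$ as a difference of sums of $g$-terms over the four interfaces appearing in the two relative partition functions, uses the Lipschitz-type bound \eqref{eq:g-bound-2-faces} $|\g(f,\cI)-\g(f',\cI')|\le Ke^{-cr(f,\cI;f',\cI')}$ together with the separation hypothesis $|x-z|\ge 10(\fm(F_x)+\fm(F_z))$ (which forces the supports of the walls $F_x$ and $F_z$, after inclusion of their interior ceilings, to lie at pairwise distance of order $|x-z|$, much as in \Cref{prop:cone-seperation} and \Cref{lem:cone-separation}), and sums the exponential tails to get $Ce^{-c\beta|x-z|}$. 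The cancellation of the combinatorial prefactors is the place where one must be a little careful: changing $F_x\to\sE_x$ alters $\kappa_\cI$ and $|\partial\cI|$, but it alters them by the \emph{same} amount in the numerator and denominator of the ratio of ratios in \eqref{eq:decorrelation-CE-bound-1}, so the net contribution is zero — this should be argued by noting that the effect of the $z$-wall on $\kappa$ and $|\partial\cI|$ is local to a neighbourhood of $z$, hence independent of whether the $x$-wall is present or empty, given $|x-z|$ large. With \eqref{eq:decorrelation-CE-bound-1}, \eqref{eq:decorrelation-CE-bound-2}, Dobrushin's lemmas, and the exponential tails in hand, the summation described above completes the proof.
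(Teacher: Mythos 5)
Your proposal follows essentially the same route as the paper: both reduce the statement to Dobrushin's decorrelation machinery via \cite{BLP79b}, identify \eqref{eq:decorrelation-CE-bound-1} and \eqref{eq:decorrelation-CE-bound-2} as the only inputs needing fresh verification in the FK setting, and observe that the new combinatorial prefactors $(1-e^{-\beta})^{|\partial\cI|}q^{\kappa_\cI}$ cancel exactly in the ratio-of-ratios because the modifications at $x$ and at $z$ act on disjoint neighborhoods once $|x-z|\geq 10(\fm(F_x)+\fm(F_z))$. Your locality argument for that cancellation is just a slightly more explicit version of what the paper asserts as ``clear.''
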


\begin{proposition}\label{prop:decorrelation-2}
For every $\beta > \beta_0$, there is a constant $C > 0$ such that for every $n$, $r > 0$, and sequences $x = x_n$ and $y = y_n$,
\begin{equation*}
    \norm{\bar{\mu_n}((\sF_s)_{|s-x| < r} \in \cdot, (\sF_t)_{|t-y|<r} \in \cdot) - \bar{\mu}_n((\sF_x)_{|s-x| < r} \in \cdot)\bar{\mu}_n((\sF_y)_{|t-y|<r} \in \cdot)}_{\tv} \leq Ce^{-(|x-y| - 2r)/C}\,.
\end{equation*}
\end{proposition}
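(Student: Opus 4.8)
The plan is to follow the Bricmont--Lebowitz--Pfister strategy for decorrelation of interface walls essentially verbatim, reducing the statement to the cluster-expansion bounds \eqref{eq:decorrelation-CE-bound-1}--\eqref{eq:decorrelation-CE-bound-2} and then invoking Dobrushin's general machinery. Concretely, the group-of-walls field $(\sF_s)_{s\in\cL_{1/2}}$ under $\bar\mu_n$ has conditional distributions governed by the partition-function-like quantities $\sZ_x^n(F_x\mid (F_y)_{y\neq x})$, and \cite[Lemmas~1--2]{Dobrushin72} (as packaged in \cite[Props.~2.1, 2.3]{BLP79b}) show that if the dependence of these conditional laws on far-away coordinates decays exponentially---which is precisely \eqref{eq:decorrelation-CE-bound-1}--\eqref{eq:decorrelation-CE-bound-2}---then both the finite-to-infinite volume comparison (\cref{prop:decorrelation-1}) and the two-region factorization asserted here follow by Dobrushin's inductive coupling/telescoping argument, which is model-independent. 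Thus the only genuinely model-specific input required in the random-cluster setting is the pair of bounds \eqref{eq:decorrelation-CE-bound-1}--\eqref{eq:decorrelation-CE-bound-2}.

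To establish those, I would expand each of the four weights appearing in the nested ratio on the left of \eqref{eq:decorrelation-CE-bound-1} using the cluster expansion \eqref{eq:CE} of \cref{prop:grimmett-cluster-exp}. The configurations in the numerator differ from those in the denominator only by the insertion of the wall group $F_z$ at a location $z$ far from $x$, and this same insertion occurs both in the ``$F_x$ present'' and in the ``$\sE_x$ present'' partition sums; consequently the prefactors $(1-e^{-\beta})^{|\partial\cI|}$ and $q^{\kappa_\cI}$---the genuinely new features of the FK cluster expansion compared with the Ising one---contribute identically to numerator and denominator and cancel exactly in the double ratio. What remains is a difference of sums of $g$-terms, and by \eqref{eq:g-bound-2-faces} each such difference is bounded by $Ke^{-cr(f,\cI;f',\cI')}$; summing over the faces $f$ of the walls near $x$ and using that the two compared interfaces agree on a ball of radius comparable to $|x-z|$ around $x$ (guaranteed by the side condition $|x-z|\ge 10(\fm(F_x)+\fm(F_z))$ together with the geometry of walls and ceilings) yields the required $Ce^{-c\beta|x-z|}$. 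The proof of \eqref{eq:decorrelation-CE-bound-2} is the same computation with $\sE_z$ replaced by the boundary annulus $W_m\setminus W_n$.

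To convert these conditional-law estimates into the stated total-variation bound, I would first truncate onto the (overwhelmingly likely) event that the wall groups $\sF_x$ and $\sF_y$ have excess area at most, say, $|x-y|/40$, so that the side conditions of \eqref{eq:decorrelation-CE-bound-1}--\eqref{eq:decorrelation-CE-bound-2} are satisfied; the complementary event costs at most $Ce^{-(\beta-C)|x-y|/40}$ by \eqref{eq:tail-on-nested-wall-group} (equivalently \cref{prop:grimmett-exp-tail-grp-of-walls}), which is absorbed into the right-hand side. On the good event, Dobrushin's scheme propagates the per-coordinate exponential decay into the global bound $Ce^{-(|x-y|-2r)/C}$ for the joint law of $(\sF_s)_{|s-x|<r}$ and $(\sF_t)_{|t-y|<r}$, since the two observed regions are separated by distance at least $|x-y|-2r$.

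I expect the main obstacle to be purely bookkeeping rather than conceptual: carefully verifying that in \emph{every} partition-function ratio arising in Dobrushin's argument the new factors $(1-e^{-\beta})^{|\partial\cI|}$ and $q^{\kappa_\cI}$ cancel \emph{exactly} (not merely up to a controllable error), and tracking the excess-area side conditions through the truncation so that the tail estimate on large wall groups is indeed negligible at the relevant scale $|x-y|-2r$. Once these two combinatorial points are handled, no new idea beyond the Ising case is needed.
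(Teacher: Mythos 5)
Your proposal is correct and takes essentially the same route as the paper: reduce to the Dobrushin/Bricmont--Lebowitz--Pfister machinery, identify \eqref{eq:decorrelation-CE-bound-1}--\eqref{eq:decorrelation-CE-bound-2} as the only model-specific input, and observe that the new FK factors $(1-e^{-\beta})^{|\partial\cI|}$ and $q^{\kappa_\cI}$ cancel exactly in the double ratio so that the Ising cluster-expansion computation of Dobrushin carries over verbatim. Your added remarks about truncating on the event that $\fm(\sF_x),\fm(\sF_y)$ are small (to satisfy the side conditions) are implicit in the cited BLP/Dobrushin arguments and pose no difficulty.
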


We now apply these decorrelation estimates to our events of interest, which we phrase as the following corollaries:
\begin{corollary}\label{cor:decorrelation-1}
For every $\beta > \beta_0$, there is a constant $C > 0$ such that for every $n \leq m$, and sequences $x = x_n$ $y = y_n$ such that $d(x, \partial \Lambda_n) \wedge d(y, \partial \Lambda_m) \geq r$,
\begin{equation*}
    |\bar{\mu}_n(E_h^x) - \bar{\mu}_m(E_h^y)| \leq C\exp[-r/C]\,.
\end{equation*}
Moreover, the same statement holds with the events $\cA^{\Bot}_{x, h}, \cA^{\Bot}_{y, h}$ instead.
\end{corollary}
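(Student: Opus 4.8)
The goal is to deduce \cref{cor:decorrelation-1} from \cref{prop:decorrelation-1} and \cref{prop:decorrelation-2}, together with the structural fact (\cref{clm:nested-walls-determine-pillars-RC-Potts}) that the events $E_h^x$ and $\cA^{\Bot}_{x,h}$ are functions of the collection of walls $\sF_x$ nesting $x$, and the localization estimate \cref{prop:pillar-marginal-of-wall} that these nesting walls are, with probability $1-Ce^{-(\beta-C)r}$, indexed by vertices within distance $r$ of $x$.

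The plan is as follows. First I would fix $r$ and set $r' = r/2$ (any constant fraction works). By \cref{clm:nested-walls-determine-pillars-RC-Potts}, the event $E_h^x$ is measurable with respect to $\fW_x$; by \cref{prop:pillar-marginal-of-wall} applied under both $\bar\mu_n$ and $\bar\mu_m$, outside an event of probability $Ce^{-(\beta-C)r'}$ the walls of $\fW_x$ are all indexed within distance $r'$ of $x$, and similarly for $y$. Hence, up to an additive error $2Ce^{-(\beta-C)r'}$, the probability $\bar\mu_n(E_h^x)$ equals the probability of the corresponding event expressed in terms of only $(\sF_s)_{|s-x|<r'}$, and likewise $\bar\mu_m(E_h^y)$ in terms of $(\sF_t)_{|t-y|<r'}$. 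Since $\Lambda_n$ and $\Lambda_m$ are translates of cylinders (the model is translation invariant in the horizontal directions), we may translate so that $x$ and $y$ sit over the same point; then $d(x,\partial\Lambda_n)\wedge d(y,\partial\Lambda_m)\ge r$ lets us invoke \cref{prop:decorrelation-1} with the window radius $r'$, giving that the laws of $(\sF_s)_{|s-x|<r'}$ under $\bar\mu_n$ and under $\bar\mu_m$ are within $C\exp(-(d(x,\partial\Lambda_n)-r')/C)\le C\exp(-r'/C)$ in total variation. Comparing the two truncated probabilities through this TV bound and folding in the two localization errors yields $|\bar\mu_n(E_h^x)-\bar\mu_m(E_h^y)|\le C'\exp(-r/C')$ after adjusting constants (using that $e^{-(\beta-C)r/2}$ is dominated by $e^{-r/C'}$ for $\beta$ large, $C'$ large).

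For the $\cA^{\Bot}_{x,h}$ case the argument is identical: \cref{clm:nested-walls-determine-pillars-RC-Potts} asserts that $\cA^{\Bot}_{x,h}$ is also determined by $\fW_x$, so the same truncation-plus-decorrelation scheme applies verbatim, replacing every occurrence of $E_h^x$ by $\cA^{\Bot}_{x,h}$ and $E_h^y$ by $\cA^{\Bot}_{y,h}$. (One should note that \cref{clm:nested-walls-determine-pillars-RC-Potts} only says $\fW_x$ \emph{determines the conditional probabilities} of $\cA^{\noRed}_{x,h}$ and $\cA^{\Blue}_{x,h}$ rather than the events themselves, which is why the corollary is stated only for $E_h^x$ and $\cA^{\Bot}_{x,h}$; the Potts versions need the separate decorrelation statement \cref{cor:Potts-decorrelation-1} invoked elsewhere in \cref{sec:ld-potts-bot}, established along the same lines but passing through the Edwards--Sokal coupling.)

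The only genuinely delicate point is bookkeeping: one must make sure the translation identifying the two base points is legitimate (it is, since the Dobrushin boundary conditions and the cylinder are invariant under horizontal translation, and $E_h^x$, $\cA^{\Bot}_{x,h}$ are defined purely locally relative to $x$), and that the window radius $r'$ fed into \cref{prop:decorrelation-1} is strictly smaller than the distance to the boundary so that the exponent $d(x,\partial\Lambda_n)-r'$ is a positive constant multiple of $r$. Everything else is a routine combination of a union bound over the two localization events with one total-variation comparison, so I do not expect any obstacle beyond making the constants consistent; the substantive content is entirely contained in the already-proved \cref{prop:pillar-marginal-of-wall}, \cref{clm:nested-walls-determine-pillars-RC-Potts}, and \cref{prop:decorrelation-1}.
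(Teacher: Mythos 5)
Your proposal correctly identifies the three inputs the paper relies on --- \cref{prop:pillar-marginal-of-wall} for localizing the event to a finite window, \cref{clm:nested-walls-determine-pillars-RC-Potts} for measurability of $E_h^x$ and $\cA^{\Bot}_{x,h}$ with respect to $\fW_x$, and \cref{prop:decorrelation-1} for comparing finite-volume marginals --- and the bookkeeping of additive errors is fine. However, there is a genuine gap in the step where you ``translate so that $x$ and $y$ sit over the same point.'' The measures $\bar\mu_n$ and $\bar\mu_m$ are defined on fixed, concentric cylinders $\Lambda_n=\llb-\tfrac n2,\tfrac n2\rrb^2\times(\Z+\tfrac12)$ and $\Lambda_m$ (both centered at the origin) with Dobrushin boundary conditions; these are \emph{not} invariant under horizontal translation, so after translating by $y-x$ the shifted cylinder $\Lambda_n+(y-x)$ is no longer nested in, or concentric with, $\Lambda_m$, and \cref{prop:decorrelation-1} --- which compares $\bar\mu_n$ and $\bar\mu_m$ on the centered cylinders at a \emph{single} base point $x$ --- does not apply. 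In short, \cref{prop:decorrelation-1} gives you control of $|\bar\mu_n(E_h^x)-\bar\mu_m(E_h^x)|$ at a common $x$, but says nothing directly about changing the base point from $x$ to $y$ within a fixed finite cylinder.

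The paper's proof closes this gap by a triangle inequality through an intermediate measure $\bar\mu_N$ with $N$ large:
\begin{equation*}
|\bar{\mu}_n(E_h^x) - \bar{\mu}_m(E_h^y)| \leq |\bar{\mu}_n(E_h^x) - \bar{\mu}_N(E_h^x)| + |\bar{\mu}_N(E_h^x) - \bar{\mu}_N(E_h^y)| +|\bar{\mu}_m(E_h^y) - \bar{\mu}_N(E_h^y)|\,.
\end{equation*}
The first and third terms keep the base point fixed ($x$ in one case, $y$ in the other) and compare nested concentric cylinders, so \cref{prop:decorrelation-1} applies after the localization step. The middle term, which is precisely the base-point change you are trying to handle by translation, is sent to zero by letting $N\to\infty$: the infinite-volume weak limit \emph{is} translation invariant in the $xy$-directions, and this is the only place where translation invariance is legitimately available. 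So the idea you wanted to use is correct, but it must be applied to the infinite-volume limit rather than to the finite cylinders, and this requires inserting the intermediate $\bar\mu_N$ and using the triangle inequality. Your parenthetical comment about the distinction between $E_h^x$, $\cA^{\Bot}_{x,h}$ (events determined by $\fW_x$) and $\cA^{\noRed}_{x,h}$, $\cA^{\Blue}_{x,h}$ (only conditional probabilities determined) is accurate and consistent with how the paper handles the Potts versions separately in \cref{cor:Potts-decorrelation-1}.
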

\begin{proof}
    We follow the proof of \cite[Corollary 6.4]{GL_max}. For $N$ large, we can write
    \begin{equation*}
        |\bar{\mu}_n(E_h^x) - \bar{\mu}_m(E_h^y)| \leq |\bar{\mu}_n(E_h^x) - \bar{\mu}_N(E_h^x)| + |\bar{\mu}_N(E_h^x) - \bar{\mu}_N(E_h^y)| +|\bar{\mu}_m(E_h^y) - \bar{\mu}_N(E_h^y)|\,.
    \end{equation*}
    By \cref{prop:pillar-marginal-of-wall,clm:nested-walls-determine-pillars-RC-Potts}, we have after paying an additive error of $Ce^{-(\beta - C)r}$ that the first and third terms are bounded by $C\exp[-r/C]$ by \cref{prop:decorrelation-1}. For $\beta$ large, this additive error is of smaller order than our bound. The second term vanishes as $N \to \infty$ by translation invariance in the $xy$-directions of the infinite volume measure.
\end{proof}

\begin{corollary}\label{cor:decorrelation-2}
For every $\beta > \beta_0$, there is a constant $C > 0$ such that for every $n$, and sequences $x = x_n$ and $y = y_n$ such that $d(x, y) \geq r$, we have
\begin{equation*}
    |\bar{\mu}_n(E_h^x, E_h^y) - \bar{\mu}_n(E_h^x)\bar{\mu}_n(E_h^y)| \leq C\exp[-r/C]\,.
\end{equation*}
Moreover, the same statement holds with the events $\cA^{\Bot}_{x, h}, \cA^{\Bot}_{y, h}$ instead.
\end{corollary}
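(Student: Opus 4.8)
The plan is to reduce the statement for $\cA^{\Bot}_{x,h}$ (and $E_h^x$) to the abstract decorrelation estimate \cref{prop:decorrelation-2}, exactly as \cref{cor:decorrelation-1} reduces to \cref{prop:decorrelation-1}. The key observation, already recorded in \cref{clm:nested-walls-determine-pillars-RC-Potts}, is that both $E_h^x$ and $\cA^{\Bot}_{x,h}$ are determined by the collection of walls $\fW_x$ nesting $x$ (and, for the Potts events, the conditional probabilities are determined by $\fW_x$). Moreover, by \cref{prop:pillar-marginal-of-wall}, except on an event of probability $Ce^{-(\beta-C)r'}$ the walls in $\fW_x$ are all indexed by vertices within distance $r'$ of $x$, and likewise for $\fW_y$.

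First I would fix $r$ with $d(x,y)\geq r$, set $r'=r/4$ say, and observe that on the event that $\fW_x$ is indexed within distance $r'$ of $x$ and $\fW_y$ within distance $r'$ of $y$, the pair of events $(E_h^x,E_h^y)$ (resp.\ $(\cA^{\Bot}_{x,h},\cA^{\Bot}_{y,h})$) is measurable with respect to $\big((\sF_s)_{|s-x|<r'},(\sF_t)_{|t-y|<r'}\big)$. Then I would apply \cref{prop:decorrelation-2} with this radius $r'$ to the marginal and joint laws of these wall families, which gives
\begin{equation*}
    \big\|\bar\mu_n\big((\sF_s)_{|s-x|<r'},(\sF_t)_{|t-y|<r'}\in\cdot\big) - \bar\mu_n\big((\sF_s)_{|s-x|<r'}\in\cdot\big)\,\bar\mu_n\big((\sF_t)_{|t-y|<r'}\in\cdot\big)\big\|_{\tv} \leq Ce^{-(|x-y|-2r')/C}\,.
\end{equation*}
Since $|x-y|\geq r$ and $2r'=r/2$, the right-hand side is at most $Ce^{-r/(2C)}$. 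Passing from the wall-family laws to the probabilities of $E_h^x$, $E_h^y$ and their intersection costs only the error $Ce^{-(\beta-C)r'}=Ce^{-(\beta-C)r/4}$ from \cref{prop:pillar-marginal-of-wall} (applied at both $x$ and $y$), which for $\beta$ large is of the same or smaller order than $e^{-r/(2C)}$; absorbing both into a single constant $C$ yields the claimed bound $|\bar\mu_n(E_h^x,E_h^y)-\bar\mu_n(E_h^x)\bar\mu_n(E_h^y)|\leq Ce^{-r/C}$.

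For the $\cA^{\Bot}_{x,h}$ case the argument is identical once we invoke the part of \cref{clm:nested-walls-determine-pillars-RC-Potts} stating that $\cA^{\Bot}_{x,h}$ is determined by $\fW_x$; the indicator $\one_{\{\cA^{\Bot}_{x,h}\}}$ is then a bounded function of $\fW_x$, so the total-variation bound transfers directly to the covariance. The only mild subtlety, and the step I expect to require the most care in writing out, is the bookkeeping when the ``bad'' event of \cref{prop:pillar-marginal-of-wall} occurs at $x$ or $y$: there one must note that the events $E_h^x$, $E_h^y$ (resp.\ $\cA^{\Bot}$) are still events, so replacing them by their restriction to the good set of wall families changes each probability by at most the additive error above, and this propagates through the triangle-inequality split of $|\bar\mu_n(\cap)-\bar\mu_n\cdot\bar\mu_n|$ into a bounded number of such terms. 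Everything else is a routine repetition of the proof of \cref{cor:decorrelation-1} with \cref{prop:decorrelation-2} in place of \cref{prop:decorrelation-1}, so I would simply state "the proof is verbatim that of \cref{cor:decorrelation-1}, using \cref{prop:decorrelation-2} and the measurability furnished by \cref{clm:nested-walls-determine-pillars-RC-Potts}" and omit the repeated details.
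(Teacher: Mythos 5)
Your proposal is correct and follows the same route as the paper: both reduce the claim to \cref{prop:decorrelation-2} via the measurability given by \cref{clm:nested-walls-determine-pillars-RC-Potts} and the localization of $\fW_x$, $\fW_y$ from \cref{prop:pillar-marginal-of-wall}. The paper records this as an immediate consequence of those three results; you have simply spelled out the radius choice and the additive-error bookkeeping that the paper leaves implicit.
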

\begin{proof}
    This is immediate by combining \cref{prop:pillar-marginal-of-wall,clm:nested-walls-determine-pillars-RC-Potts} with \cref{prop:decorrelation-2}.
\end{proof}

For the Potts model, the results only make sense for $q \geq 2$, but otherwise the proofs are exactly the same.

\begin{corollary}\label{cor:Potts-decorrelation-1}
For every $\beta > \beta_0$, $q \geq 2$, there is a constant $C > 0$ such that for every $n \leq m$, and sequences $x = x_n$ $y = y_n$ such that $d(x, \partial \Lambda_n) \wedge d(y, \partial \Lambda_m) \geq r$,
\begin{equation*}
|\phi_n(\cA^{\noRed}_{x, h}) - \phi_m(\cA^{\noRed}_{y, h})| \leq C\exp[-r/C]\,.
\end{equation*}
Moreover, the same statement holds with the events $\cA^{\Blue}_{x, h}, \cA^{\Blue}_{y, h}$ instead.
\end{corollary}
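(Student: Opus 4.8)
\textbf{Proof proposal for \cref{cor:Potts-decorrelation-1}.} The plan is to mirror the argument of \cref{cor:decorrelation-1} verbatim, only tracking that the two extra bookkeeping steps (fixing an edge configuration via a group-of-walls estimate, and then applying the abstract decorrelation estimates of \cref{prop:decorrelation-1,prop:decorrelation-2}) still go through when the event in question lives in the joint FK--Potts space. First I would fix $N$ much larger than $m$, and insert the triangle-inequality decomposition
\begin{equation*}
|\phi_n(\cA^{\noRed}_{x, h}) - \phi_m(\cA^{\noRed}_{y, h})| \leq |\phi_n(\cA^{\noRed}_{x, h}) - \phi_N(\cA^{\noRed}_{x, h})| + |\phi_N(\cA^{\noRed}_{x, h}) - \phi_N(\cA^{\noRed}_{y, h})| + |\phi_m(\cA^{\noRed}_{y, h}) - \phi_N(\cA^{\noRed}_{y, h})|\,.
\end{equation*}
The middle term vanishes as $N\to\infty$ by translation invariance of the infinite-volume coupled measure in the $xy$-directions, so it suffices to bound the first and third terms.

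For the outer terms I would invoke \cref{prop:pillar-marginal-of-wall} to restrict, at an additive cost $Ce^{-(\beta-C)r}$, to the event that every wall nesting $x$ (resp.\ $y$) is indexed by a vertex within distance $r$; on this event \cref{clm:nested-walls-determine-pillars-RC-Potts} tells us that $\fW_x$ determines the conditional probability of $\cA^{\noRed}_{x, h}$ (and likewise $\cA^{\Blue}_{x,h}$), so the event is a function of the group-of-walls data $(\sF_s)_{|s-x|<r}$ together with an independent uniform coloring of open clusters. Concretely, $\phi_n(\cA^{\noRed}_{x,h}) = \bar\mu_n\big[\, g((\sF_s)_{|s-x|<r})\,\big]$ for a fixed bounded function $g$ (the conditional probability over the random-cluster and coloring randomness, which by \cref{clm:nested-walls-determine-pillars-RC-Potts} depends only on those walls and not on the ambient volume), so the difference $|\phi_n(\cA^{\noRed}_{x,h}) - \phi_N(\cA^{\noRed}_{x,h})|$ is at most the total-variation distance between the laws of $(\sF_s)_{|s-x|<r}$ under $\bar\mu_n$ and $\bar\mu_N$, which \cref{prop:decorrelation-1} bounds by $C\exp[-(d(x,\partial\Lambda_n)-r)/C]\le C\exp[-r/C]$ after relabelling $r$ (using $d(x,\partial\Lambda_n)\ge r$ and shrinking $r$ by a factor if necessary, or more cleanly running the bound with $2r$ throughout). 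Since $\beta$ is large, the additive error $Ce^{-(\beta-C)r}$ from \cref{prop:pillar-marginal-of-wall} is of smaller order than $C\exp[-r/C]$, and can be absorbed. The same chain of reasoning applied with $\cA^{\Blue}$ in place of $\cA^{\noRed}$ yields the final assertion.

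The only genuinely non-routine point—already essentially handled by \cref{clm:nested-walls-determine-pillars-RC-Potts}—is the measurability reduction: a priori $\cA^{\noRed}_{x,h}$ is an event in the joint $(\omega,\sigma)$ space that depends globally on which clusters receive the colour $\Red$, so it is not literally a function of $\fW_x$. The resolution (spelled out in the proof of \cref{clm:nested-walls-determine-pillars-RC-Potts}) is that finite closed-face components $F$ disjoint from $\cI$ do not influence whether the vertices they enclose lie in $\Ared$, because $(\Delta_\scV F,\Delta_\scE F)$ is connected with all boundary edges open (by \cref{prop:out-graph-connected}) so the enclosed vertices inherit their $\Ared$/$\Abot$ status from $\Delta_\scV F$, and the independent colouring of the enclosed cluster is irrelevant. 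Hence the \emph{conditional} probability of $\cA^{\noRed}_{x,h}$ given $\fW_x$ is a well-defined bounded function of $\fW_x$ alone, which is exactly what the argument above needs; I would simply cite \cref{clm:nested-walls-determine-pillars-RC-Potts} for this and otherwise state that the proof is identical to that of \cref{cor:decorrelation-1}.
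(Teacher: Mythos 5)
Your proposal mirrors the paper's argument exactly: the paper explicitly notes that the Potts decorrelation corollaries have proofs identical to \cref{cor:decorrelation-1} (triangle inequality via a large $N$, translation invariance for the middle term, \cref{prop:pillar-marginal-of-wall} and \cref{clm:nested-walls-determine-pillars-RC-Potts} to reduce to a function of the nearby group-of-walls data, then \cref{prop:decorrelation-1} for the total-variation bound). The one genuinely nontrivial point you flag — that $\cA^{\noRed}_{x,h}$ lives in the joint space and is only conditionally determined by $\fW_x$ — is precisely what \cref{clm:nested-walls-determine-pillars-RC-Potts} was designed to handle, so your reliance on it is correct and matches the paper.
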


\begin{corollary}\label{cor:Potts-decorrelation-2}
For every $\beta > \beta_0$, there is a constant $C > 0$ such that for every $n$, and sequences $x = x_n$ and $y = y_n$ such that $d(x, y) \geq r$, we have
\begin{equation*}
|\phi_n(\cA^{\noRed}_{x, h}, \cA^{\noRed}_{y, h}) - \phi_n(\cA^{\noRed}_{x, h})\phi_n(\cA^{\noRed}_{y, h})| \leq C\exp[-r/C]\,.
\end{equation*}
Moreover, the same statement holds with the events $\cA^{\Blue}_{x, h}, \cA^{\Blue}_{y, h}$ instead.
\end{corollary}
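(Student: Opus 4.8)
\textbf{Proof proposal for \cref{cor:Potts-decorrelation-2}.}
The plan is to imitate the proof of \cref{cor:decorrelation-2} verbatim, replacing the pair of pillar events $E_h^x, E_h^y$ by the pair $\cA^{\noRed}_{x, h}, \cA^{\noRed}_{y, h}$ (and likewise $\cA^{\Blue}_{x, h}, \cA^{\Blue}_{y, h}$). The key observation making this possible is \cref{clm:nested-walls-determine-pillars-RC-Potts}: it states that the collection of walls $\fW_x$ nesting $x$ determines the \emph{conditional probability} (given the FK configuration) of the events $\cA^{\noRed}_{x, h}$ and $\cA^{\Blue}_{x, h}$ --- indeed, a finite $1$-connected component $F$ of closed faces disjoint from $\cI$ plays no role in deciding whether the vertices it encloses lie in $\Ared$ or $\Ablue$, since such vertices form a single open cluster whose color is chosen independently in the Edwards--Sokal coupling. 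Thus, writing $p^{\noRed}(\fW_x) := \phi_n(\cA^{\noRed}_{x,h}\mid \fW_x)$, which is $\sigma(\fW_x)$-measurable, we have $\phi_n(\cA^{\noRed}_{x,h}) = \bar\mu_n[p^{\noRed}(\fW_x)]$ and $\phi_n(\cA^{\noRed}_{x,h},\cA^{\noRed}_{y,h}) = \bar\mu_n[p^{\noRed}(\fW_x)\,p^{\noRed}(\fW_y)]$, using that, conditionally on the FK configuration, the colorings of the clusters are independent, so the two conditional probabilities multiply once the wall collections (equivalently the relevant cluster structure) are fixed, provided $\fW_x$ and $\fW_y$ are disjoint.

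The steps, in order, are as follows. First, fix $r>0$ and set $r' = r/3$, say. By \cref{prop:pillar-marginal-of-wall}, after paying an additive error of $Ce^{-(\beta-C)r'}$ (which, for $\beta$ large, is of smaller order than the claimed bound $Ce^{-r/C}$), we may assume the walls in $\fW_x$ are indexed by vertices within distance $r'$ of $x$, and similarly for $\fW_y$; since $d(x,y)\ge r$, on this event $\fW_x$ and $\fW_y$ are disjoint (their index sets are separated), so the conditional-independence factorization above applies. Second, $p^{\noRed}(\fW_x)$ is a bounded function measurable with respect to $(\sF_s)_{|s-x|<r'}$ and $p^{\noRed}(\fW_y)$ with respect to $(\sF_t)_{|t-y|<r'}$; hence \cref{prop:decorrelation-2} (with radius $r'$ and separation $d(x,y)\ge r = 3r'$, so that $|x-y|-2r' \ge r'$) gives
\[
\big|\bar\mu_n[p^{\noRed}(\fW_x)\,p^{\noRed}(\fW_y)] - \bar\mu_n[p^{\noRed}(\fW_x)]\,\bar\mu_n[p^{\noRed}(\fW_y)]\big| \le C e^{-r'/C}\,.
\]
Third, combine the two displays (and their analogues with the $Ce^{-(\beta-C)r'}$ error from the first step) with the identities $\phi_n(\cA^{\noRed}_{x,h}) = \bar\mu_n[p^{\noRed}(\fW_x)]$ etc., and absorb all errors into a single constant $C$ and exponent $1/C$. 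The same argument with $\cA^{\Blue}$ in place of $\cA^{\noRed}$ (again via \cref{clm:nested-walls-determine-pillars-RC-Potts}) yields the $\Blue$ statement, and one needs $q\ge 2$ only so that the Potts model is defined.

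I do not expect a genuine obstacle here: the proof is essentially a bookkeeping exercise assembling \cref{prop:pillar-marginal-of-wall}, \cref{prop:decorrelation-2}, and \cref{clm:nested-walls-determine-pillars-RC-Potts}, exactly as \cref{cor:decorrelation-2} assembled \cref{prop:pillar-marginal-of-wall}, \cref{prop:decorrelation-2}, and \cref{clm:nested-walls-determine-pillars-RC-Potts} for the FK events. The only point requiring a moment's care --- and arguably the ``main'' point --- is the conditional-independence step: one must verify that once $\fW_x,\fW_y$ are disjoint, the two conditional probabilities $p^{\noRed}(\fW_x)$ and $p^{\noRed}(\fW_y)$ are not merely each determined by their respective wall collections but also that their \emph{product} equals the conditional probability of the joint event, which follows because, given the full FK configuration $\omega$, the events $\cA^{\noRed}_{x,h}$ and $\cA^{\noRed}_{y,h}$ depend on the colorings of disjoint sets of open clusters (those meeting $\cP_x$ versus those meeting $\cP_y$), and these colorings are independent under the Edwards--Sokal coupling. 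This is immediate from the structure of the coupling but deserves an explicit sentence.
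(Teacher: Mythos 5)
Your proposal follows the same skeleton the paper sketches for this corollary (and its FK analogue, \cref{cor:decorrelation-2}): localize the walls via \cref{prop:pillar-marginal-of-wall}, invoke \cref{clm:nested-walls-determine-pillars-RC-Potts} to reduce $\cA^{\noRed}_{x,h}$ to a functional $p^{\noRed}(\fW_x)$ of the local wall collection, and apply the TV bound of \cref{prop:decorrelation-2}. You are also right that the only point the paper leaves genuinely implicit here --- and the only place the Potts corollary is not literally identical to the FK one --- is the factorization $\phi_n(\cA^{\noRed}_{x,h}\cap\cA^{\noRed}_{y,h}\mid\omega)=p^{\noRed}(\fW_x)\,p^{\noRed}(\fW_y)$ on the good event, and that this is what turns the TV bound on walls into a covariance bound on Potts events. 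So the approach is correct and matches the paper's.

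One thing to tighten in the sentence you flag as needing care. Your justification asserts that the two events depend on the colorings of ``disjoint sets of open clusters (those meeting $\cP_x$ versus those meeting $\cP_y$).'' As stated this is not quite right: a cluster can meet both $\cP_x$ and $\cP_y$. The clean version is the following. On the event that $\fW_x,\fW_y$ are disjoint (and localized) and $x,y$ are cut-points, the closed faces of $\cP_x$ trap every open cluster intersecting $V(\cP_x)$ inside $V(\cP_x)$ \emph{except} the cluster of $x$, which can escape downward through $f_{[x,x-\ez]}$; likewise for $y$. All the trapped clusters are contained in the disjoint sets $V(\cP_x)$ and $V(\cP_y)$, so their colors are independent. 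The one shared cluster --- the cluster of $x$, which is also the cluster of $y$ --- is the bottom cluster and is colored $\Blue$ deterministically, so it contributes no randomness and hence no correlation (the residual configurations where $x$ or $y$ fails to be a cut-point, or where some other cluster straddles both neighborhoods, are absorbed into the $Ce^{-(\beta-C)r'}$ additive error you already pay). It is this deterministic color of the shared cluster, not disjointness of the cluster sets themselves, that makes the conditional probabilities multiply. With that correction your argument is complete and is the argument the paper intends.
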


Finally, we provide the missing proof of \cref{clm:G-split-product}, which is restated here for convenience. Recall the definition of $G_h^x$ in \cref{def:Ghx}.

\begin{claim}\label{clm:Appendix-G-split-product}
    For all $\beta > \beta_0$, there exists a constant $\epsilon_\beta$ such that for all $h \ll \log^2 n$, $(x, y) \in \cL_{1/2, n}^\mathrm{o}$ with $d(x, y) \leq \log^2 n$ and $n$ sufficiently large,
    \begin{equation*}
        \bar{\mu}_n(G_h^x,\, G_h^y) \leq (1+\epsilon_\beta)\frac{(e^\beta + q-1)^2}{q}\bar{\mu}_n(E_h^x)\bar{\mu}_n(E_h^y)\,,
    \end{equation*}
    where the definition of $G_h^x$ can be taken with respect to any of the four interfaces.
\end{claim}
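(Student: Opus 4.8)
The plan is to mirror the structure of the proof of \cref{lem:submultiplicativity-in-Gamma}, but now the two ``sub-events'' $G_h^x$ and $G_h^y$ play the roles that $A_{h_1}^x$ and $\theta_{h_1}A_{h_2}^{y-h_1\ez}$ played there: we want to peel off the pillar $\cP_y$ (treating it as a decreasing proxy event, via $A_h^y$) after revealing everything ``outside'' it. First I would reduce from $G_h^y$ to the decreasing event $A_h^y$: by \cref{prop:compare-A_h^x-E_h^x} we have $\bar\mu_n(E_h^y)\ge \frac{q}{e^\beta+q-1}(1-\epsilon_\beta)\bar\mu_n(A_h^y)$, and conversely $G_h^y\subseteq A_h^y\cap E_h^y$ (the capping of the pillar in \cref{it:Ghx-capped-pillar} gives a co-connected vertex set $C$ whose bounding faces $F(C)$ are closed, exactly as in the upper bound of \cref{prop:compare-A_h^x-E_h^x}), so it suffices to bound $\bar\mu_n(G_h^x, A_h^y)$. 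As a preparatory step (needed for the Domain Markov application, exactly as in the lower-bound half of \cref{prop:compare-A_h^x-E_h^x}), I would close the edge $[y, y-\ez]$ at a multiplicative cost of $\tfrac{e^\beta+q-1}{q}$, replacing $A_h^y$ by $\tilde A_h^y := A_h^y\cap\{f_{[y,y-\ez]}\in\clfaces\}$; since $G_h^x$ is determined by $\cP_x$ and the walls nesting $x$ (\cref{clm:nested-walls-determine-pillars-RC-Potts}) and $d(x,y)\le\log^2 n$ while $h\ll\log^2 n$ only matters for separation, closing $[y,y-\ez]$ does not disturb $G_h^x$ provided $y$ is far enough from the pillar $\cP_x$ — and if it is not, that falls under the usual $\epsilon_\beta$-exceptional event where $\cP_x$ and $\cP_y$ are not separated (handled by $\Iso$-type estimates, \cref{thm:iso-map}).

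The core step is the Domain Markov decomposition: I would sum over the interface $\cI$ restricted to its portion ``below and around $\cP_y$''. Concretely, condition first on the event $G_h^x$ together with the entire interface $\cI$ with the faces of $\cP_y$ (minus its cap) removed — call this truncated interface $\cI'$, defined analogously to the truncation in \cref{lem:submultiplicativity-in-Gamma} with the single added face $f_{[y,y-\ez]}$ closing the gap, which is legitimate because \cref{it:Ghx-cut-point-x} gives a cut-point of $\cP_y$ at $y$ and \cref{it:Ghx-faces-of-I-near-pillar} guarantees $\cP_y$ only touches the rest of $\cI$ at height $0$. Revealing $\cI'$ exposes a side boundary $\partial^\dagger\cI'$ of open edges (by maximality), and by \cref{prop:out-graph-connected} plus \cref{clm:Bv-connected}–\cref{clm:G-bc} (applied to $\cI'$ around the point $y$, verbatim from \cref{sec:ld-RC}), these open edges form a genuine edge-boundary separating the subgraph $G=(V,E)$ ``above $\cI'$'' from everything else, with wired boundary condition on $B_\scV\setminus\{y\}\cup\partial\Lambda_n^+$ and free at $y$. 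The event $\tilde A_h^y$ is measurable with respect to $\omega\restriction_E$ (this is \cref{clm:Ah2-E-measurable} in the present setting, with $y$ replacing $y-h_1\ez$), so Domain Markov gives $\mu_n(\tilde A_h^y\mid \cS_{\cI'}) = \mu_n(\tilde A_h^y\mid f_{[y,y-\ez]}\in\clfaces,\ \partial^\dagger\cI'\subseteq\opfaces)$ as in \cref{lem:Ah2-DMP}. Conditioning off the single closed edge costs a factor $q$ (as in \cref{eq:remove-conditioning-on-edge}), and then FKG — using that $\tilde A_h^y$, hence $A_h^y$, is decreasing while $\partial^\dagger\cI'\subseteq\opfaces$ is increasing, and that $\sep_n$ is decreasing — removes the conditioning entirely, yielding $\mu_n(\tilde A_h^y\mid\cS_{\cI'})\le \bar\mu_n(A_h^y)$. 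Crucially the events $\cS_{\cI'}$ are disjoint across truncations $\cI'$ (\cref{clm:S-I'-disjoint}) and each implies $G_h^x\cap\sep_n$ (since $G_h^x$ is determined by the part of the interface we kept — its walls nesting $x$, which are untouched by the truncation of $\cP_y$, by \cref{clm:nested-walls-determine-pillars-RC-Potts} and the separation), so summing over $\cI'$ gives $\sum_{\cI'}\mu_n(\cS_{\cI'})\le \mu_n(G_h^x,\sep_n)\le \mu_n(\sep_n)\bar\mu_n(E_h^x)$.

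Assembling the factors: $\bar\mu_n(G_h^x,G_h^y)\le \bar\mu_n(G_h^x,\tilde A_h^y)\cdot\tfrac{e^\beta+q-1}{q}\le \tfrac{e^\beta+q-1}{q}\cdot q\cdot\bar\mu_n(E_h^x)\cdot\bar\mu_n(A_h^y)$ and then $\bar\mu_n(A_h^y)\le(1+\epsilon_\beta)\tfrac{e^\beta+q-1}{q}\bar\mu_n(E_h^y)$ by \cref{prop:compare-A_h^x-E_h^x}, giving the claimed $(1+\epsilon_\beta)\tfrac{(e^\beta+q-1)^2}{q}\bar\mu_n(E_h^x)\bar\mu_n(E_h^y)$. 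For the Potts/$\Bot$ versions one replaces the target $E_h^y$ by $\cA^{\star}_{y,h}$ on the left but still bounds by $E_h^y$ on the right (using the ordering of interfaces, $\cA^\star_{y,h}\subseteq E_h^y$), so the same bound holds a fortiori; in the $\Top$ case the left side is literally $E_h^x,E_h^y$ after the $G\to A\to E$ reductions. The main obstacle — exactly as flagged in the remark before \cref{clm:G-split-product} — is the Domain Markov step: one must check that after removing $\cP_y$ the revealed open edges $\partial^\dagger\cI'$ genuinely cut $\Z^3$, which requires re-running the geometric analysis of \cref{clm:Bv-connected,clm:Ah2-E-measurable,clm:G-bc} with $\cI'$ a truncation of the full (hairy) interface around $y$, and verifying that $G_h^x$, being determined by walls nesting $x$ and those being disjoint from the walls nesting $y$ on the (high-probability) separated event, is indeed $\cS_{\cI'}$-measurable; the handling of the low-probability non-separated event is routine via \cref{thm:iso-map}.
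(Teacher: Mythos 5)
Your overall strategy matches the paper's proof essentially step for step: close $[y,y-\ez]$, reveal a truncation of $\cI$ around $\cP_y$, apply a Domain Markov cut built from $\partial^\dagger\cI'$, and recover $E_h^y$ via FKG and \cref{prop:compare-A_h^x-E_h^x}. However, as written there are two genuine issues in the logic.

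The first is an ordering error in the reduction chain. You declare early on that "it suffices to bound $\bar\mu_n(G_h^x, A_h^y)$," and then reveal the truncation $\cI'=\cI\setminus\cP_y$. But the truncation is only well defined, and only leaves a single face-sized gap filled by $f_{[y,y-\ez]}$, because of the $G_h^y$-structure --- \cref{it:Ghx-cut-point-x} forcing a cut-point at $y$ and \cref{it:Ghx-faces-of-I-near-pillar} forcing $\cP_y$ to touch the rest of $\cI$ only at height $0$. On the weaker event $G_h^x\cap A_h^y$ there need not even be a pillar with a cut-point at $y$: the closed face set $F(C)\cup\{f_{[y,y-\ez]}\}$ realizing $A_h^y$ may well be a separate $1$-connected component of $\clfaces$ that never joins $\cI$. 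The paper keeps $G_h^y$ through the closing of $[y,y-\ez]$ and the grouping by $I'\in\hat\sep_n^1$, and only then bounds $\mu_n(\cI'=I',\,G_h^y)$ by $\mu_n(\cI'=I',\,A_h^y)$ inside the sum; your later appeals to $G_h^y$-conditions suggest this is what you intend, but the "it suffices to bound $\bar\mu_n(G_h^x,A_h^y)$" reduction, taken literally, would break the core Domain Markov step.

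The second issue is the proposed "$\epsilon_\beta$-exceptional event where $\cP_x$ and $\cP_y$ are not separated, handled by $\Iso$-type estimates." This is both unnecessary and not how the argument closes. On $G_h^x\cap G_h^y$ the pillars are deterministically separated: \cref{it:Ghx-faces-of-I-near-pillar} of $G_h^x$ restricts the faces of $\cI$ $1$-connected to $\cP_x$ to the few height-$0$ faces around $x$, and any $1$-connection between $\cP_x$ and $\cP_y$ would force $x,y$ adjacent and then produce a forbidden $1$-connected face (e.g.\ $f_{[y,y+\ez]}$). Thus closing $[y,y-\ez]$ never leaves $G_h^x\cap G_h^y$, and no additive exceptional term arises; the $(1+\epsilon_\beta)$ in the bound comes solely from the $A_h^y$-to-$E_h^y$ comparison of \cref{prop:compare-A_h^x-E_h^x}, not from a conditioning on a high-probability event. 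With the ordering corrected and the exceptional event dropped, the remaining ingredients (disjointness of $\cS_{I'}$ via \cref{case-1:S-I'-disjoint}, the re-run geometric claims, the cost-$q$ edge-removal, and the two FKG applications) are exactly those in the paper's proof.
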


\begin{proof}
As noted before, we can assume that we are working with $G_h^x$ defined with respect to $\Itop$. Begin by defining the sets
\begin{align*} \sep_n^1 &= \left\{ I= \cI(\omega)\mbox{ for some }\omega\in G_h^x \cap G_h^y \right\}\,,
\end{align*}
and
\begin{align*} \hat{\sep}_n^1 =\left\{ I\in\sep_n^1\,:\; f_{[y,y-\ez]}\in I\right\}\,.
\end{align*}

We can force the face below $y$ to be in $\clfaces$ at a cost of $\frac{e^\beta+q-1}{q}$ by \cref{obs:close-edge}, noting that closing this edge always creates an additional open cluster because the event $G_h^y$ ensures that $y$ is a cut-point and thus cannot have a path of open edges to $y - \ez$ without using the edge $[y, y - \ez]$. Furthermore, the event $G_h^x$ only concerns properties of the interface. Thus, we have
\begin{equation*}
    \bar{\mu}_n(G_h^x,\, G_h^y) = \frac{\mu_n(\sep_n^1)}{\mu_n(\sep_n)} \leq \frac{e^\beta+q-1}{q}\frac{\mu_n(\hat\sep_n^1)}{\mu_n(\sep_n)} = \frac{e^\beta+q-1}{q}\frac{1}{\mu_n(\sep_n)}\sum_{I \in \hat\sep_n^1} \mu_n(\cI = I)\,.
\end{equation*}
Now, we want to group the interfaces according to the truncation $\cI \setminus \cP_y$. Recall that this truncated interface is obtained by removing from $\cI$ the faces of $\cP_y$ and adding in the faces which are directly below vertices of $\cP_y$ which have height $1/2$ (see \cref{def:truncated-interface}). As this is not equal to the face set ``$\cI$ set-minus $\cP_y$", we will write $\cI' = \cI \setminus \cP_y$ to avoid confusion and also highlight the parallel to the proof of \cref{lem:submultiplicativity-in-Gamma}. With this notation, the above sum is equal to
\begin{equation*}
    \frac{e^\beta+q-1}{q}\frac{1}{\mu_n(\sep_n)}\sum_{I':\, I \in \hat\sep_n^1} \mu_n(\cI' = I',\, G_h^y)\,.
\end{equation*}
Now recall that we showed in the beginning of the proof of \cref{prop:compare-A_h^x-E_h^x} that $\widetilde E_h^x \cap \Iso_{x, L, h} \subseteq A_h^y$. The only property of $\Iso_{x, L, h}$ used in that proof was that $x$ is a cut-point, and hence $G_h^y \subseteq A_h^y$ as well, so the above is easily upper bounded by
\begin{equation*}
    \frac{e^\beta+q-1}{q}\frac{1}{\mu_n(\sep_n)}\sum_{I':\, I \in \hat\sep_n^1} \mu_n(\cI' = I',\, A_h^y)\,.
\end{equation*}
It is important that we move from the event $G_h^y$ to $A_h^y$ because the latter is defined independently from the interface, and is also a decreasing event. Now, define $\partial^\dagger I'$ by deleting from $\partial I'$ the 4 faces that are 1-connected to the face $f_{[y, y - \ez]}$ (out of the 12 such faces) and have height $> 0$. On the event $\cI' = I'$, we know that $\partial^\dagger I' \subseteq \opfaces$ by maximality of $\cI$. Ordinarily, we would not know that $I' \subseteq \clfaces$ because $I'$ can include faces that are not in $I$. However, the event $G_h^y$ ensures that the only possible extra face in $I'$ is $f_{[y, y - \ez]}$, and $\hat\sep_n^1$ was defined so that this face is always in $I$. Hence, combining the above gets us
\begin{equation}\label{eq:Ghx-separate-by-truncated-interface}
    \bar{\mu}_n(G_h^x,\, G_h^y) \leq \frac{e^\beta+q-1}{q}\frac{1}{\mu_n(\sep_n)}\sum_{I':\, I \in \hat\sep_n^1} \mu_n(I' \subseteq \clfaces,\, \partial^\dagger I' \subseteq \opfaces,\, A_h^y)\,.
\end{equation}
Writing the latter probability as
\[ \mu_n(I' \subseteq \clfaces,\, \partial^\dagger I' \subseteq \opfaces ,\, A_h^y)= \mu_n\left(A_h^y \mid \cS_{I'}\right) \mu_n\left(\cS_{I'}\right)
 \]
 for
 \begin{equation}
 \cS_{I'}:=\left\{ I'\subseteq \clfaces\,,\, \partial^\dagger I'\subseteq \opfaces\right\}\,,
 \end{equation}
 we note that the events $\cS_{I'}$ are disjoint by applying verbatim \cref{case-1:S-I'-disjoint}
 from the proof of \cref{clm:S-I'-disjoint}. Since every $\cS_{I'}$ for $I'\in\hat\sep_n^1$ further implies $G_h^x$ and $\sep_n$, it follows from the above claim that
\[ \sum_{I':I\in\hat\sep_n^1} \mu_n(\cS_{I'})\leq \mu_n(G_h^x,\, \sep_n)\,,\]
and consequently (together with \cref{eq:Ghx-separate-by-truncated-interface} and the fact that $G_h^x \subseteq E_h^x$):
\begin{equation}\label{eq:Ghx-mu(Ah-Gamma)-up-to-max-Ah2}
\bar{\mu}_n(G_h^x,\, G_h^y)\leq 
\frac{e^\beta+q-1}{q}\bar{\mu}_n(E_h^x)\max_{I':I\in\hat\sep_n^1}\mu_n\left(A_h^y \mid \cS_{I'}\right)\,.
\end{equation}
Hence, to conclude the proof it will suffice to show that for $I'$ such that $I \in \hat\sep_n^1$, we have 
$\mu_n(A_h^y \mid \cS_{I'})) \leq C(\beta,q) \bar{\mu}_n(E_h^y)$; namely, we prove this for $C(\beta,q)=(1+\epsilon_\beta)(e^\beta+q-1)$.

As before, most of the labor is showing that
\begin{equation}\label{eq:Ghx-DMP-step}
    \mu_n(A_h^y \mid I' \subseteq \clfaces,\, \partial^\dagger I' \subseteq \opfaces) = \mu_n(A_h^y \mid f_{[y, y - \ez]} \in \clfaces,\, \partial^\dagger I' \subseteq \opfaces)\,.
\end{equation}
By \cref{prop:out-graph-connected}, the subgraph $K=(\Delta_{\scV}I', \Delta_{\scE}I')$ is connected. Let $B_\scV$ be the vertices of $\Delta_{\scV}I' \cap \Lambda_n$ with a $\Lambda_n$-path to $\partial \Lambda_n^+$ that do not cross a face of $I'$, and let $B_\scE$ be the edges of the induced subgraph of $K$ on $B_\scV$. Then, \cref{clm:Bv-connected} implies that the graph $(B_\scV, B_\scE)$ is connected, as $I'$ is an interface. 

Now let $G$ be the subgraph of $\Lambda_n$ induced on the set of vertices $V$ that are not disconnected from $\partial \Lambda_n^+$ by $I'$. Let $E$ be the edge set of $G$. The next claim says that $G$ is the right graph to be looking at, and is the analog of \cref{clm:Ah2-E-measurable}. The proof is nearly identical, except we need to use properties of $G_h^y$ instead of properties of the event $\Gamma_{h_1}^x$ defined there. We include the full proof for completion. For ease of reference, denote the four adjacent vertices to $y$ that have height $1/2$ as $z_1, z_2, z_3, z_4$.
\begin{claim}\label{clm:Ghx-Ah2-E-measurable}
    For any interface $I \in \hat\sep_n^1$, let $G = (V, E)$ be defined as above (w.r.t.\ $I'$). Then, conditional on $\partial^\dagger I' \subseteq \opfaces$, the event $A_h^y$ is measurable w.r.t.\ $\{\omega_e :\; e \in E\}$.
\end{claim}
\begin{proof}
As in the proof of \cref{clm:Ah2-E-measurable}, 
by the definition of $A_h^y$ it suffices to show that for any $1$-connected subset $F$ of $\clfaces\cap\cL_{>0}$ that includes $\{f_{[y,z_i]}\}_{i=1}^4$, the edges $\{e \,:\; f_e\in F\}$ must all belong to $E$. First, we show   
\begin{equation}\label{eq:Ghx-side-edges-y-zi-in-E}
 \{[y, z_i]\}_{i = 1}^4 \subseteq E\,,   
\end{equation} 
or equivalently that $y$ and each $z_i$ are in $V$. For any $I \in \hat\sep_n^1$, the requirement that $P_y$ has a cut-point at $y$ ensures that $I$ does not separate any of the $z_i$ from $\partial \Lambda_n^+$, and $I' \subseteq I$. Thus, $\{z_i\}_{i = 1}^4 \subseteq V$. Furthermore, since $\{f_{[y, z_i]}\}_{i = 1}^4 \cap I' = \emptyset$, then $y$ is also in $V$. (In fact, since $f_{[y, y - \ez]} \in I'$, we additionally have that $y, z_i \in B_\scV$.) Second, we show that 
\begin{equation}\label{eq:Ghx-f-1-connected-to-f_y_z-not-in-I'}
    \left\{ f\,:\;  \hgt(f) > 0 \mbox{ and $f$ is 1-connected to  $\bigcup_{i = 1}^4 f_{[y, z_i]}$}\right\} \cap I'  = \emptyset\,.
\end{equation} 
Indeed, we know that for any $I \in \hat\sep_n^1$, by the fact that $y$ is a cut-point of $P_y$, we have $f_{[y,z_i]}\in P_y$ for each $i=1,\ldots,4$. Thus, any faces whose height exceeds $0$ and are 1-connected to one of the $f_{[y, z_i]}$ would have been included in $P_y$ as on the event $G_h^y$, the only faces of $I$ that are in $\partial P_y$ are at height 0. 

Now, consider the faces $F$. Since $F \subseteq \clfaces$, on the event $\partial^\dagger I' \subseteq \opfaces$ we have
\begin{equation}\label{eq:Ghx-F-intersect-partial-I'}
F \cap \partial I' \subseteq \partial I' \setminus \partial^\dagger I'=  \{f_{[y, z_i]}\}_{i = 1}^4.
\end{equation}
We claim that by definition of $F$ and \cref{eq:Ghx-f-1-connected-to-f_y_z-not-in-I',eq:Ghx-F-intersect-partial-I'} we can infer that
\begin{equation}\label{eq:Ghx-F-intersect-I'-is-empty} F \cap I' = \emptyset\,;
\end{equation}
to see this, suppose there exists some $f \in F \cap I'$, and let $P=(f_i)_1^m$ be a 1-connected of faces in $F$ connecting $f_0=f$ to $f_m=f_{[y, z_1]}$. Let $j$ be the minimal index such that $f_j\notin I'$ (well-defined since $f_m\notin I'$). Then $f_j\in F\cap \partial I'$, hence $f_j=f_{[y,z_i]}$ for some $i$ by \cref{eq:Ghx-F-intersect-partial-I'}, whence $f_{j-1}$ cannot exist by \cref{eq:Ghx-f-1-connected-to-f_y_z-not-in-I'} and the fact that $F \subseteq \cL_{>0}$, contradiction.

We are now ready to show that every edge $e$ with $f_e \in F$ must be in $E$. For any $f \in F$, there is a 1-connected path $P$ of faces in $F$ from $f$ to one of the $f_{[y, z_i]}$. If $f= f_e$ for some $e \notin E$, then let $g = g_{[u, v]}$ be the last face in the path $P$ such that $[u, v] \notin E$, so that $g$ is 1-connected to $g' = g'_{[u', v']}$ where $[u', v'] \in E$. W.l.o.g., let $u \notin V$. No matter how $g$ and $g'$ are connected to each other, $u$ is always $\Lambda_n$-adjacent to $u'$ (or $v'$), with the face $g'' = g''_{[u, u']}$ (or $= g''_{[u, v']})$ being either equal to or 1-connected to $g$. However, since $g''$ separates $u \notin V$ from $u' \in V$, then $g'' \in  I'$. Hence, as $g$ and $g''$ are equal or $1$-connected, we have $g \in \overline{I'}$. But then the assumption that $g = g_{[u, v]}$ for $[u, v] \notin E$ contradicts the combination of \cref{eq:Ghx-side-edges-y-zi-in-E,eq:Ghx-F-intersect-partial-I',eq:Ghx-F-intersect-I'-is-empty}.
This concludes the proof.
\end{proof}
\begin{claim}\label{clm:Ghx-G-bc}
For any interface $I \in G_h^x \cap G_h^y$, let $(B_\scV, B_\scE)$ and $G = (V, E)$ be defined as above (w.r.t.\ $I'$). The following hold:
\begin{enumerate}[(i)]
    \item \label{it:Ghx-vertex-bc}
    The vertices $B_{\scV} \cup \partial \Lambda_n^+$ form a vertex boundary for $V$ (in that every $\Lambda_n$-path from $v\in V$ to $V^c$ must cross one of those vertices).

    \item \label{it:Ghx-edge-bc}
    The graph obtained from $(B_\scV,B_\scE)$ by deleting the vertex $y$ (and edges incident to it) is connected. Consequently, on the event $\partial^\dagger I' \subseteq \opfaces$, the vertices $B_\scV \setminus \{y\}$ are all part of a single open cluster in~$\omega$.

    \item \label{it:Ghx-y-bc}
    On the event $f_{[y, y - \ez]} \in \clfaces$, there cannot be a path of open edges in $E^c$ connecting $y$ to $\partial \Lambda_n^+ \cup B_\scV \setminus \{y\}$.
\end{enumerate}
\end{claim}
\begin{proof}
The proof of \cref{it:Ghx-vertex-bc,it:Ghx-y-bc} follows verbatim from the proof in \cref{clm:G-bc}.

For \cref{it:Ghx-edge-bc}, let $\tilde B_\scE$ be the outcome of removing from $B_\scE$ the four edges $[y, z_i]$. First, we claim that there are no other edges of $B_\scE$ incident to $y$, via the following two items:
\begin{enumerate}[(a)]
    \item $[y, y - \ez] \notin B_\scE$ since
    $f_{[y, y - \ez]} \in I'$;
    \item 
    $[y, y+\ez] \notin B_\scE$, as otherwise, having $f_{[y, y+\ez]} \in \partial I'$, there must be a face $g \in I'$ that is 1-connected to $f_{[y, y+\ez]}$ with $\hgt(g) > 0$. The face $g$ must be 1-connected to $f_{[y, z_i]}$ for some $i$, but by \cref{eq:Ghx-f-1-connected-to-f_y_z-not-in-I'}, this is impossible.
\end{enumerate}
Thus, the graph $(B_\scV \setminus \{y\}, \tilde B_\scE)$ is equal to the subgraph of $(B_\scV, B_\scE)$ induced on $B_\scV \setminus \{y\}$. So, to show that $(B_\scV \setminus \{y\}, \tilde B_\scE)$ is connected, it suffices to exhibit a path in $\tilde B_{\scE}$ between $z_1 = y+\ex$ and $z_2=y+\ey$ (whence by symmetry there will be such paths between any two of the $z_i$'s). These are connected in $\Lambda_n$ by the path 
\begin{equation*}
    P = \big(y+\ex ,y+\ex + \ey, y+\ey\big)\,.
\end{equation*}
Now, by \cref{it:Ghx-faces-of-I-near-pillar} of the definition of $G_h^x$ (and the fact that $I \in \hat\sep_n^1$), we know that $I'$ contains the faces directly below $z_i$ for each $i=1,\ldots,4$. Furthermore, we know by \cref{eq:Ghx-f-1-connected-to-f_y_z-not-in-I'} that the faces $f_{[y + \ex, y + \ex + \ey]}$ and $f_{[y+\ex+\ey, y+\ey]}$ are not in $I'$. Combined, the two aforementioned faces are in $\partial I'$. Since $z_1 \in B_\scV$, this implies every vertex in the path $P$ is also in $B_\scV$. Thus, the path $P$ uses only edges in $\tilde B_\scE$ as required, and altogether $(B_\scV \setminus \{y\}, \tilde B_\scE)$ is connected.
\end{proof}
Combining \cref{clm:Ghx-Ah2-E-measurable,clm:Ghx-G-bc} with the Domain Markov property, we have \cref{eq:Ghx-DMP-step}. Next, the same computation as in \cref{eq:remove-conditioning-on-edge} shows that we can remove the conditioning on the event $f_{[y, y - \ez]} \in \clfaces$ by paying a factor of $q$. We can thereafter remove the conditioning on $\partial^\dagger I' \subseteq \opfaces$ by FKG, getting that
\begin{equation*}
    \mu_n(A_h^y \mid f_{[y, y - \ez]} \in \clfaces,\, \partial^\dagger I' \subseteq \opfaces) \leq q\mu_n(A_h^y)\,.
\end{equation*}
Since $A_h^y$ is a decreasing event, we have by FKG again that
$\mu_n(A_h^y) \leq \bar{\mu}_n(A_h^y)$.
Using \cref{prop:compare-A_h^x-E_h^x} (which, we recall, compares $A_h^y$ to $E_h^y$), we have
\begin{equation*}
    \bar{\mu}_n(A_h^y) \leq (1+\epsilon_\beta)\frac{e^\beta + q-1}{q} \bar{\mu}_n(E_h^y)\,.
\end{equation*}
Thus, combining the above with \cref{eq:Ghx-DMP-step}, we have
\begin{equation*}
\max_{I':I\in\hat\sep_n^1}\mu_n\left(A_h^y \mid \cS_{I'}\right) \leq (1+\epsilon_\beta)(e^\beta + q-1)\bar{\mu}_n(E_h^y)\,,
\end{equation*}
which together with \cref{eq:Ghx-mu(Ah-Gamma)-up-to-max-Ah2} concludes the proof.
\end{proof}

\subsection*{Acknowledgements}
We thank an anonymous referee for many useful comments. This research was supported by NSF grants DMS-1812095 and DMS-2054833.


\bibliographystyle{abbrv}
\bibliography{Potts3D_max}

\end{document}